\DeclareMathOperator{\con}{con}
\DeclareMathOperator{\simple}{sim}
\DeclareMathOperator{\mul}{mul}
\DeclareMathOperator{\occ}{occ}
\DeclareMathOperator{\ini}{ini}
\DeclareMathOperator{\head}{h}
\DeclareMathOperator{\tail}{t}
\DeclareMathOperator{\var}{var}
\DeclareMathOperator{\FIC}{FIC}
\newtheorem{theorem}{Theorem}[section]
\newtheorem{proposition}[theorem]{Proposition}
\newtheorem{lemma}[theorem]{Lemma}
\newtheorem{corollary}[theorem]{Corollary}
\newtheorem{example}[theorem]{Example}
\numberwithin{equation}{section}
\renewcommand*\subjclass[2][2010]{\def\@subjclass{#2}\@ifundefined{subjclassname@#1}{\ClassWarning{\@classname}{Unknown edition (#1) of Mathematics Subject Classification; using '2010'.}}{\@xp\let\@xp\subjclassname\csname subjclassname@#1\endcsname}}
\renewcommand{\subjclassname}{\textup{2010} Mathematics Subject Classification}
\begin{document}

\title[Varieties of monoids whose subvariety lattice is distributive]{Varieties of aperiodic monoids with commuting idempotents whose subvariety lattice is distributive}
\thanks{Supported by the Ministry of Science and Higher Education of the Russian Federation (project FEUZ-2023-0022).}

\author{Sergey V. Gusev}

\address{Ural Federal University, Institute of Natural Sciences and Mathematics, Lenina 51, Ekaterinburg 620000, Russia}

\email{sergey.gusb@gmail.com}

\begin{abstract}
We completely classify all varieties of aperiodic monoids with commuting idempotents whose subvariety lattice is distributive.
\end{abstract}

\keywords{Monoid, aperiodic monoid, monoid with commuting idempotents, variety, subvariety lattice, distributive lattice.}

\subjclass{20M07}

\maketitle

\section{Background and overview}
\label{Sec: introduction}

A variety $\mathbf V$ is \textit{distributive} if its lattice $\mathfrak L(\mathbf V)$ of subvarieties is distributive.

In 1979, Shevrin~\cite[Problem~2.60a]{sverdlovsk-tetrad} posed the problem of classifying all distributive varieties of semigroups. 
This problem includes the problem of identifying all distributive varieties of periodic groups.
In view of the result by Kozhevnikov~\cite{Kozhevnikov-12}, there exist uncountably many group varieties whose lattice of subvarieties is isomorphic to the 3-element chain.
Thus, the latter problem seems to be extremely difficult and it is natural to speak about the classification of distributive varieties of semigroups modulo group varieties.
In the early 1990s, in a series of papers, Volkov solved Shevrin's problem in a very wide partial case, resulting in an almost complete description modulo group varieties.
In particular, Volkov completely classified all distributive varieties of \textit{aperiodic} semigroups, i.e., semigroups all whose subgroups are trivial (see~\cite[Section~11]{Shevrin-Vernikov-Volkov-09} for more details).

The present article is concerned with the distributive varieties of \textit{monoids}, i.e., semigroups with an identity element.
In comparison to the semigroup case, distributive varieties of monoids have not been systematically examined until recently, although non-trivial examples of such varieties have long been known.
More information and many references can be found in a very recent survey~\cite{Gusev-Lee-Vernikov-22}.

As in the semigroup case, in view of the above-mentioned result of~\cite{Kozhevnikov-12}, the general problem of classifying distributive varieties of monoids seems to be extremely difficult.
Thus, it is natural to begin the study monoid varieties with the mentioned property within the class of aperiodic monoids.
Nevertheless, experience suggests that even the problem of classifying distributive varieties of aperiodic monoids remains quite difficult.
So, it is natural at first to try solving this problem within some subclass of the class of all aperiodic monoids. 

The first step was taken in~\cite{Gusev-23}, where distributive varieties of aperiodic monoids with central idempotents were classified.
Here we continue this investigation.
In the survey~\cite{Gusev-Lee-Vernikov-22}, some reasonable problems were suggested, the solutions of which would further contribute toward solving problem of a complete classification of distributive varieties of aperiodic monoids.
One of them is the following: describe distributive varieties within the class $\mathbf A_\mathsf{com}$ of aperiodic monoids with commuting idempotents~\cite[Problem~6.12a)]{Gusev-Lee-Vernikov-22}.

The goal of the present paper is to solve this problem.
We present 5 countably infinite series of varieties and 27 ``sporadic'' varieties such that every distributive subvariety of $\mathbf A_\mathsf{com}$ is contained in one of them. Notice that the proof this result implies that set of all distributive subvarieties of $\mathbf A_\mathsf{com}$ is countably infinite, although the set of all distributive varieties of monoids is uncountably infinite~\cite{Kozhevnikov-12}.

This paper is structured as follows.
In Section~\ref{Sec: main result}, we formulate and discuss our main result.
Some background results are first given in Section~\ref{Sec: preliminaries}. 
In Section~\ref{Sec: construction}, a general construction of Olga Sapir is defined that allows one to construct monoids of a special form.  
In Section~\ref{Sec: separating identities}, separating identities are found for a number of certain varieties of monoids.
Section~\ref{Sec: non-distributive} contains a series of examples of non-distributive varieties of monoids.
Results in Section~\ref{Sec: identities} provide explicit sets of identities which define subvarieties of our 27 ``sporadic'' distributive varieties. 
Finally, Section~\ref{Sec: proof} is devoted to the proof of our main result.

Many identities will be introduced and used throughout this article.
For the reader's convenience, some of these identities are collected in the appendix for quick referencing.

\section{The main result}
\label{Sec: main result}

Let us briefly recall a few notions that we need to formulate our main result.
Let $\mathfrak X$ be a countably infinite set called an \textit{alphabet}. 
As usual, let~$\mathfrak X^\ast$ denote the free monoid over the alphabet~$\mathfrak X$. 
Elements of~$\mathfrak X$ are called \textit{letters} and elements of~$\mathfrak X^\ast$ are called \textit{words}.
We treat the identity element of~$\mathfrak X^\ast$ as \textit{the empty word}, which is denoted by~$1$.  
Words and letters are denoted by small Latin letters. 
However, words unlike letters are written in bold. 
An identity is written as $\mathbf u \approx \mathbf v$, where $\mathbf u,\mathbf v \in \mathfrak X^\ast$; it is \textit{non-trivial} if $\mathbf u \ne \mathbf v$.

As usual, $\mathbb N$ denote the set of all natural numbers. 
For any $n\in\mathbb N$, we denote by $S_n$ the full symmetric group on the set $\{1,\dots,n\}$. 
For convenience, we put $S_0=S_1$. 
Let $\mathbb N_0:=\mathbb N\cup\{0\}$. 
For any $n,m,k\in\mathbb N_0$, $\rho\in S_{n+m}$ and $\tau\in S_{n+m+k}$, we define the words:
\[
\begin{aligned}
\mathbf a_{n,m}[\rho]&:=\biggl(\prod_{i=1}^n z_it_i\biggr)x\biggl(\prod_{i=1}^{n+m-1} z_{i\rho}y_i^2\biggr)z_{(n+m)\rho}x\biggl(\prod_{i=n+1}^{n+m} t_iz_i\biggr),\\
\mathbf a_{n,m}^\prime[\rho]&:=\biggl(\prod_{i=1}^n z_it_i\biggr)\biggl(\prod_{i=1}^{n+m-1} z_{i\rho}y_i^2\biggr)z_{(n+m)\rho}x^2\biggl(\prod_{i=n+1}^{n+m} t_iz_i\biggr),\\
\overline{\mathbf a}_{n,m}[\rho]&:=\biggl(\prod_{i=1}^n z_it_i\biggr)x\biggl(\prod_{i=1}^{n+m-1} z_{i\rho}y_i^2x\biggr)z_{(n+m)\rho}x\biggl(\prod_{i=n+1}^{n+m} t_iz_i\biggr),\\
\mathbf c_{n,m,k}[\tau]&:=\biggl(\prod_{i=1}^n z_it_i\biggr)xyt\biggl(\prod_{i=n+1}^{n+m} z_it_i\biggr)x\biggl(\prod_{i=1}^{n+m+k-1} z_{i\tau}y_i^2\biggr)z_{(n+m+k)\tau}y\biggl(\prod_{i=n+m+1}^{n+m+k} t_iz_i\biggr).
\end{aligned}
\]
Let $\mathbf c_{n,m,k}^\prime[\tau]$ denote the word obtained from $\mathbf c_{n,m,k}[\tau]$ by interchanging the first occurrences of $x$ and $y$.
We denote also by $\mathbf d_{n,m,k}[\tau]$ and $\mathbf d_{n,m,k}^\prime[\tau]$ the words obtained from the words $\mathbf c_{n,m,k}[\tau]$ and $\mathbf c_{n,m,k}^\prime[\tau]$, respectively, when reading the last words from right to left.
We fix notation for the following three identities:
\begin{align*}
\sigma_1:\enskip xyzxty\approx yxzxty,\ \ \ \ 
\sigma_2:\enskip xzytxy\approx xzytyx,\ \ \ \ 
\sigma_3:\enskip xzxyty\approx xzyxty.
\end{align*}
Let
\[
\begin{aligned}
&\Phi:=\{x^2\approx x^3,\,x^2y^2\approx y^2x^2\},\\
&\Phi_1:=\left\{
\mathbf c_{k,\ell,m}[\rho]\approx\mathbf c_{k,\ell,m}^\prime[\rho],\,\mathbf d_{k,\ell,m}[\rho]\approx\mathbf d_{k,\ell,m}^\prime[\rho]
\mid
k,\ell,m\in\mathbb N,\,
\rho\in S_{k+\ell+m}
\right\},\\
&\Phi_2:=\left\{
\mathbf a_{k,\ell}[\rho] \approx \overline{\mathbf a}_{k,\ell}[\rho]
\mid
k,\ell\in\mathbb N,\,
\rho\in S_{k+\ell}
\right\},\\
&\Phi_3:=\left\{
\mathbf a_{k,\ell}[\rho] \approx \mathbf a_{k,\ell}^\prime[\rho]
\mid
k,\ell\in\mathbb N,\,
\rho\in S_{k+\ell}
\right\}.
\end{aligned}
\]
Let $\var\,\Sigma$ denote the monoid variety given by a set $\Sigma$ of identities. 
Given a class $\mathbf V$ of monoids, the class \textit{dual} to $\mathbf V$, denoted by $\mathbf V^\delta$, is the class consisting of monoids dual to members of $\mathbf V$.

Our main result is the following

\begin{theorem}
\label{T: A_com}
A subvariety of $\mathbf A_\mathsf{com}$ is distributive if and only if it is contained in one of the varieties 
\[
\begin{aligned}
&\mathbf D_1:=\var
\left\{
\Phi,\,xyx\approx xyx^2,\,x^2y\approx x^2yx \right\},
\\
&\mathbf D_2:=\var
\left\{
\Phi,\,\Phi_1,\,\Phi_2,\,xyx\approx xyx^2 \right\},
\\
&\mathbf D_3:=\var
\left\{
\Phi,\,\sigma_2,\,\sigma_3,\,x^2y\approx x^2yx\approx xyx^2,\,xyxzx\approx xyxzx^2
\right\}
,\\
&\mathbf D_4:=\var
\left\{
\Phi,\,\sigma_2,\,\sigma_3,\,x^2y\approx x^2yx,\,x^2yty\approx yx^2ty,\,xyzx^2ty^2\approx yxzx^2ty^2,\,xyxzx\approx xyx^2zx
\right\}
,\\
&\mathbf D_5:=\var
\left\{
\Phi,\,\sigma_2,\,\sigma_3,\,xyx^2\approx x^2yx,\,xyzx^2y\approx yxzx^2y,\,xyxzx\approx xyxzx^2
\right\}
,\\
&\mathbf D_6:=\var
\left\{
\Phi,\,\sigma_2,\,\sigma_3,\,x^2yx\approx x^2yx^2,\,x^2yty\approx yx^2ty,\,xyzx^2ty\approx yxzx^2ty,\,xyzytx^2\approx yxzytx^2
\right\}
,\\
&\mathbf D_7:=\var
\left\{\!\!\!\!
\begin{array}{l}
\Phi,\,\sigma_2,\,\sigma_3,\,x^2yx\approx x^2yx^2,\,x^2yty\approx yx^2ty,\,xyzx^2ty^2\approx yxzx^2ty^2,\\
xyzx^2y\approx yxzx^2y,\,xyxzx\approx xyx^2zx
\end{array}
\!\!\!\!\right\}
,\\
&\mathbf D_8:=\var
\left\{\!\!\!\!
\begin{array}{l}
\Phi,\,\sigma_2,\,\sigma_3,\,x^2yx\approx x^2yx^2,\,x^2yty\approx yx^2ty,\,xyzx^2ty^2\approx yxzx^2ty^2,\\
xyzx^2y\approx yxzx^2y,\,xyzx^2tysx\approx yxzx^2tysx,\,xyxzx\approx xyxzx^2
\end{array}
\!\!\!\!\right\}
,\\
&\mathbf D_9:=\var
\left\{
\Phi,\,\sigma_1,\,\sigma_3,\,x^2yx\approx x^2yx^2,\, ytx^2y\approx ytyx^2
\right\}
,\\
&\mathbf D_{10}:=\var
\left\{
\Phi,\,\sigma_1,\,\sigma_3,\,x^2yx\approx x^2yx^2,\, xyxzx\approx xyxzx^2,\,x^2zytxy\approx x^2zytyx
\right\}
,\\
&\mathbf D_{11}:=\var
\left\{
\Phi,\,\Phi_1,\,\Phi_2,\,xyx^2\approx x^2yx,\, xyxzx\approx xyxzx^2
\right\}
,\\
&\mathbf D_{12}:=\var
\left\{\!\!\!\!
\begin{array}{l}
\Phi,\,\Phi_1,\,\Phi_2,\,x^2yx\approx x^2yx^2,\,ytx^2y\approx ytyx^2,\, x^2yty\approx yx^2ty,\\
xyzx^2ty\approx yxzx^2ty,\, xyzytx^2\approx yxzytx^2,\, yzyxtx^2\approx yzxytx^2
\end{array}
\!\!\!\!\right\}
,\\
&\mathbf D_{13}:=\var
\left\{\!\!\!\!
\begin{array}{l}
\Phi,\,\Phi_1,\,\Phi_2,\,x^2yx\approx x^2yx^2,\,yx^2ty\approx xyx^2ty,\,ytyx^2\approx ytxyx^2,\,xyxzx\approx xyx^2zx\\
x^2yty^2\approx yx^2ty^2,\,x^2yzytx\approx yx^2zytx,\,x^2yzxty\approx yx^2zxty,\,x^2zytxy\approx x^2zytyx,\\ 
yzx^2ytx\approx yzyx^2tx,\,xyzx^2ty^2\approx yxzx^2ty^2
\end{array}
\!\!\!\!\right\}
,\\
&\mathbf D_{14}:=\var
\left\{\!\!\!\!
\begin{array}{l}
\Phi,\,\Phi_1,\,\Phi_2,\,x^2yx\approx x^2yx^2,\,yx^2ty\approx xyx^2ty,\,ytyx^2\approx ytxyx^2,\,xyxzx\approx xyxzx^2\\
x^2yty^2\approx yx^2ty^2,\,x^2yzytx\approx yx^2zytx,\,x^2yzxty\approx yx^2zxty,\,x^2zytxy\approx x^2zytyx,\\ 
yzx^2ytx\approx yzyx^2tx,\,xyzx^2ty^2\approx yxzx^2ty^2,\,xyzx^2tysx\approx yxzx^2tysx
\end{array}
\!\!\!\!\right\}
,\\
&\mathbf P_n:=\var
\left\{
\Phi_1,\,\Phi_3,\,x^n\approx x^{n+1},\,x^2y\approx yx^2
\right\}
,\\
&\mathbf Q_n:=\var\left\{x^n\approx x^{n+1},\,x^ny\approx yx^n,\,x^2y\approx xyx\right\},\\
&\mathbf R_n:=\var\left\{\sigma_1,\,\sigma_2,\,x^n\approx x^{n+1},\,x^2y\approx yx^2\right\},\ \ n \in \mathbb N,
\end{aligned}
\]
or the dual ones.
\end{theorem}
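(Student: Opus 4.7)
The argument splits into sufficiency (each listed variety is distributive) and necessity (every distributive $\mathbf V \subseteq \mathbf A_\mathsf{com}$ lies in one of the listed varieties or its dual).

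For sufficiency, the plan is to describe the subvariety lattice $\mathfrak L(\mathbf W)$ of each candidate $\mathbf W$ and verify distributivity directly. For the fourteen finite varieties $\mathbf D_1, \dots, \mathbf D_{14}$ the natural route is to establish a normal form for words modulo the defining identities, parametrise the fully invariant congruences on the relatively free object, and inspect the resulting lattice. For $\mathbf P_n, \mathbf Q_n, \mathbf R_n$ the defining identities permit a uniform treatment indexed by $n$, and the lattices turn out to be close to chains. The dual cases come for free thanks to the natural antiisomorphism $\mathbf V \leftrightarrow \mathbf V^\delta$.

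For necessity, the standard tool is that a distributive lattice contains no copy of $M_3$ or $N_5$. Given a distributive $\mathbf V \subseteq \mathbf A_\mathsf{com}$, I first note that aperiodicity together with commutativity of idempotents forces the identities $\Phi$. Distributivity then excludes a list of low-complexity configurations: for each identity $\mathbf u \approx \mathbf v$ appearing in one of the candidate lists, I would show that if $\mathbf u \not\approx \mathbf v$ in $\mathbf V$, then $\mathfrak L(\mathbf V)$ contains a copy of $M_3$ or $N_5$, obtained by interpolating a short family of subvarieties between two close neighbours inside $\mathbf V$. The proof proceeds by a branching case analysis: at each node $\mathbf V$ either satisfies a suitable candidate identity (such as one of $\sigma_1, \sigma_2, \sigma_3$, $x^2y \approx x^2yx$, $xyx \approx xyx^2$, $x^2y \approx yx^2$) or, if it does not, a forbidden sublattice is exhibited, contradicting distributivity. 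The seventeen candidate varieties correspond exactly to the leaves of this branching analysis; the subcase in which $\mathbf V$ additionally satisfies $x^2y \approx yx^2$ (strengthening commuting to \emph{central} idempotents) is expected to reduce to the classification of \cite{Gusev-23} and to produce the infinite series $\mathbf P_n, \mathbf Q_n, \mathbf R_n$, while the genuinely non-central subcases produce the $\mathbf D_i$.

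The main obstacle is the combinatorial handling of the infinite identity schemas $\Phi_1, \Phi_2, \Phi_3$, which are parametrised by arbitrary permutations and involve the words $\mathbf a_{n,m}[\rho]$, $\mathbf a_{n,m}^\prime[\rho]$, $\overline{\mathbf a}_{n,m}[\rho]$, $\mathbf c_{n,m,k}[\tau]$ and their variants, in which a long block of squared letters is interleaved with simple letters in a pattern controlled by $\rho$ or $\tau$. To establish that such a schema is necessary, or sufficient, at the appropriate node, I would induct on the parameters $(k,\ell,m)$, reducing a general permutation to a product of adjacent transpositions and then absorbing each transposition using identities already known to hold at that node. I anticipate that most of the paper's effort is consumed by this combinatorial bookkeeping, organised so that the finite identities listed in each $\mathbf D_i$, together with $\Phi_1$ and $\Phi_2$ where they appear, are exactly what is needed both to secure distributivity and to block any strictly larger distributive subvariety of $\mathbf A_\mathsf{com}$.
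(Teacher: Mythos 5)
Your outline has the right global shape (split into sufficiency and necessity, case analysis keyed on a short list of discriminating identities, delegation of the central-idempotent subcase to \cite{Gusev-23}), but on closer inspection two load-bearing pieces are missing, and the sufficiency plan as stated would not go through.

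First, for sufficiency you propose to ``establish a normal form for words modulo the defining identities, parametrise the fully invariant congruences\dots, and inspect the resulting lattice.'' That is not what the paper does and, as stated, it would not work: the lattices $\mathfrak L(\mathbf D_i)$ are in general infinite (see the remark at the end of Section~\ref{Sec: proof}), so you cannot simply describe and inspect them. The actual engine of the sufficiency proof is the criterion in Lemma~\ref{L: smth imply distributivity}: $\mathfrak L(\mathbf V)$ is distributive provided one can exhibit a set $\Sigma$ of identities such that every non-commutative subvariety of $\mathbf V$ is defined within $\mathbf V$ by some subset of $\Sigma$ \emph{and} every $\sigma\in\Sigma$ is ``meet-prime'' in the sense that $\sigma$ holding in $\mathbf X\wedge\mathbf Y$ forces it to hold in $\mathbf X$ or in $\mathbf Y$. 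Building on Propositions~\ref{P: var{sigma_2,sigma_3} subvarieties} and~\ref{P: Phi,Phi_1,Phi_2 subvarieties} and Lemma~\ref{L: one letter in block reduction}, one reduces to a finite-looking set $\Delta\cup\Delta_i$ and checks meet-primeness identity by identity. Without this criterion you have no handle on the (infinite) subvariety lattices, and the normal-form-plus-inspection plan stalls.

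Second, for necessity you gesture at $M_3$/$N_5$ but do not say how the forbidden configurations are produced. In the paper this is done via the Rees-quotient constructions $M_\alpha(\mathtt W)$ of Olga Sapir for the specific congruences $\gamma,\lambda,\gamma',\lambda',\gamma'',\beta,\nu,\eta,\mu,\alpha_1$. Section~\ref{Sec: non-distributive} shows that a long list of joins $\mathbf M_\alpha(\mathtt W_1)\vee\mathbf M_\beta(\mathtt W_2)$ are non-modular or non-distributive; Section~\ref{Sec: separating identities} proves the complementary ``separating identity'' lemmas (if $M_\alpha(\mathtt W)\notin\mathbf V$ then $\mathbf V$ satisfies such-and-such identity). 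The necessity proof is a dichotomy chain driven by these two halves. Your plan to ``induct on $(k,\ell,m)$, reducing a general permutation to adjacent transpositions'' may be useful at isolated steps, but the crucial non-distributivity results for $\Phi_1,\Phi_2$ (Propositions~\ref{P: non-dis L(M([hat{a}_{0,k}[pi]]^lambda))}--\ref{P: non-dis  L(M_{zeta}(hat{a}_{n,m}[pi]))} and~\ref{P: non-mod M([c_{0,0,n}[rho]]^lambda)}--\ref{P: non-mod M([c_n^{(i)}[rho]]^{gamma'})}) are obtained by building three subvarieties $\mathbf X,\mathbf Y,\mathbf Z$ from carefully tailored word families $\mathbf v_0,\mathbf v_1,\mathbf v_2,\mathbf v_3$ and a delicate analysis of direct deducibility; this is not a permutation-factorization argument, and I see no way to recover those propositions from the sketch as written.

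So the top-level strategy you anticipate is correct for necessity but the mechanism (the $M_\alpha(\mathtt W)$ machinery plus separating-identity lemmas) is absent, and for sufficiency the proposed route is genuinely different and, as stated, would fail; the missing idea is the meet-prime identity criterion of Lemma~\ref{L: smth imply distributivity}.
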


A variety $\mathbf V$ is \textit{self-dual} if $\mathbf V=\mathbf V^\delta$.
Notice that only the varieties $\mathbf D_{11}$ and $\mathbf P_n$ are self-dual among the varieties listed in Theorem~\ref{T: A_com}.
Thus, Theorem~\ref{T: A_com} shows that every distributive subvariety of $\mathbf A_\mathsf{com}$ is contained in either a variety in 5 countably infinite series or 27 ``sporadic'' varieties. 

Analysing the proof of Theorem~\ref{T: A_com} in Section~\ref{Sec: proof}, one can note that if $\mathbf X$ is one of the varieties listed in Theorem~\ref{T: A_com}, then each subvariety of $\mathbf X$ may be given within $\mathbf X$ by a finite set of identities.
Consequently, the set of all distributive subvarieties of $\mathbf A_\mathsf{com}$ is countably infinite.

\section{Preliminaries}
\label{Sec: preliminaries}

\subsection{Deduction}

A variety $\mathbf V$ \textit{satisfies} an identity $\mathbf u \approx \mathbf v$, if for any monoid $M\in \mathbf V$ and any substitution $\phi\colon \mathfrak X \to M$, the equality $\phi(\mathbf u)=\phi(\mathbf v)$ holds in $M$.
An identity $\mathbf u \approx \mathbf v$ is \textit{directly deducible} from an identity $\mathbf s \approx \mathbf t$ if there exist some words $\mathbf a,\mathbf b \in \mathfrak X^\ast$ and substitution $\phi\colon \mathfrak X \to \mathfrak X^\ast$ such that $\{ \mathbf u, \mathbf v \} = \{ \mathbf a\phi(\mathbf s)\mathbf b,\mathbf a\phi(\mathbf t)\mathbf b \}$.
A non-trivial identity $\mathbf u \approx \mathbf v$ is \textit{deducible} from a set $\Sigma$ of identities if there exists some finite sequence $\mathbf u = \mathbf w_0, \dots, \mathbf w_m = \mathbf v$ of words such that each identity $\mathbf w_i \approx \mathbf w_{i+1}$ is directly deducible from some identity in $\Sigma$.

\begin{proposition}[Birkhoff's Completeness Theorem for Equational Logic; see {\cite[Theorem~1.4.6]{Almeida-94}}]
\label{P: deduction}
Let $\mathbf V$ be the variety defined by some set $\Sigma$ of identities.
Then $\mathbf V$ satisfies an identity $\mathbf u \approx \mathbf v$ if and only if $\mathbf u \approx \mathbf v$ is deducible from $\Sigma$.\qed
\end{proposition}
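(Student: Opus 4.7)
The plan is to prove both implications by the standard argument underlying equational logic, with the relatively free monoid of $\mathbf V$ on $\mathfrak X$ as the central tool. The soundness direction --- that deducibility implies satisfaction --- is a routine induction on the length of a deduction. It suffices to show that if $\mathbf u \approx \mathbf v$ is directly deducible from some identity $\mathbf s \approx \mathbf t$ that holds in $\mathbf V$, then $\mathbf u \approx \mathbf v$ also holds in $\mathbf V$. Writing $\mathbf u = \mathbf a\phi(\mathbf s)\mathbf b$ and $\mathbf v = \mathbf a\phi(\mathbf t)\mathbf b$, and given any $M \in \mathbf V$ together with a substitution $\psi\colon \mathfrak X \to M$, the composition $\psi\circ\phi$ is itself a substitution into $M$, so $\psi(\phi(\mathbf s))=\psi(\phi(\mathbf t))$ by hypothesis; multiplying on both sides by $\psi(\mathbf a)$ and $\psi(\mathbf b)$ gives $\psi(\mathbf u)=\psi(\mathbf v)$.

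For the non-trivial direction I would construct the relatively free monoid of $\mathbf V$ explicitly. Define a binary relation $\sim$ on $\mathfrak X^\ast$ by $\mathbf w\sim\mathbf w'$ iff $\mathbf w=\mathbf w'$ or $\mathbf w\approx\mathbf w'$ is deducible from $\Sigma$. Reflexivity and symmetry are immediate; transitivity follows by concatenating deduction sequences; and compatibility with the monoid operation holds because the definition of direct deducibility incorporates arbitrary left/right contexts $\mathbf a,\mathbf b$. Hence $\sim$ is a monoid congruence, and the quotient $F:=\mathfrak X^\ast/{\sim}$ is a monoid, with quotient map $\pi\colon \mathfrak X^\ast\to F$. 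To see that $F\in\mathbf V$, take any $\mathbf s\approx\mathbf t$ in $\Sigma$ and any substitution $\psi\colon \mathfrak X\to F$, and lift $\psi$ to $\widetilde\psi\colon \mathfrak X\to\mathfrak X^\ast$ by choosing word representatives. Then $\widetilde\psi(\mathbf s)\approx\widetilde\psi(\mathbf t)$ is directly deducible from $\mathbf s\approx\mathbf t$ (taking $\mathbf a=\mathbf b=1$ and substitution $\widetilde\psi$), so $\pi(\widetilde\psi(\mathbf s))=\pi(\widetilde\psi(\mathbf t))$, which rewrites as $\psi(\mathbf s)=\psi(\mathbf t)$.

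Now if $\mathbf V$ satisfies $\mathbf u \approx \mathbf v$, apply satisfaction to the monoid $F\in\mathbf V$ with the canonical substitution $x\mapsto\pi(x)$: this yields $\pi(\mathbf u)=\pi(\mathbf v)$, i.e.\ $\mathbf u\sim\mathbf v$, so either $\mathbf u=\mathbf v$ or $\mathbf u\approx\mathbf v$ is deducible from $\Sigma$. The main technical point --- and the one to verify carefully --- is that $\sim$ must be a \emph{fully invariant} congruence, so that $F$ genuinely belongs to $\mathbf V$; this is precisely why the definition of direct deducibility is designed to absorb both the monoid operation (via the contexts $\mathbf a,\mathbf b$) and the substitution of letters (via $\phi$). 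Once this is set up correctly, the remainder is a clean tracking of representatives through the quotient, with no further ingenuity required.
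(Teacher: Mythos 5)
Your proof is correct, and it is the standard argument for Birkhoff's completeness theorem (construct the relatively free algebra $F=\mathfrak X^\ast/{\sim}$, verify it lies in $\mathbf V$, and evaluate $\mathbf u\approx\mathbf v$ there under the canonical substitution). The paper does not prove this proposition at all --- the $\qed$ immediately after the statement signals that it is invoked by citation to Almeida's book --- so there is no in-paper argument to compare against; your supplied proof fills that gap correctly. Two small remarks worth making explicit. First, the paper's definition of \emph{deducible} is given only for non-trivial identities; you sidestep this by phrasing the conclusion as ``either $\mathbf u=\mathbf v$ or $\mathbf u\approx\mathbf v$ is deducible,'' which is exactly right. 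Second, your closing comment that the ``main technical point'' is full invariance of $\sim$ is a useful conceptual framing but is somewhat redundant with what you already proved: the lifting argument (take $\widetilde\psi$, observe $\widetilde\psi(\mathbf s)\approx\widetilde\psi(\mathbf t)$ is directly deducible from $\mathbf s\approx\mathbf t$ with empty contexts) establishes $F\models\Sigma$ directly, without needing to isolate full invariance as a separate lemma. That said, full invariance is indeed the property that makes the construction work, so flagging it does no harm.
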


\subsection{Rees quotient monoids}
\label{Subsec: Rees quotient monoids}

The following construction was introduced by Perkins~\cite{Perkins-69} to build the first example of a finite semigroup generating non-finitely based variety.
We use $W^\le$ to denote the closure of a set of words $W\subset\mathfrak X^\ast$ under taking factors. 
For any set $W$ of words, let $M(W)$ denote the Rees quotient monoid of $\mathfrak X^\ast$ over the ideal $\mathfrak X^\ast \setminus W^{\le}$ consisting of all words that are not factors of any word in $W$.
A word $\mathbf w$ is an \textit{isoterm} for a variety $\mathbf V$ if $\mathbf V$ violates any non-trivial identity of the form $\mathbf w \approx \mathbf w^\prime$.

\begin{lemma}[\mdseries{\!\cite[Lemma~3.3]{Jackson-05}}]
\label{L: M(W) in V}
Let $\mathbf V$ be a monoid variety and $W$ a set of words. 
Then $M(W)$ lies in $\mathbf V$ if and only if each word in $W$ is an isoterm for $\mathbf V$.\qed
\end{lemma}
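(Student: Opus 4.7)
The plan is to prove both implications directly, via Proposition~\ref{P: deduction}. The key observation throughout is that a nonzero element of $M(W)$ is literally a word in $\mathfrak X^\ast$, so evaluation in $M(W)$ agrees with evaluation in the free monoid whenever the resulting product does not hit $0$. For the forward direction, assume $M(W)\in\mathbf V$ and fix $\mathbf w\in W$. Any identity $\mathbf w\approx\mathbf w'$ of $\mathbf V$ holds in $M(W)$; the substitution $x\mapsto x$ (from $\mathfrak X$ into $M(W)$) evaluates the left-hand side to $\mathbf w\in W^{\le}\setminus\{0\}$, so the right-hand side evaluates to the same nonzero element, forcing $\mathbf w'\in W^{\le}$ and $\mathbf w=\mathbf w'$ as words in $\mathfrak X^\ast$. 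This shows $\mathbf w$ is an isoterm for $\mathbf V$.

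For the converse, assume every word in $W$ is an isoterm for $\mathbf V$; by Proposition~\ref{P: deduction} it suffices to verify that $M(W)$ satisfies every $\mathbf V$-identity $\mathbf u\approx\mathbf v$. Fix such an identity and a substitution $\phi\colon\mathfrak X\to M(W)$. The case $\phi(\mathbf u)=0=\phi(\mathbf v)$ is trivial, so by symmetry I may assume $\phi(\mathbf u)\neq 0$; then $\phi(\mathbf u)\in W^{\le}$ and $\mathbf w=\mathbf a\,\phi(\mathbf u)\,\mathbf b$ in $\mathfrak X^\ast$ for some $\mathbf w\in W$ and $\mathbf a,\mathbf b\in\mathfrak X^\ast$. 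Fix once and for all a word $\mathbf z\notin W^{\le}$ (if $W^{\le}=\mathfrak X^\ast$ then $M(W)$ is the free monoid, forcing $\mathbf V$ to be the variety of all monoids and rendering the lemma trivial). Lift $\phi$ to $\psi\colon\mathfrak X\to\mathfrak X^\ast$ by $\psi(x):=\phi(x)$ on letters with $\phi(x)\neq 0$ and $\psi(x):=\mathbf z$ otherwise. Because no letter of $\mathbf u$ has zero image, $\psi(\mathbf u)=\phi(\mathbf u)$ as words, so $\mathbf V$ satisfies $\mathbf w\approx\mathbf a\,\psi(\mathbf v)\,\mathbf b$; the isoterm property of $\mathbf w$ then gives $\mathbf a\,\psi(\mathbf v)\,\mathbf b=\mathbf w$, hence $\psi(\mathbf v)=\phi(\mathbf u)\in W^{\le}$. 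In particular no letter of $\mathbf v$ has $\phi$-image $0$, for otherwise $\mathbf z$ would appear as a factor of $\psi(\mathbf v)\in W^{\le}$, contradicting the choice of $\mathbf z$. Consequently $\phi(\mathbf v)=\psi(\mathbf v)=\phi(\mathbf u)$ in $M(W)$, as required.

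The main obstacle is precisely this lifting construction. The lift $\psi$ must simultaneously agree with $\phi$ wherever $\phi$ is nonzero (to make $\mathbf a\,\psi(\mathbf u)\,\mathbf b=\mathbf w$ and so invoke the isoterm hypothesis on $\mathbf w$) and mark zero-valued letters with a witness $\mathbf z\notin W^{\le}$ (so that the isoterm conclusion $\psi(\mathbf v)\in W^{\le}$ can detect and exclude zero-valued letters from $\mathbf v$, overcoming the potential mismatch between variable sets of $\mathbf u$ and $\mathbf v$). Once $\psi$ is set up with both properties, the identity transfers from $\mathbf V$ to $M(W)$ by a one-line isoterm argument.
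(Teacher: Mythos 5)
Your proof is correct and is essentially the standard argument of Jackson~\cite[Lemma~3.3]{Jackson-05}; the paper cites this result without reproducing the proof, so there is nothing to compare against. Both directions are handled properly: the forward direction via the identity substitution $x\mapsto x$ (which is valid since every letter of a word $\mathbf w\in W$ lies in $W^{\le}$, hence is nonzero in $M(W)$), and the converse via the lifting $\psi$ that sends zero-valued letters to a fixed witness $\mathbf z\notin W^{\le}$, so that the isoterm hypothesis on $\mathbf w=\mathbf a\,\phi(\mathbf u)\,\mathbf b$ forces $\psi(\mathbf v)=\phi(\mathbf u)$, which both pins down $\phi(\mathbf v)$ and rules out zero-valued letters in $\mathbf v$; the edge case $W^{\le}=\mathfrak X^\ast$ is correctly dispatched (there $\mathbf V$ must be the variety of all monoids, since any nontrivial identity $\mathbf u\approx\mathbf v$ would yield a nontrivial identity $\mathbf p\mathbf u\mathbf q\approx\mathbf p\mathbf v\mathbf q$ for any $\mathbf w=\mathbf p\mathbf u\mathbf q\in W$). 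One small remark: the appeal to Proposition~\ref{P: deduction} is not actually necessary — that $M(W)\in\mathbf V$ follows from $M(W)$ satisfying every $\mathbf V$-identity is immediate from $\mathbf V$ being defined by a set of identities — but this does not affect correctness.
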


\subsection{Decomposition of words}

The \textit{content} of a word $\mathbf w$, i.e., the set of all letters occurring in $\mathbf w$, is denoted by $\con(\mathbf w)$. 
A letter is called \textit{simple} [\textit{multiple}] \textit{in a word} $\mathbf w$ if it occurs in $\mathbf w$ once [at least twice]. 
The set of all simple [multiple] letters in a word $\mathbf w$ is denoted by $\simple(\mathbf w)$ [respectively $\mul(\mathbf w)$]. 
If $\mathbf w$ is a word and $X\subseteq\con(\mathbf w)$, then we denote by $\mathbf w(X)$ the word obtained from $\mathbf w$ by deleting all letters except letters from $X$. 
If $X=\{x_1,\dots,x_k\}$, then we write $\mathbf w(x_1,\dots,x_k)$ rather than $\mathbf w(\{x_1,\dots,x_k\})$. 

Let $\mathbf w$ be a word with $\simple(\mathbf w)=\{t_1,\dots,t_m\}$. 
We will assume without loss of generality that $\mathbf w(t_1,\dots,t_m)=t_1\cdots t_m$. 
Then $\mathbf w=\mathbf w_0t_1\mathbf w_1\cdots t_m\mathbf w_m$ for some words $\mathbf w_0,\dots,\mathbf w_m$. 
The words $\mathbf w_0,\dots, \mathbf w_m$ are called \textit{blocks} of the word $\mathbf w$. 
The representation of the word $\mathbf w$ as a product of alternating simple letters in $\mathbf w$ and blocks is called \textit{decomposition} of $\mathbf w$.

\begin{lemma}[\mdseries{\!\cite[Lemma~2.17]{Gusev-Vernikov-21}}]
\label{L: identities of M(xy)}
Let $\mathbf u\approx\mathbf v$ be an identity of $M(xy)$. 
If $\mathbf u_0t_1\mathbf u_1\cdots t_m\mathbf u_m$ is the decomposition of the word $\mathbf u$, then $\con(\mathbf u)=\con(\mathbf v)$ and the decomposition of the word $\mathbf v$ has the form $\mathbf v_0t_1\mathbf v_1\cdots t_m\mathbf v_m$.\qed
\end{lemma}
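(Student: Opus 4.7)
The plan is to analyze the concrete structure of $M(xy)$ and then prove each of the three conclusions---equality of contents, equality of simple-letter sets, and identical order of simple letters---by a direct substitution argument. Recall that $M(xy)=\{1,x,y,xy,0\}$, where $0$ absorbs everything and is the image of any product whose result is not a factor of $xy$: in particular $x\cdot x=y\cdot y=y\cdot x=0$, and the only non-trivial non-zero product among non-identity elements is $x\cdot y=xy$. Since $\mathbf u\approx\mathbf v$ is an identity of $M(xy)$, every substitution $\phi\colon\mathfrak X\to M(xy)$ must send both sides to the same element, and this is what I would exploit.

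First, I would establish $\con(\mathbf u)=\con(\mathbf v)$ by contradiction. If some letter $z$ lay in $\con(\mathbf u)\setminus\con(\mathbf v)$, then the substitution defined by $\phi(z)=x$ and $\phi(a)=1$ for every other letter $a$ would yield $\phi(\mathbf u)\in\{x,0\}$ (according to whether $z$ occurs once or several times in $\mathbf u$), whereas $\phi(\mathbf v)=1$; symmetry then closes the argument. Next, I would prove $\simple(\mathbf u)=\simple(\mathbf v)$ along the same lines: if some $t\in\simple(\mathbf u)$ were multiple in $\mathbf v$, substituting $t\mapsto x$ and all other letters to $1$ would give $\phi(\mathbf u)=x$ but $\phi(\mathbf v)=x^k=0$ for some $k\ge 2$.

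Finally, to fix the order, let $t_1,\dots,t_m$ be the simple letters of $\mathbf u$ in their order of occurrence. If the corresponding order in $\mathbf v$ were different, some pair $t_i,t_j$ with $i<j$ would appear with $t_j$ preceding $t_i$ in $\mathbf v$, and the substitution $t_i\mapsto x$, $t_j\mapsto y$, all remaining letters $\mapsto 1$ would yield $\phi(\mathbf u)=xy\ne 0$ but $\phi(\mathbf v)=yx=0$, a contradiction. Hence the simple letters of $\mathbf v$ occur in the same order $t_1,\dots,t_m$, and so the decomposition of $\mathbf v$ has the claimed form $\mathbf v_0 t_1\mathbf v_1\cdots t_m\mathbf v_m$.

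There is no deep obstacle; the argument is essentially combinatorial bookkeeping, made transparent by the fact that every non-factor of $xy$ collapses to the single absorbing element $0$. The only small care needed is in the first step, where one must note that both possibilities for the multiplicity of $z$ (a single occurrence producing the image $x$, several occurrences producing $0$) already contradict the identity $\phi(\mathbf v)=1$.
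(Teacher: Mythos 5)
Your proof is correct. Since the paper cites this lemma from an external reference (Gusev--Vernikov~2021) without reproducing the argument, there is no internal proof to compare against; but your direct substitution argument into the five-element monoid $M(xy)=\{1,x,y,xy,0\}$ is exactly the standard way one establishes facts of this type, and each of the three steps is sound. A minor remark: in the second step you should note (as you implicitly do) that the symmetric inclusion $\simple(\mathbf v)\subseteq\simple(\mathbf u)$ also uses the first step, since to conclude that a letter simple in $\mathbf v$ but not simple in $\mathbf u$ must be \emph{multiple} in $\mathbf u$ (rather than absent) one needs $\con(\mathbf u)=\con(\mathbf v)$ already in hand. With that observation made explicit, the argument is complete.
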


For a word $\mathbf w$ and a letter $x$, let $\occ_x(\mathbf w)$ denote the number of occurrences of $x$ in $\mathbf w$.
A non-empty word $\mathbf w$ is called \emph{linear} if $\occ_x(\mathbf w)\le 1$ for each letter $x$. 
Let $\mathbf u$ and $\mathbf v$ be words and $\mathbf u_0t_1\mathbf u_1\cdots t_m\mathbf u_m$ and $\mathbf v_0t_1\mathbf v_1\cdots t_m\mathbf v_m$ decompositions of $\mathbf u$ and $\mathbf v$, respectively. 
A letter $x$ is called \emph{linear-balanced in the identity} $\mathbf u\approx\mathbf v$ if $x$ is multiple in $\mathbf u$ and $\occ_x(\mathbf u_i)=\occ_x(\mathbf v_i)\le 1$ for all $i=0,1,\dots,m$; the identity $\mathbf u\approx\mathbf v$ is called \emph{linear-balanced} if any letter $x\in\mul(\mathbf u)\cup\mul(\mathbf v)$ is linear-balanced in this identity. 

The next statement directly follows from Lemma~\ref{L: M(W) in V} and Lemma~3.1 in~\cite{Gusev-Vernikov-21}.

\begin{lemma}
\label{L: identities of M(xt_1x...t_kx)}
Let $\mathbf u\approx\mathbf v$ be an identity of $M(xt_1x\cdots t_kx)$. 
If all blocks of $\mathbf u$ are linear words and $\occ_x(\mathbf u)\le k+1$ for every letter $x$, then the identity $\mathbf u\approx\mathbf v$ is linear-balanced.\qed
\end{lemma}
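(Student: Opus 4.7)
The statement is designed to follow directly from two prior results. First, by Lemma~\ref{L: M(W) in V} applied to $W=\{xt_1x\cdots t_kx\}$ (trivially $M(W)\in\var M(W)$), the word $xt_1x\cdots t_kx$ is an isoterm for $M(xt_1x\cdots t_kx)$. Since taking factors preserves isotermness (a one-line unwinding of the definition), $xy$ is also an isoterm; applying Lemma~\ref{L: M(W) in V} in the reverse direction yields $M(xy)\in\var M(xt_1x\cdots t_kx)$. Hence the given identity $\mathbf u\approx\mathbf v$ is an identity of $M(xy)$, and Lemma~\ref{L: identities of M(xy)} supplies $\con(\mathbf u)=\con(\mathbf v)$ together with a decomposition $\mathbf v=\mathbf v_0t_1\mathbf v_1\cdots t_m\mathbf v_m$ that matches the simple letters of $\mathbf u$ in both content and order; in particular $\mul(\mathbf u)=\mul(\mathbf v)$.

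It remains to show, for every $x\in\mul(\mathbf u)$, that $\occ_x(\mathbf u_i)=\occ_x(\mathbf v_i)\le 1$ for every block index $i$, and this is precisely the content of Lemma~3.1 of \cite{Gusev-Vernikov-21} once the isotermness of $xt_1x\cdots t_kx$ is in hand. The strategy of that lemma, which I would reproduce if the reference were unavailable, is to feed $\mathbf u\approx\mathbf v$ into a family of substitutions $\phi$ that fix $x$, collapse all other multiple letters to $1$, and preserve a selected subset of the simple letters $t_i$. Two regimes appear. In regime (a) the selection is made so that $\phi(\mathbf u)$ is of the form $xs_1xs_2x\cdots s_{j-1}x$ with $j=\occ_x(\mathbf u)\le k+1$, i.e.\ a factor of $xt_1x\cdots t_kx$ and hence an isoterm; this forces $\phi(\mathbf v)=\phi(\mathbf u)$ and yields equalities of the form $\sum_\ell \occ_x(\mathbf v_\ell)=1$ summed over intervals of block indices determined by the selection. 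In regime (b) the selection deliberately groups two $x$-occurrences of $\mathbf u$ together so that $\phi(\mathbf u)$ contains the forbidden subword $x^2$ and vanishes in $M(xt_1x\cdots t_kx)$, forcing $\phi(\mathbf v)=0$ as well and yielding additional vanishing constraints. Varying the selection and telescoping the resulting equalities and vanishings pins down $\occ_x(\mathbf v_\ell)=\occ_x(\mathbf u_\ell)$ block by block.

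The delicate point — the one Lemma~3.1 must address carefully — is the extremal case $\occ_x(\mathbf u)=k+1$, where the $k$ internal slots of the isoterm $xt_1x\cdots t_kx$ are already consumed by the $k$ interior separators, leaving no room to insert a fresh separator before the first or after the last occurrence of $x$ in $\mathbf u$. In that case the telescoping in regime (a) pins down only the interior blocks $\mathbf v_{i_2},\dots,\mathbf v_{i_{j-1}}$, and substitutions of regime (b) become indispensable: the vanishing of $x^2$ in $M(xt_1x\cdots t_kx)$ rules out any migration of $x$-occurrences into blocks outside $\{\mathbf u_{i_1},\dots,\mathbf u_{i_{k+1}}\}$, and combined with the total-count equality $\occ_x(\mathbf v)=\occ_x(\mathbf u)$ this forces the remaining boundary blocks to agree as well. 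With these block-by-block equalities in hand for every $x\in\mul(\mathbf u)=\mul(\mathbf v)$, the identity $\mathbf u\approx\mathbf v$ is linear-balanced, as required.
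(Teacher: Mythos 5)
Your proposal is correct and follows essentially the same route as the paper, which states without elaboration that the lemma ``directly follows from Lemma~\ref{L: M(W) in V} and Lemma~3.1 in~\cite{Gusev-Vernikov-21}.'' The extra material you supply (routing through Lemma~\ref{L: identities of M(xy)} and sketching the substitution strategy of the cited Lemma~3.1, including the edge case $\occ_x(\mathbf u)=k+1$) is a reasonable reconstruction of what the reference must do, but it is not something the paper itself spells out, so the two proofs coincide in substance.
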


\section{Construction of Olga Sapir}
\label{Sec: construction}

In~\cite{Sapir-18,Sapir-21}, Olga Sapir introduced a generalization of $M(W)$ construction.
It played a critical role in the classification of limit varieties of $J$-trivial monoids~\cite{Gusev-Sapir-22} and finding new examples of limit varieties generated by finite non-$J$-trivial aperiodic monoids~\cite{Sapir-23}.
The construction of Olga Sapir is also a key tool in the present article.

\subsection{Definition}

Let $\alpha$ be a congruence on the free monoid $\mathfrak X^\ast$.  
The elements of the quotient monoid $\mathfrak X^\ast/\alpha$ are called $\alpha$-\textit{classes} and written using lowercase letters in the typewriter style. 
The factor relation on $\mathfrak X^\ast$ can be naturally extended to $\alpha$-classes as follows: given two $\alpha$-classes $\mathtt u, \mathtt v \in \mathfrak X^\ast/\alpha$  we write $\mathtt v \le_\alpha\mathtt u$ if $\mathtt u = \mathtt p\mathtt v\mathtt s$ for some $\mathtt p,\mathtt s \in \mathfrak X^\ast/\alpha$.
Given a set $\mathtt W$ of $\alpha$-classes we define $\mathtt W^{\le_\alpha}$ as closure of $\mathtt W$ in quasi-order $\le_\alpha$.
If $\mathtt W$ is a set of $\alpha$-classes, then $M_\alpha(\mathtt W)$ denotes the Rees quotient of $\mathfrak X^\ast /\alpha$ over the ideal $(\mathfrak X^\ast/ \alpha) \setminus  \mathtt W^{\le_\alpha}$.
Evidently, if $\alpha$ is the trivial congruence on  $\mathfrak X^\ast$, then $M_\alpha(\mathtt W)$ is nothing but $M(\mathtt W)$.

The following statement shows how to calculate the relation $\le_\alpha$.

\begin{lemma}[\mdseries{\!\cite[Lemma~2.1]{Sapir-21}}]
\label{L: le_alpha}
For $\mathtt u, \mathtt v \in \mathfrak X^\ast/\alpha$ the following are equivalent:
\begin{itemize}
\item[\textup{(i)}] $\mathtt v \le_\alpha\mathtt u$;
\item[\textup{(ii)}] every word $\mathbf v \in\mathtt v$ is a factor of a word $\mathbf u \in\mathtt u$;
\item[\textup{(iii)}] some word $\mathbf v \in\mathtt v$ is a factor of a word $\mathbf u \in\mathtt u$.
\end{itemize}
\end{lemma}

A set $W$ of words is \textit{stable} with respect to a monoid variety $\mathbf V$ if for each $\mathbf w\in W$, we have $\mathbf v\in W$ whenever $\mathbf V$ satisfies $\mathbf u \approx \mathbf v$.
Notice that a word $\mathbf w$ is an isoterm for $\mathbf V$ if and only if the singleton set $\{\mathbf w\}$ is stable with respect to $\mathbf V$.
The following lemma generalizes Lemma~\ref{L: M(W) in V}.

\begin{lemma}[\mdseries{\!\cite[Proposition~2.3]{Sapir-21}}]
\label{L: M_alpha(W) in V} 
Let $\alpha$ be a congruence on $\mathfrak X^\ast$ such that the empty
word $1$ forms a singleton $\alpha$-class, $\mathbf V$ a monoid variety and $\mathtt W$ a set of $\alpha$-classes.
Then $M_\alpha(\mathtt W)$ lies in $\mathbf V$ if and only if each $\alpha$-class in $\mathtt W^{\le_\alpha}$ is stable with respect to $\mathbf V$.\qed
\end{lemma}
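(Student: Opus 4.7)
The plan is to mimic the familiar proof of Lemma~\ref{L: M(W) in V} at the level of $\alpha$-classes, exploiting the fact that the complement $I := (\mathfrak X^\ast/\alpha) \setminus \mathtt W^{\le_\alpha}$ is a two-sided ideal of $\mathfrak X^\ast/\alpha$, precisely because $\mathtt W^{\le_\alpha}$ is closed under the factor quasi-order. Thus $M_\alpha(\mathtt W) = (\mathfrak X^\ast/\alpha)/I$ as a Rees quotient, and its non-zero elements correspond bijectively to the $\alpha$-classes in $\mathtt W^{\le_\alpha}$.

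For the ``only if'' direction, I would suppose $M_\alpha(\mathtt W) \in \mathbf V$, pick $\mathtt u \in \mathtt W^{\le_\alpha}$ with a representative $\mathbf u \in \mathtt u$, and assume $\mathbf V$ satisfies $\mathbf u \approx \mathbf v$. Apply the substitution $\phi : \mathfrak X \to M_\alpha(\mathtt W)$ sending $x \mapsto [x]_\alpha$. Factor-closure of $\mathtt W^{\le_\alpha}$ gives $[x]_\alpha \in \mathtt W^{\le_\alpha}$ for every letter $x$ occurring in $\mathbf u$, so $\phi(\mathbf u) = \mathtt u \neq 0$ in $M_\alpha(\mathtt W)$. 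The identity then forces $\phi(\mathbf v) = \mathtt u$, which immediately gives $[\mathbf v]_\alpha = \mathtt u$, establishing the stability of $\mathtt u$.

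For the ``if'' direction, assume stability of every class in $\mathtt W^{\le_\alpha}$, fix an identity $\mathbf u \approx \mathbf v$ of $\mathbf V$ and an arbitrary substitution $\phi : \mathfrak X \to M_\alpha(\mathtt W)$, and verify $\phi(\mathbf u) = \phi(\mathbf v)$. The crucial step is to lift $\phi$ to a substitution $\psi : \mathfrak X \to \mathfrak X^\ast$ with the property that the image of $[\psi(\mathbf w)]_\alpha$ in $M_\alpha(\mathtt W)$ equals $\phi(\mathbf w)$ for every word $\mathbf w$: pick $\psi(x) \in \phi(x)$ when $\phi(x) \neq 0$ (the hypothesis that $\{1\}$ is a singleton $\alpha$-class removes any ambiguity when $\phi(x) = [1]_\alpha$), and, when $\phi(x) = 0$, fix once and for all a word $\mathbf b$ whose $\alpha$-class lies in $I$; such $\mathbf b$ exists unless $I = \emptyset$, in which case no letter can be sent to $0$ and there is nothing to check. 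Factor-closure of $I$ then forces $[\psi(\mathbf w)]_\alpha \in I$ as soon as $\mathbf w$ contains a letter $x$ with $\phi(x) = 0$, so the matching $\phi(\mathbf w) = [\psi(\mathbf w)]_\alpha$ (read in $M_\alpha(\mathtt W)$) holds automatically. Since $\mathbf V$ also satisfies $\psi(\mathbf u) \approx \psi(\mathbf v)$, stability of $[\psi(\mathbf u)]_\alpha$, whenever this class lies in $\mathtt W^{\le_\alpha}$, yields $[\psi(\mathbf v)]_\alpha = [\psi(\mathbf u)]_\alpha$; the symmetric argument applied to $[\psi(\mathbf v)]_\alpha$ shows that $[\psi(\mathbf u)]_\alpha \in I$ forces $[\psi(\mathbf v)]_\alpha \in I$ too. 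In either case $\phi(\mathbf u) = \phi(\mathbf v)$.

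The only real subtlety, and the main obstacle I expect, is the uniform construction of the lift $\psi$ that handles the ``non-zero'' and the ``zero'' regimes of $\phi$ simultaneously; once $\psi$ is in place, the remainder of the argument is routine bookkeeping through Birkhoff's theorem (Proposition~\ref{P: deduction}) and the definition of stability.
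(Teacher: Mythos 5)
Your argument is correct and is almost certainly the proof in the cited source; the paper at hand proves nothing here, simply attributing the result to~\cite{Sapir-21}. The structure you use is the expected $\alpha$-class analogue of Jackson's proof of Lemma~\ref{L: M(W) in V}: observe that $I := (\mathfrak X^\ast/\alpha)\setminus\mathtt W^{\le_\alpha}$ is a two-sided ideal because $\mathtt W^{\le_\alpha}$ is downward-closed under $\le_\alpha$; substitute $x\mapsto[x]^\alpha$ for the forward implication, using downward-closure to see $\phi(\mathbf u)=\mathtt u\ne 0$ and then reading off $[\mathbf v]^\alpha=\mathtt u$ from $\phi(\mathbf u)=\phi(\mathbf v)$; and for the converse lift an arbitrary $\phi\colon\mathfrak X\to M_\alpha(\mathtt W)$ to a word-valued $\psi$, using upward-closure of $I$ to keep the two sides matched through the Rees projection, and then invoking stability of $[\psi(\mathbf u)]^\alpha$ (or, symmetrically, of $[\psi(\mathbf v)]^\alpha$). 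All the bookkeeping is sound, including the observation that agreeing on generators suffices because both maps are monoid homomorphisms from $\mathfrak X^\ast$.

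One small quibble with your framing, not your reasoning: the singleton-$[1]^\alpha$ hypothesis is not what ``removes ambiguity'' in the lift. Any representative $\psi(x)\in\phi(x)$ automatically satisfies $[\psi(x)]^\alpha=\phi(x)$, whether $\phi(x)$ is the identity class or not, and that equality on generators is all the matching uses. Your proof in fact does not invoke the hypothesis at any essential step; this is harmless (it just means the argument establishes a marginally more general claim than the one cited), but it is worth being precise about where a stated hypothesis actually does or does not enter a proof.
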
 

Given any set $\mathtt W$ of $\alpha$-classes, let $\mathbf M_\alpha(\mathtt W)$ denote the variety generated by the monoid $M_\alpha(\mathtt W)$.
For brevity, if $\mathtt w_1,\dots,\mathtt w_k\in\mathfrak X^\ast/ \alpha$, then we write $M_\alpha(\mathtt w_1,\dots,\mathtt w_k)$ [respectively, $\mathbf M_\alpha(\mathtt w_1,\dots,\mathtt w_k)$] rather than $M_\alpha(\{\mathtt w_1,\dots,\mathtt w_k\})$ [respectively, $\mathbf M_\alpha(\{\mathtt w_1,\dots,\mathtt w_k\})$].

\begin{lemma}[\mdseries{\!\cite[Corollary~2.5]{Sapir-21}}]
\label{L: M_alpha(W_1) vee M_alpha(W_2)} 
Let $\alpha$ be a congruence on $\mathfrak X^\ast$ such that the empty
word $1$ forms a singleton $\alpha$-class, while $\mathtt W_1$ and $\mathtt W_2$ two sets of $\alpha$-classes.
Then $\mathbf M_\alpha(\mathtt W_1)\vee \mathbf M_\alpha(\mathtt W_2)=\mathbf M_\alpha(\mathtt W_1\cup\mathtt W_2)$.\qed
\end{lemma}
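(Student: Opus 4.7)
The plan is to apply Lemma~\ref{L: M_alpha(W) in V} in both directions, which reduces the equality of varieties to a statement about stability of $\alpha$-classes. The key preliminary observation is the set-theoretic identity $(\mathtt W_1\cup\mathtt W_2)^{\le_\alpha}=\mathtt W_1^{\le_\alpha}\cup\mathtt W_2^{\le_\alpha}$, which is immediate from the definition of closure under the quasi-order $\le_\alpha$: a class is a factor of something in $\mathtt W_1\cup\mathtt W_2$ iff it is a factor of something in $\mathtt W_1$ or of something in $\mathtt W_2$.

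For the inclusion $\mathbf M_\alpha(\mathtt W_1)\vee\mathbf M_\alpha(\mathtt W_2)\subseteq\mathbf M_\alpha(\mathtt W_1\cup\mathtt W_2)$, I would argue as follows. Since $M_\alpha(\mathtt W_1\cup\mathtt W_2)$ generates the right-hand variety, Lemma~\ref{L: M_alpha(W) in V} tells us that every $\alpha$-class in $(\mathtt W_1\cup\mathtt W_2)^{\le_\alpha}$ is stable with respect to it. Using the set-theoretic identity above, each $\alpha$-class in $\mathtt W_i^{\le_\alpha}$ is therefore stable with respect to $\mathbf M_\alpha(\mathtt W_1\cup\mathtt W_2)$ for $i=1,2$. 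Applying Lemma~\ref{L: M_alpha(W) in V} in the opposite direction yields $M_\alpha(\mathtt W_i)\in\mathbf M_\alpha(\mathtt W_1\cup\mathtt W_2)$, so the join they generate is contained in $\mathbf M_\alpha(\mathtt W_1\cup\mathtt W_2)$.

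For the reverse inclusion, it suffices by Lemma~\ref{L: M_alpha(W) in V} to show that each $\alpha$-class $\mathtt w\in(\mathtt W_1\cup\mathtt W_2)^{\le_\alpha}$ is stable with respect to $\mathbf M_\alpha(\mathtt W_1)\vee\mathbf M_\alpha(\mathtt W_2)$. By the set-theoretic identity, $\mathtt w\in\mathtt W_i^{\le_\alpha}$ for some $i\in\{1,2\}$, and hence $\mathtt w$ is stable with respect to $\mathbf M_\alpha(\mathtt W_i)$ (again by Lemma~\ref{L: M_alpha(W) in V} applied to $M_\alpha(\mathtt W_i)$). Any identity satisfied by the join is in particular satisfied by the smaller variety $\mathbf M_\alpha(\mathtt W_i)$, so stability of $\mathtt w$ with respect to $\mathbf M_\alpha(\mathtt W_i)$ upgrades to stability of $\mathtt w$ with respect to the join.

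There is no genuine obstacle in this argument: the statement is essentially a formal consequence of the isoterm-style criterion in Lemma~\ref{L: M_alpha(W) in V} together with the fact that identities of $\mathbf M_\alpha(\mathtt W_1)\vee\mathbf M_\alpha(\mathtt W_2)$ are exactly those common to both factors. The only point requiring attention is the assumption that $1$ forms a singleton $\alpha$-class, which is needed implicitly so that Lemma~\ref{L: M_alpha(W) in V} applies to each factor, but it is built into the hypotheses and requires no additional work.
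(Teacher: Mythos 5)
Your proof is correct. The paper does not reprove this lemma but simply cites it from Sapir's work; your argument is the standard and essentially unique derivation from Lemma~\ref{L: M_alpha(W) in V}, hinging on the observation $(\mathtt W_1\cup\mathtt W_2)^{\le_\alpha}=\mathtt W_1^{\le_\alpha}\cup\mathtt W_2^{\le_\alpha}$ and the fact that stability with respect to a subvariety upgrades to stability with respect to the join, and it matches what one finds in Sapir's original source.
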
 

\subsection{Several certain congruences on $\mathfrak X^\ast$}

In~\cite{Sapir-21}, several certain congruences on $\mathfrak X^\ast$ were introduced.
Two of them, the congruences $\gamma$ and $\lambda$, will play a critical role in the present paper. 
They are defined as follows: for every $\mathbf u,\mathbf v \in \mathfrak X^\ast$, 
\begin{itemize}
\item $\mathbf u\mathrel{\gamma}\mathbf v$ if and only if $\simple(\mathbf u)=\simple(\mathbf v)$ and $\mathbf u$ can be obtained from $\mathbf v$ by changing the individual exponents of letters;
\item $\mathbf u\mathrel{\lambda}\mathbf v$ if and only if $\mathbf u\mathrel{\gamma}\mathbf v$ and the first two occurrences of each multiple letter are adjacent in $\mathbf u$ if and only if these occurrences are adjacent in $\mathbf v$.
\end{itemize}
For any monoid variety $\mathbf V$, let $\FIC(\mathbf V)$ denote the fully invariant congruence on $\mathfrak X^\ast$ corresponding to $\mathbf V$.
We partition the congruences $\gamma$ and $\lambda$ as follows:
\begin{itemize}
\item $\gamma^\prime:=\gamma\wedge \FIC(\mathbf M(xyx))$;
\item $\lambda^\prime:=\lambda\wedge \FIC(\mathbf M(xyx))$.
\end{itemize}

If $\mathbf w \in \mathfrak X^\ast$ and $x \in \con(\mathbf w)$, then an \textit{island} formed by $x$ in $\mathbf w$ is a maximal factor of $\mathbf w$, which is a power of $x$. 
We say that a word $\mathbf w \in \mathfrak X^\ast$ is $2$-\textit{island-limited} if each letter forms at most two islands in $\mathbf w$.
A set $W$ of words is $2$-\textit{island-limited} if each word in $W$ is $2$-island-limited.

\begin{lemma}
\label{L: subclasses are stable}
Let $\alpha$ be a congruence on $\mathfrak X^\ast$ and $\mathtt u$ an $\alpha$-class. 
Assume that the $\alpha$-class $\mathtt u$ is stable with respect to a monoid variety $\mathbf V$ and one of the following holds:
\begin{itemize}
\item[\textup{(i)}] $\alpha=\gamma$;
\item[\textup{(ii)}] $\alpha=\lambda$ and the $\alpha$-class $\mathtt u$ is $2$-island-limited;
\item[\textup{(iii)}] $\alpha=\FIC(\mathbf M(xyx))$ and 
\begin{equation}
\label{eq: condition for FIC(M(xyx))}
\text{there are } \mathbf u\in\mathtt u \text{ and } x,y\in \mathfrak X\text{ such that }\mathbf u(x,y)=xyx;
\end{equation}
\item[\textup{(iv)}] $\alpha=\gamma^\prime$ and the condition~\eqref{eq: condition for FIC(M(xyx))} holds;
\item[\textup{(v)}] $\alpha=\lambda^\prime$, the $\alpha$-class $\mathtt u$ is $2$-island-limited and the condition~\eqref{eq: condition for FIC(M(xyx))} holds.
\end{itemize}
Then for each $\mathtt v\in\{\mathtt u\}^{\le_{\alpha}}$ the $\alpha$-class $\mathtt v$ is stable with respect to $\mathbf V$. 
\end{lemma}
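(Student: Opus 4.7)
The plan is to work element-wise: fix an arbitrary $\mathbf{v} \in \mathtt{v}$ together with any identity $\mathbf{v} \approx \mathbf{v}'$ satisfied by $\mathbf{V}$, and show that $\mathbf{v} \mathrel{\alpha} \mathbf{v}'$. Since $\mathtt{v} \le_\alpha \mathtt{u}$, I can choose representatives $\mathbf{p}, \mathbf{s}$ with $\mathbf{p}\mathbf{v}\mathbf{s} \in \mathtt{u}$. Because $\mathbf{V}$ is a variety, it also satisfies $\mathbf{p}\mathbf{v}\mathbf{s} \approx \mathbf{p}\mathbf{v}'\mathbf{s}$, and stability of $\mathtt{u}$ then forces $\mathbf{p}\mathbf{v}'\mathbf{s} \in \mathtt{u}$, whence
\[
\mathbf{p}\mathbf{v}\mathbf{s} \mathrel{\alpha} \mathbf{p}\mathbf{v}'\mathbf{s}. \qquad (\ast)
\]
The problem then reduces to a cancellation-type statement: from $(\ast)$ deduce $\mathbf{v} \mathrel{\alpha} \mathbf{v}'$. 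The five hypotheses of the lemma are tailored precisely to make this cancellation go through.

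For case (i), $\gamma$-equivalence is characterised by the skeleton of maximal letter-islands and the set of simple letters, and both invariants can be read off $(\ast)$ uniformly for $\mathbf{v}$ and $\mathbf{v}'$, yielding $\mathbf{v} \mathrel{\gamma} \mathbf{v}'$. Case (ii) proceeds similarly but must additionally track, for each multiple letter, whether its first two occurrences are adjacent; the 2-island-limited hypothesis on $\mathtt{u}$ (which descends to $\mathtt{v}$ since factors of 2-island-limited words are again 2-island-limited) is what lets me locate the first two occurrences of each letter of $\mathbf{v}$ unambiguously inside $\mathbf{p}\mathbf{v}\mathbf{s}$ and then transport the adjacency data to $\mathbf{v}'$ via $(\ast)$. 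Case (iii) uses Lemma~\ref{L: M(W) in V} and the fully invariant nature of $\FIC(\mathbf{M}(xyx))$: the condition $\mathbf{u}(x,y) = xyx$ exhibits two letters realising the $xyx$ pattern inside $\mathbf{u}$, which I use to lift any $M(xyx)$-substitution separating $\mathbf{v}$ and $\mathbf{v}'$ to one separating $\mathbf{p}\mathbf{v}\mathbf{s}$ from $\mathbf{p}\mathbf{v}'\mathbf{s}$, contradicting $(\ast)$. Cases (iv) and (v) follow at once from $\gamma' = \gamma \wedge \FIC(\mathbf{M}(xyx))$ and $\lambda' = \lambda \wedge \FIC(\mathbf{M}(xyx))$ by applying (i) (resp.\ (ii)) and (iii) on each component; the listed hypotheses supply exactly what each sub-argument needs.

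I expect the main obstacle to be case (ii), and correspondingly case (v). Keeping track of how the islands of each letter redistribute across $\mathbf{p}, \mathbf{v}, \mathbf{s}$ is delicate when these factors share letters, and the 2-island-limited assumption is precisely the structural constraint that prevents the adjacency data required by $\lambda$ from being lost in the embedding: without it one can contrive embeddings for which $\mathbf{p}\mathbf{v}\mathbf{s} \mathrel{\lambda} \mathbf{p}\mathbf{v}'\mathbf{s}$ yet $\mathbf{v} \not\mathrel{\lambda} \mathbf{v}'$. A secondary subtlety in cases (iii)--(v) is the construction of the $M(xyx)$-substitution witnessing a would-be separation of $\mathbf{v}$ from $\mathbf{v}'$: it must extend to $\mathbf{p}\mathbf{v}\mathbf{s}$ in a way that still distinguishes the two sides, and the hypothesis $\mathbf{u}(x,y) = xyx$ is exactly what makes this possible.
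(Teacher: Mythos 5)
Your proposed reduction to a ``cancellation-type statement'' --- deducing $\mathbf{v}\mathrel{\alpha}\mathbf{v}'$ from $\mathbf{p}\mathbf{v}\mathbf{s}\mathrel{\alpha}\mathbf{p}\mathbf{v}'\mathbf{s}$ --- has a genuine gap, because that implication is simply false for the congruences in question. Take $\alpha=\gamma$, $\mathbf{p}=x$, $\mathbf{s}=1$, $\mathbf{v}=xy$, $\mathbf{v}'=x^2y$: then $\mathbf{p}\mathbf{v}\mathbf{s}=x^2y\mathrel{\gamma}x^3y=\mathbf{p}\mathbf{v}'\mathbf{s}$, yet $xy\not\mathrel{\gamma}x^2y$ since $x$ is simple in one word and multiple in the other. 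The condition $(\ast)$ is a one-shot consequence of stability of $\mathtt{u}$, and by itself it does not control how the boundary between $\mathbf{p}$, $\mathbf{v}$, $\mathbf{s}$ behaves. What actually saves the lemma in such a scenario is that the \emph{full} stability of $\mathtt{u}$ (over all identities of $\mathbf{V}$, not just $\mathbf{v}\approx\mathbf{v}'$ embedded once) already fails --- for instance any $\mathbf{V}$ with $\mathbf{V}\models xy\approx x^2y$ destabilizes $[x^2y]^\gamma$ --- but your sketch discards that extra leverage after producing $(\ast)$. So the heart of the lemma, which is precisely the passage you describe as ``the main obstacle,'' is not demonstrated.

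The paper's proof does not attempt this cancellation. Parts (i) and (ii) are imported wholesale from Sapir's results (Corollary~3.5 and Lemma~6.3 of~\cite{Sapir-21}), whose underlying mechanism is that every representative of a stable class is an ``$\alpha$-term'' for $\mathbf{V}$ and that factors of $\gamma$-terms (resp.\ of $\lambda$-terms of a suitably restricted shape) are again $\alpha$-terms --- a structural result about terms, not about cancelling $(\ast)$. Part~(iii) is handled by a clean contrapositive that avoids cancellation altogether: if some $\mathtt{v}\in\{\mathtt{u}\}^{\le_\alpha}$ is unstable, then $M(xyx)\notin\mathbf{V}$, so by Lemma~\ref{L: M(W) in V} the variety $\mathbf{V}$ satisfies a nontrivial identity $xyx\approx\mathbf{w}$; condition~\eqref{eq: condition for FIC(M(xyx))} then lets one apply that identity inside a representative $\mathbf{u}\in\mathtt{u}$ realising the $xyx$ pattern and push it out of $\mathtt{u}$, contradicting stability of $\mathtt{u}$. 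Your description of ``lifting an $M(xyx)$-substitution separating $\mathbf{v}$ and $\mathbf{v}'$'' gestures at related ideas but is not the argument the paper gives, and crucially you never supply the substitution that would witness the lift. Parts~(iv) and~(v) you handle as the paper does, by intersecting the component congruences, but since your arguments for the components are incomplete this does not close the proof.
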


\begin{proof}
Parts~(i) and~(ii) are proved in~\cite[Corollary~3.5]{Sapir-21} and~\cite[Lemma~6.3]{Sapir-21}, respectively.

\smallskip

(iii) Take an arbitrary $\mathtt v\in\{\mathtt u\}^{\le_{\FIC(\mathbf M(xyx))}}$.
If the $\FIC(\mathbf M(xyx))$-class $\mathtt v$ is not stable with respect to $\mathbf V$,  then $M(xyx)\notin\mathbf V$.
According to Lemma~\ref{L: M(W) in V}, $\mathbf V$ satisfies a non-trivial identity $xyx\approx \mathbf w$ for some $\mathbf w\in\mathfrak X^\ast$.
In view of the condition~\eqref{eq: condition for FIC(M(xyx))}, there are $\mathbf u\in\mathtt u$ and $a,b\in\mathfrak X^\ast$ such that $\mathbf u(a,b)=aba$.
It is easy to see that the identity $xyx\approx \mathbf w$ implies an identity $\mathbf u\approx \mathbf u^\prime$ with $\mathbf u^\prime(a,b)\ne aba$.
Hence $\mathbf u^\prime\notin \mathtt u$, contradicting the assumption that the $\FIC(\mathbf M(xyx))$-class $\mathtt u$ is stable with respect to $\mathbf V$.
Therefore, the $\FIC(\mathbf M(xyx))$-class $\mathtt v$ is stable with respect to $\mathbf V$.

\smallskip

Parts~(iv) and~(v) readily follow from Parts~(i)--(iii).
\end{proof}

The following statement directly follows from Lemmas~\ref{L: M_alpha(W) in V} and~\ref{L: subclasses are stable}.

\begin{corollary}
\label{C: M_alpha(W) in V} 
Let $\alpha$ be a congruence on $\mathfrak X^\ast$, $\mathbf V$ a monoid variety and $\mathtt W$ a set of $\alpha$-classes.
Assume that one of the following holds:
\begin{itemize}
\item[\textup{(i)}] $\alpha=\gamma$;
\item[\textup{(ii)}] $\alpha=\lambda$ and every $\alpha$-class in $\mathtt W$ is $2$-island-limited;
\item[\textup{(iii)}] $\alpha=\FIC(\mathbf M(xyx))$ and for each $\mathtt u\in\mathtt W$ the condition~\eqref{eq: condition for FIC(M(xyx))} holds;
\item[\textup{(iv)}] $\alpha=\gamma^\prime$ and for each $\mathtt u\in\mathtt W$ the condition~\eqref{eq: condition for FIC(M(xyx))} holds;
\item[\textup{(v)}] $\alpha=\lambda^\prime$, each $\alpha$-class in $\mathtt W$ is $2$-island-limited and for each $\mathtt u\in\mathtt W$ the condition~\eqref{eq: condition for FIC(M(xyx))} holds.
\end{itemize}
Then $M_\alpha(\mathtt W)$ lies in $\mathbf V$ if and only if every $\alpha$-class in $\mathtt W$ is stable with respect to $\mathbf V$.\qed
\end{corollary}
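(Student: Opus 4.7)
The plan is to deduce this corollary directly from Lemmas~\ref{L: M_alpha(W) in V} and~\ref{L: subclasses are stable}, as the statement foreshadows. Before doing so, I would first verify that the hypothesis of Lemma~\ref{L: M_alpha(W) in V}---namely that the empty word $1$ forms a singleton $\alpha$-class---is satisfied in each of the five cases. For $\alpha=\gamma$ this is immediate from the definition, since $\mathbf u\mathrel{\gamma}\mathbf v$ forces $\simple(\mathbf u)=\simple(\mathbf v)$ together with the same underlying linear skeleton, so any word $\gamma$-related to $1$ must be empty. For $\alpha=\lambda$ it follows from $\lambda\subseteq\gamma$. For $\alpha=\FIC(\mathbf M(xyx))$ it holds because $\mathbf M(xyx)$ is a non-trivial variety, so in its free monoid the identity element is distinct from every non-empty word. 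The cases $\alpha=\gamma^\prime$ and $\alpha=\lambda^\prime$ are then immediate, as these are contained in $\gamma$ and $\lambda$ respectively.

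For the equivalence itself, the forward direction is essentially free: if $M_\alpha(\mathtt W)\in\mathbf V$, Lemma~\ref{L: M_alpha(W) in V} asserts that every $\alpha$-class in $\mathtt W^{\le_\alpha}$ is stable with respect to $\mathbf V$, and since $\mathtt W\subseteq \mathtt W^{\le_\alpha}$, every $\alpha$-class in $\mathtt W$ is stable. The converse is where Lemma~\ref{L: subclasses are stable} does the real work. Assuming every $\mathtt u\in\mathtt W$ is stable with respect to $\mathbf V$, I would pick an arbitrary $\mathtt v\in\mathtt W^{\le_\alpha}$; by definition $\mathtt v\in\{\mathtt u\}^{\le_\alpha}$ for some $\mathtt u\in\mathtt W$. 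In each of cases (i)--(v), the corresponding clause of Lemma~\ref{L: subclasses are stable} then applies to $\mathtt u$ and yields that $\mathtt v$ is stable with respect to $\mathbf V$. Hence every $\alpha$-class in $\mathtt W^{\le_\alpha}$ is stable, and Lemma~\ref{L: M_alpha(W) in V} returns $M_\alpha(\mathtt W)\in\mathbf V$.

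The only bookkeeping required is to check that the per-class hypotheses imposed on $\mathtt W$ in the corollary---being $2$-island-limited in cases (ii) and (v), and satisfying~\eqref{eq: condition for FIC(M(xyx))} in cases (iii), (iv), (v)---are exactly the hypotheses needed to feed the chosen $\mathtt u\in\mathtt W$ into the matching clause of Lemma~\ref{L: subclasses are stable}. Since the corollary states these conditions universally over all members of $\mathtt W$, they automatically apply to whichever $\mathtt u$ dominates a given $\mathtt v$, so no additional work is needed. I do not expect any substantive obstacle: this is a genuine direct corollary, and my ``proof'' amounts to a clean packaging of the two preceding results.
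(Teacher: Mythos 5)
Your proof is correct and follows exactly the route the paper intends, namely combining Lemma~\ref{L: M_alpha(W) in V} with Lemma~\ref{L: subclasses are stable}; the paper itself records this as a direct consequence with no written argument. Your additional verification that the empty word forms a singleton $\alpha$-class in each of the five cases is a sensible piece of bookkeeping the paper leaves implicit, but it does not constitute a different approach.
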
 

We notice that Parts~(i) and~(ii) of Corollary~\ref{C: M_alpha(W) in V} are Corollaries~3.6 and~6.4 in~\cite{Sapir-21}, respectively.

Define one more relation relative to the congruence $\lambda$: for every $\mathbf u,\mathbf v \in \mathfrak X^\ast$, 
\begin{itemize}
\item $\mathbf u\mathrel{\beta}\mathbf v$ if and only if $\mathbf u\mathrel{\gamma}\mathbf v$ and the first two occurrences of each multiple letter lie in the same block in $\mathbf u$ if and only if these occurrences lie in the same block in $\mathbf v$.
\end{itemize}
It is easy to see that the relation $\beta$ is actually a congruence on $\mathfrak X^\ast$.
Indeed, first notice that two $\gamma$-related words begin and end with the same letters. 
Using this fact, it is straightforward to verify that $\beta$ relation is stable under multiplication in $\mathfrak X^\ast$.

If $\alpha$ is an equivalence relation on the free monoid $\mathfrak X^\ast$, then a
word $\mathbf u$ is said to be a $\alpha$-\textit{term} for a variety $\mathbf V$ if $\mathbf u\mathrel{\alpha}\mathbf v$ whenever $\mathbf V$ satisfies $\mathbf u \approx \mathbf v$.

\begin{lemma}
\label{L: M_beta(W) in V} 
Let $\mathbf V$ be a monoid variety and $\mathtt W$ a set of $\beta$-classes such that, for each $\mathtt u\in\mathtt W$, there are $\mathbf u\in\mathtt u$ and $x,y\in \mathfrak X$ such that $\mathbf u(x,y)= xyx^p$ for some $p\in\mathbb N$.
Then $M_\beta(\mathtt W)$ lies in $\mathbf V$ if and only if each $\beta$-class in $\mathtt W$ is stable with respect to $\mathbf V$.
\end{lemma}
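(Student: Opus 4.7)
The plan is to reduce to Lemma~\ref{L: M_alpha(W) in V} applied with $\alpha=\beta$ (the empty word forms a singleton $\beta$-class since $\beta\subseteq\gamma$), which requires stability of every $\beta$-class in $\mathtt W^{\le_\beta}$, not only those in $\mathtt W$. Necessity is immediate: if $M_\beta(\mathtt W)\in\mathbf V$ and $\mathbf V$ satisfies $\mathbf u\approx\mathbf w$ with $\mathbf u\in\mathtt u$ for some $\mathtt u\in\mathtt W$, then evaluating both sides in $M_\beta(\mathtt W)$ at the substitution sending each letter to its own $\beta$-class yields the non-zero element $\mathtt u$ on the left, forcing $\mathbf w\in\mathtt u$, so $\mathtt u$ is stable with respect to $\mathbf V$.

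For sufficiency, the key step is a $\beta$-analogue of Lemma~\ref{L: subclasses are stable}\textup{(iii)}: \emph{if a $\beta$-class $\mathtt u$ is stable with respect to $\mathbf V$ and contains a word $\mathbf u$ with $\mathbf u(x,y)\in xyx^+$ for some $x,y\in\mathfrak X$, then every $\mathtt v\in\{\mathtt u\}^{\le_\beta}$ is stable.} I would prove this by contradiction. Assume $\mathtt v\le_\beta\mathtt u$ is not stable, so that $\mathbf V\vDash\mathbf v\approx\mathbf v'$ with $\mathbf v\in\mathtt v$ and $\mathbf v'\notin\mathtt v$. Writing $\mathtt u=\mathtt p\mathtt v\mathtt s$ and fixing representatives $\mathbf p\in\mathtt p$, $\mathbf s\in\mathtt s$, the word $\mathbf u_0:=\mathbf p\mathbf v\mathbf s$ lies in $\mathtt u$. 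By Proposition~\ref{P: deduction}, the identity $\mathbf u_0\approx\mathbf p\mathbf v'\mathbf s$ holds in $\mathbf V$, and stability of $\mathtt u$ forces $\mathbf p\mathbf v\mathbf s\mathrel{\beta}\mathbf p\mathbf v'\mathbf s$. The remaining task is to ``$\beta$-cancel'' $\mathbf p$ and $\mathbf s$ to deduce $\mathbf v\mathrel{\beta}\mathbf v'$, contradicting the choice of $\mathbf v'$; once this is in hand, the subclass-stability claim combined with Lemma~\ref{L: M_alpha(W) in V} gives sufficiency.

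The main obstacle is precisely this $\beta$-cancellation. Its $\gamma$-component ($\mathbf v\mathrel{\gamma}\mathbf v'$) should follow from comparing the island sequences of $\mathbf p\mathbf v\mathbf s$ and $\mathbf p\mathbf v'\mathbf s$, together with a case analysis of the island-merges at the two boundaries and of the simple-versus-multiple status of letters shared between $\mathbf v$ and $\mathbf p\mathbf s$. The same-block clause is more delicate: a multiple letter $z\in\con(\mathbf v)$ may also appear in $\mathbf p$ or $\mathbf s$, so its first two occurrences in $\mathbf p\mathbf v\mathbf s$ need not be those inside $\mathbf v$, and a letter simple in $\mathbf v$ may become multiple in $\mathbf p\mathbf v\mathbf s$, altering the block decomposition. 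The hypothesis $\mathbf u(x,y)\in xyx^+$ is precisely what enables control of this situation: it forces $y$ to be simple in $\mathbf u$, with a single $x$-occurrence to its left and at least one to its right, pinning down a specific block boundary of $\mathbf u$. I plan to exploit this boundary by choosing the factorization $\mathtt u=\mathtt p\mathtt v\mathtt s$ in alignment with it, so that any failure of the same-block condition between $\mathbf v$ and $\mathbf v'$ survives the juxtaposition with $\mathbf p$ and $\mathbf s$ and manifests as a failure between $\mathbf p\mathbf v\mathbf s$ and $\mathbf p\mathbf v'\mathbf s$, yielding the desired contradiction.
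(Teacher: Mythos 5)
The proposal has a genuine gap at the $\beta$-cancellation step. The claim that $\mathbf p\mathbf v\mathbf s\mathrel{\beta}\mathbf p\mathbf v^\prime\mathbf s$ forces $\mathbf v\mathrel{\beta}\mathbf v^\prime$ is false, and the hypothesis $\mathbf u(x,y)\in xyx^+$ does not rescue it by realigning the factorization, because the letter witnessing the failure of the same-block condition for $\mathtt v$ is generally unrelated to the pair $(x,y)$ and may already occur in $\mathbf p$ or $\mathbf s$. A concrete example: take $\mathbf u=zt_1zt_2z$, $\mathtt u=[\mathbf u]^\beta$, so the hypothesis holds with $(x,y)=(z,t_1)$. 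Take $\mathtt v=[zt_2z]^\beta\le_\beta\mathtt u$ via $\mathtt p=[zt_1]^\beta$, $\mathtt s=[1]^\beta$ (this is the only choice of $\mathtt p,\mathtt s$). Now let $\mathbf v=zt_2z$ and $\mathbf v^\prime=z^2t_2z$; these are $\gamma$-related but not $\beta$-related ($z$'s first two occurrences lie in different blocks in $\mathbf v$ and in the same block in $\mathbf v^\prime$). Yet $\mathbf p\mathbf v\mathbf s=zt_1zt_2z$ and $\mathbf p\mathbf v^\prime\mathbf s=zt_1z^2t_2z$ \emph{are} $\beta$-related: in both the first two $z$'s are separated by the simple letter $t_1$. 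So the non-stability of $\mathtt v$ leaves no trace at the level of $\mathtt u$ via your $\mathbf u_0$, and the contradiction never materializes. No choice of factorization of $\mathtt u$ avoids this; as the example shows, $\mathtt p$ and $\mathtt s$ are pinned down once $\mathtt u$ and $\mathtt v$ are.

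The paper's actual argument circumvents cancellation entirely. It first uses the cited facts from Sapir's paper to conclude that the factor $\mathbf v$ of $\mathbf u\in\mathtt u$ is already a $\gamma$-term for $\mathbf V$; thus a failure of $\beta$-stability of $\mathtt v$ can only come from the same-block clause, and by killing all letters except two one extracts a concrete two-variable identity $xyx^p\approx x^qyx^r$ with $q\ge2$ holding in $\mathbf V$. The hypothesis $\mathbf u(x,y)\in xyx^+$ is then used not to align a factorization, but to exhibit a word $\mathbf u^\prime\in\mathtt u$ of the form $\mathbf p\,z\,\mathbf q\,z^p\,\mathbf r$ with $z\notin\con(\mathbf p\mathbf q)$ and with $\mathbf q$ containing a letter simple in $\mathbf u^\prime$. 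Applying the extracted identity to this factor produces $\mathbf p\,z^q\,\mathbf q\,z^r\,\mathbf r$, whose first two $z$'s are adjacent (since $q\ge2$) and hence lie in the same block, whereas in $\mathbf u^\prime$ they are separated by a simple letter. This immediately contradicts the stability of $\mathtt u$, without ever comparing $\mathbf p\mathbf v\mathbf s$ with $\mathbf p\mathbf v^\prime\mathbf s$. You should replace your cancellation step with this extract-and-apply mechanism.
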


\begin{proof}
In view of Lemma~\ref{L: M_alpha(W) in V}, it remains to verify that if $\mathtt u\in\mathtt W$ is stable with respect to $\mathbf V$ and $\mathtt v\in\{\mathtt u\}^{\le_{\beta}}$, then $\mathtt v$ is also stable with respect to $\mathbf V$.
Take an arbitrary $\mathbf v \in \mathtt v$. 
Then the word $\mathbf v$ is a factor of some word $\mathbf u \in \mathtt u$ by Lemma~\ref{L: le_alpha}. 
Since the word $\mathbf u$ is a $\gamma$-term for the variety $\mathbf V$ by~\cite[Observation~2.4]{Sapir-21}, the word $\mathbf v$ is a $\gamma$-term for this variety by~\cite[Lemma~3.3 and Fact~3.4]{Sapir-21}.
If $\mathbf v$ is not a $\beta$-term for $\mathbf V$, then $\mathbf V$ satisfies an identity $\mathbf v \approx \mathbf v^\prime$ such that $\mathbf v\mathrel{\gamma}\mathbf v^\prime$ but $\mathbf v^\prime\notin\mathtt v$.
In this case, there are $x,y\in\mathfrak X$ and $p,r\in\mathbb N$, $q\ge2$ such that $\mathbf v(x,y)$ coincides with one of the words $xyx^p$ or $xx^qyx^r$, while $\mathbf v^\prime(x,y)$ coincides with the other one.
This implies that $\mathbf V$ satisfies $xyx^p\approx x^qyx^r$.
In view of the condition of the lemma, one can find $\mathbf u^\prime\in\mathtt u$ and $z\in\con(\mathbf u^\prime)$ such that $\mathbf u^\prime=\mathbf pz\mathbf qz^p\mathbf r$, $z\notin\con(\mathbf p\mathbf q)$ and $\mathbf q$ contains a letter which is simple in $\mathbf u^\prime$.
Clearly, $\mathbf u^\prime\approx \mathbf pz^q\mathbf qz^r\mathbf r$ is directly deducible from $xyx^p\approx x^qyx^r$ and so holds in $\mathbf V$.
However, $\mathbf px^q\mathbf qx^r\mathbf r\notin\mathtt u$, contradicting the assumption that the $\beta$-class $\mathtt u$ is stable with respect to the variety $\mathbf V$.
\end{proof}

Let
\[
\gamma^{\prime\prime}:=\gamma\wedge\FIC(\mathbf M(xyxty,ytxyx)).
\]
A $2$-island-limited word $\mathbf w \in \mathfrak X^\ast$ is $2$-\textit{island-rigid} if $\occ_x(\mathbf w)=2$ for each letter forming two islands in $\mathbf w$.
A set $W$ of words is $2$-\textit{island-rigid} if each word in $W$ is $2$-island-rigid.

\begin{lemma}
\label{L: M_{gamma''}(W) in V} 
Let $\mathbf V$ be a monoid variety and $\mathtt W$ a set of $2$-island-rigid $\gamma^{\prime\prime}$-classes.
Then $M_{\gamma^{\prime\prime}}(\mathtt W)$ lies in $\mathbf V$ if and only if each $\gamma^{\prime\prime}$-class in $\mathtt W$ is stable with respect to $\mathbf V$.
\end{lemma}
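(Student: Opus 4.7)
The plan is to adapt the argument used in the proof of Lemma~\ref{L: M_beta(W) in V}. The ``only if'' direction follows at once from Lemma~\ref{L: M_alpha(W) in V}, so the only work lies in the converse: assuming that each $\gamma^{\prime\prime}$-class in $\mathtt W$ is stable with respect to $\mathbf V$, one must show that for every $\mathtt u\in\mathtt W$ and every $\mathtt v\in\{\mathtt u\}^{\le_{\gamma^{\prime\prime}}}$, the class $\mathtt v$ is stable with respect to $\mathbf V$ as well.

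First I would fix $\mathtt u\in\mathtt W$, $\mathtt v\in\{\mathtt u\}^{\le_{\gamma^{\prime\prime}}}$, and pick $\mathbf v\in\mathtt v$. By~\cite[Lemma~2.1]{Sapir-21}, $\mathbf v$ is a factor of some $\mathbf u\in\mathtt u$. Since $\mathtt u$ is stable with respect to $\mathbf V$, $\mathbf u$ is in particular a $\gamma$-term for $\mathbf V$ by~\cite[Observation~2.4]{Sapir-21}, and then $\mathbf v$ is a $\gamma$-term for $\mathbf V$ by~\cite[Lemma~3.3 and Fact~3.4]{Sapir-21}. This part copies the opening of the proof of Lemma~\ref{L: M_beta(W) in V} verbatim.

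Next, suppose toward contradiction that $\mathbf v$ is not a $\gamma^{\prime\prime}$-term for $\mathbf V$, so that $\mathbf V$ satisfies some non-trivial identity $\mathbf v\approx\mathbf v^\prime$ with $\mathbf v\mathrel{\gamma}\mathbf v^\prime$ but $(\mathbf v,\mathbf v^\prime)\notin\FIC(\mathbf M(xyxty,ytxyx))$. Then $\mathbf M(xyxty,ytxyx)$ does not satisfy $\mathbf v\approx\mathbf v^\prime$, hence $M(xyxty,ytxyx)\notin\mathbf V$, and Lemma~\ref{L: M(W) in V} forces $\mathbf V$ to satisfy a non-trivial identity of the form $xyxty\approx\mathbf w_1$ or $ytxyx\approx\mathbf w_2$.

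The final step is where the main obstacle lies. Since $\mathbf v\mathrel{\gamma}\mathbf v^\prime$, the two words agree up to exponents of letters inside a common block decomposition, so the failure of $\FIC(\mathbf M(xyxty,ytxyx))$-equivalence must be witnessed by concrete letters $a,b,c\in\con(\mathbf v)$ realising the pattern $xyxty$ or $ytxyx$ whose rearrangement between $\mathbf v$ and $\mathbf v^\prime$ breaks the isoterm condition. The hypothesis that $\mathbf u$ is $2$-island-rigid then becomes essential: it guarantees that these witness letters sit in $\mathbf u$ in the same block pattern as in $\mathbf v$, with no additional hidden occurrences that could absorb the exponent change, so substituting into the $\mathbf V$-identity $xyxty\approx\mathbf w_1$ (or $ytxyx\approx\mathbf w_2$) within $\mathbf u$ produces an identity $\mathbf u\approx\mathbf u^\ast\in\mathbf V$ with $\mathbf u^\ast\notin\mathtt u$, contradicting the stability of $\mathtt u$. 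The technical heart is a case analysis enumerating the ways $\mathbf v\mathrel{\gamma}\mathbf v^\prime$ can fail to be $\FIC(\mathbf M(xyxty,ytxyx))$-equivalent and verifying in each case that $2$-island-rigidity permits lifting the violation from $\mathbf v$ to $\mathbf u$; without rigidity, additional copies of a two-island letter in $\mathbf u$ could neutralise the substitution and this final step would collapse.
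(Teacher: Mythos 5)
Your opening steps match the paper exactly: the only-if direction via Lemma~\ref{L: M_alpha(W) in V}, and then fixing $\mathtt u$, $\mathtt v$, taking $\mathbf v\in\mathtt v$ as a factor of some $\mathbf u\in\mathtt u$, and deducing that $\mathbf v$ is a $\gamma$-term. But the route you take from there has a genuine gap.

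You throw away the identity $\mathbf v\approx\mathbf v'$ and instead pass to the weaker conclusion $M(xyxty,ytxyx)\notin\mathbf V$, which only gives you some non-trivial identity $xyxty\approx\mathbf w_1$ (or $ytxyx\approx\mathbf w_2$) with $\mathbf w_1$ completely uncontrolled. You then want to ``substitute this within $\mathbf u$'' to produce $\mathbf u\approx\mathbf u^\ast$ with $\mathbf u^\ast\notin\mathtt u$. That substitution step is precisely what cannot be made to work: the word $\mathbf v$ satisfies $\mathbf v(x,y,t)=xyxty$, so $\mathbf v=\mathbf v_0x\mathbf v_1y\mathbf v_2x\mathbf v_3t\mathbf v_4y\mathbf v_5$, and a substitution $\phi$ with $\phi(xyxty)$ a factor of $\mathbf u$ would need $\phi(y)$ to match both the context $\mathbf v_1y\mathbf v_2$ between the two $x$'s and the context $\mathbf v_4y\mathbf v_5$ after $t$ simultaneously — unlike the $xyx$ case treated in Lemma~\ref{L: subclasses are stable}(iii), where the single occurrence of $y$ can absorb the whole interior, the two occurrences of $y$ in $xyxty$ live in different local contexts. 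So the lift does not exist in general, and the contradiction with stability of $\mathtt u$ does not come out.

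The paper avoids this entirely by lifting the concrete identity $\mathbf v\approx\mathbf v'$ rather than a generic $xyxty\approx\mathbf w_1$: if $\mathbf u=\mathbf p\mathbf v\mathbf q$ then $\mathbf u\approx\mathbf p\mathbf v'\mathbf q$ holds in $\mathbf V$ with no substitution needed. Since $\mathbf v\mathrel{\gamma}\mathbf v'$ but not $\mathbf v\mathrel{\gamma''}\mathbf v'$, one may choose $x,y,t$ with $\mathbf v(x,y,t)=xyxty$ (here $2$-island-rigidity of $\mathbf v$ forces the exponents to be $1$) while $\mathbf v'(x,y,t)\in x^+y^+x^+ty^+$ differs, so $\occ_x(\mathbf v')>2$ or $\occ_y(\mathbf v')>2$. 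Rigidity of $\mathbf u$ then forces $x\notin\con(\mathbf p\mathbf q)$ (and symmetrically for $y$), so $\mathbf p\mathbf v'\mathbf q$ fails to be $2$-island-rigid and hence lies outside $\mathtt u$, contradicting stability. This is where rigidity actually enters; your sketch gestures at it but doesn't have the right identity in hand for the mechanism to engage.
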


\begin{proof}
In view of Lemma~\ref{L: M_alpha(W) in V}, it remains to verify that if $\mathtt u\in\mathtt W$ is stable with respect to $\mathbf V$ and $\mathtt v\in\{\mathtt u\}^{\le_{\gamma^{\prime\prime}}}$, then $\mathtt v$ is also stable with respect to $\mathbf V$.
Take an arbitrary $\mathbf v \in \mathtt v$. 
Then the word $\mathbf v$ is a factor of some word $\mathbf u \in \mathtt u$ by Lemma~\ref{L: le_alpha}, i.e., $\mathbf u=\mathbf p\mathbf v\mathbf q$ for some $\mathbf p,\mathbf q\in\mathfrak X^\ast$. 
Since the word $\mathbf u$ is a $\gamma$-term for the variety $\mathbf V$ by~\cite[Observation~2.4]{Sapir-21}, the word $\mathbf v$ is also a $\gamma$-term for this variety by~\cite[Lemma~3.3 and Fact~3.4]{Sapir-21}.
If $\mathbf v$ is not a $\gamma^{\prime\prime}$-term for $\mathbf V$, then $\mathbf V$ satisfies an identity $\mathbf v \approx \mathbf v^\prime$ such that $\mathbf v\mathrel{\gamma}\mathbf v^\prime$ but $\mathbf v \approx\mathbf v^\prime$ does not hold in $M(xyxty,ytxyx)$.
In this case, we may assume without any loss that there are $x,y,t\in\mathfrak X$ and $X\subseteq\{x,y,t\}$ such that one of the words $\mathbf v(X)$ or $\mathbf v^\prime(X)$ belong to the set $\{xyxty\}^\le$, while the other one does not.
Since $\mathbf v$ is 2-island-rigid and $\mathbf v\mathrel{\gamma}\mathbf v^\prime$, this is only possible when $\mathbf v(X)\in\{xyx,xyxt,yxty,xyxty\}$ and either $\occ_x(\mathbf v^\prime(X))>2$ or $\occ_y(\mathbf v^\prime(X))>2$.
The identity $\mathbf u\approx \mathbf p\mathbf v^\prime\mathbf q$ holds in $\mathbf V$.
However, the $\gamma^{\prime\prime}$-class $\mathtt u$ is 2-island-rigid.
Hence $\mathbf p\mathbf v^\prime\mathbf r\notin\mathtt u$, contradicting the assumption that the $\gamma^{\prime\prime}$-class $\mathtt u$ is stable with respect to $\mathbf V$.
\end{proof}

Define four more congruences on $\mathfrak X^\ast$, which will be used in the present paper: 
\begin{itemize}
\item $\alpha_1:=\FIC(\var\{xyx^2\approx x^2yx,\,x^2y^2\approx y^2x^2,\,\sigma_3\})$;
\item $\eta:=\FIC(\var\{xyxz\approx xyxzx,\,\sigma_2\}\vee\mathbf M(xyx))$;
\item $\mu:=\FIC(\var\{x^2\approx x^3,\,xyxzx\approx xyxzx^2\})$;
\item $\nu:=\FIC(\var\{xy\approx xyx\}\vee\mathbf M(xyx))$.
\end{itemize}
Given a congruence $\alpha$ on $\mathfrak X^\ast$ and a word $\mathbf u\in\mathfrak X^\ast$, let $[\mathbf u]^\alpha$ denote the $\alpha$-class containing $\mathbf u$.

\subsection{Defining congruence classes by regular expressions}

We use regular expressions to describe sets of words, in particular, congruence classes. 
Given a letter $x\in\mathfrak X^\ast$, we let $x^+:=\{x^n\mid n\in\mathbb N\}$ and $x^\ast:=\{x^n\mid n\in\mathbb N_0\}$.
Using this notation, one can represent many concrete congruence classes as words in the alphabet $\{x,\,x^+,\,x^\ast\mid x\in\mathfrak X^\ast\}$.
For example, $[xyx]^\lambda=\{xyx^n\mid n\in\mathbb N\}$.
Using regular expressions, we write $[xyx]^\lambda=xyx^+$.
The following routinely checked example describes all $\mu$-classes in $\{[xyzxtysx]^{\mu}\}^{\le_\mu}$.
\begin{example} 
\label{E: xyzxtysx^+}
\[
\begin{aligned}
\{[xyzxtysx]^{\mu}\}^{\le_\mu}=\{xyzxtysx^+\}^{\le_\mu} = \{&1,  x, xx^+, y, z, t, s,\\
&xy, yz, zx, xt, ty, ys, sx, sxx^+,\\
&xyz, yzx,zxt, xty, tys, ysx, ysxx^+\\
&xyzx, yzxt, zxty, xtys, tysx, tysxx^+,\\
&xyzxt, yzxty, zxtys, xtysx, xtysxx^+,\\
&xyzxty, yzxtys, zxtysx, zxtysxx^+,\\
&xyzxtys,yzxtysx,yzxtysxx^+,\\
&xyzxtysx^+\}.
\end{aligned}
\]
\end{example}

\subsection{Lattices of subvarieties of two varieties}

The subvariety of a variety $\mathbf V$ defined by a set $\Sigma$ of identities is denoted by $\mathbf V \Sigma$.
Let 
\[
\mathbf H:=\mathbf D_1\{xyxty\approx yx^2ty\}.
\]
Let $\mathbf T$ and $\mathbf{SL}$ denote the trivial variety of monoids and the variety of all semilattice monoids, respectively.

\begin{lemma}[\mdseries{\!\cite[Proposition~6.1]{Gusev-Vernikov-18},~\cite[Theorem~7.2]{Sapir-21}}]
\label{L: L(D1)}
The lattice $\mathfrak L(\mathbf D_1)$ is a chain.
The bottom seven elements of this chain are $\mathbf T\subset \mathbf{SL}\subset \mathbf M(x)\subset \mathbf M(xy)\subset \mathbf M_\gamma(yxx^+)\subset \mathbf M_\lambda(xyx^+)\subset \mathbf H$.\qed
\end{lemma}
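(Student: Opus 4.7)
The plan is to deduce the statement by combining \cite[Proposition~6.1]{Gusev-Vernikov-21} with \cite[Theorem~7.2]{Sapir-21}. The work reduces to (a) identifying $\mathbf D_1$ as defined here with the variety whose subvariety lattice is shown to form a chain in those references, and (b) matching the first seven elements of that chain with the Sapir-construction monoids listed in the statement.

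For step~(a), I would invoke Proposition~\ref{P: deduction} to check that the basis $\{\Phi,\, xyx\approx xyx^2,\, x^2y\approx x^2yx\}$ is equivalent, over the monoid axioms, to the basis appearing in \cite{Gusev-Vernikov-21}. Once this is verified, the chain property of $\mathfrak L(\mathbf D_1)$ is immediate from that reference. The intuition is that $xyx\approx xyx^2$ and $x^2y\approx x^2yx$, together with $x^2\approx x^3$ and $x^2y^2\approx y^2x^2$, impose a rigid normal form in which any letter that has already occurred twice is effectively ``locked down'', leaving very little room for pairwise-incomparable subvarieties.

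For step~(b), I would apply Corollary~\ref{C: M_alpha(W) in V} to verify that each of $\mathbf M(x)$, $\mathbf M(xy)$, $\mathbf M_\gamma(yxx^+)$, $\mathbf M_\lambda(xyx^+)$ belongs to $\mathbf D_1$, by checking stability of the relevant $\alpha$-classes under the defining identities of $\mathbf D_1$. Each strict inclusion in the chain is then witnessed by a separating identity: $x\approx x^2$ lifts $\mathbf{SL}$ above $\mathbf M(x)$; $xy\approx yx$ separates $\mathbf M(x)$ from $\mathbf M(xy)$; an identity changing the exponent in $yxx^n$ separates $\mathbf M(xy)$ from $\mathbf M_\gamma(yxx^+)$; a $\lambda$-refining identity of the form $xyx\approx xyx^2$ read against the $\gamma$-coarsening separates $\mathbf M_\gamma(yxx^+)$ from $\mathbf M_\lambda(xyx^+)$; and the identity $xyxty\approx yx^2ty$ separates $\mathbf M_\lambda(xyx^+)$ from $\mathbf H$ by the very definition of $\mathbf H$. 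Combined with \cite[Theorem~7.2]{Sapir-21}, which identifies precisely these monoids as the successive covers, this yields the explicit listing.

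The principal obstacle is the chain property itself, which is delicate and is already handled in the cited references by a careful analysis of all identities satisfiable in subvarieties of $\mathbf D_1$; what remains for the present paper is only the bookkeeping-heavy but routine basis-equivalence check of step~(a), together with the stability/separation verifications of step~(b).
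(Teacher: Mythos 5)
Your proposal is correct and follows the same route as the paper, which simply cites \cite[Proposition~6.1]{Gusev-Vernikov-21} and \cite[Theorem~7.2]{Sapir-21} and records the result without further argument. One small caution on the bookkeeping: your description of the separator between $\mathbf M_\gamma(yxx^+)$ and $\mathbf M_\lambda(xyx^+)$ is off — $xyx\approx xyx^2$ holds in both (it is a defining identity of $\mathbf D_1$); what actually distinguishes them is an identity such as $xyx^2\approx x^2yx^2$, which is valid in $\mathbf M_\gamma(yxx^+)$ but fails in $\mathbf M_\lambda(xyx^+)$.
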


\begin{lemma}[\mdseries{\!\cite[Proposition~3.1]{Gusev-20},~\cite[Theorem~7.2]{Sapir-21}}]
\label{L: L(M(xzyx^+ty^+))}
The lattice $\mathfrak L\left(\mathbf M_\lambda(xzyx^+ty^+)\right)$ is given in Fig.~\ref{F: L(J)}.\qed
\end{lemma}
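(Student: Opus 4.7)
The plan is to identify $\mathfrak{L}(\mathbf{M}_\lambda(xzyx^+ty^+))$ with the lattice of down-sets of the poset $\{[xzyx^+ty^+]^\lambda\}^{\le_\lambda}$, using the general machinery of the Olga Sapir construction developed in Section~\ref{Sec: construction}. First I would observe that the word $xzyx^+ty^+$ is $2$-island-limited, so Corollary~\ref{C: M_alpha(W) in V}(ii) applies to every set of $\lambda$-classes below $[xzyx^+ty^+]^\lambda$. Together with Lemma~\ref{L: M_alpha(W_1) vee M_alpha(W_2)}, this yields that the assignment
\[
\mathtt W\;\longmapsto\;\mathbf M_\lambda(\mathtt W)
\]
from down-sets of $\{[xzyx^+ty^+]^\lambda\}^{\le_\lambda}$ to subvarieties of $\mathbf M_\lambda(xzyx^+ty^+)$ is order preserving and join preserving, and the intersection of down-sets corresponds to the intersection of varieties.

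Next I would enumerate the poset $\{[xzyx^+ty^+]^\lambda\}^{\le_\lambda}$ explicitly (in the spirit of Example~\ref{E: xyzxtysx^+}). Each $\lambda$-class is represented by a regular expression of the form obtained by replacing each multiple letter $v$ in some factor of $xzyxty$ with either $v$ or $v^+$, subject to the adjacency condition encoded by $\lambda$. Using $\le_\lambda$, one arranges these classes into a finite partially ordered set whose Hasse diagram coincides with Fig.~\ref{F: L(J)} after taking down-sets. At this point one should verify that the diagram in Fig.~\ref{F: L(J)} is indeed the lattice of down-sets of that poset, a routine but careful inspection.

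The main obstacle is to show that the map $\mathtt W\mapsto \mathbf M_\lambda(\mathtt W)$ is a \emph{bijection} onto $\mathfrak{L}(\mathbf M_\lambda(xzyx^+ty^+))$, i.e., every subvariety arises in this form and distinct down-sets give distinct varieties. For injectivity, if $\mathtt W_1\ne\mathtt W_2$ are down-sets and $\mathtt u$ lies in one but not the other, then by Corollary~\ref{C: M_alpha(W) in V}(ii) the $\lambda$-class $\mathtt u$ is stable with respect to exactly one of $\mathbf M_\lambda(\mathtt W_1),\mathbf M_\lambda(\mathtt W_2)$, separating the two varieties. For surjectivity, given an arbitrary subvariety $\mathbf V\subseteq\mathbf M_\lambda(xzyx^+ty^+)$, one takes $\mathtt W(\mathbf V)$ to be the set of all $\lambda$-classes in $\{[xzyx^+ty^+]^\lambda\}^{\le_\lambda}$ that are stable with respect to $\mathbf V$; this set is a down-set by Lemma~\ref{L: subclasses are stable}(ii), and one checks that $\mathbf V=\mathbf M_\lambda(\mathtt W(\mathbf V))$. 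The non-trivial inclusion requires that any identity satisfied by every $M_\lambda(\mathtt u)$ with $\mathtt u\in\mathtt W(\mathbf V)$ is already a consequence of the identities defining $\mathbf V$; this can be reduced, via Lemma~\ref{L: identities of M(xy)} and Lemma~\ref{L: identities of M(xt_1x...t_kx)}, to an identity that permutes letters within individual blocks, which is controlled by stability of the relevant $\lambda$-classes.

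Finally, having shown the correspondence is an order isomorphism between the lattice of down-sets of $\{[xzyx^+ty^+]^\lambda\}^{\le_\lambda}$ and $\mathfrak L(\mathbf M_\lambda(xzyx^+ty^+))$, one concludes that the latter lattice is exactly the one depicted in Fig.~\ref{F: L(J)}. I expect the combinatorial bookkeeping in enumerating the down-sets and matching them with the picture to be the most delicate part, while the structural assertions follow uniformly from the tools of Section~\ref{Sec: construction}.
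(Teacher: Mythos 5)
There is a genuine gap in the strategy: the lattice in Fig.~\ref{F: L(J)} is \emph{not} the lattice of down-sets of the poset $\{[xzyx^+ty^+]^\lambda\}^{\le_\lambda}$, so the claimed order isomorphism cannot exist. A direct count shows the factor poset has roughly thirty $\lambda$-classes (for instance, $\{1\}$, $\{x\}$, $xx^+$, $\{y\}$, $yy^+$, $\{z\}$, $\{t\}$, $\{xz\}$, $\{zy\}$, $\{yx\}$, $yxx^+$, $\{xt\}$, $xx^+t$, $\{ty\}$, $tyy^+$, $\{xzy\}$, $\{zyx\}$, $zyxx^+$, $xzyx^+$, $\{yxt\}$, $yxx^+t$, $\{xty\}$, $xx^+ty$, $xtyy^+$, $xx^+tyy^+$, $yxty^+$, $yxx^+ty^+$, $\{zyxt\}$, $zyxx^+t$, $zyxty^+$, $zyxx^+ty^+$, $xzyx^+t$, $xzyx^+ty^+$), so its down-set lattice has many more than twelve elements. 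You appear to have conflated the factor poset with the (much smaller) poset of join-irreducible elements of the target lattice.

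The injectivity argument is also invalid, independently of the element count. From $\mathtt u\in\mathtt W_1\setminus\mathtt W_2$, Corollary~\ref{C: M_alpha(W) in V}(ii) does give that $\mathtt u$ is stable with respect to $\mathbf M_\lambda(\mathtt W_1)$, but it does \emph{not} give that $\mathtt u$ fails to be stable with respect to $\mathbf M_\lambda(\mathtt W_2)$; the corollary only converts stability into membership $M_\lambda(\mathtt u)\in\mathbf V$, and $\mathtt u$ can well be stable with respect to $\mathbf M_\lambda(\mathtt W_2)$ even when $\mathtt u\notin\mathtt W_2$. Concretely, the down-sets $\{1,x,z,\{xz\}\}$ and $\{1,y,z,\{zy\}\}$ are distinct, yet $\mathbf M_\lambda$ of each is just $\mathbf M(xy)$ (a two-letter linear word generates the same variety up to renaming), so the map $\mathtt W\mapsto\mathbf M_\lambda(\mathtt W)$ collapses many down-sets. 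Consequently, the whole bijection scheme fails; proving this lemma requires a direct classification of the possible identity systems satisfied by subvarieties of $\mathbf M_\lambda(xzyx^+ty^+)$ (which is what the cited sources \cite{Gusev-20,Sapir-21} carry out), not a purely combinatorial down-set argument read off the factor poset.
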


\begin{figure}[htb]
\unitlength=1mm
\linethickness{0.4pt}
\begin{center}
\begin{picture}(55,95)
\put(35,5){\circle*{1.33}}
\put(35,15){\circle*{1.33}}
\put(35,25){\circle*{1.33}}
\put(35,35){\circle*{1.33}}
\put(45,45){\circle*{1.33}}
\put(25,45){\circle*{1.33}}
\put(35,55){\circle*{1.33}}
\put(15,55){\circle*{1.33}}
\put(25,65){\circle*{1.33}}
\put(25,75){\circle*{1.33}}
\put(25,85){\circle*{1.33}}
\put(25,95){\circle*{1.33}}

\put(35,5){\line(0,1){30}}
\put(35,35){\line(-1,1){20}}
\put(35,35){\line(1,1){10}}
\put(25,45){\line(1,1){10}}
\put(25,45){\line(1,1){10}}
\put(15,55){\line(1,1){10}}
\put(45,45){\line(-1,1){20}}
\put(25,65){\line(0,1){30}}

\put(35,2){\makebox(0,0)[cc]{\textbf T}}
\put(37,15){\makebox(0,0)[lc]{\textbf{SL}}}
\put(37,25){\makebox(0,0)[lc]{$\mathbf M(x)$}}
\put(37,35){\makebox(0,0)[lc]{$\mathbf M(xy)$}}
\put(23,45){\makebox(0,0)[rc]{$\mathbf M_\gamma(yxx^+)$}}
\put(13,55){\makebox(0,0)[rc]{$\mathbf M_\lambda(xyx^+)$}}
\put(47,45){\makebox(0,0)[lc]{$\mathbf M_\gamma(xx^+y)$}}
\put(37,55){\makebox(0,0)[lc]{$\mathbf M_\gamma(yxx^+)\vee\mathbf M_\gamma(xx^+y)$}}
\put(27,65){\makebox(0,0)[lc]{$\mathbf M_\lambda(xyx^+)\vee\mathbf M_\gamma(xx^+y)$}}
\put(27,75){\makebox(0,0)[lc]{$\mathbf M_\lambda(xyzx^+ty^+)$}}
\put(27,85){\makebox(0,0)[lc]{$\mathbf M_\lambda(yxx^+ty^+)$}}
\put(27,95){\makebox(0,0)[lc]{$\mathbf M_\lambda(xzyx^+ty^+)$}}
\end{picture}
\end{center}
\caption{The lattice $\mathfrak L\left(\mathbf M_\lambda(xzyx^+ty^+)\right)$}
\label{F: L(J)}
\end{figure}
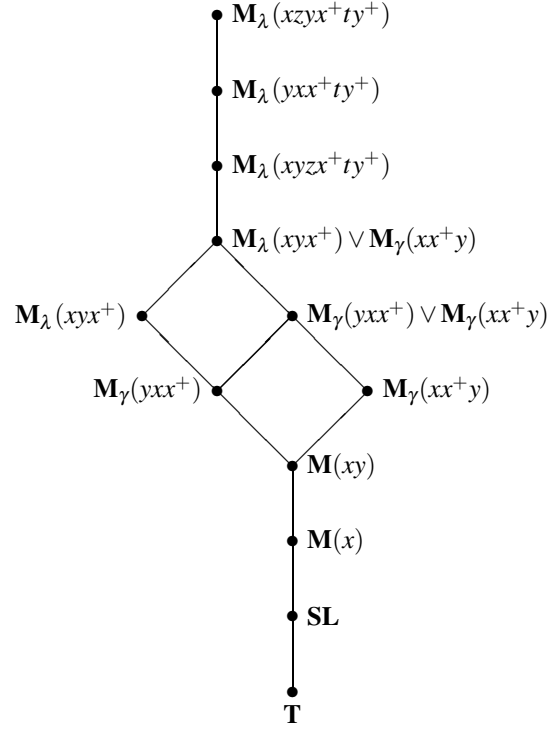

\section{Separating identities for varieties}
\label{Sec: separating identities}

Here we provide separating identities which hold in varieties not containing given monoid varieties.

\subsection{Many certain identities}

The following statement was established in the proof of Lemma~3.5 in~\cite{Gusev-Vernikov-21}.
It also can be readily deduced from Lemmas~\ref{L: M(W) in V} and~\ref{L: identities of M(xt_1x...t_kx)}.

\begin{lemma}
\label{L: swapping in linear-balanced}
Let $\mathbf V$ be a monoid variety such that $M(xt_1x\cdots t_nx)\in\mathbf V$. 
If $M(\mathbf p\,xy\,\mathbf q)\notin\mathbf V$, where $\mathbf p:=a_1t_1\cdots a_kt_k$ and $\mathbf q:=t_{k+1}a_{k+1}\cdots t_{k+\ell}a_{k+\ell}$ for some $k,\ell\in\mathbb N_0$ and $a_1,\dots,a_{k+\ell}$ are letters such that $\{a_1,\dots,a_{k+\ell}\}=\{x,y\}$ and $\occ_x(\mathbf p\mathbf q),\occ_y(\mathbf p\mathbf q)\le n$, then $\mathbf V$ satisfies the identity $\mathbf p\,xy\,\mathbf q\approx\mathbf p\,yx\,\mathbf q$.\qed
\end{lemma}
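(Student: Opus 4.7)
The plan is to use the failure of $\mathbf pxy\mathbf q$ to be an isoterm together with the isoterm properties of $xy$ and $xt_1x\cdots t_nx$ to rigidly determine the shape of any non-trivial identity $\mathbf pxy\mathbf q\approx\mathbf v$ satisfied by $\mathbf V$. The first step is to invoke Lemma~\ref{L: M(W) in V}: from $M(\mathbf pxy\mathbf q)\notin\mathbf V$ we extract such a non-trivial identity, and it then suffices to force $\mathbf v=\mathbf pyx\mathbf q$.

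Next I would apply two structural results in tandem. On the one hand, since $xy$ appears (up to renaming) as a factor of the isoterm $xt_1x\cdots t_nx$, it is itself an isoterm for $\mathbf V$, so $M(xy)\in\mathbf V$; Lemma~\ref{L: identities of M(xy)} then guarantees that $\mathbf v$ admits a block decomposition $\mathbf v_0t_1\mathbf v_1\cdots t_{k+\ell}\mathbf v_{k+\ell}$ paralleling the decomposition of $\mathbf u:=\mathbf pxy\mathbf q$, whose blocks are $\mathbf u_i=a_i$ for $i\ne k$ and $\mathbf u_k=xy$. On the other hand, all blocks of $\mathbf u$ are linear and the hypothesis $\occ_x(\mathbf p\mathbf q),\occ_y(\mathbf p\mathbf q)\le n$ gives $\occ_x(\mathbf u),\occ_y(\mathbf u)\le n+1$, so Lemma~\ref{L: identities of M(xt_1x...t_kx)} applied with parameter $n$ (using $M(xt_1x\cdots t_nx)\in\mathbf V$) yields that $\mathbf u\approx\mathbf v$ is linear-balanced. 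Hence each of the multiple letters $x$ and $y$ occurs at most once in each $\mathbf v_i$ and with the same count as in $\mathbf u_i$. Combined with the fact that every $\mathbf v_i$ is a word in $\con(\mathbf u)\setminus\{t_1,\dots,t_{k+\ell}\}=\{x,y\}$, this forces $\mathbf v_i=a_i$ for $i\ne k$ and $\mathbf v_k\in\{xy,yx\}$. Non-triviality of the identity rules out $\mathbf v_k=xy$, so $\mathbf v=\mathbf pyx\mathbf q$, as required.

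No step here poses a serious obstacle; the only mild subtlety is the passage from ``$xt_1x\cdots t_nx$ is an isoterm'' to ``$xy$ is an isoterm'', which is the standard observation (a one-line cancellation argument in the free monoid) that factors of isoterms are again isoterms. Everything else is a direct application of the preceding two block-decomposition lemmas.
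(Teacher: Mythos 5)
Your proof is correct and takes the route clearly intended by the paper (which cites the statement to \cite{Gusev-Vernikov-21} rather than reproving it, but surrounds it with precisely the two ingredients you use — Lemma~\ref{L: identities of M(xy)} and Lemma~\ref{L: identities of M(xt_1x...t_kx)}). The extraction of a nontrivial identity from $M(\mathbf pxy\mathbf q)\notin\mathbf V$ via Lemma~\ref{L: M(W) in V}, the matching block decomposition from $M(xy)\in\mathbf V$, the linear-balanced conclusion from $M(xt_1x\cdots t_nx)\in\mathbf V$, and the final elimination of $\mathbf v_k=xy$ by nontriviality are all exactly right; the side observation that $xy$ is an isoterm because it is (up to renaming) a factor of the isoterm $xt_1x\cdots t_nx$ is also sound, noting that $n\ge1$ is forced by $\{a_1,\dots,a_{k+\ell}\}=\{x,y\}$ together with $\occ_x(\mathbf p\mathbf q)\le n$.
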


\begin{lemma}
\label{L: nsub M(xy)}
Let $\mathbf V$ be a monoid variety satisfying the identities $x^n\approx x^{n+1}$ and $x^ny^n\approx y^nx^n$ for some $n\in\mathbb N$.
If $M(xy)\notin\mathbf V$, then $\mathbf V$ is commutative.
\end{lemma}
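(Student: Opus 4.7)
The plan is to deduce from the failure of $M(xy)$ to lie in $\mathbf V$ that $\mathbf V$ satisfies some one-variable identity $x\approx x^m$ with $m\ge 2$, then bootstrap to $x\approx x^2$, and finally invoke commutativity of $n$th powers. By Lemma~\ref{L: M(W) in V}, $xy$ is not an isoterm for $\mathbf V$, so $\mathbf V$ satisfies some nontrivial identity $xy\approx\mathbf w$.

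First I would analyze $\mathbf w$ by occurrences. Set $a:=\occ_x(\mathbf w)$ and $b:=\occ_y(\mathbf w)$. If $a=0$, substituting $y\mapsto 1$ and every foreign letter by $1$ collapses $xy\approx\mathbf w$ to $x\approx 1$, forcing $\mathbf V$ to be trivial (hence commutative). If $a\ge 2$, the same substitution yields the monadic identity $x\approx x^a$. The situations $b=0$ and $b\ge 2$ are symmetric. In the remaining case $a=b=1$, the word $\mathbf w$ contains exactly one $x$ and one $y$. If $\mathbf w$ also has no foreign letters, then $\mathbf w\in\{xy,yx\}$; since $\mathbf w\ne xy$, one has $\mathbf w=yx$ and we are immediately done. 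Otherwise $\mathbf w$ contains some foreign letter, and the substitution $y\mapsto 1$ together with every foreign letter $\mapsto x$ yields $x\approx x^{1+f}$, where $f\ge 1$ is the number of foreign letters in $\mathbf w$; this again gives $x\approx x^m$ with $m\ge 2$.

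The second step would be to upgrade $x\approx x^m$ with $m\ge 2$ to $x\approx x^2$. Iterating gives $x\approx x^{m^j}$ for every $j\ge 1$; choosing $j$ large enough that $m^j\ge n$ and using $x^n\approx x^{n+1}$ (which yields $x^i\approx x^n$ for all $i\ge n$) gives $x\approx x^n$. Then $x^2\approx x^{2n}\approx x^n\approx x$, so $\mathbf V$ satisfies $x\approx x^2$. Every element is therefore idempotent, and the assumed identity $x^ny^n\approx y^nx^n$ degenerates to $xy\approx x^ny^n\approx y^nx^n\approx yx$, proving that $\mathbf V$ is commutative.

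The main obstacle I anticipate is the subcase $a=b=1$ with foreign letters present. Substituting foreign letters to $1$ can collapse $\mathbf w$ to $xy$ and thereby lose the identity, so one must instead substitute them to $x$ while sending $y\mapsto 1$; this increases the exponent of $x$ past $1$ on the right-hand side, producing the required nontrivial monadic identity. Once that case is handled, the remaining manipulations are routine reductions of one-variable identities.
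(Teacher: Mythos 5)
Your proof is correct, and it takes a genuinely more self-contained route than the paper's. The paper's proof appeals to Lemma~2.7 of~\cite{Gusev-Sapir-22} as a black box, which directly supplies the dichotomy that a monoid variety avoiding $M(xy)$ is either commutative or idempotent; the paper then only adds the (one-line) observation that idempotency plus $x^ny^n\approx y^nx^n$ yields $xy\approx yx$. You instead re-derive that dichotomy from first principles by a clean occurrence count on the nontrivial identity $xy\approx\mathbf w$ guaranteed by Lemma~\ref{L: M(W) in V}: whenever $\occ_x(\mathbf w)\neq 1$ or $\occ_y(\mathbf w)\neq 1$ or a foreign letter is present, a one-variable substitution extracts $x\approx x^m$ with $m\geq 2$, and periodicity then bootstraps this to $x\approx x^2$; the only remaining possibility is $\mathbf w=yx$, which is already the commutative law. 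The trade-off is classic: the paper's proof is shorter but depends on a nontrivial external lemma, while yours is longer but elementary and verifiable in place. Both converge on the same closing step. One minor stylistic note: in the subcase $a=b=1$ with foreign letters, it would be cleaner to say $f$ is the number of \emph{occurrences} of foreign letters rather than the number of foreign letters, though the conclusion $1+f\geq 2$ holds either way.
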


\begin{proof}
In view of Lemma~2.7 in~\cite{Gusev-Sapir-22} and Lemma~\ref{L: M(W) in V}, the variety $\mathbf V$ is either commutative or idempotent.
If $\mathbf V$ is idempotent, then $\mathbf V$ is commutative as well because $x^ny^n\approx y^nx^n$ together with $x\approx x^2$ imply the commutative law.
Thus, $\mathbf V$ is commutative in any case.
\end{proof}

The following lemma can be proved by the arguments similar to ones from the proof of Lemma~4.1 in~\cite{Gusev-Sapir-22}.

\begin{lemma}
\label{L: nsub M(yxx^+)}
Let $\mathbf V$ be a monoid variety satisfying the identity $x^n\approx x^{n+1}$ for some $n\in\mathbb N$ such that $M(xy)\in \mathbf V$.
If $M_\gamma(yxx^+)\notin \mathbf V$, then $\mathbf V$ satisfies the identity $yx^n\approx xyx^n$.\qed
\end{lemma}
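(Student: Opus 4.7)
The plan is to unwind the non-membership $M_\gamma(yxx^+)\notin\mathbf V$ into an explicit identity of $\mathbf V$, then use the hypotheses $M(xy)\in\mathbf V$ and $x^n\approx x^{n+1}$ to massage that identity into the desired $yx^n\approx xyx^n$. By Corollary~\ref{C: M_alpha(W) in V}(i), $M_\gamma(yxx^+)\notin\mathbf V$ is equivalent to the $\gamma$-class $[yxx^+]^\gamma=\{yx^k : k\ge 2\}$ failing to be stable with respect to $\mathbf V$. Hence $\mathbf V$ satisfies some identity $yx^k\approx\mathbf v$ with $k\ge 2$ and $\mathbf v\notin [yxx^+]^\gamma$.

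The second step is to pin down the shape of $\mathbf v$. Since $M(xy)\in\mathbf V$, the word $xy$ is an isoterm for $\mathbf V$, and Lemma~\ref{L: identities of M(xy)} applied to $yx^k\approx\mathbf v$ forces $\con(\mathbf v)=\{x,y\}$, $\simple(\mathbf v)=\{y\}$, and $y$ between the two blocks of $\mathbf v$; hence $\mathbf v=x^p y x^q$ with $p,q\ge 0$ and $p+q\ge 2$. The condition $\mathbf v\notin[yxx^+]^\gamma$ then forces $p\ge 1$.

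The remainder is equational bookkeeping using the aperiodicity law $x^n\approx x^{n+1}$, which collapses $x^m$ to $x^n$ for all $m\ge n$. Right-multiplying $yx^k\approx x^p y x^q$ by $x^n$ and collapsing gives $yx^n\approx x^p y x^n$. Substituting $x\mapsto x^n$ into the latter identity and collapsing again yields $yx^n\approx x^n y x^n$. Left-multiplying this by $x$ and collapsing $x^{n+1}$ to $x^n$ produces
\[
xyx^n\approx x^{n+1}y x^n\approx x^n y x^n\approx yx^n,
\]
as required. The only genuine obstacle is the shape identification of $\mathbf v$ in the second paragraph; everything else is routine manipulation in the free monoid modulo $x^n\approx x^{n+1}$.
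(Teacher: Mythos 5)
Your proof is correct. The paper does not spell out a proof of this lemma (it refers to Lemma~4.1 of Gusev--Sapir~2022), but your argument follows exactly the route such a proof would take: Corollary~\ref{C: M_alpha(W) in V}(i) turns $M_\gamma(yxx^+)\notin\mathbf V$ into a violating identity $yx^k\approx\mathbf v$ with $k\ge 2$; Lemma~\ref{L: identities of M(xy)} together with $M(xy)\in\mathbf V$ forces $\mathbf v=x^pyx^q$ with $y$ simple and $x$ multiple, and $\mathbf v\notin yxx^+$ forces $p\ge 1$; and the final equational manipulation (right-multiply by $x^n$ and collapse to get $yx^n\approx x^pyx^n$, substitute $x\mapsto x^n$ and collapse to get $yx^n\approx x^nyx^n$, then left-multiply by $x$) correctly uses $p\ge 1$ to ensure $np\ge n$. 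Every step is sound, and this is essentially the expected argument.
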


\begin{lemma}
\label{L: nsub M(xyx^+)}
Let $\mathbf V$ be a monoid variety satisfying the identity $x^2\approx x^3$ such that $M(xy)\in \mathbf V$.
If $M_\lambda(xyx^+)\notin \mathbf V$, then $\mathbf V$ satisfies the identity
\begin{equation}
\label{xyxx=xxyxx}
xyx^2\approx x^2yx^2.
\end{equation}
\end{lemma}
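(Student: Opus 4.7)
The plan is to first invoke Corollary~\ref{C: M_alpha(W) in V}(ii) to convert the hypothesis $M_\lambda(xyx^+)\notin\mathbf V$ into the existence of a concrete identity of $\mathbf V$, and then to exploit $M(xy)\in\mathbf V$ together with $x^2\approx x^3$ to reduce to a small case analysis that produces $xyx^2\approx x^2yx^2$.

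More precisely, the $\lambda$-class $\mathtt u:=[xyx]^\lambda=xyx^+$ has $x$ in two islands and $y$ in one, so it is $2$-island-limited. By Corollary~\ref{C: M_alpha(W) in V}(ii) applied with $\mathtt W=\{\mathtt u\}$, the assumption $M_\lambda(xyx^+)\notin\mathbf V$ is equivalent to the failure of stability of $\mathtt u$ with respect to $\mathbf V$. Hence $\mathbf V$ satisfies an identity $xyx^n\approx\mathbf v$ for some $n\in\mathbb N$ and some $\mathbf v\notin xyx^+$. Since this identity also holds in $M(xy)$, Lemma~\ref{L: identities of M(xy)} forces $\con(\mathbf v)=\{x,y\}$ with $y\in\simple(\mathbf v)$; so $\mathbf v=x^ayx^b$ for some $a,b\in\mathbb N_0$ with $a+b\ge 1$. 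The condition $\mathbf v\notin xyx^+$ then says precisely that $(a,b)$ is not of the form $(1,c)$ with $c\ge 1$.

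Because $\mathbf V$ satisfies $x^k\approx x^2$ for every $k\ge 2$ (a consequence of $x^2\approx x^3$), I can replace each of $n,a,b$ by its minimum with $2$ without loss, reducing to the six candidate pairs
\[
(a,b)\in\{(0,1),(0,2),(1,0),(2,0),(2,1),(2,2)\}.
\]
Two of these are ruled out by $M(xy)\in\mathbf V$: substituting $y\mapsto 1$ into $xyx^n\approx yx$ (case $(0,1)$) or $xyx^n\approx xy$ (case $(1,0)$) produces $x^{n+1}\approx x$, which fails in $M(xy)$ for every $n\ge 1$ since $x^{n+1}$ evaluates to the zero of $M(xy)$ while $x$ does not. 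Each of the remaining four pairs yields $xyx^2\approx x^2yx^2$ after at most two concatenations by $x$ followed by the collapse $x^k\approx x^2$; for instance, in case $(0,2)$, appending $x$ to $xyx^n\approx yx^2$ gives $xyx^2\approx yx^2$, and prepending $x$ yields $x^2yx^2\approx xyx^2$.

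The main obstacle here is purely organizational: correctly identifying the six candidates via Lemma~\ref{L: identities of M(xy)}, eliminating the two pairs incompatible with $M(xy)\in\mathbf V$, and dispatching each of the four remaining cases. All of the underlying deductions are short monoid-term manipulations.
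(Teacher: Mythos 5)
Your proposal is correct and takes essentially the same route as the paper: invoke Corollary~\ref{C: M_alpha(W) in V}(ii) to obtain a violating identity $xyx^n\approx\mathbf v$ with $\mathbf v\notin xyx^+$, constrain $\mathbf v=x^ayx^b$ via Lemma~\ref{L: identities of M(xy)}, and then multiply by $x$ on suitable sides and collapse exponents using $x^2\approx x^3$. The only cosmetic difference is that the paper uses the full strength of Lemma~\ref{L: identities of M(xy)} (the decomposition shape forces $\simple(\mathbf v)=\{y\}$, so $x$ is multiple in $\mathbf v$, i.e.\ $a+b\ge 2$), which removes your extra cases $(0,1)$ and $(1,0)$ without the ad hoc substitution $y\mapsto 1$.
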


\begin{proof}
In view of Corollary~\ref{C: M_alpha(W) in V}(ii), the $\lambda$-class $xyx^+$ is not stable with respect to $\mathbf V$.
This means that $\mathbf V$ satisfies an identity $\mathbf u\approx \mathbf v$ such that $\mathbf u\in xyx^+$ and $\mathbf v\notin xyx^+$.
Since $M(xy)\in \mathbf V$, Lemma~\ref{L: identities of M(xy)} implies that $\mathbf v\in yxx^+\cup xx^+yx^\ast$.
If $\mathbf v\in yxx^+$, then the identity $x\mathbf ux\approx x\mathbf vx$ is equivalent modulo $x^2\approx x^3$ to~\eqref{xyxx=xxyxx}.
If $\mathbf v\in xx^+yx^\ast$, then the identity $\mathbf ux^2\approx \mathbf vx^2$ is equivalent modulo $x^2\approx x^3$ to~\eqref{xyxx=xxyxx} again. 
\end{proof}

The expression $_{i\mathbf w}x$ means the $i$th occurrence of a letter $x$ in a word $\mathbf w$. 
We use $_{\ell\mathbf w}x$ to refer to the last occurrence of $x$ in $\mathbf w$.  
If the $i$th occurrence of $x$ precedes the $j$th occurrence of $y$ in a word $\mathbf w$, then we write $({_{i\mathbf w}x}) < ({_{j\mathbf w}y})$.

\begin{lemma}
\label{L: nsub M([yx^2zy]),M(xx^+yty)}
Let $\mathbf V$ be a monoid variety satisfying the identity $x^2\approx x^3$ such that $M(xyx)\in \mathbf V$.
\begin{itemize}
\item[\textup{(i)}] If $M_\nu([yx^2ty]^\nu)\notin \mathbf V$, then $\mathbf V$ satisfies the identity
\begin{equation}
\label{yxxtxxyxx=xxyxxtxxyxx}
yx^2tx^2yx^2\approx x^2yx^2tx^2yx^2.
\end{equation}
\item[\textup{(ii)}] If $M_{\gamma^\prime}(xx^+yty)\notin \mathbf V$, then $\mathbf V$ satisfies the identity
\begin{equation} 
\label{xxyty=xxyxty}
x^2yty\approx x^2yxty.
\end{equation} 
\end{itemize}
\end{lemma}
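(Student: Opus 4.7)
The plan is to adapt the strategy used in the proof of Lemma~\ref{L: nsub M(xyx^+)}. In each part, the non-stability hypothesis provides a non-trivial identity $\mathbf u \approx \mathbf v$ of $\mathbf V$ whose left-hand side lies in the indicated congruence class and whose right-hand side lies outside it. One then uses $M(xyx)\in\mathbf V$ together with $x^2\approx x^3$ to restrict $\mathbf v$ to a parametrized form, and finally manipulates the identity via carefully chosen substitutions and left/right multiplications by powers of $x$ to derive the claimed target.

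For part~(ii), Corollary~\ref{C: M_alpha(W) in V}(iv) applies, because the factor $yty$ of $x^2yty$ realizes the $xyx$-pattern required in~\eqref{eq: condition for FIC(M(xyx))} with letters $y,t$. Consequently, $[x^2yty]^{\gamma'}=xx^+yty$ is not stable with respect to $\mathbf V$, so $\mathbf V$ satisfies a non-trivial identity $\mathbf u\approx\mathbf v$ with $\mathbf u\in xx^+yty$ and $\mathbf v\notin xx^+yty$. By $x^2\approx x^3$ one may assume $\mathbf u=x^2yty$. Applying the substitution $x\mapsto1$ and invoking the fact that $yty$ is an isoterm for $\mathbf V$ (since $xyx$ is, as $M(xyx)\in\mathbf V$), one deduces that $\mathbf v=x^{a_0}yx^{a_1}tx^{a_2}yx^{a_3}$ for some $a_0,a_1,a_2,a_3\in\mathbb N_0$. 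The hypothesis $\mathbf v\notin xx^+yty$ means that either $a_0\leq 1$ or at least one of $a_1,a_2,a_3$ is positive. Multiplying $\mathbf u\approx\mathbf v$ on the left by $x^2$ and applying $x^2\approx x^3$ reduces to the case $a_0\geq 2$; then, depending on which of $a_1,a_2,a_3$ is positive, an appropriate substitution among $t\mapsto xt$, $y\mapsto xy$, $t\mapsto tx$, or $y\mapsto yx$ applied to $\mathbf u\approx\mathbf v$, combined once more with $x^2\approx x^3$ to collapse the extra $x$-powers, produces the target identity~\eqref{xxyty=xxyxty}.

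Part~(i) proceeds along the same lines. Lemma~\ref{L: M_alpha(W) in V} applied to $M_\nu([yx^2ty]^\nu)\notin\mathbf V$ supplies a $\nu$-class in $\{[yx^2ty]^\nu\}^{\le_\nu}$ that is not stable with respect to $\mathbf V$; after reducing to $[yx^2ty]^\nu$ itself via the factor structure, one obtains an identity $\mathbf u\approx\mathbf v$ with $\mathbf u\in[yx^2ty]^\nu$ and $\mathbf v$ outside it. The same $x\mapsto 1$ projection combined with the $yty$-isoterm argument forces $\mathbf v=x^{a_0}yx^{a_1}tx^{a_2}yx^{a_3}$. The target identity~\eqref{yxxtxxyxx=xxyxxtxxyxx} has the pleasant feature that each of its non-trivial blocks is already $x^2$; therefore, after padding both sides of $\mathbf u\approx\mathbf v$ with $x^2$-factors at every block position and then normalizing all $x$-powers $\geq 2$ to $x^2$, every discrepancy in the $a_i$ is absorbed, and~\eqref{yxxtxxyxx=xxyxxtxxyxx} drops out.

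The main obstacle will be the case analysis, particularly in part~(i): the congruence $\nu$ blends the first-occurrence collapse coming from $\var\{xy\approx xyx\}$ with the $\mathbf M(xyx)$ structure, so the witness of non-stability can deviate from the canonical representative $yx^2ty$ in several positions; each deviation requires its own preliminary manipulation before the uniform $x^2$-padding trick applies.
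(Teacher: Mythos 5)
Your overall strategy (locate an unstable congruence class, extract a non-trivial identity, constrain its right-hand side using isoterms, then manipulate to reach the target) matches the spirit of the paper. However, both parts have genuine gaps.

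For part~(ii), your non-stability step via Corollary~\ref{C: M_alpha(W) in V}(iv) is fine, but the paper does not proceed directly: it first splits on whether $M_\gamma(xx^+y)\in\mathbf V$. If not, the dual of Lemma~\ref{L: nsub M(yxx^+)} gives $x^2y\approx x^2yx$ which already yields~\eqref{xxyty=xxyxty}; if so, stability of $xx^+y$ forces $\mathbf v(x,t)\in xx^+t$, so in your notation $a_2=a_3=0$ and $a_0\ge2$, leaving only the benign case $a_1>0$. Your proposal skips this split and instead asserts that the substitutions $t\mapsto xt,\ y\mapsto xy,\ t\mapsto tx,\ y\mapsto yx$ handle all configurations. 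They do not: if, say, $a_0\ge2$, $a_1=a_2=0$, $a_3>0$, the identity you extract is $x^2yty\approx x^2ytyx$, and none of your four insertion-type substitutions combined with $x^2\approx x^3$ produces $x^2yty\approx x^2yxty$ from it. You would need to erase $y$ first (restrict to $(x,t)$) to obtain $x^2t\approx x^2tx$ and then reinsert --- an operation absent from your list. So the derivation step is incomplete as stated, though fixable.

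For part~(i), the gap is more serious. You correctly observe that $\nu$ is not among the congruences covered by Corollary~\ref{C: M_alpha(W) in V}, so Lemma~\ref{L: M_alpha(W) in V} only gives you some unstable $\nu$-class somewhere in $\{[yx^2ty]^\nu\}^{\le_\nu}$, not the top one. The phrase ``after reducing to $[yx^2ty]^\nu$ itself via the factor structure'' conceals the bulk of the paper's argument. The paper first case-splits on $M_\gamma(yxx^+)\in\mathbf V$: when it fails, Lemma~\ref{L: nsub M(yxx^+)} gives $yx^2\approx xyx^2$, which already implies~\eqref{yxxtxxyxx=xxyxxtxxyxx}. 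Only when $M_\gamma(yxx^+)\in\mathbf V$ does the paper list out all thirty-odd $\nu$-classes in the factor closure and verify, one by one, that every proper class is stable (using $M(xyx)\in\mathbf V$ for the singletons and $\mathbf M(xyx)\vee\mathbf M_\gamma(yxx^+)$ for classes like $[x^2y]^\nu$). Without the assumption $M_\gamma(yxx^+)\in\mathbf V$ several of those classes are \emph{not} stable, so the reduction you invoke simply fails. Your final ``pad with $x^2$ at every block position'' step is broadly plausible once one knows $\mathbf v\in x^+yx^\ast tx^\ast yx^\ast$, but that membership is itself extracted from the stability of other $\nu$-classes --- the piece your proposal omits.
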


\begin{proof}
(i) If $M_\gamma(yxx^+)\notin \mathbf V$, then $\mathbf V$ satisfies the identity
\begin{equation}
\label{yxx=xyxx}
yx^2\approx xyx^2
\end{equation}
by Lemma~\ref{L: nsub M(yxx^+)}.
Evidently, the latter identity implies the identity~\eqref{yxxtxxyxx=xxyxxtxxyxx}.
So, we may further assume that $M_\gamma(yxx^+)\in \mathbf V$.
Using Lemma~\ref{L: le_alpha}, it is routine to check that
\[
\begin{aligned}
\{[yx^2ty]^\nu\}^{\le_\nu}= \{&1,  x,  y, t,[x^2]^\nu,\\
&xy, xt, yx,  tx, ty, [x^2y]^\nu, [x^2t]^\nu, [yx^2]^\nu,  [tx^2]^\nu,\\
&xyx, xtx, xty, yxt, txy, tyx, [x^2ty]^\nu, [yx^2t]^\nu,  [tx^2y]^\nu, [tyx^2]^\nu,\\ 
&xtxy, xtyx, yxtx, yxty, txyx, yxtxy, yxtyx, [yx^2ty]^\nu\}.
\end{aligned}
\]
Since $M(xyx)\in\mathbf V$, Lemma~\ref{L: M(W) in V} implies that all singleton $\nu$-classes in $\{[yx^2ty]^\nu\}^{\le_\nu}$ together with $[x^2]^\nu$ are stable with respect to $\mathbf V$.
Further, consider an identity $\mathbf w\approx \mathbf w^\prime$ of $\mathbf V$ with $\mathbf w\in[x^2y]^\nu=xx^+yx^\ast\cup xyxx^+$.
Since $\mathbf M(xyx)\vee\mathbf M_\gamma(yxx^+)\subseteq \mathbf V$, Lemmas~\ref{L: M(W) in V} and~\ref{L: M_alpha(W) in V} imply that $\simple(\mathbf w^\prime)=\{y\}$ and $\mul(\mathbf w^\prime)=\{x\}$ but $\mathbf w^\prime\notin yxx^+\cup\{xyx\}$.
Hence $\mathbf w^\prime\in[x^2y]^\nu$ and so $[x^2y]^\nu$ is stable with respect to $\mathbf V$.
By a similar argument we can show that the other non-singleton $\nu$-classes in $\{[yx^2ty]^\nu\}^{\le_\nu}$ except $[yx^2ty]^\nu$ are stable with respect to $\mathbf V$.
This fact and Lemma~\ref{L: M_alpha(W) in V} imply that the $\nu$-class $[yx^2ty]^\nu$ is not stable with respect to $\mathbf V$.
This means that $\mathbf V$ satisfies an identity $\mathbf u\approx \mathbf v$ such that $\mathbf u\in [yx^2ty]^\nu$ and $\mathbf v\notin [yx^2ty]^\nu$.
Since $\mathbf M(xyx)\vee\mathbf M_\gamma(yxx^+)\subseteq \mathbf V$, Lemmas~\ref{L: M(W) in V} and~\ref{L: M_alpha(W) in V} imply that $\mathbf v(y,t)=yty$, $\mathbf v(x,t)\ne xtx$ and $(_{1\mathbf v}x)<(_{1\mathbf v}t)$.
Hence $\mathbf v\in x^+yx^\ast tx^\ast yx^\ast$ because $\mathbf v\notin [yx^2ty]^\nu$.
It follows that $\mathbf u\approx \mathbf v$ together with $x^2\approx x^3$ imply~\eqref{yxxtxxyxx=xxyxxtxxyxx}. 

\smallskip

(ii) If $M_\gamma(xx^+y)\notin \mathbf V$, then $\mathbf V$ satisfies the identity
\begin{equation}
\label{xxy=xxyx}
x^2y\approx x^2yx
\end{equation}
by the dual to Lemma~\ref{L: nsub M(yxx^+)}.
Evidently, the latter identity implies the identity~\eqref{xxyty=xxyxty}.
So, we may further assume that $M_\gamma(xx^+y)\in \mathbf V$.
In view of Corollary~\ref{C: M_alpha(W) in V}(iv), the $\gamma^\prime$-class $xx^+yty$ is not stable with respect to $\mathbf V$.
This means that $\mathbf V$ satisfies an identity $\mathbf u\approx \mathbf v$ such that $\mathbf u\in xx^+yty$ and $\mathbf v\notin xx^+yty$.
Since $\mathbf M(xyx)\vee \mathbf M_\gamma(xx^+y)\subseteq \mathbf V$, Lemmas~\ref{L: M(W) in V} and~\ref{L: M_alpha(W) in V} imply that $\mathbf v(x,t)\in xx^+t$ and $\mathbf v(y,t)=yty$.
Hence $(_{1\mathbf v}y)<(_{\ell\mathbf v}x)$ because $\mathbf v\notin xx^+yty$.
It follows that $\mathbf u\approx \mathbf v$ together with $x^2\approx x^3$ imply~\eqref{xxyty=xxyxty}.
\end{proof}

\begin{lemma}
\label{L: nsub M(yxx^+ty)}
Let $\mathbf V$ be a monoid variety satisfying the identity $x^2\approx x^3$ such that $\mathbf M(xyx)\vee \mathbf M_\gamma(xx^+y)\subseteq \mathbf V$.
If $M_{\gamma^\prime}(yxx^+ty)\notin \mathbf V$, then $\mathbf V$ satisfies the identity
\begin{equation} 
\label{yxxty=xyxxty}
yx^2ty\approx xyx^2ty.
\end{equation}
\end{lemma}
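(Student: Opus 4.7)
The plan mirrors the structure of Lemma~\ref{L: nsub M([yx^2zy]),M(xx^+yty)}(ii). First I would dispose of the easy case: if $M_\gamma(yxx^+)\notin\mathbf V$, then Lemma~\ref{L: nsub M(yxx^+)} applied with $n=2$ (available since $x^2\approx x^3$ holds in $\mathbf V$ and $M(xy)\in\mathbf M(xyx)\subseteq\mathbf V$) gives $yx^2\approx xyx^2$, which yields~\eqref{yxxty=xyxxty} upon postmultiplying by $ty$. So we may henceforth assume $M_\gamma(yxx^+)\in\mathbf V$, and the full hypothesis becomes $\mathbf M(xyx)\vee\mathbf M_\gamma(yxx^+)\vee\mathbf M_\gamma(xx^+y)\subseteq\mathbf V$.

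The representative $yx^2ty$ restricted to the letters $y$ and $t$ equals $yty$, so the class $[yxx^+ty]^{\gamma^\prime}$ satisfies condition~\eqref{eq: condition for FIC(M(xyx))}. Corollary~\ref{C: M_alpha(W) in V}(iv) then converts the assumption $M_{\gamma^\prime}(yxx^+ty)\notin\mathbf V$ into the statement that the $\gamma^\prime$-class $[yxx^+ty]^{\gamma^\prime}$ is not stable with respect to $\mathbf V$. Hence $\mathbf V$ satisfies some identity $\mathbf u\approx\mathbf v$ with $\mathbf u\in[yxx^+ty]^{\gamma^\prime}$ and $\mathbf v\notin[yxx^+ty]^{\gamma^\prime}$.

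The core of the argument is pinning down the shape of $\mathbf v$. Substituting $y\to 1$ in $\mathbf u\approx\mathbf v$ produces $x^2t\approx\mathbf v(x,t)$, and stability of the $\gamma$-class $xx^+y$ under $\mathbf V$ (coming from $\mathbf M_\gamma(xx^+y)\subseteq\mathbf V$ via Lemma~\ref{L: M_alpha(W) in V}) forces $\mathbf v(x,t)\in xx^+t$; in particular every occurrence of $x$ in $\mathbf v$ lies left of $t$, and there are at least two of them. Substituting $x\to 1$ produces $yty\approx\mathbf v(y,t)$, and since $xyx$ is an isoterm for $\mathbf V$ (Lemma~\ref{L: M(W) in V}), this gives $\mathbf v(y,t)=yty$. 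Combining, $\mathbf v=x^ayx^bty$ with $a,b\geq 0$ and $a+b\geq 2$. Moreover, $M(xyx)\in\mathbf V$ implies $\mathbf u\mathrel{\FIC(\mathbf M(xyx))}\mathbf v$; so the possibility $a=0$ would force $\mathbf v\in[yxx^+ty]^\gamma\cap[yxx^+ty]^{\FIC(\mathbf M(xyx))}=[yxx^+ty]^{\gamma^\prime}$, contradicting the choice of $\mathbf v$. Therefore $a\geq 1$.

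It remains to verify that $yx^2ty\approx x^ayx^bty$ with $a\geq 1$ and $a+b\geq 2$, together with $x^2\approx x^3$, implies~\eqref{yxxty=xyxxty}. The cases $a\geq 2$, $b\geq 2$, or $b=0$ are immediate because the extra $x$-powers can be absorbed into $x^2$ on either side of $y$. The main obstacle is the residual case $a=b=1$, where $\mathbf v=xyxty$: here one substitutes $x\mapsto x^2$ in the derived identity and uses $x^2\approx x^3\approx x^4$ to obtain $yx^2ty\approx x^2yx^2ty$, then premultiplies the original identity by $x^2$ to collapse $x^2yxty$ to $x^2yx^2ty$, and finally transports everything through $xyx^2ty\approx x^2yxty$ to recover $yx^2ty\approx xyx^2ty$. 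Once this case is handled, the lemma is established.
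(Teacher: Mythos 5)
Your proposal is correct and follows essentially the same route as the paper: invoke Corollary~\ref{C: M_alpha(W) in V}(iv) to get an identity $\mathbf u\approx\mathbf v$ with $\mathbf u\in yxx^+ty$ and $\mathbf v\notin yxx^+ty$, use Lemmas~\ref{L: M(W) in V} and~\ref{L: M_alpha(W) in V} to force $\mathbf v(x,t)\in xx^+t$ and $\mathbf v(y,t)=yty$, and then conclude that $({_{1\mathbf v}}x)<({_{1\mathbf v}}y)$ so that $\mathbf u\approx\mathbf v$ together with $x^2\approx x^3$ yields~\eqref{yxxty=xyxxty}. The opening case split on whether $M_\gamma(yxx^+)\in\mathbf V$ is harmless but unnecessary (the paper does without it), and the detour through $\FIC(\mathbf M(xyx))$ to conclude $a\geq 1$ can be shortened: if $a=0$ then $\mathbf v=yx^bty$ with $b\geq 2$ already lies in $yxx^+ty$, an immediate contradiction.
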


\begin{proof}
In view of Corollary~\ref{C: M_alpha(W) in V}(iv), the $\gamma^\prime$-class $yxx^+ty$ is not stable with respect to $\mathbf V$.
This means that $\mathbf V$ satisfies an identity $\mathbf u\approx \mathbf v$ such that $\mathbf u\in yxx^+ty$ and $\mathbf v\notin yxx^+ty$.
Since $\mathbf M(xyx)\vee \mathbf M_\gamma(xx^+y)\subseteq \mathbf V$, Lemmas~\ref{L: M(W) in V} and~\ref{L: M_alpha(W) in V} imply that $\mathbf v(x,t)\in xx^+t$ and $\mathbf v(y,t)=yty$.
Hence $(_{1\mathbf v}x)<(_{1\mathbf v}y)$ because $\mathbf v\notin yxx^+ty$.
It follows that $\mathbf u\approx \mathbf v$ together with $x^2\approx x^3$ imply~\eqref{yxxty=xyxxty}. 
\end{proof}

\begin{corollary}
\label{C: nsub M(yxx^+ty)}
Let $\mathbf V$ be a monoid variety satisfying the identities $x^2\approx x^3$ and
\begin{equation}
\label{xxytxy=yxxtxy}
x^2ytxy\approx yx^2txy
\end{equation}
such that $M(xyx)\in\mathbf V$.
If $M_{\gamma^\prime}(yxx^+ty)\notin \mathbf V$, then $\mathbf V$ satisfies the identity~\eqref{yxxty=xyxxty}.
\end{corollary}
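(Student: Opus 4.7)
The plan is to proceed by a case split on whether $M_\gamma(xx^+y) \in \mathbf V$. If so, then combined with the hypothesis $M(xyx) \in \mathbf V$ one has $\mathbf M(xyx) \vee \mathbf M_\gamma(xx^+y) \subseteq \mathbf V$, and Lemma~\ref{L: nsub M(yxx^+ty)} delivers~\eqref{yxxty=xyxxty} directly.

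In the remaining case $M_\gamma(xx^+y) \notin \mathbf V$, I would first observe that $M(xy) \in \mathbf V$, since $xy$ is a factor of the isoterm $xyx$ and therefore is itself an isoterm by Lemma~\ref{L: M(W) in V}. The dual of Lemma~\ref{L: nsub M(yxx^+)} then yields $\mathbf V \models x^2y \approx x^2yx$, that is, identity~\eqref{xxy=xxyx}. Corollary~\ref{C: M_alpha(W) in V}(iv) applies (the condition~\eqref{eq: condition for FIC(M(xyx))} being met by the representative $\mathbf u = yx^2ty$ via the projection $\mathbf u(y,t) = yty$), so the hypothesis $M_{\gamma^\prime}(yxx^+ty) \notin \mathbf V$ produces a separating identity $\mathbf u \approx \mathbf v$ of $\mathbf V$ with $\mathbf u = yx^2ty$ (after normalisation through $x^2 \approx x^3$) and $\mathbf v \notin yxx^+ty$. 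Lemma~\ref{L: identities of M(xy)} together with $M(xyx) \in \mathbf V$ then forces $\mathbf v = x^a y x^b t x^c y x^d$ for nonnegative integers $a, b, c, d$ with $(a, b, c, d)$ not of the shape $(0, k, 0, 0)$, $k \ge 2$.

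The conclusion~\eqref{yxxty=xyxxty} is next to be deduced by a case analysis on the quadruple $(a, b, c, d)$, combining the separating identity with~\eqref{xxy=xxyx}, \eqref{xxytxy=yxxtxy} and $x^2 \approx x^3$, along with the substitutional variants of~\eqref{xxytxy=yxxtxy} arising from the substitutions $x \mapsto xy$, $x \mapsto x^2$ and $t \mapsto 1$ (these yield, for instance, the auxiliary identity $yx^2tx^2y \approx xyx^2tx^2y$ after collapsing $x^{\ge 2}$-islands via $x^2 \approx x^3$). Sub-cases with $a \ge 1$ close quickly, since $\mathbf v$ already carries a leading $x$ and one merely absorbs extraneous $x$-islands through $x^2 \approx x^3$. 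The main obstacle is the sub-case $a = 0$: here the leading $x$ of $xyx^2ty$ has to be manufactured from~\eqref{xxytxy=yxxtxy} and its substitutional consequences, and the plan is to route the deduction through the enlarged word $yx^2tx^2y$ (or a similar intermediate) and then collapse the trailing $x$-tails back to the desired form via~\eqref{xxy=xxyx}.
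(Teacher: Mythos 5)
Your first branch (the case $M_\gamma(xx^+y)\in\mathbf V$, handled by Lemma~\ref{L: nsub M(yxx^+ty)}) matches the paper exactly. In the second branch, however, you have introduced a substantial detour that is both unnecessary and unfinished. Once you have derived~\eqref{xxy=xxyx} from the dual of Lemma~\ref{L: nsub M(yxx^+)}, the conclusion~\eqref{yxxty=xyxxty} follows \emph{directly} from~\eqref{xxy=xxyx}, the hypothesis~\eqref{xxytxy=yxxtxy}, and $x^2\approx x^3$, with no further use of $M_{\gamma^\prime}(yxx^+ty)\notin\mathbf V$. Concretely, the paper's deduction is
\[
yx^2ty
\stackrel{\eqref{xxy=xxyx}}\approx yx^2txy
\stackrel{\eqref{xxytxy=yxxtxy}}\approx x^2ytxy
\stackrel{x^2\approx x^3}\approx x^3ytxy
\stackrel{\eqref{xxytxy=yxxtxy}}\approx xyx^2txy
\stackrel{\eqref{xxy=xxyx}}\approx xyx^2ty,
\]
where~\eqref{xxy=xxyx} is applied with $y\mapsto t$ and suitable prefix/suffix, and~\eqref{xxytxy=yxxtxy} is applied once bare and once with the prefix $x$.

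Your plan instead invokes Corollary~\ref{C: M_alpha(W) in V}(iv) to obtain a separating identity $yx^2ty\approx\mathbf v$ with $\mathbf v=x^ayx^btx^cyx^d$ and then proposes a case analysis on $(a,b,c,d)$. Apart from being a roundabout route, this plan is not carried through: you explicitly flag the sub-case $a=0$ as ``the main obstacle'' and only indicate an intention to ``route the deduction through the enlarged word $yx^2tx^2y$,'' without exhibiting a derivation. So as written, the second branch of your argument has a genuine gap. The fix is to notice that the separating-identity machinery is simply not needed once~\eqref{xxy=xxyx} is in hand — the three identities $x^2\approx x^3$, \eqref{xxytxy=yxxtxy}, \eqref{xxy=xxyx} already force~\eqref{yxxty=xyxxty}.
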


\begin{proof}
If $M_\gamma(xx^+y)\notin \mathbf V$, then $\mathbf V$ satisfies~\eqref{xxy=xxyx} by the dual to Lemma~\ref{L: nsub M(yxx^+)}.
In this case, the identity~\eqref{yxxty=xyxxty} is also satisfied by $\mathbf V$ because
\[
yx^2ty\stackrel{\eqref{xxy=xxyx}}\approx yx^2txy\stackrel{\eqref{xxytxy=yxxtxy}}\approx x^2ytxy\stackrel{x^2\approx x^3}\approx x^3ytxy\stackrel{\eqref{xxytxy=yxxtxy}}\approx xyx^2txy\stackrel{\eqref{xxy=xxyx}}\approx xyx^2ty.
\]
If $M_\gamma(xx^+y)\in \mathbf V$, then the required claim follows from Lemma~\ref{L: nsub M(yxx^+ty)}.
\end{proof}

Let $\mathbf A$ denote the variety defined by the identities $x^2\approx x^3$ and
\begin{equation}
\label{xxyx=xxyxx}
x^2yx\approx x^2yx^2.
\end{equation}

\begin{lemma}
\label{L: nsub M(x^+yzx^+)}
Let $\mathbf V$ be a subvariety of $\mathbf A$ such that $M(xy)\in \mathbf V$.
If $M_\gamma(x^+yzx^+)\notin \mathbf V$, then $\mathbf V$ satisfies the identity
\begin{equation}
\label{xxyzx=xxyxzx}
x^2yzx\approx x^2yxzx.
\end{equation}
\end{lemma}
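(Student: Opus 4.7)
The plan is to apply Corollary~\ref{C: M_alpha(W) in V}(i) with $\alpha=\gamma$ to the hypothesis $M_\gamma(x^+yzx^+)\notin\mathbf V$, yielding a non-trivial identity $\mathbf u\approx\mathbf v$ of $\mathbf V$ with $\mathbf u=x^pyzx^q\in x^+yzx^+$ (so $p,q\geq 1$) and $\mathbf v\notin x^+yzx^+$. Since $M(xy)\in\mathbf V$, Lemma~\ref{L: identities of M(xy)} forces $\mathbf v=x^ayx^bzx^c$ for some $a,b,c\geq 0$; preservation of simple letters (namely $\{y,z\}$) also forces $x\in\mul(\mathbf v)$, whence $a+b+c\geq 2$. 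The hypothesis $\mathbf v\notin x^+yzx^+$ then partitions the situation into three exhaustive cases: (i)~$b\geq 1$; (ii)~$a=b=0$, forcing $c\geq 2$; (iii)~$b=c=0$, forcing $a\geq 2$.

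Before the case split I would make the following reduction. In $\mathbf A$, substituting $y\mapsto yz$ and $y\mapsto yxz$ into the defining identity $x^2yx\approx x^2yx^2$ and combining with $x^2\approx x^3$ gives $x^2yzx^n\approx x^2yzx$ and $x^2yxzx^n\approx x^2yxzx$ for every $n\geq 1$. Hence the target~\eqref{xxyzx=xxyxzx} is equivalent modulo $\mathbf A$ to $x^2yzx^2\approx x^2yxzx^2$, and this is the identity I will actually establish in each case. I will also freely use the easy $\mathbf A$-consequence $x^2yx^b\approx x^2yx$, valid for every $b\geq 1$.

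In case~(i) I would multiply $\mathbf u\approx\mathbf v$ by $x^2$ on both sides and reduce exponents via $x^2\approx x^3$ to obtain $x^2yzx^2\approx x^2yx^bzx^2$; applying $x^2yx^b\approx x^2yx$ to the right-hand side immediately yields the goal. In cases~(ii) and~(iii) the key trick is to substitute $z\mapsto 1$ or $y\mapsto 1$ respectively in $\mathbf u\approx\mathbf v$, producing a two-variable identity $x^pyx^q\approx yx^c$ (in case~(ii)) or $x^pzx^q\approx x^az$ (in case~(iii)). Reducing modulo $\mathbf A$, the identity falls into one of four reduced forms indexed by whether $p=1$ or $p\geq 2$ and similarly for $q$; all four collapse --- using $x^2yx\approx x^2yx^2$ to merge the large-exponent forms --- to the single useful consequence $x^2yx\approx yx^2$ in case~(ii) and $x^2yx\approx x^2y$ in case~(iii). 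In case~(iii), right-multiplying $x^2yx\approx x^2y$ by $zx^2$ yields $x^2yxzx^2\approx x^2yzx^2$ at once. In case~(ii), substituting $y\mapsto z$ in $x^2yx\approx yx^2$ gives $x^2zx\approx zx^2$, and the further substitution $z\mapsto xz$ plus reduction by $x^2\approx x^3$ yields $xzx^2\approx zx^2$; applying this rewrite inside $x^2yxzx^2=x^2y\cdot xzx^2$ collapses it to $x^2yzx^2$.

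The main obstacle will be verifying that all four $(p,q)$-reduced forms in cases~(ii) and~(iii) genuinely converge to the same useful two-variable consequence modulo $\mathbf A$; this step is routine but must be checked carefully, and it crucially relies on $x^2yx\approx x^2yx^2$ to identify the several large-exponent reduced forms with each other.
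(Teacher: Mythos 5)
Your proof is correct, and it takes a genuinely different route from the paper's. The paper splits into three cases based on the relative positions of occurrences in $\mathbf v = x^a y x^b z x^c$: either the first $y$ precedes the first $x$ (i.e.\ $a=0$), or the last $x$ precedes the first $z$ (i.e.\ $c=0$), or there is an intermediate occurrence of $x$ strictly between the first and last (forcing $a,b,c \geq 1$); in the first two cases the paper adjoins a \emph{fresh} letter $s$, forming identities of the shape $x^2 s\,\mathbf u(x,y)\,x \approx x^2 s\,\mathbf v(x,y)\,x$ and $x^2\,\mathbf u(x,z)\,s\,x \approx x^2\,\mathbf v(x,z)\,s\,x$, while in the third case it just prepends $x$. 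You instead split by the combinatorial shape of $\mathbf v$ --- $b\geq 1$ (handled uniformly, regardless of $a,c$), or $a=b=0$ with $c\geq 2$, or $b=c=0$ with $a\geq 2$ --- and in the latter two cases you \emph{delete} a letter (substitute $z\mapsto 1$ or $y\mapsto 1$) to reduce to a two-variable identity, from which you extract $x^2yx\approx yx^2$ or $x^2yx\approx x^2y$ respectively, and then rewrite the target directly. The partitions overlap differently (for instance, $a=0,b\geq 1$ lands in the paper's first case but in your first case), and your case~(i) is noticeably broader and handled more uniformly than the paper's third case. The trade-off is that your two-variable collapse in cases~(ii) and~(iii) involves a four-way sub-case check on $p,q \in \{1,\geq 2\}$ that the paper's adjunction trick avoids; all four sub-cases do check out via the merges you indicate, and the final rewrites $xzx^2\approx zx^2$ (case~(ii)) and $x^2yx\approx x^2y$ (case~(iii)) applied to $x^2y\cdot xzx^2$ correctly give $x^2yzx^2\approx x^2yxzx^2$, which you rightly identified as equivalent in $\mathbf A$ to the target~\eqref{xxyzx=xxyxzx}.
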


\begin{proof}
In view of Corollary~\ref{C: M_alpha(W) in V}(i), the $\gamma$-class $x^+yzx^+$ is not stable with respect to $\mathbf V$.
This means that $\mathbf V$ satisfies an identity $\mathbf u\approx \mathbf v$ such that $\mathbf u\in x^+yzx^+$ and $\mathbf v\notin x^+yzx^+$.
Since $M(xy)\in \mathbf V$, Lemma~\ref{L: identities of M(xy)} implies that $\mathbf v(y,z)=yz$ and $\mul(\mathbf v)=\{x\}$.
Then it is routine to check that
\begin{itemize}
\item if $(_{1\mathbf v}y)<(_{1\mathbf v}x)$, then the identity $x^2s\mathbf u(x,y)x\approx x^2s\mathbf v(x,y)x$ is equivalent modulo $\{x^2\approx x^3,\,\eqref{xxyx=xxyxx}\}$ to~\eqref{xxyzx=xxyxzx};
\item if $(_{\ell\mathbf v}x)<(_{1\mathbf v}z)$, then the identity $x^2\mathbf u(x,z)sx\approx x^2\mathbf v(x,z)sx$ is equivalent modulo $\{x^2\approx x^3,\,\eqref{xxyx=xxyxx}\}$ to~\eqref{xxyzx=xxyxzx};
\item if $(_{1\mathbf v}x)<(_{1\mathbf v}y)<(_{j\mathbf v}x)<(_{1\mathbf v}z)<(_{\ell\mathbf v}x)$ for some $1<j<\occ_x(\mathbf v)$, then the identity $x\mathbf u\approx x\mathbf v$ is equivalent modulo $\{x^2\approx x^3,\,\eqref{xxyx=xxyxx}\}$ to~\eqref{xxyzx=xxyxzx}.
\end{itemize}
Hence $\mathbf V$ satisfies the identity~\eqref{xxyzx=xxyxzx} in any case. 
\end{proof}

\begin{lemma}
\label{L: nsub M([x^2yzytx^2]),M([x^2yzx^2ty])}
Let $\mathbf V$ be a subvariety of $\mathbf A$ satisfying the identity~\eqref{yxxty=xyxxty} such that $M(xyx)\in \mathbf V$.
\begin{itemize}
\item[\textup{(i)}] If $M_{\gamma^\prime}([x^2yzytx^2]^{\gamma^\prime})\notin \mathbf V$, then $\mathbf V$ satisfies the identity
\begin{equation} 
\label{xxyzytx=yxxzytx}
x^2yzytx\approx yx^2zytx.
\end{equation} 
\item[\textup{(ii)}] If $M_{\gamma^\prime}([x^2yzx^2ty]^{\gamma^\prime})\notin \mathbf V$, then $\mathbf V$ satisfies the identity
\begin{equation} 
\label{xxyzxty=yxxzxty}
x^2yzxty\approx yx^2zxty.
\end{equation} 
\end{itemize}
\end{lemma}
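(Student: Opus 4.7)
The argument follows the template of Lemmas~\ref{L: nsub M(yxx^+ty)} and~\ref{L: nsub M([yx^2zy]),M(xx^+yty)}(ii). I sketch the strategy for part~(i); part~(ii) is proved analogously by replacing $[x^2yzytx^2]^{\gamma^\prime}$ with $[x^2yzx^2ty]^{\gamma^\prime}$ and the target identity~\eqref{xxyzytx=yxxzytx} with~\eqref{xxyzxty=yxxzxty}.

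First, I would verify the applicability of Corollary~\ref{C: M_alpha(W) in V}(iv). The condition~\eqref{eq: condition for FIC(M(xyx))} holds for $\mathbf u=x^2yzytx^2$ because $\mathbf u(y,z)=yzy$, an $xyx$-pattern in the letters $y,z$. The hypothesis then yields an identity $\mathbf u\approx\mathbf v$ of $\mathbf V$ with $\mathbf u\in[x^2yzytx^2]^{\gamma^\prime}$ and $\mathbf v\notin[x^2yzytx^2]^{\gamma^\prime}$.

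Next, I would constrain the shape of $\mathbf v$. Since $xy$ is a factor of $xyx$, Lemma~\ref{L: M(W) in V} yields $M(xy)\in\mathbf V$, and Lemma~\ref{L: identities of M(xy)} provides $\con(\mathbf v)=\{x,y,z,t\}$, $\simple(\mathbf v)=\{z,t\}$, and a decomposition $\mathbf v=\mathbf v_0z\mathbf v_1t\mathbf v_2$. Because $\mathbf u\approx\mathbf v$ holds in $M(xyx)$, we have $\mathbf u\mathrel{\FIC(\mathbf M(xyx))}\mathbf v$, and combined with $\mathbf v\notin[\mathbf u]^{\gamma^\prime}$ this forces $\mathbf u\not\mathrel{\gamma}\mathbf v$: writing $\mathbf u=x^ayzytx^b$ for some $a,b\ge 1$, the letter-patterns of $\mathbf u$ and $\mathbf v$ (words with successive repetitions collapsed) must differ.

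Then I would enumerate the possible letter-pattern discrepancies. Applying substitutions that kill one or more of $x,y,z,t$ in $\mathbf u\approx\mathbf v$ and exploiting the isoterm property of $xyx$ (together with consequences of $x^2\approx x^3$ and~\eqref{xxyx=xxyxx}) narrows the admissible form of $\mathbf v$ to one in which the first occurrence of $y$ lies before the initial block of $x$'s. Multiplying on the left/right by suitable words and using the assumed identity~\eqref{yxxty=xyxxty} to normalize exponents finally produces the target identity~\eqref{xxyzytx=yxxzytx}. Part~(ii) follows by the parallel analysis, with $\mathbf v_1=x^c$ replacing $\mathbf v_1=y$ and the identity~\eqref{yxxty=xyxxty} used analogously to reach~\eqref{xxyzxty=yxxzxty}.

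The main obstacle will be the case analysis in the third step: cataloguing all ways in which $\mathbf v$ can fail to be $\gamma$-equivalent to $\mathbf u$ while remaining $\FIC(\mathbf M(xyx))$-equivalent, and in each case selecting appropriate substitutions and rewrites using $x^2\approx x^3$,~\eqref{xxyx=xxyxx}, and~\eqref{yxxty=xyxxty} to reduce to the target.
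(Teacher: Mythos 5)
The proposal correctly identifies the general template (Corollary~\ref{C: M_alpha(W) in V}(iv) to extract $\mathbf u\approx\mathbf v$, then constrain $\mathbf v$, then deduce the target), and the observations about $\mathbf u(y,z)=yzy$ satisfying condition~\eqref{eq: condition for FIC(M(xyx))} and about $\mathbf u\not\mathrel{\gamma}\mathbf v$ are correct. However, there is a genuine gap: the proof must begin with a preliminary case split on whether $M_\gamma(x^+yzx^+)\in\mathbf V$, and your proposal omits it. In the paper's proof, if $M_\gamma(x^+yzx^+)\notin\mathbf V$ then Lemma~\ref{L: nsub M(x^+yzx^+)} immediately gives~\eqref{xxyzx=xxyxzx}, which together with~\eqref{yxxty=xyxxty} yields the target and we are done; in the remaining case, the inclusion $\mathbf M_\gamma(x^+yzx^+)\subseteq\mathbf V$ is then used via Lemma~\ref{L: M_alpha(W) in V} to pin down $\mathbf v(x,z,t)\in x^+ztx^+$ (in particular, no occurrence of $x$ lies between $z$ and $t$, and $\occ_x(\mathbf v)\ge 3$). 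This is essential and is \emph{not} available from the isoterm property of $xyx$ alone, since the projections $\mathbf u(x,z)=x^2zx^2$ and $\mathbf u(x,t)=x^2tx^2$ are not $xyx$-patterns, so $M(xyx)\in\mathbf V$ says nothing about where the $x$'s sit relative to $z$ and $t$ in $\mathbf v$.

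Concretely: without the case split, $\mathbf v$ could a priori have an $x$ inside the block between $z$ and $t$ (e.g.\ $\mathbf v_1\in\{xy,yx,xyx\}$ rather than $\mathbf v_1=y$), and the final step ``multiply by $x^2$ and rewrite with $x^2\approx x^3$, \eqref{xxyx=xxyxx}, and \eqref{yxxty=xyxxty}'' would not reduce such a $\mathbf v$ to $yx^2zytx$. You flag the third step as ``the main obstacle,'' but the obstacle is not a large but tractable case analysis: without the extra hypothesis $M_\gamma(x^+yzx^+)\in\mathbf V$ (or the complementary conclusion~\eqref{xxyzx=xxyxzx}), some of the cases simply cannot be closed with the identities you list. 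This is precisely the pattern in the nearby Lemmas~\ref{L: nsub M(yxx^+ty)} and~\ref{L: nsub M([yx^2zy]),M(xx^+yty)} that you cite as a template: each first either assumes or separately discharges an auxiliary monoid-membership hypothesis before extracting the shape of $\mathbf v$.
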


\begin{proof}
(i) If $M_\gamma(x^+yzx^+)\notin \mathbf V$, then $\mathbf V$ satisfies the identity~\eqref{xxyzx=xxyxzx} by Lemma~\ref{L: nsub M(x^+yzx^+)}.
Evidently, the latter identity together with~\eqref{yxxty=xyxxty} imply the identity~\eqref{xxyzytx=yxxzytx}.
So, we may further assume that $M_\gamma(x^+yzx^+)\in \mathbf V$.
In view of Corollary~\ref{C: M_alpha(W) in V}(iv), the $\gamma^\prime$-class $[x^2yzytx^2]^{\gamma^\prime}$ is not stable with respect to $\mathbf V$.
This means that $\mathbf V$ satisfies an identity $\mathbf u\approx \mathbf v$ such that $\mathbf u\in [x^2yzytx^2]^{\gamma^\prime}$ and $\mathbf v\notin [x^2yzytx^2]^{\gamma^\prime}$.
Since $\mathbf M(xyx)\vee \mathbf M_\gamma(x^+yzx^+)\subseteq \mathbf V$, Lemmas~\ref{L: M(W) in V} and~\ref{L: M_alpha(W) in V} imply that $\mathbf v(x,z,t)\in x^+ztx^+$, $\occ_x(\mathbf v)\ge3$ and $\mathbf v(y,z,t)=yzyt$.
Hence there is an occurrence of $x$ between $_{1\mathbf v}y$ and $_{1\mathbf v}z$ in $\mathbf v$ because $\mathbf v\notin [x^2yzytx^2]^{\gamma^\prime}$.
It follows that $x^2\mathbf u\approx x^2\mathbf v$ is equivalent modulo $\{\eqref{yxxty=xyxxty},\,\eqref{xxyx=xxyxx}\}$ to~\eqref{xxyzytx=yxxzytx}.

\smallskip

(ii) The proof is quite similar to the proof of Part~(i).
\end{proof}

The proof of the following lemma is similar to the proof of Lemma~\ref{L: nsub M([x^2yzytx^2]),M([x^2yzx^2ty])} and we omit it.

\begin{lemma}
\label{L: nsub M([yzx^2ytx^2])}
Let $\mathbf V$ be a subvariety of $\mathbf A$ satisfying the identity
\begin{equation}
\label{ytyxx=ytxyxx}
ytyx^2\approx ytxyx^2
\end{equation}
such that $M(xyx)\in \mathbf V$.
If $M_{\gamma^\prime}([yzx^2ytx^2]^{\gamma^\prime})\notin \mathbf V$, then the identity
\begin{equation} 
\label{yzxxytx=yzyxxtx}
yzx^2ytx\approx yzyx^2tx
\end{equation}
holds in $\mathbf V$.\qed
\end{lemma}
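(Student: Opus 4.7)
The plan is to imitate the proof of Part~(i) of Lemma~\ref{L: nsub M([x^2yzytx^2]),M([x^2yzx^2ty])}, replacing the identity~\eqref{yxxty=xyxxty} by~\eqref{ytyxx=ytxyxx} and adjusting the target accordingly. The argument splits into two cases according to whether $M_\gamma(x^+yzx^+)\in\mathbf V$.

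In the first case, where $M_\gamma(x^+yzx^+)\notin\mathbf V$, Lemma~\ref{L: nsub M(x^+yzx^+)} delivers the identity~\eqref{xxyzx=xxyxzx}. Together with~\eqref{ytyxx=ytxyxx},~\eqref{xxyx=xxyxx}, and $x^2\approx x^3$, this yields the target~\eqref{yzxxytx=yzyxxtx} by an explicit chain of rewrites,
\begin{align*}
yzyx^2tx
&\stackrel{\eqref{xxyx=xxyxx}}{\approx}yzyx^2tx^2
\stackrel{\eqref{ytyxx=ytxyxx}}{\approx}yzxyx^2tx^2
\stackrel{\eqref{ytyxx=ytxyxx}}{\approx}yzx^2yx^2tx^2\\
&\stackrel{\eqref{xxyzx=xxyxzx}}{\approx}yzx^2yxtx^2
\stackrel{\eqref{xxyzx=xxyxzx}}{\approx}yzx^2ytx^2
\stackrel{\eqref{xxyx=xxyxx}}{\approx}yzx^2ytx,
\end{align*}
where the two applications of~\eqref{ytyxx=ytxyxx} use the substitutions $(y,t)\mapsto(y,z)$ and $(y,t)\mapsto(y,zx)$, and the two applications of~\eqref{xxyzx=xxyxzx} use $(y,z)\mapsto(yx,t)$ and $(y,z)\mapsto(y,t)$.

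In the second case, $M_\gamma(x^+yzx^+)\in\mathbf V$, so $\mathbf M(xyx)\vee\mathbf M_\gamma(x^+yzx^+)\subseteq\mathbf V$. Corollary~\ref{C: M_alpha(W) in V}(iv) applied to the hypothesis $M_{\gamma'}([yzx^2ytx^2]^{\gamma'})\notin\mathbf V$ supplies an identity $\mathbf u\approx\mathbf v$ of $\mathbf V$ with $\mathbf u\in[yzx^2ytx^2]^{\gamma'}$ and $\mathbf v\notin[yzx^2ytx^2]^{\gamma'}$. The inclusions, via Lemmas~\ref{L: M(W) in V} and~\ref{L: M_alpha(W) in V}, force $\mathbf v(y,z,t)=yzyt$, constrain the distribution of the $x$'s in $\mathbf v$, and (since $\mathbf v$ leaves the $\gamma'$-class of $\mathbf u$) pin down a specific positional mismatch involving the first $y$ and an adjacent $x$-island. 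The identity $\mathbf u\approx\mathbf v$ then reduces, modulo~\eqref{ytyxx=ytxyxx} and~\eqref{xxyx=xxyxx}, to the target~\eqref{yzxxytx=yzyxxtx}.

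The principal obstacle is the first case: because~\eqref{ytyxx=ytxyxx} acts on the left of an $x^2$-island rather than the right, the rewrite chain must begin by padding an $x^2$-tail via~\eqref{xxyx=xxyxx} and then use two carefully chosen substitutions in~\eqref{ytyxx=ytxyxx} to transport the inner $y$ leftwards; two symmetric applications of~\eqref{xxyzx=xxyxzx} are then needed to eliminate the auxiliary $x$'s produced in the process. The second case is essentially a routine, if delicate, shape analysis of the type carried out in the preceding lemma.
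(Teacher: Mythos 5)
Your proposal is correct and follows the approach the paper indicates (the proof is omitted in the paper with a pointer to the ``quite similar'' Lemma~\ref{L: nsub M([x^2yzytx^2]),M([x^2yzx^2ty])}). I verified the six-step rewrite chain in your first case step by step, and each application of~\eqref{xxyx=xxyxx},~\eqref{ytyxx=ytxyxx} and~\eqref{xxyzx=xxyxzx} (with the substitutions you indicate) is correct; this is exactly the analogue of the paper's ``Evidently, (4.9) together with (4.4) imply (4.10)'' remark, made explicit.

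One small imprecision worth fixing in the second case: the positional mismatch that the shape analysis pins down concerns the \emph{second} occurrence of $y$ (the one between $x^2$ and $t$ in $yzx^2ytx^2$), not the first. In Lemma~\ref{L: nsub M([x^2yzytx^2]),M([x^2yzx^2ty])}(i) it is indeed the first $y$ that is displaced relative to the leading $x$-island, because there the first $y$ sits immediately after $x^2$ in $x^2yzytx^2$; here the first $y$ is frozen at the very front of the word, and the target~\eqref{yzxxytx=yzyxxtx} moves the inner $y$. Concretely, once Lemmas~\ref{L: M(W) in V} and~\ref{L: M_alpha(W) in V} force $\mathbf v(y,z,t)=yzyt$ and $\mathbf v(x,z,t)\in zx^+tx^+$, the fact that $\mathbf v\notin[yzx^2ytx^2]^{\gamma'}$ (while $\mathbf v\mathrel{\FIC(\mathbf M(xyx))}\mathbf u$) forces an $x$ between the second $y$ and $t$; applying the substitution $x\mapsto x^2$ to $\mathbf u\approx\mathbf v$, normalising with $x^2\approx x^3$, and then iterating~\eqref{ytyxx=ytxyxx} and applying~\eqref{xxyx=xxyxx} yields~\eqref{yzxxytx=yzyxxtx}. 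With that wording adjusted, the proof is complete.
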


\begin{lemma}
\label{L: nsub M([x^2zytx^2y])}
Let $\mathbf V$ be a subvariety of $\mathbf A$ satisfying~\eqref{ytyxx=ytxyxx} and
\begin{equation}
\label{xxytxy=xxytyx}
x^2ytxy\approx x^2ytyx
\end{equation}
such that $M(xyx)\in\mathbf V$.
If $M_{\gamma^\prime}([x^2zytx^2y]^{\gamma^\prime})\notin \mathbf V$, then $\mathbf V$ satisfies the identity
\begin{equation} 
\label{xxzytxy=xxzytyx}
x^2zytxy\approx x^2zytyx.
\end{equation}
\end{lemma}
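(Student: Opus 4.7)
I would follow the pattern established in the proofs of Lemmas~\ref{L: nsub M([x^2yzytx^2]),M([x^2yzx^2ty])} and~\ref{L: nsub M([yzx^2ytx^2])}. The argument splits into an auxiliary reduction step followed by an analysis via Corollary~\ref{C: M_alpha(W) in V}(iv).

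First, I would handle the subcase $M_\gamma(x^+yzx^+) \notin \mathbf V$ separately. In this situation, Lemma~\ref{L: nsub M(x^+yzx^+)} yields the identity $x^2yzx \approx x^2yxzx$. Combining this with the hypothesized identities \eqref{ytyxx=ytxyxx} and \eqref{xxytxy=xxytyx}, together with the defining identities $x^2\approx x^3$ and \eqref{xxyx=xxyxx} of $\mathbf A$, a short syntactic derivation yields \eqref{xxzytxy=xxzytyx}; the extra $x$ inserted between $z$ and $y$ by \eqref{xxyzx=xxyxzx} brings the context into the form required to apply \eqref{xxytxy=xxytyx}. So I may assume $M_\gamma(x^+yzx^+) \in \mathbf V$.

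Next, I would verify the condition~\eqref{eq: condition for FIC(M(xyx))} for the $\gamma^\prime$-class $[x^2zytx^2y]^{\gamma^\prime}$: the projection of $x^2zytx^2y$ onto the letters $\{y,t\}$ equals $yty$, so the condition holds. By Corollary~\ref{C: M_alpha(W) in V}(iv), the assumption $M_{\gamma^\prime}([x^2zytx^2y]^{\gamma^\prime}) \notin \mathbf V$ means this $\gamma^\prime$-class fails to be stable with respect to $\mathbf V$. Hence $\mathbf V$ satisfies some non-trivial identity $\mathbf u \approx \mathbf v$ with $\mathbf u \in [x^2zytx^2y]^{\gamma^\prime}$ and $\mathbf v \notin [x^2zytx^2y]^{\gamma^\prime}$.

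To pin down the shape of $\mathbf v$, I would use that $M(xyx) \in \mathbf V$ implies $M(xy) \in \mathbf V$: Lemma~\ref{L: identities of M(xy)} forces $\con(\mathbf v)=\{x,y,z,t\}$ and a parallel block decomposition $\mathbf v=\mathbf v_0 z \mathbf v_1 t \mathbf v_2$ with $\mathbf v_i \in \{x,y\}^\ast$. The projections of $\mathbf u$ onto the pairs $(x,z)$, $(x,t)$, $(y,t)$ each give an $xyx$-pattern, and the projection onto $(x,y)$ contains a $yxy$-pattern, so (since $M(xyx) \in \mathbf V$) these patterns must persist in the corresponding projections of $\mathbf v$. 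This forces $x$ to occur in both $\mathbf v_0$ and $\mathbf v_2$ and $y$ to occur in both $\mathbf v_1$ and $\mathbf v_2$. Combining with $\mathbf M_\gamma(x^+yzx^+)\subseteq \mathbf V$ via Lemma~\ref{L: M_alpha(W) in V} to rule out additional insertions of $x$ around $z$, the only residual freedom is the relative order of the final $x$ and $y$ in $\mathbf v_2$; the only way $\mathbf v$ can fail to lie in $[x^2zytx^2y]^{\gamma^\prime}$ is for this order to be inverted compared with $\mathbf u$. The resulting identity, read modulo $x^2\approx x^3$ and \eqref{xxyx=xxyxx}, is precisely~\eqref{xxzytxy=xxzytyx}.

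The main obstacle will be the last step: exhaustively excluding the other potential deviations of $\mathbf v$ from the $\gamma^\prime$-class of $\mathbf u$. Each such alternative shape must be ruled out either by the $M(xyx)$-isoterm constraints on two-letter projections, by the stability of $[x^+yzx^+]^\gamma$, or by being subsumed under the auxiliary identities already assumed; keeping track of all the cases and making sure no case produces a strictly weaker identity requiring a further reduction (e.g., via the dual monoid $M_\gamma(x^+zyx^+)$) is the one point where some care is required.
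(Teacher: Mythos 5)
Your proposal is correct and follows essentially the same path as the paper's own proof: dichotomize on $M_\gamma(x^+yzx^+)$, invoke Lemma~\ref{L: nsub M(x^+yzx^+)} and a short derivation with~\eqref{xxytxy=xxytyx} in the first branch, then apply Corollary~\ref{C: M_alpha(W) in V}(iv) and constrain $\mathbf v$ via $M(xyx)$ and the stability of the $\gamma$-class $x^+ztx^+$, arriving at $(_{\ell\mathbf v}y)<(_{\ell\mathbf v}x)$. The only slip is bookkeeping of the auxiliary identities: in the first branch the paper needs only~\eqref{xxyzx=xxyxzx} and~\eqref{xxytxy=xxytyx} (not~\eqref{ytyxx=ytxyxx}), whereas~\eqref{ytyxx=ytxyxx} is in fact the hypothesis used at the end to handle the sub-case $\mathbf v\in x^+zytx^+yx^+$, moving the extra $x$-block past the last $y$ before normalizing with $x^2\approx x^3$ and~\eqref{xxyx=xxyxx} -- this is exactly the ``care required'' you flag in your last paragraph.
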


\begin{proof}
If $M_\gamma(x^+yzx^+)\notin \mathbf V$, then $\mathbf V$ satisifies~\eqref{xxyzx=xxyxzx} by Lemma~\ref{L: nsub M(x^+yzx^+)}.
In this case, the identity~\eqref{xxzytxy=xxzytyx} holds in $\mathbf V$ because
\[
x^2zytxy\stackrel{\eqref{xxyzx=xxyxzx}}\approx
x^2zx^2ytxy\stackrel{\eqref{xxytxy=xxytyx}}\approx
x^2zx^2ytyx\stackrel{\eqref{xxyzx=xxyxzx}}\approx
x^2zytyx.
\]
So, we may further assume that $M_\gamma(x^+yzx^+)\in \mathbf V$.
In view of Corollary~\ref{C: M_alpha(W) in V}(iv), the $\gamma^\prime$-class $[x^2zytx^2y]^{\gamma^\prime}$ is not stable with respect to $\mathbf V$.
This means that $\mathbf V$ satisfies an identity $\mathbf u\approx \mathbf v$ such that $\mathbf u\in [x^2zytx^2y]^{\gamma^\prime}$ and $\mathbf v\notin [x^2zytx^2y]^{\gamma^\prime}$.
Since $\mathbf M(xyx)\vee \mathbf M_\lambda(x^+yzx^+)\subseteq \mathbf V$, Lemmas~\ref{L: M(W) in V} and~\ref{L: M_alpha(W) in V} imply that $\mathbf v(x,z,t)\in x^+ztx^+$, $\occ_x(\mathbf v)\ge3$ and $\mathbf v(y,z,t)=zyty$.
Hence $(_{\ell\mathbf v}y)<(_{\ell\mathbf v}x)$ because $\mathbf v\notin [x^2zytx^2y]^{\gamma^\prime}$.
It follows that $\mathbf u\approx \mathbf v$ together with $\{x^2\approx x^3,\,\eqref{xxyx=xxyxx},\,\eqref{ytyxx=ytxyxx}\}$ imply~\eqref{xxzytxy=xxzytyx}.
\end{proof}

\begin{lemma}
\label{L: nsub M([xyzx^2ty]),M(xyzytxx^+),M(yzyxtxx^+)}
Let $\mathbf V$ be a subvariety of $\mathbf A$ such that $\mathbf M(xyx)\vee \mathbf M_\lambda(xyx^+)\subseteq \mathbf V$.
\begin{itemize}
\item[\textup{(i)}] If $M_\eta([xyzx^2ty]^\eta)\notin \mathbf V$, then $\mathbf V$ satisfies the identity
\begin{equation} 
\label{xyzxxtxyx=yxzxxtxyx}
xyzx^2txyx\approx yxzx^2txyx.
\end{equation} 
\item[\textup{(ii)}] If $M_{\lambda^\prime}(xyzytxx^+)\notin \mathbf V$, then $\mathbf V$ satisfies the identity
\begin{equation} 
\label{xyzytxx=yxzytxx}
xyzytx^2\approx yxzytx^2.
\end{equation} 
\item[\textup{(iii)}] If $M_{\lambda^\prime}(yzyxtxx^+)\notin \mathbf V$, then $\mathbf V$ satisfies the identity
\begin{equation} 
\label{yzxytxx=yzyxtxx}
yzxytx^2\approx yzyxtx^2.
\end{equation} 
\end{itemize}
\end{lemma}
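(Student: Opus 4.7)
The plan is to treat all three parts uniformly, following the recipe established by Lemmas~\ref{L: nsub M([yx^2zy]),M(xx^+yty)}--\ref{L: nsub M([x^2zytx^2y])} above. In each case I would begin by listing the congruence classes in $\{\mathtt u\}^{\le_\alpha}$ (for $\mathtt u=[xyzx^2ty]^\eta$, $xyzytxx^+$, $yzyxtxx^+$, respectively), then verify one class at a time that every proper sub-class is stable with respect to $\mathbf V$, so that Lemma~\ref{L: M_alpha(W) in V} (or Corollary~\ref{C: M_alpha(W) in V}(v) for parts (ii) and (iii), since the top classes are $2$-island-limited) forces the top class itself to be the one which fails stability. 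Once this is done, I extract an identity $\mathbf u\approx\mathbf v$ of $\mathbf V$ with $\mathbf u$ in the top class and $\mathbf v$ outside, pin down the shape of $\mathbf v$ using the hypothesis $\mathbf M(xyx)\vee\mathbf M_\lambda(xyx^+)\subseteq\mathbf V$, and deduce the required identity after multiplying by suitable powers of $x$ and reducing modulo $\{x^2\approx x^3,\,\eqref{xxyx=xxyxx}\}$ (these hold since $\mathbf V\subseteq\mathbf A$).

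For part~(i), the $\eta$-class $[xyzx^2ty]^\eta$ contains the words $xyzx^kty$ with $k\ge2$ together with everything obtainable via the defining relations $xyxz\approx xyxzx$ and $ytx^2y\approx ytyx^2$ of $\eta$. The hypotheses $M(xyx)\in\mathbf V$ and $M_\lambda(xyx^+)\in\mathbf V$ (together with $\mathbf M_\gamma(yxx^+)\le\mathbf M_\lambda(xyx^+)$, which can be read off from Lemma~\ref{L: L(M(xzyx^+ty^+))}) rule out non-stability for every proper sub-class in $\{[xyzx^2ty]^\eta\}^{\le_\eta}$ via Lemmas~\ref{L: M(W) in V} and~\ref{L: M_alpha(W) in V}. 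Hence the top class is non-stable, giving an identity $\mathbf u\approx\mathbf v$ with $\mathbf u\in[xyzx^2ty]^\eta$ and $\mathbf v\notin[xyzx^2ty]^\eta$. Since $M(xyx)\in\mathbf V$ controls $xyx$-patterns and $M_\lambda(xyx^+)\in\mathbf V$ controls adjacency of the first two $x$-occurrences, the only residual freedom is the relative order of the first $x$ and first $y$, so $\mathbf v$ must have $y$ preceding $x$ at the start. Pre- and post-multiplying $\mathbf u\approx\mathbf v$ by $x$ and reducing modulo $\mathbf A$ collapses everything to precisely~\eqref{xyzxxtxyx=yxzxxtxyx}.

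Parts~(ii) and~(iii) are analogous but work with the congruence $\lambda^\prime$ on the $2$-island-limited $\lambda^\prime$-classes $xyzytxx^+$ and $yzyxtxx^+$, so Corollary~\ref{C: M_alpha(W) in V}(v) applies directly. After the corresponding stability analysis of proper $\lambda^\prime$-sub-classes (again dispatched by $\mathbf M(xyx)$ and $\mathbf M_\lambda(xyx^+)$), one obtains an identity $\mathbf u\approx\mathbf v$ whose left side lies in the top class and whose right does not. Because $M_\lambda(xyx^+)\in\mathbf V$, the two first occurrences of $x$ remain adjacent in $\mathbf v$, so $\mathbf v$ must still end in $\cdots tx^p$ with $p\ge2$ up to $\lambda$-equivalence; the only remaining degree of freedom is the swap of the first occurrences of $x$ and $y$ (part~(ii)) or of $x$ past the second $y$ (part~(iii)). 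In both cases the resulting identity reduces, modulo $\{x^2\approx x^3,\,\eqref{xxyx=xxyxx}\}$, to~\eqref{xyzytxx=yxzytxx} and~\eqref{yzxytxx=yzyxtxx} respectively.

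The main obstacle is the stability enumeration for part~(i): the principal $\eta$-filter $\{[xyzx^2ty]^\eta\}^{\le_\eta}$ is substantially larger than the $\lambda^\prime$-filters in parts~(ii)/(iii) because $\eta$ is defined by two nontrivial generating identities on top of $\FIC(\mathbf M(xyx))$, and each sub-class must be individually reconciled with what $\mathbf M(xyx)\vee\mathbf M_\lambda(xyx^+)\subseteq\mathbf V$ tells us (in the spirit of Example~\ref{E: xyzxtysx^+}). By contrast, once the case analysis of what $\mathbf v$ may look like is in place, extracting the target identities in all three parts is a routine rewrite modulo $\mathbf A$.
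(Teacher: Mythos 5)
Your proposal matches the paper's proof in structure and in all the essential steps: for part~(i) you enumerate the filter $\{[xyzx^2ty]^\eta\}^{\le_\eta}$ by hand (exactly what the paper does, since $\eta$ is not one of the congruences covered by Corollary~\ref{C: M_alpha(W) in V}), verify that every proper sub-class is stable via $\mathbf M(xyx)\vee\mathbf M_\lambda(xyx^+)\subseteq\mathbf V$, conclude that the top class fails stability, and then pin down the destabilizing identity $\mathbf u\approx\mathbf v$; for parts~(ii) and~(iii) you invoke Corollary~\ref{C: M_alpha(W) in V}(v) to get non-stability of the top class directly.

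Two small slips are worth flagging, neither fatal. First, once Corollary~\ref{C: M_alpha(W) in V}(v) applies in parts~(ii) and~(iii), there is nothing further to check for the proper $\lambda'$-sub-classes --- the corollary's whole purpose, via Lemma~\ref{L: subclasses are stable}, is to avoid the enumeration; the paper goes straight to the destabilizing identity $\mathbf u\approx\mathbf v$ without any filter walk. Second, in parts~(ii) and~(iii) the first two occurrences of $x$ in $xyzytxx^+$ and $yzyxtxx^+$ are \emph{not} adjacent (the $\lambda$-adjacency constraint one imports from $\mathbf M_\lambda(xyx^+)$ is non-adjacency, not adjacency), and for part~(i) the final reduction is not a naive pre/post-multiplication by $x$ (pre-multiplying changes the leading $x$-island, which cannot be undone modulo $\mathbf A$); the paper instead just reads off $\mathbf v\in yxzx^+tx^\ast yx^\ast$, $\mathbf u\in[xyzx^2ty]^\eta$ and observes that the pair already yields~\eqref{xyzxxtxyx=yxzxxtxyx} modulo $\{x^2\approx x^3,\,\eqref{xxyx=xxyxx}\}$ (with a right multiplication by $x$ and the substitution $t\mapsto tx$ if one wants to force all three $x$-islands of the right shape to appear). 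These are descriptive inaccuracies rather than gaps in the strategy.
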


\begin{proof}
(i) If $M(xyzxty)\notin \mathbf V$, then $\mathbf V$ satisfies the identity $\sigma_1$ by Lemma~\ref{L: swapping in linear-balanced}.
Evidently, the latter identity implies the identity~\eqref{xyzxxtxyx=yxzxxtxyx}.
So, we may further assume that $M(xyzxty)\in \mathbf V$.
Using Lemma~\ref{L: le_alpha}, it is routine to check that
\[
\begin{aligned}
\{&[xyzx^2ty]^\eta\}^{\le_\eta}= \{1,  x,  y, z, t, [x^2]^\eta,\\
&xy, xt, yx, yz, zx, tx, ty, [x^2y]^\eta,[x^2t]^\eta, [yx^2]^\eta, [zx^2]^\eta,  [tx^2]^\eta,\\
&xyx, xyz, xtx, xty, yzx, zxt, txy, tyx, [xyx^2]^\eta,[xtx^2]^\eta,[x^2ty]^\eta, [yzx^2]^\eta, [zx^2t]^\nu, [tx^2y]^\eta,[tyx^2]^\eta,\\ 
&xyzx,xtxy,xtyx,yzxt,zxtx,zxty,txyx, [xyzx^2]^\eta, [xtx^2y]^\eta,[xtyx^2]^\eta,[yzx^2t]^\eta, [zxtx^2]^\eta, [zx^2ty]^\eta, \\ 
&xyzxt, yzxtx, yzxty, zxtxy, zxtyx, [xyzx^2t]^\eta,[yzxtx^2]^\eta, [yzx^2ty]^\eta, [zxtx^2y]^\eta, [zxtyx^2]^\eta,[txyx^2]^\eta,\\
&xyzxty, \{yzxtxy, yzxtyx\}, [xyzx^2ty]^\eta, [yzxtx^2y]^\eta\}.
\end{aligned}
\]
Since $M(xyzxty)\in\mathbf V$, Lemma~\ref{L: M(W) in V} implies that all singleton $\eta$-classes in $\{[xyzx^2ty]^\eta\}^{\le_\eta}$ are stable with respect to $\mathbf V$.
Further, consider an identity $\mathbf w\approx \mathbf w^\prime$ of $\mathbf V$ with $\mathbf w\in[yzxtx^2y]^\eta=[yzxtyx^2]^\eta$.
Since $\mathbf M(xyx)\vee\mathbf M_\gamma(xyx^+)\subseteq \mathbf V$, Lemmas~\ref{L: M(W) in V} and~\ref{L: M_alpha(W) in V} imply that $\mathbf w^\prime(x,z,t)\in zxtxx^+$ and $\mathbf w^\prime(y,z,t)\in yzty$.
Hence $\mathbf w^\prime\in[yzxtx^2y]^\eta$ and so $[yzxtx^2y]^\eta$ is stable with respect to $\mathbf V$.
By a similar argument we can show that the other non-singleton $\eta$-classes in $\{[xyzx^2ty]^\eta\}^{\le_\eta}$ except $[xyzx^2ty]^\eta$ are stable with respect to $\mathbf V$.
This fact and Lemma~\ref{L: M_alpha(W) in V} imply that the $\eta$-class $[xyzx^2ty]^\eta$ is not stable with respect to $\mathbf V$.
This means that $\mathbf V$ satisfies an identity $\mathbf u\approx \mathbf v$ such that $\mathbf u\in [xyzx^2ty]^\eta$ and $\mathbf v\notin [xyzx^2ty]^\eta$.
Since $\mathbf M(xyx)\vee\mathbf M_\lambda(xyx^+)\subseteq \mathbf V$, Lemmas~\ref{L: M(W) in V} and~\ref{L: M_alpha(W) in V} imply that $\mathbf v(x,z)\in xzxx^+$, $\mathbf v(y,z,t)=yzty$ and $\mathbf v(x,t)\in xx^+tx^\ast$.
Hence $\mathbf v\in yxz x^+ tx^\ast yx^\ast$ because $\mathbf v\notin [xyzx^2ty]^\eta$.
It follows that $\mathbf u\approx \mathbf v$ together with $\{x^2\approx x^3,\,\eqref{xxyx=xxyxx}\}$ imply~\eqref{xyzxxtxyx=yxzxxtxyx}. 

\smallskip

(ii) In view of Corollary~\ref{C: M_alpha(W) in V}(v), the $\lambda^\prime$-class $xyzytxx^+$ is not stable with respect to $\mathbf V$.
This means that $\mathbf V$ satisfies an identity $\mathbf u\approx \mathbf v$ such that $\mathbf u\in xyzytxx^+$ and $\mathbf v\notin xyzytxx^+$.
Since $\mathbf M(xyx)\vee \mathbf M_\lambda(xyx^+)\subseteq \mathbf V$, Lemmas~\ref{L: M(W) in V} and~\ref{L: M_alpha(W) in V} imply that $\mathbf v(x,z,t)\in xztxx^+$ and $\mathbf v(y,z,t)=yzyt$.
Hence $\mathbf v\in yxzytxx^+$ because $\mathbf v\notin xyzytxx^+$.
It follows that $\mathbf u\approx \mathbf v$ is equivalent modulo $x^2\approx x^3$ to~\eqref{xyzytxx=yxzytxx}. 

\smallskip

(iii) In view of Corollary~\ref{C: M_alpha(W) in V}(v), the $\lambda^\prime$-class $yzyxtxx^+$ is not stable with respect to $\mathbf V$.
This means that $\mathbf V$ satisfies an identity $\mathbf u\approx \mathbf v$ such that $\mathbf u\in yzyxtxx^+$ and $\mathbf v\notin yzyxtxx^+$.
Since $\mathbf M(xyx)\vee \mathbf M_\lambda(xyx^+)\subseteq \mathbf V$, Lemmas~\ref{L: M(W) in V} and~\ref{L: M_alpha(W) in V} imply that $\mathbf v(x,z,t)\in zxtxx^+$ and $\mathbf v(y,z,t)=yzyt$.
Hence $\mathbf v\in yzxytxx^+$ because $\mathbf v\notin yzyxtxx^+$.
It follows that $\mathbf u\approx \mathbf v$ is equivalent modulo $x^2\approx x^3$ to~\eqref{yzxytxx=yzyxtxx}. 
\end{proof}

\begin{lemma}
\label{L: nsub H}
Let $\mathbf V$ be a monoid variety satisfying $\Phi$ such that $M_\lambda(xyx^+)\in \mathbf V$.
If $\mathbf H\nsubseteq \mathbf V$, then $\mathbf V$ satisfies the identity
\begin{equation} 
\label{xyzxxyy=yxzxxyy}
xyzx^2y^2\approx yxzx^2y^2.
\end{equation} 
\end{lemma}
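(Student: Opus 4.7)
My plan is to exploit Lemma~\ref{L: L(D1)}, which says that $\mathfrak L(\mathbf D_1)$ is a chain in which $\mathbf H$ is the unique cover of $\mathbf M_\lambda(xyx^+)$. First I would show that $\mathbf V\cap\mathbf D_1=\mathbf M_\lambda(xyx^+)$: since $M_\lambda(xyx^+)\in\mathbf V$ we have $\mathbf M_\lambda(xyx^+)\subseteq \mathbf V\cap\mathbf D_1$, while $\mathbf H\nsubseteq\mathbf V$ together with $\mathbf H\subseteq\mathbf D_1$ gives $\mathbf H\nsubseteq\mathbf V\cap\mathbf D_1$. The chain structure then forces $\mathbf V\cap\mathbf D_1=\mathbf M_\lambda(xyx^+)$, since $\mathbf M_\lambda(xyx^+)$ is the immediate predecessor of $\mathbf H$ in this chain.

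Next, by Proposition~\ref{P: deduction}, the hypothesis $\mathbf H\nsubseteq\mathbf V$ yields some identity $\mathbf u\approx\mathbf v$ that is satisfied by $\mathbf V$ but fails in $\mathbf H$. Because $\mathbf M_\lambda(xyx^+)\subseteq\mathbf V$, the identity $\mathbf u\approx\mathbf v$ also holds in $\mathbf M_\lambda(xyx^+)$. Using Lemmas~\ref{L: identities of M(xy)} and~\ref{L: M_alpha(W) in V} together with the stability of the $\lambda$-class $xyx^+$ with respect to $\mathbf V$, I would pin down the possible structural forms of $\mathbf u\approx\mathbf v$: the two sides must evaluate to the zero of $M_\lambda(xyx^+)$ (so neither $\mathbf u$ nor $\mathbf v$ is a factor of any $xyx^n$), and the only kind of non-trivial ``swap'' that can occur in such an identity while failing in $\mathbf H$ is a transposition of the first two letters in a context of the shape $x^+ y^+ x^+$ followed by another $y^+$.

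Finally, I would apply a substitution into $\mathbf u\approx\mathbf v$ that specializes the variables to words in $\{x,y,z\}$ witnessing precisely the pattern $xyzx^2y^2$ versus $yxzx^2y^2$, and combine the resulting identity with $\Phi$ (in particular with $x^2y^2\approx y^2x^2$) to derive the target identity~\eqref{xyzxxyy=yxzxxyy} in $\mathbf V$.

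The main obstacle is the final step: the abstract separating identity $\mathbf u\approx\mathbf v$ is not pinned down uniquely, so one must verify that every candidate form compatible with $\mathbf V\models\Phi$ and $M_\lambda(xyx^+)\in\mathbf V$ yields~\eqref{xyzxxyy=yxzxxyy} after a uniform substitution. In effect, one shows that modulo $\Phi$ and the identities already available from the stability of $xyx^+$, every identity separating $\mathbf M_\lambda(xyx^+)$ from $\mathbf H$ inside $\mathbf D_1$ collapses to the swap of the first $xy$ in the word $xyzx^2y^2$.
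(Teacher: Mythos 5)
The paper's proof is a two-line argument: it invokes Propositions~6.9(i) and~6.12 of \cite{Gusev-Vernikov-18} to conclude directly that $\mathbf V$ satisfies an identity of the precise shape $xyz\mathbf a\approx yxz\mathbf b$ with $\con(\mathbf a)=\con(\mathbf b)=\{x,y\}$, then right-multiplies both sides by $x^2y^2$ and collapses modulo $\Phi$ to obtain \eqref{xyzxxyy=yxzxxyy}. The entire burden of the proof is carried by those two external propositions, which characterize what must happen when a variety satisfying $\Phi$ fails to contain $\mathbf H$.

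Your plan starts correctly: the observation that $\mathbf V\wedge\mathbf D_1=\mathbf M_\lambda(xyx^+)$ follows cleanly from Lemma~\ref{L: L(D1)}, and the existence of some identity $\mathbf u\approx\mathbf v$ holding in $\mathbf V$ but not in $\mathbf H$ is immediate from $\mathbf H\nsubseteq\mathbf V$. The final specialization-plus-$\Phi$ step is also the right idea. However, the middle step --- pinning down the structure of the separating identity --- is exactly where the proposal stalls, and you acknowledge this yourself. You assert that ``the only kind of non-trivial swap that can occur \dots is a transposition of the first two letters in a context of the shape $x^+y^+x^+$ followed by another $y^+$,'' but this assertion is precisely the nontrivial content that needs proof; Lemmas~\ref{L: identities of M(xy)} and~\ref{L: M_alpha(W) in V} constrain $\mathbf u\approx\mathbf v$ but nowhere near enough to force this particular form without an explicit description of the fully invariant congruence of $\mathbf H$ (or at least of a generating set of identities for $\mathbf H$). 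That is what Propositions~6.9(i) and 6.12 of \cite{Gusev-Vernikov-18} supply and what your argument is missing; the chain structure of $\mathfrak L(\mathbf D_1)$ tells you \emph{that} $\mathbf H$ is the next step up, but not \emph{which} identities witness the gap. Without that input, the reduction ``every candidate form of $\mathbf u\approx\mathbf v$ collapses to the swap of the first $xy$ in $xyzx^2y^2$'' cannot be verified, so the proposal has a genuine gap at its central step.
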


\begin{proof}
One can easlily deduce from Propositions~6.9(i) and~6.12 in~\cite{Gusev-Vernikov-18} that $\mathbf V$ satisfies an identity $xyz\mathbf a\approx yxz\mathbf b$ with $\con(\mathbf a)=\con(\mathbf b)=\{x,y\}$.
Clearly, the identity $xyz\mathbf ax^2y^2\approx yxz\mathbf bx^2y^2$ is equivalent modulo $\Phi$ to~\eqref{xyzxxyy=yxzxxyy}, and we are done.
\end{proof}

\begin{lemma}
\label{L: nsub M(xx^+yty^+)}
Let $\mathbf V$ be a variety satisfying the identity $x^2\approx x^3$ such that $M_\lambda(xyx^+)\in \mathbf V$.
If $M_\lambda(xx^+yty^+)\notin \mathbf V$, then $\mathbf V$ satisfies the identity
\begin{equation} 
\label{xxytyy=xxyxtyy}
x^2yty^2\approx x^2yxty^2.
\end{equation} 
\end{lemma}

\begin{proof}
If $M_\gamma(xx^+y)\notin \mathbf V$, then $\mathbf V$ satisfies~\eqref{xxy=xxyx} by the dual to Lemma~\ref{L: nsub M(yxx^+)}.
Clearly, this identity implies~\eqref{xxytyy=xxyxtyy}.
So, we may further assume that $M_\gamma(xx^+y)\in \mathbf V$.
In view of Corollary~\ref{C: M_alpha(W) in V}(ii), the $\lambda$-class $xx^+yty^+$ is not stable with respect to $\mathbf V$.
This means that $\mathbf V$ satisfies an identity $\mathbf u\approx \mathbf v$ such that $\mathbf u\in xx^+yty^+$ and $\mathbf v\notin xx^+yty^+$.
Since $\mathbf M_\lambda(xyx^+)\vee \mathbf M_\lambda(xx^+y)\subseteq \mathbf V$, Lemma~\ref{L: M_alpha(W) in V} imply that $\mathbf v(x,t)\in xx^+t$ and $\mathbf v(y,t)\in yty^+$.
Hence $(_{1\mathbf v}y)<(_{\ell\mathbf v}x)$ because $\mathbf v\notin xx^+yty^+$.
It follows that $\mathbf u\approx \mathbf v$ together with $x^2\approx x^3$ imply~\eqref{xxytyy=xxyxtyy}.
\end{proof}

\begin{corollary}
\label{C: nsub M(xx^+yty^+)}
Let $\mathbf V$ be a variety satisfying the identities $x^2\approx x^3$ and~\eqref{yxxty=xyxxty} such that $M_\lambda(xyx^+)\in \mathbf V$.
If $M_\lambda(xx^+yty^+)\notin \mathbf V$, then $\mathbf V$ satisfies the identity
\begin{equation} 
\label{xxytyy=yxxtyy}
x^2yty^2\approx yx^2ty^2.
\end{equation} 
\end{corollary}

\begin{proof}
By Lemma~\ref{L: nsub M(xx^+yty^+)}, $\mathbf V$ satisfies the identity~\eqref{xxytyy=xxyxtyy}.
Then the identities $x^2yty^2\stackrel{\eqref{xxytyy=xxyxtyy}}\approx x^2yx^2ty^2 \stackrel{\eqref{yxxty=xyxxty}}\approx yx^2ty^2$ hold in $\mathbf V$.
\end{proof}

\begin{lemma}
\label{L: nsub M(xyzx^+ty^+)}
Let $\mathbf V$ be a variety satisfying the identity $x^2\approx x^3$ such that $\mathbf M_\lambda(xyx^+)\vee \mathbf M_\gamma(xx^+y)\subseteq \mathbf V$.
If $M_\lambda(xyzx^+ty^+)\notin \mathbf V$, then $\mathbf V$ satisfies the identity
\begin{equation} 
\label{xyzxxtyy=yxzxxtyy}
xyzx^2ty^2\approx yxzx^2ty^2.
\end{equation} 
\end{lemma}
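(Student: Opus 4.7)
The plan is to follow the template of Lemmas~\ref{L: nsub M(xx^+yty^+)} and~\ref{L: nsub M([xyzx^2ty]),M(xyzytxx^+),M(yzyxtxx^+)}. Since $\mathbf M_\lambda(xyx^+)\vee\mathbf M_\gamma(xx^+y)\subseteq\mathbf V$ and the $\lambda$-class $xyzx^+ty^+$ is $2$-island-limited, Corollary~\ref{C: M_alpha(W) in V}(ii) tells us that the failure $M_\lambda(xyzx^+ty^+)\notin\mathbf V$ produces an identity $\mathbf u\approx\mathbf v$ satisfied in $\mathbf V$ with $\mathbf u=xyzx^aty^b\in xyzx^+ty^+$ (so $a,b\ge1$) and $\mathbf v\notin xyzx^+ty^+$.

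The next step is to pin down the shape of $\mathbf v$ by restricting to $2$-letter sub-alphabets. Applying Lemmas~\ref{L: M(W) in V} and~\ref{L: M_alpha(W) in V} together with the hypothesis (and using that $\FIC(\mathbf V)$ is fully invariant, so stability passes to any letter-renamed version of a stable class), I would deduce that
\[
\mathbf v(z,t)=zt,\ \ \mathbf v(x,z)\in xzx^+,\ \ \mathbf v(y,z)\in yzy^+,\ \ \mathbf v(x,t)\in xx^+t,\ \ \mathbf v(y,t)\in yty^+.
\]
The first equality yields a decomposition $\mathbf v=\mathbf v_0z\mathbf v_1t\mathbf v_2$ with $\con(\mathbf v_0\mathbf v_1\mathbf v_2)\subseteq\{x,y\}$; the remaining four constraints, read block by block, force $\mathbf v_0\in\{xy,yx\}$, $\mathbf v_1\in x^+$, and $\mathbf v_2\in y^+$. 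Since $\mathbf v\notin xyzx^+ty^+$, the only remaining possibility is $\mathbf v\in yxzx^+ty^+$, so $\mathbf u\approx\mathbf v$ takes the form $xyzx^aty^b\approx yxzx^nty^m$ for some $a,b,n,m\ge1$.

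Finally, I would derive~\eqref{xyzxxtyy=yxzxxtyy} from this identity and $x^2\approx x^3$ (which applies to every letter, so yields $y^2\approx y^3$ as well). If all four exponents are already at least $2$, one can directly collapse them to $2$. In general, the substitution $z\mapsto zx$ and $t\mapsto ty$ converts $\mathbf u\approx\mathbf v$ into $xyzx^{a+1}ty^{b+1}\approx yxzx^{n+1}ty^{m+1}$, after which the equivalences $x^{a+1}\approx x^{n+1}\approx x^2$ and $y^{b+1}\approx y^{m+1}\approx y^2$ deliver exactly~\eqref{xyzxxtyy=yxzxxtyy}.

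The step I expect to be most delicate is the combinatorial bookkeeping that forces the middle block $\mathbf v_1$ to contain no $y$ and the final block $\mathbf v_2$ to contain no $x$. Each of the five projection conditions is easy individually, but one must intersect them carefully across blocks to rule out stray occurrences of letters drifting into the wrong positions of the decomposition.
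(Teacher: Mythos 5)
Your proposal is correct and follows essentially the same route as the paper: invoke Corollary~\ref{C: M_alpha(W) in V}(ii) to get $\mathbf u\approx\mathbf v$ with $\mathbf u\in xyzx^+ty^+$, $\mathbf v\notin xyzx^+ty^+$, use stability of $xyx^+$ and $xx^+y$ (under letter renaming) to pin down $\mathbf v\in yxzx^+ty^+$, and then collapse exponents modulo $x^2\approx x^3$. The only differences are cosmetic: you record the redundant condition $\mathbf v(z,t)=zt$ and spell out the block-by-block intersection and the final substitution $z\mapsto zx$, $t\mapsto ty$, which the paper leaves implicit.
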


\begin{proof}
In view of Corollary~\ref{C: M_alpha(W) in V}(ii), the $\lambda$-class $xyzx^+ty^+$ is not stable with respect to $\mathbf V$.
This means that $\mathbf V$ satisfies an identity $\mathbf u\approx \mathbf v$ such that $\mathbf u\in xyzx^+ty^+$ and $\mathbf v\notin xyzx^+ty^+$.
Since $\mathbf M_\lambda(xyx^+)\vee \mathbf M_\gamma(xx^+y)\subseteq \mathbf V$, Lemma~\ref{L: M_alpha(W) in V} imply that $\mathbf v(x,z)\in xzx^+$, $\mathbf v(x,t)\in xx^+t$, $\mathbf v(y,z)\in yzy^+$ and $\mathbf v(y,t)\in yty^+$.
Hence $\mathbf v\in yxzx^+ty^+$ because $\mathbf v\notin xyzx^+ty^+$.
It follows that $\mathbf u\approx \mathbf v$ together with $x^2\approx x^3$ imply~\eqref{xyzxxtyy=yxzxxtyy}.
\end{proof}

\begin{corollary}
\label{C: nsub M(xyzx^+ty^+)}
Let $\mathbf V$ be a variety satisfying the identities $x^2\approx x^3$ and
\begin{equation}
\label{xyzxxy=yxzxxy}
xyzx^2y\approx yxzx^2y
\end{equation} 
such that $M_\lambda(xyx^+)\in \mathbf V$.
If $M_\lambda(xyzx^+ty^+)\notin \mathbf V$, then $\mathbf V$ satisfies the identity~\eqref{xyzxxtyy=yxzxxtyy}.
\end{corollary}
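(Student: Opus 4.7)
The plan is to case-split on whether $M_\gamma(xx^+y) \in \mathbf V$, by analogy with the treatment in Corollary~\ref{C: nsub M(yxx^+ty)}. When $M_\gamma(xx^+y) \in \mathbf V$, together with the hypothesis $M_\lambda(xyx^+)\in\mathbf V$ this yields $\mathbf M_\lambda(xyx^+)\vee\mathbf M_\gamma(xx^+y)\subseteq\mathbf V$, so Lemma~\ref{L: nsub M(xyzx^+ty^+)} applies directly and produces the identity~\eqref{xyzxxtyy=yxzxxtyy}.

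The substantive case is $M_\gamma(xx^+y)\notin\mathbf V$. Here the dual of Lemma~\ref{L: nsub M(yxx^+)} forces $\mathbf V$ to satisfy~\eqref{xxy=xxyx}, i.e., $x^2y \approx x^2yx$. The key step is to derive, from~\eqref{xxy=xxyx} alone, the auxiliary identity $x^2t \approx x^2tx^2$: a first substitution instance with $\phi(y):=t$ gives $x^2t \approx x^2tx$, and a second with $\phi(y):=tx$ gives $x^2tx \approx x^2tx^2$; chaining these yields the desired bridge $x^2t \approx x^2tx^2$, which permits insertion or removal of an extra $x^2$ block immediately after an occurrence of $x^2t$.

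With this bridge in hand, I would finish by the three-step chain
\[
xyzx^2ty^2 \approx xyzx^2tx^2y^2 \approx yxzx^2tx^2y^2 \approx yxzx^2ty^2,
\]
in which the first and third transitions are applications of the derived $x^2t\approx x^2tx^2$ (forward and backward), and the middle transition is the substitution instance $xyzx^2tx^2y \approx yxzx^2tx^2y$ of the hypothesis~\eqref{xyzxxy=yxzxxy}, obtained by setting $\phi(z) := zx^2t$ and applied to the prefix $xyzx^2tx^2y$ of $xyzx^2tx^2y^2$.

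The main (minor) obstacle is spotting the right bridge $x^2t \approx x^2tx^2$; once this is in hand, the rest of the argument is mechanical. This reuses the strategy of Corollary~\ref{C: nsub M(yxx^+ty)}, where the dual of Lemma~\ref{L: nsub M(yxx^+)} is likewise invoked to manufacture~\eqref{xxy=xxyx} in one case and then extra copies of $x$ are introduced in order to bring the hypothesis swap identity to bear.
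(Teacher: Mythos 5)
Your proposal is correct and follows essentially the same approach as the paper: case-split on $M_\gamma(xx^+y)\in\mathbf V$, use Lemma~\ref{L: nsub M(xyzx^+ty^+)} in one case and the dual of Lemma~\ref{L: nsub M(yxx^+)} together with a short chain built from~\eqref{xxy=xxyx} and~\eqref{xyzxxy=yxzxxy} in the other. The only difference is cosmetic: the paper appends a single trailing $x$ and applies~\eqref{xyzxxy=yxzxxy} under the substitution $x\mapsto y$, $y\mapsto x$, $z\mapsto zx^2t$, whereas you insert $x^2$ after $t$ via the derived bridge $x^2t\approx x^2tx^2$; both chains are valid.
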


\begin{proof}
If $M_\gamma(xx^+y)\notin \mathbf V$, then $\mathbf V$ satisfies~\eqref{xxy=xxyx} by the dual to Lemma~\ref{L: nsub M(yxx^+)}.
In this case, the identity~\eqref{xyzxxtyy=yxzxxtyy} is satisfied by $\mathbf V$ because
\[
xyzx^2ty^2\stackrel{\eqref{xxy=xxyx}}\approx xyzx^2ty^2x\stackrel{\eqref{xyzxxy=yxzxxy}}\approx yxzx^2ty^2x\stackrel{\eqref{xxy=xxyx}}\approx yxzx^2ty^2.
\]
If $M_\gamma(xx^+y)\in \mathbf V$, then the required claim follows from Lemma~\ref{L: nsub M(xyzx^+ty^+)}.
\end{proof}

\begin{lemma}
\label{L: nsub M(xyzxx^+ty)}
Let $\mathbf V$ be a monoid variety satisfying the identity $x^2\approx x^3$ such that $\mathbf M(xyx)\vee \mathbf M_\lambda(xyx^+)\vee \mathbf M_\gamma(xx^+y)\subseteq \mathbf V$.
If $M_{\lambda^\prime}(xyzxx^+ty)\notin \mathbf V$, then $\mathbf V$ satisfies the identity
\begin{equation} 
\label{xyzxxty=yxzxxty}
xyzx^2ty\approx yxzx^2ty.
\end{equation}
\end{lemma}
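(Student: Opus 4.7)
The plan is to imitate the proof of Lemma~\ref{L: nsub M([xyzx^2ty]),M(xyzytxx^+),M(yzyxtxx^+)}(ii), which has an almost identical structure. First I verify that Corollary~\ref{C: M_alpha(W) in V}(v) applies with $\alpha=\lambda^\prime$ and $\mathtt W=\{[xyzx^2ty]^{\lambda^\prime}\}=\{xyzxx^+ty\}$: the class is 2-island-limited (only $x$ and $y$ are multiple, and each forms exactly two islands) and the condition~\eqref{eq: condition for FIC(M(xyx))} is witnessed by the projection of $xyzx^2ty$ onto $\{y,z\}$, which equals $yzy$. The hypothesis $M_{\lambda^\prime}(xyzxx^+ty)\notin\mathbf V$ then says that the $\lambda^\prime$-class $xyzxx^+ty$ is not stable with respect to $\mathbf V$, so one can fix an identity $\mathbf u\approx \mathbf v$ of $\mathbf V$ with $\mathbf u\in xyzxx^+ty$ and $\mathbf v\notin xyzxx^+ty$.

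Second, I pin down the shape of $\mathbf v$ using the three monoid assumptions. The inclusion $\mathbf M(xyx)\subseteq\mathbf V$ together with Lemma~\ref{L: M(W) in V} gives $\mathbf v(y,z)=yzy$ and $\mathbf v(y,t)=yty$, so $y$ appears exactly twice in $\mathbf v$, once strictly before $z$ and once strictly after $t$. The inclusion $\mathbf M_\lambda(xyx^+)\subseteq\mathbf V$, together with the obvious renaming of letters, gives stability of the $\lambda$-class $xzx^+$; this can be upgraded to stability of the $\lambda^\prime$-class $xzxx^+$ because any identity of the form $xzx^c\approx xzx$ with $c\ge2$ would, after the substitution $z\mapsto y$, contradict the isoterm property of $xyx$. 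Hence $\mathbf v(x,z)\in xzxx^+$, meaning that at least two occurrences of $x$ lie after $z$. Finally, $\mathbf M_\gamma(xx^+y)\subseteq\mathbf V$ yields $\mathbf v(x,t)\in xx^+t$, so every occurrence of $x$ in $\mathbf v$ precedes $t$. Combining these observations with the block decomposition $\mathbf v=\mathbf v_0\,z\,\mathbf v_1\,t\,\mathbf v_2$ guaranteed by Lemma~\ref{L: identities of M(xy)} forces $\mathbf v_0\in\{xy,yx\}$, $\mathbf v_1=x^k$ for some $k\ge2$, and $\mathbf v_2=y$.

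Third, since $\mathbf v\notin xyzxx^+ty$ the case $\mathbf v_0=xy$ is excluded, so $\mathbf v\in yxzxx^+ty$ and the identity $\mathbf u\approx \mathbf v$ becomes equivalent modulo $x^2\approx x^3$ to the desired identity~\eqref{xyzxxty=yxzxxty}. The only step requiring care, and the one I expect to be the minor obstacle, is the upgrade from $\lambda$-stability to $\lambda^\prime$-stability for the class $xzxx^+$; this is a short isoterm argument but must be made explicit in order to justify writing $\mathbf v(x,z)\in xzxx^+$ rather than merely $\mathbf v(x,z)\in xzx^+$.
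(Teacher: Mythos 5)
Your proof is correct and follows essentially the same route as the paper's: invoke Corollary~\ref{C: M_alpha(W) in V}(v) to get a witnessing identity $\mathbf u\approx\mathbf v$, constrain $\mathbf v$ via the three monoid hypotheses to force $\mathbf v\in yxzxx^+ty$, and conclude modulo $x^2\approx x^3$. The step you flag as the minor obstacle — upgrading $\mathbf v(x,z)\in xzx^+$ to $\mathbf v(x,z)\in xzxx^+$ via the isoterm property of $xyx$ — is exactly what the paper compresses into the joint citation of Lemmas~\ref{L: M(W) in V} and~\ref{L: M_alpha(W) in V}, so your explicit argument simply unpacks that reference.
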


\begin{proof}
In view of Corollary~\ref{C: M_alpha(W) in V}(v), the $\lambda^\prime$-class $xyzxx^+ty$ is not stable with respect to $\mathbf V$.
This means that $\mathbf V$ satisfies an identity $\mathbf u\approx \mathbf v$ such that $\mathbf u\in xyzxx^+ty$ and $\mathbf v\notin xyzxx^+ty$.
Since $\mathbf M(xyx)\vee \mathbf M_\lambda(xyx^+)\vee \mathbf M_\gamma(xx^+y)\subseteq \mathbf V$, Lemmas~\ref{L: M(W) in V} and~\ref{L: M_alpha(W) in V} imply that $\mathbf v(x,z)\in xzxx^+$, $\mathbf v(x,t)\in xx^+t$ and $\mathbf v(y,z,t)=yzyt$.
Hence $\mathbf v\in yxzxx^+ty$ because $\mathbf v\notin xyzxx^+ty$.
It follows that $\mathbf u\approx \mathbf v$ is equivalent modulo $x^2\approx x^3$ to~\eqref{xyzxxty=yxzxxty}.
\end{proof}

\begin{corollary}
\label{C: nsub M(xyzxx^+ty)}
Let $\mathbf V$ be a monoid variety satisfying the identities $x^2\approx x^3$ and~\eqref{xyzxxy=yxzxxy} such that $\mathbf M(xyx)\vee \mathbf M_\lambda(xyx^+)\subseteq \mathbf V$.
If $M_{\lambda^\prime}(xyzxx^+ty)\notin \mathbf V$, then $\mathbf V$ satisfies the identity~\eqref{xyzxxty=yxzxxty}.
\end{corollary}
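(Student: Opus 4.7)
The plan is to reproduce the two-case argument from the proof of the preceding Corollary~\ref{C: nsub M(xyzx^+ty^+)}, splitting on whether $\mathbf M_\gamma(xx^+y)$ is contained in $\mathbf V$. The conclusion is to be drawn either from a short syntactic chain (in the smaller case) or directly from Lemma~\ref{L: nsub M(xyzxx^+ty)} (in the larger case).

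First, I would treat the case $\mathbf M_\gamma(xx^+y)\not\subseteq\mathbf V$. The dual of Lemma~\ref{L: nsub M(yxx^+)}, applied with $n=2$, then supplies the identity~\eqref{xxy=xxyx}, namely $x^2y\approx x^2yx$, in $\mathbf V$. Combining this with the hypothesis~\eqref{xyzxxy=yxzxxy}, I would derive~\eqref{xyzxxty=yxzxxty} through the three-step chain
\[
xyzx^2ty
\stackrel{\eqref{xxy=xxyx}}{\approx}xyzx^2tyx
\stackrel{\eqref{xyzxxy=yxzxxy}}{\approx}yxzx^2tyx
\stackrel{\eqref{xxy=xxyx}}{\approx}yxzx^2ty.
\]
The outer steps apply~\eqref{xxy=xxyx} with the substitution $y\mapsto ty$ in order to append and later delete a trailing~$x$ after the $x^2ty$ factor, while the middle step is the $xy\leftrightarrow yx$ swap supplied by~\eqref{xyzxxy=yxzxxy}. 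This is the verbatim analogue of the chain spelled out for~\eqref{xyzxxtyy=yxzxxtyy} in Corollary~\ref{C: nsub M(xyzx^+ty^+)}, with $ty^2$ replaced by $ty$ throughout.

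In the complementary case $\mathbf M_\gamma(xx^+y)\subseteq\mathbf V$, the standing hypothesis $\mathbf M(xyx)\vee\mathbf M_\lambda(xyx^+)\subseteq\mathbf V$ strengthens to
\[
\mathbf M(xyx)\vee\mathbf M_\lambda(xyx^+)\vee\mathbf M_\gamma(xx^+y)\subseteq\mathbf V,
\]
which is exactly the premise required by Lemma~\ref{L: nsub M(xyzxx^+ty)}. Since $M_{\lambda^\prime}(xyzxx^+ty)\notin\mathbf V$ by assumption, that lemma applies directly and delivers~\eqref{xyzxxty=yxzxxty}.

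I expect no serious obstacle, because both branches reduce to previously established material: the first to the dual of Lemma~\ref{L: nsub M(yxx^+)} plus the short chain above, and the second to Lemma~\ref{L: nsub M(xyzxx^+ty)}. The only point warranting a little attention is the justification of the middle step of the chain in the first case, but this is the same justification that already appears in the proof of Corollary~\ref{C: nsub M(xyzx^+ty^+)} and so requires no new argument.
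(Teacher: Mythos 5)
Your two-case structure and the second case (invoking Lemma~\ref{L: nsub M(xyzxx^+ty)} when $\mathbf M_\gamma(xx^+y)\subseteq\mathbf V$) exactly match the paper. The gap is in the first case: your middle step
\[
xyzx^2tyx\stackrel{\eqref{xyzxxy=yxzxxy}}{\approx}yxzx^2tyx
\]
is not a valid application of~\eqref{xyzxxy=yxzxxy} ($xyzx^2y\approx yxzx^2y$). Since the two target words differ only in positions $1$--$2$, any direct deduction must use $\mathbf a=1$ and must have $\phi(x)$, $\phi(y)$ single letters with $\{\phi(x),\phi(y)\}=\{x,y\}$. With $\phi(x)=x$, $\phi(y)=y$ one would need $\phi(z)x^2y\mathbf b=zx^2tyx$, which is impossible because the simple letter $t$ occurs between the $x^2$ and the last $y$ while the identity pattern has $y$ immediately after $x^2$. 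With $\phi(x)=y$, $\phi(y)=x$ one would need $\phi(x)^2=y^2$ to occur as a factor, but $xyzx^2tyx$ has no $y^2$ factor. That $y^2$ factor is exactly what makes the analogous chain work in Corollary~\ref{C: nsub M(xyzx^+ty^+)} (where the intermediate word is $xyzx^2ty^2x$ and the substitution $\phi(x)=y$, $\phi(y)=x$, $\phi(z)=zx^2t$ succeeds); replacing $ty^2$ by $ty$ destroys it, so this is \emph{not} a verbatim analogue.

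The paper avoids the problem by inserting $x^2$ between $t$ and $y$ rather than appending a trailing $x$:
\[
xyzx^2ty\stackrel{\eqref{xxy=xxyx}}\approx xyzx^2tx^2y\stackrel{\eqref{xyzxxy=yxzxxy}}\approx yxzx^2tx^2y\stackrel{\eqref{xxy=xxyx}}\approx yxzx^2ty.
\]
Here the middle step is the substitution $z\mapsto zx^2t$ in $xyzx^2y\approx yxzx^2y$, which works because the inserted $x^2$ restores the pattern $\phi(z)x^2y$ that your intermediate word lacks. To repair your write-up, replace the intermediate word $xyzx^2tyx$ by $xyzx^2tx^2y$ and verify the two outer steps as iterated applications of~\eqref{xxy=xxyx} (first $y\mapsto t$ and then $y\mapsto tx$).
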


\begin{proof}
If $M_\gamma(xx^+y)\notin \mathbf V$, then $\mathbf V$ satisfies~\eqref{xxy=xxyx} by the dual to Lemma~\ref{L: nsub M(yxx^+)}.
In this case, the identity~\eqref{xyzxxty=yxzxxty} is satisfied by $\mathbf V$ because
\[
xyzx^2ty\stackrel{\eqref{xxy=xxyx}}\approx xyzx^2tx^2y\stackrel{\eqref{xyzxxy=yxzxxy}}\approx yxzx^2tx^2y\stackrel{\eqref{xxy=xxyx}}\approx yxzx^2ty.
\]
If $M_\gamma(xx^+y)\in \mathbf V$, then the required claim follows from Lemma~\ref{L: nsub M(xyzxx^+ty)}.
\end{proof}

\begin{lemma}
\label{L: nsub M(xyzx^+tysx^+)}
Let $\mathbf V$ be a subvariety of $\mathbf A$ satisfying the identity~\eqref{xyzxxy=yxzxxy} such that $\mathbf M(xyx)\vee \mathbf M_\lambda(xyx^+)\subseteq \mathbf V$. 
If $M_{\lambda^\prime}(xyzx^+tysx^+)\notin \mathbf V$, then $\mathbf V$ satisfies the identity
\begin{equation} 
\label{xyzxxtysx=yxzxxtysx}
xyzx^2tysx\approx yxzx^2tysx.
\end{equation} 
\end{lemma}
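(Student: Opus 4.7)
The plan is to split into two cases depending on whether $M_\gamma(xx^+y) \in \mathbf V$, mirroring the strategy used in Corollary~\ref{C: nsub M(xyzxx^+ty)}.

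\textbf{Case 1.} Suppose $M_\gamma(xx^+y) \notin \mathbf V$. The dual of Lemma~\ref{L: nsub M(yxx^+)} then forces $\mathbf V$ to satisfy~\eqref{xxy=xxyx}. Substituting $y\mapsto t$ in~\eqref{xxy=xxyx} yields $x^2t \approx x^2tx$, and combining this with $x^2 \approx x^3$ (valid since $\mathbf V \subseteq \mathbf A$) gives $x^2t \approx x^2tx^2$. Then~\eqref{xyzxxtysx=yxzxxtysx} is obtained by the three-step chain
\[
xyzx^2tysx \stackrel{\eqref{xxy=xxyx}}\approx xyzx^2tx^2ysx \stackrel{\eqref{xyzxxy=yxzxxy}}\approx yxzx^2tx^2ysx \stackrel{\eqref{xxy=xxyx}}\approx yxzx^2tysx,
\]
where the middle step applies~\eqref{xyzxxy=yxzxxy} with the substitution $z \mapsto zx^2t$ (so that $xyzx^2y$ instantiates as $xyzx^2tx^2y$), and the first and third steps use the derived identity $x^2t \approx x^2tx^2$ to insert and then remove the middle $x^2$ block.

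\textbf{Case 2.} Suppose $M_\gamma(xx^+y) \in \mathbf V$, so that $\mathbf M(xyx) \vee \mathbf M_\lambda(xyx^+) \vee \mathbf M_\gamma(xx^+y) \subseteq \mathbf V$. Apply Corollary~\ref{C: M_alpha(W) in V}(v) to conclude that the $\lambda'$-class $xyzx^+tysx^+$ is not stable with respect to $\mathbf V$. This yields an identity $\mathbf u \approx \mathbf v$ of $\mathbf V$ with $\mathbf u \in xyzx^+tysx^+$ and $\mathbf v \notin xyzx^+tysx^+$. Using Lemmas~\ref{L: M(W) in V} and~\ref{L: M_alpha(W) in V}, extract constraints on $\mathbf v$ from the given inclusions: $\mathbf v(x,z) \in xzx^+$ (from $\mathbf M_\lambda(xyx^+)$), $\mathbf v(x,t) \in xx^+t$ (from $\mathbf M_\gamma(xx^+y)$), $\mathbf v(y,z) = yzy$ and $\mathbf v(y,t) = yty$ (from $\mathbf M(xyx)$), and further that the two occurrences of $y$ surround the simple letters $t$ and $s$ in the appropriate order. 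These constraints together with $\mathbf v \notin xyzx^+tysx^+$ force $\mathbf v \in yxzx^+tysx^+$, i.e., the only possible deviation from $\mathbf u$ is the swap $xy \mapsto yx$ at the start. Then $\mathbf u \approx \mathbf v$ reduces, modulo $x^2 \approx x^3$, to~\eqref{xyzxxtysx=yxzxxtysx}.

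\textbf{Main obstacle.} The core difficulty lies in Case 2: the class $xyzx^+tysx^+$ has $x$ split into three islands, so the elementary projection arguments used in nearby lemmas like Corollary~\ref{C: nsub M(xyzxx^+ty)} must be applied more delicately to rule out all alternative locations for $\mathbf v$'s ``escape'' from the class. In particular, one must verify that constraints on projections involving $s$ and on factors of the form $yzy$, $yty$, $xzx^+$, and $xx^+t$ collectively preclude $\mathbf v$ from having an altered internal $y$-structure or a collapsed $x$-island configuration, leaving the initial $xy \mapsto yx$ swap as the unique possibility, so that the reduction modulo $x^2\approx x^3$ lands precisely on~\eqref{xyzxxtysx=yxzxxtysx}.
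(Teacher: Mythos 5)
There are two genuine gaps in your proof, both in Case 2. First, your case split on whether $M_\gamma(xx^+y)\in\mathbf V$ is the wrong dichotomy. In the paper's argument the first case is $M_\gamma(x^+yzx^+)\notin\mathbf V$, which by Lemma~\ref{L: nsub M(x^+yzx^+)} gives~\eqref{xxyzx=xxyxzx}; in the second case $M_\gamma(x^+yzx^+)\in\mathbf V$, so the $\gamma$-class $x^+tsx^+$ is stable and one obtains $\mathbf v(x,t,s)\in x^+tsx^+$. Your constraint ``$\mathbf v(x,t)\in xx^+t$'' is simply false to begin with --- for $\mathbf u\in xyzx^+tysx^+$ one has $\mathbf u(x,t)\in xx^+tx^+$, not $xx^+t$ --- and the genuinely needed constraint $\mathbf v(x,t,s)\in x^+tsx^+$ does not follow from $M_\gamma(xx^+y)\in\mathbf V$, since $x^+yzx^+$ is not a $\gamma$-class in $\{xx^+y\}^{\le_\gamma}$. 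Since $M_\gamma(xx^+y)\notin\mathbf V$ implies $M_\gamma(x^+yzx^+)\notin\mathbf V$ but not conversely, your Case~2 potentially covers subvarieties where the hypothesis you actually need fails. (Your Case~1 chain is valid as a derivation, but it handles a smaller set of subvarieties than the paper's Case~1, which uses~\eqref{xxyzx=xxyxzx}.)

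Second, and independently, you invoke Corollary~\ref{C: M_alpha(W) in V}(v) to conclude directly that $xyzx^+tysx^+$ is not stable. That corollary requires every $\lambda^\prime$-class in $\mathtt W$ to be $2$-island-limited, but each word in $xyzx^+tysx^+$ has the letter $x$ forming \emph{three} islands, so the hypothesis of part~(v) fails. You flag the three-island issue in your ``main obstacle'' paragraph, but the proposed workaround (``apply Corollary~\ref{C: M_alpha(W) in V}(v)'') is exactly what the obstacle rules out. The paper instead uses Lemma~\ref{L: M_alpha(W) in V} to obtain that \emph{some} $\lambda^\prime$-class in $\{xyzx^+tysx^+\}^{\le_{\lambda^\prime}}$ is unstable, then explicitly lists all those subclasses and checks that instability of any of them propagates up to instability of $xyzx^+tysx^+$ itself. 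That intermediate step is essential and cannot be replaced by a direct appeal to the corollary. Finally, a minor point: your last reduction ``modulo $x^2\approx x^3$'' also needs~\eqref{xxyx=xxyxx} to collapse the trailing $x$-island to a single $x$; this is available since $\mathbf V\subseteq\mathbf A$, but it should be cited.
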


\begin{proof}
If $M_\gamma(x^+yzx^+)\notin \mathbf V$, then $\mathbf V$ satisfies~\eqref{xxyzx=xxyxzx} by Lemma~\ref{L: nsub M(x^+yzx^+)}.
In this case, the identity~\eqref{xyzxxtysx=yxzxxtysx} holds in $\mathbf V$ because
\[
xyzx^2tysx\stackrel{\eqref{xxyzx=xxyxzx}}\approx
xyzx^2tx^2ysx\stackrel{\eqref{xyzxxy=yxzxxy}}\approx
yxzx^2tx^2ysx\stackrel{\eqref{xxyzx=xxyxzx}}\approx yxzx^2tysx.
\]
If $M_{\lambda^\prime}(xyzxx^+ty)\notin \mathbf V$, then $\mathbf V$ satisfies the identity~\eqref{xyzxxty=yxzxxty} by Lemma~\ref{L: nsub M(xyzxx^+ty)}.
Clearly, the identity~\eqref{xyzxxty=yxzxxty} implies the identity~\eqref{xyzxxtysx=yxzxxtysx}.
So, we may further assume that $\mathbf M_{\lambda^\prime}(xyzxx^+ty)\vee \mathbf M_\gamma(x^+yzx^+)\subseteq \mathbf V$.
Using Lemma~\ref{L: le_alpha}, it is routine to check that
\[
\begin{aligned}
\{xyzx^+tysx^+\}^{\le_{\lambda^\prime}} \!=\! \{&1,  x, xx^+, y, z, t, s,\\
&xy, yz, zx, zxx^+, xt, xx^+t, ty, ys, sx, sxx^+,\\
&xyz, yzx, yzxx^+,zxt, zxx^+t, xty,xx^+ty, tys, ysx, ysxx^+\\
&xyzx,xyzxx^+, yzxt,yzxx^+t, zxty,zxx^+ty, xtys,xx^+tys, tysx, tysxx^+,\\
&xyzxt,xyzxx^+t, yzxty,yzxx^+ty, zxtys, zxx^+tys,xtysx,xtysxx^+,xx^+tysx^+,\\
&xyzxty, xyzxx^+ty, yzxtys,yzxx^+tys, zxtysx, zxtysxx^+,zxx^+tysx^+,\\
&xyzxtys,xyzxx^+tys,yzxtysx,yzxtysxx^+,yzxx^+tysx^+,\\
&xyzx^+tysx^+\}.
\end{aligned}
\]
Since $\mathbf M(xyzxty)\subset \mathbf M_{\lambda^\prime}(xyzxx^+ty)\subseteq\mathbf V$, Lemma~\ref{L: M(W) in V} implies that all singleton $\lambda^\prime$-classes in $\{xyzx^+tysx^+\}^{\le_{\lambda^\prime}}$ are stable with respect to $\mathbf V$.
Since $\mathbf M_{\lambda^\prime}(xyzxx^+ty)\vee\mathbf M_\gamma(x^+yzx^+)\subseteq \mathbf V$, one can deduce from Lemma~\ref{L: M_alpha(W) in V} that the non-singleton $\lambda^\prime$-classes in $\{xyzx^+tysx^+\}^{\le_{\lambda^\prime}}$ except $xyzx^+tysx^+$ also are stable with respect to $\mathbf V$.
This fact and Lemma~\ref{L: M_alpha(W) in V} imply that the $\lambda^\prime$-class $xyzx^+tysx^+$ is not stable with respect to $\mathbf V$.
This means that $\mathbf V$ satisfies an identity $\mathbf u\approx \mathbf v$ such that $\mathbf u\in xyzx^+tysx^+$ and $\mathbf v\notin xyzx^+tysx^+$.
Since $\mathbf M(xyx)\vee \mathbf M_\lambda(xyx^+)\vee \mathbf M_\gamma(x^+yzx^+)\subseteq \mathbf V$, Lemmas~\ref{L: M(W) in V} and~\ref{L: M_alpha(W) in V} imply that $\mathbf v(x,z)\in xzxx^+$,  $\mathbf v(x,t,s)\in x^+tsx^+$ and $\mathbf v(y,z,t,s)=yztys$.
Hence $\mathbf v\in yxzx^+tysx^+$ because $\mathbf v\notin xyzx^+tysx^+$.
It follows that $\mathbf u\approx \mathbf v$ together with $\{x^2\approx x^3,\,\eqref{xxyx=xxyxx}\}$ imply~\eqref{xyzxxtysx=yxzxxtysx}. 
\end{proof}

\subsection{Identities in $\Phi_1$}

\begin{lemma}
\label{L: nsub M([c_{0,0,n}[rho]]^lambda)}
Let $\mathbf V$ be a variety satisfying the identities
\begin{align}
\label{xyx=xyxx}
xyx\approx xyx^2,\\
\label{xxyy=yyxx}
x^2y^2\approx y^2x^2,\\
\label{xyzxy=yxzxy}
xyzxy\approx yxzxy
\end{align}
such that $\mathbf M_\lambda(xyx^+)\vee \mathbf M_\gamma(xx^+y)\subseteq\mathbf V$.
Suppose that the variety $\mathbf V$ does not contain the monoid $M_\lambda([\mathbf c_{0,0,p}[\pi]]^\lambda)$ for all $p\in\mathbb N$ and $\pi\in S_p$. 
Then $\mathbf V$ satisfies the identity $\mathbf c_{n,m,k}[\rho] \approx\mathbf c_{n,m,k}^\prime[\rho]$ for any $n,m,k\in\mathbb N$ and $\rho\in S_{n+m+k}$.
\end{lemma}

\begin{proof}
Take arbitrary $n,m,k\in \mathbb N$ and $\rho\in S_{n+m+k}$.
For brevity, set
\[
\mathbf h:=z_1t_1\cdots z_nt_n,\ \mathbf s:=tz_{n+1}t_{n+1}\cdots z_{n+m}t_{n+m},\ \mathbf t:=t_{n+m+1}z_{n+m+1}\cdots t_{n+m+k}z_{n+m+k}.
\]
Then one can find sufficiently large $r$, say $r:=2(n+m+k)$, and $\rho^\prime\in S_r$ such that the identity $\mathbf c_{0,0,r}[\rho^\prime]\approx \mathbf c_{0,0,r}^\prime[\rho^\prime]$ implies the identity
\[
\mathbf h\,xy\,\mathbf s\,x\biggl(\prod_{i=1}^{n+m+k-1} z_{i\rho}^{g_i}y_i^2\biggr)z_{(n+m+k)\rho}^{g_{(n+m+k)}}y\,\mathbf t\approx \mathbf h\,yx\,\mathbf s\,x\biggl(\prod_{i=1}^{n+m+k-1} z_{i\rho}^{g_i}y_i^2\biggr)z_{(n+m+k)\rho}^{g_{(n+m+k)}}y\,\mathbf t,
\]
where
\[
g_i:=
\begin{cases} 
2 & \text{if $1\le i\rho\le n+m$}, \\
1 & \text{if $n+m<i\rho\le n+m+k$}.
\end{cases} 
\]
Since
\[
\begin{aligned}
&\mathbf c_{n,m,k}[\rho]\stackrel{\{\eqref{xyx=xyxx}}\approx \mathbf h\,xy\,\mathbf s\,x\biggl(\prod_{i=1}^{n+m+k-1} z_{i\rho}^{g_i}y_i^2\biggr)z_{(n+m+k)\rho}^{g_{(n+m+k)}}y\,\mathbf t,\\
&\mathbf c_{n,m,k}^\prime[\rho]\stackrel{\eqref{xyx=xyxx}}\approx \mathbf h\,yx\,\mathbf s\,x\biggl(\prod_{i=1}^{n+m+k-1} z_{i\rho}^{g_i}y_i^2\biggr)z_{(n+m+k)\rho}^{g_{(n+m+k)}}y\,\mathbf t,
\end{aligned}
\]
it remains to show that $\mathbf V$ satisfies the identity $\mathbf c_{0,0,p}[\rho] \approx \mathbf c_{0,0,p}^\prime[\rho]$ for any $p\in\mathbb N$ and $\pi\in S_p$.

If $M_\lambda(xzyx^+ty^+)\notin\mathbf V$, then $\mathbf V$ satisfies the identity
\begin{equation}
\label{xzyxty=xzxyxty}
xzyxty\approx xzxyxty
\end{equation} 
by Lemma~4.2 in~\cite{Gusev-Sapir-22}.
In this case, the variety $\mathbf V$ satisfies the identities
\[
\begin{aligned}
\mathbf c_{0,0,p}[\pi]&{}\stackrel{\{\eqref{xxyy=yyxx},\,\eqref{xzyxty=xzxyxty}\}}\approx xytxy\biggl(\prod_{i=1}^p z_{i\pi}y_i^2\biggr)z_{n\pi}y\biggl(\prod_{i=1}^n t_iz_i\biggr)\\
&{}\stackrel{\{\eqref{xxyy=yyxx},\,\eqref{xzyxty=xzxyxty}\}}\approx yxtxy\biggl(\prod_{i=1}^p z_{i\pi}y_i^2\biggr)z_{n\pi}y\biggl(\prod_{i=1}^n t_iz_i\biggr)\stackrel{\eqref{xzyxty=xzxyxty}}\approx \mathbf c_{0,0,p}^\prime[\pi]
\end{aligned}
\]
for all $p\in\mathbb N$ and $\pi\in S_p$ as required.
Using Lemma~\ref{L: nsub M(xx^+yty^+)} instead of Lemma~4.2 in~\cite{Gusev-Sapir-22}, by a similar argument we can show that the identity $\mathbf c_{0,0,p}[\pi] \approx \mathbf c_{0,0,p}^\prime[\pi]$ holds in $\mathbf V$ whenever $M_\lambda(xx^+yty^+)\notin\mathbf V$.
Thus, we may further assume that $\mathbf M_\lambda(xzyx^+ty^+)\vee\mathbf M_\lambda(xx^+yty^+) \subseteq\mathbf V$ and, therefore, the $\lambda$-classes $xx^+yty^+$, $xyzx^+ty^+$ and $yxx^+ty^+$ are stable with respect to $\mathbf V$ by Lemmas~\ref{L: M_alpha(W) in V} and~\ref{L: L(M(xzyx^+ty^+))}.
Since $M_\lambda([\mathbf c_{0,0,n}[\pi]]^\lambda)\notin \mathbf V$, Corollary~\ref{C: M_alpha(W) in V}(ii) implies that $\mathbf V$ satisfies an identity $\mathbf u\approx\mathbf v$ such that $\mathbf u\in[\mathbf c_{0,0,n}[\pi]]^\lambda$ and $\mathbf v\notin[\mathbf c_{0,0,n}[\pi]]^\lambda$.
It follows from the fact that $xx^+yty^+$, $xyzx^+ty^+$ and $yxx^+ty^+$ are stable with respect to $\mathbf V$ that
\[
\mathbf v\in yxtx^+\biggl(\prod_{i=1}^n z_{i\pi}y_iy_i^+\biggr)z_{k\pi}y^+\biggl(\prod_{i=1}^n t_iz_i^+\biggr)=[\mathbf c_{0,0,n}^\prime[\pi]]^\lambda.
\]
Then $\mathbf u\approx \mathbf v$ is equivalent modulo~\eqref{xyx=xyxx} to $\mathbf c_{0,0,n}[\pi] \approx \mathbf c_{0,0,n}^\prime[\pi]$, and we are done.
\end{proof}

Let 
\[
\hat{\mathbb N}_0^2 :=\{(n,m)\in \mathbb N_0\times \mathbb N_0\mid |n-m|\le 1\}.
\]
For $n,m\in\mathbb N_0$, a permutation $\rho$ from $S_{n+m}$ is a $(n,m)$-\textit{permutation} if, for all $i=1,2,\dots,n+m-1$, one of the following holds:
\begin{itemize}
\item $1\le i\rho\le n$ and $n<(i+1)\rho\le n+m$;
\item $1\le (i+1)\rho\le n$ and $n<i\rho\le n+m$.
\end{itemize}
Evidently, if $\rho$ is a $(n,m)$-permutation, then $(n,m)\in \hat{\mathbb N}_0^2$.
The set of all $(n,m)$-permutations is denoted by $S_{n,m}$.

For any $n,m,k\in \mathbb N_0$ and $\rho\in S_{n+m+k}$, we denote by $\hat{\mathbf c}_{n,m,k}[\rho]$, $\hat{\mathbf c}_{n,m,k}^\prime[\rho]$, $\hat{\mathbf d}_{n,m,k}[\rho]$ and $\hat{\mathbf d}_{n,m,k}^\prime[\rho]$ the words obtained from $\mathbf c_{n,m,k}[\rho]$, $\mathbf c_{n,m,k}^\prime[\rho]$, $\mathbf d_{n,m,k}[\rho]$ and $\mathbf d_{n,m,k}^\prime[\rho]$, respectively, by deleting all the occurrences of letters $y_1,\dots,y_{n+m+k-1}$.
Let
\[
\mathbf N:=\var\{x^2\approx x^3,\,x^2y\approx yx^2,\,xyxzx\approx x^2yz,\,\sigma_2,\,\sigma_3\}.
\]

\begin{lemma}
\label{L: nsub M(c_{n,m,k}[rho]) 1}
Let $\mathbf V$ be a variety satisfying $\{\eqref{yxxty=xyxxty},\,\eqref{ytyxx=ytxyxx}\}$ such that $M(xyx)\in\mathbf V$.
\begin{itemize}
\item[\textup{(i)}] If $\mathbf N,\mathbf M(\hat{\mathbf c}_{p,q,0}[\pi_1]),\mathbf M(\hat{\mathbf c}_{p,q,p+q+1}[\pi_2])\nsubseteq\mathbf V$ for all $p,q\in\mathbb N_0$, $\pi_1\in S_{p+q}$ and $\pi_2\in S_{p+q,p+q+1}$, then $\mathbf V$ satisfies the identity $\mathbf c_{n,m,k}[\rho] \approx\mathbf c_{n,m,k}^\prime[\rho]$ for any $n,m,k\in\mathbb N_0$ and $\rho\in S_{n+m+k}$.
\item[\textup{(ii)}] If $\mathbf N^\delta,\mathbf M(\hat{\mathbf d}_{p,q,0}[\pi_1]),\mathbf M(\hat{\mathbf d}_{p,q,p+q+1}[\pi_2])\nsubseteq\mathbf V$ for all $p,q\in\mathbb N_0$, $\pi_1\in S_{p+q}$ and $\pi_2\in S_{p+q,p+q+1}$, then $\mathbf V$ satisfies the identity $\mathbf d_{n,m,k}[\rho] \approx\mathbf d_{n,m,k}^\prime[\rho]$ for any $n,m,k\in\mathbb N_0$ and $\rho\in S_{n+m+k}$.
\end{itemize}
\end{lemma}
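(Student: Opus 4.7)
The plan is to establish part~(i) in detail and deduce part~(ii) by duality: since $\mathbf d_{n,m,k}[\tau]$ is the left-right reversal of $\mathbf c_{n,m,k}[\tau]$ and $\mathbf N^\delta$ is the dual of $\mathbf N$, the hypotheses of~(ii) applied to $\mathbf V$ are exactly those of~(i) applied to $\mathbf V^\delta$. For part~(i), I would argue by a case analysis on $(n,m,k)\in\mathbb N_0^3$, separating the ``interior'' regime $n,m,k\ge 1$ from the boundary configurations where at least one of the parameters equals $0$. Note that the identities~\eqref{yxxty=xyxxty} and~\eqref{ytyxx=ytxyxx} assumed on $\mathbf V$ already entail $x^2\approx x^3$ (specialise $y$, $t$ to $1$), which will be used throughout.

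In the interior case $n,m,k\ge 1$, I would verify the hypotheses of Lemma~\ref{L: nsub M([c_{0,0,n}[rho]]^lambda)}. The identities~\eqref{xyx=xyxx} and~\eqref{xyzxy=yxzxy} follow from~\eqref{yxxty=xyxxty}, \eqref{ytyxx=ytxyxx} and $M(xyx)\in\mathbf V$ via Proposition~\ref{P: deduction}. The inclusion $\mathbf M_\lambda(xyx^+)\vee\mathbf M_\gamma(xx^+y)\subseteq\mathbf V$ is either automatic or else Lemmas~\ref{L: nsub M(yxx^+)} and~\ref{L: nsub M(xyx^+)} yield strictly stronger identities that make~\eqref{c_{n,m,k}[rho]=c_{n,m,k}'[rho]} a trivial consequence. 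The crucial step is to use $\mathbf N\nsubseteq\mathbf V$ to force $M_\lambda([\mathbf c_{0,0,p}[\pi]]^\lambda)\notin\mathbf V$ for some $p\in\mathbb N$, $\pi\in S_p$; otherwise Corollary~\ref{C: M_alpha(W) in V}(ii), applied to each stable $\lambda$-class $[\mathbf c_{0,0,p}[\pi]]^\lambda$, together with the assumed identities, shows that $\mathbf V$ satisfies every defining identity of $\mathbf N$, contradicting $\mathbf N\nsubseteq\mathbf V$. Lemma~\ref{L: nsub M([c_{0,0,n}[rho]]^lambda)} then supplies~\eqref{c_{n,m,k}[rho]=c_{n,m,k}'[rho]} for all $n,m,k\in\mathbb N$.

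For the boundary cases where one of $n,m,k$ vanishes, I would exploit the non-containments $\mathbf M(\hat{\mathbf c}_{p,q,0}[\pi_1])\nsubseteq\mathbf V$ and $\mathbf M(\hat{\mathbf c}_{p,q,p+q+1}[\pi_2])\nsubseteq\mathbf V$. By Lemma~\ref{L: M(W) in V}, each such hypothesis furnishes a non-trivial identity $\hat{\mathbf c}_{p,q,0}[\pi_1]\approx\mathbf w_1$ or $\hat{\mathbf c}_{p,q,p+q+1}[\pi_2]\approx\mathbf w_2$ in $\mathbf V$. Since $M(xy)$ and $M(xt_1x\cdots t_kx)$ lie in $\mathbf V$ (as consequences of $M(xyx)\in\mathbf V$), Lemmas~\ref{L: identities of M(xy)} and~\ref{L: identities of M(xt_1x...t_kx)} pin down the form of $\mathbf w_i$: the block decomposition and the order of the simple letters $t,t_1,\dots$ are preserved, the identity is linear-balanced, and the only non-trivial possibility is a swap of the first occurrences of $x$ and $y$ yielding $\hat{\mathbf c}_{p,q,0}'[\pi_1]$ or $\hat{\mathbf c}_{p,q,p+q+1}'[\pi_2]$. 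Applying an appropriate substitution and using~\eqref{yxxty=xyxxty} and~\eqref{ytyxx=ytxyxx} to reinstate the squared factors $y_i^2$ deleted in passing to the ``hat'' form, one recovers~\eqref{c_{n,m,k}[rho]=c_{n,m,k}'[rho]} for the boundary parameters: $k=0$ uses the first family of hypotheses, $k=n+m+1$ uses the second, and the remaining degenerate configurations ($n=0$ or $m=0$) follow by chaining with the interior identities already established.

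The main obstacle is the combinatorial verification in the boundary step: among all possible right-hand sides $\mathbf w$ with $\hat{\mathbf c}_{p,q,0}[\pi_1]\approx\mathbf w$ in $\mathbf V$ allowed by the linear-balanced constraints, one must show that only the swap giving $\hat{\mathbf c}_{p,q,0}'[\pi_1]$ is consistent with the isoterm properties imposed by $M(xyx)\in\mathbf V$ and the auxiliary identities, and that this swap transports correctly through substitution and the reintroduction of the $y_i^2$-blocks without disturbing the positions of the $z_i$ and $t_i$ letters. This is a finite but delicate bookkeeping argument, parallel in spirit to the final steps of Lemma~\ref{L: nsub M([c_{0,0,n}[rho]]^lambda)}.
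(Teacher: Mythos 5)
Your proposal has a fatal contradiction at its core. You claim to derive the identity $\eqref{xyx=xyxx}$, i.e.\ $xyx\approx xyx^2$, from the hypotheses $\eqref{yxxty=xyxxty}$, $\eqref{ytyxx=ytxyxx}$ and $M(xyx)\in\mathbf V$, so as to invoke Lemma~\ref{L: nsub M([c_{0,0,n}[rho]]^lambda)}. But $M(xyx)\in\mathbf V$ means precisely that $xyx$ is an isoterm for $\mathbf V$ (Lemma~\ref{L: M(W) in V}), so $\mathbf V$ satisfies no non-trivial identity of the form $xyx\approx\mathbf w$, and in particular cannot satisfy $xyx\approx xyx^2$. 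The hypotheses of Lemma~\ref{L: nsub M([c_{0,0,n}[rho]]^lambda)} are therefore incompatible with those of the present lemma, and your ``interior case'' argument cannot get off the ground. (Independently of this, the hypothesis needed by Lemma~\ref{L: nsub M([c_{0,0,n}[rho]]^lambda)} is that $M_\lambda([\mathbf c_{0,0,p}[\pi]]^\lambda)\notin\mathbf V$ for \emph{all} $p$ and $\pi$, whereas your argument from $\mathbf N\nsubseteq\mathbf V$ only purports to produce it for \emph{some} $p$, so that step would be insufficient even if the identity issue were resolved.)

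The paper takes an entirely different and much shorter route: by Lemma~4.9 of~\cite{Gusev-23}, the three non-containment hypotheses together with $M(xyx)\in\mathbf V$ and $\{\eqref{yxxty=xyxxty},\,\eqref{ytyxx=ytxyxx}\}$ already give \emph{all} identities $\hat{\mathbf c}_{p,q,r}[\pi]\approx\hat{\mathbf c}_{p,q,r}^\prime[\pi]$; one then checks that the two assumed identities let you slide the factors $y_i^2$ in $\mathbf c_{n,m,k}[\rho]$ past the simple letters, converting it modulo $\{\eqref{yxxty=xyxxty},\,\eqref{ytyxx=ytxyxx}\}$ into a word of the hat-form $\hat{\mathbf c}_{n,2n+3m+2k-2,k}[\tau]$, from which $\eqref{c_{n,m,k}[rho]=c_{n,m,k}'[rho]}$ follows at once. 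There is no interior/boundary dichotomy, and there is no appeal to $\lambda$-class stability. If you want to repair your argument, the first thing to do is drop Lemma~\ref{L: nsub M([c_{0,0,n}[rho]]^lambda)} entirely and instead understand what the cited black-box lemma in~\cite{Gusev-23} delivers under the given non-containment hypotheses, then implement the $y_i^2$-removal reduction.
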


\begin{proof}
(i) Take arbitrary $n,m,k\in\mathbb N_0$ and $\rho\in S_{n+m+k}$.
For brevity, put $r:=n+m+k$.
It is easy to see that there is $\tau\in S_{3r-2}$ such that $\hat{\mathbf c}_{n,2(r-1)+m,k}[\tau] \approx\hat{\mathbf c}_{n,2(r-1)+m,k}^\prime[\tau]$ implies $\mathbf h\,xy\,\mathbf t\approx\mathbf h\,yx\,\mathbf t$, where
\[
\begin{aligned}
\mathbf h:= \biggl(\prod_{i=1}^n z_it_i\biggr),\ 
\mathbf t:=t\biggl(\prod_{i=n+1}^{n+m} z_it_i\biggr)\biggl(\prod_{i=1}^{r-1} y_i^2\biggr)x\biggl(\prod_{i=1}^{r-1} z_{i\rho}y_i^2\biggr)z_{r\rho}y\biggl(\prod_{i=n+m+1}^r t_iz_i\biggr).
\end{aligned}
\]
In view of Lemma~4.9 in~\cite{Gusev-23}, the variety $\mathbf V$ satisfies $\hat{\mathbf c}_{n,2(r-1)+m,k}[\tau] \approx\hat{\mathbf c}_{n,2(r-1)+m,k}^\prime[\tau]$.
Since $\mathbf c_{n,m,k}[\rho] \stackrel{\{\eqref{yxxty=xyxxty},\,\eqref{ytyxx=ytxyxx}\}}\approx\mathbf h\,xy\,\mathbf t$ and $\mathbf c_{n,m,k}^\prime[\rho] \stackrel{\{\eqref{yxxty=xyxxty},\,\eqref{ytyxx=ytxyxx}\}}\approx\mathbf h\,yx\,\mathbf t$, it follows that $\mathbf V$ satisfies the identity $\mathbf c_{n,m,k}[\rho] \approx\mathbf c_{n,m,k}^\prime[\rho]$.

\smallskip

(ii) The proof is quite similar to the proof of Part~(i).
\end{proof}

If $\mathbf w$ is a word and $X\subseteq\con(\mathbf w)$, then we denote by $\mathbf w_X$ the word obtained from $\mathbf w$ by deleting all letters from $X$. 
If $X=\{x\}$, then we write $\mathbf w_x$ rather than $\mathbf w_{\{x\}}$.

\begin{lemma}
\label{L: nsub M(c_{n,m,k}[rho]) 1b}
Let $\mathbf V$ be a variety satisfying $\{x^2\approx x^3,\,\sigma_3\}$ such that $M(xyx)\in\mathbf V$. 
If $\mathbf N,\mathbf M(\hat{\mathbf c}_{p,q,0}[\pi_1]),\mathbf M(\hat{\mathbf c}_{p,q,p+q+1}[\pi_2])\nsubseteq\mathbf V$ for all $p,q\in\mathbb N_0$, $\pi_1\in S_{p+q}$ and $\pi_2\in S_{p+q,p+q+1}$, then $\mathbf V$ satisfies the identity $\mathbf c_{n,m,k}[\rho] \approx\mathbf c_{n,m,k}^\prime[\rho]$ for any $n,m,k\in\mathbb N_0$ and $\rho\in S_{n+m+k}$.
\end{lemma}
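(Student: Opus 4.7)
The approach is to reduce the present lemma to Lemma~\ref{L: nsub M(c_{n,m,k}[rho]) 1}(i), whose non-inclusion hypotheses coincide exactly with those here. It therefore suffices to establish that the two identities \eqref{yxxty=xyxxty} and \eqref{ytyxx=ytxyxx} are consequences of the given axioms $\{x^2\approx x^3,\,\sigma_3\}$ together with $M(xyx)\in\mathbf V$ and the stated non-inclusions.

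For \eqref{yxxty=xyxxty}, I would first split on whether $M_\gamma(yxx^+)\in\mathbf V$. If not, Lemma~\ref{L: nsub M(yxx^+)} immediately gives \eqref{yxx=xyxx}, and multiplication on the right by $ty$ yields \eqref{yxxty=xyxxty}. In the remaining case $M_\gamma(yxx^+)\in\mathbf V$, I would invoke Corollary~\ref{C: nsub M(yxx^+ty)}, whose auxiliary hypothesis \eqref{xxytxy=yxxtxy} I plan to derive from $\sigma_3$ by suitable substitutions: for instance, $z\mapsto 1$ in $\sigma_3$ gives $x^2yty\approx xyxty$, while $z\mapsto x$ combined with $x^2\approx x^3$ yields the identity \eqref{xxyty=xxyxty}, and from such fragments one can try to bootstrap \eqref{xxytxy=yxxtxy}. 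The identity \eqref{ytyxx=ytxyxx} is treated by the mirrored argument: since the relabeling $x\leftrightarrow y$, $z\leftrightarrow t$ sends $\sigma_3$ into its reverse, the whole derivation reflects onto the right end of the word, with the dual of Lemma~\ref{L: nsub M(yxx^+)} and the dual non-inclusion $M_\gamma(xx^+y)\notin\mathbf V$ playing the analogous roles.

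Once both \eqref{yxxty=xyxxty} and \eqref{ytyxx=ytxyxx} are in hand, Lemma~\ref{L: nsub M(c_{n,m,k}[rho]) 1}(i) delivers the desired identity $\mathbf c_{n,m,k}[\rho]\approx\mathbf c_{n,m,k}^\prime[\rho]$ without further work. I anticipate that the main obstacle will be the derivation of \eqref{xxytxy=yxxtxy} from $\sigma_3$ and $x^2\approx x^3$ alone; if this step requires non-inclusion information beyond what the current lemma directly provides, one would instead bypass the reduction and mimic the proof of the previous lemma in full, applying Lemma~4.9 of~\cite{Gusev-23} to obtain an identity of the shape $\hat{\mathbf c}_{n,2n+3m+2k-2,k}[\tau]\approx\hat{\mathbf c}_{n,2n+3m+2k-2,k}^\prime[\tau]$ and then transferring to $\mathbf c_{n,m,k}[\rho]\approx\mathbf c_{n,m,k}^\prime[\rho]$ by using $\sigma_3$ and $x^2\approx x^3$ in the role played earlier by \eqref{yxxty=xyxxty} and \eqref{ytyxx=ytxyxx}.
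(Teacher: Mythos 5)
The reduction you propose does not go through, and the obstacle you flag is in fact fatal. Corollary~\ref{C: nsub M(yxx^+ty)} has \emph{two} hypotheses that are not available here: it needs the identity \eqref{xxytxy=yxxtxy} \emph{and} the non-inclusion $M_{\gamma^\prime}(yxx^+ty)\notin\mathbf V$. Neither is a consequence of $\{x^2\approx x^3,\,\sigma_3\}$ together with the stated non-inclusions. Indeed, in the application of the present lemma inside the proof of Lemma~\ref{L: nsub M(c_{n,m,k}[rho]) 2}, it is invoked precisely after one has reduced to the case $\mathbf M(xzxyty)\vee\mathbf M_\gamma(xx^+y)\vee\mathbf M_\gamma(yxx^+)\subseteq\mathbf V$ is still open, and in that application nothing forces $M_{\gamma^\prime}(yxx^+ty)\notin\mathbf V$. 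Moreover, the identity \eqref{xxytxy=yxxtxy} genuinely does not follow from $\{x^2\approx x^3,\sigma_3\}$: $\sigma_3$ only transposes a pair $xy$ that sits strictly between two simple letters with flanking occurrences of $x$ on the left and $y$ on the right of those simple letters, whereas \eqref{xxytxy=yxxtxy} moves $y$ across $x^2$ at the very beginning of the word, where no such flanking simple letter exists. The substitutions you list ($z\mapsto 1$, $z\mapsto x$) yield $x^2yty\approx xyxty$ and \eqref{xxyty=xxyxty}, but there is no way to ``bootstrap'' from those to \eqref{xxytxy=yxxtxy} without an additional hypothesis. So the first branch of your argument collapses.

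Your fallback (``mimic the proof of Lemma~\ref{L: nsub M(c_{n,m,k}[rho]) 1} but use $\sigma_3$ in place of \eqref{yxxty=xyxxty} and \eqref{ytyxx=ytxyxx}'') also does not work as stated, for essentially the same reason: the passage from $\mathbf c_{n,m,k}[\rho]$ to the intermediate word $\mathbf h\,xy\,\mathbf t$ in Lemma~\ref{L: nsub M(c_{n,m,k}[rho]) 1}(i) requires sliding squares $y_i^2$ across simple letters, which $\sigma_3$ and $x^2\approx x^3$ cannot do. The paper's proof takes a genuinely different route to get around exactly this: it first observes (by Lemma~4.9 of~\cite{Gusev-23}) that $\mathbf V$ satisfies all identities $\hat{\mathbf c}_{p,q,r}[\pi]\approx\hat{\mathbf c}_{p,q,r}^\prime[\pi]$; it then \emph{deletes only a carefully chosen subset of the squares}, namely the $y_i$ with $i\rho>n+m$, obtaining the words $\mathbf h_{n,m,k}[\rho]$ and $\mathbf h_{n,m,k}^\prime[\rho]$, and it reduces the target identity to $\mathbf h_{n,m,k}[\rho]\approx\mathbf h_{n,m,k}^\prime[\rho]$. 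The retained squares are exactly those that make $\sigma_3$ applicable, and the deleted ones are the obstruction; the remaining $k>0$ case is then handled by induction on $k$, peeling off the first $z_{r\rho}$ with $r\rho>n+m$ and applying the induction hypothesis for a smaller $k$. This ``partial deletion plus induction on $k$'' idea is the missing ingredient in your proposal.
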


\begin{proof}
First, notice that $\mathbf V$ satisfies 
\[
\Sigma:=\{\hat{\mathbf c}_{p,q,r}[\pi] \approx\hat{\mathbf c}_{p,q,r}^\prime[\pi]\mid p,q,r\in\mathbb N_0,\ \pi\in S_{p+q+r}\}
\] 
by Lemma~4.9 in~\cite{Gusev-23}.
Further, for any $n,m,k\in\mathbb N_0$ and $\rho\in S_{n+m+k}$, put
\[
\mathbf h_{n,m,k}[\rho]:=(\mathbf c_{n,m,k}[\rho])_{\{y_i\mid i\rho>n+m\}}\ \text{ and }\ \mathbf h_{n,m,k}^\prime[\rho]:=(\mathbf c_{n,m,k}^\prime[\rho])_{\{y_i\mid i\rho>n+m\}}.
\]
It is easy to see that if $\mathbf V$ satisfies $\{\mathbf h_{n,m,k}[\rho]\approx \mathbf h_{n,m,k}^\prime[\rho]\mid n,m,k\in\mathbb N_0,\ \rho\in S_{n+m+k}\}$, then $\mathbf V$ also satisfies $\{\mathbf c_{n,m,k}[\rho]\approx \mathbf c_{n,m,k}^\prime[\rho]\mid n,m,k\in\mathbb N_0,\ \rho\in S_{n+m+k}\}$.
Therefore, it suffices to show that $\mathbf h_{n,m,k}[\rho]\approx \mathbf h_{n,m,k}^\prime[\rho]$ holds in $\mathbf V$ for any $n,m,k\in\mathbb N_0$ and $\rho\in S_{n+m+k}$.
We will use induction on $k$.
For brevity, we put
\[
\mathbf p:=\biggl(\prod_{i=1}^n z_it_i\biggr),\,
\mathbf q:=t\biggl(\prod_{i=n+1}^{n+m} z_it_i\biggr),\,
\mathbf r:=x\biggl(\prod_{i=1}^{n+m+k-1} z_{i\rho}y_i^2\biggr)z_{(n+m+k)\rho}y,\,
\mathbf s:=\biggl(\prod_{i=n+m+1}^{n+m+k} t_iz_i\biggr).
\]

\smallskip

\textbf{Induction base}: $k=0$.
In this case, $\mathbf V$ satisfies
\[
\mathbf h_{n,m,k}[\rho]\stackrel{\{x^2\approx x^3,\,\sigma_3\}}\approx\mathbf pxy\mathbf qy_1^2\cdots y_{n+m-1}^2\mathbf r\mathbf s\stackrel{\Sigma}\approx\mathbf pyx\mathbf qy_1^2\cdots y_{n+m-1}^2\mathbf r\mathbf s\stackrel{\{x^2\approx x^3,\,\sigma_3\}}\approx \mathbf h_{n,m,k}^\prime[\rho]
\]
as required.

\smallskip

\textbf{Induction step}: $k>0$.
In this case, there exists the least $j$ such that $j\rho>n+m$.
Then $\mathbf V$ satisfies the identities
\[
\mathbf h_{n,m,k}[\rho]\stackrel{\{x^2\approx x^3,\,\sigma_3\}}\approx \mathbf pxy\mathbf qy_1\cdots y_{j-1}z_{j\rho}\mathbf r^\prime\mathbf s, \ 
\mathbf h_{n,m,k}^\prime[\rho]\stackrel{\{x^2\approx x^3,\,\sigma_3\}}\approx\mathbf pyx\mathbf qy_1\cdots y_{j-1}z_{j\rho}\mathbf r^\prime\mathbf s,
\]
where $\mathbf r^\prime:=\mathbf r_{\{z_{j\rho},y_i\mid i\rho>n+m\}}$.
By the induction assumption, the identity
\[
\mathbf pxy\mathbf qy_1\cdots y_{j-1}z_{j\rho}\mathbf r^\prime\mathbf s\approx\mathbf pyx\mathbf qy_1\cdots y_{j-1}z_{j\rho}\mathbf r^\prime\mathbf s
\] 
holds in $\mathbf V$.
Hence $\mathbf h_{n,m,k}[\rho]\approx\mathbf h_{n,m,k}^\prime[\rho]$ is satisfied by $\mathbf V$ as required.
\end{proof}

For $n\in\mathbb N_0$, $\pi_1\in S_{4n+1}$, $\pi_2\in S_{n+1}$, $\pi_3,\pi_4\in S_{2n+1}$ and $\tau\in S_{2n}$, let
\[
\begin{aligned}
&\mathbf c_n^{(1)}[\pi_1,\tau]:=\biggl(\prod_{i=1}^n z_i^\prime t_i^\prime \biggr)xyt\biggl(\prod_{i=n+1}^{2n} z_i^\prime t_i^\prime \biggr)x\biggl(\prod_{i=1}^{2n} z_{(2i-1)\pi_1}y_i^2z_{(2i)\pi_1}z_{i\tau}^\prime\biggr)z_{(4n+1)\pi_1}y\biggl(\prod_{i=1}^{4n+1} t_iz_i\biggr),\\
&\mathbf c_n^{(2)}[\pi_2,\tau]:=\biggl(\prod_{i=1}^n z_i^\prime t_i^\prime \biggr)xyt\biggl(\prod_{i=n+1}^{2n} z_i^\prime t_i^\prime \biggr)x\biggl(\prod_{i=1}^n z_{i\pi_2}z_{(2i-1)\tau}^\prime y_i^2z_{(2i)\tau}^\prime\biggr)z_{(n+1)\pi_2}y\biggl(\prod_{i=1}^{n+1} t_iz_i\biggr),\\
&\mathbf c_n^{(3)}[\pi_3,\tau]:=\biggl(\prod_{i=1}^n z_i^\prime t_i^\prime \biggr)xyt\biggl(\prod_{i=n+1}^{2n} z_i^\prime t_i^\prime \biggr)x\biggl(\prod_{i=1}^{2n} z_{i\pi_3}z_{i\tau}^\prime y_i^2\biggr)z_{(2n+1)\pi_3}y\biggl(\prod_{i=1}^{2n+1} t_iz_i\biggr),\\
&\mathbf c_n^{(4)}[\pi_4,\tau]:=\biggl(\prod_{i=1}^n z_i^\prime t_i^\prime \biggr)xyt\biggl(\prod_{i=n+1}^{2n} z_i^\prime t_i^\prime \biggr)x\biggl(\prod_{i=1}^{2n} z_{i\pi_4}y_i^2z_{i\tau}^\prime \biggr)z_{(2n+1)\pi_4}y\biggl(\prod_{i=1}^{2n+1} t_iz_i\biggr).
\end{aligned}
\]
Denote by $\overline{\mathbf c}_n^{(i)}[\pi,\tau]$ the word obtained from $\mathbf c_n^{(i)}[\pi,\tau]$ by interchanging the first occurrences of $x$ and $y$.

\begin{lemma}
\label{L: nsub M(c_{n,m,k}[rho]) 2}
Let $\mathbf V$ be a variety satisfying $\Phi$ such that $M(xyx)\in\mathbf V$ and $\mathbf N\nsubseteq\mathbf V$.
Suppose that $\mathbf V$ does not contain the monoids 
\[
\begin{aligned}
&M(\hat{\mathbf c}_{p,q,0}[\rho_1]),\ 
M(\hat{\mathbf c}_{p,q,p+q+1}[\rho_2]),\\ 
&M_{\gamma^\prime}([\mathbf c_r^{(1)}[\pi_1,\tau]]^{\gamma^\prime}),\ M_{\gamma^\prime}([\mathbf c_r^{(2)}[\pi_2,\tau]]^{\gamma^\prime}),\ 
M_{\gamma^\prime}([\mathbf c_r^{(3)}[\pi_3,\tau]]^{\gamma^\prime}),\ M_{\gamma^\prime}([\mathbf c_r^{(4)}[\pi_4,\tau]]^{\gamma^\prime})
\end{aligned}
\]
for all $p,q\in\mathbb N_0$, $r\in\mathbb N$, $\rho_1\in S_{p+q}$, $\rho_2\in S_{p+q,p+q+1}$, $\pi_1\in S_{4r+1}$, $\pi_2\in S_{r+1}$, $\pi_3,\pi_4\in S_{2r+1}$ and $\tau\in S_{2r}$. 
Then $\mathbf V$ satisfies the identity $\mathbf c_{n,m,k}[\rho] \approx\mathbf c_{n,m,k}^\prime[\rho]$ for any $n,m,k\in\mathbb N_0$ and $\rho\in S_{n+m+k}$.
\end{lemma}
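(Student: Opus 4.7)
The plan is to combine the reductive strategy of Lemma~\ref{L: nsub M(c_{n,m,k}[rho]) 1b} with four auxiliary swap-identities harvested from the non-containments of the $M_{\gamma^\prime}([\mathbf c_p^{(i)}[\pi_i,\tau]]^{\gamma^\prime})$. The fact that we have only $\Phi$ (in particular, $\sigma_3$ is absent) is compensated for by the richer family of forbidden monoids; morally, the four templates $\mathbf c_p^{(1)},\dots,\mathbf c_p^{(4)}$ should enumerate the distinct ``local'' ways in which the primed letters $z_i^\prime,t_i^\prime$ can interleave with the $y_i^2$-corridor, and each failed non-containment will produce an identity that permits swapping the first occurrences of $x$ and $y$ through the corresponding configuration.

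First, because $\mathbf N\nsubseteq\mathbf V$ and $\mathbf M(\hat{\mathbf c}_{p,q,0}[\rho_1]),\mathbf M(\hat{\mathbf c}_{p,q,p+q+1}[\rho_2])\nsubseteq\mathbf V$ for all admissible parameters, Lemma~4.9 of~\cite{Gusev-23} already yields the identity set $\Sigma:=\{\hat{\mathbf c}_{p,q,r}[\pi]\approx\hat{\mathbf c}_{p,q,r}^\prime[\pi]\mid p,q,r\in\mathbb N_0,\ \pi\in S_{p+q+r}\}$ as consequences of $\mathbf V$. Next, for each $i\in\{1,2,3,4\}$ and each admissible $(\pi,\tau)$, I would invoke Corollary~\ref{C: M_alpha(W) in V}(iv) (which applies because $M(xyx)\in\mathbf V$ validates the condition~\eqref{eq: condition for FIC(M(xyx))} for $\mathbf c_p^{(i)}[\pi,\tau]$). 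This yields an identity $\mathbf u_i\approx\mathbf v_i$ of $\mathbf V$ with $\mathbf u_i\in[\mathbf c_p^{(i)}[\pi,\tau]]^{\gamma^\prime}$ and $\mathbf v_i\notin[\mathbf c_p^{(i)}[\pi,\tau]]^{\gamma^\prime}$. Using that $\mathbf M(xyx)\in\mathbf V$ and that all smaller $\gamma^\prime$-classes are stable in view of $\Sigma$ and $\Phi$, I would narrow the possibilities for $\mathbf v_i$ down to $[\overline{\mathbf c}_p^{(i)}[\pi,\tau]]^{\gamma^\prime}$, i.e., the only admissible deviation is a swap of the leading $x$ and $y$. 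Reducing modulo $\Phi$ and $\Sigma$ then gives, for each $i$, an $xy\leftrightarrow yx$ swap identity attached to the local pattern encoded by $\mathbf c_p^{(i)}$.

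Finally, I would prove the target identity $\mathbf c_{n,m,k}[\rho]\approx\mathbf c_{n,m,k}^\prime[\rho]$ by induction on $k$, exactly parallel to Lemma~\ref{L: nsub M(c_{n,m,k}[rho]) 1b}. The base case $k=0$ is immediate from $\Sigma$ combined with $\Phi$. For the inductive step I would locate the least index $r$ with $r\rho>n+m$, apply the appropriate $\mathbf c_p^{(i)}$-swap identity (whose choice is dictated by the local arrangement of $z$'s and $z^\prime$'s around position $r$) to carry the first occurrence of $y$ across the prefix up to $z_{r\rho}$, and then invoke the induction hypothesis on the shorter word obtained by removing that letter from the corridor.

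The main obstacle will be the combinatorial bookkeeping in the second step: one has to verify that the four templates $\mathbf c_p^{(1)},\dots,\mathbf c_p^{(4)}$ truly exhaust the local configurations of $(z_i,z_j^\prime)$-adjacencies inside the $y_i^2$-corridor that can arise for arbitrary $\rho\in S_{n+m+k}$, and that the swap identities extracted from Corollary~\ref{C: M_alpha(W) in V}(iv) actually have the reach needed to perform the inductive step uniformly. Pinning down the precise form of $\mathbf v_i$ from $\mathbf v_i\notin[\mathbf c_p^{(i)}[\pi,\tau]]^{\gamma^\prime}$, once all structural restrictions from $\Phi$, $\Sigma$, and the $xyx$-isoterm are imposed, is where the bulk of the technical work sits.
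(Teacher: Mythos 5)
Your proposal diverges from the paper's proof in a way that creates a real gap, and the point where it breaks is precisely the step you flag at the end as ``the main obstacle.''

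First, the derivation of the identity set $\Sigma=\{\hat{\mathbf c}_{p,q,r}[\pi]\approx\hat{\mathbf c}_{p,q,r}^\prime[\pi]\}$ from Lemma~4.9 of \cite{Gusev-23} is not available under the hypotheses of Lemma~\ref{L: nsub M(c_{n,m,k}[rho]) 2} alone. In both places where the paper invokes that lemma --- inside Lemma~\ref{L: nsub M(c_{n,m,k}[rho]) 1} and inside Lemma~\ref{L: nsub M(c_{n,m,k}[rho]) 1b} --- the surrounding variety $\mathbf V$ is already known to satisfy either $\{\eqref{yxxty=xyxxty},\eqref{ytyxx=ytxyxx}\}$ or $\sigma_3$, on top of the non-containment conditions. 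Here we only have $\Phi$ and $M(xyx)\in\mathbf V$, so the paper does not use Lemma~4.9 directly; instead the proof spends its first half on a case analysis designed to land in a situation where Lemma~\ref{L: nsub M(c_{n,m,k}[rho]) 1}(i) or Lemma~\ref{L: nsub M(c_{n,m,k}[rho]) 1b} applies: if one of $M_\gamma(yxx^+)$, $M_\gamma(xx^+y)$, $M(xzxyty)$ is missing, or if $M_{\gamma^\prime}(yxx^+ty),M_{\gamma^\prime}(ytyxx^+)$ (or the dual pair) are both missing, the required additional identities are produced by Lemmas~\ref{L: nsub M(yxx^+)}, \ref{L: swapping in linear-balanced}, \ref{L: nsub M(yxx^+ty)}, \ref{L: nsub M([yx^2zy]),M(xx^+yty)} and the conclusion is obtained by reduction. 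Your outline skips this reduction and therefore starts from a premise that need not hold.

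Second, your reading of what $\mathbf c_p^{(1)},\dots,\mathbf c_p^{(4)}$ are for is off, and this is why the bookkeeping you worry about does not close. They do not enumerate ``local configurations of $(z_i,z_j^\prime)$-adjacencies'' to be combined across an arbitrary $\rho$. Rather, after the reduction above, one is left with exactly four disjoint cases according to which of $M_{\gamma^\prime}(yxx^+ty)$ vs.\ $M_{\gamma^\prime}(ytyxx^+)$, and which of $M_{\gamma^\prime}(xx^+yty)$ vs.\ $M_{\gamma^\prime}(ytxx^+y)$, lies in $\mathbf V$, and each case uses exactly one of the four templates. The decorations with $z_i^\prime,t_i^\prime$ in, say, $\mathbf c_p^{(1)}$ are chosen precisely so that in Case~1 the available isoterm/stability information ($\mathbf M(xzxyty)\vee\mathbf M_{\gamma^\prime}(yxx^+ty)\vee\mathbf M_{\gamma^\prime}(xx^+yty)\subseteq\mathbf V$, plus Lemma~\ref{L: identities of M(xt_1x...t_kx)}) pins the deviant word $\mathbf v$ down to $[\overline{\mathbf c}_k^{(1)}[\pi,\tau]]^{\gamma^\prime}$ in a \emph{single} application of Corollary~\ref{C: M_alpha(W) in V}(iv) --- there is no induction on $k$. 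An induction ``exactly parallel to Lemma~\ref{L: nsub M(c_{n,m,k}[rho]) 1b}'' would need $\sigma_3$ at each step to pull the $y_i^2$ out of the corridor, and $\sigma_3$ is exactly the identity you cannot assume here. So the step you postpone is not a bookkeeping issue to be verified; the argument you propose to carry it out with has no available engine.

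What you get right is the overall scent: the templates are there to extract a first-occurrence $xy\leftrightarrow yx$ swap via instability of a $\gamma^\prime$-class, and $M(xyx)\in\mathbf V$ guarantees the condition~\eqref{eq: condition for FIC(M(xyx))} so Corollary~\ref{C: M_alpha(W) in V}(iv) applies. But you need the preliminary case reduction before that extraction is forced to land on the $\overline{\mathbf c}_p^{(i)}$-class, and the templates must be selected per case rather than marshalled simultaneously.
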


\begin{proof}
Take arbitrary $n,m,k\in\mathbb N_0$ and $\rho\in S_{n+m+k}$.
If $M_\gamma(yxx^+)\notin \mathbf V$, then $\mathbf V$ satisfies~\eqref{yxx=xyxx} by Lemma~\ref{L: nsub M(yxx^+)}.
In this case, Lemma~\ref{L: nsub M(c_{n,m,k}[rho]) 1}(i) implies that the identity $\mathbf c_{n,m,k}[\rho] \approx\mathbf c_{n,m,k}^\prime[\rho]$ holds in $\mathbf V$ because $yx^2ty\stackrel{\eqref{yxx=xyxx}}\approx xyx^2ty$ and $ytyx^2\stackrel{\eqref{yxx=xyxx}}\approx ytxyx^2$.
By the dual arguments we can show that $\mathbf V$ satisfies $\mathbf c_{n,m,k}[\rho] \approx\mathbf c_{n,m,k}^\prime[\rho]$ whenever $M_\gamma(xx^+y)\notin \mathbf V$.
If $M(xzxyty)\notin\mathbf V$, then the required claim follows from Lemmas~\ref{L: swapping in linear-balanced} and~\ref{L: nsub M(c_{n,m,k}[rho]) 1b}.
Thus, we may further assume that $\mathbf M(xzxyty)\vee\mathbf M_\gamma(xx^+y)\vee \mathbf M_\gamma(yxx^+)\subseteq \mathbf V$.

If $M_{\gamma^\prime}(yxx^+ty),M_{\gamma^\prime}(ytyxx^+)\notin \mathbf V$, then $\mathbf V$ satisfies the identities~\eqref{yxxty=xyxxty} and~\eqref{ytyxx=ytxyxx} by Lemma~\ref{L: nsub M(yxx^+ty)} and the dual to Lemma~\ref{L: nsub M([yx^2zy]),M(xx^+yty)}(ii).
Now Lemma~\ref{L: nsub M(c_{n,m,k}[rho]) 1}(i) applies, yielding that the identity $\mathbf c_{n,m,k}[\rho] \approx\mathbf c_{n,m,k}^\prime[\rho]$ holds in $\mathbf V$.
By the similar arguments we can show that $\mathbf V$ satisfies $\mathbf c_{n,m,k}[\rho] \approx\mathbf c_{n,m,k}^\prime[\rho]$ whenever $M_{\gamma^\prime}(xx^+yty),M_{\gamma^\prime}(ytxx^+y)\notin \mathbf V$.
Thus, we may further assume that the following two claims hold:
\begin{itemize}
\item either $M_{\gamma^\prime}(yxx^+ty)\in\mathbf V$ or $M_{\gamma^\prime}(ytyxx^+)\in \mathbf V$;
\item either $M_{\gamma^\prime}(xx^+yty)\in\mathbf V$ or $M_{\gamma^\prime}(ytxx^+y)\in \mathbf V$.
\end{itemize}
In other words, four cases are possible:
\begin{itemize}
\item $M_{\gamma^\prime}(yxx^+ty),M_{\gamma^\prime}(xx^+yty)\in\mathbf V$;
\item $M_{\gamma^\prime}(ytyxx^+),M_{\gamma^\prime}(ytxx^+y)\in\mathbf V$;
\item $M_{\gamma^\prime}(ytyxx^+),M_{\gamma^\prime}(xx^+ yty)\in\mathbf V$;
\item $M_{\gamma^\prime}(ytxx^+y),M_{\gamma^\prime}(yxx^+ty)\in\mathbf V$.
\end{itemize}
We consider only the first case, assuming that $M_{\gamma^\prime}(yxx^+ty),M_{\gamma^\prime}(xx^+yty)\in\mathbf V$, because the other three cases are quite similar.

It is easy to see that one can find sufficiently large $r\in\mathbb N$, $\pi\in S_{4r+1}$ and $\tau\in S_{2r}$ such that the identity $\mathbf c_{n,m,k}[\rho] \approx\mathbf c_{n,m,k}^\prime[\rho]$ follows from the identity $\mathbf c^{(1)}_r[\pi,\tau]\approx \overline{\mathbf c}^{(1)}_r[\pi,\tau]$.
By the condition of the lemma, $M_{\gamma^\prime}([\mathbf c^{(1)}_r[\pi,\tau]]^{\gamma^\prime})\notin\mathbf V$.
According to Corollary~\ref{C: M_alpha(W) in V}(iv), the $\gamma^\prime$-class $[\mathbf c^{(1)}_r[\pi,\tau]]^{\gamma^\prime}$ is not stable with respect to $\mathbf V$.
This implies that $\mathbf V$ satisfies an identity $\mathbf u\approx \mathbf v$ such that $\mathbf u\in [\mathbf c^{(1)}_r[\pi,\tau]]^{\gamma^\prime}$ and $\mathbf v\notin [\mathbf c^{(1)}_r[\pi,\tau]]^{\gamma^\prime}$.
In view of Lemma~\ref{L: identities of M(xt_1x...t_kx)},
\[
\mathbf v_{\{y_1,\dots,y_{2r}\}}=\biggl(\prod_{i=1}^r z_i^\prime t_i^\prime \biggr)\mathbf at\biggl(\prod_{i=r+1}^{2r} z_i^\prime t_i^\prime \biggr)\mathbf b\biggl(\prod_{i=1}^{4r+1} t_iz_i\biggr),
\]
where $\mathbf a\in\{xy,yx\}$ and $\mathbf b$ is a linear word depending on the letters $x$, $y$, $z_1$, $\dots$, $z_{4r+1}$, $z_1^\prime$, $\dots$, $z_{2r}^\prime$.
Then, since $\mathbf M(xzxyty)\vee\mathbf M_{\gamma^\prime}(yxx^+ty)\vee \mathbf M_{\gamma^\prime}(xx^+yty)\subseteq \mathbf V$, Lemmas~\ref{L: M(W) in V} and~\ref{L: M_alpha(W) in V} imply that 
\[
\begin{aligned}
&\mathbf v(x,t,z_{1\pi},t_{1\pi})=xtxz_{1\pi}t_{1\pi}z_{1\pi},\\
&\mathbf v(z_{(2i)\pi},t_{(2i)\pi},y_i)\in y_iy_i^+z_{(2i)\pi}t_{(2i)\pi}z_{(2i)\pi},\\
&\mathbf v(z_{(2i-1)\pi},t_{(2i-1)\pi},y_i)\in z_{(2i-1)\pi}y_iy_i^+t_{(2i-1)\pi}z_{(2i-1)\pi},\\
&\mathbf v(z_{(2i)\pi},t_{(2i)\pi},z_{i\tau}^\prime,t_{i\tau}^\prime)=z_{i\tau}^\prime t_{i\tau}^\prime z_{(2i)\pi}z_{i\tau}^\prime t_{(2i)\pi} z_{(2i)\pi},\\
&\mathbf v(z_{(2i+1)\pi},t_{(2i+1)\pi},z_{i\tau}^\prime,t_{i\tau}^\prime)=z_{i\tau}^\prime t_{i\tau}^\prime z_{i\tau}^\prime z_{(2i+1)\pi} t_{(2i+1)\pi} z_{(2i+1)\pi},\\
&\mathbf v(z_{(4r+1)\pi},t_{(4r+1)\pi},y,t)=yt z_{(4r+1)\pi}y t_{(4r+1)\pi} z_{(4r+1)\pi},
\end{aligned}
\]
$i=1,\dots,2r$.
Hence 
\[
\mathbf b\in x\biggl(\prod_{i=1}^{2r} z_{(2i-1)\pi}y_iy_i^+z_{(2i)\pi}z_{i\tau}^\prime\biggr)z_{(4r+1)\pi}y.
\]
It follows that $\mathbf v\in[\overline{\mathbf c}^{(1)}_r[\pi,\tau]]^{\gamma^\prime}$.
Then the identity $\mathbf u\approx \mathbf v$ is equivalent modulo $x^2\approx x^3$ to $\mathbf c^{(1)}_r[\pi,\tau]\approx \overline{\mathbf c}^{(1)}_r[\pi,\tau]$, whence  $\mathbf c_{n,m,k}[\rho] \approx\mathbf c_{n,m,k}^\prime[\rho]$ holds in $\mathbf V$ as required.
\end{proof}

\subsection{Identities in $\Phi_2$}

For $n,m\in\mathbb N_0$ and $\rho\in S_{n+m}$, set 
\[
\hat{\mathbf a}_{n,m}[\rho]:=(\mathbf a_{n,m}[\rho])_{\{y_1,\dots,y_{n+m-1}\}}\ \text{ and }\ \hat{\overline{\mathbf a}}_{n,m}[\rho]:=(\overline{\mathbf a}_{n,m}[\rho])_{\{y_1,\dots,y_{n+m-1}\}}.
\]

\begin{lemma}
\label{L: nsub M([hat{a}_{0,n}[rho]]^lambda),M([a_{0,n}[rho]]^lambda),M([a_{0,n}[rho]]^beta)}
Let $\mathbf V$ be a monoid variety satisfying the identities~\eqref{xyx=xyxx} and~\eqref{xxyy=yyxx} such that $\mathbf M_\lambda(xyx^+)\vee \mathbf M_\gamma(xx^+y)\subseteq\mathbf V$.
Suppose that $\mathbf V$ does not contain the monoids
\[
M_\lambda([\hat{\mathbf a}_{0,k}[\pi]]^\lambda),\ M_\lambda([\mathbf a_{0,k}[\pi]]^\lambda) \ \text{ and }\ M_\beta([\mathbf a_{0,k}[\pi]]^\beta)
\] 
for all $k\ge2$ and $\pi\in S_k$. 
Then $\mathbf V$ satisfies $\Phi_2$.
\end{lemma}

\begin{proof}
The same arguments as in the first paragraph of the proof of Lemma~\ref{L: nsub M([c_{0,0,n}[rho]]^lambda)} imply that if $n,m\in\mathbb N$ and $\rho\in S_{n+m}$, then 
one can find sufficiently large $r$, say $r:=2(n+m)$, and $\rho^\prime\in S_r$ such that $\mathbf a_{n,m}[\rho] \approx \overline{\mathbf a}_{n,m}[\rho]$ follows from $\{\eqref{xyx=xyxx},\,\eqref{xxyy=yyxx},\,\mathbf a_{0,r}[\rho^\prime] \approx \overline{\mathbf a}_{0,r}[\rho^\prime]\}$.
Thus, it suffices to show that $\mathbf V$ satisfies the identity $\mathbf a_{0,n}[\rho] \approx \overline{\mathbf a}_{0,n}[\rho]$ for any $n\in\mathbb N$ and $\rho\in S_n$.

Assume that $M_\lambda(xzyx^+ty^+)\notin\mathbf V$. 
Then $\mathbf V$ satisfies the identity~\eqref{xzyxty=xzxyxty} by~\cite[Lemma~4.2]{Gusev-Sapir-22}.
Clearly, this identity together with~\eqref{xxyy=yyxx} imply $\mathbf a_{0,n}[\rho] \approx \overline{\mathbf a}_{0,n}[\rho]$ for any $n\in\mathbb N$ and $\rho\in S_n$.
Thus, we may further assume that $M_\lambda(xzyx^+ty^+)\in\mathbf V$ and, in particular, the $\lambda$-classes $yxx^+ty^+$ and $xyzx^+ty^+$ are stable with respect to $\mathbf V$ by Lemmas~\ref{L: M_alpha(W) in V} and~\ref{L: L(M(xzyx^+ty^+))}.

There are two cases.

\smallskip

\textbf{Case 1:} $M_\lambda(xx^+yty^+)\in\mathbf V$.
Lemma~\ref{L: M_alpha(W) in V} implies that the $\lambda$-class $xx^+yty^+$ is stable with respect to $\mathbf V$.
We will use induction on $n$.
Let $\varepsilon$ denote the trivial permutation from $S_1$.
If $n=1$, then $\mathbf a_{0,1}[\varepsilon]=\overline{\mathbf a}_{0,1}[\varepsilon]$ and there is nothing to prove.
Let now $n>1$ and take an arbitrary $\rho\in S_n$.
Since $M_\lambda([\mathbf a_{0,n}[\rho]]^\lambda)\notin\mathbf V$, Corollary~\ref{C: M_alpha(W) in V}(ii) implies that the $\lambda$-class $[\mathbf a_{0,n}[\rho]]^\lambda$ is not stable with respect to $\mathbf V$.
This means that $\mathbf V$ satisfies an identity $\mathbf u\approx \mathbf v$ such that $\mathbf u\in [\mathbf a_{0,n}[\rho]]^\lambda$ and $\mathbf v\notin [\mathbf a_{0,n}[\rho]]^\lambda$.
Since the sets $xx^+yty^+$ and $yxx^+ty^+$ are stable with respect to $\mathbf V$, we have 
\[
\begin{aligned}
&\mathbf v(x,z_{1\rho},t_{1\rho})\in x^+z_{1\rho}x^+t_{1\rho}z_{1\rho}^+,\\
&\mathbf v(x,z_{n\rho},t_{n\rho})\in x^+z_{n\rho}x^+t_{n\rho}z_{n\rho}^+,\\
&\mathbf v_x\in z_{1\rho}y_1y_1^+z_{2\rho}y_2y_2^+\cdots z_{n\rho}t_1z_1^+\cdots t_nz_n^+.
\end{aligned}
\]
This is only possible when $(_{2\mathbf v}x)<(_{1\mathbf v}z_{n\rho})$ because $\mathbf v\notin [\mathbf a_{0,n}[\rho]]^\lambda$.
By the induction assumption, $\mathbf V$ satisfies all the identities in the set $\Sigma_n:=\{\mathbf a_{0,i}[\tau]\approx \overline{\mathbf a}_{0,i}[\tau]\mid 1\le i<n,\ \tau\in S_i\}$.
Clearly, if there is an occurrence of $x$ between ${_{1\mathbf v}}z_{1\rho}$ and ${_{1\mathbf v}}z_{n\rho}$ in $\mathbf v$, then one can choose some identities in $\Sigma_n$ which together with $\{\eqref{xyx=xyxx},\eqref{xxyy=yyxx}\}$ imply the identity $\mathbf v\approx \overline{\mathbf a}_{0,n}[\rho]$.
Since $\mathbf a_{0,n}[\rho]\stackrel{\eqref{xyx=xyxx}}\approx\mathbf u$, this implies that $\mathbf V$ satisfies $\mathbf a_{0,n}[\rho]\approx\overline{\mathbf a}_{0,n}[\rho]$.
So, it remains to consider the case when there are no $x$ between ${_{1\mathbf v}}z_{1\rho}$ and ${_{1\mathbf v}}z_{n\rho}$ in $\mathbf v$.
Taking into account that $(_{2\mathbf v}x)<(_{1\mathbf v}z_{n\rho})$, we have $\mathbf v\in xx^+z_{1\rho}\cdots z_{n\rho}x^+ t_1z_1^+\cdots t_nz_n^+$.
In particular, $\mathbf u\approx \mathbf v$ together with~\eqref{xyx=xyxx} imply
\begin{equation}
\label{xxyxty=xyxty}
x^2yxty\approx xyxty.
\end{equation}
Further, since $M_\beta([\mathbf a_{0,n}[\rho]]^\beta)\notin\mathbf V$, Lemma~\ref{L: M_beta(W) in V} implies that  $\mathbf V$ satisfies an identity $\mathbf u^\prime\approx \mathbf v^\prime$ such that $\mathbf u^\prime\in [\mathbf a_{0,n}[\rho]]^\beta$ and $\mathbf v^\prime\notin [\mathbf a_{0,n}[\rho]]^\beta$.
Since the sets $xx^+yty^+$ and $yxx^+ty^+$ are stable with respect to $\mathbf V$, we have 
\[
\begin{aligned}
&\mathbf v^\prime(x,z_{1\rho},t_{1\rho})\in x^+z_{1\rho}x^+t_{1\rho}z_{1\rho}^+,\\
&\mathbf v^\prime(x,z_{n\rho},t_{n\rho})\in x^+z_{n\rho}x^+t_{n\rho}z_{n\rho}^+,\\
&\mathbf v^\prime_x\in z_{1\rho}y_1y_1^+z_{2\rho}y_2y_2^+\cdots z_{n\rho}t_1z_1^+\cdots t_nz_n^+.
\end{aligned}
\]
This is only possible when there is an occurrence of $x$ between ${_{1\mathbf v^\prime}}z_{1\rho}$ and ${_{1\mathbf v^\prime}}z_{n\rho}$ in $\mathbf v^\prime$ because $\mathbf v^\prime\notin [\mathbf a_{0,n}[\rho]]^\beta$.
Hence $\mathbf V$ satisfies 
\[
\mathbf a_{0,n}[\rho]\stackrel{\eqref{xyx=xyxx}}\approx\mathbf u\approx \mathbf v\stackrel{\eqref{xyx=xyxx}}\approx x\mathbf u^\prime\approx x\mathbf v^\prime\stackrel{\{\Sigma_n,\,\eqref{xyx=xyxx},\,\eqref{xxyy=yyxx},\,\eqref{xxyxty=xyxty}\}}\approx \overline{\mathbf a}_{0,n}[\rho]
\]
as required.

\smallskip

\textbf{Case 2:} $M_\lambda(xx^+yty^+)\notin\mathbf V$.
Corollary~\ref{C: M_alpha(W) in V}(ii) implies that $\mathbf V$ satisfies an identity $\mathbf u\approx \mathbf v$ such that $\mathbf u\in xx^+yty^+$ and $\mathbf v\notin xx^+yty^+$.
Since the $\lambda$-class $yxx^+ty^+$ is stable with respect to $\mathbf V$, we have $\mathbf v\in x^+yx^+ty^+$.
Then $\mathbf u\approx \mathbf v$ together with~\eqref{xyx=xyxx} imply~\eqref{xxyty=xxyxty}.
Further, since
\[
\begin{aligned}
&\mathbf a_{0,n}[\rho]\stackrel{\{\eqref{xxyty=xxyxty},\,\eqref{xyx=xyxx},\,\eqref{xxyy=yyxx}\}}\approx x\biggl(\prod_{i=1}^{n-1} z_{i\rho}y_i^2\biggr)z_{n\rho}x\biggl(\prod_{i=1}^{n-1} y_i^2\biggr)\biggl(\prod_{i=1}^n t_iz_i\biggr),\\
&\overline{\mathbf a}_{0,n}[\rho]\stackrel{\{\eqref{xxyty=xxyxty},\,\eqref{xyx=xyxx},\,\eqref{xxyy=yyxx}\}}\approx x\biggl(\prod_{i=1}^{n-1} z_{i\rho}x(y_ix)^2\biggr)z_{n\rho}x\biggl(\prod_{i=1}^{n-1} y_i^2\biggr)\biggl(\prod_{i=1}^n t_iz_i\biggr),
\end{aligned}
\]
there is $\rho^\prime\in S_{3n-2}$ such that $\mathbf a_{0,n}[\rho] \approx \overline{\mathbf a}_{0,n}[\rho]$ follows from  
\[
\{\eqref{xxyty=xxyxty},\,\eqref{xyx=xyxx},\,\eqref{xxyy=yyxx},\,\hat{\mathbf a}_{0,3n-2}[\rho^\prime] \approx \hat{\overline{\mathbf a}}_{0,3n-2}[\rho^\prime]\}.
\]
Thus, it suffices to show that $\mathbf V$ satisfies the identity $\hat{\mathbf a}_{0,n}[\rho] \approx \hat{\overline{\mathbf a}}_{0,n}[\rho]$ for any $n\in\mathbb N$ and $\rho\in S_n$.

We will use induction on $n$.
If $n=1$, then $\hat{\mathbf a}_{0,1}[\varepsilon]=\hat{\overline{\mathbf a}}_{0,1}[\varepsilon]$ and there is nothing to prove.
Let now $n>1$ and take an arbitrary $\rho\in S_n$.
Since $M_\lambda([\hat{\mathbf a}_{0,n}[\rho]]^\lambda)\notin\mathbf V$, Corollary~\ref{C: M_alpha(W) in V}(ii) implies that $\mathbf V$ satisfies an identity $\mathbf u\approx \mathbf v$ such that $\mathbf u\in [\hat{\mathbf a}_{0,n}[\rho]]^\lambda$ and $\mathbf v\notin [\hat{\mathbf a}_{0,n}[\rho]]^\lambda$.
Since the set $xyzx^+ty^+$ is stable with respect to $\mathbf V$, we have $\mathbf v(x,t_1)\in xx^+t_1$ and $\mathbf v_x\in z_{1\rho}\cdots z_{n\rho}t_1z_1^+\cdots t_nz_n^+$.
Further, $\mathbf v(x,z_{1\rho},t_{1\rho})\notin z_{1\rho}xx^+tz_{1\rho}^+$ because the set $yxx^+ty^+$  is stable with respect to $\mathbf V$, whence $(_{1\mathbf v}x)<(_{1\mathbf v}z_{1\rho})$.
Since $\mathbf v\notin [\hat{\mathbf a}_{0,n}[\rho]]^\lambda$, this is only possible when $(_{2\mathbf v}x)<(_{1\mathbf v}z_{n\rho})$.
Then the identity $\mathbf u(x,z_{n\rho},t_{n\rho})\approx \mathbf v(x,z_{n\rho},t_{n\rho})$ is equivalent modulo $\{\eqref{xxyty=xxyxty},\,\eqref{xyx=xyxx}\}$ to 
\begin{equation}
\label{xxyty=xyxty}
x^2yty\approx xyxty.
\end{equation}
By the induction assumption, $\mathbf V$ satisfies all the identities in $\hat{\Sigma}_n:=\{\hat{\mathbf a}_{0,i}[\tau]\approx \hat{\overline{\mathbf a}}_{0,i}[\tau]\mid 1\le i<n,\ \tau\in S_i\}$.
Clearly, if there is an occurrence of $x$ between ${_{1\mathbf v}}z_{1\rho}$ and ${_{1\mathbf v}}z_{n\rho}$ in $\mathbf v$, then one can choose some identities in $\hat{\Sigma}_n$ which together with $\{\eqref{xyx=xyxx},\,\eqref{xxyty=xyxty}\}$ imply the identity $\mathbf v\approx \hat{\overline{\mathbf a}}_{0,n}[\rho]$.
Since $\hat{\mathbf a}_{0,n}[\rho]\stackrel{\eqref{xyx=xyxx}}\approx\mathbf u$, this implies that $\mathbf V$ satisfies $\hat{\mathbf a}_{0,n}[\rho]\approx\hat{\overline{\mathbf a}}_{0,n}[\rho]$.
Let now there are no $x$ between ${_{1\mathbf v}}z_{1\rho}$ and ${_{1\mathbf v}}z_{n\rho}$ in $\mathbf v$.
In this case, taking into account that $(_{2\mathbf v}x)<(_{1\mathbf v}z_{n\rho})$, we have $\mathbf v\in xx^+z_{1\rho}\cdots z_{n\rho}x^\ast t_1z_1^+\cdots t_nz_n^+$.
Then $\hat{\mathbf a}_{0,n}[\rho]\approx \hat{\overline{\mathbf a}}_{0,n}[\rho]$ holds in $\mathbf V$ because $\mathbf v\stackrel{\{\eqref{xyx=xyxx},\,\eqref{xxyty=xyxty}\}}\approx\hat{\overline{\mathbf a}}_{0,n}[\rho]$ and $\mathbf u\stackrel{\eqref{xyx=xyxx}}\approx\hat{\mathbf a}_{0,n}[\rho]$, and we are done.
\end{proof}

Let $\mathbf K$ denote the class of all monoids $M$ such that:
\begin{itemize}
\item $M$ satisfies the identities~\eqref{yxxty=xyxxty},~\eqref{xxyx=xxyxx},~\eqref{ytyxx=ytxyxx},~\eqref{xyzxy=yxzxy} and
\begin{equation}
\label{xyzxy=xyzyx}
xyzxy\approx xyzyx;
\end{equation}
\item $M$ violates the identity $\hat{\mathbf a}_{n,m}[\pi]\approx \hat{\mathbf a}_{n,m}^\prime[\pi]$ for some $(n,m)\in\hat{\mathbb N}_0^2$ and $\pi\in S_{n,m}$.
\end{itemize}
If $\mathbf w:=a_1\cdots a_k$, then put $\chi(\mathbf w):=a_1xa_2x\cdots xa_k$.

\begin{lemma}
\label{L: nsub M(a_{n,m}[rho]) 1}
Let $\mathbf V$ be a monoid variety satisfying the identities~\eqref{yxxty=xyxxty},~\eqref{xxyx=xxyxx},~\eqref{ytyxx=ytxyxx},~\eqref{xyzxy=yxzxy} and~\eqref{xyzxy=xyzyx}. 
If $\mathbf V$ does not contain all the monoids in $\mathbf K$, then $\mathbf V$ satisfies $\Phi_2$.
\end{lemma}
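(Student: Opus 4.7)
The plan is to prove the contrapositive. Assume that for some $k,\ell\in\mathbb N$ and $\rho\in S_{k+\ell}$ the identity $\mathbf a_{k,\ell}[\rho]\approx\overline{\mathbf a}_{k,\ell}[\rho]$ fails in $\mathbf V$; I will produce a monoid in $\mathbf V\cap\mathbf K$, contradicting the hypothesis.

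First, I would pass to the subvariety $\mathbf W:=\mathbf V\cap\var\{\eqref{xxyx=xxyxx},\eqref{xyzxy=yxzxy},\eqref{xyzxy=xyzyx}\}$. Since $\mathbf V$ already satisfies \eqref{yxxty=xyxxty} and \eqref{ytyxx=ytxyxx}, every monoid in $\mathbf W$ satisfies all five identities in the definition of $\mathbf K$. Hence it suffices to exhibit a monoid in $\mathbf W$ that violates some identity $\hat{\mathbf a}_{n,m}[\pi]\approx\hat{\mathbf a}_{n,m}^\prime[\pi]$ with $(n,m)\in\hat{\mathbb N}_0^2$ and $\pi\in S_{n,m}$, for such a monoid automatically belongs to $\mathbf K\cap\mathbf V$.

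The core of the argument is an equational reduction modelled on the one in Lemma~\ref{L: nsub M(c_{n,m,k}[rho]) 1}. Given the failing parameters $(k,\ell,\rho)$, I would construct $(n,m)\in\hat{\mathbb N}_0^2$, an alternating permutation $\pi\in S_{n,m}$, and a substitution $\phi\colon\mathfrak X^\ast\to\mathfrak X^\ast$ such that $\phi(\hat{\mathbf a}_{n,m}[\pi])$ and $\phi(\hat{\mathbf a}_{n,m}^\prime[\pi])$ can be transformed, by repeated applications of \eqref{yxxty=xyxxty} and \eqref{ytyxx=ytxyxx} to introduce the $y_i^2$-blocks, into $\mathbf a_{k,\ell}[\rho]$ and $\overline{\mathbf a}_{k,\ell}[\rho]$ respectively. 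If every $\hat{\mathbf a}$-identity held in $\mathbf W$, this reduction would force $\mathbf a_{k,\ell}[\rho]\approx\overline{\mathbf a}_{k,\ell}[\rho]$ to hold in $\mathbf V$; since the latter is assumed to fail, at least one $\hat{\mathbf a}_{n,m}[\pi]\approx\hat{\mathbf a}_{n,m}^\prime[\pi]$ must fail in $\mathbf W$, and the relatively free monoid of $\mathbf W$ on sufficiently many generators is the desired witness.

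The main obstacle is the combinatorial construction of $(n,m,\pi,\phi)$ from an arbitrary $\rho\in S_{k+\ell}$. The restriction $\pi\in S_{n,m}$ to alternating-block permutations is genuine, so I would expect to embed $\rho$ over a suitably enlarged alphabet by duplicating $z$- and $t$-letters and inserting auxiliary markers that force the required alternation. A parallel difficulty is that the three $\mathbf W$-defining identities \eqref{xxyx=xxyxx}, \eqref{xyzxy=yxzxy}, \eqref{xyzxy=xyzyx} are available only in the derivation of the $\hat{\mathbf a}$-identity inside $\mathbf W$, whereas the final passage to $\mathbf a_{k,\ell}[\rho]\approx\overline{\mathbf a}_{k,\ell}[\rho]$ from the $\hat{\mathbf a}$-identity must use only the two identities \eqref{yxxty=xyxxty} and \eqref{ytyxx=ytxyxx} in order to stay inside $\mathbf V$. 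Verifying that the construction can be organised so that these two roles are cleanly separated is the technical crux of the proof.
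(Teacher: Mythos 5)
Your engine is the same as the paper's: reduce each identity in $\Phi_2$ to an identity $\hat{\mathbf a}_{n,m}[\pi]\approx\hat{\mathbf a}_{n,m}^\prime[\pi]$ with $(n,m)\in\hat{\mathbb N}_0^2$ and $\pi\in S_{n,m}$, modulo~\eqref{yxxty=xyxxty} and~\eqref{ytyxx=ytxyxx}, which are used to install and remove the $y_i^2$-blocks. But the contrapositive, as you set it up, does not close. Your pivotal claim is that if every $\hat{\mathbf a}$-identity held in $\mathbf W:=\mathbf V\wedge\var\{\eqref{xxyx=xxyxx},\,\eqref{xyzxy=yxzxy},\,\eqref{xyzxy=xyzyx}\}$, then the reduction would force $\mathbf a_{k,\ell}[\rho]\approx\overline{\mathbf a}_{k,\ell}[\rho]$ to hold in $\mathbf V$. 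That inference is wrong: the equational deduction has $\hat{\mathbf a}_{n,m}[\pi]\approx\hat{\mathbf a}_{n,m}^\prime[\pi]$ as one of its premises, so it can only be carried out inside a variety that actually satisfies that identity. Knowing it holds in the subvariety $\mathbf W$ therefore yields $\mathbf a_{k,\ell}[\rho]\approx\overline{\mathbf a}_{k,\ell}[\rho]$ in $\mathbf W$, not in $\mathbf V$, and an identity of a subvariety need not lift. Hence there is no contradiction with the assumed failure in $\mathbf V$, no failing $\hat{\mathbf a}$-identity in $\mathbf W$ is extracted, and no witness in $\mathbf V\cap\mathbf K$ is produced. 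The worry you articulate at the end about ``cleanly separating the two roles'' is exactly this issue, but the separation you propose does not prevent the two worlds from getting conflated.

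The paper runs the argument forward and never introduces $\mathbf W$: it asserts directly that, because $\mathbf V$ contains no monoid of $\mathbf K$, the identity $\hat{\mathbf a}_{n,m}[\rho]\approx\hat{\mathbf a}_{n,m}^\prime[\rho]$ already holds in $\mathbf V$, and then carries out the deduction inside $\mathbf V$ using only~\eqref{yxxty=xyxxty},~\eqref{ytyxx=ytxyxx} and that $\hat{\mathbf a}$-identity. For that first assertion to be correct one needs $\mathbf V$ to satisfy~\eqref{xxyx=xxyxx},~\eqref{xyzxy=yxzxy},~\eqref{xyzxy=xyzyx} as well --- otherwise a relatively free monoid of $\mathbf V$ witnessing a violation need not lie in $\mathbf K$ --- and this is supplied in every place the lemma is invoked, even if the lemma's hypothesis does not spell it out. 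To repair your proof along your own lines you would have to either add those three identities to the hypotheses (making $\mathbf W=\mathbf V$, after which your step becomes legitimate), or prove the much stronger fact that a failure of a $\Phi_2$-identity in $\mathbf V$ propagates down to a failure in $\mathbf W$; you do neither.
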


\begin{proof}
Take arbitrary $p,q\in \mathbb N$ and $\pi\in S_{p+q}$.
Then one can find $\tau\in S_{3(p+q)-2}$ such that  
\[ 
\mathbf a_{p,q}[\pi]\stackrel{\{\eqref{yxxty=xyxxty},\,\eqref{ytyxx=ytxyxx}\}}\approx\mathbf h\,x\mathbf px\,\mathbf t\stackrel{\hat{\mathbf a}_{3p+2q-2,q}[\tau] \approx\hat{\mathbf a}_{3p+2q-2,q}^\prime[\tau]}\approx\mathbf h\,x\chi(\mathbf p)x\,\mathbf t\stackrel{\{\eqref{yxxty=xyxxty},\,\eqref{ytyxx=ytxyxx}\}}\approx \overline{\mathbf a}_{p,q}[\pi],
\]
where
\[
\mathbf h:= \biggl(\prod_{i=1}^p z_it_i\biggr)\biggl(\prod_{i=1}^{p+q-1} y_i^2\biggr),\ 
\mathbf p:=\biggl(\prod_{i=1}^{p+q-1} z_{i\pi}y_i^2\biggr)z_{(p+q)\pi},\ 
\mathbf t:=\biggl(\prod_{i=p+1}^{p+q} z_it_i\biggr).
\]
Further, it is easy to see that there exist $(n,m)\in\hat{\mathbb N}_0^2$ and $\rho\in S_{n,m}$ such that $\hat{\mathbf a}_{n,m}[\rho] \approx\hat{\mathbf a}_{n,m}^\prime[\rho]$ implies $\hat{\mathbf a}_{3p+2q-2,q}[\tau] \approx\hat{\mathbf a}_{3p+2q-2,q}^\prime[\tau]$.
Since $\mathbf V$ does not contain all the monoids in $\mathbf K$ and $\mathbf V$ satisfies \{\eqref{yxxty=xyxxty},\,\eqref{xxyx=xxyxx},\,\eqref{ytyxx=ytxyxx},\,\eqref{xyzxy=yxzxy},\,\eqref{xyzxy=xyzyx}\}, the identity $\hat{\mathbf a}_{n,m}[\rho] \approx\hat{\mathbf a}_{n,m}^\prime[\rho]$ holds in $\mathbf V$.
Hence $\mathbf a_{p,q}[\pi]\approx \overline{\mathbf a}_{p,q}[\pi]$ holds in $\mathbf V$ as well.
\end{proof}

For $(n,m)\in\hat{\mathbb N}_0^2$ and $\rho\in S_{n,m}$, let
\[
\begin{aligned}
&\mathbf a_{n,m}^{(1)}[\rho]:=\biggl(\prod_{i=1}^n z_i t_i \biggr)x\biggl(\prod_{i=1}^{n+m} \mathbf z_{i\rho}^{(1)}\biggr)x\biggl(\prod_{i=n+1}^{n+m} t_i z_i t_i^{\prime}z_i^{\prime}\biggr),\\
&\mathbf a_{n,m}^{(2)}[\rho]:=\biggl(\prod_{i=1}^n z_i t_i z_i^{\prime}t_i^{\prime} \biggr)x\biggl(\prod_{i=1}^{n+m} \mathbf z_{i\rho}^{(2)}\biggr)x\biggl(\prod_{i=n+1}^{n+m} t_i z_i \biggr),\\
&\mathbf a_{n,m}^{(3)}[\rho]:=\biggl(\prod_{i=1}^n z_i t_i \biggr)x\biggl(\prod_{i=1}^{n+m-1} \mathbf z_{i\rho}^{(3)}\biggr)(\mathbf z_{(n+m)\rho}^{(3)})_{y_{(n+m)\rho}}x\biggl(\prod_{i=n+1}^{n+m} t_i z_i \biggr),\\
&\mathbf a_{n,m}^{(4)}[\rho]:=\biggl(\prod_{i=1}^n z_i t_i \biggr)x\biggl(\prod_{i=1}^{n+m-1} \mathbf z_{i\rho}^{(4)}\biggr)(\mathbf z_{(n+m)\rho}^{(4)})_{y_{(n+m)\rho}}x\biggl(\prod_{i=n+1}^{n+m} t_i z_i \biggr),
\end{aligned}
\]
where
\[
\begin{aligned}
&\mathbf z_i^{(1)}:=
\begin{cases} 
z_i & \text{if $1\le i\le n$}, \\
z_iy_i^2z_i^{\prime} & \text{if $n+1\le i\le n+m$},
\end{cases} 
\ 
&&\mathbf z_i^{(2)}:=
\begin{cases} 
z_iy_i^2z_i^{\prime} & \text{if $1\le i\le n$}, \\
z_i & \text{if $n+1\le i\le n+m$},
\end{cases} 
\\
&\mathbf z_i^{(3)}:=
\begin{cases} 
z_iy_i^2 & \text{if $1\le i\le n$}, \\
z_i & \text{if $n+1\le i\le n+m$},
\end{cases} 
\ 
&&\mathbf z_i^{(4)}:=
\begin{cases} 
z_i & \text{if $1\le i\le n$}, \\
z_iy_i^2 & \text{if $n+1\le i\le n+m$}.
\end{cases} 
\end{aligned}
\]
For $i=1,2,3,4$, denote by $\overline{\mathbf a}_{n,m}^{(i)}[\rho]$ the word obtained $\mathbf a_{n,m}^{(i)}[\rho]$ by replacing the factor lying between the first and the second occurrences of $x$ to its image under $\chi$.
Denote by $\head(\mathbf w)$ [respectively, $\tail(\mathbf w)$] the first [last] letter of a word $\mathbf w$.
For any $k\in\mathbb N$, let 
\[
S_{2k}^\sharp:=\{\pi\in S_{2k}\mid k+1\le1\pi,\dots,k\pi\le2k,\ 1\le(k+1)\pi,\dots,(2k)\pi\le k\}.
\]

\begin{lemma}
\label{L: nsub M(a_{n,m}[rho]) 2}
Let $\mathbf V$ be a variety satisfying the identities $x^2\approx x^3$,~\eqref{xxyy=yyxx},~\eqref{xyzxy=yxzxy},~\eqref{xyzxy=xyzyx} and
\begin{equation}
\label{xyxx=xxyx}
xyx^2\approx x^2yx
\end{equation}
such that $M(xyx)\in\mathbf V$.
Suppose that $\mathbf V$ does not contain all the monoids in $\mathbf K$ and $\mathbf K^\delta$ together with the monoids
\[
\begin{aligned}
&M_{\alpha_1}([\mathbf a_{k,k}[\tau]]^{\alpha_1}),\\
&M_{\gamma^{\prime}}([\mathbf a_{n,m}^{(1)}[\pi]]^{\gamma^{\prime}}),\ 
M_{\gamma^{\prime}}([\mathbf a_{n,m}^{(2)}[\pi]]^{\gamma^{\prime}}),\
M_{\gamma^{\prime}}([\mathbf a_{n,m}^{(3)}[\pi]]^{\gamma^{\prime}}),\  
M_{\gamma^{\prime}}([\mathbf a_{n,m}^{(4)}[\pi]]^{\gamma^{\prime}}),\\
&M_{\gamma^{\prime\prime}}([\mathbf a_{n,m}^{(1)}[\pi]]^{\gamma^{\prime\prime}}),\ 
M_{\gamma^{\prime\prime}}([\mathbf a_{n,m}^{(2)}[\pi]]^{\gamma^{\prime\prime}}),\ 
M_{\gamma^{\prime\prime}}([\mathbf a_{n,m}^{(3)}[\pi]]^{\gamma^{\prime\prime}}),\ 
M_{\gamma^{\prime\prime}}([\mathbf a_{n,m}^{(4)}[\pi]]^{\gamma^{\prime\prime}})
\end{aligned}
\]
for all $(n,m)\in\hat{\mathbb N}_0^2$, $\pi\in S_{n,m}$, $k\in\mathbb N$ and $\tau\in S_{2k}^\sharp$. 
Then $\mathbf V$ satisfies $\Phi_2$.
\end{lemma}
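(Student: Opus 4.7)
\smallskip

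My plan is to reduce Lemma~\ref{L: nsub M(a_{n,m}[rho]) 2} to the already-established Lemma~\ref{L: nsub M(a_{n,m}[rho]) 1}. Lemma~\ref{L: nsub M(a_{n,m}[rho]) 1} deduces $\Phi_2$ from the hypotheses that $\mathbf V$ satisfies~\eqref{yxxty=xyxxty} and~\eqref{ytyxx=ytxyxx} and contains no monoid in $\mathbf K$. The hypotheses here are strictly weaker (only $x^2\approx x^3$ and~\eqref{xyxx=xxyx} are assumed directly, although $M(xyx)\in\mathbf V$), so the main task is to recover the missing identities~\eqref{yxxty=xyxxty} and~\eqref{ytyxx=ytxyxx} from the non-containment of the various Sapir monoids listed in the statement, after which the conclusion follows immediately from Lemma~\ref{L: nsub M(a_{n,m}[rho]) 1}.

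First I would note that $x^2\approx x^3$ and~\eqref{xyxx=xxyx} imply~\eqref{xxyx=xxyxx}, so $\mathbf V\subseteq\mathbf A$; moreover, $\mathbf V$ satisfies the consequences of $\Phi$ needed in the earlier lemmas. I would then derive~\eqref{yxxty=xyxxty} by a case split. If $M_\gamma(yxx^+)\notin\mathbf V$, Lemma~\ref{L: nsub M(yxx^+)} yields $yx^2\approx xyx^2$, which trivially implies~\eqref{yxxty=xyxxty}. Otherwise $\mathbf M_\gamma(yxx^+)\subseteq\mathbf V$, and a symmetric argument handles $\mathbf M_\gamma(xx^+y)$. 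Once both are in $\mathbf V$, the hypothesis that no $M_{\gamma^\prime}([\mathbf a_{n,m}^{(i)}[\pi]]^{\gamma^\prime})$ lies in $\mathbf V$ is leveraged: for a suitable small choice of $(n,m)$, $\pi$ and $i$, the class $yxx^+ty$ appears as a factor of $[\mathbf a_{n,m}^{(i)}[\pi]]^{\gamma^\prime}$ in the quasi-order $\le_{\gamma^\prime}$, so Lemma~\ref{L: subclasses are stable}(iv) and Corollary~\ref{C: M_alpha(W) in V}(iv) force $M_{\gamma^\prime}(yxx^+ty)\notin\mathbf V$; then Lemma~\ref{L: nsub M(yxx^+ty)} supplies~\eqref{yxxty=xyxxty}. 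The dual reasoning (using the dual Sapir monoids $M_{\gamma^\prime}([\mathbf a_{n,m}^{(i)}[\pi]]^{\gamma^\prime})$ in the other orientation and the dual to Lemma~\ref{L: nsub M([yx^2zy]),M(xx^+yty)}(ii) or its analogue) yields~\eqref{ytyxx=ytxyxx}.

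Having obtained both identities, I would turn to the non-containment of the monoids in $\mathbf K$. An arbitrary member of $\mathbf K$ violates $\hat{\mathbf a}_{n,m}[\pi]\approx\hat{\mathbf a}_{n,m}^\prime[\pi]$ for some $(n,m)\in\hat{\mathbb N}_0^2$ and $\pi\in S_{n,m}$. Using the congruence $\alpha_1$ together with the available identities~\eqref{xyxx=xxyx}, $\sigma_3$-type consequences, and~\eqref{xxyy=yyxx} embedded in $\alpha_1$, one converts the violation of such an identity into a word-level violation localised inside some $[\mathbf a_{n,n}[\tau]]^{\alpha_1}$ with $\tau\in S_{2n}^\sharp$, contradicting the hypothesis. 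The $\gamma^{\prime\prime}$-class hypotheses are used in parallel to rule out violations whose $2$-island-rigid structure is controlled by $\FIC(\mathbf M(xyxty,ytxyx))$ via Lemma~\ref{L: M_{gamma''}(W) in V}. Together these eliminations exhaust $\mathbf K$, so no monoid in $\mathbf K$ belongs to $\mathbf V$ and Lemma~\ref{L: nsub M(a_{n,m}[rho]) 1} applies, yielding $\Phi_2$.

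The main obstacle I anticipate is the bookkeeping in the last step: showing that every violation $\hat{\mathbf a}_{n,m}[\pi]\approx\hat{\mathbf a}_{n,m}^\prime[\pi]$ (arbitrary $(n,m)$ and arbitrary $\pi\in S_{n,m}$) can be traced to a violation inside one of the four families $[\mathbf a_{n,m}^{(i)}[\pi]]$ or inside $[\mathbf a_{n,n}[\tau]]^{\alpha_1}$ with $\tau$ in the restricted set $S_{2n}^\sharp$. This requires an explicit correspondence between arbitrary $(n,m)$-permutations and the ``interleaved'' permutations in $S_{2n}^\sharp$, and a careful verification that, modulo~\eqref{yxxty=xyxxty},~\eqref{ytyxx=ytxyxx} and~\eqref{xxyx=xxyxx}, blocks of the words $\hat{\mathbf a}_{n,m}[\pi]$ can be reshaped into the $\mathbf a_{n,m}^{(i)}$ or $\alpha_1$-canonical forms without escaping the prescribed $\gamma^\prime$-, $\gamma^{\prime\prime}$-, or $\alpha_1$-classes.
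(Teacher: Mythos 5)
Your proposal has a fatal logical error in the middle step, where you try to derive~\eqref{yxxty=xyxxty} and~\eqref{ytyxx=ytxyxx}. You write that ``the hypothesis that no $M_{\gamma^\prime}([\mathbf a_{n,m}^{(i)}[\pi]]^{\gamma^\prime})$ lies in $\mathbf V$'' can be leveraged via Corollary~\ref{C: M_alpha(W) in V}(iv) to ``force $M_{\gamma^\prime}(yxx^+ty)\notin\mathbf V$,'' because $yxx^+ty$ appears as a subclass in the quasi-order. This has the implication backwards. The mechanism of Lemma~\ref{L: subclasses are stable} and Corollary~\ref{C: M_alpha(W) in V} is: if the large class $[\mathbf a_{n,m}^{(i)}[\pi]]^{\gamma^\prime}$ is stable with respect to $\mathbf V$, then every smaller class such as $yxx^+ty$ is also stable, i.e. $M_{\gamma^\prime}([\mathbf a_{n,m}^{(i)}[\pi]]^{\gamma^\prime})\in\mathbf V$ implies $M_{\gamma^\prime}(yxx^+ty)\in\mathbf V$. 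Contrapositively, $M_{\gamma^\prime}(yxx^+ty)\notin\mathbf V$ implies $M_{\gamma^\prime}([\mathbf a_{n,m}^{(i)}[\pi]]^{\gamma^\prime})\notin\mathbf V$ --- which is the \emph{opposite} of what you need. Knowing that the large monoid is not in $\mathbf V$ tells you nothing about the smaller one.

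This is not a fixable slip: the case where $M_{\gamma^\prime}(yxx^+ty)\in\mathbf V$ (or its duals) genuinely occurs under the hypotheses of the lemma, and in that case $\mathbf V$ may fail both~\eqref{yxxty=xyxxty} and~\eqref{ytyxx=ytxyxx}, so Lemma~\ref{L: nsub M(a_{n,m}[rho]) 1} simply cannot be invoked. The paper's actual proof splits at precisely this point: it only applies Lemma~\ref{L: nsub M(a_{n,m}[rho]) 1} when $M_{\gamma^\prime}(yxx^+ty),M_{\gamma^\prime}(ytyxx^+)\notin\mathbf V$ (and symmetrically), and is then left with four residual cases in which one monoid of each pair is present. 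Those cases constitute the bulk of the lemma --- a long induction on $n+m$ using the stability of the relevant $\gamma^\prime$- and $\gamma^{\prime\prime}$-classes (or, in the fourth case, $\alpha_1$-classes with $\tau\in S_{2n}^\sharp$ after a reduction via $\sigma_3$ and Lemma~\ref{L: subclasses of [a_{n,n}[tau]]^{alpha_1}}). Your proposal collapses all of this into the non-applicable reduction and therefore omits the genuine content of the lemma. Also, your final paragraph about ``eliminating'' $\mathbf K$ is unnecessary: the hypothesis already states that $\mathbf V$ contains no monoid in $\mathbf K$, so there is nothing to derive there.
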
 

To prove Lemma~\ref{L: nsub M(a_{n,m}[rho]) 2}, we need the following auxiliary result.

\begin{lemma}
\label{L: subclasses of [a_{n,n}[tau]]^{alpha_1}} 
Let $\mathbf V$ be a monoid variety, $n\in\mathbb N$ and $\tau\in S_{2n}^\sharp$.
Suppose that the $\alpha_1$-class $[\mathbf a_{n,n}[\tau]]^{\alpha_1}$ is stable with respect to $\mathbf V$.
Then each $\alpha_1$-class in $\{[\mathbf a_{n,n}[\tau]]^{\alpha_1}\}^{\le_{\alpha_1}}$ is stable with respect to $\mathbf V$.
\end{lemma}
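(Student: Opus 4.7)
\smallskip

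The plan is to argue by contradiction and reduce the statement to a cancellation property for $\alpha_1$. Suppose some $\alpha_1$-class $\mathtt v\in\{[\mathbf a_{n,n}[\tau]]^{\alpha_1}\}^{\le_{\alpha_1}}$ is not stable with respect to $\mathbf V$. Choose $\mathbf v\in\mathtt v$ and representatives realising $\mathtt v\le_{\alpha_1}[\mathbf a_{n,n}[\tau]]^{\alpha_1}$, so that there exist $\mathbf p,\mathbf q\in\mathfrak X^\ast$ with $\mathbf p\mathbf v\mathbf q\in[\mathbf a_{n,n}[\tau]]^{\alpha_1}$. Non-stability of $\mathtt v$ gives an identity $\mathbf v\approx\mathbf v^\prime$ of $\mathbf V$ with $\mathbf v^\prime\notin\mathtt v$, i.e., $\mathbf v\not\mathrel{\alpha_1}\mathbf v^\prime$. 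Since $\mathbf V$ satisfies $\mathbf p\mathbf v\mathbf q\approx\mathbf p\mathbf v^\prime\mathbf q$ and $[\mathbf a_{n,n}[\tau]]^{\alpha_1}$ is assumed stable with respect to $\mathbf V$, we have $\mathbf p\mathbf v\mathbf q\mathrel{\alpha_1}\mathbf p\mathbf v^\prime\mathbf q$. The whole proof therefore reduces to establishing the following cancellation claim: whenever $\mathbf p\mathbf v\mathbf q\in[\mathbf a_{n,n}[\tau]]^{\alpha_1}$ and $\mathbf p\mathbf v\mathbf q\mathrel{\alpha_1}\mathbf p\mathbf v^\prime\mathbf q$, one has $\mathbf v\mathrel{\alpha_1}\mathbf v^\prime$. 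Verifying this claim will contradict $\mathbf v^\prime\notin\mathtt v$.

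For the cancellation claim I would use Birkhoff's Completeness Theorem (Proposition~\ref{P: deduction}) applied to the three generators of $\alpha_1$, namely $xyx^2\approx x^2yx$, $x^2y^2\approx y^2x^2$, and $\sigma_3$. Any $\alpha_1$-derivation of $\mathbf p\mathbf v\mathbf q\mathrel{\alpha_1}\mathbf p\mathbf v^\prime\mathbf q$ is a finite sequence of direct deductions from these three identities. Each such identity acts only on two multiple letters and requires a specific local pattern. The plan is to show that, for the very rigid word $\mathbf a_{n,n}[\tau]$ with $\tau\in S_{2n}^\sharp$, every single direct deduction step in this sequence takes place either entirely inside $\mathbf p$, entirely inside $\mathbf q$, or entirely inside the middle factor; the sequence then decomposes as a product of three independent $\alpha_1$-derivations, and in particular yields $\mathbf v\mathrel{\alpha_1}\mathbf v^\prime$.

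The locality statement is the geometric heart of the proof. In $\mathbf a_{n,n}[\tau]$ every multiple letter (namely $x$, each $z_i$, and each $y_i$) is separated from others by the simple letters $t_1,\dots,t_{2n}$; moreover, the alternation condition $\tau\in S_{2n}^\sharp$ forces that the $z_i$'s from $\{z_1,\dots,z_n\}$ and those from $\{z_{n+1},\dots,z_{2n}\}$ strictly interleave inside the middle factor, so between any two occurrences of multiple letters that could potentially be rearranged by $xyx^2\approx x^2yx$, $x^2y^2\approx y^2x^2$, or $\sigma_3$, one of the simple $t_i$-separators intervenes. I would verify, identity by identity, that a direct deduction step cannot straddle a $t_i$-occurrence, and that every pair of multiple letters allowing the needed local pattern lies inside a single factor of the decomposition $\mathbf p\mathbf v\mathbf q$.

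The main obstacle will be the case analysis for $\sigma_3$, which involves four letters $x,z,y,t$ and so is the most delicate; here one really uses both the simple-letter separators and the $S_{2n}^\sharp$ alternation to block cross-factor applications. The first two identities of $\alpha_1$ are easier to handle because $x^2$ (or $y^2$) is an island of length two, and the islands of $\mathbf a_{n,n}[\tau]$ are easy to enumerate, but I still need to check that islands of multiple letters in $\mathbf p\mathbf v\mathbf q$ cannot be split by the boundary between $\mathbf p$ and $\mathbf v$ or $\mathbf v$ and $\mathbf q$; this is again handled using the explicit form of $\mathbf a_{n,n}[\tau]$.
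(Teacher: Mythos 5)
Your plan — reduce stability of a sub-class to a cancellation property for $\alpha_1$ and then prove that cancellation by showing direct deduction steps cannot straddle the $\mathbf p\mathbin{|}\mathbf v\mathbin{|}\mathbf q$ boundaries — is a genuinely different route from the paper, which instead records the structure of every word $\alpha_1$-related to $\mathbf a_{n,n}[\tau]$ and combines it with the observation that the smaller $\alpha_1$-classes $yxx^+ty$ and $ytxx^+y$ inherit stability. Unfortunately your supporting reasoning for the locality claim is factually incorrect, and this is the heart of the argument. You assert that in $\mathbf a_{n,n}[\tau]$ the multiple letters are ``separated from others by the simple letters $t_1,\dots,t_{2n}$'' and that the $S_{2n}^\sharp$ alternation inserts $t_i$-separators between any two multiple letters that could participate in an application of $xyx^2\approx x^2yx$, $x^2y^2\approx y^2x^2$, or $\sigma_3$. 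That is not the structure of the word: the simple letters $t_1,\dots,t_{2n}$ appear only in the outer portions $\prod_{i=1}^n z_it_i$ and $\prod_{i=n+1}^{2n}t_iz_i$, while the entire middle factor $x\bigl(\prod_{i=1}^{2n-1}z_{i\tau}y_i^2\bigr)z_{(2n)\tau}x$ is a single block containing \emph{no} simple letter, in which $z_{i\tau}$, $y_i$, and $x$ sit adjacent to one another. The $S_{2n}^\sharp$ condition controls the interleaving of $\{z_1,\dots,z_n\}$ with $\{z_{n+1},\dots,z_{2n}\}$ but does not introduce any separators. Consequently the claimed confinement of every deduction step to one of the three factors is not established by what you wrote, and the most delicate case — $\sigma_3$ — takes place precisely inside the unseparated middle block where your separator argument offers no traction.

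There is also a structural gap that would remain even if the locality heuristic were repaired: an $\alpha_1$-derivation from $\mathbf p\mathbf v\mathbf q$ to $\mathbf p\mathbf v'\mathbf q$ passes through intermediate words that need not factor as $\mathbf p_i\mathbf v_i\mathbf q_i$ with the pieces staying $\alpha_1$-related to $\mathbf p$, $\mathbf v$, $\mathbf q$ independently, and a deduction step applied with a substitution $\phi$ can carry an occurrence of a multiple letter across the boundary even when the letters involved lie in different factors of the original decomposition. Ruling this out requires a concrete case analysis that is absent. The paper's proof sidesteps the whole issue: it writes down explicitly the shape that any word in $[\mathbf a_{n,n}[\tau]]^{\alpha_1}$ must have, notes that $\mathbf p\mathbf v'\mathbf q$ has that shape because the big class is stable, and then uses stability of the two auxiliary classes $yxx^+ty$ and $ytxx^+y$ (which constrain the relative positions of the doubled-letter islands and the $z_i$'s) to pin down $\mathbf v'$ up to $\alpha_1$. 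You would strengthen your argument considerably by replacing the syntactic locality-of-derivations claim with these semantic constraints, which are what actually make the cancellation go through.
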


\begin{proof}
Take arbitrary $\mathtt v\in\{[\mathbf a_{n,n}[\tau]]^{\alpha_1}\}^{\le_{\alpha_1}}$ and $\mathbf v \in \mathtt v$. 
Consider an identity $\mathbf v\approx\mathbf v^\prime$ of $\mathbf V$.
In view of Lemma~\ref{L: le_alpha}, the word $\mathbf v$ is a factor of some $\mathbf u \in [\mathbf a_{n,n}[\tau]]^{\alpha_1}$, i.e., there are $\mathbf p,\mathbf q\in\mathfrak X^\ast$ such that $\mathbf u=\mathbf p\mathbf v\mathbf q$. 
Clearly,  
\[
\mathbf u=\biggl(\prod_{i=1}^n z_it_i\biggr)x\biggl(\prod_{i=1}^{2n-1} z_{i\tau}\mathbf y_i\biggr)z_{(2n)\tau}x\biggl(\prod_{i=n+1}^{2n} t_iz_i\biggr),
\]
where $\occ_{y_1}(\mathbf y_1\cdots \mathbf y_{2n-1}),\dots,\occ_{y_{2n-1}}(\mathbf y_1\cdots \mathbf y_{2n-1}) \ge 2$ and
\[
\begin{aligned}
&\{y_i\}\subseteq\con(\mathbf y_i)\subseteq\{y_1,\dots,y_i\},\\
&\{y_{n+i}\}\subseteq\con(\mathbf y_{n+i})\subseteq\{y_{n+i},\dots,y_{2n-1}\},\\
&\{y_n\}\subseteq\con(\mathbf y_n)\subseteq\{y_1,\dots,y_{2n-1}\},
\end{aligned}
\]
$i=1,\dots,n-1$.
Since the $\alpha_1$-class $[\mathbf a_{n,n}[\tau]]^{\alpha_1}$ is stable with respect to $\mathbf V$, 
\[
\mathbf p\mathbf v^\prime\mathbf q=\biggl(\prod_{i=1}^n z_it_i\biggr)x\biggl(\prod_{i=1}^{2n-1} z_{i\tau}\mathbf y_i^\prime\biggr)z_{(2n)\tau}x\biggl(\prod_{i=n+1}^{2n} t_iz_i\biggr),
\]
where $\occ_{y_1}(\mathbf y_1^\prime\cdots \mathbf y_{2n-1}^\prime),\dots,\occ_{y_{2n-1}}(\mathbf y_1^\prime\cdots \mathbf y_{2n-1}^\prime) \ge 2$ and
\[
\begin{aligned}
&\{y_i\}\subseteq\con(\mathbf y_i^\prime)\subseteq\{y_1,\dots,y_i\},\\
&\{y_{n+i}\}\subseteq\con(\mathbf y_{n+i}^\prime)\subseteq\{y_{n+i},\dots,y_{2n-1}\},\\
&\{y_n\}\subseteq\con(\mathbf y_n^\prime)\subseteq\{y_1,\dots,y_{2n-1}\},
\end{aligned}
\]
$i=1,\dots,n-1$, and the $\alpha_1$-classes $yxx^+ty$ and $ytxx^+y$ are also stable with respect to $\mathbf V$.
One can easily deduce from these facts that $\mathbf v$ and $\mathbf v^\prime$ are $\alpha_1$-related.
Since the identity $\mathbf v\approx \mathbf v^\prime$ is arbitrary, it follows that $\mathtt v$ is stable with respect to $\mathbf V$.
\end{proof}

\begin{proof}[Proof of Lemma~\ref{L: nsub M(a_{n,m}[rho]) 2}]
If $M_\gamma(yxx^+)\notin \mathbf V$, then $\mathbf V$ satisfies~\eqref{yxx=xyxx} by Lemma~\ref{L: nsub M(yxx^+)}.
In this case, Lemma~\ref{L: nsub M(a_{n,m}[rho]) 1} implies that $\Phi_2$ holds in $\mathbf V$ because $yx^2ty\stackrel{\eqref{yxx=xyxx}}\approx xyx^2ty$ and $ytyx^2\stackrel{\eqref{yxx=xyxx}}\approx ytxyx^2$.
Since $xyx^2\stackrel{x^2\approx x^3}\approx xyx^3\stackrel{\eqref{xyxx=xxyx}}\approx x^2yx^2$, by the dual arguments we can show that $\mathbf V$ satisfies $\Phi_2$ whenever $M_\gamma(xx^+y)\notin \mathbf V$.

If $M_{\gamma^\prime}(yxx^+ty),M_{\gamma^\prime}(ytyxx^+)\notin \mathbf V$, then $\mathbf V$ satisfies the identities~\eqref{yxxty=xyxxty} and~\eqref{ytyxx=ytxyxx} by Lemma~\ref{L: nsub M(yxx^+ty)} and the dual to Lemma~\ref{L: nsub M([yx^2zy]),M(xx^+yty)}(ii).
Now Lemma~\ref{L: nsub M(a_{n,m}[rho]) 1} applies again, yielding that $\Phi_2$ holds in $\mathbf V$.
By the similar arguments we can show that $\mathbf V$ satisfies $\Phi_2$ whenever $M_{\gamma^\prime}(xx^+yty),M_{\gamma^\prime}(ytxx^+y)\notin \mathbf V$.
Thus, we may further assume that the following two claims hold:
\begin{itemize}
\item either $M_{\gamma^\prime}(yxx^+ty)\in\mathbf V$ or $M_{\gamma^\prime}(ytyxx^+)\in \mathbf V$;
\item either $M_{\gamma^\prime}(xx^+yty)\in\mathbf V$ or $M_{\gamma^\prime}(ytxx^+y)\in \mathbf V$.
\end{itemize}
Four cases are possible.

\smallskip

\textbf{Case 1}: $M_{\gamma^\prime}(yxx^+ty),M_{\gamma^\prime}(xx^+yty)\in\mathbf V$.
It is easy to see that every identity in $\Phi_2$ follows from an identity $\mathbf a^{(1)}_{n,m}[\rho]\approx \overline{\mathbf a}^{(1)}_{n,m}[\rho]$ for some $(n,m)\in\hat{\mathbb N}_0^2$ and $\rho\in S_{n,m}$.
Thus, it suffices to show that the identity $\mathbf a^{(1)}_{n,m}[\rho]\approx \overline{\mathbf a}^{(1)}_{n,m}[\rho]$ holds $\mathbf V$ for all $(n,m)\in\hat{\mathbb N}_0^2$ and $\rho\in S_{n,m}$.
To do this, we will show by induction on $n+m$ that $\mathbf V$ satisfies 
\[
\Sigma_{n+m}^{(1)}:=\{\mathbf a^{(1)}_{n,m}[\pi]\approx \overline{\mathbf a}^{(1)}_{n,m}[\pi]\mid \pi\in S_{n,m}\}.
\]

\smallskip

\textbf{Induction base}: $n+m=1$.
The identity $\mathbf a^{(1)}_{1,0}[\rho]\approx \overline{\mathbf a}^{(1)}_{1,0}[\rho]$ is trivial an so holds in $\mathbf V$.
Assume now that $n=0$ and $m=1$.
Evidently, $\rho$ in nothing but the trivial permutation $\varepsilon$ from $S_1$.
By the condition of the lemma, $M_{\gamma^{\prime\prime}}([\mathbf a^{(1)}_{0,1}[\varepsilon]]^{\gamma^{\prime\prime}})\notin\mathbf V$.
Then the $\gamma^{\prime\prime}$-class $[\mathbf a^{(1)}_{0,1}[\varepsilon]]^{\gamma^{\prime\prime}}=xz_1y_1y_1^+z_1^\prime xt_1z_1t_1^\prime z_1^\prime$ is not stable with respect to $\mathbf V$ by Lemma~\ref{L: M_{gamma''}(W) in V}.
This implies that $\mathbf V$ satisfies an identity $\mathbf u\approx \mathbf v$ such that $\mathbf u\in [\mathbf a^{(1)}_{0,1}[\varepsilon]]^{\gamma^{\prime\prime}}$ and $\mathbf v\notin [\mathbf a^{(1)}_{0,1}[\varepsilon]]^{\gamma^{\prime\prime}}$.
Since $\mathbf M_{\gamma^\prime}(yxx^+ty)\vee \mathbf M_{\gamma^\prime}(xx^+yty)\subseteq \mathbf V$, Lemma~\ref{L: M_alpha(W) in V} implies that $\mathbf v\in x^+z_1x^\ast y_1x^\ast y_1\,\{x,y_1\}^\ast \,z_1^\prime x^+t_1z_1t_1^\prime z_1^\prime$.
If there is an occurrence of $x$ between ${_{1\mathbf v}}z_1$ and ${_{1\mathbf v}}z_1^\prime$ in $\mathbf v$, then it is easy to see that $\mathbf u\approx \mathbf v$ together with $\Phi$ imply $\mathbf a^{(1)}_{0,1}[\varepsilon]\approx \overline{\mathbf a}^{(1)}_{0,1}[\varepsilon]$.
So, it remains to consider the case when there are no $x$ between ${_{1\mathbf v}}z_1$ and ${_{1\mathbf v}}z_1^\prime$ in $\mathbf v$.
Then 
\[
\mathbf v\in xx^+z_1y_1y_1^+ z_1^\prime x^+t_1z_1t_1^\prime z_1^\prime\cup x^+z_1y_1y_1^+ z_1^\prime xx^+t_1z_1t_1^\prime z_1^\prime=[\mathbf a^{(1)}_{0,1}[\varepsilon]]^{\gamma^\prime}\setminus[\mathbf a^{(1)}_{0,1}[\varepsilon]]^{\gamma^{\prime\prime}}.
\]
Further, since $M_{\gamma^\prime}([\mathbf a^{(1)}_{0,1}[\varepsilon]]^{\gamma^\prime})\notin\mathbf V$, Corollary~\ref{C: M_alpha(W) in V}(iv) implies that $\mathbf V$ satisfies an identity $\mathbf u^\prime\approx \mathbf v^\prime$ such that $\mathbf u^\prime\in [\mathbf a^{(1)}_{0,1}[\varepsilon]]^{\gamma^\prime}$ and $\mathbf v^\prime\notin [\mathbf a^{(1)}_{0,1}[\varepsilon]]^{\gamma^\prime}$.
Then $\mathbf V$ satisfies the identities $\mathbf a^{(1)}_{0,1}[\varepsilon]\stackrel{x^2\approx x^3}\approx\mathbf u\approx \mathbf v\stackrel{\eqref{xyxx=xxyx}}\approx \mathbf u^\prime\approx \mathbf v^\prime$.
Since $\mathbf M_{\gamma^\prime}(yxx^+ty)\vee \mathbf M_{\gamma^\prime}(xx^+yty)\subseteq \mathbf V$, Lemma~\ref{L: M_alpha(W) in V} implies that $\mathbf v^\prime\in x^+z_1x^\ast y_1x^\ast y_1\,\{x,y_1\}^\ast \,z_1^\prime x^+t_1z_1t_1^\prime z_1^\prime$.
Then the fact that $\mathbf v^\prime\notin [\mathbf a^{(1)}_{0,1}[\varepsilon]]^{\gamma^\prime}$ implies that there is an occurrence of $x$ between ${_{1\mathbf v^\prime}}z_1$ and ${_{1\mathbf v^\prime}}z_1^\prime$ in $\mathbf v^\prime$.
In this case, $\mathbf a^{(1)}_{0,1}[\varepsilon]\approx \mathbf v^\prime$ together with $\Phi$ imply $\mathbf a^{(1)}_{0,1}[\varepsilon]\approx \overline{\mathbf a}^{(1)}_{0,1}[\varepsilon]$.
We see that the identity $\mathbf a^{(1)}_{0,1}[\varepsilon]\approx \overline{\mathbf a}^{(1)}_{0,1}[\varepsilon]$ holds in $\mathbf V$ as required.

\smallskip

\textbf{Induction step}: $n+m>1$ and $\mathbf V$ satisfies $\Sigma_k^{(1)}$ for all $k<n+m$.
By the condition of the lemma, $M_{\gamma^{\prime\prime}}([\mathbf a^{(1)}_{n,m}[\rho]]^{\gamma^{\prime\prime}})\notin\mathbf V$.
Then the $\gamma^{\prime\prime}$-class $[\mathbf a^{(1)}_{n,m}[\rho]]^{\gamma^{\prime\prime}}$ is not stable with respect to $\mathbf V$ by Lemma~\ref{L: M_{gamma''}(W) in V}.
This implies that $\mathbf V$ satisfies an identity $\mathbf u\approx \mathbf v$ such that $\mathbf u\in [\mathbf a^{(1)}_{n,m}[\rho]]^{\gamma^{\prime\prime}}$ and $\mathbf v\notin [\mathbf a^{(1)}_{n,m}[\rho]]^{\gamma^{\prime\prime}}$.
Clearly, $
[\mathbf a^{(1)}_{n,m}[\rho]]^{\gamma^{\prime\prime}}=\mathbf hxZ_{1\rho}^{(1)}\cdots Z_{(n+m)\rho}^{(1)}x\mathbf t$, where
\[
\mathbf h:=\biggl(\prod_{i=1}^n z_i t_i \biggr),\ 
\mathbf t:=\biggl(\prod_{i=n+1}^{n+m} t_i z_i t_i^{\prime}z_i^{\prime}\biggr),\ 
Z_i^{(1)}:=
\begin{cases} 
\{z_i\} & \text{if $1\le i\le n$}, \\
z_iy_iy_i^+z_i^{\prime} & \text{if $n+1\le i\le n+m$}.
\end{cases} 
\]
In view of Lemma~\ref{L: identities of M(xt_1x...t_kx)}, $\mathbf v_{\{x,y_{n+1},\dots,y_{n+m}\}}=\mathbf h\mathbf a\mathbf t$, where $\mathbf a$ is a linear word depending on the letters $z_1,\dots,z_n,z_{n+1},z_{n+1}^\prime,\dots,z_{n+m},z_{n+m}^\prime$.
It is easy to see that $\mathbf M(xzxyty)\subseteq \mathbf M_{\gamma^\prime}(xx^+yty)$.
Then, since $\mathbf M_{\gamma^\prime}(yxx^+ty)\vee \mathbf M_{\gamma^\prime}(xx^+yty)\subseteq \mathbf V$, Lemmas~\ref{L: M(W) in V} and~\ref{L: M_alpha(W) in V} imply that the sets $yxx^+ty$, $xx^+yty$ and $\{xzxyty\}$ are stable with respect to $\mathbf V$.
Hence $\mathbf v= \mathbf h\mathbf z_{1\rho}\cdots \mathbf z_{(n+m)\rho}\mathbf t$, where $\hat{\mathbf z}_{i\rho}:=(\mathbf z_{i\rho})_x\in Z_{i\rho}^{(1)}$, $i=1,\dots,n+m$.
Moreover, since $\rho\in S_{n,m}$, either ${1\rho}\le n$ or ${2\rho}\le n$ and either ${(n+m-1)\rho}\le n$ or ${(n+m)\rho}\le n$, whence either $x\in\con(\mathbf z_{1\rho})$ or $\head(\mathbf z_{2\rho})=x$ and either $x\in\con(\mathbf z_{(n+m)\rho})$ or $\tail(\mathbf z_{(n+m-1)\rho})=x$.

Suppose that either $\tail(\mathbf z_{1\rho})=x$ or $\head(\mathbf z_{(n+m)\rho})=x$ or $x\in\con(\mathbf z_{2\rho}\cdots\mathbf z_{(n+m-1)\rho})$.
Then, using identities in $\Sigma_{n+m-1}^{(1)}$, we can put occurrences of $x$ in the factor 
\[
\tail(\mathbf z_{1\rho})\mathbf z_{2\rho}\cdots\mathbf z_{(n+m-1)\rho}\head(\mathbf z_{(n+m)\rho})
\] 
of $\mathbf v$.
In other words,
\[
\mathbf v\stackrel{\Sigma_{n+m-1}^{(1)}}\approx \mathbf h\mathbf z_{1\rho}\,x\,\chi(\hat{\mathbf z}_{2\rho}\cdots \hat{\mathbf z}_{(n+m-1)\rho})\,x\,\mathbf z_{(n+m)\rho}\mathbf t:=\hat{\mathbf v}.
\]
If $\head(\mathbf z_{1\rho})\ne x$, then ${1\rho}\le n$ and $\hat{\mathbf z}_{1\rho}=z_{1\rho}$ because the sets $yxx^+ty$ and $xx^+yty$ are stable with respect to $\mathbf V$.
In this case, $\mathbf z_{1\rho}\in z_{1\rho}x^\ast$ and the identity $\mathbf u(x,t_{1\rho},z_{1\rho})\approx \mathbf v(x,t_{1\rho},z_{1\rho})$ is equivalent modulo $x^2\approx x^3$ to
\begin{equation}
\label{ytyxx=ytxyx}
ytyx^2\approx ytxyx.
\end{equation}
Then $\mathbf V$ satisfies 
\[
\hat{\mathbf v}\stackrel{\Sigma_{n+m-1}^{(1)}}\approx \mathbf h\mathbf z_{1\rho}\,x^2\,\chi(\hat{\mathbf z}_{2\rho}\cdots \hat{\mathbf z}_{(n+m-1)\rho})\,x\,\mathbf z_{(n+m)\rho}\mathbf t\stackrel{\eqref{ytyxx=ytxyx}}\approx \mathbf h x\,\chi(\hat{\mathbf z}_{1\rho}\cdots \hat{\mathbf z}_{(n+m-1)\rho})\,x\,\mathbf z_{(n+m)\rho}\mathbf t.
\]
So, we may assume without any loss that $\head(\mathbf z_{1\rho})=x$ and, dually, $\tail(\mathbf z_{(n+m)\rho})=x$.
In this case, $\hat{\mathbf v}\stackrel{\Sigma_{n+m-1}^{(1)}}\approx \overline{\mathbf a}^{(1)}_{n,m}[\rho]$ and, therefore, $\mathbf V$ satisfies $\mathbf a^{(1)}_{n,m}[\rho]\approx \overline{\mathbf a}^{(1)}_{n,m}[\rho]$.

Thus, it remains to consider the case when $x\notin\con(\tail(\mathbf z_{1\rho})\mathbf z_{2\rho}\cdots\mathbf z_{(n+m-1)\rho}\head(\mathbf z_{(n+m)\rho}))$.
Then $x\in\con(\mathbf z_{1\rho})\cap\con(\mathbf z_{(n+m)\rho})$. 
If $\head(\mathbf z_{1\rho})\ne x$, then ${1\rho}\le n$ and $\hat{\mathbf z}_{1\rho}=z_{1\rho}$ because the sets $yxx^+ty$ and $xx^+yty$ are stable with respect to $\mathbf V$.
In this case, $\mathbf z_{1\rho}\in z_{1\rho}x^+$ contradicting the assumption that $\tail(\mathbf z_{1\rho})\ne x$.
Therefore, $\head(\mathbf z_{1\rho})=x$.
Analogously, $\tail(\mathbf z_{(n+m)\rho})=x$.

If there is an occurrence of $x$ succeeding $z_{1\rho}$ in $\mathbf z_{1\rho}$, then $1\rho>n$ and this occurrence of $x$ lies between $z_{1\rho}$ and $z_{1\rho}^\prime$ in $\mathbf z_{1\rho}$ because $\tail(\mathbf z_{1\rho})\ne x$.
In this case, since
\[
\mathbf u_{\{y_{1\rho},z_{1\rho}^\prime\}}\stackrel{x^2\approx x^3}\approx \mathbf hxz_{1\rho}\hat{\mathbf z}_{2\rho}\cdots \hat{\mathbf z}_{(n+m)\rho}x\mathbf t\ \text{ and }\ \mathbf v_{\{y_{1\rho},z_{1\rho}^\prime\}}\stackrel{\Sigma_{n+m-1}^{(1)}}\approx  \mathbf hxz_{1\rho}x\mathbf z_{2\rho}\cdots \mathbf z_{(n+m)\rho}x\mathbf t,
\]
the variety $\mathbf V$ satisfies the identities
\[
\mathbf a^{(1)}_{n,m}[\rho]\stackrel{x^2\approx x^3}\approx\mathbf u\approx \mathbf v\stackrel{\{\Phi,\,\Sigma_{n+m-1}^{(1)},\,\mathbf u\approx \mathbf v\}}\approx \mathbf hx z_{1\rho}(xy_{1\rho})^2x z_{1\rho}^\prime x \mathbf z_{2\rho}\cdots \mathbf z_{(n+m)\rho}\mathbf t\stackrel{\Sigma_{n+m-1}^{(1)}}\approx \overline{\mathbf a}^{(1)}_{n,m}[\rho].
\]
By a similar argument we can show that $\mathbf V$ satisfies $\mathbf a^{(1)}_{n,m}[\rho]\approx \overline{\mathbf a}^{(1)}_{n,m}[\rho]$ whenever $(n+m)\rho>n$ and there is an occurrence of $x$ between $z_{(n+m)\rho}$ and $z_{(n+m)\rho}^\prime$ in $\mathbf z_{(n+m)\rho}$.
So, it remains to consider the case when 
\[
\mathbf v\in \mathbf hxx^+Z_{1\rho}^{(1)}\cdots Z_{(n+m)\rho}^{(1)}x^+\mathbf t\cup \mathbf hx^+Z_{1\rho}^{(1)}\cdots Z_{(n+m)\rho}^{(1)}xx^+\mathbf t.
\]
Further, since $M_{\gamma^\prime}([\mathbf a^{(1)}_{n,m}[\rho]]^{\gamma^\prime})\notin\mathbf V$, Corollary~\ref{C: M_alpha(W) in V}(iv) implies that $\mathbf V$ satisfies an identity $\mathbf u^\prime\approx \mathbf v^\prime$ such that $\mathbf u^\prime\in [\mathbf a^{(1)}_{n,m}[\rho]]^{\gamma^\prime}$ and $\mathbf v\notin [\mathbf a^{(1)}_{n,m}[\rho]]^{\gamma^\prime}$.
Then $\mathbf V$ satisfies the identities $\mathbf a^{(1)}_{n,m}[\rho]\stackrel{x^2\approx x^3}\approx\mathbf u\approx \mathbf v\stackrel{\eqref{xyxx=xxyx}}\approx \mathbf u^\prime\approx \mathbf v^\prime$.
Since the sets $yxx^+ty$ and $xx^+yty$ are stable with respect to $\mathbf V$, we have $
\mathbf v^\prime=\mathbf hx^p\mathbf z_{1\rho}^\prime\cdots \mathbf z_{(n+m)\rho}^\prime x^q\mathbf t$, where $p,q\in\mathbb N_0$, $\head(\mathbf z_{1\rho}^\prime)\ne x$, $\tail(\mathbf z_{(n+m)\rho}^\prime)\ne x$ and $(\mathbf z_{i\rho}^\prime)_x\in Z_{i\rho}^{(1)}$, $i=1,\dots,n+m$.
Further, $x\in\con(\mathbf z_{1\rho}^\prime\cdots\mathbf z_{(n+m)\rho}^\prime)$ because $\mathbf v^\prime\notin [\mathbf a^{(1)}_{n,m}[\rho]]^{\gamma^\prime}$.
Then arguments similar to ones from the above imply that $\mathbf v^\prime\approx \overline{\mathbf a}^{(1)}_{n,m}[\rho]$ holds in $\mathbf V$.
Hence $\mathbf V$ satisfies the identity $\mathbf a^{(1)}_{n,m}[\rho]\approx\overline{\mathbf a}^{(1)}_{n,m}[\rho]$ as required.

\smallskip

\textbf{Case 2}: $M_{\gamma^\prime}(ytyxx^+),M_{\gamma^\prime}(ytxx^+y)\in\mathbf V$.
This case is dual to Case~1 and we omit the corresponding considerations.

\smallskip

\textbf{Case 3}: $M_{\gamma^\prime}(ytyxx^+),M_{\gamma^\prime}(xx^+ yty)\in\mathbf V$. 
This case is quite similar to Case~1 and we omit the corresponding considerations.

\smallskip

\textbf{Case 4}: $M_{\gamma^\prime}(ytxx^+y),M_{\gamma^\prime}(yxx^+ty)\in\mathbf V$.
If $M(xzxyty)\in\mathbf V$, then arguments similar to ones from Case~1 imply that $\Phi_2$ holds in $\mathbf V$.
Thus, it remains to consider the case when $M(xzxyty)\notin\mathbf V$.
Then $\sigma_3$ holds in $\mathbf V$ by Lemma~\ref{L: swapping in linear-balanced}.
It is easy to see that every identity in $\Phi_2$ follows from an identity $\mathbf a_{n,n}[\pi]\approx \overline{\mathbf a}_{n,n}[\pi]$ for some $n\in\mathbb N$ and $\pi\in S_{n,n}$ with
\[
n+1\le1\pi,\dots,(2n-1)\pi\le2n,\ 1\le2\pi,\dots,(2n)\pi\le n.
\]
Further, since
\[
\begin{aligned}
&\mathbf a_{n,n}[\pi]\stackrel{\{\Phi,\,\sigma_3\}}\approx\mathbf p x\biggl(\prod_{i=1}^{n-1} z_{(2i-1)\pi}(y_{2i-1}^2y_{2i}^2)^2\biggr)z_{(2n-1)\pi}\biggl(\prod_{i=1}^{n-1} z_{(2i)\pi}(y_{2i}^2y_{2i+1}^2)^2\biggr)z_{(2n)\pi}x\mathbf q,\\
&\overline{\mathbf a}_{n,n}[\pi]\stackrel{\{\Phi,\,\sigma_3\}}\approx\mathbf p x\biggl(\prod_{i=1}^{n-1} z_{(2i-1)\pi}(y_{2i-1}^2y_{2i}^2)^2x\biggr)z_{(2n-1)\pi}x\biggl(\prod_{i=1}^{n-1} z_{(2i)\pi}(y_{2i}^2y_{2i+1}^2)^2x\biggr)z_{(2n)\pi}x\mathbf q,
\end{aligned}
\]
where
\[
\mathbf p:=\biggl(\prod_{i=1}^n z_it_i\biggr) \ \text{ and }\ \mathbf q:=\biggl(\prod_{i=n+1}^{2n} t_iz_i\biggr),
\]
the identity $\mathbf a_{n,n}[\pi]\approx \overline{\mathbf a}_{n,n}[\pi]$ follows from $\{\Phi,\,\sigma_3,\,\mathbf a_{n,n}[\rho]\approx \overline{\mathbf a}_{n,n}[\rho]\}$ for some $\rho\in S_{2n}^\sharp$.
Thus, it suffices to show that  $\mathbf a_{n,n}[\rho]\approx \overline{\mathbf a}_{n,n}[\rho]$ holds $\mathbf V$.

By the condition of the lemma, $M_{\alpha_1}([\mathbf a_{n,n}[\rho]]^{\alpha_1})\notin\mathbf V$.
In view of Lemmas~\ref{L: M_alpha(W) in V} and~\ref{L: subclasses of [a_{n,n}[tau]]^{alpha_1}}, the $\alpha_1$-class $[\mathbf a_{n,n}[\rho]]^{\alpha_1}$ is not stable with respect to $\mathbf V$.
This implies that $\mathbf V$ satisfies an identity $\mathbf u\approx \mathbf v$ such that $\mathbf u\in [\mathbf a_{n,n}[\rho]]^{\alpha_1}$ and $\mathbf v\notin [\mathbf a_{n,n}[\rho]]^{\alpha_1}$.
In view of Lemma~\ref{L: identities of M(xt_1x...t_kx)},
\[
\mathbf v_{\{x,y_1,\dots,y_{2n-1}\}}=\biggl(\prod_{i=1}^n z_i t_i \biggr)\mathbf a\biggl(\prod_{i=n+1}^{2n} t_iz_i\biggr),
\]
where $\mathbf a$ is a linear word depending on the letters $z_1,\dots,z_{2n}$.
Then since $\mathbf M_{\gamma^\prime}(ytxx^+y)\vee \mathbf M_{\gamma^\prime}(yxx^+ty)\subseteq \mathbf V$, Lemma~\ref{L: M_alpha(W) in V} implies that 
\[
\begin{aligned}
&\mathbf v(x,z_{1\rho},t_{1\rho},z_{(2n)\rho},t_{(2n)\rho})\in z_{(2n)\rho}t_{(2n)\rho}x^+z_{1\rho}x^\ast z_{(2n)\rho}x^+ t_{1\rho}z_{1\rho},\\
&\mathbf v(y_i,z_{i\rho},t_{i\rho})\in z_{i\rho}y_iy_i^+ t_{i\rho}z_{i\rho},\\
&\mathbf v(y_j,z_{(j+1)\rho},t_{(j+1)\rho})\in y_j^+z_{(j+1)\rho}y_j^\ast t_{(j+1)\rho}z_{(j+1)\rho},\\
&\mathbf v(y_{n+i-1},z_{(n+i)\rho},t_{(n+i)\rho})\in z_{(n+i)\rho} t_{(n+i)\rho}y_{n+i-1}y_{n+i-1}^+z_{(n+i)\rho},\\
&\mathbf v(y_{n+j},z_{(n+j)\rho},t_{(n+j)\rho})\in z_{(n+j)\rho} t_{(n+j)\rho}y_{n+j}^\ast z_{(n+j)\rho}y_{n+j}^+,
\end{aligned}
\]
$i=1,\dots,n$ and $j=1,\dots,n-1$.
Then
\[
\mathbf v\in \biggl(\prod_{i=1}^n z_it_i\biggr)x^+\biggl(\prod_{i=1}^{2n-1} z_{i\rho}\mathbf y_i\biggr)z_{(2n)\rho}x^+\biggl(\prod_{i=n+1}^{2n} t_iz_i\biggr),
\]
where
\[
\begin{aligned}
&\{y_i\}\subseteq\con(\mathbf y_i)\subseteq\{x,y_1,\dots,y_i\},\\
&\{y_{n+i}\}\subseteq\con(\mathbf y_{n+i})\subseteq\{x,y_{n+i},\dots,y_{2n-1}\},\\
&\{y_n\}\subseteq\con(\mathbf y_n)\subseteq\{x,y_1,\dots,y_{2n-1}\},
\end{aligned}
\]
$i=1,\dots,n-1$.
Further, since $\mathbf v\notin[\mathbf a_{n,n}[\rho]]^{\alpha_1}$, we have $\occ_x(\mathbf v)\ge3$. 
In this case, $\mathbf a_{n,n}[\rho]\approx \overline{\mathbf a}_{n,n}[\rho]$ holds in $\mathbf V$ because $\mathbf u\stackrel{\{\Phi,\,\sigma_3\}}\approx \mathbf a_{n,n}[\rho]$ and $\mathbf v\stackrel{\{\Phi,\,\sigma_3\}}\approx \overline{\mathbf a}_{n,n}[\rho]$.
\end{proof}

\section{Certain varieties with a non-distributive subvariety lattice}
\label{Sec: non-distributive}

\subsection{Sporadic varieties}
\label{Subsec: sporadic}

For a congruence $\alpha$ on $\mathfrak X^\ast$, we denote by $\overline{\alpha}$ the congruence on $\mathfrak X^\ast$ dual to $\alpha$.

\begin{proposition}
\label{P: sporadic}
The following varieties have a non-modular lattice of subvarieties:
\begin{itemize}
\item[\textup{(i)}] $\mathbf M_\gamma(xx^+y)\vee\mathbf H$;
\item[\textup{(ii)}] $\mathbf M_\gamma(xx^+y)\vee\mathbf M_\nu([yx^2ty]^\nu)$;
\item[\textup{(iii)}] $\mathbf M_\gamma(xx^+y)\vee\mathbf M_\eta([xyzx^2ty]^\eta)$;
\item[\textup{(iv)}] $\mathbf M_\lambda(xyx^+)\vee\mathbf M_{\overline{\lambda}}(x^+yx)$;
\item[\textup{(v)}] $\mathbf M_\lambda(xyx^+)\vee\mathbf M_\nu([yx^2ty]^\nu)$;
\item[\textup{(vi)}] $\mathbf M_\lambda(xyx^+)\vee\mathbf M_{\gamma^\prime}(yxx^+ty)$;
\item[\textup{(vii)}] $\mathbf M_\lambda(xyx^+)\vee\mathbf M_{\gamma^\prime}(ytyxx^+)$;
\item[\textup{(viii)}] $\mathbf M_\lambda(xyx^+)\vee\mathbf M_{\gamma^\prime}([x^2yzytx^2]^{\gamma^\prime})$;
\item[\textup{(ix)}] $\mathbf M_\lambda(xyx^+)\vee\mathbf M_{\gamma^\prime}([x^2yzx^2ty]^{\gamma^\prime})$;
\item[\textup{(x)}] $\mathbf M_\lambda(xyx^+)\vee\mathbf M_{\gamma^\prime}([yzx^2ytx^2]^{\gamma^\prime})$;
\item[\textup{(xi)}] $\mathbf M_\lambda(xyx^+)\vee\mathbf M_{\gamma^\prime}([x^2zytx^2y]^{\gamma^\prime})$;
\item[\textup{(xii)}] $\mathbf M(xyx)\vee\mathbf H$;
\item[\textup{(xiii)}] $\mathbf M(xyx)\vee\mathbf M_\lambda(xyzx^+ty^+)$;
\item[\textup{(xiv)}] $\mathbf M(xyx)\vee\mathbf M_\lambda(xx^+yty^+)$;
\item[\textup{(xv)}] $\mathbf M(xyxzx)\vee\mathbf M_\nu([yx^2ty]^\nu)$;
\item[\textup{(xvi)}] $\mathbf M(xyxzx)\vee\mathbf M_{\gamma^\prime}(yxx^+ty)$;
\item[\textup{(xvii)}] $\mathbf M(xyxzx)\vee\mathbf M_{\gamma^\prime}(xx^+yty)$;
\item[\textup{(xviii)}] $\mathbf M(xyxzx)\vee\mathbf M_{\lambda^\prime}(xyzxx^+ty)$;
\item[\textup{(xix)}] $\mathbf M(xyxzx)\vee\mathbf M_{\lambda^\prime}(xyzytxx^+)$;
\item[\textup{(xx)}] $\mathbf M(xyxzx)\vee\mathbf M_{\lambda^\prime}(yzyxtxx^+)$;
\item[\textup{(xxi)}] $\mathbf M_\mu(xyxzx^+)\vee\mathbf M_{\lambda^\prime}(xyzx^+tysx^+)$;
\item[\textup{(xxii)}] $\mathbf M_\mu(xyxzx^+)\vee\mathbf M_\eta([xyzx^2ty]^\eta)$;
\item[\textup{(xxiii)}] $\mathbf M(xzxyty)\vee\mathbf N$;
\item[\textup{(xxiv)}] $\mathbf M(xzytxy)\vee\mathbf N$.
\end{itemize}
\end{proposition}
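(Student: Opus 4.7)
The plan is to prove, for each of the 24 listed varieties $\mathbf V$, that the subvariety lattice $\mathfrak L(\mathbf V)$ contains a sublattice isomorphic to the pentagon $N_5$; this is equivalent to non-modularity. Such a sublattice consists of five elements $\mathbf 0\subsetneq\mathbf a\subsetneq\mathbf b\subsetneq\mathbf 1$ together with $\mathbf c$ incomparable to $\mathbf a$ and $\mathbf b$, satisfying $\mathbf a\vee\mathbf c=\mathbf b\vee\mathbf c=\mathbf 1$ and $\mathbf a\cap\mathbf c=\mathbf b\cap\mathbf c=\mathbf 0$.

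The unifying strategy is the following. Every variety on the list has the form $\mathbf V=\mathbf V_1\vee\mathbf V_2$, where each factor is generated by a single monoid of Olga Sapir type $M_\alpha(\mathtt w)$ (or, for the last two cases, given by explicit identities). Inclusions among such varieties are controlled by Corollary~\ref{C: M_alpha(W) in V} and joins inside a single $\mathfrak L(\mathbf M_\alpha(\mathtt w))$ are computed by Lemma~\ref{L: M_alpha(W_1) vee M_alpha(W_2)}. For each case, one of the two factors (say $\mathbf V_1$) contains a short ascending chain whose consecutive elements are separated by an identity $\iota$ drawn from the separating identity lemmas of Section~\ref{Sec: separating identities}. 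The pentagon is built by placing $\mathbf a$ and $\mathbf b$ as two such consecutive elements and taking $\mathbf c$ to be a subvariety of $\mathbf V_2$ chosen so that $\iota$ becomes vacuous in $\mathbf c$. Under this arrangement, $\mathbf a\vee\mathbf c=\mathbf b\vee\mathbf c$ holds because the only identity distinguishing $\mathbf a$ from $\mathbf b$ is trivialized after joining with $\mathbf c$.

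For example, in case~(i), where $\mathbf V=\mathbf M_\gamma(xx^+y)\vee\mathbf H$, one uses the chain supplied by Lemma~\ref{L: L(D1)}: candidates for $\mathbf a,\mathbf b$ lie among the consecutive elements of this chain, $\mathbf c=\mathbf M_\gamma(xx^+y)$ is dual-asymmetric to the chain, and the separating identity~\eqref{xyzxxyy=yxzxxyy} of Lemma~\ref{L: nsub H} plays the role of $\iota$. The analogous pattern in cases~(iv), (xii), (xiii), and (xiv) uses Figure~\ref{F: L(J)} in place of Lemma~\ref{L: L(D1)}. For cases~(ii), (iii), (v)--(xi), and (xv)--(xxii), the pentagon is constructed from comparable $\alpha$-classes below the generating one (for $\alpha\in\{\nu,\eta,\gamma^\prime,\gamma^{\prime\prime},\lambda^\prime,\mu,\alpha_1\}$), using the separating identities from the corresponding lemmas of Section~\ref{Sec: separating identities}. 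Cases~(xxiii) and~(xxiv) require a mild variant: since $\mathbf N$ is defined by identities rather than a Sapir construction, one uses $\sigma_2$ or $\sigma_3$ as the collapsing identity, combined with subvarieties of $\mathbf M(xzxyty)$ or $\mathbf M(xzytxy)$.

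The main obstacle will be verifying the meet equalities $\mathbf a\cap\mathbf c=\mathbf b\cap\mathbf c$, since meets in lattices of monoid varieties do not enjoy an analogue of Lemma~\ref{L: M_alpha(W_1) vee M_alpha(W_2)}. The standard resolution is to show that both meets coincide with the common small ``bottom'' of the pentagon, usually $\mathbf M(xy)$ or $\mathbf M(xyx)$, by tracking which words remain isoterms via Lemma~\ref{L: M(W) in V}. A secondary challenge is the sheer number of cases: although the template above is uniform, each instance demands a separate verification both that the chosen $\iota$ actually separates $\mathbf a$ from $\mathbf b$ and that $\iota$ is satisfied in $\mathbf c$, so that the join with $\mathbf c$ indeed collapses the distinction.
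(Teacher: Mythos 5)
The pentagon idea is equivalent to non-modularity, but your concrete recipe misidentifies where the pentagon lives, and this is a real gap. In almost every case of the paper's proof, one of the two comparable elements of the pentagon is a variety $\mathbf W$ (for example $\mathbf M_\lambda(xyzx^+ty^+)$ in case (i), $\mathbf M(xyzxty)$ in cases (xii)--(xiv), $\mathbf M_{\gamma^\prime}(yxx^+ty)$ in case (ii), etc.) that is \emph{not} a subvariety of either join factor $\mathbf V_1$ or $\mathbf V_2$. The central step of each case is a stability argument using Lemma~\ref{L: M(W) in V} and Corollary~\ref{C: M_alpha(W) in V}, showing that the relevant congruence class (e.g., $xyzx^+ty^+$) remains stable with respect to $\mathbf V_1\vee\mathbf V_2$ and hence $\mathbf W\subseteq\mathbf V_1\vee\mathbf V_2$ even though $\mathbf W\nsubseteq\mathbf V_1$ and $\mathbf W\nsubseteq\mathbf V_2$. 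Your template explicitly restricts $\mathbf a,\mathbf b$ to subvarieties of a single factor $\mathbf V_1$ and $\mathbf c$ to a subvariety of $\mathbf V_2$, which cannot produce this intermediate variety and therefore cannot reproduce the modular-law violation.

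Your case~(i) example makes the gap explicit: with $\mathbf a,\mathbf b$ consecutive in the chain $\mathfrak L(\mathbf D_1)$ and $\mathbf c=\mathbf M_\gamma(xx^+y)$, the joins $\mathbf a\vee\mathbf c$ and $\mathbf b\vee\mathbf c$ are already distinct for $\mathbf M_\gamma(yxx^+)<\mathbf M_\lambda(xyx^+)$ (they give $\mathbf M_\gamma(yxx^+)\vee\mathbf M_\gamma(xx^+y)$ and $\mathbf M_\lambda(xyx^+)\vee\mathbf M_\gamma(xx^+y)$, which are different vertices of Fig.~\ref{F: L(J)}), and likewise for $\mathbf M_\lambda(xyx^+)<\mathbf H$, so no pentagon results from that choice. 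In the paper's pentagon for case~(i), the incomparable element is in fact $\mathbf H$, the chain element $\mathbf b$ is $\mathbf M_\lambda(xyzx^+ty^+)$ (not in either factor), and the violation is $\mathbf M_\lambda(xyzx^+ty^+)\wedge(\mathbf H\vee\mathbf M_\gamma(xx^+y))\supsetneq(\mathbf M_\lambda(xyzx^+ty^+)\wedge\mathbf H)\vee\mathbf M_\gamma(xx^+y)$. Also, cases~(xxiii) and~(xxiv) are handled in the paper by citation to prior work rather than by the construction you describe. To repair your outline you would need to (a) drop the restriction that $\mathbf a,\mathbf b$ live inside one factor, and (b) supply the stability arguments that place the intermediate variety inside the join.
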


\begin{proof}
(i) Consider an arbitrary identity $\mathbf u\approx \mathbf v$ of $\mathbf M_\gamma(xx^+y)\vee\mathbf H$ with $\mathbf u\in xyzx^+ty^+$.
In view of Lemma~\ref{L: L(M(xzyx^+ty^+))}, $\mathbf M_\lambda(xyx^+)\subset \mathbf H$.
Since $\mathbf u(x,z)\in xzx^+$, $\mathbf u(x,t)\in xx^+t$ and $\mathbf u(y,z,t)\in yzty^+$, Lemma~\ref{L: M_alpha(W) in V} implies that $\mathbf v(x,z)\in xzx^+$, $\mathbf v(x,t)\in xx^+t$ and $\mathbf v(y,z,t)\in yzty^+$.
Hence $\mathbf v\in xyzx^+ty^+\cup yxzx^+ty^+$.
It follows from Proposition~6.12 in~\cite{Gusev-Vernikov-18} that $\mathbf H$ violates $xyzxty\approx yxzxty$, whence $\mathbf v\in xyzx^+ty^+$. 
We see that the $\lambda$-class $xyzx^+ty^+$ is stable with respect to $\mathbf M_\gamma(x^+y)\vee\mathbf H$.
Now Corollary~\ref{C: M_alpha(W) in V}(ii) applies, yielding that 
\[
\left(\mathbf M_\gamma(xx^+y)\vee\mathbf H\right)\wedge \mathbf M_\lambda(xyzx^+ty^+) = \mathbf M_\lambda(xyzx^+ty^+).
\]
Further, in view of Lemmas~\ref{L: L(D1)} and~\ref{L: L(M(xzyx^+ty^+))},
\[
\mathbf M_\gamma(xx^+y)\vee\left(\mathbf H\wedge\mathbf M_\lambda(xyzx^+ty^+)\right)=\mathbf M_\gamma(xx^+y)\vee\mathbf M_\lambda(xyx^+)\subset\mathbf M_\lambda(xyzx^+ty^+).
\]
Therefore, the lattice $\mathfrak L\left(\mathbf M_\gamma(xx^+y)\vee\mathbf H\right)$ is not modular.

\smallskip

(ii) Consider an arbitrary identity $\mathbf u\approx \mathbf v$ of $\mathbf M_\gamma(xx^+y)\vee\mathbf M_\nu([yx^2zy]^\nu)$ with $\mathbf u\in yxx^+ty$.
Since $\mathbf u(x,t)\in xx^+t$, $\mathbf v(y,t)=yty$ and $({_{1\mathbf u}y}) < ({_{1\mathbf u}x}) < ({_{1\mathbf u}t})$, Lemmas~\ref{L: M(W) in V} and~\ref{L: M_alpha(W) in V} imply $\mathbf v(x,t)\in xx^+t$, $\mathbf v(y,t)=yty$ and $({_{1\mathbf v}y}) < ({_{1\mathbf v}x}) < ({_{1\mathbf v}t})$.
Hence $\mathbf v\in yxx^+ty$.
We see that the $\gamma^\prime$-class $yxx^+ty$ is stable with respect to $\mathbf M_\gamma(xx^+y)\vee\mathbf M_\nu([yx^2ty]^\nu)$.
Now Corollary~\ref{C: M_alpha(W) in V}(iv) applies, yielding that 
\[
\left(\mathbf M_\gamma(xx^+y)\vee\mathbf M_\nu([yx^2ty]^\nu)\right)\wedge \mathbf M_{\gamma^\prime}(yxx^+ty)=\mathbf M_{\gamma^\prime}(yxx^+ty).
\]
Further, $\mathbf M_\nu([yx^2ty]^\nu)\wedge \mathbf M_{\gamma^\prime}(yxx^+ty)$ satisfies the identities
\[
yx^2ty\stackrel{\mathbf M_\nu([yx^2ty]^\nu)}\approx yx^2txy\stackrel{\mathbf M_{\gamma^\prime}(yxx^+ty)}\approx x^2ytxy \stackrel{\mathbf M_\nu([yx^2ty]^\nu)}\approx x^2yty.
\]
Evidently, the identity 
\begin{equation}
\label{xxyty=yxxty}
x^2yty\approx yx^2ty
\end{equation}
holds in $\mathbf M_\gamma(xx^+y)$ as well.
Since $\mathbf M_\gamma(xx^+y)\subset \mathbf M_{\gamma^\prime}(yxx^+ty)$, this implies that
\[
\mathbf M_\gamma(xx^+y)\vee\left(\mathbf M_\nu([yx^2ty]^\nu)\wedge\mathbf M_{\gamma^\prime}(yxx^+ty)\right)\subset\mathbf M_{\gamma^\prime}(yxx^+ty).
\]
The lattice $\mathfrak L\left(\mathbf M_\gamma(xx^+y)\vee\mathbf M_\nu([yx^2ty]^\nu)\right)$ is thus not modular.

\smallskip

(iii) Consider an arbitrary identity $\mathbf u\approx \mathbf v$ holding in $\mathbf M_\gamma(xx^+y)\vee\mathbf M_\eta([xyzx^2ty]^\eta)$ with $\mathbf u\in xyzxx^+ty$.
Since $\mathbf u(x,z,t)\in xzxx^+t$ and $\mathbf u(y,z,t)=yzty$, Lemmas~\ref{L: M(W) in V} and~\ref{L: M_alpha(W) in V} imply that $\mathbf v(x,z,t)\in xzxx^+t$ and $\mathbf v(y,z,t)=yzty$.
Hence $\mathbf v\in xyzxx^+ty\cup yxzxx^+ty$.
However, $M_\eta([xyzx^2ty]^\eta)$ violates the identity~\eqref{xyzxxty=yxzxxty}.
Therefore, $\mathbf v\in xyzxx^+ty$.
We see that the $\lambda^\prime$-class $xyzxx^+ty$ is stable with respect to $\mathbf M_\gamma(xx^+y)\vee\mathbf M_\eta([xyzx^2ty]^\eta)$.
Now Corollary~\ref{C: M_alpha(W) in V}(v) applies, yielding that 
\[
\left(\mathbf M_\lambda(xx^+y)\vee\mathbf M_\eta([xyzx^2ty]^\eta)\right)\wedge \mathbf M_{\lambda^\prime}(xyzxx^+ty)=\mathbf M_{\lambda^\prime}(xyzxx^+ty).
\]
Further, $\mathbf M_\eta([xyzx^2ty]^\eta)\wedge \mathbf M_{\lambda^\prime}(xyzxx^+ty)$ satisfies the identities
\[
xyzx^2ty\stackrel{\mathbf M_\eta([xyzx^2ty]^\eta)}\approx xyzx^2txy\stackrel{\mathbf M_{\lambda^\prime}(xyzxx^+ty)}\approx yxzx^2txy \stackrel{\mathbf M_\eta([xyzx^2ty]^\eta)}\approx yxzx^2ty.
\]
Clearly, the identity~\eqref{xyzxxty=yxzxxty} holds in $\mathbf M_\gamma(xx^+y)$ as well.
Since $\mathbf M_\gamma(xx^+y)\subset \mathbf M_{\lambda^\prime}(xyzxx^+ty)$, this implies that
\[
\mathbf M_\gamma(xx^+y)\vee\left(\mathbf M_\eta([xyzx^2ty]^\eta)\wedge \mathbf M_{\lambda^\prime}(xyzxx^+ty)\right)\subset\mathbf M_{\lambda^\prime}(xyzxx^+ty).
\]
The lattice $\mathfrak L\left(\mathbf M_\gamma(xx^+y)\vee\mathbf M_\eta([xyzx^2ty]^\eta)\right)$ is thus not modular.

\smallskip

(iv) Suppose that $\mathbf M_\lambda(xyx^+)\vee\mathbf M_{\overline{\lambda}}(x^+yx)$ satisfies an identity $xyx\approx \mathbf w$ for some $\mathbf w\in\mathfrak X^\ast$.
Then $\mathbf w=x^syx^t$ for some $s,t\in\mathbb N_0$ by Lemma~\ref{L: identities of M(xy)} and the evident inclusion $\mathbf M(xy)\subseteq \mathbf M_\lambda(xyx^+)\vee\mathbf M_{\overline{\lambda}}(x^+yx)$.
In view of Lemma~\ref{L: M_alpha(W) in V}, this is only possible when $s=t=1$ and so $xyx$ is an isoterm for $\mathbf M_\lambda(xyx^+)\vee\mathbf M_{\overline{\lambda}}(x^+yx)$.
Let $\mathbf V:=\mathbf M_{\overline{\lambda}}(x^+yx)\vee\mathbf M(xyx)$.
Then 
\[
\left(\mathbf M_\lambda(xyx^+)\vee\mathbf M_{\overline{\lambda}}(x^+yx)\right)\wedge \mathbf V = \mathbf V
\]
by Lemma~\ref{L: M(W) in V}.
Clearly, $\mathbf M_{\overline{\lambda}}(x^+yx)\subset\mathbf V$.
Further, it is routine to check that~\eqref{yxx=xyxx} holds in $\mathbf V$.
Now Lemma~\ref{L: M_alpha(W) in V} applies, yielding that $M_\gamma(yxx^+)\notin \mathbf V$.
Then $\mathbf M_\lambda(xyx^+)\wedge\mathbf V=\mathbf M(xy)$ by Lemma~\ref{L: L(M(xzyx^+ty^+))}.
Hence
\[
\mathbf M_{\overline{\lambda}}(x^+yx)\vee\left(\mathbf M_\lambda(xyx^+)\wedge\mathbf V\right)=\mathbf M_{\overline{\lambda}}(x^+yx)\subset\left(\mathbf M_\lambda(xyx^+)\vee\mathbf M_{\overline{\lambda}}(x^+yx)\right)\wedge \mathbf V =\mathbf V.
\]
It follows that the lattice $\mathfrak L\left(\mathbf M_\lambda(xyx^+)\vee\mathbf M_{\overline{\lambda}}(x^+yx)\right)$ is not modular.

\smallskip

(v),(viii) Let $\mathbf V\in\{\mathbf M_\nu([yx^2ty]^\nu),\mathbf M_{\gamma^\prime}([x^2yzytx^2]^{\gamma^\prime})\}$.
Consider an arbitrary identity $\mathbf u\approx \mathbf v$ of $\mathbf M_\lambda(xyx^+)\vee\mathbf V$ with $\mathbf u\in xyzytxx^+$.
Since $\mathbf u(x,z,t)\in xztxx^+$ and $\mathbf u(y,z,t)=yzyt$, Lemmas~\ref{L: M(W) in V} and~\ref{L: M_alpha(W) in V} imply that $\mathbf v(x,z,t)\in xztxx^+$ and $\mathbf v(y,z,t)=yzty$.
Hence $\mathbf v\in xyzytxx^+\cup yxzytxx^+$.
However, $\mathbf M_\nu([yx^2ty]^\nu)$ violates the identity~\eqref{xyzytxx=yxzytxx} because this identity together with~\eqref{xxy=xxyx} imply the identity~\eqref{xxyty=yxxty}.
Evidently, $\mathbf M_{\gamma^\prime}([x^2yzytx^2]^{\gamma^\prime})$ does not satisfy~\eqref{xyzytxx=yxzytxx}  as well.
Therefore, $\mathbf v\in xyzytxx^+$.
We see that the $\lambda^\prime$-class $xyzytxx^+$ is stable with respect to $\mathbf M_\lambda(xyx^+)\vee\mathbf V$.
Now Corollary~\ref{C: M_alpha(W) in V}(v) applies, yielding that 
\[
\left(\mathbf M_\lambda(xyx^+)\vee\mathbf V\right)\wedge \mathbf M_{\lambda^\prime}(xyzytxx^+)=\mathbf M_{\lambda^\prime}(xyzytxx^+).
\]
Further, $\mathbf V\wedge \mathbf M_{\lambda^\prime}(xyzytxx^+)$ satisfies the identities
\[
xyzytx^2\stackrel{\mathbf V}\approx x^2yzytx\stackrel{\mathbf M_{\lambda^\prime}(xyzytxx^+)}\approx yx^2zytx \stackrel{\mathbf V}\approx yxzytx^2.
\]
Clearly, the identity~\eqref{xyzytxx=yxzytxx} holds in $\mathbf M_\lambda(xyx^+)$ as well.
Since $\mathbf M_\lambda(xyx^+)\subset \mathbf M_{\lambda^\prime}(xyzxx^+ty)$, this implies that
\[
\mathbf M_\lambda(xyx^+)\vee\left(\mathbf V\wedge\mathbf M_{\lambda^\prime}(xyzytxx^+)\right)\subset\mathbf M_{\lambda^\prime}(xyzytxx^+).
\]
The lattices $\mathfrak L\left(\mathbf M_\lambda(xyx^+)\vee\mathbf M_\nu([yx^2zy]^\nu)\right)$ and $\mathfrak L\left(\mathbf M_\lambda(xyx^+)\vee\mathbf M_{\gamma^\prime}([x^2yzytx^2]^{\gamma^\prime})\right)$ are not modular.

\smallskip

(vi),(ix) Let $\mathbf V\in\{\mathbf M_{\gamma^\prime}(yxx^+ty),\mathbf M_{\gamma^\prime}([x^2yzx^2ty]^{\gamma^\prime})\}$.
Consider an arbitrary identity $\mathbf u\approx \mathbf v$ of $\mathbf M_\lambda(xyx^+)\vee\mathbf V$ with $\mathbf u\in xyzxx^+ty$.
Since $\mathbf u(x,z,t)\in xzxx^+t$ and $\mathbf u(y,z,t)=yzty$, Lemmas~\ref{L: M(W) in V} and~\ref{L: M_alpha(W) in V} imply that $\mathbf v(x,z,t)\in xzxx^+t$ and $\mathbf v(y,z,t)=yzty$.
Hence $\mathbf v\in xyzxx^+ty\cup yxzxx^+ty$.
However, $\mathbf M_{\gamma^\prime}(yxx^+ty)$ violates the identity~\eqref{xyzxxty=yxzxxty} because this identity together with $x^2\approx x^3$ imply the identity~\eqref{yxxty=xyxxty}.
Evidently, $\mathbf M_{\gamma^\prime}([x^2yzx^2ty]^{\gamma^\prime})$ does not satisfy~\eqref{xyzxxty=yxzxxty} as well.
Therefore, $\mathbf v\in xyzxx^+ty$.
We see that the $\lambda^\prime$-class $xyzxx^+ty$ is stable with respect to $\mathbf M_\lambda(xyx^+)\vee\mathbf V$.
Now Corollary~\ref{C: M_alpha(W) in V}(v) applies, yielding that 
\[
\left(\mathbf M_\lambda(xyx^+)\vee\mathbf V\right)\wedge \mathbf M_{\lambda^\prime}(xyzxx^+ty)=\mathbf M_{\lambda^\prime}(xyzxx^+ty).
\]
Further, $\mathbf V\wedge\mathbf M_{\lambda^\prime}(xyzxx^+ty)$ satisfies the identities
\[
xyzx^2ty\stackrel{\mathbf V}\approx x^2yzxty\stackrel{\mathbf M_{\lambda^\prime}(xyzxx^+ty)}\approx yx^2zxty \stackrel{\mathbf V}\approx yxzx^2ty.
\]
Clearly, the identity~\eqref{xyzxxty=yxzxxty} holds in $\mathbf M_\lambda(xyx^+)$ as well.
Since $\mathbf M_\lambda(xyx^+)\subset \mathbf M_{\lambda^\prime}(xyzxx^+ty)$, this implies that
\[
\mathbf M_\lambda(xyx^+)\vee\left(\mathbf V\wedge\mathbf M_{\lambda^\prime}(xyzytxx^+)\right)\subset\mathbf M_{\lambda^\prime}(xyzytxx^+).
\]
The lattices $\mathfrak L\left(\mathbf M_\lambda(xyx^+)\vee\mathbf M_{\gamma^\prime}(yxx^+ty)\right)$ and $\mathfrak L\left(\mathbf M_\lambda(xyx^+)\vee\mathbf M_{\gamma^\prime}([x^2yzx^2ty]^{\gamma^\prime})\right)$ are thus not modular.

\smallskip

(vii),(x) Let $\mathbf V\in\{\mathbf M_{\gamma^\prime}(ytyxx^+),\mathbf M_{\gamma^\prime}([yzx^2ytx^2]^{\gamma^\prime})\}$.
Consider an arbitrary identity $\mathbf u\approx \mathbf v$ of $\mathbf M_\lambda(xyx^+)\vee\mathbf V$ with $\mathbf u\in yzyxtxx^+$.
Since $\mathbf u(x,z,t)\in zxtxx^+$ and $\mathbf u(y,z,t)=yzyt$, Lemmas~\ref{L: M(W) in V} and~\ref{L: M_alpha(W) in V} imply that $\mathbf v(x,z,t)\in zxtxx^+$ and $\mathbf v(y,z,t)=yzyt$.
Hence $\mathbf v\in yzyxtxx^+\cup yzxytxx^+$.
However, $\mathbf M_{\gamma^\prime}(ytyxx^+)$ violates the identity~\eqref{yzxytxx=yzyxtxx} because this identity together with $x^2\approx x^3$ imply the identity~\eqref{ytyxx=ytxyxx}.
Evidently, $\mathbf M_{\gamma^\prime}([yzx^2ytx^2]^{\gamma^\prime})$ does not satisfy~\eqref{yzxytxx=yzyxtxx} as well.
Therefore, $\mathbf v\in yzyxtxx^+$.
We see that the $\lambda^\prime$-class $yzyxtxx^+$ is stable with respect to $\mathbf M_\lambda(xyx^+)\vee\mathbf V$.
Now Corollary~\ref{C: M_alpha(W) in V}(v) applies, yielding that 
\[
\left(\mathbf M_\lambda(xyx^+)\vee\mathbf V\right)\wedge \mathbf M_{\lambda^\prime}(yzyxtxx^+)=\mathbf M_{\lambda^\prime}(yzyxtxx^+).
\]
Further, $\mathbf V\wedge \mathbf M_{\lambda^\prime}(yzyxtxx^+)$ satisfies the identities
\[
yzyxtx^2\stackrel{\mathbf V}\approx yzyx^2tx\stackrel{\mathbf M_{\lambda^\prime}(yzyxtxx^+)}\approx yzx^2ytx \stackrel{\mathbf V}\approx yzxytx^2.
\]
Clearly, the identity~\eqref{yzxytxx=yzyxtxx} holds in $\mathbf M_\lambda(xyx^+)$ as well.
Since $\mathbf M_\lambda(xyx^+)\subset \mathbf M_{\lambda^\prime}(yzyxtxx^+)$, this implies that
\[
\mathbf M_\lambda(xyx^+)\vee\left(\mathbf V\wedge\mathbf M_{\lambda^\prime}(yzyxtxx^+)\right)\subset\mathbf M_{\lambda^\prime}(yzyxtxx^+).
\]
The lattices $\mathfrak L\left(\mathbf M_\lambda(xyx^+)\vee\mathbf M_{\gamma^\prime}(ytyxx^+)\right)$ and $\mathfrak L\left(\mathbf M_\lambda(xyx^+)\vee\mathbf M_{\gamma^\prime}([yzx^2ytx^2]^{\gamma^\prime})\right)$ are thus not modular.

\smallskip

(xi) Consider an arbitrary identity $\mathbf u\approx \mathbf v$ of $\mathbf M_\lambda(xyx^+)\vee\mathbf M_{\gamma^\prime}([x^2zytx^2y]^{\gamma^\prime})$ with $\mathbf u\in xzytxx^+y$.
Since $\mathbf u(x,z,t)\in xztxx^+$ and $\mathbf u(y,z,t)=zyty$, Lemmas~\ref{L: M(W) in V} and~\ref{L: M_alpha(W) in V} imply that $\mathbf v(x,z,t)\in xztxx^+$ and $\mathbf v(y,z,t)=zyty$.
Hence $\mathbf v\in xzytxx^+y\cup xzytyxx^+\cup xzytx^+yx^+$.
However, $\mathbf M_{\gamma^\prime}([x^2zytx^2y]^{\gamma^\prime})$ violates the identities 
\begin{align}
\label{xzytxxy=xzytyxx}
xzytx^2y&{}\approx xzytyx^2,\\
\label{xzytxxy=xzytxyx}
xzytx^2y&{}\approx xzytxyx.
\end{align}
Therefore, $\mathbf v\in xzytxx^+y$.
We see that the $\lambda^\prime$-class $xzytxx^+y$ is stable with respect to $\mathbf M_\lambda(xyx^+)\vee\mathbf M_{\gamma^\prime}([x^2zytx^2y]^{\gamma^\prime})$.
Now Corollary~\ref{C: M_alpha(W) in V}(v) applies, yielding that 
\[
\left(\mathbf M_\lambda(xyx^+)\vee \mathbf M_{\gamma^\prime}([x^2zytx^2y]^{\gamma^\prime})\right)\wedge \mathbf M_{\lambda^\prime}(xzytxx^+y)=\mathbf M_{\lambda^\prime}(xzytxx^+y).
\]
Further, $\mathbf M_{\gamma^\prime}([x^2zytx^2y]^{\gamma^\prime})\wedge \mathbf M_{\lambda^\prime}(xzytxx^+y)$ satisfies the identities
\[
xzytx^2y\stackrel{\mathbf M_{\gamma^\prime}([x^2zytx^2y]^{\gamma^\prime})}\approx x^2zytx^2y\stackrel{\mathbf M_{\lambda^\prime}(xzytxx^+y)}\approx x^2zytyx^2 \stackrel{\mathbf M_{\gamma^\prime}([x^2zytx^2y]^{\gamma^\prime})}\approx xzytyx^2.
\]
Clearly, the identity~\eqref{xzytxxy=xzytyxx} holds in $\mathbf M_\lambda(xyx^+)$ as well.
Since $\mathbf M_\lambda(xyx^+)\subset \mathbf M_{\lambda^\prime}(xzytxx^+y)$, this implies that
\[
\mathbf M_\lambda(xyx^+)\vee\left(\mathbf M_{\gamma^\prime}([x^2zytx^2y]^{\gamma^\prime})\wedge\mathbf M_{\lambda^\prime}(xzytxx^+y)\right)\subset\mathbf M_{\lambda^\prime}(xzytxx^+y).
\]
The lattice $\mathfrak L\left(\mathbf M_\lambda(xyx^+)\vee\mathbf M_{\gamma^\prime}([x^2zytx^2y]^{\gamma^\prime})\right)$ is thus not modular.

\smallskip

(xii),(xiii) Let $\mathbf V\in\{\mathbf H,\mathbf M_\lambda(xyzx^+ty^+)\}$. 
In view of Lemma~\ref{L: identities of M(xt_1x...t_kx)}, the set $\{xyzxty,yxzxty\}$ forms a $\FIC\left(\mathbf M(xyx)\right)$-class. 
It follows from Proposition~6.12 in~\cite{Gusev-Vernikov-18} that $\mathbf H$ does not satisfy the identity $\sigma_1$.
Clearly, $\mathbf M_\lambda(xyzx^+ty^+)$ violates $\sigma_1$ as well. 
Hence the word $xyzxty$ is an isoterm for $\mathbf M(xyx)\vee\mathbf V$ in any case.
Now Lemma~\ref{L: M(W) in V} applies, yielding that 
\[
\left(\mathbf M(xyx)\vee\mathbf V\right)\wedge \mathbf M(xyzxty)=\mathbf M(xyzxty).
\]
Further, in view of~\cite[Fig.~1]{Jackson-05} and Lemmas~\ref{L: L(D1)} and~\ref{L: L(M(xzyx^+ty^+))},
\[
\begin{aligned}
\mathbf M(xyx)\vee\left(\mathbf V\wedge\mathbf M(xyzxty)\right)=\mathbf M(xyx)\vee\mathbf M(xy)=\mathbf M(xyx)\subset\mathbf M(xyzxty).
\end{aligned}
\]
Therefore, the lattices $\mathfrak L\left(\mathbf M(xyx)\vee\mathbf H\right)$ and $\mathfrak L\left(\mathbf M(xyx)\vee\mathbf M_\lambda(xyzx^+ty^+)\right)$ are not modular.

\smallskip

(xiv) The proof is quite similar to the proof of Parts~(xii) and~(xiii).

\smallskip

(xv),(xvi),(xviii) Let 
\[
\mathbf V\in\{\mathbf M_\nu([yx^2ty]^\nu),\mathbf M_{\gamma^\prime}(yxx^+ty),\mathbf M_{\lambda^\prime}(xyzxx^+ty)\}.
\]
It follows from Lemma~\ref{L: identities of M(xt_1x...t_kx)} that the set $\{xyzxtxsy,yxzxtxsy\}$ forms a $\FIC\left(\mathbf M(xyxzx)\right)$-class.   
Clearly, the variety $\mathbf V$ violates the identity $xyzxtxsy\approx yxzxtxsy$. 
Hence the word $xyzxtxsy$ is an isoterm for $\mathbf M(xyxzx)\vee\mathbf V$.
Now Lemma~\ref{L: M(W) in V} applies, yielding that 
\[
\left(\mathbf M(xyxzx)\vee\mathbf V\right)\wedge \mathbf M(xyzxtxsy)=\mathbf M(xyzxtxsy).
\]
Further, the variety $\mathbf V\wedge\mathbf M(xyzxtxsy)$ satisfies the identities
\[
xyzxtxsy\stackrel{\mathbf V}\approx xyzxtx^2sy\stackrel{\mathbf M(xyzxtxsy)}\approx yxzxtx^2sy \stackrel{\mathbf V}\approx yxzxtxsy.
\]
Evidently, the identity $xyzxtxsy\approx yxzxtxsy$ holds in $\mathbf M(xyxzx)$ as well.
Since $\mathbf M(xyxzx)\subset \mathbf M(xyzxtxsy)$, this implies that
\[
\mathbf M(xyxzx)\vee\left(\mathbf V\wedge\mathbf M(xyzxtxsy)\right)\subset\mathbf M(xyzxtxsy).
\]
Therefore, the lattice $\mathfrak L\left(\mathbf M(xyxzx)\vee\mathbf V\right)$ is not modular.

\smallskip

(xvii),(xix),(xx) The proof is quite similar to the proof of Parts~(xv),~(xvi) and~(xviii).

\smallskip

(xxi),(xxii) Let $\mathbf V\in\{\mathbf M_{\lambda^\prime}(xyzx^+tysx^+),\mathbf M_\eta([xyzx^2ty]^\eta)\}$.
Consider an arbitrary identity $\mathbf u\approx \mathbf v$ of $\mathbf M_\mu(xyxzx^+)\vee\mathbf V$ with $\mathbf u\in xyzxtysx^+$.
Since $\mathbf u(x,z,t,s)\in xzxtsx^+$ and $\mathbf u(y,z,t,s)=yztys$, Lemmas~\ref{L: M(W) in V} and~\ref{L: M_alpha(W) in V} imply that $\mathbf v(x,z,t,s)\in xzxtsx^+$ and $\mathbf v(y,z,t,s)=yztys$.
Hence $\mathbf v\in xyzxtysx^+\cup yxzxtysx^+$.
However, the variety $\mathbf V$ violates the identity 
\begin{equation}
\label{xyzxtysx=yxzxtysx}
xyzxtysx\approx yxzxtysx.
\end{equation}
Therefore, $\mathbf v\in xyzxtysx^+$.
We see that the $\mu$-class $xyzxtysx^+$ is stable with respect to $\mathbf M_\mu(xyxzx^+)\vee\mathbf V$.
One can easily deduce from this fact and Example~\ref{E: xyzxtysx^+} that every $\mu$-class in $\{xyzxtysx^+\}^{\le_\mu}$ is stable with respect to $\mathbf M_\mu(xyxzx^+)\vee\mathbf V$.
Now Lemma~\ref{L: M_alpha(W) in V} applies, yielding that 
\[
\left(\mathbf M_\mu(xyxzx^+)\vee\mathbf V\right)\wedge \mathbf M_\mu(xyzxtysx^+)=\mathbf M_\mu(xyzxtysx^+).
\]
Further, $\mathbf V\wedge \mathbf M_\mu(xyzxtysx^+)$ satisfies the identities
\[
xyzxtysx\stackrel{\mathbf V}\approx xyzx^2tysx\stackrel{\mathbf M_\mu(xyzxtysx^+)}\approx yxzx^2tysx \stackrel{\mathbf V}\approx yxzxtysx.
\]
Evidently, the identity~\eqref{xyzxtysx=yxzxtysx} holds in $\mathbf M_\mu(xyxzx^+)$ as well.
Then since $\mathbf M_\mu(xyxzx^+)\subset \mathbf M_\mu(xyzxtysx^+)$, this implies that
\[
\mathbf M_\mu(xyxzx^+)\vee\left(\mathbf V\wedge \mathbf M_\mu(xyzxtysx^+)\right)\subset\mathbf M_\mu(xyzxtysx^+).
\]
The lattices $\mathfrak L\left(\mathbf M_\mu(xyxzx^+)\vee\mathbf M_{\lambda^\prime}(xyzx^+tysx^+)\right)$ and $\mathfrak L\left(\mathbf M_\mu(xyxzx^+)\vee\mathbf M_\eta([xyzx^2ty]^\eta)\right)$ are thus not modular.

\smallskip

(xxiii),(xxiv) The required claims were established in~\cite[Proposition~3.2]{Gusev-23} and~\cite[Theorem~1.1]{Gusev-19}, respectively.
\end{proof}

\subsection{Infinite series of varieties induced by $\mathbf a_{n,m}[\rho]$}
\label{Subsec: infinite series a_{n,m}[rho]}

\begin{proposition}
\label{P: non-dis L(M([hat{a}_{0,k}[pi]]^lambda))}
The lattices $\mathfrak L\left(\mathbf M_\lambda([\hat{\mathbf a}_{0,k}[\pi]]^\lambda)\right)$ is not distributive for any $k\in\mathbb N$ and $\pi\in S_k$.
\end{proposition}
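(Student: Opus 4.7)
The plan is to exhibit within $\mathfrak{L}\left(\mathbf{M}_\lambda([\hat{\mathbf{a}}_{0,k}[\pi]]^\lambda)\right)$ a non-modular (hence non-distributive) sublattice, most likely a pentagon $N_5$ or a diamond $M_3$. The strategy follows the template of Proposition~\ref{P: sporadic}: produce a candidate triple of subvarieties, compute their pairwise joins using Lemma~\ref{L: M_alpha(W_1) vee M_alpha(W_2)}, and compute meets through stability analysis via Corollary~\ref{C: M_alpha(W) in V}(ii) together with the deduction mechanism of Proposition~\ref{P: deduction}.

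My first step is to catalog relevant $\lambda$-classes in the downset $\{[\hat{\mathbf{a}}_{0,k}[\pi]]^\lambda\}^{\leq_\lambda}$. A key observation is that the $\lambda$-class $xyx^+$ is stable with respect to $\mathbf{M}_\lambda([\hat{\mathbf{a}}_{0,k}[\pi]]^\lambda)$: under the substitution $\phi(x) = x$, $\phi(y) = z_{1\pi}\cdots z_{k\pi}$, the word $xyx^c$ is sent to a factor $xz_{1\pi}\cdots z_{k\pi}x^c$ of $\hat{\mathbf{a}}_{0,k}[\pi]$ that faithfully tracks the parameter $c$, so the generating Rees quotient distinguishes the $\lambda$-class $xyx^+$ from every other $\lambda$-class. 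Consequently $\mathbf{M}_\lambda(xyx^+)$ sits inside $\mathbf{M}_\lambda([\hat{\mathbf{a}}_{0,k}[\pi]]^\lambda)$, and similar reasoning provides a supply of further Rees-quotient subvarieties $\mathbf{M}_\lambda(\mathtt{w})$ for classes $\mathtt{w}$ arising as factors.

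The core of the argument is to identify two further subvarieties $\mathbf{Y}, \mathbf{Z}$ incomparable with $\mathbf{X} := \mathbf{M}_\lambda(xyx^+)$ such that $\mathbf{X} \wedge (\mathbf{Y} \vee \mathbf{Z}) \neq (\mathbf{X} \wedge \mathbf{Y}) \vee (\mathbf{X} \wedge \mathbf{Z})$. I would exploit the structural asymmetry of $\hat{\mathbf{a}}_{0,k}[\pi] = xz_{1\pi}\cdots z_{k\pi}x t_1 z_1\cdots t_k z_k$: the doubly-occurring letter $x$ sits between a permuted block and a linear block, while each $z_i$ occurs once in each block. This produces two distinct ``swap'' patterns for $x$---one in the prefix, one in the suffix---whose associated subvarieties coexist in $\mathbf{M}_\lambda([\hat{\mathbf{a}}_{0,k}[\pi]]^\lambda)$ yet interact in a way reminiscent of the non-modular configuration of Proposition~\ref{P: sporadic}(iv), where $\mathbf{M}_\lambda(xyx^+)$ and its mirror combine non-modularly. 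Concretely I would try $\mathbf{Y}$ and $\mathbf{Z}$ defined by $\lambda$-classes obtained from the prefix $[xz_{1\pi}\cdots z_{k\pi}x]^\lambda$ and from fragments involving the second $x$ and the suffix (or by imposing additional swap identities), arranged so that $\mathbf{Y} \vee \mathbf{Z}$ is strictly larger than each of $\mathbf{Y}, \mathbf{Z}$ but the meets $\mathbf{X} \wedge \mathbf{Y}$, $\mathbf{X} \wedge \mathbf{Z}$ collapse to a common smaller subvariety.

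The principal obstacle is confirming, uniformly in $k$ and $\pi$, that the chosen subvarieties yield the desired pentagon or diamond. Once the candidates are fixed, joins are handed to us by Lemma~\ref{L: M_alpha(W_1) vee M_alpha(W_2)}; the real work lies in the meet computations, where one must check precisely which identities of the form $\mathbf{u} \approx \mathbf{v}$ with $\mathbf{u}$ in a prescribed $\lambda$-class are forced by combining identities from two summands, and rule out that any unwanted $\lambda$-class remains stable after such combination. I expect the decisive swap identity to play the role analogous to the one driving Proposition~\ref{P: sporadic}(iv).
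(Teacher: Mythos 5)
The target distributivity failure is right, and the reduction to Corollary~\ref{C: M_alpha(W) in V}(ii), Lemma~\ref{L: M_alpha(W_1) vee M_alpha(W_2)} and Proposition~\ref{P: deduction} names the correct toolkit, but your plan for producing the triple has a genuine gap. You propose to take $\mathbf X := \mathbf M_\lambda(xyx^+)$ and to find $\mathbf Y,\mathbf Z$ among Rees-quotient subvarieties $\mathbf M_\lambda(\mathtt w)$ arising from $\lambda$-classes in the downset. This is not what the paper does, and it is unlikely to succeed: with $\mathbf X=\mathbf M_\lambda(xyx^+)$ you are forced to engineer $\mathbf Y,\mathbf Z$ so that $xyx^+$ is \emph{not} stable with respect to either of them individually (each must satisfy some identity of the form $xyx^p\approx x^qyx^r$, $q\ge2$) while remaining stable with respect to $\mathbf Y\vee\mathbf Z$; you never produce candidate classes $\mathtt w$, and there is no evidence in $\{[\hat{\mathbf a}_{0,k}[\pi]]^\lambda\}^{\le_\lambda}$ that such a pair exists. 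Moreover, Lemma~\ref{L: M_alpha(W_1) vee M_alpha(W_2)} only hands you joins of varieties of the form $\mathbf M_\alpha(\mathtt W)$; the real workhorse in the paper's argument is a join of varieties that are \emph{not} Rees quotients.

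The paper's construction is of a different nature. It first reduces to a specific $\rho\in S_{k+1}$ built from $\pi$ and shows $M_\lambda(\mathtt a_n[\rho])\in\mathbf M_\lambda(\mathtt a_k[\pi])$, which is needed for uniformity in $k$ and $\pi$; your proposal does not address this. It then introduces an auxiliary congruence $\delta$, chooses three long words $\mathbf v_0,\mathbf v_1,\mathbf v_2$ (and a collapse target $\mathbf v_3$) whose letters are organized around the doubly-occurring $x$ so that certain swap identities can be distinguished, and defines $\mathbf X,\mathbf Y,\mathbf Z$ as intersections of $\mathbf M_\lambda(\mathtt a_n[\rho])$ with $\var\{\mathbf v_0\approx\mathbf v_1\}$, $\var\{\mathbf v_0\approx\mathbf v_2\}$, $\var\{\mathbf v_1\approx\mathbf v_3,\,\mathbf v_2\approx\mathbf v_3\}$. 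The bulk of the proof is then the \emph{stability} analysis: using Proposition~\ref{P: deduction}, it classifies every way a word in $\mathtt v_0\cup\mathtt v_1$ can be transformed by a single direct application of $\mathbf v_0\approx\mathbf v_1$ (tracking which letter $\phi(x)$ can be, in several cases) and rules out unwanted collapses, so that $\mathtt v_0\cup\mathtt v_1$ is a $\FIC(\mathbf X)$-class, similarly for $\mathbf Y$ and $\mathbf Z$. This is exactly the part you flagged as ``the principal obstacle'' and deferred — it is where the proof lives, and your parenthetical aside ``(or by imposing additional swap identities)'' is the direction that should have been the main route. In short: your $\mathbf X,\mathbf Y,\mathbf Z$ are not the right kind of subvariety, the lemma you invoke for joins does not apply to the varieties that do work, and the key direct-deducibility case analysis is absent.
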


To prove Proposition~\ref{P: non-dis L(M([hat{a}_{0,k}[pi]]^lambda))}, we need one auxiliary result.

\begin{lemma}
\label{L: stable M_lambda(hat{a}_{0,k}[pi])}
If $k,n\in\mathbb N$, $\pi\in S_k$ and $\rho\in S_n$, then the $\lambda$-class $xzyx^+ty^+$ and so the $\lambda$-classes $xyx^+$, $xx^+y$, $yxx^+ty^+$, $[(\hat{\mathbf a}_{0,n}[\rho])_x]^\lambda$ are stable with respect to $\mathbf M_\lambda([\hat{\mathbf a}_{0,k}[\pi]]^\lambda)$.
\end{lemma}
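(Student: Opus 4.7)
\emph{Plan.} Set $\mathbf V := \mathbf M_\lambda([\hat{\mathbf a}_{0,k}[\pi]]^\lambda)$ and $M := M_\lambda([\hat{\mathbf a}_{0,k}[\pi]]^\lambda)$. Since $M \in \mathbf V$, Corollary~\ref{C: M_alpha(W) in V}(ii), combined with Lemma~\ref{L: M_alpha(W_1) vee M_alpha(W_2)}, shows that each stability assertion in the statement is equivalent to a containment of the form $M_\lambda(\mathtt w) \in \mathbf V$. Using the description of $\mathfrak L(\mathbf M_\lambda(xzyx^+ty^+))$ in Lemma~\ref{L: L(M(xzyx^+ty^+))}, the monoids $M_\lambda(xyx^+)$, $M_\gamma(xx^+y)$ and $M_\lambda(yxx^+ty^+)$ all generate subvarieties of $\mathbf M_\lambda(xzyx^+ty^+)$; hence the problem reduces to the two inclusions
\[
\mathbf M_\lambda(xzyx^+ty^+) \subseteq \mathbf V \qquad \text{and} \qquad M_\lambda([(\hat{\mathbf a}_{0,n}[\rho])_x]^\lambda) \in \mathbf V.
\]

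For the first inclusion, I interpret $M_\lambda(xzyx^+ty^+)$ inside a suitable direct power of $M$. The idea is to encode $xzyx^+ty^+$ using the already-present structural pattern of $\hat{\mathbf a}_{0,k}[\pi]$: for instance, setting $\phi(x) := [x]^\lambda$ and $\phi(y) := [z_{k\pi}]^\lambda$ gives two independent ``multiple'' elements of $M$ (since each of $x$ and $z_{k\pi}$ appears twice non-adjacently in $\hat{\mathbf a}_{0,k}[\pi]$), while $\phi(z)$ and $\phi(t)$ are taken to be appropriate products of letters from the segments of $\hat{\mathbf a}_{0,k}[\pi]$ lying between and after the two occurrences of $x$, respectively. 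Additional coordinates of the direct power are then used to separate the remaining $\lambda$-classes in $\{xzyx^+ty^+\}^{\le_\lambda}$. The rigidity of the class $[\hat{\mathbf a}_{0,k}[\pi]]^\lambda$ ensures that simple letters remain simple, multiple letters remain multiple with non-adjacent first occurrences, and the left-to-right order of first occurrences is preserved.

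For the second inclusion, I would pass to a direct power $M^N$ with $N$ depending on $n$. The $2n$ positions of $(\hat{\mathbf a}_{0,n}[\rho])_x$ are partitioned into blocks of length at most $k$, and on each coordinate of $M^N$ the letters of the corresponding block are substituted via a local interpretation that mimics the structure of $\hat{\mathbf a}_{0,k}[\pi]$. Any identity $\mathbf u \approx \mathbf v$ of $\mathbf V$ with $\mathbf u \in [(\hat{\mathbf a}_{0,n}[\rho])_x]^\lambda$ must be preserved on every coordinate, forcing simple letters to remain simple, multiple letters to remain multiple, and the first-occurrence order of every letter to be preserved; hence $\mathbf v \in [(\hat{\mathbf a}_{0,n}[\rho])_x]^\lambda$.

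The principal difficulty lies in this second inclusion: for arbitrary $n$ (possibly much larger than $k$) and arbitrary $\rho \in S_n$, one must design the block partition and the coordinate substitutions so that every pair of letter-positions whose relative order matters in $(\hat{\mathbf a}_{0,n}[\rho])_x$ is detected on at least one coordinate, while simultaneously ensuring that no intermediate substituted word accidentally collapses to the zero of $M$. Carrying out this combinatorial design uniformly in $n$ and $\rho$ is the delicate technical part of the argument.
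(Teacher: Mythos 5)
Your reduction of the stability assertions to the two inclusions $\mathbf M_\lambda(xzyx^+ty^+)\subseteq\mathbf V$ and $M_\lambda([(\hat{\mathbf a}_{0,n}[\rho])_x]^\lambda)\in\mathbf V$ via Corollary~\ref{C: M_alpha(W) in V}(ii) and the lattice description of Lemma~\ref{L: L(M(xzyx^+ty^+))} is sound. But after that your argument stops being a proof: both inclusions are to be established by interpreting the target monoid inside a direct power $M^N$, and those interpretations are only sketched. For the second inclusion --- which is the real content here, since $n$ and $\rho$ are arbitrary --- you yourself say that ``carrying out this combinatorial design uniformly in $n$ and $\rho$ is the delicate technical part of the argument.'' That is not a detail to be deferred; it \emph{is} the proof, and nothing in the proposal addresses how the block partition is chosen, how the coordinate substitutions avoid collapsing to the zero of $M$, or how one verifies that every relative-order constraint among the letters of $(\hat{\mathbf a}_{0,n}[\rho])_x$ is detected on some coordinate. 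As written, the proposal establishes neither inclusion.

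There is also a more basic structural worry with the route you chose. You are trying to prove $M_\lambda(\mathtt w)\in\mathbf V=\mathrm{HSP}(M)$ by realising $M_\lambda(\mathtt w)$ inside a direct power of $M$, i.e.\ by placing it in $\mathrm{SP}(M)$. That is a strictly stronger claim, and you give no reason it should hold; in general one needs $\mathrm{H}$ as well. The paper avoids direct powers entirely: by Corollary~\ref{C: M_alpha(W) in V}(ii) the desired memberships are literally \emph{equivalent} to the stability statements, so one works with identities instead. Concretely, if $xyx^+$ (resp.\ $xx^+y$) were not stable, then Lemmas~\ref{L: M_alpha(W) in V} and~\ref{L: nsub M(xyx^+)} (resp.\ its dual) would force an identity of the form $xyx\approx x^2yx$, which applied to $\hat{\mathbf a}_{0,k}[\pi]$ produces a word outside $[\hat{\mathbf a}_{0,k}[\pi]]^\lambda$ --- contradicting Lemma~\ref{L: M_alpha(W) in V}. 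The same forbidden-identity pattern (via~\eqref{xzyxty=xzxyxty}) gives stability of $xzyx^+ty^+$, after which the remaining classes are handled by the lattice of Lemma~\ref{L: L(M(xzyx^+ty^+))}, and stability of $[(\hat{\mathbf a}_{0,n}[\rho])_x]^\lambda$ follows directly by using the stability of $xyx^+$, $xx^+y$ and $xyzx^+ty^+$ to constrain the shape of the right-hand side of any identity with left-hand side in that class. No representation-theoretic construction is needed, and the argument is uniform in $n$ and $\rho$ for free.
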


\begin{proof}
For brevity, let $\mathtt a_r[\tau]:=[\hat{\mathbf a}_{0,r}[\tau]]^\lambda$ for any $r\in\mathbb N$ and $\tau\in S_r$.
Evidently, $xy$ is an isoterm for $\mathbf M_\lambda(\mathtt a_k[\pi])$.
Notice also that $\mathbf M_\lambda(\mathtt a_k[\pi])$ satisfies the identity~\eqref{xyx=xyxx}.
If the set $xyx^+$ is not stable with respect to a variety satisfying~\eqref{xyx=xyxx}, then it follows from Lemmas~\ref{L: M_alpha(W) in V} and~\ref{L: nsub M(xyx^+)} that this variety satisfies the identity
\begin{equation}
\label{xyx=xxyx}
xyx\approx x^2yx.
\end{equation} 
Since 
\[
\hat{\mathbf a}_{0,k}[\pi]\stackrel{\eqref{xyx=xxyx}}\approx x^2z_{1\pi}\cdots z_{k\pi}xt_1z_1\cdots t_nz_k,
\] 
we see that the $\lambda$-class $xyx^+$ is stable with respect to $\mathbf M_\lambda(\mathtt a_k[\pi])$.
By similar arguments, we can show that the $\lambda$-class $xx^+y$ is stable with respect to $\mathbf M_\lambda(\mathtt a_k[\pi])$.

Now, take an arbitrary identity $\mathbf u \approx \mathbf v$ of $\mathbf M_\lambda(\mathtt a_k[\pi])$ with $\mathbf u\in xzyx^+ty^+$.
Since $xyx^+$ and $xx^+y$ are stable with respect to $\mathbf M_\lambda(\mathtt a_k[\pi])$, we have $\mathbf v\in xzyx^+ty^+\cup xzx^+y x^\ast ty^+$.
If $\mathbf v\in xzx^+y x^\ast ty^+$, then $\mathbf u\approx \mathbf v$ together with~\eqref{xyx=xyxx} imply~\eqref{xzyxty=xzxyxty}.
Since 
\[
\hat{\mathbf a}_{0,k}[\pi]\stackrel{\eqref{xzyxty=xzxyxty}}\approx xz_{1\pi}\cdots xz_{k\pi}xt_1z_1\cdots t_nz_k,
\] 
we see that the $\lambda$-class $xzyx^+ty^+$ must be stable with respect to $\mathbf M_\lambda(\mathtt a_k[\pi])$ by Lemma~\ref{L: M_alpha(W) in V}.
Then Lemmas~\ref{L: M_alpha(W) in V} and~\ref{L: L(M(xzyx^+ty^+))} imply that the $\lambda$-classes $yxx^+ty^+$ and $xyzx^+ty^+$ are stable with respect to $\mathbf M_\lambda(\mathtt a_k[\pi])$ as well.

Finally, take an arbitrary identity $\mathbf u \approx \mathbf v$ of $\mathbf M_\lambda(\mathtt a_k[\pi])$ with $\mathbf u\in [([\hat{\mathbf a}_{0,n}[\rho])_x]^\lambda$.
Since $xyzx^+ty^+$ is stable with respect to $\mathbf M_\lambda(\mathtt a_k[\pi])$, we have $\mathbf v(z_i,z_j,t_i,z_j)\in z_{i\rho}z_{j\rho}t_iz_i^+t_jz_j^+$ for any $1\le i<j\le n$.
Hence $\mathbf v\in [([\hat{\mathbf a}_{0,n}[\rho])_x]^\lambda$.
Since the identity $\mathbf u \approx \mathbf v$ is arbitrary, the $\lambda$-class $[([\hat{\mathbf a}_{0,n}[\rho])_x]^\lambda$ is also stable with respect to $\mathbf M_\lambda(\mathtt a_k[\pi])$.
\end{proof}

For any non-empty word $\mathbf w$ of length $\ell$, $0\le k \le \ell$ and $0\le m\le\ell-k$, let $\mathbf w[k;m]$ denote a factor of $\mathbf w$ of length $m$ directly succeeding the prefix of $\mathbf w$ of length $k$.
For any word $\mathbf w$, let $\ini_2(\mathbf w)$ denote the word obtained from $\mathbf w$ by retaining the first and second occurrences of each letter.

\begin{proof}[Proof of Proposition~\ref{P: non-dis L(M([hat{a}_{0,k}[pi]]^lambda))}]
For any $r\in\mathbb N$ and $\tau\in S_r$, let $\mathtt a_r[\tau]:=[\hat{\mathbf a}_{0,r}[\tau]]^\lambda$.
For brevity, put $n:=k+1$.
Define permutation $\rho\in S_n$ as follows:
\begin{equation}
\label{rho= a_{o,n}[rho]}
i\rho := 
\begin{cases} 
n & \text{if $i=1$}, \\
(i-1)\pi & \text{if $1<i\le n$}.
\end{cases} 
\end{equation}
Let $\mathbf a\approx \mathbf a^\prime$ be an identity of $\mathbf M_\lambda(\mathtt a_k[\pi])$ with $\mathbf a\in \mathtt a_n[\rho]$.
In view of Lemma~\ref{L: stable M_lambda(hat{a}_{0,k}[pi])}, the following $\lambda$-classes are stable with respect to $\mathbf M_\lambda(\mathtt a_k[\pi])$: $[(\hat{\mathbf a}_{0,n}[\rho])_x]^\lambda$, $xx^+y$ and $yxx^+ty^+$.
Since $[(\hat{\mathbf a}_{0,n}[\rho])_x]^\lambda$ is stable with respect to $\mathbf M_\lambda(\mathtt a_k[\pi])$, we have $\mathbf a_x^\prime\in [(\hat{\mathbf a}_{0,n}[\rho])_x]^\lambda$.
The fact that $xx^+y$ is stable with respect to $\mathbf M_\lambda(\mathtt a_k[\pi])$ implies that $({_{\ell\mathbf a^\prime}}x)<({_{\ell\mathbf a^\prime}}t_1)$.
Further, if $({_{1\mathbf a^\prime}}z_n)<({_{1\mathbf a^\prime}}x)$, then $\mathbf a^\prime(x,z_n,t_n)\in z_nxx^+t_nz_n^+$, while $\mathbf a(x,z_n,t_n)\in xz_nx^+t_nz_n^+$, contradicting the fact that $yxx^+ty^+$ is stable with respect to $\mathbf M_\lambda(\mathtt a_k[\pi])$.
Therefore, $({_{1\mathbf a^\prime}}x)<({_{1\mathbf a^\prime}}z_n)$.
Finally, since $\mathbf a_{\{z_n,t_n\}}\in \mathtt a_k[\pi]$, Lemma~\ref{L: M_alpha(W) in V} implies that $\mathbf a_{\{z_n,t_n\}}^\prime\in \mathtt a_k[\pi]$.
Hence $\mathbf a^\prime\in \mathtt a_n[\rho]$.
We see that the $\lambda$-class $\mathtt a_n[\rho]$ is stable with respect to $\mathbf M_\lambda(\mathtt a_k[\pi])$ and, therefore, $M_\lambda(\mathtt a_n[\rho])\in\mathbf M_\lambda(\mathtt a_k[\pi])$ by Corollary~\ref{C: M_alpha(W) in V}(ii).
Thus, it suffices to verify that the lattice $\mathfrak L\left(\mathbf M_\lambda(\mathtt a_n[\rho])\right)$ is not distributive.

Now, define the relation $\delta$ on $\mathfrak X^\ast$ as follows: for every $\mathbf u,\mathbf v \in \mathfrak X^\ast$, $\mathbf u\mathrel{\delta}\mathbf v$ if and only if
\begin{itemize}
\item $\mathbf u(\simple(\mathbf u))=\mathbf v(\simple(\mathbf u))$;
\item if $\mathbf u=\mathbf u_0t_1\mathbf u_1\cdots t_m\mathbf u_m$ and $\mathbf v=\mathbf v_0t_1\mathbf v_1\cdots t_m\mathbf v_m$ are the decompositions of $\mathbf u$ and $\mathbf v$, respectively, then $\con(\mathbf u_i)=\con(\mathbf v_i)$ for all $i=0,\dots,m$;
\item $\ini_2(\mathbf u)=\ini_2(\mathbf v)$.
\end{itemize}
It is routine to check that $\delta$ is, in fact, a congruence on $\mathfrak X^\ast$.
Let now
\[
\begin{aligned}
&\mathbf v_0 := x\,\mathbf q[0;3n-3]\,y\,\mathbf q[3n-3;2]\,x\,\mathbf q[3n-1;3n+1]\,y\,\mathbf r,\\
&\mathbf v_1 := x\,\mathbf q[0;3n-3]\,y\,\mathbf q[3n-3;1]\,x\,\mathbf q[3n-2;3n+2]\,y\,\mathbf r,\\
&\mathbf v_2 := x\,\mathbf q[0;3n-3]\,y\,\mathbf q[3n-3;2]\,x\,\mathbf q[3n-1;3n]\,y\,\mathbf q[6n-1;1]\,\mathbf r,
\end{aligned}
\]
where
\[
\mathbf q := \biggl(\prod_{i=1}^n  z_{i\rho}^{(1)}z_{i\rho}^{(2)}z_{i\rho}^{(3)}\biggr) \biggl(\prod_{i=1}^n z_{i\rho}^{(4)}z_{i\rho}^{(5)}z_{i\rho}^{(6)}\biggr),\ \mathbf r := \biggl(\prod_{j=1}^3\biggl(\prod_{i=1}^n  t_i^{(j)}z_i^{(j)}\biggr)\biggr)\biggl(\prod_{j=6}^4\biggl(\prod_{i=1}^n  t_i^{(j)}z_i^{(j)}\biggr)\biggr).
\]
Let $\mathtt v_j:=[\mathbf v_j]^\delta$, $j=0,1,2$.
Since $\mathbf M_\lambda(\mathtt a_n[\rho])$ satisfies the identities~\eqref{xxyty=xxyxty} and~\eqref{xyx=xyxx}, all the words in $\mathtt v_j$, $j=0,1,2$, are $\FIC(\mathbf M_\lambda(\mathtt a_n[\rho]))$-related.

Let $\mathbf u\approx \mathbf v$ be an identity of $\mathbf M_\lambda(\mathtt a_n[\rho])$ with $\mathbf u \in \mathtt v_0$.
It follows from Lemma~\ref{L: stable M_lambda(hat{a}_{0,k}[pi])} that $[\mathbf q\mathbf r]^\lambda$ is stable with respect to $\mathbf M_\lambda(\mathtt a_n[\rho])$.
Hence $\mathbf v_{\{x,y\}}\in[\mathbf q\mathbf r]^\lambda$.
Since $yxx^+ty^+$ is stable with respect to $\mathbf M_\lambda(\mathtt a_n[\rho])$ by Lemma~\ref{L: stable M_lambda(hat{a}_{0,k}[pi])}, we have $({_{1\mathbf v}}x)<({_{1\mathbf v}}z_{1\rho}^{(1)})$ and $({_{1\mathbf v}}z_{(n-1)\rho}^{(3)})<({_{1\mathbf v}}y)<({_{1\mathbf v}}z_{n\rho}^{(1)})$. 
Further, if $\phi\colon \mathfrak X \to \mathfrak X^\ast$ is the substitution given by
\[ 
\phi(v) := 
\begin{cases} 
x & \text{if }v=x, \\ 
z_i & \text{if }v=z_i^{(2)},\ i=1,\dots,n,\\
t_i & \text{if }v=t_i^{(2)},\ i=1,\dots,n,\\
1 & \text{otherwise},
\end{cases} 
\]
then $\phi(\mathbf u)\in\mathtt a_n[\rho]$.
Then $\phi(\mathbf v)\in \mathtt a_n[\rho]$ by Lemma~\ref{L: M_alpha(W) in V}. 
Hence $({_{1\mathbf v}}z_{n\rho}^{(2)})<({_{2\mathbf v}}x)$.
By a similar argument we can show that $({_{2\mathbf v}}x)<({_{1\mathbf v}}z_{n\rho}^{(3)})$ and $({_{1\mathbf v}}z_{n\rho}^{(6)})<({_{2\mathbf v}}y)$.
Finally, $({_{\ell\mathbf v}}x)<({_{1\mathbf v}}t_1^{(1)})$ and $({_{\ell\mathbf v}}y)<({_{1\mathbf v}}t_1^{(1)})$ because $xx^+y$ is stable with respect to $\mathbf M_\lambda(\mathtt a_n[\rho])$ by Lemma~\ref{L: stable M_lambda(hat{a}_{0,k}[pi])}.
We see that $\mathbf v\in\mathtt v_0$ and thus the $\delta$-class $\mathtt v_0$ is a $\FIC(\mathbf M_\lambda(\mathtt a_n[\rho]))$-class.
By similar arguments we can show that the $\delta$-classes $\mathtt v_1$ and $\mathtt v_2$ are $\FIC(\mathbf M_\lambda(\mathtt a_n[\rho]))$-classes.

Let 
\[
\begin{aligned}
&\mathbf X := \mathbf M_\lambda(\mathtt a_n[\rho])\wedge\var\{\mathbf v_0 \approx \mathbf v_1\},\\ 
&\mathbf Y := \mathbf M_\lambda(\mathtt a_n[\rho])\wedge\var\{\mathbf v_0 \approx \mathbf v_2\},\\
&\mathbf Z := \mathbf M_\lambda(\mathtt a_n[\rho])\wedge\var\{\mathbf v_1\approx\mathbf v_3, \mathbf v_2\approx \mathbf v_3\},
\end{aligned}
\]
where
\[
\mathbf v_3 := x^2\,\mathbf q[0;3n-3]\,y^2\,\mathbf q[3n-3;3n+3]\,\mathbf r.
\]
It is routine to check that one can take a sufficiently large integer $r$, say $r>10n$, such that   the identities $\mathbf v_0\approx\mathbf v_1\approx \mathbf v_2\approx \mathbf v_3$ hold in $\mathbf M_\lambda(\mathtt a_r[\theta])$ for any $\theta\in S_r$.
Arguments similar to ones from the first paragraph of this proof imply that there is $\theta^\prime\in S_r$ such that $M_\lambda(\mathtt a_r[\theta^\prime])\in \mathbf M_\lambda(\mathtt a_n[\rho])$.
Hence $M_\lambda(\mathtt a_r[\theta^\prime])\in\mathbf X\wedge\mathbf Y\wedge\mathbf Z$.

\smallskip

\textbf{1. The set $\mathtt v_0\cup \mathtt v_1$ forms a $\FIC(\mathbf X)$-class.} 
Consider an arbitrary identity $\mathbf u \approx \mathbf u^\prime$ of $\mathbf X$ with $\mathbf u\in \mathtt v_0\cup \mathtt v_1$.
We are going to show that $\mathbf u^\prime \in \mathtt v_0\cup \mathtt v_1$.
Since $M_\lambda(\mathtt a_r[\theta^\prime])\in\mathbf X$, Lemma~\ref{L: stable M_lambda(hat{a}_{0,k}[pi])} implies that $\mathbf u^\prime_{\{x,y\}}\in [\mathbf q \mathbf r]^\lambda$.
In view of Proposition~\ref{P: deduction}, we may assume without loss of generality that  either $\mathbf u \approx \mathbf u^\prime$ holds in $\mathbf M_\lambda(\mathtt a_n[\rho])$ or $\mathbf u \approx \mathbf u^\prime$ is directly deducible from $\mathbf v_0 \approx \mathbf v_1$. 
In view of the above, $\mathtt v_0\cup \mathtt v_1$ is a union of two $\FIC(\mathbf M_\lambda(\mathtt a_n[\rho]))$-classes.
Therefore, it remains to consider the case when $\mathbf u \approx \mathbf u^\prime$ is directly deducible from $\mathbf v_0 \approx \mathbf v_1$, i.e., there exist some words $\mathbf a,\mathbf b \in \mathfrak X^\ast$ and substitution $\phi\colon \mathfrak X \to \mathfrak X^\ast$ such that $(\mathbf u,\mathbf u^\prime ) = (\mathbf a\phi(\mathbf v_s)\mathbf b,\mathbf a\phi(\mathbf v_t)\mathbf b)$, where $\{s,t\}=\{0,1\}$.

First, notice that if $a$ and $b$ are distinct letters with $\{a,b\}\ne\{x,y\}$, then $ab$ occurs in $\mathbf u$ as a factor at most once.
It follows that 
\begin{itemize}
\item[\textup{($\ast$)}] for any $v\in \mul(\mathbf v_s)=\mul(\mathbf v_t)$, if $\con(\phi(v))\ne \{x,y\}$, then $\phi(v)$ is either empty or a power of letter.
\end{itemize}
If $\phi(x)=1$, then $\phi(\mathbf v_s)=\phi(\mathbf v_t)$ and so $\mathbf u=\mathbf u^\prime$, whence $\mathbf u^\prime \in \mathtt v_0\cup \mathtt v_1$.
It remains to consider the case when $\phi(x)\ne1$.
Clearly, the letters in $\{t_i^{(j)}\mid 1\le i\le n,\, 1\le j\le 6\}$ do not occur in $\phi(x)$ because these letters are simple in $\mathbf u$, while $x\in\mul(\mathbf v_s)=\mul(\mathbf v_t)$.
If $\phi(x)$ is not a power of letter, then $\con(\phi(x))=\{x,y\}$ by~($\ast$) and, therefore, all the factors of the form $\phi(x)$ in $\mathbf u$ are preceded by the letter ${_{1\mathbf u}}z_{n\rho}^{(6)}$.
In this case, the application of $\mathbf v_s\approx \mathbf v_t$ to $\mathbf u$ does not change the factor of $\mathbf u$ preceding ${_{1\mathbf u}}z_{n\rho}^{(6)}$, whence $\mathbf u^\prime\in \mathtt v_0\cup \mathtt v_1$. 
It remains to consider the case when $\phi(x)$ is a power of letter.
If $\phi(x)$ is a power of a letter in $\{z_i^{(j)}\mid 1\le i\le n,\, 1\le j\le 6\}$, then all the factors of the form $\phi(x)$ in $\mathbf u$ must be preceded by the letter $t_1^{(1)}$.
In this case, the application of $\mathbf v_s\approx \mathbf v_t$ to $\mathbf u$ does not change the factor of $\mathbf u$ preceding $t_1^{(1)}$. 
Hence $\mathbf u^\prime\in \mathtt v_0\cup \mathtt v_1$.
Therefore, we may further assume that $\phi(x)$ is a power of letter in $\{x,y\}$.
For convenience, denote this letter by $c$.
If the image of ${_{2\mathbf v_s}}x$ under $\phi$ does not contain ${_{2\mathbf u}}c$, then the application of $\mathbf v_s\approx \mathbf v_t$ to $\mathbf u$ cannot change the position of the second occurrence of $c$ in $\mathbf u$ and, therefore, $\mathbf u^\prime\in \mathtt v_0\cup \mathtt v_1$.
Therefore, $\phi({_{2\mathbf v_s}}x)$ contains ${_{2\mathbf u}}c$ in $\mathbf u$.
This is only possible when $\phi(x)=c$ and, moreover, $\phi({_{1\mathbf v_s}}x)={_{1\mathbf u}}c$ and $\phi({_{2\mathbf v_s}}x)={_{2\mathbf u}}c$.

The content of the factor of $\mathbf u$ between ${_{1\mathbf u}}y$ and ${_{2\mathbf u}}y$ consists of $3n+4$ distinct letters.
However, the factor of $\mathbf v_s$ between ${_{1\mathbf v_s}}x$ and ${_{2\mathbf v_s}}x$ contains at most $3n$ distinct letters.
These facts and~($\ast$) imply that $\phi(x)$ must not coincide with $y$.
Therefore, $\phi({_{1\mathbf v_s}}x)={_{1\mathbf u}}x$ and $\phi({_{2\mathbf v_s}}x)={_{2\mathbf u}}x$.
Then the factor of $\mathbf u$ lying between ${_{1\mathbf u}}x$ and ${_{2\mathbf u}}x$ must be the image of the factor of $\mathbf v_s$ lying between ${_{1\mathbf v_s}}x$ and ${_{2\mathbf v_s}}x$.
In other words, 
\[
\mathbf q[0;3n-3]\,y\,\mathbf q[3n-3;2-p]=\phi(\mathbf q[0;3n-3]\,y\,\mathbf q[3n-3;2-s]),
\]
where 
\[ 
p := 
\begin{cases} 
0 & \text{if }\mathbf u\in\mathtt v_0, \\ 
1 & \text{if }\mathbf u\in\mathtt v_1.
\end{cases} 
\]
There are four cases.

\smallskip

\textbf{Case 1.1}: $\mathbf u\in \mathtt v_0$ and $(s,t)=(0,1)$.
In this case, $\mathbf u^\prime \in \mathtt v_0\cup \mathtt v_1$ because the image of the letter $z_{n\rho}^{(2)}$ under $\phi$ is either empty or a letter by~($\ast$).

\smallskip

\textbf{Case 1.2}: $\mathbf u\in \mathtt v_0$ and $(s,t)=(1,0)$.
In this case, the content of the factor of $\mathbf u$ between ${_{1\mathbf u}}x$ and ${_{2\mathbf u}}x$ consists of $3n$ distinct letters, while the factor of $\mathbf v_s$ between ${_{1\mathbf v_s}}x$ and ${_{2\mathbf v_s}}x$ contains $3n-1$ distinct letters.
Therefore, the image of some letter in the latter factor under $\phi$ must contain at least two distinct letters, contradicting~($\ast$).
Therefore, this case is impossible.

\smallskip

\textbf{Case 1.3}: $\mathbf u\in \mathtt v_1$ and $(s,t)=(0,1)$.
If $\phi(z_{n\rho}^{(2)})=1$, then $\phi(\mathbf v_s)=\phi(\mathbf v_t)$ and so $\mathbf u^\prime\in \mathtt v_1$.
It remains to consider the case when $\phi(z_{n\rho}^{(2)})\ne1$.
Then $\phi(z_{n\rho}^{(2)})=z_{n\rho}^{(1)}$ by~($\ast$).
Further, it is easy to see that either $\phi(y)=y$ or $\phi(y)=1$ and, therefore, $\phi(d)=d$ for each $d\in\con(\mathbf q[0;3n-3])$.
In particular, since $n\rho<n$, we have $\phi(z_n^{(1)})=z_n^{(1)}$, contradicting
\[
\begin{aligned}
({_{2\mathbf u}}z_{n\rho}^{(1)})<({_{\ell\mathbf u}}z_{n\rho}^{(1)})&{}<({_{2\mathbf u}}z_n^{(1)})<({_{\ell\mathbf u}}z_n^{(1)}),\\
({_{2\mathbf v_s}}z_n^{(1)})<({_{\ell\mathbf v_s}}z_n^{(1)})&{}<({_{2\mathbf v_s}}z_{n\rho}^{(2)})<({_{\ell\mathbf v_s}}z_{n\rho}^{(2)}).
\end{aligned}
\]
Therefore, this case is impossible as well.

\smallskip

\textbf{Case 1.4}: $\mathbf u\in \mathtt v_1$ and $(s,t)=(1,0)$.
In this case, $\mathbf u^\prime \in \mathtt v_0\cup \mathtt v_1$ because the image of the letter $z_{n\rho}^{(2)}$ under $\phi$ is either empty or a letter by~($\ast$).

\smallskip

We see that $\mathbf u^\prime \in \mathtt v_0\cup \mathtt v_1$ in either case.
This means that $\mathtt v_0\cup \mathtt v_1$ forms a $\FIC(\mathbf X)$-class.

\smallskip

\textbf{2. The set $\mathtt v_0\cup \mathtt v_2$ forms a $\FIC(\mathbf Y)$-class.} 
The same arguments as in the proof of the fact that the set $\mathtt v_0\cup\mathtt v_1$ forms a $\FIC(\mathbf X)$-class can show that it suffices to establish that if an identity $\mathbf u\approx \mathbf u^\prime$ with $\mathbf u\in\mathtt v_0\cup \mathtt v_2$ is directly deducible from $\mathbf v_0 \approx \mathbf v_2$, i.e., there exist some words $\mathbf a,\mathbf b \in \mathfrak X^\ast$ and substitution $\phi\colon \mathfrak X \to \mathfrak X^\ast$ such that $(\mathbf u,\mathbf u^\prime ) = (\mathbf a\phi(\mathbf v_s)\mathbf b,\mathbf a\phi(\mathbf v_t)\mathbf b)$, where $\{s,t\}=\{0,2\}$, then $\mathbf u^\prime\in\mathtt v_0\cup \mathtt v_2$.

Further, arguments similar to ones from the proof of the fact that $\mathtt v_0\cup\mathtt v_1$ forms a $\FIC(\mathbf X)$-class show that~($\ast$) is true, and $\mathbf u^\prime\in\mathtt v_0\cup\mathtt v_2$ whenever the following claim does not hold:
\begin{itemize}
\item[\textup{(a)}] $\phi({_{1\mathbf v_s}}y)={_{1\mathbf u}}c$ and $\phi({_{2\mathbf v_s}}y)={_{2\mathbf u}}c$ for some $c\in\{x,y\}$.
\end{itemize}
Thus, it remains to consider the case when~(a) holds.
If $\phi(y)=x$, then the letter ${_{1\mathbf u}}y$ must be the image under $\phi$ of the first occurrence of some letter in $\{z_{(n-1)\rho}^{(4)},z_{(n-1)\rho}^{(5)},z_{(n-1)\rho}^{(6)},z_{n\rho}^{(4)}\}$ by~($\ast$).
This is only possible when the image under $\phi$ of each letter in $\{x,z_i^{(j)},t_i^{(j)}\mid 1\le i\le n,\,1\le j\le 3\}$ is the empty word.
In this case, the image under $\phi$ of the factor of $\mathbf v_s$ between ${_{1\mathbf v_s}}y$ and ${_{2\mathbf v_s}}y$ contains at most $3n$ distinct letters by~($\ast$).
Therefore, $y=\phi(z_{n\rho}^{(4)})$.
Hence $\phi(t_i^{(5)})=\phi(z_i^{(5)})=1$ for any $i=1,\dots,n$.
Then the image under $\phi$ of the factor of $\mathbf v_s$ between ${_{1\mathbf v_s}}y$ and ${_{2\mathbf v_s}}y$ contains at most $2n$ distinct letters by~($\ast$), contradicting the fact that the factor of $\mathbf u$ between ${_{1\mathbf u}}x$ and ${_{2\mathbf u}}x$ contains $3n$ distinct letters.  
Therefore, $\phi({_{1\mathbf v_s}}y)={_{1\mathbf u}}y$ and $\phi({_{2\mathbf v_s}}y)={_{2\mathbf u}}y$.
Then the factor of $\mathbf u$ lying between ${_{1\mathbf u}}y$ and ${_{2\mathbf u}}y$ must be the image of the factor of $\mathbf v_s$ lying between ${_{1\mathbf v_s}}y$ and ${_{2\mathbf v_s}}y$.
There are four cases.

\smallskip

\textbf{Case 2.1}: $\mathbf u\in \mathtt v_0$ and $(s,t)=(0,2)$.
In this case, $\mathbf u^\prime \in \mathtt v_0\cup \mathtt v_2$ because the image of the letter $z_{n\rho}^{(6)}$ under $\phi$ is either empty or a letter by~($\ast$).

\smallskip

\textbf{Case 2.2}: $\mathbf u\in \mathtt v_0$ and $(s,t)=(2,0)$.
In this case, the content of the factor of $\mathbf u$ between ${_{1\mathbf u}}y$ and ${_{2\mathbf u}}y$ consists of $3n+4$ distinct letters, while the factor of $\mathbf v_s$ between ${_{1\mathbf v_s}}y$ and ${_{2\mathbf v_s}}y$ contains $3n+3$ distinct letters.
Therefore, the image of some letter in the latter factor under $\phi$ must contain at least two distinct letters, contradicting~($\ast$).
Thus, this case is impossible.

\smallskip

\textbf{Case 2.3}: $\mathbf u\in \mathtt v_2$ and $(s,t)=(0,2)$.
If $\phi(z_{n\rho}^{(6)})=1$, then $\phi(\mathbf v_s)=\phi(\mathbf v_t)$ and so $\mathbf u^\prime\in \mathtt v_2$.
It remains to consider the case when $\phi(z_{n\rho}^{(6)})\ne1$.
Then $\phi(z_{n\rho}^{(6)})=z_{n\rho}^{(5)}$ by~($\ast$).
Since the content of the factor of $\mathbf u$ between ${_{1\mathbf u}}y$ and ${_{2\mathbf u}}y$ consists of $3n+3$ distinct letters, while the factor of $\mathbf v_s$ between ${_{1\mathbf v_s}}y$ and ${_{2\mathbf v_s}}y$ contains $3n+4$ distinct letters, it follows from~($\ast$) and the fact that $1\rho=n$ that $z_n^{(6)}\in\{\phi(z_n^{(6)}),\phi(z_{2\rho}^{(4)})\}$, contradicting
\[
\begin{aligned}
({_{2\mathbf v_s}}z_{n\rho}^{(6)})<({_{\ell\mathbf v_s}}z_{n\rho}^{(6)})<({_{2\mathbf v_s}}z_n^{(6)})&<({_{\ell\mathbf v_s}}z_n^{(6)})<({_{2\mathbf v_s}}z_{2\rho}^{(4)})<({_{\ell\mathbf v_s}}z_{2\rho}^{(4)}),\\
({_{2\mathbf u}}z_n^{(6)})<({_{\ell\mathbf u}}z_n^{(6)})&<({_{2\mathbf u}}z_{n\rho}^{(5)})<({_{\ell\mathbf u}}z_{n\rho}^{(5)}).
\end{aligned}
\]
Therefore, this case is impossible as well.

\smallskip

\textbf{Case 2.4}: $\mathbf u\in \mathtt v_2$ and $(s,t)=(2,0)$.
In this case, $\mathbf u^\prime \in \mathtt v_0\cup \mathtt v_2$ because the image of the letter $z_{n\rho}^{(6)}$ under $\phi$ is either empty or a letter by~($\ast$).

\smallskip

We see that $\mathbf u^\prime \in \mathtt v_0\cup \mathtt v_2$ in either case.
This means that $\mathtt v_0\cup \mathtt v_2$ forms a $\FIC(\mathbf Y)$-class.

\smallskip

\textbf{3. The set $\mathtt v_0$ forms a $\FIC(\mathbf Z)$-class.} 
The same arguments as in the proof of the fact that the set $\mathtt v_0\cup\mathtt v_1$ forms a $\FIC(\mathbf X)$-class can show that it suffices to establish that if an identity $\mathbf u\approx \mathbf u^\prime$ with $\mathbf u\in\mathtt v_0$ is directly deducible from some identity in $\{\mathbf v_1 \approx \mathbf v_3,\mathbf v_2 \approx \mathbf v_3\}$, i.e., there exist some words $\mathbf a,\mathbf b \in \mathfrak X^\ast$ and substitution $\phi\colon \mathfrak X \to \mathfrak X^\ast$ such that $(\mathbf u,\mathbf u^\prime ) = (\mathbf a\phi(\mathbf v_s)\mathbf b,\mathbf a\phi(\mathbf v_t)\mathbf b)$, where $\{s,t\}$ is either $\{1,3\}$ or $\{2,3\}$, then $\mathbf u^\prime\in\mathtt v_0$.

Further, arguments similar to ones from the proof of the facts that $\mathtt v_0\cup\mathtt v_1$ forms a $\FIC(\mathbf X)$-class and $\mathtt v_0\cup\mathtt v_2$ forms a $\FIC(\mathbf Y)$-class show that~($\ast$) is true and $\mathbf u^\prime\in\mathtt v_0$ whenever the following claim does not hold:
\begin{itemize}
\item[\textup{(b)}] there is a non-empty subset $A$ of $\{x,y\}$ such that for each $c\in A$, $\phi({_{1\mathbf v_s}}c)={_{1\mathbf u}}c$ and $\phi({_{2\mathbf v_s}}c)={_{2\mathbf u}}c$.
\end{itemize}
Thus, it remains to consider the case when~(b) holds.
Clearly, this is only possible when $s\ne 3$. 
Suppose that $x\in A$.
If $(s,t)=(1,3)$, then the content of the factor of $\mathbf u$ between ${_{1\mathbf u}}x$ and ${_{2\mathbf u}}x$ consists of $3n$ distinct letters, while the factor of $\mathbf v_s$ between ${_{1\mathbf v_s}}x$ and ${_{2\mathbf v_s}}x$ contains $3n-1$ distinct letters.
Therefore, the image of some letter in the latter factor under $\phi$ must contain at least two distinct letters, contradicting~($\ast$).
Hence $(s,t)=(2,3)$.
Then we apply~($\ast$) again, yielding that $\phi(y)=y$ and so $y\in A$. 
In this case, the content of the factor of $\mathbf u$ between ${_{1\mathbf u}}y$ and ${_{2\mathbf u}}y$ consists of $3n+4$ distinct letters, while the factor of $\mathbf v_s$ between ${_{1\mathbf v_s}}y$ and ${_{2\mathbf v_s}}y$ contains $3n+3$ distinct letters.
Therefore, the image of some letter in the latter factor under $\phi$ must contain at least two distinct letters, contradicting~($\ast$).
Hence, $x$ cannot belong to $A$.
By a similar argument we can show that $y\notin A$.
Thus, (b) is impossible.
It follows that $\mathbf u^\prime \in \mathtt v_0$ in either case and, therefore, $\mathtt v_0$ forms a $\FIC(\mathbf Z)$-class.

\smallskip

In view of the above, the $\delta$-class $\mathtt v_0$ forms a $\FIC(\mathbf X\vee\mathbf Y)$-class and a $\FIC(\mathbf Z)$-class.
It follows that the $\delta$-class $\mathtt v_0$ is stable with respect to $(\mathbf X\vee \mathbf Y)\wedge \mathbf Z$.
However, $\mathbf X\wedge \mathbf Z$ [respectively, $\mathbf Y\wedge \mathbf Z$] satisfies $\mathbf v_0\approx \mathbf v_1\approx\mathbf v_3$ [respectively, $\mathbf v_0\approx \mathbf v_2\approx\mathbf v_3$].
Therefore, the identity $\mathbf v_0\approx \mathbf v_3$ holds in the variety $(\mathbf X\wedge \mathbf Z)\vee(\mathbf Y\wedge \mathbf Z)$.
This means that the $\delta$-class $\mathtt v_0$ is not stable with respect to this variety, whence
\[
(\mathbf X\wedge \mathbf Z)\vee(\mathbf Y\wedge \mathbf Z)\subset (\mathbf X\vee \mathbf Y)\wedge \mathbf Z.
\]
Thus, we have proved that the lattice $\mathfrak L\left(\mathbf M(\mathtt a_n[\rho])\right)$ is not distributive.
\end{proof}

\begin{proposition}
\label{P: non-dis L(M([a_{0,k}[pi]]^lambda))}
The lattice $\mathfrak L\left(\mathbf M_\lambda([\mathbf a_{0,k}[\pi]]^\lambda)\right)$ is not distributive for any $k\ge2$ and $\pi\in S_k$.
\end{proposition}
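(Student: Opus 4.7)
The plan is to follow the blueprint of Proposition~\ref{P: non-dis L(M([hat{a}_{0,k}[pi]]^lambda))} line by line, modifying each step to accommodate the additional squared letters $y_1^2,\dots,y_{k-1}^2$ that distinguish $\mathbf a_{0,k}[\pi]$ from $\hat{\mathbf a}_{0,k}[\pi]$. First I would prove a direct analogue of Lemma~\ref{L: stable M_lambda(hat{a}_{0,k}[pi])}: for every $n\ge1$ and $\tau\in S_n$, the $\lambda$-classes $xyx^+$, $xx^+y$, $xzyx^+ty^+$, $yxx^+ty^+$, and $[\mathbf a_{0,n}[\tau]]^\lambda$ are stable with respect to $\mathbf M_\lambda([\mathbf a_{0,k}[\pi]]^\lambda)$. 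The argument is exactly as in Lemma~\ref{L: stable M_lambda(hat{a}_{0,k}[pi])}, using Lemmas~\ref{L: nsub M(xyx^+)} and~\ref{L: M_alpha(W) in V} together with the evident reduction $xy$ being an isoterm and~\eqref{xyx=xyxx} holding throughout.

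Next, set $n:=k+1$ and define $\rho\in S_n$ by~\eqref{rho= a_{o,n}[rho]}. Arguing precisely as in the first paragraph of the proof of Proposition~\ref{P: non-dis L(M([hat{a}_{0,k}[pi]]^lambda))}, I would use the stability claims above together with Corollary~\ref{C: M_alpha(W) in V} to conclude $M_\lambda([\mathbf a_{0,n}[\rho]]^\lambda)\in\mathbf M_\lambda([\mathbf a_{0,k}[\pi]]^\lambda)$. Thus it suffices to show that the lattice $\mathfrak L(\mathbf M_\lambda([\mathbf a_{0,n}[\rho]]^\lambda))$ is not distributive.

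For this reduction I would define the congruence $\delta$ exactly as in the previous proof and construct analogues of the witness words $\mathbf v_0,\mathbf v_1,\mathbf v_2,\mathbf v_3$. The crucial change is that the ``bulk'' word $\mathbf q$ must now carry not only the $z_{i\rho}^{(j)}$-letters but also interleaved squares $(y_{i}^{(j)})^2$ matching the shape of $\mathbf a_{0,n}[\rho]$. Concretely, in the scaffolding $\mathbf q$ replace each $z_{i\rho}^{(j)}z_{(i+1)\rho}^{(j)}$ by $z_{i\rho}^{(j)}(y_{i}^{(j)})^2z_{(i+1)\rho}^{(j)}$ so that the restriction of $\mathbf v_0$ to appropriate letters lies in $[\mathbf a_{0,n}[\rho]]^\lambda$. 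Then set $\mathtt v_j:=[\mathbf v_j]^\delta$ and verify, using identities~\eqref{xyx=xyxx} and~\eqref{xxyty=xxyxty} and the stability results, that each $\mathtt v_j$ is a $\FIC(\mathbf M_\lambda([\mathbf a_{0,n}[\rho]]^\lambda))$-class. Defining $\mathbf X,\mathbf Y,\mathbf Z$ as in the previous proof, the non-distributivity will follow once one checks that $\mathtt v_0\cup\mathtt v_1$ is a $\FIC(\mathbf X)$-class, $\mathtt v_0\cup\mathtt v_2$ is a $\FIC(\mathbf Y)$-class, and $\mathtt v_0$ is a $\FIC(\mathbf Z)$-class, so that the identity $\mathbf v_0\approx\mathbf v_3$ distinguishes $(\mathbf X\wedge\mathbf Z)\vee(\mathbf Y\wedge\mathbf Z)$ from $(\mathbf X\vee\mathbf Y)\wedge\mathbf Z$.

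The main obstacle is the detailed case analysis behind the $\FIC$-class claims for $\mathbf X,\mathbf Y,\mathbf Z$, specifically parts~1--3 of the previous proof. I would re-derive the key observation~$(\ast)$: for any $v\in\mul(\mathbf v_s)$, if $\con(\phi(v))\ne\{x,y\}$ then $\phi(v)$ is empty or a power of a letter. Then the argument splits on the image of $x$ (respectively $y$), showing via a content-counting argument that $\phi$ must fix the positions of the first two occurrences of $x$ or $y$, whence the four sub-cases on $(s,t)\in\{(0,1),(1,0),(0,2),(2,0),\dots\}$ proceed as before. The counting bounds must be recomputed: the presence of the extra $y_i^{(j)}$-letters in the bulk increases the count of distinct letters between ${_{1\mathbf v_s}}x$ and ${_{2\mathbf v_s}}x$ by the same amount on both sides of each displayed identity, so the contradictions in Cases 1.2, 2.2, and the analogous cases for $\mathbf Z$ still hold; the asymmetry of exactly one letter between $\mathtt v_0$ and $\mathtt v_1$ (respectively $\mathtt v_0$ and $\mathtt v_2$) is preserved. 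Carrying this verification through the interleaved $y_i^{(j)}$-squares is the technically delicate part, but no new conceptual difficulty arises beyond the bookkeeping.
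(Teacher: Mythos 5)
Your plan is to transplant the $\delta$-based machinery from Proposition~\ref{P: non-dis L(M([hat{a}_{0,k}[pi]]^lambda))} verbatim, but that machinery is tailored to a situation that does not occur here, and the transplant breaks in two ways.

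First, the auxiliary congruence $\delta$ is unavailable in this setting. In the hat case, $\delta$-classes are the right objects precisely because $\mathbf M_\lambda([\hat{\mathbf a}_{0,n}[\rho]]^\lambda)$ satisfies~\eqref{xxyty=xxyxty}; that identity coarsens the relevant $\FIC$-classes beyond their $\lambda$-classes, and $\delta$ was engineered to capture exactly that coarsening. For the non-hat words the situation reverses: by Lemma~\ref{L: stable M_lambda(a_{0,k}[pi])}, the $\lambda$-class $xx^+yty^+$ is \emph{stable} with respect to $\mathbf M_\lambda([\mathbf a_{0,k}[\pi]]^\lambda)$ when $k\ge 2$ (this is one of the ``extras'' bought by the interleaved $y_i^2$-squares), so~\eqref{xxyty=xxyxty} \emph{fails} in $\mathbf M_\lambda([\mathbf a_{0,n}[\rho]]^\lambda)$. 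Consequently the $\FIC$-class of a witness word coincides with its $\lambda$-class, and $[\mathbf v_j]^\delta$ properly contains $[\mathbf v_j]^\lambda$. Your claimed verification ``that each $\mathtt v_j:=[\mathbf v_j]^\delta$ is a $\FIC$-class'' would therefore be false, and the subsequent $\FIC(\mathbf X)$, $\FIC(\mathbf Y)$, $\FIC(\mathbf Z)$ arguments would not go through. The correct move is to set $\mathtt v_j:=[\mathbf v_j]^\lambda$, and then only~\eqref{xyx=xyxx} is needed.

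Second, because the $\lambda$-classes are already the $\FIC$-classes, the witness words cannot just be positional shifts of a fixed letter pattern as in the hat case; the correct witnesses add an \emph{extra} occurrence. In the paper's $\mathbf v_0$, both $x$ and $y$ occur twice, while $\mathbf v_1$ has three occurrences of $x$ and $\mathbf v_2$ has three occurrences of $y$. Your construction, with $\mathbf v_1$ and $\mathbf v_2$ modeled on the hat-case ``move the second occurrence along $\mathbf q$'' pattern, cannot reproduce this. Relatedly, once $\mathtt v_0$ is taken to be a $\lambda$-class there is no need to define $\mathbf Z$ via $\var\{\mathbf v_1\approx\mathbf v_3,\mathbf v_2\approx\mathbf v_3\}$ and re-run the three $\FIC$-class arguments: one simply sets $\mathbf Z:=\mathbf M_\lambda(\mathtt v_0)$ and invokes Corollary~\ref{C: M_alpha(W) in V}(ii) to get $\mathbf Z\subseteq\mathbf X\vee\mathbf Y$ directly. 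So even though your high-level strategy (produce three subvarieties witnessing $(\mathbf X\wedge\mathbf Z)\vee(\mathbf Y\wedge\mathbf Z)\subsetneq(\mathbf X\vee\mathbf Y)\wedge\mathbf Z$) is the right one, the execution needs to depart from the hat proof at the choice of congruence, the shape of the witnesses, and the definition of $\mathbf Z$.
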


To prove Proposition~\ref{P: non-dis L(M([a_{0,k}[pi]]^lambda))}, we need the following auxiliary result.
Its proof is quite similar to the proof of Lemma~\ref{L: stable M_lambda(hat{a}_{0,k}[pi])} and we omit it.

\begin{lemma}
\label{L: stable M_lambda(a_{0,k}[pi])}
If $k,n\ge2$, $\pi\in S_k$ and $\rho\in S_n$, then the $\lambda$-classes $xx^+yty^+$, $xzyx^+ty^+$ and $[(\mathbf a_{0,n}[\rho])_x]^\lambda$ are stable with respect to $\mathbf M_\lambda([\mathbf a_{0,k}[\pi]]^\lambda)$.\qed
\end{lemma}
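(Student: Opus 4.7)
My plan is to mirror the argument of Lemma~\ref{L: stable M_lambda(hat{a}_{0,k}[pi])}, adjusting for the presence of the $y_i^2$-blocks in $\mathbf a_{0,k}[\pi]$ and adding a separate step for the new $\lambda$-class $xx^+yty^+$. Throughout, set $\mathbf V := \mathbf M_\lambda([\mathbf a_{0,k}[\pi]]^\lambda)$.

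First I would record the preliminaries. By Corollary~\ref{C: M_alpha(W) in V}(ii) applied to the generator, every $\lambda$-class in the $\le_\lambda$-downset of $[\mathbf a_{0,k}[\pi]]^\lambda$ is stable with respect to $\mathbf V$. In particular $xy$ is an isoterm (so $\mathbf M(xy)\subseteq\mathbf V$), and $\mathbf V$ satisfies $x^2\approx x^3$ and~\eqref{xyx=xyxx}. A direct inspection of factors of representatives of $[\mathbf a_{0,k}[\pi]]^\lambda$ shows that $xx^+y$, $yxx^+$, $xx^+yy^+$, and $yxx^+ty^+$ all lie in this downset and are therefore stable; for example $xx^+y$ is witnessed by the factor $x^{p_2}t_1$ of a representative with $p_2\ge 2$.

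Next I would treat $xyx^+$, $xx^+y$, and then $xzyx^+ty^+$ exactly as in the hat-lemma proof. If $xyx^+$ were not stable, Lemma~\ref{L: nsub M(xyx^+)} combined with~\eqref{xyx=xyxx} would force $xyx\approx x^2yx$ in $\mathbf V$; the substitution $x\mapsto x$, $y\mapsto z_{1\pi}y_1^2 z_{2\pi}\cdots z_{k\pi}$ applied to this identity transforms $\mathbf a_{0,k}[\pi]$ into a word whose first two $x$-occurrences are adjacent, contradicting stability of $[\mathbf a_{0,k}[\pi]]^\lambda$. The class $xx^+y$ is handled dually. Then, for $xzyx^+ty^+$, any violating identity $\mathbf u\approx\mathbf v$ with $\mathbf u\in xzyx^+ty^+$ would (by the stabilities just established together with Lemmas~\ref{L: M(W) in V} and~\ref{L: M_alpha(W) in V}) force $\mathbf v\in xzx^+yx^\ast ty^+$; combined with~\eqref{xyx=xyxx} this yields~\eqref{xzyxty=xzxyxty} in $\mathbf V$, which applied to $\mathbf a_{0,k}[\pi]$ again creates a word outside the generating $\lambda$-class. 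Stability of $xzyx^+ty^+$ gives $\mathbf M_\lambda(xzyx^+ty^+)\subseteq\mathbf V$, and Lemma~\ref{L: L(M(xzyx^+ty^+))} then delivers stability of $yxx^+ty^+$, $xyzx^+ty^+$, and the other sub-generators in Fig.~\ref{F: L(J)}.

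The new ingredient is stability of $xx^+yty^+$. Given an identity $\mathbf u\approx\mathbf v$ with $\mathbf u\in[x^2yty]^\lambda$, Lemma~\ref{L: identities of M(xy)} gives $\mathbf v=\mathbf v_0 t\mathbf v_1$, and stability of the downset class $xx^+yy^+$ (applied by specialising $t\mapsto 1$) forces $\mathbf v_0\mathbf v_1\in xx^+yy^+$. The possible positions of $t$ in $\mathbf v$ are then classified into ``bad'' cases ($x$-letters appearing after $t$, two or more $y$'s before $t$, or $t$ ending the word), and for each bad case one picks a substitution into $\mathbf a_{0,k}[\pi]$ drawn from the trailing segment $\cdots t_{j-1}z_{j-1}^{r_{j-1}}t_j z_j^{r_j}\cdots$ that sends the two sides of the alleged identity to distinct (one zero, one nonzero) elements of $M_\lambda([\mathbf a_{0,k}[\pi]]^\lambda)$. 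I expect this case analysis to be the main obstacle: one cannot appeal to Lemma~\ref{L: nsub M(xx^+yty^+)}, because its hypothesis~\eqref{yxxty=xyxxty} fails in $\mathbf V$ in general (for instance when $\pi(k)=1$, direct factor inspection shows $[yx^2ty]^\lambda$ lies in the downset while $[xyx^2ty]^\lambda$ does not), so the argument must be carried out by hand from the stabilities already established. Finally, stability of $[(\mathbf a_{0,n}[\rho])_x]^\lambda$ follows by combining stability of $xyx^+$, $xx^+y$, $xx^+yy^+$, $xx^+yty^+$ and $xyzx^+ty^+$: these constraints force any $\mathbf v$ equivalent to a word in $[(\mathbf a_{0,n}[\rho])_x]^\lambda$ to lie in $[(\mathbf a_{0,n}[\rho'])_x]^\lambda$ for some $\rho'\in S_n$, and stability of $xyzx^+ty^+$ then pins down $\rho'=\rho$.
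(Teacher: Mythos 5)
Your overall skeleton (redo the argument of Lemma~\ref{L: stable M_lambda(hat{a}_{0,k}[pi])} for $xyx^+$, $xx^+y$ and $xzyx^+ty^+$, then pull $yxx^+ty^+$ and $xyzx^+ty^+$ out of Lemma~\ref{L: L(M(xzyx^+ty^+))}, and handle $xx^+yty^+$ and $[(\mathbf a_{0,n}[\rho])_x]^\lambda$ on top) is the right one, but the step that carries the genuinely new content of this lemma rests on a false claim. You assert that $xx^+yy^+$ lies in the downset $\{[\mathbf a_{0,k}[\pi]]^\lambda\}^{\le_\lambda}$ and is therefore stable. It does not lie there: in every representative $x\,z_{1\pi}y_1^{c_1}\cdots z_{(k-1)\pi}y_{k-1}^{c_{k-1}}z_{k\pi}x^{a}\,t_1z_1^{d_1}\cdots t_kz_k^{d_k}$ of the generating class, each island of length at least two (the $y_i^{c_i}$, the second $x$-island, the tail $z_i^{d_i}$) is flanked only by islands of length one, so no factor is $\lambda$-equivalent to $x^2y^2$. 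Worse, $xx^+yy^+$ is not stable at all: any factor of a representative in which every letter is multiple must avoid the simple letters $t_j$ and hence is a power of a single letter, so for every substitution the two sides of $x^2y^2\approx y^2x^2$ are either equal or both zero in $M_\lambda([\mathbf a_{0,k}[\pi]]^\lambda)$; thus \eqref{xxyy=yyxx} holds in $\mathbf V$ and destroys the class $xx^+yy^+$. Consequently your treatment of $xx^+yty^+$ — "specialise $t\mapsto 1$ and use stability of $xx^+yy^+$ to force $\mathbf v_0\mathbf v_1\in xx^+yy^+$" — collapses, and the $\{x,y\}$-projection cannot be repaired, precisely because $\mathbf V$ satisfies \eqref{xxyy=yyxx}. (The claim that $yxx^+ty^+$ is always in the downset is also wrong — it is a factor class only when $k\pi=1$ — but that slip is harmless since you also derive its stability from $xzyx^+ty^+$.)

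The correct pinning-down of $\mathbf v$ in an identity $\mathbf u\approx\mathbf v$ with $\mathbf u\in xx^+yty^+$ comes from the projections that keep $t$: stability of $xx^+y$ gives $\mathbf v(x,t)\in xx^+t$ and stability of $xyx^+$ gives $\mathbf v(y,t)\in yty^+$. Note that these two facts already dispose of all the "bad cases" you list ($x$ after $t$, two $y$'s before $t$, $t$ at the end), so your case analysis is aimed at the wrong target; the cases that actually require work are $\mathbf v\in x^\ast yx^+ty^+$, i.e. an occurrence of $x$ strictly after the first $y$ (possibly with no $x$ before it). In those cases $\mathbf u\approx\mathbf v$ together with $x^2\approx x^3$ and \eqref{xyx=xyxx} yields an identity of the form $x^2yty^2\approx x^2yx^2ty^2$ or $x^2yty^2\approx yx^2ty^2$, and one must exhibit a substitution showing such an identity moves a representative of $[\mathbf a_{0,k}[\pi]]^\lambda$ out of its class — for instance $x\mapsto y_{k-1}$, $y\mapsto z_{k\pi}$, $t\mapsto xt_1z_1\cdots t_{k\pi}$ applied to a representative with $d_{k\pi}\ge 2$ inserts (or transposes) an island and contradicts stability of the generating class guaranteed by Lemma~\ref{L: M_alpha(W) in V}. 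You defer exactly this part ("I expect this case analysis to be the main obstacle"), so the essential new step of the lemma — and with it the final claim about $[(\mathbf a_{0,n}[\rho])_x]^\lambda$, which again invokes the unstable class $xx^+yy^+$ — is not actually proved in your proposal.
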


\begin{proof}[Proof of Proposition~\ref{P: non-dis L(M([a_{0,k}[pi]]^lambda))}]
For any $r\in\mathbb N$ and $\tau\in S_r$, let $\mathtt a_r[\tau]:=[\mathbf a_{0,r}[\tau]]^\lambda$.
For brevity, put $n:=k+1$.
If $\rho\in S_n$ is the permutation given by~\eqref{rho= a_{o,n}[rho]}, then arguments similar to ones from the first paragraph of the proof of Proposition~\ref{P: non-dis L(M([hat{a}_{0,k}[pi]]^lambda))} imply that $M_\lambda(\mathtt a_n[\rho])\in\mathbf M_\lambda(\mathtt a_k[\pi])$.
Thus, it suffices to verify that the lattice $\mathfrak L\left(\mathbf M_\lambda(\mathtt a_n[\rho])\right)$ is not distributive.

Let now
\[
\begin{aligned}
&\mathbf v_0 := x\,\mathbf p\,z_{n\rho}^{(1)}yz_{n\rho}^{(2)}z_{n\rho}^{(3)}xz_{n\rho}^{(4)}\,\mathbf q\,z_{n\rho}^{(5)}z_{n\rho}^{(6)}z_{n\rho}^{(7)}z_{n\rho}^{(8)}y\,\mathbf r,\\
&\mathbf v_1 := x\,\mathbf p\,z_{n\rho}^{(1)}yz_{n\rho}^{(2)}xz_{n\rho}^{(3)}xz_{n\rho}^{(4)}\,\mathbf q\,z_{n\rho}^{(5)}z_{n\rho}^{(6)}z_{n\rho}^{(7)}z_{n\rho}^{(8)}y\,\mathbf r,\\
&\mathbf v_2 := x\,\mathbf p\,z_{n\rho}^{(1)}yz_{n\rho}^{(2)}z_{n\rho}^{(3)}xz_{n\rho}^{(4)}\,\mathbf q\,z_{n\rho}^{(5)}z_{n\rho}^{(6)}z_{n\rho}^{(7)}yz_{n\rho}^{(8)}y\,\mathbf r,
\end{aligned}
\]
where
\[
\begin{aligned}
&\mathbf p := \biggl(\prod_{i=1}^{n-1}  z_{i\rho}^{(1)}z_{i\rho}^{(2)}z_{i\rho}^{(3)}z_{i\rho}^{(4)}y_i^2\biggr),\\
&\mathbf q := \biggl(\prod_{i=1}^{n-1}  z_{i\rho}^{(5)}z_{i\rho}^{(6)}z_{i\rho}^{(7)}z_{i\rho}^{(8)}(y_i^\prime)^2\biggr),\\
&\mathbf r := \biggl(\prod_{j=1}^4\biggl(\prod_{i=1}^n  t_i^{(j)}z_i^{(j)}\biggr)\biggr)\biggl(\prod_{j=8}^4\biggl(\prod_{i=1}^n  t_i^{(j)}z_i^{(j)}\biggr)\biggr).
\end{aligned}
\]
Let $\mathtt v_j:=[\mathbf v_j]^\lambda$, $j=0,1,2$.
Since $\mathbf M_\lambda(\mathtt a_n[\rho])$ satisfies the identities~\eqref{xyx=xyxx}, all the words of $\mathtt v_j$ lie in the same $\FIC(\mathbf M_\lambda(\mathtt a_n[\rho]))$-class for $j=0,1,2$.
Moreover, arguments similar to ones from the proof of Proposition~\ref{P: non-dis L(M([hat{a}_{0,k}[pi]]^lambda))} imply that the $\lambda$-classes $\mathtt v_0$, $\mathtt v_1$ and $\mathtt v_2$ are $\FIC(\mathbf M_\lambda(\mathtt a_n[\rho]))$-classes.

Let 
\[
\begin{aligned}
&\mathbf X := \mathbf M_\lambda(\mathtt a_n[\rho])\wedge\var\{\mathbf v_0 \approx \mathbf v_1\},\\ 
&\mathbf Y := \mathbf M_\lambda(\mathtt a_n[\rho])\wedge\var\{\mathbf v_0 \approx \mathbf v_2\}.
\end{aligned}
\]
Using Lemma~\ref{L: stable M_lambda(a_{0,k}[pi])} instead of Lemma~\ref{L: stable M_lambda(hat{a}_{0,k}[pi])}, by arguments similar to ones from the proof of Proposition~\ref{P: non-dis L(M([hat{a}_{0,k}[pi]]^lambda))}, we can show that the set $\mathtt v_0\cup \mathtt v_1$ [respectively, $\mathtt v_0\cup \mathtt v_2$] form a $\FIC(\mathbf X)$-class [respectively, $\FIC(\mathbf Y)$-class].
It follows that the $\lambda$-class $\mathtt v_0$ is stable with respect to $\mathbf X\vee \mathbf Y$.
Put $\mathbf Z: = \mathbf M_\lambda(\mathtt v_0)$.
According to Corollary~\ref{C: M_alpha(W) in V}(ii), $\mathbf Z\subseteq \mathbf X\vee \mathbf Y$ and, therefore, $(\mathbf X\vee \mathbf Y)\wedge \mathbf Z=\mathbf Z$.
It is routine to check that $\mathbf Z$ satisfies $\mathbf v_1\approx \mathbf v_3$ and $\mathbf v_2\approx \mathbf v_3$, where 
\[
\mathbf v_3 := x^2\,\mathbf p\,z_{n\rho}^{(1)}y^2z_{n\rho}^{(2)}z_{n\rho}^{(3)}xz_{n\rho}^{(4)}\,\mathbf q\,z_{n\rho}^{(5)}z_{n\rho}^{(6)}z_{n\rho}^{(7)}z_{n\rho}^{(8)}y\,\mathbf r.
\]
Then the identity $\mathbf v_0\approx \mathbf v_3$ holds in the variety $(\mathbf X\wedge \mathbf Z)\vee(\mathbf Y\wedge \mathbf Z)$.
We see that $\mathtt v_0$ is not stable with respect to this variety, whence
\[
(\mathbf X\wedge \mathbf Z)\vee(\mathbf Y\wedge \mathbf Z)\subset \mathbf Z= (\mathbf X\vee \mathbf Y)\wedge \mathbf Z.
\]
Thus, we have proved that the lattice $\mathfrak L(\mathbf M_\lambda(\mathtt a_n[\rho]))$ is not distributive.
\end{proof}

\begin{proposition}
\label{P: non-dis L(M([a_{0,k}[pi]]^beta))}
The lattice $\mathfrak L\left(\mathbf M_\beta([\mathbf a_{0,k}[\pi]]^\beta)\right)$ is not distributive for any $k\ge2$ and $\pi\in S_k$.
\end{proposition}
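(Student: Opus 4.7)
]^beta))}]
The plan is to imitate the two preceding propositions, which both establish non-distributivity of $\mathfrak L(\mathbf M_\alpha(\cdot))$ (with $\alpha\in\{\lambda\}$) by exhibiting inside the variety an explicit witness $(\mathbf X,\mathbf Y,\mathbf Z)$ to the failure of the distributive law. First, as in the opening paragraph of the proof of Proposition~\ref{P: non-dis L(M([a_{0,k}[pi]]^lambda))}, I would reduce to a convenient choice of permutation: set $n:=k+1$ and let $\rho\in S_n$ be defined by~\eqref{rho= a_{o,n}[rho]}. Writing $\mathtt a_r[\tau]:=[\mathbf a_{0,r}[\tau]]^\beta$, one verifies using Lemma~\ref{L: M_beta(W) in V} and stability properties of $\mathbf M_\beta(\mathtt a_k[\pi])$ analogous to Lemma~\ref{L: stable M_lambda(a_{0,k}[pi])} that $M_\beta(\mathtt a_n[\rho])\in\mathbf M_\beta(\mathtt a_k[\pi])$. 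Hence it is enough to show that $\mathfrak L(\mathbf M_\beta(\mathtt a_n[\rho]))$ is not distributive.

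Next I would prove a $\beta$-analog of Lemma~\ref{L: stable M_lambda(a_{0,k}[pi])}, namely that the $\beta$-classes $[xx^+yty^+]^\beta$, $[xzyx^+ty^+]^\beta$ and $[(\mathbf a_{0,n}[\rho])_x]^\beta$ are stable with respect to $\mathbf M_\beta(\mathtt a_n[\rho])$. The key observation is that since $\beta$ only demands agreement on which pairs of first two occurrences lie inside the same block (rather than being literally adjacent as in $\lambda$), the same decomposition arguments based on Lemma~\ref{L: identities of M(xt_1x...t_kx)} go through, with the minor adjustment that ``letter $y$ is squeezed next to itself'' is replaced by ``no simple letter separates its first two occurrences''. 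These stability facts will play in the $\beta$-setting exactly the role that Lemmas~\ref{L: stable M_lambda(hat{a}_{0,k}[pi])} and~\ref{L: stable M_lambda(a_{0,k}[pi])} play in the $\lambda$-setting.

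With these preparations, I would choose $\beta$-analogs of the words $\mathbf v_0,\mathbf v_1,\mathbf v_2,\mathbf v_3$ used in the proof of Proposition~\ref{P: non-dis L(M([a_{0,k}[pi]]^lambda))}. The words $\mathbf v_0$--$\mathbf v_3$ of that proof in fact already reside in a common $\FIC(\mathbf M_\beta(\mathtt a_n[\rho]))$-class; what needs to change is the interior of the factor around the special letter $z_{n\rho}^{(1)},\dots,z_{n\rho}^{(8)}$, inserting (instead of a duplicated occurrence of $x$ or $y$) an extra occurrence of $x$ or $y$ inside the same block but not adjacent. Concretely, one modifies the $\mathbf v_1,\mathbf v_2$ of that proof so that the second occurrence of $x$ (resp.\ $y$) is moved inside a block of the same simple-letter decomposition. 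Setting
\[
\mathbf X:=\mathbf M_\beta(\mathtt a_n[\rho])\wedge\var\{\mathbf v_0\approx\mathbf v_1\},\quad \mathbf Y:=\mathbf M_\beta(\mathtt a_n[\rho])\wedge\var\{\mathbf v_0\approx\mathbf v_2\},\quad \mathbf Z:=\mathbf M_\beta([\mathbf v_0]^\beta),
\]
one verifies, by the substitution-chasing argument of the earlier proofs, that $[\mathbf v_0]^\beta\cup[\mathbf v_1]^\beta$ is a single $\FIC(\mathbf X)$-class and $[\mathbf v_0]^\beta\cup[\mathbf v_2]^\beta$ is a single $\FIC(\mathbf Y)$-class, so $[\mathbf v_0]^\beta$ is stable with respect to $\mathbf X\vee\mathbf Y$ and thus $\mathbf Z\subseteq\mathbf X\vee\mathbf Y$ by Lemma~\ref{L: M_beta(W) in V}. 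On the other hand, $\mathbf Z$ satisfies both $\mathbf v_1\approx\mathbf v_3$ and $\mathbf v_2\approx\mathbf v_3$ (where $\mathbf v_3$ is the common coarsening obtained by doubling both $x$ and $y$), so $(\mathbf X\wedge\mathbf Z)\vee(\mathbf Y\wedge\mathbf Z)$ satisfies $\mathbf v_0\approx\mathbf v_3$; however $[\mathbf v_0]^\beta$ is a $\FIC(\mathbf Z)$-class not containing $\mathbf v_3$, giving the strict inclusion $(\mathbf X\wedge\mathbf Z)\vee(\mathbf Y\wedge\mathbf Z)\subset(\mathbf X\vee\mathbf Y)\wedge\mathbf Z=\mathbf Z$.

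The main obstacle will be the direct-deduction bookkeeping showing that $[\mathbf v_0]^\beta\cup[\mathbf v_j]^\beta$ is a single fully invariant congruence class in the corresponding relative variety. The argument has to be careful because $\beta$ permits more freedom than $\lambda$: one must rule out substitutions $\phi$ that, via~($\ast$)-type restrictions, could move the second occurrences of $x$ or $y$ out of their designated blocks. The case analysis is essentially the same as Cases~1.1--1.4 and~2.1--2.4 in the proof of Proposition~\ref{P: non-dis L(M([hat{a}_{0,k}[pi]]^lambda))}, but with ``adjacent to'' replaced everywhere by ``inside the same simple-letter block as,'' and with the content-counting inequality $3n$ versus $3n-1$ (respectively $3n+4$ versus $3n+3$) appearing as before to force $\phi$ to act as the identity on the relevant letters. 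I expect this to be the only delicate step; once completed the non-distributivity conclusion falls out exactly as in the $\lambda$-case.
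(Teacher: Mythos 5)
Your high-level strategy — exhibit a triple $(\mathbf X,\mathbf Y,\mathbf Z)$ witnessing non-distributivity inside $\mathbf M_\beta(\mathtt a_n[\rho])$, with $\mathbf Z$ generated by a single $\beta$-class, and use a $\beta$-analog of Lemma~\ref{L: stable M_lambda(a_{0,k}[pi])} (which the paper supplies as Lemma~\ref{L: stable M_beta(a_{0,k}[pi])}) — matches the paper's plan. But the details diverge from the paper in ways that matter, and the proposal is too thin at exactly the places where the $\beta$-case deviates from the $\lambda$-case.

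First, the reduction step is not the one from the $\lambda$-case. The paper sets $n:=k+2$, not $k+1$, and defines a noticeably more elaborate permutation $\rho\in S_n$ (five cases, placing $1\pi+1$, $n$, $\ldots$, $1$, $(n-2)\pi+1$), rather than the short permutation~\eqref{rho= a_{o,n}[rho]} you invoke. This is not cosmetic: the containment $M_\beta(\mathtt a_n[\rho])\in\mathbf M_\beta(\mathtt a_k[\pi])$ is established via an explicit substitution $\psi_1$ that maps $\mathtt a_n[\rho]$ into $\mathtt a_k[\pi]$, and that substitution is tailored to the specific $\rho$; the $\lambda$-case permutation would not leave room for $\psi_1$ to be injective on the required coordinates. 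Second, and more significantly, the witness words in the paper's proof are not $\beta$-modifications of the $\lambda$-case $\mathbf v_0,\dots,\mathbf v_3$. Rather than a two-letter construction in which $\mathbf v_1$ adds an extra $x$ and $\mathbf v_2$ adds an extra $y$, the paper uses a one-letter construction: all of $\mathbf v_0,\mathbf v_1,\mathbf v_2$ have only $x$ as the varying multiple letter, $\mathbf v_0=x\mathbf q x\mathbf r$ has two occurrences of $x$, while $\mathbf v_1$ and $\mathbf v_2$ each have three, differing only in where the interior $x$ sits inside the block (right after $\mathbf q[0;1]$ versus right before $\mathbf q[4n-3;1]$), and $\mathbf v_3=x\chi(\mathbf q)x\mathbf r$. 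This change reshapes the entire direct-deducibility analysis: instead of the two-branch case analysis tracking whether $\phi$ hits ${}_{1\mathbf u}x$ or ${}_{1\mathbf u}y$, the paper's argument runs through a different cascade — $\phi(x)$ cannot involve any $t_i,t_i',z_i,z_i'$, is either empty, a power of some $y_i$, or a power of $x$, and the clinching contradiction comes from an incompatibility of second-occurrence orderings between $z_{2\rho}$ and $z_{1\rho}'$ (and uses that $2\rho=n$ for the paper's choice of $\rho$). The content-counting inequalities $3n$ vs.\ $3n-1$ and $3n+4$ vs.\ $3n+3$ that you cite belong to the proofs of Propositions~\ref{P: non-dis L(M([hat{a}_{0,k}[pi]]^lambda))} and~\ref{P: non-dis L(M([a_{0,k}[pi]]^lambda))} and do not reappear here.

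The upshot: what you flag as the "only delicate step" — the substitution-chasing showing $\mathtt v_0\cup\mathtt v_j$ forms an $\FIC$-class — is precisely where your proposal is a sketch rather than a proof, and where the adaptation you describe (inserting a non-adjacent $x$ or $y$ into the $\lambda$-case words) does not match the paper's construction and is not shown to have the stability and separation properties the argument needs. In particular, it is not verified that your proposed identities $\mathbf v_0\approx\mathbf v_1$ and $\mathbf v_0\approx\mathbf v_2$ behave compatibly under $\beta$ (which only tracks whether the first two occurrences are in the same block), nor that $\mathbf M_\beta(\mathtt v_0)\subseteq\mathbf X\vee\mathbf Y$ holds for your words. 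You would need to either reproduce the paper's single-letter construction (with $n=k+2$ and the five-case $\rho$) and rework the $\phi$-analysis from scratch, or actually carry out the direct-deduction bookkeeping for your $(x,y)$-style words under the weaker invariant afforded by $\beta$.
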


To prove Proposition~\ref{P: non-dis L(M([a_{0,k}[pi]]^lambda))}, we need the following auxiliary result.
Its proof is quite similar to the proof of Lemma~\ref{L: stable M_lambda(hat{a}_{0,k}[pi])} and we omit it.

\begin{lemma}
\label{L: stable M_beta(a_{0,k}[pi])}
If $k,n\ge2$, $\pi\in S_k$ and $\rho\in S_n$, then the $\beta$-classes $xx^+yty^+$, $xzyx^+ty^+$, $xyx^+$, $xx^+y$, $yxx^+ty^+$ and $[(\mathbf a_{0,n}[\rho])_x]^\beta$ are stable with respect to $\mathbf M_\beta([\mathbf a_{0,k}[\pi]]^\beta)$.\qed
\end{lemma}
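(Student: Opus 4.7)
My plan is to mirror the proof of Lemma~\ref{L: stable M_lambda(hat{a}_{0,k}[pi])}, replacing $\lambda$ by $\beta$ throughout. Write $\mathtt a_k[\pi]:=[\mathbf a_{0,k}[\pi]]^\beta$ and $\mathbf V:=\mathbf M_\beta(\mathtt a_k[\pi])$. The key enabling observation is that each $\beta$-class listed in the statement ($xyx^+$, $xx^+y$, $xx^+yty^+$, $xzyx^+ty^+$, $yxx^+ty^+$, and $[(\mathbf a_{0,n}[\rho])_x]^\beta$) coincides as a set of words with the corresponding $\lambda$-class: in each representative every multiple letter either forms a single island or has its two islands separated by a simple letter, so the adjacency condition of $\lambda$ and the same-block condition of $\beta$ impose the same exponent restriction. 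Consequently, $\beta$-stability of these classes with respect to $\mathbf V$ coincides with their $\lambda$-stability, and I can re-use the $\lambda$-machinery.

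I first verify that $xy$ is an isoterm for $\mathbf V$ (via the identity substitution into $M_\beta(\mathtt a_k[\pi])$, which sends $xy$ to a non-zero singleton $\beta$-class), and that $\mathbf V$ satisfies $x^2\approx x^3$ together with~\eqref{xyx=xyxx}. The latter identities reduce to the equalities $[x^2]^\beta=[x^3]^\beta$ and $[xyx]^\beta=[xyx^2]^\beta$ in $\mathfrak X^\ast/\beta$, extended to arbitrary substitutions into $M_\beta(\mathtt a_k[\pi])$ by a short case analysis: any non-letter $\beta$-class $\phi(x)$ has iterates $\phi(x)^2$ and $\phi(x)^3$ that either fall outside $\{\mathtt a_k[\pi]\}^{\le_\beta}$ (so both sides collapse to~$0$) or share the same $\beta$-class thanks to the bounded-island structure of representatives of $\mathtt a_k[\pi]^{\le_\beta}$.

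To establish stability of $xyx^+$, suppose for contradiction that $xyx^+$ is not stable with respect to $\mathbf V$. By Corollary~\ref{C: M_alpha(W) in V}(ii) (applied at $\alpha=\lambda$, where the $\beta$- and $\lambda$-classes coincide), $M_\lambda(xyx^+)\notin\mathbf V$, and Lemma~\ref{L: nsub M(xyx^+)} forces $\mathbf V$ to satisfy~\eqref{xyxx=xxyxx}. Combining with~\eqref{xyx=xyxx} and the trivially valid identity $x^2yx\approx x^2yx^2$ (which holds because $[x^2yx]^\beta=[x^2yx^2]^\beta$ as $\beta$-classes) yields $xyx\approx x^2yx$ in $\mathbf V$. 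However, the substitution $\phi(x):=x$, $\phi(y):=z_{1\pi}y_1^2\cdots z_{k\pi}$ into $M_\beta(\mathtt a_k[\pi])$ sends these two sides to distinct non-zero $\beta$-classes: both $[x\phi(y)x]^\beta$ and $[x^2\phi(y)x]^\beta$ are non-zero as prefixes of $\beta$-representatives of $\mathtt a_k[\pi]$, but their first two $x$'s lie in different blocks in the former and in the same block in the latter. The dual substitution (with $\phi(y)$ a suffix of $\mathbf a_{0,k}[\pi]$) gives stability of $xx^+y$. Stability of $xzyx^+ty^+$ follows the analogue of the $\lambda$-scheme: the just-established stability of $xyx^+$ and $xx^+y$ restricts the right-hand side of any unstable identity to $xzx^+yx^\ast ty^+$, yielding~\eqref{xzyxty=xzxyxty}; iterated application of the latter to $\mathbf a_{0,k}[\pi]$ inserts extra $x$-islands between the $z_{j\pi}$'s, producing a word outside $\mathtt a_k[\pi]$ and contradicting stability of $\mathtt a_k[\pi]$ (Lemma~\ref{L: M_alpha(W) in V}). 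Stability of $xx^+yty^+$ is the symmetric argument, and stabilities of $yxx^+ty^+$ and of $xyzx^+ty^+$ are then deduced via Lemma~\ref{L: L(M(xzyx^+ty^+))} applied to the (coincident) $\lambda$-classes, exactly as in the $\lambda$-proof. Finally, stability of $[(\mathbf a_{0,n}[\rho])_x]^\beta$ is obtained in three steps from stability of $xyx^+$, $xx^+y$, and $xyzx^+ty^+$: the first two confine $\mathbf v$ in any identity to $[(\mathbf a_{0,n}[\rho'])_x]^\beta$ for some $\rho'\in S_n$, and the third pins down $\rho'=\rho$.

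The main obstacle will be the initial verification that $\mathbf V$ satisfies $x^2\approx x^3$ and~\eqref{xyx=xyxx} uniformly over all substitutions into $M_\beta(\mathtt a_k[\pi])$, since $\beta$ is strictly coarser than $\lambda$ and one cannot merely invoke the $\lambda$-version of this fact. The saving feature is that $\mathtt a_k[\pi]^{\le_\beta}$ is a bounded set of $\beta$-classes with uniformly bounded numbers of islands per letter, so any substitution whose image forces more than that many islands collapses to $0$ in the Rees quotient, while simpler substitutions lie in classes where $[x^2]^\beta$ and $[x^3]^\beta$ coincide.
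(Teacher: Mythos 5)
Your proposal is correct and follows essentially the route the paper intends, namely transferring the proof of Lemma~\ref{L: stable M_lambda(hat{a}_{0,k}[pi])} to the congruence $\beta$, using that all six listed classes coincide as word sets with the corresponding $\lambda$-classes so that Corollary~\ref{C: M_alpha(W) in V}, Lemma~\ref{L: nsub M(xyx^+)} and Lemma~\ref{L: L(M(xzyx^+ty^+))} remain applicable. You also correctly spot and repair the one place where a literal transfer would break: since $[\mathbf a_{0,k}[\pi]]^\beta$ absorbs extra powers of $x$ inside its first block, the final contradiction cannot be obtained by applying $xyx\approx x^2yx$ to the generator as in the $\lambda$-case, and your substitution producing proper factors in which the letters $z_{i\pi}$ become simple (equivalently, targeting the $z_j$'s or $y_i$'s) does yield distinct non-zero elements of $M_\beta([\mathbf a_{0,k}[\pi]]^\beta)$, so the argument goes through.
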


\begin{proof}[Proof of Proposition~\ref{P: non-dis L(M([a_{0,k}[pi]]^beta))}]
For any $r\ge2$ and $\tau\in S_r$, let $\mathtt a_r[\tau]:=[\mathbf a_{0,r}[\tau]]^\beta$.
For brevity, put $n:=k+2$.
Define permutation $\rho\in S_n$ as follows:
\[
i\rho := 
\begin{cases} 
1\pi+1 & \text{if $i=1$}, \\
n & \text{if $i=2$}, \\
(i-1)\pi+1 & \text{if $2<i< n-1$},\\
1 & \text{if $i=n-1$},\\
(n-2)\pi+1 & \text{if $i=n$}.
\end{cases} 
\]
Let $\mathbf a\approx \mathbf a^\prime$ be an identity of $\mathbf M_\beta(\mathtt a_k[\pi])$ with $\mathbf a\in \mathtt a_n[\rho]$.
In view of Lemma~\ref{L: stable M_beta(a_{0,k}[pi])}, the following $\beta$-classes are stable with respect to $\mathbf M_\beta(\mathtt a_k[\pi])$: $[(\mathbf a_{0,n}[\rho])_x]^\beta$, $xx^+yty^+$ and $yxx^+ty^+$.
Then $\mathbf a_x^\prime\in [(\mathbf a_{0,n}[\rho])_x]^\beta$, $({_{1\mathbf a^\prime}}x)<({_{1\mathbf a^\prime}}z_{1\rho})$ and $({_{1\mathbf a^\prime}}z_{n\rho})<({_{\ell\mathbf a^\prime}}x)<({_{\ell\mathbf a^\prime}}t_1)$.
Further, since $\psi_1(\mathbf a)\in \mathtt a_k[\pi]$, where $\psi_1\colon \mathfrak X \to \mathfrak X^\ast$ is the substitution given by
\[ 
\psi_1(v) := 
\begin{cases} 
v & \text{if }v=x \text{ or } v=y_1,\\ 
y_{i-1} & \text{if }v=y_i,\ i=3,\dots,n-2,\\
z_{i-1} & \text{if }v=z_i,\ i=2,\dots,n-1,\\
t_{i-1} & \text{if }v=t_i,\ i=2,\dots,n-1,\\
1 & \text{otherwise},
\end{cases} 
\]  
Lemma~\ref{L: M_alpha(W) in V} implies that $\psi_1(\mathbf a^\prime)\in \mathtt a_k[\pi]$.
It follows that $\mathbf a^\prime\in \mathtt a_n[\rho]$.
We see that the $\beta$-class $\mathtt a_n[\rho]$ is stable with respect to $\mathbf M_\beta(\mathtt a_k[\pi])$ and, therefore, $M_\beta(\mathtt a_n[\rho])\in\mathbf M_\beta(\mathtt a_k[\pi])$ by Lemma~\ref{L: M_beta(W) in V}.
Thus, it suffices to verify that the lattice $\mathfrak L\left(\mathbf M_\beta(\mathtt a_n[\rho])\right)$ is not distributive.

Let now
\[
\begin{aligned}
&\mathbf v_0 := x\,\mathbf q\,x\,\mathbf r,\\
&\mathbf v_1 := x\,\mathbf q[0;1]\,x\,\mathbf q[1;4n-3]\,x\,\mathbf r,\\
&\mathbf v_2 := x\,\mathbf q[0;4n-3]\,x\,\mathbf q[4n-3;1]\,x\,\mathbf r,
\end{aligned}
\]
where
\[
\mathbf q := \biggl(\prod_{i=1}^{n-1}  z_{i\rho}z_{i\rho}^{\prime}y_i^2\biggr)z_{n\rho}z_{n\rho}^{\prime}\ \text{ and }\  
\mathbf r := \biggl(\prod_{i=1}^n  t_iz_i\biggr)\biggl(\prod_{i=1}^n t_i^{\prime}z_i^{\prime} \biggr).
\]
Let $\mathtt v_j:=[\mathbf v_j]^\beta$, $j=0,1,2$.
It is easy to see that all the words of $\mathtt v_j$ lie in the same $\FIC(\mathbf M_\beta(\mathtt a_n[\rho]))$-class for $j=0,1,2$.

Let $\mathbf u\approx \mathbf v$ be an identity of $\mathbf M_\beta(\mathtt a_n[\rho])$ with $\mathbf u \in \mathtt v_1$.
It follows from Lemma~\ref{L: stable M_beta(a_{0,k}[pi])} that the $\beta$-class $[\mathbf q\mathbf r]^\beta$ is stable with respect to $\mathbf M_\beta(\mathtt a_n[\rho])$.
Hence $\mathbf v_x\in[\mathbf q\mathbf r]^\beta$.
Since $yxx^+ty^+$ and $xx^+yty^+$ are stable with respect to $\mathbf M_\beta(\mathtt a_n[\rho])$ by Lemma~\ref{L: stable M_beta(a_{0,k}[pi])}, we have $({_{1\mathbf v}}x)<({_{1\mathbf v}}z_{1\rho})$ and $({_{1\mathbf v}}z_{n\rho}^{\prime})<({_{\ell\mathbf v}}x)<({_{1\mathbf v}}t_1)$. 
Further, $\mathbf u(X)\notin \mathtt a_n[\rho]$ and $\psi_2(\mathbf u)\in\mathtt a_n[\rho]$, where $X:=\{x,y_1,z_1,t_1,\dots,y_{n-1},z_{n-1},t_{n-1},z_n,t_n\}$ and $\psi_2\colon \mathfrak X \to \mathfrak X^\ast$ is the substitution given by
\[ 
\psi_2(v) := 
\begin{cases} 
v & \text{if }v=x \text{ or } v=y_i,\ i=1,\dots,n-1,\\ 
z_i & \text{if }v=z_i^\prime,\ i=1,\dots,n,\\
t_i & \text{if }v=t_i^\prime,\ i=1,\dots,n,\\
1 & \text{otherwise}.
\end{cases} 
\]
Then $\mathbf v(X)\notin \mathtt a_n[\rho]$ and $\psi_2(\mathbf v)\in \mathtt a_n[\rho]$ by Lemma~\ref{L: M_alpha(W) in V}. 
Hence $\mathbf v\in\mathtt v_1$ and thus the $\beta$-class $\mathtt v_1$ is a $\FIC(\mathbf M_\beta(\mathtt a_n[\rho]))$-class.
By similar arguments we can show that the $\beta$-classes $\mathtt v_0$ and $\mathtt v_2$ are $\FIC(\mathbf M_\beta(\mathtt a_n[\rho]))$-classes.

Let 
\[
\mathbf X := \mathbf M_\beta(\mathtt a_n[\rho])\wedge\var\{\mathbf v_0 \approx \mathbf v_1\}\ \text{ and } \ 
\mathbf Y := \mathbf M_\beta(\mathtt a_n[\rho])\wedge\var\{\mathbf v_0 \approx \mathbf v_2\}.
\]
It is routine to check that one can take a sufficiently large integer $r$, say $r>2n$, such that   the identities $\mathbf v_0\approx\mathbf v_1\approx \mathbf v_2$ hold in $\mathbf M_\beta(\mathtt a_r[\theta])$ for any $\theta\in S_r$.
Arguments similar to ones from the first paragraph of this proof imply that there is $\theta^\prime\in S_r$ such that $M_\beta(\mathtt a_r[\theta^\prime])\in \mathbf M_\beta(\mathtt a_n[\rho])$.
Hence $M_\beta(\mathtt a_r[\theta^\prime])\in\mathbf X\wedge\mathbf Y$.

Consider an arbitrary identity $\mathbf u \approx \mathbf u^\prime$ of $\mathbf X$ with $\mathbf u\in \mathtt v_0\cup \mathtt v_1$.
We are going to show that $\mathbf u^\prime \in \mathtt v_0\cup \mathtt v_1$.
Since $M_\beta(\mathtt a_r[\theta^\prime])\in\mathbf X$, Lemma~\ref{L: stable M_beta(a_{0,k}[pi])} implies that $\mathbf u^\prime_x\in [\mathbf q \mathbf r]^\beta$.
In view of Proposition~\ref{P: deduction}, we may assume without loss of generality that  either $\mathbf u \approx \mathbf u^\prime$ holds in $\mathbf M_\beta(\mathtt a_n[\rho])$ or $\mathbf u \approx \mathbf u^\prime$ is directly deducible from $\mathbf v_0 \approx \mathbf v_1$. 
In view of the above, $\mathtt v_0\cup \mathtt v_1$ is a union of two $\FIC(\mathbf M_\beta(\mathtt a_n[\rho]))$-classes.
Therefore, it remains to consider the case when $\mathbf u \approx \mathbf u^\prime$ is directly deducible from $\mathbf v_0 \approx \mathbf v_1$, i.e., there exist some words $\mathbf a,\mathbf b \in \mathfrak X^\ast$ and substitution $\phi\colon \mathfrak X \to \mathfrak X^\ast$ such that $(\mathbf u,\mathbf u^\prime ) = (\mathbf a\phi(\mathbf v_s)\mathbf b,\mathbf a\phi(\mathbf v_t)\mathbf b)$, where $\{s,t\}=\{0,1\}$.

First, notice that if $a$ and $b$ are distinct letters, then $ab$ occurs in $\mathbf u$ as a factor at most once.
It follows that 
\begin{itemize}
\item[\textup{($\ast$)}] $\phi(v)$ is either empty or a power of letter for any $v\in \mul(\mathbf v_s)=\mul(\mathbf v_t)$.
\end{itemize}
If $\phi(x)=1$, then $\phi(\mathbf v_s)=\phi(\mathbf v_t)$ and so $\mathbf u=\mathbf u^\prime$, whence $\mathbf u^\prime \in \mathtt v_0\cup \mathtt v_1$.
It remains to consider the case when $\phi(x)\ne1$.
Clearly, the letters $t_i, t_i^\prime$ do not occur in $\phi(x)$ because these letters are simple in $\mathbf u$, while $x\in\mul(\mathbf v_s)=\mul(\mathbf v_t)$.
If $\phi(x)$ is a power of a letter in $\{z_i,z_i^\prime\mid 1\le i\le n\}$, then all the factors of the form $\phi(x)$ in $\mathbf u$ must be preceded by the letter $t_1$.
In this case, the application of $\mathbf v_s\approx \mathbf v_t$ to $\mathbf u$ does not change the factor of $\mathbf u$ preceding $t_1$, whence $\mathbf u^\prime\in \mathtt v_0\cup \mathtt v_1$.
If $\phi(x)\in y_i^+$ for some $1\le i\le n-1$, then $\mathbf u^\prime \in \mathtt v_0\cup \mathtt v_1$ because $y_i$ form exactly one island in $\mathbf u$. 
Therefore, we may further assume that $\phi(x)\in x^+$.
If the image of ${_{1\mathbf v_s}}x$ under $\phi$ is preceded by ${_{1\mathbf u}}z_{n\rho}^\prime$ in $\mathbf u$, then the application of $\mathbf v_s\approx \mathbf v_t$ to $\mathbf u$ cannot change the factor of $\mathbf u$ preceding ${_{1\mathbf u}}z_{n\rho}^\prime$ in $\mathbf u$ and thus $\mathbf u^\prime\in \mathtt v_0\cup \mathtt v_1$.
Therefore, the image of ${_{1\mathbf v_s}}x$ under $\phi$ precedes ${_{1\mathbf u}}z_{n\rho}^\prime$ in $\mathbf u$.
This is only possible when $\phi({_{1\mathbf v_s}}x)$ either precedes ${_{1\mathbf u}}z_{1\rho}$ or lies between ${_{1\mathbf u}}z_{1\rho}$ and ${_{1\mathbf u}}z_{1\rho}^\prime$ in $\mathbf u$.

If $\phi({_{1\mathbf v_s}}x)$ precedes ${_{1\mathbf u}}z_{1\rho}$ in $\mathbf u$, then $\mathbf u^\prime \in \mathtt v_0\cup \mathtt v_1$ because the image of the letter $z_{1\rho}$ under $\phi$ is either empty or a power of letter by~($\ast$).
Suppose now that $\phi({_{1\mathbf v_s}}x)$ lies between ${_{1\mathbf u}}z_{1\rho}$ and ${_{1\mathbf u}}z_{1\rho}^\prime$ in $\mathbf u$.
In this case, $\mathbf u\in\mathtt v_1$.
If $(s,t)=(1,0)$, then $\phi(z_{1\rho})\in x^\ast$, whence $\mathbf u^\prime \in \mathtt v_0\cup \mathtt v_1$.
So, it remains to consider the case when $(s,t)=(0,1)$.
Clearly, if $\phi(z_{1\rho})\in x^\ast$, then $\mathbf u^\prime \in \mathtt v_0\cup \mathtt v_1$.
Let now $\phi(z_{1\rho})\notin x^\ast$.
Then $\phi(z_{1\rho})=z_{1\rho}^\prime$ by~($\ast$).
It follows that $z_{2\rho}\in\{\phi(z_{1\rho}^\prime),\phi(z_{2\rho})\}$.
However, this is impossible because $z_{2\rho}=z_n$ and so
\[
({_{2\mathbf v_s}}z_{1\rho})<({_{\ell\mathbf v_s}}z_{1\rho})<({_{2\mathbf v_s}}z_{2\rho})<({_{\ell\mathbf v_s}}z_{2\rho})<({_{2\mathbf v_s}}z_{1\rho}^\prime)<({_{\ell\mathbf v_s}}z_{1\rho}^\prime)
\]
but 
\[
({_{2\mathbf u}}z_{2\rho})<({_{\ell\mathbf u}}z_{2\rho})<({_{2\mathbf u}}z_{1\rho}^\prime)<({_{\ell\mathbf u}}z_{1\rho}^\prime).
\]
Therefore, the case when $(s,t)=(0,1)$ is impossible and we have proved that $\mathbf u^\prime \in \mathtt v_0\cup \mathtt v_1$ in any case.

We see that the set $\mathtt v_0\cup \mathtt v_1$ forms a $\FIC(\mathbf X)$-class.
By similar arguments we can show that the set $\mathtt v_0\cup \mathtt v_2$ forms a $\FIC(\mathbf Y)$-class.
It follows that the $\beta$-class $\mathtt v_0$ is stable with respect to $\mathbf X\vee \mathbf Y$.
Put $\mathbf Z: = \mathbf M_\beta(\mathtt v_0)$.
According to Lemma~\ref{L: M_beta(W) in V}, $\mathbf Z\subseteq \mathbf X\vee \mathbf Y$ and, therefore, $(\mathbf X\vee \mathbf Y)\wedge \mathbf Z=\mathbf Z$.
It is routine to check that $\mathbf Z$ satisfies $\mathbf v_1\approx \mathbf v_3$ and $\mathbf v_2\approx \mathbf v_3$, where $\mathbf v_3:= x\,\chi(\mathbf q)\,x\,\mathbf r$.
Then the identity $\mathbf v_0\approx \mathbf v_3$ holds in the variety $(\mathbf X\wedge \mathbf Z)\vee(\mathbf Y\wedge \mathbf Z)$.
We see that $\mathtt v_0$ is not stable with respect to this variety, whence
\[
(\mathbf X\wedge \mathbf Z)\vee(\mathbf Y\wedge \mathbf Z)\subset \mathbf Z= (\mathbf X\vee \mathbf Y)\wedge \mathbf Z.
\]
Thus, we have proved that the lattice $\mathfrak L(\mathbf M_\beta(\mathtt a_n[\rho]))$ is not distributive.
\end{proof}

\begin{proposition}
\label{P: non-dis L(M_{gamma'}(a_{n,m}[pi]))}
The lattice $\mathfrak L\left(M_{\gamma^{\prime}}([\mathbf a_{k,k^\prime}^{(p)}[\pi]]^{\gamma^{\prime}})\right)$ is not distributive for any $(k,k^\prime)\in\hat{\mathbb N}_2^0$, $\pi\in S_{k,k^\prime}$ and $p=1,2,3,4$.
\end{proposition}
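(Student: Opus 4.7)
The plan is to mirror the template used for Propositions~\ref{P: non-dis L(M([hat{a}_{0,k}[pi]]^lambda))}, \ref{P: non-dis L(M([a_{0,k}[pi]]^lambda))}, and~\ref{P: non-dis L(M([a_{0,k}[pi]]^beta))}: first replace the given parameters $(k,k')$ and $\pi$ by larger ones giving enough room for the construction, then exhibit a diamond configuration $\mathbf X,\mathbf Y,\mathbf Z$ of subvarieties witnessing non-distributivity. Fix $p \in \{1,2,3,4\}$ and write $\mathtt a_{n,n'}^{(p)}[\rho] := [\mathbf a^{(p)}_{n,n'}[\rho]]^{\gamma'}$. The reduction step is to choose $(n,n') \in \hat{\mathbb N}_0^2$ with $n \ge k$, $n' \ge k'$, and $\rho \in S_{n,n'}$ extending $\pi$ so that $M_{\gamma'}(\mathtt a_{n,n'}^{(p)}[\rho])$ lies in $\mathbf M_{\gamma'}(\mathtt a_{k,k'}^{(p)}[\pi])$; this requires verifying, via Corollary~\ref{C: M_alpha(W) in V}(iv), that the $\gamma'$-class $\mathtt a_{n,n'}^{(p)}[\rho]$ is stable with respect to $\mathbf M_{\gamma'}(\mathtt a_{k,k'}^{(p)}[\pi])$. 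This stability will be proved by a $\gamma'$-analogue of Lemma~\ref{L: stable M_lambda(hat{a}_{0,k}[pi])}: first one shows that the $\gamma'$-classes $xyx^+$, $xx^+y$, $yxx^+ty$, $xx^+yty$ and $[(\mathbf a^{(p)}_{n,n'}[\rho])_x]^{\gamma'}$ are all stable, using the lemmas of Section~\ref{Sec: separating identities} combined with the observation that if one of them fails to be stable then $\mathbf M_{\gamma'}(\mathtt a_{k,k'}^{(p)}[\pi])$ would satisfy an identity forbidden by the $\gamma'$-class structure of $\mathbf a^{(p)}_{k,k'}[\pi]$.

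Once the reduction is in place, one constructs three words $\mathbf v_0, \mathbf v_1, \mathbf v_2$ by inflating $\mathbf a^{(p)}_{n,n'}[\rho]$ via extra simple letters clustered around two ``swap points'' (for example, inserting auxiliary blocks $z_j^{(i)}t_j^{(i)}$ as in the proofs of the preceding propositions, modified so that the inflation respects the $\gamma'$-congruence rather than the $\lambda$-congruence). The three words are designed so that all three define the same $\FIC(\mathbf M_{\gamma'}(\mathtt a_{n,n'}^{(p)}[\rho]))$-class, yet $\mathbf v_0 \approx \mathbf v_1$, $\mathbf v_0 \approx \mathbf v_2$, and $\mathbf v_0 \approx \mathbf v_3$ (where $\mathbf v_3$ is a common inflation obtained by applying both modifications simultaneously) are pairwise independent. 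Set
\[
\mathbf X := \mathbf M_{\gamma'}(\mathtt a_{n,n'}^{(p)}[\rho]) \wedge \var\{\mathbf v_0 \approx \mathbf v_1\},\ \mathbf Y := \mathbf M_{\gamma'}(\mathtt a_{n,n'}^{(p)}[\rho]) \wedge \var\{\mathbf v_0 \approx \mathbf v_2\},\ \mathbf Z := \mathbf M_{\gamma'}([\mathbf v_0]^{\gamma'}).
\]
To verify that $\mathbf Z \subseteq \mathbf X \vee \mathbf Y$, it suffices by Corollary~\ref{C: M_alpha(W) in V}(iv) to show that the $\gamma'$-class $[\mathbf v_0]^{\gamma'}$ is stable with respect to $\mathbf X \vee \mathbf Y$; this in turn follows once we prove that $[\mathbf v_0]^{\gamma'} \cup [\mathbf v_1]^{\gamma'}$ is a $\FIC(\mathbf X)$-class and $[\mathbf v_0]^{\gamma'} \cup [\mathbf v_2]^{\gamma'}$ is a $\FIC(\mathbf Y)$-class.

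The main technical obstacle is precisely this $\FIC$-class verification. By Proposition~\ref{P: deduction}, it reduces to analyzing identities that are directly deducible from $\mathbf v_0 \approx \mathbf v_s$ (via some substitution $\phi$ and context $(\mathbf a,\mathbf b)$) applied to a word $\mathbf u$ in $[\mathbf v_0]^{\gamma'} \cup [\mathbf v_s]^{\gamma'}$. The analysis follows the scheme in the preceding propositions: one first shows that for any multiple letter $v$, the image $\phi(v)$ is either empty, a power of a letter, or has content exactly $\{x,y\}$ (since each two-letter factor occurs at most once in $\mathbf u$ outside a controlled region), and then one traces the image of $x$ (respectively $y$) and of the distinguished letters near the swap point. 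The extra difficulty compared with the $\lambda$ case is that $\gamma'$ permits changing exponents of letters, so a substitution can replace $x$ by some $x^q$ without leaving the class; one must therefore also keep track of second and last occurrences of letters and argue, via the stability properties established in the reduction step, that any nontrivial deduction still preserves $[\mathbf v_0]^{\gamma'} \cup [\mathbf v_s]^{\gamma'}$. Once both $\FIC$-class claims are verified, the conclusion is automatic: $\mathbf Z$ satisfies $\mathbf v_1 \approx \mathbf v_3$ and $\mathbf v_2 \approx \mathbf v_3$ by construction, so $(\mathbf X \wedge \mathbf Z) \vee (\mathbf Y \wedge \mathbf Z)$ satisfies $\mathbf v_0 \approx \mathbf v_3$, which is not satisfied by $\mathbf Z = (\mathbf X \vee \mathbf Y) \wedge \mathbf Z$, proving non-distributivity.
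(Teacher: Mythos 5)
Your proposal follows the paper's overall template quite closely: reduce the parameters by a fixed amount (the paper takes $n := k+2$ and defines a specific $\rho \in S_{n,n}$), verify that $M_{\gamma'}([\mathbf a^{(p)}_{n,n}[\rho]]^{\gamma'})$ lies in the original variety by a stability argument via Corollary~\ref{C: M_alpha(W) in V}(iv), and then set up the diamond $\mathbf X, \mathbf Y, \mathbf Z$ exactly as you describe, with $\mathbf Z := \mathbf M_{\gamma'}([\mathbf v_0]^{\gamma'})$. The paper additionally first restricts to $p=1$ and to one of four combinatorial cases for $\pi$, remarking the others are analogous; your proposal silently works in full generality, which is fine for a sketch.

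The genuine gap is in the precise formulation of the $\FIC$-class claims. You propose to prove that $[\mathbf v_0]^{\gamma'} \cup [\mathbf v_1]^{\gamma'}$ is a $\FIC(\mathbf X)$-class (and similarly for $\mathbf Y$). This is false: because the generating monoid $M_{\gamma'}([\mathbf a^{(1)}_{n,n}[\rho]]^{\gamma'})$ satisfies identities such as~\eqref{xyxx=xxyx}, the extra interior occurrence of $x$ in $\mathbf v_1$ can slide across one of the adjacent auxiliary letters of $\mathbf q$, and the resulting word lies in a different $\gamma'$-class (the positions of the $x$-islands change). So a union of just two $\gamma'$-classes is not closed under deduction modulo $\mathbf X$. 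The paper's proof handles this by working with explicitly larger sets $\mathtt v_1, \mathtt v_2$ that are unions of several $\gamma'$-classes, parametrized by allowed exponent patterns $(k_1,k_2,k_3,k_4)$ satisfying constraints such as $k_1+k_2, k_2+k_3, k_4 \ge 1$; these larger sets are shown to be unions of $\FIC$-classes (not single classes), and the conclusion that $[\mathbf v_0]^{\gamma'}$ is stable with respect to $\mathbf X \vee \mathbf Y$ follows because $\mathtt v_1 \cap \mathtt v_2 = \varnothing$ while $\mathtt v_0 \subseteq (\mathtt v_0 \cup \mathtt v_1) \cap (\mathtt v_0 \cup \mathtt v_2)$. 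Your remark about the extra difficulty caused by $\gamma'$ allowing exponent changes correctly senses trouble in this area, but the fix is not to ``track second and last occurrences more carefully'' within the two $\gamma'$-classes --- it is to enlarge the candidate sets beyond $\gamma'$-classes before attempting the $\FIC$-closure argument. Without this enlargement the deduction analysis does not close.
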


\begin{proof}
There are four possibilities:
\begin{itemize}
\item $k=k^\prime$ and $1\le 1\pi\le k$;
\item $k=k^\prime$ and $k+1\le 1\pi\le 2k$;
\item $k=k^\prime+1$ and so $1\le 1\pi\le k$;
\item $k=k^\prime-1$ and so $k+1\le 1\pi\le k+k^\prime$.
\end{itemize}
We will consider only the first possibility because the other ones are considered quite analogous.
In this case, $1\le i\pi\le k$ and $k+1\le (i+1)\pi\le 2k$ for any $i=1,3,\dots,2k-1$.
We also consider only the case when $p=1$; the case when $p\in\{2,3,4\}$ is very similar.

First, it is easy to see that
\begin{itemize}
\item[\textup{($\star$)}] the $\gamma^\prime$-classes $\{xzxyty\}$, $xx^+yty$ and $yxx^+ty$ are stable with respect to the variety $\mathbf M_{\gamma^\prime}([\mathbf a_{q,q}^{(1)}[\theta]]^{\gamma^\prime})$ for any $q\in\mathbb N$ and $\theta\in S_{q,q}$.
\end{itemize} 
For brevity, put $n:=k+2$.
Define permutation $\rho\in S_{n,n}$ as follows:
\[
i\rho  := 
\begin{cases} 
1\pi +1 & \text{if $i=1$}, \\
2\pi +3 & \text{if $i=2$}, \\
k+2 & \text{if $i=3$}, \\
2k+4 & \text{if $i=4$}, \\
(i-2)\pi+1 & \text{if $i=5,7,\dots,2k-1$},\\
(i-2)\pi+3 & \text{if $i=6,8,\dots,2k$},\\
1 & \text{if $i=2k+1$},\\
k+3 & \text{if $i=2k+2$},\\
(2k-1)\pi+1 & \text{if $i=2k+3$},\\
(2k)\pi+3 & \text{if $i=2k+4$}.
\end{cases} 
\]
Let $\mathbf a_{n,n}^{(1)}[\rho]\approx \mathbf a$ be an identity of $\mathbf M_{\gamma^\prime}([\mathbf a_{k,k}^{(1)}[\pi]]^{\gamma^\prime})$.
It follows from~($\star$) that $\mathbf a_x=(\mathbf a_{n,n}^{(1)}[\rho])_x$. 
Since $(\mathbf a_{n,n}^{(1)}[\rho])_X$ coincides (up to renaming of letters) with $\mathbf a_{k,k}^{(1)}[\pi]$ for 
\[
X:=\{z_i,t_i\mid i\in\{1,n,n+1,2n\}\}\cup \{y_j,z_j^\prime\mid j\in\{n+1,2n\}\},
\] 
Lemma~\ref{L: M_alpha(W) in V} implies that $\mathbf a_X\in[(\mathbf a_{n,n}^{(1)}[\rho])_X]^{\gamma^\prime}$.
We notice also that $({_{1\mathbf a}t_{n+2}}) < ({_{1\mathbf a}x})$ and $({_{\ell\mathbf a}x}) < ({_{1\mathbf a}t_{n+3}})$ by~($\star$).
It follows that $\mathbf a \in [\mathbf a_{n,n}^{(1)}[\rho]]^{\gamma^\prime}$.
We see that the $\gamma^\prime$-class $[\mathbf a_{n,n}^{(1)}[\rho]]^{\gamma^\prime}$ is stable with respect to $\mathbf M_{\gamma^\prime}([\mathbf a_{k,k}^{(1)}[\pi]]^{\gamma^\prime})$.
Then $M_{\gamma^\prime}([\mathbf a_{n,n}^{(1)}[\rho]]^{\gamma^\prime})\in\mathbf M_{\gamma^\prime}([\mathbf a_{k,k}^{(1)}[\pi]]^{\gamma^\prime})$ by Corollary~\ref{C: M_alpha(W) in V}(iv).
Thus, it suffices to verify that the lattice $\mathfrak L\left(\mathbf M_{\gamma^\prime}([\mathbf a_{n,n}^{(1)}[\rho]]^{\gamma^\prime})\right)$ is not distributive.

Let now
\[
\begin{aligned}
&\mathbf v_0 := \mathbf p\,x\,\mathbf q\,x\,\mathbf r,\\
&\mathbf v_1 := \mathbf p\,x\,\mathbf q[0;1]\,x\,\mathbf q[1;11n-1]\,x\,\mathbf r,\\
&\mathbf v_2 := \mathbf p\,x\,\mathbf q[0;11n-1]\,x\,\mathbf q[11n-1;1]\,x\,\mathbf r,
\end{aligned}
\]
where
\[
\begin{aligned}
&\mathbf p := \biggl(\prod_{i=1}^n z_i^{(1)}t_i^{(1)}\biggr)\biggl(\prod_{i=1}^n z_i^{(2)}t_i^{(2)}\biggr)\biggl(\prod_{i=1}^n z_i^{(3)}t_i^{(3)}\biggr)\biggl(\prod_{i=1}^n z_i^{(4)}t_i^{(4)}\biggr),\\
&\mathbf q := \biggl(\prod_{i=1}^n  z_{(2i-1)\rho}^{(1)}z_{(2i)\rho}^{(1)}z_{(2i-1)\rho}^{(2)}z_{(2i)\rho}^{(2)}z_{(2i-1)\rho}^{(3)}z_{(2i)\rho}^{(3)}y_{(2i)\rho}^2z_{(2i)\rho}^{(4)}z_{(2i-1)\rho}^{(4)}z_{(2i)\rho}^{(5)}\biggr),\\
&\mathbf r := \biggl(\prod_{i=n+1}^{2n} t_i^{(1)}z_i^{(1)}\biggr)\biggl(\prod_{i=n+1}^{2n} t_i^{(3)}z_i^{(3)}t_i^{(4)}z_i^{(4)}\biggr)\biggl(\prod_{i=n+1}^{2n} t_i^{(2)}z_i^{(2)}t_i^{(5)}z_i^{(5)}\biggr).
\end{aligned}
\]
Further, let $\mathtt v_0:=[\mathbf v_0]^{\gamma^\prime}$, 
\[
\begin{aligned}
&\mathtt v_1:=\{\mathbf p\,x^{k_1}\,\mathbf q[0;1]\,x^{k_2}\,\mathbf q[1;1]\,x^{k_3}\,\mathbf q[2;11n-2]\,x^{k_4}\,\mathbf r\mid k_1+k_2,k_2+k_3,k_4\ge1\},\\
&\mathtt v_2:=\{\mathbf p\,x^{k_4}\,\mathbf q[0;11n-2]\,x^{k_3}\,\mathbf q[11n-2;1]\,x^{k_2}\,\mathbf q[11n-1;1]\,x^{k_1}\,\mathbf r\mid k_1,k_2+k_3,k_4\ge1\}.
\end{aligned}
\] 
Let $\mathbf u\approx \mathbf v$ be an identity of $\mathbf M_{\gamma^\prime}([\mathbf a_{n,n}^{(1)}[\rho]]^{\gamma^\prime})$ with $\mathbf u \in \mathtt v_1$.
It follows from~($\star$) that $\mathbf v_x\in[\mathbf p\mathbf q\mathbf r]^{\gamma^\prime}$ and $({_{1\mathbf v}}z_{(2n)\rho}^{(5)})<({_{\ell\mathbf v}}x)<({_{1\mathbf v}}t_{n+1}^{(1)})$. 
Further, $\psi_1(\mathbf u)\in [\mathbf a_{n,n}^{(1)}[\rho]]^{\gamma^\prime}$, where $\psi_1\colon \mathfrak X \to \mathfrak X^\ast$ is the substitution given by
\[ 
\psi_1(v) := 
\begin{cases} 
v & \text{if }v=x \text{ or } v=y_i,\ i=n+1,\dots,2n,\\ 
z_i & \text{if }v=z_i^{(2)},\ i=1,\dots,2n,\\
t_i & \text{if }v=t_i^{(2)},\ i=1,\dots,2n,\\
z_i^\prime & \text{if }v=z_i^{(5)},\ i=n+1\dots,2n,\\
t_i^\prime & \text{if }v=t_i^{(5)},\ i=n+1,\dots,2n,\\
1 & \text{otherwise}.
\end{cases} 
\]
This fact and Lemma~\ref{L: M_alpha(W) in V} imply that $\psi_1(\mathbf v)\in [\mathbf a_{n,n}^{(1)}[\rho]]^{\gamma^\prime}$.
Hence
\[
\mathbf v=\mathbf p\,x^{r_1}\,\mathbf q[0;1]\,x^{r_2}\,\mathbf q[1;1]\,x^{r_3}\,\mathbf q[2;11n-2]\,x^{r_4}\,\mathbf r,
\]
where $r_1+r_2+r_3,r_4\ge1$.
In view of~($\star$), $({_{1\mathbf v}}x)<({_{1\mathbf v}}z_{2\rho}^{(1)})$, whence $r_1+r_2\ge1$.
Finally, $\psi_2(\mathbf u)\notin[\mathbf a_{n,n}^{(1)}[\rho]]^{\gamma^\prime}$, where $\psi_2\colon \mathfrak X \to \mathfrak X^\ast$ is the substitution given by
\[
\psi_2(v) := 
\begin{cases} 
v & \text{if }v=x \text{ or } v=y_i,\ i=n+1,\dots,2n,\\ 
z_i & \text{if }v=z_i^{(1)},\ i=1,\dots,n \text{ or }v=z_i^{(2)},\ i=n+1,\dots,2n,\\
t_i & \text{if }v=t_i^{(1)},\ i=1,\dots,n \text{ or }v=t_i^{(2)},\ i=n+1,\dots,2n,\\
z_i^\prime & \text{if }v=z_i^{(5)} \text{ or },\ i=n+1\dots,2n,\\
t_i^\prime & \text{if }v=t_i^{(5)},\ i=n+1,\dots,2n,\\
1 & \text{otherwise}.
\end{cases}
\]
Then $\psi_2(\mathbf v)\notin[\mathbf a_{n,n}^{(1)}[\rho]]^{\gamma^\prime}$ by Lemma~\ref{L: M_alpha(W) in V}. 
Hence $r_2+r_3\ge1$ and, therefore, $\mathbf v\in\mathtt v_1$.
We see that the set $\mathtt v_1$ is a union of $\FIC(\mathbf M_{\gamma^\prime}([\mathbf a_{n,n}^{(1)}[\rho]]^{\gamma^\prime}))$-classes.
By similar arguments we can show that the sets $\mathtt v_0$ and $\mathtt v_2$ are union of $\FIC(\mathbf M_{\gamma^\prime}([\mathbf a_{n,n}^{(1)}[\rho]]^{\gamma^\prime}))$-classes.

Let 
\[
\mathbf X := \mathbf M_{\gamma^\prime}([\mathbf a_{n,n}^{(1)}[\rho]]^{\gamma^\prime})\wedge\var\{\mathbf v_0 \approx \mathbf v_1\}\ \text{ and } \ 
\mathbf Y := \mathbf M_{\gamma^\prime}([\mathbf a_{n,n}^{(1)}[\rho]]^{\gamma^\prime})\wedge\var\{\mathbf v_0 \approx \mathbf v_2\}.
\]
It is routine to check that one can take a sufficiently large integer $r$, say $r>9n$, such that   the identities $\mathbf v_0\approx\mathbf v_1\approx \mathbf v_2$ hold in $\mathbf M_{\gamma^\prime}([\mathbf a_{r,r}^{(1)}[\theta]]^{\gamma^\prime})$ for any $\theta\in S_{r,r}$.
Arguments similar to ones from the first paragraph of this proof imply that there is $\theta^\prime\in S_{r,r}$ such that $M_{\gamma^\prime}([\mathbf a_{r,r}^{(1)}[\theta]]^{\gamma^\prime})\in \mathbf M_{\gamma^\prime}([\mathbf a_{n,n}^{(1)}[\rho]]^{\gamma^\prime})$.
Hence $M_{\gamma^\prime}([\mathbf a_{r,r}^{(1)}[\theta]]^{\gamma^\prime})\in\mathbf X\wedge\mathbf Y$.

Consider an arbitrary identity $\mathbf u \approx \mathbf u^\prime$ of $\mathbf X$ with $\mathbf u\in \mathtt v_0\cup \mathtt v_1$.
We are going to show that $\mathbf u^\prime \in \mathtt v_0\cup \mathtt v_1$.
Since $M_{\gamma^\prime}([\mathbf a_{r,r}^{(1)}[\theta]]^{\gamma^\prime})\in\mathbf X$, it follows from~($\star$) that $\mathbf u^\prime_x\in [\mathbf p\mathbf q \mathbf r]^{\gamma^\prime}$.
In view of Proposition~\ref{P: deduction}, we may assume without loss of generality that  either $\mathbf u \approx \mathbf u^\prime$ holds in $\mathbf M_{\gamma^\prime}([\mathbf a_{n,n}^{(1)}[\rho]]^{\gamma^\prime})$ or $\mathbf u \approx \mathbf u^\prime$ is directly deducible from $\mathbf v_0 \approx \mathbf v_1$. 
In view of the above, $\mathtt v_0\cup \mathtt v_1$ is a union of $\FIC(\mathbf M_{\gamma^\prime}([\mathbf a_{n,n}^{(1)}[\rho]]^{\gamma^\prime}))$-classes.
Therefore, it remains to consider the case when $\mathbf u \approx \mathbf u^\prime$ is directly deducible from $\mathbf v_0 \approx \mathbf v_1$, i.e., there exist some words $\mathbf a,\mathbf b \in \mathfrak X^\ast$ and substitution $\phi\colon \mathfrak X \to \mathfrak X^\ast$ such that $(\mathbf u,\mathbf u^\prime ) = (\mathbf a\phi(\mathbf v_s)\mathbf b,\mathbf a\phi(\mathbf v_t)\mathbf b)$, where $\{s,t\}=\{0,1\}$.

First, notice that if $a$ and $b$ are distinct letters, then $ab$ occurs in $\mathbf u$ as a factor at most once.
It follows that 
\begin{itemize}
\item[\textup{($\ast$)}] $\phi(v)$ is either empty or a power of letter for any $v\in \mul(\mathbf v_s)=\mul(\mathbf v_t)$.
\end{itemize}
If $\phi(x)=1$, then $\phi(\mathbf v_s)=\phi(\mathbf v_t)$ and so $\mathbf u=\mathbf u^\prime$, whence $\mathbf u^\prime \in \mathtt v_0\cup \mathtt v_1$.
It remains to consider the case when $\phi(x)\ne1$.
Clearly, the letters of the form $t_i^{(j)}$ do not occur in $\phi(x)$ because these letters are simple in $\mathbf u$, while $x\in\mul(\mathbf v_s)=\mul(\mathbf v_t)$.
Further, the letters in 
\[
\{z_i^{(1)}, z_i^{(2)},z_i^{(3)},z_i^{(4)},z_j^{(5)}\mid 1\le i\le 2n,\, n+1\le j\le 2n\}
\] also do not occur in $\phi(x)$ because the first and the second occurrences of these letters lie in different blocks in $\mathbf u$, while all the occurrences of $x$ lie in the same block in $\mathbf v_s$.
If $\phi(x)\in y_i^+$ for some $n+1\le i\le 2n$, then $\mathbf u^\prime \in \mathtt v_0\cup \mathtt v_1$ because $y_i$ forms exactly one island in $\mathbf u$. 
Therefore, we may further assume that $\phi(x)\in x^+$.
If the image of ${_{1\mathbf v_s}}x$ under $\phi$ is preceded by ${_{1\mathbf u}}z_{(2n)\rho}^{(5)}$ in $\mathbf u$, then the application of $\mathbf v_s\approx \mathbf v_t$ to $\mathbf u$ cannot change the factor of $\mathbf u$ preceding ${_{1\mathbf u}}z_{(2n)\rho}^{(5)}$ in $\mathbf u$ and thus $\mathbf u^\prime\in \mathtt v_0\cup \mathtt v_1$.
Therefore, the image of ${_{1\mathbf v_s}}x$ under $\phi$ precedes ${_{1\mathbf u}}z_{(2n)\rho}^{(5)}$ in $\mathbf u$.
This is only possible when one of the following holds:
\begin{itemize}
\item[\textup{(i)}] $\phi({_{1\mathbf v_s}}x)$ precedes ${_{2\mathbf u}}z_{1\rho}^{(1)}$ in $\mathbf u$;
\item[\textup{(ii)}] $\phi({_{1\mathbf v_s}}x)$ lies between ${_{2\mathbf u}}z_{1\rho}^{(1)}$ and ${_{1\mathbf u}}z_{2\rho}^{(1)}$ in $\mathbf u$;
\item[\textup{(iii)}] $\phi({_{1\mathbf v_s}}x)$ lies between ${_{1\mathbf u}}z_{2\rho}^{(1)}$ and ${_{2\mathbf u}}z_{1\rho}^{(2)}$ in $\mathbf u$.
\end{itemize}

If~(i) holds, then $\mathbf u^\prime \in \mathtt v_0\cup \mathtt v_1$ because the image of the letter $z_{1\rho}^{(1)}$ under $\phi$ is either empty or a power of letter by~($\ast$).
If~(ii) holds, then $\phi(z_{1\rho}^{(1)})\in x^\ast$ and so $\mathbf u^\prime \in \mathtt v_0\cup \mathtt v_1$ because $\phi(z_{1\rho}^{(1)})$ cannot contain $z_{2\rho}^{(1)}$.
Suppose now that~(iii) holds.
Clearly, if $\phi(z_{1\rho}^{(1)})\in x^\ast$, then $\mathbf u^\prime \in \mathtt v_0\cup \mathtt v_1$.
Let now $\phi(z_{1\rho}^{(1)})\notin x^\ast$.
Then $\phi(z_{1\rho}^{(1)})=z_{1\rho}^{(2)}$ by~($\ast$).
The case when $(s,t)=(1,0)$ is impossible because the are no $x$ between ${_{1\mathbf u}}z_{1\rho}^{(2)}$ and ${_{2\mathbf u}}z_{1\rho}^{(2)}$ in $\mathbf u$.
So, it remains to consider the case when $(s,t)=(0,1)$.
Now we apply~($\ast$) again, yielding that $z_{3\rho}^{(1)}\in\{\phi(z_{3\rho}^{(1)}),\phi(z_{1\rho}^{(4)})\}$.
However, this is impossible because $3\rho=n$ and so $({_{1\mathbf v_s}}z_{1\rho}^{(1)})<({_{1\mathbf v_s}}z_n^{(1)})<({_{1\mathbf v_s}}z_{1\rho}^{(4)})$ but $({_{1\mathbf u}}z_n^{(1)})<({_{1\mathbf u}}z_{1\rho}^{(2)})$.
Therefore, $\mathbf u^\prime \in \mathtt v_0\cup \mathtt v_1$ in any case.

We see that the set $\mathtt v_0\cup \mathtt v_1$ is a union of $\FIC(\mathbf X)$-classes.
By similar arguments we can show that the set $\mathtt v_0\cup \mathtt v_2$ is a union of $\FIC(\mathbf Y)$-classes.
It follows that the $\gamma^\prime$-class $\mathtt v_0$ is stable with respect to $\mathbf X\vee \mathbf Y$.
Put $\mathbf Z: = \mathbf M_{\gamma^\prime}(\mathtt v_0)$.
According to Corollary~\ref{C: M_alpha(W) in V}(iv), $\mathbf Z\subseteq \mathbf X\vee \mathbf Y$ and, therefore, $(\mathbf X\vee \mathbf Y)\wedge \mathbf Z=\mathbf Z$.
It is routine to check that $\mathbf Z$ satisfies $\mathbf v_1\approx \mathbf v_3$ and $\mathbf v_2\approx \mathbf v_3$, where $\mathbf v_3:= \mathbf p\,x\,\chi(\mathbf q)\,x\,\mathbf r$.
Then the identity $\mathbf v_0\approx \mathbf v_3$ holds in the variety $(\mathbf X\wedge \mathbf Z)\vee(\mathbf Y\wedge \mathbf Z)$.
We see that $\mathtt v_0$ is not stable with respect to this variety, whence
\[
(\mathbf X\wedge \mathbf Z)\vee(\mathbf Y\wedge \mathbf Z)\subset \mathbf Z= (\mathbf X\vee \mathbf Y)\wedge \mathbf Z.
\]
Thus, we have proved that the lattice $\mathfrak L\left(\mathbf M_{\gamma^\prime}([\mathbf a_{n,n}^{(1)}[\rho]]^{\gamma^\prime})\right)$ is not distributive.
\end{proof}

\begin{proposition}
\label{P: non-dis L(M_{gamma''}(a_{n,m}[pi]))}
The lattice $\mathfrak L\left(\mathbf M_{\gamma^{\prime\prime}}([\mathbf a_{n,m}^{(p)}[\rho]]^{\gamma^{\prime\prime}})\right)$ is not distributive for any $(n,m)\in\hat{\mathbb N}_2^0$, $\rho\in S_{n,m}$ and $p=1,2,3,4$.
\end{proposition}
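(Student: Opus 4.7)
The plan is to adapt the scheme used in Proposition~\ref{P: non-dis L(M_{gamma'}(a_{n,m}[pi]))} to the finer congruence $\gamma^{\prime\prime}$. The basic architecture will remain the same: find a suitable reduction to a canonical permutation, then exhibit three words $\mathbf{v}_0,\mathbf{v}_1,\mathbf{v}_2$ together with a fourth word $\mathbf{v}_3$ and varieties $\mathbf{X},\mathbf{Y},\mathbf{Z}$ obtained by imposing identities among them, such that $(\mathbf{X}\wedge\mathbf{Z})\vee(\mathbf{Y}\wedge\mathbf{Z})\subsetneq(\mathbf{X}\vee\mathbf{Y})\wedge\mathbf{Z}$ fails distributivity. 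Just as before, we can treat only the case $k=k'$, $1\le 1\pi\le k$ and $p=1$ since the remaining ones are analogous.

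As a first step, I would fix $n:=k+2$ and define a permutation $\rho\in S_{n,n}$ along the lines of~\eqref{rho= a_{n,m}[rho]}, then invoke Lemma~\ref{L: M_{gamma''}(W) in V} (rather than Corollary~\ref{C: M_alpha(W) in V}(iv)) to verify that the $\gamma^{\prime\prime}$-class $[\mathbf{a}_{n,n}^{(1)}[\rho]]^{\gamma^{\prime\prime}}$ is stable with respect to $\mathbf{M}_{\gamma^{\prime\prime}}([\mathbf{a}_{k,k}^{(1)}[\pi]]^{\gamma^{\prime\prime}})$. This would give the inclusion $M_{\gamma^{\prime\prime}}([\mathbf{a}_{n,n}^{(1)}[\rho]]^{\gamma^{\prime\prime}})\in \mathbf{M}_{\gamma^{\prime\prime}}([\mathbf{a}_{k,k}^{(1)}[\pi]]^{\gamma^{\prime\prime}})$, reducing the problem to showing non-distributivity of the lattice of subvarieties of $\mathbf{M}_{\gamma^{\prime\prime}}([\mathbf{a}_{n,n}^{(1)}[\rho]]^{\gamma^{\prime\prime}})$. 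Here 2-island-rigidity of the chosen word is essential since Lemma~\ref{L: M_{gamma''}(W) in V} applies only in that setting, so the reduction must be designed to preserve 2-island-rigidity.

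Next, I would write down words $\mathbf{v}_0,\mathbf{v}_1,\mathbf{v}_2$ patterned on those in the previous proof, but adapted so that each $\mathbf{v}_j$ is 2-island-rigid in $x$ and the sets $\mathtt v_j:=[\mathbf{v}_j]^{\gamma^{\prime\prime}}$ play the role of the $\gamma^\prime$-classes in the previous argument. The intended identities are $\mathbf{v}_0\approx\mathbf{v}_1$, $\mathbf{v}_0\approx\mathbf{v}_2$ and $\mathbf{v}_1\approx\mathbf{v}_3\approx\mathbf{v}_2$, where $\mathbf{v}_3:=\mathbf{p}\,x\,\chi(\mathbf{q})\,x\,\mathbf{r}$ is an appropriate ``fully separated'' word. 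Defining $\mathbf{X}:=\mathbf{M}_{\gamma^{\prime\prime}}(\mathtt v_0)\wedge\var\{\mathbf{v}_0\approx\mathbf{v}_1\}$, $\mathbf{Y}:=\mathbf{M}_{\gamma^{\prime\prime}}(\mathtt v_0)\wedge\var\{\mathbf{v}_0\approx\mathbf{v}_2\}$ and $\mathbf{Z}:=\mathbf{M}_{\gamma^{\prime\prime}}(\mathtt v_0)$, I would then verify (as in the previous proof) that $\mathtt v_0\cup\mathtt v_1$ is a union of $\FIC(\mathbf{X})$-classes and $\mathtt v_0\cup\mathtt v_2$ is a union of $\FIC(\mathbf{Y})$-classes. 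From this, the $\gamma^{\prime\prime}$-class $\mathtt v_0$ is stable with respect to $\mathbf{X}\vee\mathbf{Y}$, while $\mathbf{v}_0\approx\mathbf{v}_3$ holds in $(\mathbf{X}\wedge\mathbf{Z})\vee(\mathbf{Y}\wedge\mathbf{Z})$, witnessing the strict inclusion.

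The main obstacle will be the direct-deducibility case analysis for showing $\mathtt v_0\cup\mathtt v_1$ forms a union of $\FIC(\mathbf{X})$-classes. For the $\gamma^{\prime\prime}$ congruence one must exclude not only substitutions $\phi$ making $\phi(x)$ a power of a letter distinct from $x$, but also substitutions that could create or destroy configurations of the form $xyxty$ or $ytxyx$; the 2-island-rigidity of each $\mathbf{v}_j$ is what forces $\phi(x)\in x^+$ in nontrivial situations, so the construction of $\mathbf{v}_0,\mathbf{v}_1,\mathbf{v}_2$ must be arranged precisely so that every multiple letter forms exactly two islands each with a single extra occurrence. Once this rigidity is in place, the exclusion argument proceeds analogously to that in Proposition~\ref{P: non-dis L(M_{gamma'}(a_{n,m}[pi]))}, and the remaining verifications are routine applications of Lemmas~\ref{L: M_alpha(W) in V} and~\ref{L: M_{gamma''}(W) in V}.
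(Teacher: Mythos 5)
The broad outline you give — invoke Lemma~\ref{L: M_{gamma''}(W) in V} in place of Corollary~\ref{C: M_alpha(W) in V}(iv), reduce to a canonical permutation in $S_{n,n}$ with $n:=k+2$, and then construct three varieties from perturbations of a word whose $\gamma^{\prime\prime}$-class is 2-island-rigid — is the right skeleton, and the reduction step matches the paper. The problem is that the specific construction you propose cannot produce a non-distributivity witness.

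The first gap is in the ambient variety. You define $\mathbf X:=\mathbf M_{\gamma^{\prime\prime}}(\mathtt v_0)\wedge\var\{\mathbf v_0\approx\mathbf v_1\}$, $\mathbf Y:=\mathbf M_{\gamma^{\prime\prime}}(\mathtt v_0)\wedge\var\{\mathbf v_0\approx\mathbf v_2\}$, and $\mathbf Z:=\mathbf M_{\gamma^{\prime\prime}}(\mathtt v_0)$. With these definitions $\mathbf X,\mathbf Y\subseteq\mathbf Z$, and then
\[
(\mathbf X\vee\mathbf Y)\wedge\mathbf Z=\mathbf X\vee\mathbf Y=(\mathbf X\wedge\mathbf Z)\vee(\mathbf Y\wedge\mathbf Z),
\]
so the distributive identity holds automatically at this triple and nothing is witnessed. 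You also cannot simultaneously have $\mathtt v_0$ stable with respect to $\mathbf X\vee\mathbf Y$ and $\mathbf v_0\approx\mathbf v_3$ satisfied by $(\mathbf X\wedge\mathbf Z)\vee(\mathbf Y\wedge\mathbf Z)$, since with your definitions the latter equals $\mathbf X\vee\mathbf Y$ and $\mathbf v_3\notin\mathtt v_0$ contradicts stability. The paper avoids this by taking $\mathbf X$ and $\mathbf Y$ inside the strictly larger variety $\mathbf M_{\gamma^{\prime\prime}}([\mathbf a_{n,n}^{(1)}[\rho]]^{\gamma^{\prime\prime}})$; only then can one have $\mathbf Z\subseteq\mathbf X\vee\mathbf Y$ nontrivially, yielding $(\mathbf X\vee\mathbf Y)\wedge\mathbf Z=\mathbf Z$ while $(\mathbf X\wedge\mathbf Z)\vee(\mathbf Y\wedge\mathbf Z)$ is proper in $\mathbf Z$.

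The second gap is the form of the words. Patterning $\mathbf v_0,\mathbf v_1,\mathbf v_2$ on the $\gamma^\prime$ case (a single multiple letter $x$ with perturbed positions) and taking $\mathbf v_3:=\mathbf p\,x\,\chi(\mathbf q)\,x\,\mathbf r$ does not transfer. In the $\gamma^{\prime\prime}$ argument of the paper, $\mathbf v_0$ carries \emph{two} multiple letters $x$ and $y$, each with exactly two occurrences forming two islands (so that $\mathtt v_0=[\mathbf v_0]^{\gamma^{\prime\prime}}$ is 2-island-rigid), $\mathbf v_1$ squares one occurrence of $x$, $\mathbf v_2$ squares one occurrence of $y$, and $\mathbf v_3$ squares both. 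The sets $\mathtt v_1$ and $\mathtt v_2$ are not the $\gamma^{\prime\prime}$-classes $[\mathbf v_1]^{\gamma^{\prime\prime}},[\mathbf v_2]^{\gamma^{\prime\prime}}$ (indeed $\mathbf v_1$ and $\mathbf v_2$ are not 2-island-rigid — each has a letter occurring three times — so your remark that ``each $\mathbf v_j$ is 2-island-rigid'' is misplaced). They are position-constrained sets chosen so that $\mathtt v_0=\mathtt v_1\cap\mathtt v_2$, which is exactly what lets one conclude stability of $\mathtt v_0$ from the two union-of-$\FIC$-class claims. Without this two-letter structure and the intersection identity, the final step does not go through.
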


\begin{proof}
There are four possibilities:
\begin{itemize}
\item $n=m$ and $1\le 1\rho\le n$;
\item $n=m$ and $n+1\le 1\rho\le 2n$;
\item $n=m+1$ and so $1\le 1\rho\le n$;
\item $n=m-1$ and so $n+1\le 1\rho\le n+m$.
\end{itemize}
We will consider only the first possibility because the other ones are considered quite analogous.
In this case, $1\le i\rho\le n$ and $n+1\le (i+1)\rho\le 2n$ for any $i=1,3,\dots,2n-1$.
We also consider only the case when $p=1$; the case when $p\in\{2,3,4\}$ is very similar.

First, it is easy to see that
\begin{itemize}
\item[\textup{($\star$)}] for any $q\in\mathbb N$ and $\theta\in S_{q,q}$, the sets $xx^+yty$ and $yxx^+ty$ are stable with respect to $\mathbf M_{\gamma^{\prime\prime}}([\mathbf a_{q,q}^{(1)}[\theta]]^{\gamma^{\prime\prime}})$; in particular, $xzxyty$ and $xyzxty$ are isoterms for $\mathbf M_{\gamma^{\prime\prime}}([\mathbf a_{q,q}^{(1)}[\theta]]^{\gamma^{\prime\prime}})$.
\end{itemize} 
Let now
\[
\mathbf v_0 := \mathbf p\,x\,\mathbf q[0;6n-6]\,y\,\mathbf q[6n-6;12]\,x\,\mathbf q[6n+6;6n-6]\,y\,\mathbf q[12n;1]\,\mathbf r,
\]
where
\[
\begin{aligned}
\mathbf p :={}&\biggl(\prod_{i=1}^n z_i^{(1)}t_i^{(1)}\biggr)\biggl(\prod_{i=1}^n z_i^{(4)}t_i^{(4)}\biggr),\\
\mathbf q :={}&z\biggl(\prod_{i=1}^{n-1}  z_{(2i-1)\rho}^{(1)}z_{(2i)\rho}^{(1)}(y_{(2i)\rho}^{(1)})^2z_{(2i)\rho}^{(2)}z_{(2i)\rho}^{(3)}\biggr) z_{1\rho}^{(4)}z_{2\rho}^{(4)}(y_{2\rho}^{(4)})^2z_{2\rho}^{(5)}z_{2\rho}^{(6)}\cdot\\
&\cdot z_{(2n-1)\rho}^{(1)}z_{(2n)\rho}^{(1)}(y_{(2n)\rho}^{(1)})^2z_{(2n)\rho}^{(2)}z_{(2n)\rho}^{(3)}\biggl(\prod_{i=2}^n  z_{(2i-1)\rho}^{(4)}z_{(2i)\rho}^{(4)}(y_{(2i)\rho}^{(4)})^2z_{(2i)\rho}^{(5)}z_{(2i)\rho}^{(6)}\biggr),\\
\mathbf r :={}&\biggl(\prod_{i=n+1}^{2n} t_i^{(1)}z_i^{(1)}t_i^{(2)}z_i^{(2)}t_i^{(3)}z_i^{(3)}\biggr)\biggl(\prod_{i=n+1}^{2n} t_i^{(4)}z_i^{(4)}t_i^{(5)}z_i^{(5)}t_i^{(6)}z_i^{(6)}\biggr)tz.
\end{aligned}
\]
Let $\mathtt v_0:=[\mathbf v_0]^{\gamma^{\prime\prime}}$ and $\mathtt v_1$ [respectively, $\mathtt v_2$] denote the set of all words $\mathbf w$ such that
\[
\begin{aligned}
&\mathtt v_1:=\{\mathbf w\mid \mathbf w_x\in[(\mathbf v_0)_x]^{\gamma^{\prime\prime}},\,\ ({_{1\mathbf w}}t_n^{(4)})<({_{1\mathbf w}}x)<({_{1\mathbf w}}z),\ ({_{1\mathbf w}}z_{(2n)\rho}^{(2)})<({_{\ell\mathbf w}}x)<({_{1\mathbf w}}z_{(2n)\rho}^{(3)})\},\\
&\mathtt v_2:=\left\{
\mathbf w\ \middle\vert
\begin{array}{l}
\mathbf w_y\in[(\mathbf v_0)_y]^{\gamma^{\prime\prime}},\\
({_{1\mathbf w}}z_{(2n-2)\rho}^{(2)})<({_{1\mathbf w}}y)<({_{1\mathbf w}}z_{(2n-2)\rho}^{(3)}),\ 
 ({_{1\mathbf w}}z_{(2n)\rho}^{(5)})<({_{\ell\mathbf w}}y)<({_{1\mathbf w}}z_{(2n)\rho}^{(6)})
\end{array}
\!\!\!\!\right\}.
\end{aligned}
\]
Arguments similar to ones from the proof of Proposition~\ref{P: non-dis L(M_{gamma'}(a_{n,m}[pi]))} can show that the set $\mathtt v_j$ is a union of $\FIC(\mathbf M_{\gamma^{\prime\prime}}([\mathbf a_{n,n}^{(1)}[\rho]]^{\gamma^{\prime\prime}}))$-classes, $j=0,1,2$.

Let 
\[
\mathbf X := \mathbf M_{\gamma^{\prime\prime}}([\mathbf a_{n,n}^{(1)}[\rho]]^{\gamma^{\prime\prime}})\wedge\var\{\mathbf v_0 \approx \mathbf v_1\}\ \text{ and } \ 
\mathbf Y := \mathbf M_{\gamma^{\prime\prime}}([\mathbf a_{n,n}^{(1)}[\rho]]^{\gamma^{\prime\prime}})\wedge\var\{\mathbf v_0 \approx \mathbf v_2\},
\]
where
\[
\begin{aligned}
&\mathbf v_1 :=  \mathbf p\,x\,\mathbf q[0;6n-6]\,y\,\mathbf q[6n-6;12]\,x^2\,\mathbf q[6n+6;6n-6]\,y\,\mathbf q[12n;1]\,\mathbf r,\\
&\mathbf v_2 :=  \mathbf p\,x\,\mathbf q[0;6n-6]\,y\,\mathbf q[6n-6;12]\,x\,\mathbf q[6n+6;6n-6]\,y^2\,\mathbf q[12n;1]\,\mathbf r.
\end{aligned}
\]
It is routine to check that one can take a sufficiently large integer $r$, say $r>9n$, such that   the identities $\mathbf v_0\approx\mathbf v_1\approx \mathbf v_2$ hold in $\mathbf M_{\gamma^{\prime\prime}}([\mathbf a_{r,r}^{(1)}[\theta]]^{\gamma^{\prime\prime}})$ for any $\theta\in S_{r,r}$.
As in the proof of Proposition~\ref{P: non-dis L(M_{gamma'}(a_{n,m}[pi]))}, using Lemma~\ref{L: M_{gamma''}(W) in V} instead of Corollary~\ref{C: M_alpha(W) in V}(iv),  we can show that there is $\theta^\prime\in S_{r,r}$ such that $M_{\gamma^{\prime\prime}}([\mathbf a_{r,r}^{(1)}[\theta]]^{\gamma^{\prime\prime}})\in \mathbf M_{\gamma^{\prime\prime}}([\mathbf a_{n,n}^{(1)}[\rho]]^{\gamma^\prime})$.
Hence $M_{\gamma^{\prime\prime}}([\mathbf a_{r,r}^{(1)}[\theta]]^{\gamma^{\prime\prime}})\in\mathbf X\wedge\mathbf Y$.

Consider an arbitrary identity $\mathbf u \approx \mathbf u^\prime$ of $\mathbf X$ with $\mathbf u\in \mathtt v_1$.
We are going to show that $\mathbf u^\prime \in \mathtt v_1$.
Since $M_{\gamma^{\prime\prime}}([\mathbf a_{r,r}^{(1)}[\theta]]^{\gamma^{\prime\prime}})\in\mathbf X$, it follows from~($\star$) that $\mathbf u^\prime_{\{x,y\}}\in [\mathbf p\mathbf q \mathbf r]^{\gamma^{\prime\prime}}$.
In view of Proposition~\ref{P: deduction}, we may assume without loss of generality that  either $\mathbf u \approx \mathbf u^\prime$ holds in $\mathbf M_{\gamma^{\prime\prime}}([\mathbf a_{n,n}^{(1)}[\rho]]^{\gamma^{\prime\prime}})$ or $\mathbf u \approx \mathbf u^\prime$ is directly deducible from $\mathbf v_0 \approx \mathbf v_1$. 
In view of the above, $\mathtt v_1$ is a union of $\FIC(\mathbf M_{\gamma^{\prime\prime}}([\mathbf a_{n,n}^{(1)}[\rho]]^{\gamma^{\prime\prime}}))$-classes.
Therefore, it remains to consider the case when $\mathbf u \approx \mathbf u^\prime$ is directly deducible from $\mathbf v_0 \approx \mathbf v_1$, i.e., there exist some words $\mathbf a,\mathbf b \in \mathfrak X^\ast$ and substitution $\phi\colon \mathfrak X \to \mathfrak X^\ast$ such that $(\mathbf u,\mathbf u^\prime ) = (\mathbf a\phi(\mathbf v_s)\mathbf b,\mathbf a\phi(\mathbf v_t)\mathbf b)$, where $\{s,t\}=\{0,1\}$.

First, notice that
\begin{itemize}
\item[\textup{($\ast$)}] if $a$ and $b$ are distinct letters and $ab$ occurs in $\mathbf u$ as a factor at least twice, then $\{a,b\}\in\{\{x,y_{2\rho}^{(4)}\},\{x,y_i^{(1)}\}\mid n+1\le i\le 2n\}$.
\end{itemize}
If $\phi(x)=1$, then $\phi(\mathbf v_s)=\phi(\mathbf v_t)$ and so $\mathbf u=\mathbf u^\prime$, whence $\mathbf u^\prime \in \mathtt v_1$.
It remains to consider the case when $\phi(x)\ne1$.
Clearly, the letters of the form $t_i^{(j)}$ do not occur in $\phi(x)$ because these letters are simple in $\mathbf u$, while $x\in\mul(\mathbf v_s)=\mul(\mathbf v_t)$.
Further, the letters in 
\[
\{z,z_i^{(1)}, z_j^{(2)}, z_j^{(3)},z_i^{(4)},z_j^{(5)},z_j^{(6)}\mid 1\le i\le 2n,\, n+1\le j\le 2n\}
\] 
also do not occur in $\phi(x)$ because the first and the second occurrences of these letters lie in different blocks in $\mathbf u$, while all the occurrences of $x$ lie in the same block in $\mathbf v_s$.
If $y_i^{(j)}\in\con(\phi(x))$, then $\mathbf u^\prime \in \mathtt v_1$ because all occurrences of $y_i^{(j)}$ lie between $_{1\mathbf u}z_i^{(j)}$ and $_{1\mathbf u}z_i^{(j+1)}$ in $\mathbf u$. 
Therefore, by~($\ast$), it remains to consider the case when either $\phi(x)\in x^+$ or $\phi(x)\in y^+$.
If $\phi(x)\in x^+$, then $\mathbf u^\prime \in \mathtt v_1$ because the application of $\mathbf v_s\approx \mathbf v_t$ do not change the positions of the first and the last occurrences of $x$.
Therefore, we may further assume that $\phi(x)\in y^+$.
This is only possible when $s=0$ and $\phi(_{i\mathbf v_0}x)={_{i\mathbf u}}y$, $i=1,2$.
Then 
\[
\phi(\mathbf q[0;6n-6]\,y\,\mathbf q[6n-6;12])=\mathbf q_1x\mathbf q_2
\]
 for some words $\mathbf q_1$, $\mathbf q_2$ with $(\mathbf q_1)_x\in[\mathbf q[6n-6;12]]^{\gamma^{\prime\prime}}$ and $\mathbf q_2\in[\mathbf q[6n+6;6n-6]]^{\gamma^{\prime\prime}}$.
Notice also that $\phi(y)=1$ in this case.
However, this contradicts~($\ast$) because the factor of $\mathbf u$ [respectively, $\mathbf v_s$] between ${_{1\mathbf u}}y$ and  ${_{2\mathbf u}}y$ [respectively, ${_{1\mathbf v_s}}x$ and ${_{2\mathbf v_s}}x$] has exactly $5n+5$ distinct letters.
Therefore, the case when $\phi(x)\in y^+$ is impossible and $\mathbf u^\prime \in \mathtt v_1$ in any case.

We see that the set $\mathtt v_1$ is a union of $\FIC(\mathbf X)$-classes.
By similar arguments we can show that the set $\mathtt v_2$ is a union of $\FIC(\mathbf Y)$-classes.
Since $\mathtt v_0=\mathtt v_1\cap \mathtt v_2$, it follows that the $\gamma^{\prime\prime}$-class $\mathtt v_0$ is stable with respect to $\mathbf X\vee \mathbf Y$.
Put $\mathbf Z: = \mathbf M_{\gamma^{\prime\prime}}(\mathtt v_0)$.
According to Lemma~\ref{L: M_{gamma''}(W) in V}, $\mathbf Z\subseteq \mathbf X\vee \mathbf Y$ and, therefore, $(\mathbf X\vee \mathbf Y)\wedge \mathbf Z=\mathbf Z$.
It is routine to check that $\mathbf Z$ satisfies $\mathbf v_1\approx \mathbf v_3$ and $\mathbf v_2\approx \mathbf v_3$, where 
\[
\mathbf v_3:= \mathbf p\,x\,\mathbf q[0;6n-6]\,y\,\mathbf q[6n-6;12]\,x^2\,\mathbf q[6n+6;6n-6]\,y^2\,\mathbf q[12n;1]\,\mathbf r.
\]
Then the identity $\mathbf v_0\approx \mathbf v_3$ holds in the variety $(\mathbf X\wedge \mathbf Z)\vee(\mathbf Y\wedge \mathbf Z)$.
We see that $\mathtt v_0$ is not stable with respect to this variety, whence
\[
(\mathbf X\wedge \mathbf Z)\vee(\mathbf Y\wedge \mathbf Z)\subset \mathbf Z= (\mathbf X\vee \mathbf Y)\wedge \mathbf Z.
\]
Thus, we have proved that the lattice $\mathfrak L\left(\mathbf M_{\gamma^{\prime\prime}}([\mathbf a_{n,n}^{(1)}[\rho]]^{\gamma^{\prime\prime}})\right)$ is not distributive.
\end{proof}

\begin{proposition}
\label{P: non-dis L(M_{alpha_1}(a_{n,n}[pi]))}
The lattice $\mathfrak L\left(\mathbf M_{\alpha_1}([\mathbf a_{k,k}[\pi]]^{\alpha_1})\right)$ is not distributive for any $k\in\mathbb N$ and $\pi\in S_{2k}^\sharp$.
\end{proposition}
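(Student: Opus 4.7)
My plan is to follow the same template used in Propositions~\ref{P: non-dis L(M([hat{a}_{0,k}[pi]]^lambda))}--\ref{P: non-dis L(M_{gamma''}(a_{n,m}[pi]))}, adapted to the congruence $\alpha_1$. First, I would reduce the problem to a ``good'' permutation by showing that a suitable enlargement $\rho\in S_{2n}^\sharp$ (where $n$ is a small function of $k$, say $n=k+1$ or $k+2$, chosen so that $\pi$ embeds into $\rho$ in a position that is isolated by simple letters) yields $M_{\alpha_1}([\mathbf a_{n,n}[\rho]]^{\alpha_1})\in \mathbf M_{\alpha_1}([\mathbf a_{k,k}[\pi]]^{\alpha_1})$. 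The membership follows from Lemma~\ref{L: subclasses of [a_{n,n}[tau]]^{alpha_1}} once I verify that the $\alpha_1$-class $[\mathbf a_{n,n}[\rho]]^{\alpha_1}$ is stable with respect to $\mathbf M_{\alpha_1}([\mathbf a_{k,k}[\pi]]^{\alpha_1})$; this stability check is analogous to the first paragraphs of the preceding proofs, using isoterms and Lemma~\ref{L: M_alpha(W) in V}.

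Next, following the pattern of Proposition~\ref{P: non-dis L(M([a_{0,k}[pi]]^beta))}, I would introduce three words
\[
\mathbf v_0:=\mathbf p\,x\,\mathbf q\,x\,\mathbf r,\quad \mathbf v_1:=\mathbf p\,x\,\mathbf q[0;1]\,x\,\mathbf q[1;N-1]\,x\,\mathbf r,\quad \mathbf v_2:=\mathbf p\,x\,\mathbf q[0;N-1]\,x\,\mathbf q[N-1;1]\,x\,\mathbf r,
\]
where $\mathbf p$ is the ``head'' $\prod_{i=1}^{n}z_it_i$, $\mathbf q$ is a block of length $N$ consisting of the $z_{i\rho}$'s together with duplicated squares $y_i^2$ and auxiliary letters $z_i^\prime$ separating them (so that $\mathbf q$ contains the information of $\mathbf a_{n,n}[\rho]$ under a projection), and $\mathbf r$ is the ``tail'' involving the $t_i,z_i$ for $i>n$ together with dual primed letters. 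The three words $\mathbf v_0,\mathbf v_1,\mathbf v_2$ are constructed so that they are pairwise related modulo~$\alpha_1$ (using $xyx^2\approx x^2yx$, $\sigma_3$ and $x^2y^2\approx y^2x^2$), so that their $\alpha_1$-classes $\mathtt v_j$ are $\FIC(\mathbf M_{\alpha_1}([\mathbf a_{n,n}[\rho]]^{\alpha_1}))$-classes. The latter is verified by composing the stability of the isoterms $\{xzxyty\}$, $xyx$, the $\alpha_1$-classes $yxx^+ty$ and $ytxx^+y$ (all of which are stable with respect to $\mathbf M_{\alpha_1}([\mathbf a_{k,k}[\pi]]^{\alpha_1})$ by the proof of Lemma~\ref{L: subclasses of [a_{n,n}[tau]]^{alpha_1}}), together with suitable substitutions $\psi_i$ that map each $\mathbf v_j$ into the known-stable class $[\mathbf a_{n,n}[\rho]]^{\alpha_1}$.

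Now set
\[
\mathbf X:=\mathbf M_{\alpha_1}([\mathbf a_{n,n}[\rho]]^{\alpha_1})\wedge\var\{\mathbf v_0\approx \mathbf v_1\},\
\mathbf Y:=\mathbf M_{\alpha_1}([\mathbf a_{n,n}[\rho]]^{\alpha_1})\wedge\var\{\mathbf v_0\approx \mathbf v_2\},\
\mathbf Z:=\mathbf M_{\alpha_1}(\mathtt v_0).
\]
I would then prove, using Proposition~\ref{P: deduction}, that $\mathtt v_0\cup\mathtt v_1$ forms a $\FIC(\mathbf X)$-class and $\mathtt v_0\cup\mathtt v_2$ forms a $\FIC(\mathbf Y)$-class. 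By the analogue of property~$(\ast)$ in the preceding proofs, for any $\mathbf v_s\approx\mathbf v_t$ with $\{s,t\}=\{0,j\}$ and any substitution $\phi$ involved in a direct deduction $\mathbf u\approx\mathbf u^\prime$, the image $\phi(v)$ of each multiple letter $v$ must be either empty or a power of a letter; this confines $\phi(x)$ to a power of $x$, and then the constraints forced by the simple letters together with the positions of $_{1\mathbf u}z^{\prime}_{(2n)\rho}$ and $_{1\mathbf u}t_1$ lock the occurrences of $x$ in $\mathbf u^\prime$ into the allowed positions. Finally, $\mathbf Z$ satisfies $\mathbf v_1\approx\mathbf v_3$ and $\mathbf v_2\approx\mathbf v_3$ for $\mathbf v_3:=\mathbf p\,x\,\chi(\mathbf q)\,x\,\mathbf r$, whence $(\mathbf X\wedge\mathbf Z)\vee(\mathbf Y\wedge \mathbf Z)\subset\mathbf Z=(\mathbf X\vee\mathbf Y)\wedge\mathbf Z$.

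The main obstacle will be the case analysis in step five: verifying that under a direct deduction from $\mathbf v_0\approx\mathbf v_1$ (and analogously from $\mathbf v_0\approx\mathbf v_2$), no substitution $\phi$ can take $\mathbf u\in\mathtt v_0\cup\mathtt v_1$ outside $\mathtt v_0\cup\mathtt v_1$. Here the congruence $\alpha_1$, unlike $\lambda$ or $\gamma^{\prime}$, allows the identity $xyx^2\approx x^2yx$ to be applied inside powers of $x$, so I must choose $\mathbf q$ so that the first and second blocks of $x$'s (in $\mathbf v_1$) are separated by \emph{exactly one} of the $z_{i\rho}$'s whose doubling is forbidden by $\alpha_1$-relatedness to $[\mathbf a_{n,n}[\rho]]^{\alpha_1}$; the verification that $\phi(z_{1\rho})\notin x^\ast$ leads to a contradiction of $\alpha_1$-alignment, exactly as in Cases~1.3 and~2.3 of the proof of Proposition~\ref{P: non-dis L(M([hat{a}_{0,k}[pi]]^lambda))}.
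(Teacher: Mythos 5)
Your overall framework matches the paper's — reduce to a convenient permutation, build three words $\mathbf v_0,\mathbf v_1,\mathbf v_2$ that are separated by $\alpha_1$, set up $\mathbf X,\mathbf Y,\mathbf Z$, and produce the distributivity violation by comparing $(\mathbf X\wedge\mathbf Z)\vee(\mathbf Y\wedge\mathbf Z)$ with $\mathbf Z=(\mathbf X\vee\mathbf Y)\wedge\mathbf Z$. But the explicit construction you propose for $\mathbf v_0,\mathbf v_1,\mathbf v_2$ is the single-letter ``split an island of $x$'' template from Proposition~\ref{P: non-dis L(M([a_{0,k}[pi]]^beta))}, and that template does not transfer to $\alpha_1$. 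The paper instead uses the two-letter $x,y$ template of Propositions~\ref{P: non-dis L(M([hat{a}_{0,k}[pi]]^lambda))} and~\ref{P: non-dis L(M_{gamma''}(a_{n,m}[pi]))}: there $\mathbf v_0$ has $x$ and $y$ each occurring twice in an alternating pattern, $\mathbf v_1$ squares the second occurrence of $x$, and $\mathbf v_2$ squares the second occurrence of $y$. Accordingly the auxiliary sets $\mathtt v_1$ and $\mathtt v_2$ are not $\alpha_1$-classes at all: $\mathtt v_1$ fixes the $\alpha_1$-structure after deleting all $x$'s (so the $y$-data is rigid) while letting $\occ_x$ float, and dually for $\mathtt v_2$; the intersection $\mathtt v_1\cap\mathtt v_2$ pins down both and equals $[\mathbf v_0]^{\alpha_1}$.

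The reason this two-letter asymmetry is essential is that the defining identities of $\alpha_1$ ($xyx^2\approx x^2yx$, $x^2y^2\approx y^2x^2$, $\sigma_3$) are all balanced, so an $\alpha_1$-class fixes the multiset $\occ_a(\cdot)$ for every letter $a$. The identity $\mathbf v_0\approx\mathbf v_1$ you want to impose is unbalanced in $x$, and iterating direct deductions from it can keep changing $\occ_x$; the set that is actually closed under $\FIC(\mathbf X)$ therefore cannot be $[\mathbf v_0]^{\alpha_1}\cup[\mathbf v_1]^{\alpha_1}$, which have fixed $\occ_x=2$ and $\occ_x=3$, and must instead be defined with $\occ_x$ free. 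With a single multiple letter $x$, both $\mathbf v_0\approx\mathbf v_1$ and $\mathbf v_0\approx\mathbf v_2$ perturb $\occ_x$, so both $\mathtt v_1$ and $\mathtt v_2$ would need $\occ_x$ free, and then $\mathtt v_1\cap\mathtt v_2$ no longer collapses to a single $\alpha_1$-class: there is nothing to pin $\mathtt v_0$ down to $[\mathbf v_0]^{\alpha_1}$, and the inclusion $\mathbf Z=\mathbf M_{\alpha_1}(\mathtt v_0)\subseteq\mathbf X\vee\mathbf Y$ is no longer what you need. The two-letter construction dodges this: $\mathbf X$ perturbs only $\occ_x$, $\mathbf Y$ perturbs only $\occ_y$, and the intersection of the two position/count constraints is exactly the $\alpha_1$-class of $\mathbf v_0$. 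In fact you implicitly acknowledge this tension in your final paragraph by appealing to Cases 1.3 and 2.3 of Proposition~\ref{P: non-dis L(M([hat{a}_{0,k}[pi]]^lambda))} — but that proof is a two-letter proof, inconsistent with the one-letter formulas you actually wrote down.

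There is a second difficulty with the single-$x$ approach specific to $\alpha_1$ which your sketch only gestures at. Under $\alpha_1$ the identity $\sigma_3:xzxyty\approx xzyxty$ (with $t\mapsto 1$) lets you slide a trailing $y$ into the gap between two nearby occurrences of $x$, so the ``island structure'' of a three-$x$ word is not rigid the way it is for $\lambda$, $\beta$ or $\gamma^\prime$. Your stated fix — put a single $z_{1\rho}$ between the first two $x$'s of $\mathbf v_1$ and argue that $\phi(z_{1\rho})\in x^\ast$ is forbidden — is the right flavor of argument, but it does not by itself prevent $\sigma_3$-rearrangements involving the $y_i$'s in $\mathbf q$, and you have not verified the analogue of Lemma~\ref{L: subclasses of [a_{n,n}[tau]]^{alpha_1}} for the new word $\mathbf v_0=\mathbf p\,x\,\mathbf q\,x\,\mathbf r$ that is needed for $\mathbf Z\subseteq\mathbf X\vee\mathbf Y$. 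The cleanest repair is to switch to the paper's two-letter construction outright.
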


\begin{proof}
First, it is easy to see that
\begin{itemize}
\item[\textup{($\star$)}] for any $q\in\mathbb N$ and $\theta\in S_{2q}^\sharp$, the sets $ytxx^+y$ and $yxx^+ty$ are stable with respect to $\mathbf M_{\alpha_1}([\mathbf a_{q,q}[\theta]]^{\alpha_1})$; in particular, the words $xyzxty$ and $xzytxy$ are isoterms for $\mathbf M_{\alpha_1}([\mathbf a_{q,q}[\theta]]^{\alpha_1})$.
\end{itemize} 
Let
\[
\mathbf v_0 := \mathbf p\,x\,\mathbf q[0;1]\,y\,\mathbf q[1;3n-2]\,x\,\mathbf q[3n-1;1]\,y\,\mathbf r,
\]
where
\[
\begin{aligned}
\mathbf p :={}&\biggl(\prod_{i=1}^{(2n)\rho-1} z_it_i\biggr)(z_{(2n)\rho}^\prime t_{(2n)\rho}^\prime z_{(2n)\rho}t_{(2n)\rho})\biggl(\prod_{i=(2n)\rho+1}^n z_it_i\biggr),\\
\mathbf q :={}&z_{1\rho}z_{1\rho}^\prime y_1^2\biggl(\prod_{i=2}^{2n-1} z_{i\rho}y_i^2\biggr) z_{(2n)\rho}z_{(2n)\rho}^\prime,\\
\mathbf r :={}&\biggl(\prod_{i=n+1}^{1\rho-1} t_iz_i\biggr)(t_{1\rho}z_{1\rho}t_{1\rho}^\prime z_{1\rho}^\prime)\biggl(\prod_{i=1\rho+1}^{2n} t_iz_i\biggr).
\end{aligned}
\]
Let now $\mathtt v_0:=[\mathbf v_0]^{\alpha_1}$ and
\[
\begin{aligned}
&\mathtt v_1:=\{\mathbf w\mid \mathbf w_x\in [(\mathbf v_0)_x]^{\alpha_1},\ ({_{1\mathbf w}}t_n^{(2)})<({_{1\mathbf w}}x)<({_{1\mathbf w}}z_{1\rho}),\ ({_{2\mathbf w}}z_{(2n)\rho})<({_{\ell\mathbf w}}x)<({_{2\mathbf w}}z_{(2n)\rho}^\prime)\},\\
&\mathtt v_2:=\{\mathbf w\mid \mathbf w_y\in[(\mathbf v_0)_y]^{\alpha_1},\ ({_{1\mathbf w}}z_{1\rho})<({_{1\mathbf w}}y)<({_{1\mathbf w}}z_{1\rho}^\prime),\ ({_{2\mathbf w}}z_{(2n)\rho}^\prime)<({_{\ell\mathbf w}}y)<({_{1\mathbf w}}t_{n+1})\}.
\end{aligned}
\]
Arguments similar to ones from the proof of Proposition~\ref{P: non-dis L(M_{gamma'}(a_{n,m}[pi]))} can show that the set $\mathtt v_j$ is a union of $\FIC(\mathbf M_{\alpha_1}([\mathbf a_{n,n}[\rho]]^{\alpha_1}))$-classes, $j=0,1,2$.

Let 
\[
\mathbf X := \mathbf M_{\alpha_1}([\mathbf a_{n,n}[\rho]]^{\alpha_1})\wedge\var\{\mathbf v_0 \approx \mathbf v_1\}\ \text{ and } \ 
\mathbf Y := \mathbf M_{\alpha_1}([\mathbf a_{n,n}[\rho]]^{\alpha_1})\wedge\var\{\mathbf v_0 \approx \mathbf v_2\},
\]
where
\[
\begin{aligned}
&\mathbf v_1 :=  \mathbf p\,x\,\mathbf q[0;1]\,y\,\mathbf q[1;3n-2]\,x^2\,\mathbf q[3n-1;1]\,y\,\mathbf r,\\
&\mathbf v_2 :=  \mathbf p\,x\,\mathbf q[0;1]\,y\,\mathbf q[1;3n-2]\,x\,\mathbf q[3n-1;1]\,y^2\,\mathbf r.
\end{aligned}
\]
It is routine to check that one can take a sufficiently large integer $r$, say $r>2n$, such that   the identities $\mathbf v_0\approx\mathbf v_1\approx \mathbf v_2$ hold in $\mathbf M_{\alpha_1}([\mathbf a_{r,r}[\theta]]^{\alpha_1})$ for any $\theta\in S_{2r}^\sharp$.
Further, as in the proof of Proposition~\ref{P: non-dis L(M_{gamma'}(a_{n,m}[pi]))}, using Lemmas~\ref{L: M_alpha(W) in V} and~\ref{L: subclasses of [a_{n,n}[tau]]^{alpha_1}} instead of Corollary~\ref{C: M_alpha(W) in V}(iv), one can show that there is $\theta^\prime\in S_{2r}^\sharp$ such that $M_{\alpha_1}([\mathbf a_{r,r}[\theta^\prime]]^{\alpha_1})\in \mathbf M_{\alpha_1}([\mathbf a_{n,n}[\rho]]^{\alpha_1})$.
Hence $M_{\alpha_1}([\mathbf a_{r,r}[\theta^\prime]]^{\alpha_1})\in\mathbf X\wedge\mathbf Y$.

Consider an arbitrary identity $\mathbf u \approx \mathbf u^\prime$ of $\mathbf X$ with $\mathbf u\in \mathtt v_1$.
We are going to show that $\mathbf u^\prime \in \mathtt v_1$.
Since $M_{\alpha_1}([\mathbf a_{r,r}[\theta]]^{\alpha_1})\in\mathbf X$, it follows from~($\star$) that $\mathbf u^\prime_{\{x,y\}}\in [\mathbf p\mathbf q \mathbf r]^{\alpha_1}$.
In view of Proposition~\ref{P: deduction}, we may assume without loss of generality that  either $\mathbf u \approx \mathbf u^\prime$ holds in $\mathbf M_{\alpha_1}([\mathbf a_{n,n}[\rho]]^{\alpha_1})$ or $\mathbf u \approx \mathbf u^\prime$ is directly deducible from $\mathbf v_0 \approx \mathbf v_1$. 
In view of the above, $\mathtt v_1$ is a union of $\FIC(\mathbf M_{\alpha_1}([\mathbf a_{n,n}[\rho]]^{\alpha_1}))$-classes.
Therefore, it remains to consider the case when $\mathbf u \approx \mathbf u^\prime$ is directly deducible from $\mathbf v_0 \approx \mathbf v_1$, i.e., there exist some words $\mathbf a,\mathbf b \in \mathfrak X^\ast$ and substitution $\phi\colon \mathfrak X \to \mathfrak X^\ast$ such that $(\mathbf u,\mathbf u^\prime ) = (\mathbf a\phi(\mathbf v_s)\mathbf b,\mathbf a\phi(\mathbf v_t)\mathbf b)$, where $\{s,t\}=\{0,1\}$.

First, notice that if $a$ and $b$ are distinct letters and at least one of them belongs to $\{y,z_{1\rho}^\prime,z_{(2n)\rho}^\prime,z_{1\rho},\dots,z_{(2n)\rho}\}$, then $ab$ occurs in $\mathbf u$ as a factor at most once.
It follows that 
\begin{itemize}
\item[\textup{($\ast$)}] for any $v\in \mul(\mathbf v_s)=\mul(\mathbf v_t)$, if $\con(\phi(v))\cap\{y,z_{1\rho}^\prime,z_{(2n)\rho}^\prime,z_{1\rho},\dots,z_{(2n)\rho}\}\ne\varnothing$, then $\phi(v)$ is a letter.
\end{itemize}
If $\phi(x)=1$, then $\phi(\mathbf v_s)=\phi(\mathbf v_t)$ and so $\mathbf u=\mathbf u^\prime$, whence $\mathbf u^\prime \in \mathtt v_1$.
It remains to consider the case when $\phi(x)\ne1$.
Clearly, the letters of the form $t_i$ or $t_i^\prime$ do not occur in $\phi(x)$ because these letters are simple in $\mathbf u$, while $x\in\mul(\mathbf v_s)=\mul(\mathbf v_t)$.
Further, the letters in $\{z_{1\rho}^\prime,z_{(2n)\rho}^\prime,z_{1\rho},\dots,z_{(2n)\rho}\}$ also do not occur in $\phi(x)$ because the first and the second occurrences of these letters lie in different blocks in $\mathbf u$, while all the occurrences of $x$ lie in the same block in $\mathbf v_s$.
If $\con(\phi(x))\subseteq\{x,y_1,\dots,y_{2n-1}\}$, then $\mathbf u^\prime \in \mathtt v_1$ because the application of $\mathbf v_s\approx \mathbf v_t$ do not change the positions of the first and last occurrences of letters in $\mathbf u$.
Therefore, it remains to consider the case when $y\in\con(\phi(x))$.
Then $\phi(x)=y$ by~($\ast$).
This is only possible when $s=0$ and $\phi(_{i\mathbf v_0}x)={_{i\mathbf u}}y$, $i=1,2$.
Now we apply~($\ast$), yielding that 
\[
\{z_{1\rho}^\prime,z_{2\rho},\dots,z_{(2n)\rho},z_{(2n)\rho}^\prime\}\subseteq\{\phi(z_{1\rho}),\phi(z_{1\rho}^\prime),\phi(z_{2\rho}),\dots,\phi(z_{(2n)\rho})\}.
\]
Since the image under $\phi$ of a non-first occurrence of letter in $\mathbf v_s$ must be a non-first occurrence of letter in $\mathbf u$, it follows that
\[
\{z_{(n+1)\rho},\dots,z_{(2n)\rho},z_{(2n)\rho}^\prime\}\subseteq\{\phi(z_{(n+1)\rho}),\dots,\phi(z_{(2n)\rho})\}.
\]
However, this contradicts the fact that the set $\{\phi(z_{(n+1)\rho}),\dots,\phi(z_{(2n)\rho})\}$ contains at most $n$ elements, while the set $\{z_{(n+1)\rho},\dots,z_{(2n)\rho},z_{(2n)\rho}^\prime\}$ contains $n+1$ elements.
Therefore, the case when $y\in\con(\phi(x))$ is impossible and $\mathbf u^\prime \in \mathtt v_1$ in any case.

We see that the set $\mathtt v_1$ is a union of $\FIC(\mathbf X)$-classes.
By similar arguments we can show that the set $\mathtt v_2$ is a union of $\FIC(\mathbf Y)$-classes.
Since $\mathtt v_0=\mathtt v_1\cap \mathtt v_2$, it follows that the $\alpha_1$-class $\mathtt v_0$ is stable with respect to $\mathbf X\vee \mathbf Y$.
According to Lemma~\ref{L: M_alpha(W) in V}, the $\alpha_1$-class $\mathtt v_0$ is stable with respect to $\mathbf Z: = \mathbf M_{\alpha_1}(\mathtt v_0)$.
Hence $\mathtt v_0$ is also stable with respect to $(\mathbf X\vee \mathbf Y)\wedge \mathbf Z$.
It is routine to check that $\mathbf Z$ satisfies $\mathbf v_1\approx \mathbf v_3$ and $\mathbf v_2\approx \mathbf v_3$, where 
\[
\mathbf v_3:= \mathbf p\,x\,\mathbf q[0;1]\,y\,\mathbf q[1;3n-2]\,x^2\,\mathbf q[3n-1;1]\,y^2\,\mathbf r.
\]
Then the identity $\mathbf v_0\approx \mathbf v_3$ holds in the variety $(\mathbf X\wedge \mathbf Z)\vee(\mathbf Y\wedge \mathbf Z)$.
We see that $\mathtt v_0$ is not stable with respect to this variety, whence
\[
(\mathbf X\wedge \mathbf Z)\vee(\mathbf Y\wedge \mathbf Z)\subset \mathbf Z= (\mathbf X\vee \mathbf Y)\wedge \mathbf Z.
\]
Thus, we have proved that the lattice $\mathfrak L\left(\mathbf M_{\alpha_1}([\mathbf a_{n,n}[\rho]]^{\alpha_1})\right)$ is not distributive.
\end{proof}

\begin{proposition}[\mdseries{\!\cite[Proposition~3.4]{Gusev-23}}]
\label{P: non-dis L(M(hat{a}_{n,m}[pi]))}
The lattice $\mathfrak L\left(\mathbf M(\hat{\mathbf a}_{n,m}[\pi])\right)$ is not distributive for any $(n,m)\in \hat{\mathbb N}_0^2$ and $\pi\in S_{n,m}$.\qed
\end{proposition}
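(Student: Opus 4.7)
The plan is to mirror the strategy of Propositions~\ref{P: non-dis L(M([hat{a}_{0,k}[pi]]^lambda))}, \ref{P: non-dis L(M([a_{0,k}[pi]]^lambda))} and \ref{P: non-dis L(M_{gamma'}(a_{n,m}[pi]))}: first reduce to a convenient, slightly enlarged instance of the parameters, then manufacture three words $\mathbf v_0,\mathbf v_1,\mathbf v_2$ whose $\mathrm{FIC}$-classes in certain subvarieties pinch together exactly enough to witness non-distributivity. The reduction step will show that, for a suitable $(N,M)\in\hat{\mathbb N}_0^2$ with $N+M$ somewhat larger than $n+m$ and a suitable $\rho\in S_{N,M}$, one has $\mathbf M(\hat{\mathbf a}_{N,M}[\rho])\subseteq \mathbf M(\hat{\mathbf a}_{n,m}[\pi])$. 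This is proved exactly as in the opening paragraph of the proof of Proposition~\ref{P: non-dis L(M([hat{a}_{0,k}[pi]]^lambda))}: one checks, by Lemma~\ref{L: M(W) in V} together with the fact that the only identities $\hat{\mathbf a}_{n,m}[\pi]\approx\mathbf w$ holding in $\mathbf M(\hat{\mathbf a}_{n,m}[\pi])$ are trivial (so $\hat{\mathbf a}_{n,m}[\pi]$ is an isoterm), that all relevant factors $xyx^+$, $xx^+y$, $yxx^+ty^+$, $xyzx^+ty^+$ are isoterms for $\mathbf M(\hat{\mathbf a}_{n,m}[\pi])$ and then that $\hat{\mathbf a}_{N,M}[\rho]$ itself is one. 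After this reduction it suffices to exhibit the non-distributivity for the specific $(N,M,\rho)$.

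Next, I would set
\[
\mathbf v_0 := \mathbf p\,x\,\mathbf q\,x\,\mathbf r,\qquad
\mathbf v_1 := \mathbf p\,x\,\mathbf q[0;1]\,x\,\mathbf q[1;L-1]\,x\,\mathbf r,\qquad
\mathbf v_2 := \mathbf p\,x\,\mathbf q[0;L-1]\,x\,\mathbf q[L-1;1]\,x\,\mathbf r,
\]
where $\mathbf q$ is a linear ``middle block'' of length $L$ built from fresh letters $z_i,z_i^\prime$ arranged in the $S_{N,M}$-order dictated by $\rho$ (so that the skeleton of $\mathbf a\,x\,\mathbf q\,x\,\mathbf b$ coincides, up to renaming and after deleting the $y_i$, with $\hat{\mathbf a}_{N,M}[\rho]$), and $\mathbf p$, $\mathbf r$ collect the appropriate $z_it_i$, $t_iz_i$ prefixes/suffixes. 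The auxiliary word $\mathbf v_3:=\mathbf p\,x\,\chi(\mathbf q)\,x\,\mathbf r$ will serve as the common ``fully split'' form. I would then verify that $\{\mathbf v_0\},\{\mathbf v_1\},\{\mathbf v_2\}$ are each $\mathrm{FIC}\bigl(\mathbf M(\hat{\mathbf a}_{N,M}[\rho])\bigr)$-classes, which uses only the same isoterm information above together with an $S_{N,M}$-skeleton argument on any hypothetical identity $\mathbf v_j\approx\mathbf w$.

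Then put $\mathbf X:=\mathbf M(\hat{\mathbf a}_{N,M}[\rho])\wedge\mathrm{var}\{\mathbf v_0\approx\mathbf v_1\}$, $\mathbf Y:=\mathbf M(\hat{\mathbf a}_{N,M}[\rho])\wedge\mathrm{var}\{\mathbf v_0\approx\mathbf v_2\}$ and $\mathbf Z:=\mathbf M(\mathbf v_0)$. The key computation is to show that $\{\mathbf v_0,\mathbf v_1\}$ is a single $\mathrm{FIC}(\mathbf X)$-class and $\{\mathbf v_0,\mathbf v_2\}$ is a single $\mathrm{FIC}(\mathbf Y)$-class; this is done exactly as in the four-case analysis appearing in the proofs of Propositions~\ref{P: non-dis L(M([a_{0,k}[pi]]^beta))} and~\ref{P: non-dis L(M_{gamma'}(a_{n,m}[pi]))}, by examining an identity $\mathbf u\approx\mathbf u^\prime$ directly deducible from $\mathbf v_0\approx \mathbf v_j$ via a substitution $\phi$, using that letters $ab$ with $\{a,b\}\ne\{x,y\}$ occur at most once in $\mathbf u$ to force each $\phi(v)$ with $v\in\mathrm{mul}(\mathbf v_j)$ to be empty or a power of a letter, and then ruling out all cases except $\phi(x)\in x^+$ with $\phi({_{1\mathbf v_s}}x)$ landing in a harmless position. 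Once these two facts are established, $\mathbf v_0$ is $\mathrm{FIC}(\mathbf X\vee\mathbf Y)$-stable, so by Lemma~\ref{L: M(W) in V} we have $\mathbf Z\subseteq\mathbf X\vee\mathbf Y$, whereas $\mathbf X\wedge\mathbf Z$ forces $\mathbf v_0\approx\mathbf v_1\approx\mathbf v_3$ and $\mathbf Y\wedge\mathbf Z$ forces $\mathbf v_0\approx\mathbf v_2\approx\mathbf v_3$, hence $(\mathbf X\wedge\mathbf Z)\vee(\mathbf Y\wedge\mathbf Z)$ satisfies $\mathbf v_0\approx\mathbf v_3$; since $\mathbf v_3\notin\{\mathbf v_0\}$ and $\mathbf v_0$ is an isoterm for $\mathbf Z$, we obtain strict inclusion $(\mathbf X\wedge\mathbf Z)\vee(\mathbf Y\wedge\mathbf Z)\subsetneq\mathbf Z=(\mathbf X\vee\mathbf Y)\wedge\mathbf Z$.

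The main obstacle will be the direct-deducibility analysis in the third step: one must check, for every substitution $\phi$ realising $\mathbf u\approx\mathbf u^\prime$ as a direct consequence of $\mathbf v_0\approx\mathbf v_j$, that $\phi(x)$ cannot be a power of any $y_i$-like letter sitting inside the $S_{N,M}$-skeleton, because such a choice could in principle shift the middle occurrence of $x$ across a $z$-letter without being detected by the isoterm conditions. This is where the precise choice of $\rho\in S_{N,M}$ (with enough ``room'' in both the left-to-right and right-to-left directions, exactly as in the proofs of Propositions~\ref{P: non-dis L(M([a_{0,k}[pi]]^beta))} and~\ref{P: non-dis L(M_{gamma'}(a_{n,m}[pi]))}) is essential: it guarantees that any such shift would violate the order of first or last occurrences of some $z_{i\rho}$, contradicting the fact (from the reduction step) that this order is preserved in $\mathbf M(\hat{\mathbf a}_{N,M}[\rho])$.
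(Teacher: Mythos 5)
The paper does not actually prove this proposition: the \texttt{\textbackslash qed} immediately following the statement, together with the bracketed attribution \textup{[Proposition~3.4]} of~\cite{Gusev-23}, signals that the result is imported verbatim from the earlier paper by the same author and is not re-derived here. There is therefore no in-paper proof against which to compare your attempt; the correct ``move'' in the context of this paper would have been to recognize the citation and simply invoke it.

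That said, your strategy is the same one the present paper deploys for all of its directly proved non-distributivity propositions (Propositions~\ref{P: non-dis L(M([hat{a}_{0,k}[pi]]^lambda))}, \ref{P: non-dis L(M([a_{0,k}[pi]]^lambda))}, \ref{P: non-dis L(M([a_{0,k}[pi]]^beta))}, \ref{P: non-dis L(M_{gamma'}(a_{n,m}[pi]))}, \ref{P: non-dis L(M_{gamma''}(a_{n,m}[pi]))}, \ref{P: non-dis L(M_{alpha_1}(a_{n,n}[pi]))}, and the generalization in Proposition~\ref{P: non-dis  L(M_{zeta}(hat{a}_{n,m}[pi]))}): a reduction to enlarged parameters followed by a three-word/three-variety pinching argument on $\FIC$-classes. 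Two caveats are worth flagging if you intend to flesh this out into an actual proof. First, your outline conflates the single-letter construction of Propositions~\ref{P: non-dis L(M([a_{0,k}[pi]]^beta))}--\ref{P: non-dis L(M_{gamma'}(a_{n,m}[pi]))} (where the ``spare'' occurrence of a single letter $x$ is shuffled) with the two-letter construction of Propositions~\ref{P: non-dis L(M([hat{a}_{0,k}[pi]]^lambda))} and~\ref{P: non-dis  L(M_{zeta}(hat{a}_{n,m}[pi]))} (where both $x$ and $y$ are multiple): your stated ``$ab$ with $\{a,b\}\ne\{x,y\}$ occurs at most once'' criterion only makes sense in the two-letter setup, yet your $\mathbf v_j$ are built around one multiple letter $x$. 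You should settle on one of the two templates and use its matching factor-repetition argument. Second, for the $M(\,\cdot\,)$ case (trivial congruence), inserting a third occurrence of $x$ into the middle linear block of $\hat{\mathbf a}_{N,M}[\rho]$ requires careful control because the $z_i$ of $\hat{\mathbf a}_{N,M}[\rho]$ already straddle the two $x$'s; you need to verify explicitly (not just assert) that the $(N,M)$-permutation structure blocks the hypothetical substitution $\phi$ from relabelling $x$ into some $z_i$, and this is exactly where the doubled letters $z_i,z_i'$ in your $\mathbf q$ must be wired to the bookend products $\mathbf p,\mathbf r$ so that the ``second-occurrence comes before $\ell$-occurrence'' ordering constraints bite.
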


The following statement generalizes Proposition~\ref{P: non-dis L(M(hat{a}_{n,m}[pi]))}.

\begin{proposition}
\label{P: non-dis  L(M_{zeta}(hat{a}_{n,m}[pi]))}
If $\mathbf V$ is a variety generated by a monoid in $\mathbf K$, then the lattice $\mathfrak L\left(\mathbf V\right)$ is not distributive.
\end{proposition}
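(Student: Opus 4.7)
The plan is to exhibit, for a suitable choice of $(n,m)\in\hat{\mathbb N}_0^2$ and $\pi\in S_{n,m}$, the variety $\mathbf M(\hat{\mathbf a}_{n,m}[\pi])$ as a subvariety of $\mathbf V$, and then invoke Proposition~\ref{P: non-dis L(M(hat{a}_{n,m}[pi]))} together with the fact that the subvariety lattice of a subvariety forms an interval (hence a sublattice) of the ambient lattice. Since a lattice containing a non-distributive sublattice cannot itself be distributive, this will complete the proof.

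Concretely, let $M\in\mathbf K$ and put $\mathbf V:=\var M$. By the definition of $\mathbf K$, the collection $\Pi$ of pairs $((n,m),\pi)$ with $(n,m)\in\hat{\mathbb N}_0^2$ and $\pi\in S_{n,m}$ for which $M$ violates $\hat{\mathbf a}_{n,m}[\pi]\approx\hat{\mathbf a}_{n,m}^\prime[\pi]$ is non-empty; I would choose a pair $((n,m),\pi)\in\Pi$ for which the quantity $n+m$ is minimal. The central step is then to prove that $\hat{\mathbf a}_{n,m}[\pi]$ is an isoterm for $\mathbf V$: once this is established, Lemma~\ref{L: M(W) in V} gives $\mathbf M(\hat{\mathbf a}_{n,m}[\pi])\subseteq\mathbf V$, and the non-distributivity of $\mathfrak L(\mathbf M(\hat{\mathbf a}_{n,m}[\pi]))$ supplied by Proposition~\ref{P: non-dis L(M(hat{a}_{n,m}[pi]))} transfers upward to $\mathfrak L(\mathbf V)$.

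To prove isoterm-ness, suppose $\mathbf V$ satisfies a non-trivial identity $\hat{\mathbf a}_{n,m}[\pi]\approx\mathbf w$. The word $\hat{\mathbf a}_{n,m}[\pi]$ is $2$-island-rigid, with $x$ appearing exactly twice, each $z_i$ appearing exactly twice, each $t_i$ simple, and all blocks linear; Lemma~\ref{L: identities of M(xt_1x...t_kx)} then forces this identity to be linear-balanced. Of the five identities defining membership in $\mathbf K$, the three identities~\eqref{yxxty=xyxxty},~\eqref{xxyx=xxyxx}, and~\eqref{ytyxx=ytxyxx} each require an $x^2$ factor in the domain of the substitution, which is unavailable in the square-free word $\hat{\mathbf a}_{n,m}[\pi]$; the remaining two identities~\eqref{xyzxy=yxzxy} and~\eqref{xyzxy=xyzyx} require pairs of multiple letters in specific adjacency patterns which do not occur in $\hat{\mathbf a}_{n,m}[\pi]$ either. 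Thus any non-trivial deformation of $\hat{\mathbf a}_{n,m}[\pi]$ within $\mathbf V$ must ultimately arise from some auxiliary identity satisfied by $M$ that reduces, via Lemma~\ref{L: swapping in linear-balanced} and cancellation of the outer blocks, to an identity of the form $\hat{\mathbf a}_{n',m'}[\pi']\approx\hat{\mathbf a}_{n',m'}^\prime[\pi']$ with $n'+m'<n+m$; since such an identity would place $((n',m'),\pi')\in\Pi$, this contradicts the minimal choice of $n+m$.

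The main obstacle is this last verification: carefully classifying the possible non-trivial shapes of $\mathbf w$ and showing that each forces a strictly shorter witness of membership in $\Pi$. In particular, the interplay between the alternating permutation $\pi\in S_{n,m}$ and the moves permitted by the five defining identities has to be worked out, much in the style of the analyses performed in Section~\ref{Sec: separating identities}. Once this is settled, the remainder is routine: $\mathbf M(\hat{\mathbf a}_{n,m}[\pi])\subseteq\mathbf V$ by Lemma~\ref{L: M(W) in V}; the lattice $\mathfrak L(\mathbf M(\hat{\mathbf a}_{n,m}[\pi]))$ is the principal ideal of $\mathfrak L(\mathbf V)$ below $\mathbf M(\hat{\mathbf a}_{n,m}[\pi])$; this ideal is non-distributive by Proposition~\ref{P: non-dis L(M(hat{a}_{n,m}[pi]))}; therefore $\mathfrak L(\mathbf V)$ is not distributive.
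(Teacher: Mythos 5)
Your proposal reduces the claim to showing that $\hat{\mathbf a}_{n,m}[\pi]$ is an isoterm for $\mathbf V$ when $n+m$ is minimal, and you acknowledge this as "the main obstacle." Unfortunately that obstacle cannot be overcome: the claimed reduction is false. The paper's own proof explicitly splits into two cases, and the case $M(\hat{\mathbf a}_{p,q}[\pi])\notin\mathbf V$ for \emph{all} $(p,q)$ and $\pi$ is not vacuous. Indeed, Lemma~\ref{L: x^2 is a factor or occ_x > 2} shows that for the minimal $(n,m)$ the variety $\mathbf V$ typically satisfies a non-trivial identity $\hat{\mathbf a}_{n,m}[\rho]\approx\hat{\mathbf a}_{n,m}^j[\rho]$ (where $\hat{\mathbf a}_{n,m}^j[\rho]$ inserts an $x^2$ into a block in the suffix $\prod t_iz_i$), which is a genuine deformation that does \emph{not} force $\hat{\mathbf a}_{n,m}[\rho]\approx\hat{\mathbf a}_{n,m}^\prime[\rho]$. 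So isoterm-ness can fail even with minimal $n+m$, and Proposition~\ref{P: non-dis L(M(hat{a}_{n,m}[pi]))} cannot be invoked through Lemma~\ref{L: M(W) in V} in that case.

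There is also a logical slip in your heuristic for the claimed isoterm-ness. You argue that a deformation of $\hat{\mathbf a}_{n,m}[\pi]$ "must ultimately arise from some auxiliary identity satisfied by $M$ that reduces \ldots to an identity of the form $\hat{\mathbf a}_{n',m'}[\pi']\approx\hat{\mathbf a}_{n',m'}^\prime[\pi']$ with $n'+m'<n+m$; since such an identity would place $((n',m'),\pi')\in\Pi$, this contradicts the minimal choice." But $\Pi$ is the set of pairs for which $M$ \emph{violates} $\hat{\mathbf a}_{n',m'}[\pi']\approx\hat{\mathbf a}_{n',m'}^\prime[\pi']$; if $M$ satisfies that identity, the pair lies \emph{outside} $\Pi$, which is entirely consistent with minimality and yields no contradiction. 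The actual proof in the paper has to bypass this dead end: after showing $\mathbf V$ satisfies $\hat{\mathbf a}_{n,m}[\rho]\approx\hat{\mathbf a}_{n,m}^r[\rho]$ for some $r$, and then pinning down that $M_\lambda(xyx^+)\in\mathbf V$, it constructs explicit words $\mathbf v_0,\mathbf v_1,\mathbf v_2,\mathbf v_3$ and subvarieties $\mathbf X,\mathbf Y,\mathbf Z$ of $\mathbf V$, and verifies the failure of distributivity directly via a delicate analysis of direct deducibility (in the style of Propositions~\ref{P: non-dis L(M([hat{a}_{0,k}[pi]]^lambda))}--\ref{P: non-dis L(M_{alpha_1}(a_{n,n}[pi]))}). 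That construction is the genuine content of the proof, and your proposal does not contain a substitute for it.
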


To prove Proposition~\ref{P: non-dis  L(M_{zeta}(hat{a}_{n,m}[pi]))}, we need one auxiliary result and some notation.
For any $n,m\in\mathbb N_0$, $\rho\in S_{n+m}$ and $0\le p\le q\le n+m$, we put
\[
\hat{\mathbf a}_{n,m}^{p,q}[\rho]:=\biggl(\prod_{i=1}^n z_it_i\biggr)\biggl(\prod_{i=1}^p z_{i\rho}\biggr)x\biggl(\prod_{i=p+1}^q z_{i\rho}\biggr)x\biggl(\prod_{i=q+1}^{n+m} z_{i\rho}\biggr)\biggl(\prod_{i=n+1}^{n+m} t_iz_i\biggr).
\]
Denote by $\hat{\mathbf a}_{n,m}^j[\rho]$ the word obtained from $\hat{\mathbf a}_{n,m}[\rho]$ by replacing the block $z_j$ to the word $x^2z_j$.

\begin{lemma}
\label{L: x^2 is a factor or occ_x > 2}
Let $(n,m)\in\hat{\mathbb N}_0^2$, $\rho\in S_{n,m}$ and $\mathbf V$ be a subvariety of $\mathbf A$ satisfying the identities~\eqref{yxxty=xyxxty},~\eqref{ytyxx=ytxyxx},~\eqref{xyzxy=yxzxy} and~\eqref{xyzxy=xyzyx} such that $M(xyx)\in\mathbf V$.
Suppose that $\mathbf V$ satisfies a non-trivial identity $\hat{\mathbf a}_{n,m}[\rho] \approx \mathbf a$ for some $\mathbf a\in\mathfrak X^\ast$ with $\mathbf a_x=(\hat{\mathbf a}_{n,m}[\rho])_x$.
Suppose also that one of the following holds:
\begin{itemize}
\item[\textup{(i)}] $x^2$ is a factor of $\mathbf a$;
\item[\textup{(ii)}] $\occ_x(\mathbf a)>2$.
\end{itemize}
Then $\mathbf V$ satisfies either $\hat{\mathbf a}_{n,m}[\rho] \approx \hat{\mathbf a}_{n,m}^\prime[\rho]$ or $\hat{\mathbf a}_{n,m}[\rho] \approx \hat{\mathbf a}_{n,m}^j[\rho]$ for some $n+1\le j\le n+m$.
\end{lemma}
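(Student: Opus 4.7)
The plan is to show that under the hypotheses, the given non-trivial identity $\hat{\mathbf{a}}_{n,m}[\rho] \approx \mathbf{a}$ can be manipulated, using only the identities listed in the hypothesis, into one of the two canonical target identities. The argument proceeds by locating the ``extra'' occurrences of $x$ in $\mathbf{a}$ and then transporting them to a normalized position.

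First, I would reduce Case (ii) to Case (i). Since $\mathbf{a}_x = \hat{\mathbf{a}}_{n,m}[\rho]$, the non-$x$ skeleton and the block structure of $\mathbf{a}$ (its decomposition along the simple letters $t_1,\dots,t_{n+m}$) agree with those of $\hat{\mathbf{a}}_{n,m}[\rho]$, so the only freedom lies in how $x$ is distributed inside the blocks. If $\occ_x(\mathbf{a}) > 2$ and $x^2$ is not a factor of $\mathbf{a}$, then there must be a block containing at least two non-adjacent $x$'s; the isoterm property of $xyx$ (since $M(xyx)\in\mathbf V$) together with $x^2\approx x^3$ forces the presence of $xyx$ as an isoterm within that block, letting us generate an equivalent consequence $\hat{\mathbf a}_{n,m}[\rho]\approx \mathbf a'$ for some $\mathbf a'$ in which $x^2$ occurs as a factor. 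Thus we may assume Case (i) throughout.

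Next, I would locate the factor $x^2$ of $\mathbf{a}$ in one of the three regions of the decomposition: the head $z_1t_1\cdots z_n t_n$, the central block surrounded by the original two occurrences of $x$, or the tail $t_{n+1}z_{n+1}\cdots t_{n+m}z_{n+m}$. In each case the aim is to move $x^2$ to the canonical spot just before $t_{n+1}$ (yielding $\hat{\mathbf a}_{n,m}[\rho]\approx\hat{\mathbf a}'_{n,m}[\rho]$) or into one of the tail blocks $z_j$ (yielding $\hat{\mathbf a}_{n,m}[\rho]\approx\hat{\mathbf a}_{n,m}^j[\rho]$). The key tools are:
\begin{itemize}
\item $x^2\approx x^3$ and $x^2yx\approx x^2yx^2$ to collapse higher powers and grow a single $x$ on the right of an $x^2$;
\item $yx^2ty\approx xyx^2ty$ and $ytyx^2\approx ytxyx^2$ (the identities \eqref{yxxty=xyxxty} and \eqref{ytyxx=ytxyxx}) to shuttle $x^2$ across a simple letter when flanked by a suitable multiple letter;
\item $xyzxy\approx yxzxy$ and $xyzxy\approx xyzyx$ (the identities \eqref{xyzxy=yxzxy} and \eqref{xyzxy=xyzyx}) to permute a letter past $x$ inside a single block.
\end{itemize}
Combining these, an occurrence of $x^2$ originally lying in the head or in the central block can be relayed rightwards across the $t_i$-barriers until it arrives either at the last slot of the central block (producing $\hat{\mathbf a}'_{n,m}[\rho]$) or inside some tail block $z_j$ (producing $\hat{\mathbf a}_{n,m}^j[\rho]$); an occurrence of $x^2$ originally in the tail needs only to be consolidated into the block $z_j$ it meets.

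The main obstacle is the case where the $x^2$ lands in the head or in the middle of the central block, because pushing it rightward requires crossing several simple letters $t_i$, and at each crossing one must verify that the appropriate flanking letters are present to apply \eqref{yxxty=xyxxty} or \eqref{ytyxx=ytxyxx}; this is exactly where the constraint $\rho\in S_{n,m}$ together with the assumption that the identity $\hat{\mathbf a}_{n,m}[\rho]\approx\mathbf a$ is non-trivial (so some move really occurs) is used, and where one must check that no intermediate step accidentally collapses the word below the isoterm $xyx$.
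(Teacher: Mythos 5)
Your sketch of Part (i) is roughly in the spirit of the paper's argument (normalize exponents, push an $x^2$ rightward using $x^2yx\approx x^2yx^2$ and the identities~\eqref{yxxty=xyxxty},~\eqref{ytyxx=ytxyxx}), though the paper first derives a key auxiliary identity obtained by deleting all the $z_i$'s (identity~\eqref{remove all z_i}), which it uses to reset exponents, and then runs a careful case split on whether the extra $x$'s sit in the central block or in the $z_i$-blocks of the head/tail — that extra scaffolding is not optional.

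The serious problem is your proposed reduction of Case~(ii) to Case~(i). You assert that if $\occ_x(\mathbf a)>2$ and $x^2$ is not a factor then ``there must be a block containing at least two non-adjacent $x$'s.'' This is false. Because $\mathbf a_x$ matches the $x$-free skeleton of $\hat{\mathbf a}_{n,m}[\rho]$ but the $x$'s can sit anywhere, you can perfectly well have three or more $x$'s spread one per block. For instance with $n=m=1$ and $\rho$ the identity, $\hat{\mathbf a}_{1,1}[\rho]=z_1t_1xz_1z_2xt_2z_2$ has blocks $z_1$, $xz_1z_2x$, $z_2$, while $\mathbf a=xz_1\,t_1\,xz_1z_2\,t_2\,z_2x$ has $\occ_x=3$, no $x^2$ factor, and every block of $\mathbf a$ carries exactly one $x$. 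The next step — invoking the isoterm property of $xyx$ ``to generate an equivalent consequence with $x^2$ as a factor'' — is also not an argument: $M(xyx)\in\mathbf V$ says the opposite of what you need here, namely it constrains which identities are \emph{available}, and it is the extra identities~\eqref{yxxty=xyxxty},~\eqref{ytyxx=ytxyxx},~\eqref{xyzxy=yxzxy},~\eqref{xyzxy=xyzyx} that do the real work of merging scattered $x$'s into an $x^2$. The paper handles this via a dedicated case split in Part~(ii): when every block of $\mathbf a$ has at most one $x$, identity~\eqref{remove all z_i} collapses $\mathbf a$ to $\hat{\mathbf a}_{n,m}^{q,q}[\rho]$; when some block has two $x$'s, it derives an identity of the form~\eqref{power of a} whose left side does contain an $x^2$ factor, and only then falls back to Part~(i). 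You also omit the base case $n+m=1$, which the paper treats separately with an explicit computation using~\eqref{xyzxy=yxzxy} and~\eqref{xyzxy=xyzyx}; that base case cannot be absorbed into the inductive shuffle since there is only one simple letter to push across.
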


\begin{proof}
Clearly, $\mathbf a =\mathbf p\mathbf q\mathbf r$, where
\[
\mathbf p:=\biggl(\prod_{i=1}^n x^{e_i}z_ix^{f_i}t_i\biggr),\ \mathbf q:=x^{g_0}\biggl(\prod_{i=1}^{n+m} z_{i\rho}x^{g_i}\biggr),\ \mathbf r:=\biggl(\prod_{i=n+1}^{n+m} t_ix^{e_i}z_ix^{f_i}\biggr)
\]
and $g_0,e_1,f_1,g_1,\dots,e_{n+m},f_{n+m},g_{n+m}\in\mathbb N_0$.
Then $\mathbf V$ satisfies
\begin{equation}
\label{remove all z_i}
\biggl(\prod_{i=1}^n t_i\biggr)x^2\biggl(\prod_{i=n+1}^{n+m} t_i\biggr)\approx \biggl(\prod_{i=1}^n x^{e_i+f_i}t_i\biggr)x^{\sum_{i=0}^{n+m}{g_i}}\biggl(\prod_{i=n+1}^{n+m} t_ix^{e_i+f_i}\biggr).
\end{equation}

(i) Suppose first that $g_q>1$ for some $q\in\{0,\dots,n+m\}$.
Since $\mathbf V$ satisfies~\eqref{xxyx=xxyxx}, we may assume without any loss that $g_i=0$ for any $i=q+1,\dots,n+m$.
If $q=n+m$, then $\mathbf V$ satisfies
\[
\hat{\mathbf a}_{n,m}[\rho]\approx \mathbf a\stackrel{x^2\approx x^3}\approx \mathbf p\mathbf qx\mathbf r\stackrel{\hat{\mathbf a}_{n,m}[\rho]\approx \mathbf a}\approx \mathbf p_xx\mathbf q_xx^2\mathbf r_x\stackrel{\{\eqref{yxxty=xyxxty},\,\eqref{ytyxx=ytxyxx}\}}\approx \hat{\mathbf a}_{n,m}^\prime[\rho].
\]
If $q<n+m$, then using identities in
\[
\left\{\hat{\mathbf a}_{n,m}[\rho](x,z_{i\rho},t_n,t_{n+1})\approx \mathbf a(x,z_{i\rho},t_n,t_{n+1})\mid i\in\{q+1,\dots,n+m\}\right\},
\]
we can put occurrences of $x$ directly before $t_{n+1}$ in $\mathbf a$.
Then the same arguments as above imply that $\mathbf V$ satisfies $\hat{\mathbf a}_{n,m}[\rho]\approx \hat{\mathbf a}_{n,m}^\prime[\rho]$.

Thus, it remains to consider the case when $g_0,\dots,g_{n+m}\le 1$.
Then either $e_p>1$ or $f_p>1$ for some $p\in\{0,\dots,n+m\}$.
We may assume without loss of generality that $e_p>1$.
In this case, arguments similar to ones from the previous paragraph show that $\{x^2\approx x^3,\,\hat{\mathbf a}_{n,m}[\rho]\approx\mathbf a\}$ implies $\hat{\mathbf a}_{n,m}[\rho]\approx \hat{\mathbf a}_{n,m}^p[\rho]$.
If $p\le n$, then using the identity~\eqref{xxyx=xxyxx}, we put $x$ directly before $t_{n+1}$ in $\hat{\mathbf a}_{n,m}^p[\rho]$ and then apply the same arguments as in the previous paragraph.
If $p>n$, then we are done.

\smallskip

(ii) In view of Part~(i), we may assume that $x^2$ is not a factor of $\mathbf a$. 

First, we consider the case when $n+m=1$. 
If $n=1$ and $m=0$, then $\hat{\mathbf a}_{n,m}[\rho]\approx\mathbf a$ is nothing but $z_1t_1xz_1x \approx x^{e_1}z_1x^{f_1}t_1x^{g_0}z_1x^{g_1}$.
Since $x^2$ is not a factor of $\mathbf a$ and $\occ_x(\mathbf a)>2$, at least three of the numbers $e_1$, $f_1$, $g_0$, $g_1$ are equal to~$1$.
Then $\mathbf V$ satisfies
\[
\hat{\mathbf a}_{1,0}[\rho]\approx \mathbf a= x^{e_1}z_1x^{f_1}t_1x^{g_0}z_1x^{g_1} \stackrel{\{\eqref{xyzxy=yxzxy},\,\eqref{xyzxy=xyzyx}\}}\approx x^{e_1+f_1}z_1t_1x^{g_0+g_1}z_1 \stackrel{\eqref{remove all z_i}}\approx z_1t_1x^2z_1=\hat{\mathbf a}_{1,0}^\prime[\rho],
\]
and we are done.
By a similar argument we can show that if $n=0$ and $m=1$, then $\hat{\mathbf a}_{0,1}[\rho] \approx \hat{\mathbf a}_{0,1}^\prime[\rho]$ holds in $\mathbf V$.
So, we may further assume that $n,m\ge1$.

Suppose that every block of $\mathbf a$ contains at most one occurrence of $x$.
Then~\eqref{remove all z_i} implies $\mathbf a\approx \hat{\mathbf a}_{n,m}^{q,q}[\rho]$ for some $0\le q\le n+m$.
Now Part~(i) applies.
So, it remains to consider the case when $x$ is multiple in some block of $\mathbf a$.

Suppose that $e_j+f_j>1$ for some $j\in\{1,\dots,n+m\}$.
Then $e_j=f_j=1$.
We may assume without loss of generality that $j\le n$.
Clearly, $x^2$ is a factor of $\mathbf a(x,z_{n+1},t_{n+1})$ and $\hat{\mathbf a}_{n,m}[\rho](x,z_{n+1},t_{n+1})$ coincides (up to renaming of letters) with $\hat{\mathbf a}_{0,1}[\varepsilon]$.
Then Part~(i) implies that $\mathbf V$ satisfies one of the identities
\begin{align}
\label{yxxty=xyxty}
yx^2ty&\approx xyxty,\\
\label{xyxtxxy=xyxty}
xyxtx^2y&\approx xyxty.
\end{align}
Since
\[
xyxty\stackrel{\eqref{xyxtxxy=xyxty}}\approx xyxtx^2y\stackrel{\eqref{xyzxy=yxzxy}}\approx yx^2tx^2y \stackrel{\eqref{yxxty=xyxxty}}\approx xyx^2tx^2y\stackrel{\eqref{xyxtxxy=xyxty}}\approx xyx^2ty\stackrel{\eqref{yxxty=xyxxty}}\approx yx^2ty,
\]
the identity~\eqref{yxxty=xyxty} holds in $\mathbf V$ in any case.
Evidently,
\[
\mathbf a\stackrel{\eqref{yxxty=xyxty}}\approx \biggl(\prod_{i=1}^{j-1} x^{e_i}z_ix^{f_i}t_i\biggr) \cdot(z_jx^{e_j+f_j}t_j)\cdot \biggl(\prod_{i=j+1}^n x^{e_i}z_ix^{f_i}t_i\biggr)\biggl(\prod_{i=1}^{n+m} z_{i\rho}x^{g_i}\biggr)\!\biggl(\prod_{i=n+1}^{n+m} t_ix^{e_i}z_ix^{f_i}\biggr).
\]
Now Part~(i) applies again and we conclude that  $\mathbf V$ satisfies either $\hat{\mathbf a}_{n,m}[\rho] \approx \hat{\mathbf a}_{n,m}^\prime[\rho]$ or $\hat{\mathbf a}_{n,m}[\rho] \approx \hat{\mathbf a}_{n,m}^p[\rho]$ for some $n+1\le p\le n+m$.

Suppose now that $e_i+f_i\le 1$ for any $i=1,\dots,n+m$.
Then $\sum_{i=0}^{n+m}g_i\ge2$.
In this case, there are $1\le s\le r\le n+m$ such that $xz_{s\rho}z_{(s+1)\rho}\cdots z_{r\rho}x$ is a factor of $\mathbf a$.
Let 
\[
X:=\{x,t_1,\dots,t_{n+m},z_{s\rho},\dots ,z_{r\rho}\}.
\]
It is easy to see that the identity $\hat{\mathbf a}_{n,m}[\rho](X)\approx \mathbf a(X)$ implies the identity
\begin{equation}
\label{power of a}
\mathbf a \approx \biggl(\prod_{i=1}^n \mathbf e_it_i\biggr) \biggl(\prod_{i=1}^{s-1} x^{g_{i-1}}z_{i\rho}\biggr) x^{\sum_{i=0}^{s-1} g_i} \biggl(\prod_{i=s}^r z_{i\rho}\biggr) x^{\sum_{i=r}^{n+m} g_i}\biggl(\prod_{i=r+1}^{n+m} z_{i\rho}x^{g_i}\biggr)\biggl(\prod_{i=n+1}^{n+m} \mathbf e_it_i\biggr),
\end{equation}
where
\[ 
\mathbf e_i= 
\begin{cases} 
x^{2e_i}z_ix^{2f_i} & \text{if }\ s\le i\rho\le r;\\ 
x^{2e_i+f_i}z_ix^{f_i} & \text{if }\ e_i=1 \ \text{ and }\  i\rho<s\ \text{ or }\ r<i\rho;\\ 
x^{e_i}z_ix^{e_i+2f_i} & \text{if }\ e_i=0 \ \text{ and }\  i\rho<s\ \text{ or }\ r<i\rho.
\end{cases} 
\]
Evidently, if $e_j=1$ or $f_j=1$ for some $1\le j\le n+m$, then $x^2$ is a factor of the left hand-side of~\eqref{power of a}.
If $e_i=f_i=0$ for any $i=1,\dots,n+m$, then either $\sum_{i=0}^{s-1} g_i>1$ or $\sum_{i=r}^{n+m} g_i>1$ because $\occ_x(\mathbf a)>2$ and, therefore, $x^2$ is a factor of the left hand-side of~\eqref{power of a} as well.
We see that the left hand-side of~\eqref{power of a} contains the factor $x^2$ in either case.
According to Part~(i), the variety $\mathbf V$ satisfies  either $\hat{\mathbf a}_{n,m}[\rho] \approx \hat{\mathbf a}_{n,m}^\prime[\rho]$ or $\hat{\mathbf a}_{n,m}[\rho] \approx \hat{\mathbf a}_{n,m}^p[\rho]$ for some $n+1\le p\le n+m$.
\end{proof}

\begin{proof}[Proof of Proposition~\ref{P: non-dis  L(M_{zeta}(hat{a}_{n,m}[pi]))}]
There are $(n,m)\in \hat{\mathbb N}_0^2$ and  $\rho\in S_{n,m}$ such that $\mathbf V$ violates the identity $\hat{\mathbf a}_{n,m}[\rho]\approx\hat{\mathbf a}_{n,m}^\prime[\rho]$.
Take $(n,m)\in \hat{\mathbb N}_0^2$ so that $n+m$ is the minimal number with such a property.
Clearly, $n+m\ge1$.

If $M(xzxyty)\notin\mathbf V$, then the identity $\sigma_3$ holds in $\mathbf V$ by Lemma~\ref{L: swapping in linear-balanced}.
In this case, $\mathbf V$ satisfies
\[
\begin{aligned}
\hat{\mathbf a}_{n,m}[\rho]&{}=\mathbf h_1x\mathbf h_2x\mathbf h_3\stackrel{\sigma_3}\approx\mathbf h_1x\mathbf h_4\mathbf h_5x\mathbf h_3\stackrel{\sigma_3}\approx\mathbf h_1\mathbf h_4x^2\mathbf h_5\mathbf h_3\stackrel{x^2\approx x^3}\approx\mathbf h_1\mathbf h_4x^3\mathbf h_5\mathbf h_3\\
&{}\stackrel{\sigma_3}\approx \mathbf h_1\mathbf h_4x\mathbf h_5x^2\mathbf h_3\stackrel{\eqref{yxxty=xyxxty}}\approx\mathbf h_1\mathbf h_4\mathbf h_5x^2\mathbf h_3\stackrel{\sigma_3}\approx\mathbf h_1\mathbf h_2x^2\mathbf h_3=\hat{\mathbf a}_{n,m}^\prime[\rho],
\end{aligned}
\]
where
\[
\begin{aligned}
&\mathbf h_1:=\biggl(\prod_{i=1}^n z_it_i\biggr),\ \mathbf h_2:=\prod_{i=1}^{n+m} z_{i\rho},\ \mathbf h_3:=\biggl(\prod_{i=n+1}^{2n} t_iz_i\biggr),\\ 
&\mathbf h_4:=\biggl(\prod_{1\le i\rho\le n} z_{i\rho}\biggr),\ \mathbf h_5:=\biggl(\prod_{n< i\rho\le n+m} z_{i\rho}\biggr),
\end{aligned}
\]
contradicting the choice of $n,m$ and $\rho$.
Therefore, $M(xzxyty)\in\mathbf V$.

If $M(\hat{\mathbf a}_{i,j}[\pi])\in\mathbf V$ for some $(i,j)\in\hat{\mathbb N}_0^2$ and $\pi\in S_{i,j}$, then the required claim follows from Proposition~\ref{P: non-dis L(M(hat{a}_{n,m}[pi]))}.
So, we may further assume that $M(\hat{\mathbf a}_{i,j}[\rho])\notin\mathbf V$ for all $(i,j)\in\hat{\mathbb N}_0^2$ and $\pi\in S_{i,j}$.
In particular, $M(\hat{\mathbf a}_{n,m}[\rho])\notin\mathbf V$.
According to Lemma~\ref{L: M(W) in V}, the variety $\mathbf V$ satisfies a non-trivial identity $\hat{\mathbf a}_{n,m}[\rho] \approx \mathbf a$.
Since $xzxyty$ is an isoterm for $\mathbf V$ by Lemma~\ref{L: M(W) in V}, we have $\mathbf a_x=(\hat{\mathbf a}_{n,m}[\rho])_x$.
If $\occ_x(\mathbf a)>2$ or $x^2$ is a factor of $\mathbf a$, then Lemma~\ref{L: x^2 is a factor or occ_x > 2} implies that $m>0$ and $\mathbf V$ satisfies the identity $\hat{\mathbf a}_{n,m}[\rho]\approx\hat{\mathbf a}_{n,m}^r[\rho]$ for some $n+1\le r\le n+m$.
Let now $\occ_x(\mathbf a)\le2$ and $x^2$ is not a factor of $\mathbf a$.
Then $\occ_x(\mathbf a)=2$ since $x$ is an isoterm for $\mathbf V$.
If $({_{1\mathbf a}}x)<({_{1\mathbf a}}t_n)$, then $({_{\ell\mathbf a}}x)<({_{1\mathbf a}}t_n)$ because $xyx$ is an isoterm for $\mathbf V$.
In this case, $\mathbf V$ satisfies the identity $x^2y\approx yx^2$ and, by Lemma~4.8 in~\cite{Gusev-23}, the identity $\hat{\mathbf a}_{n,m}[\rho]\approx \hat{\mathbf a}_{n,m}^\prime[\rho]$, contradicting the choice of $n,m$ and $\rho$.
Hence $({_{1\mathbf a}}t_n)<({_{1\mathbf a}}x)$.
By a similar argument we can show that $({_{\ell\mathbf a}}x)<({_{1\mathbf a}}t_{n+1})$.
Then we may assume without any loss that $\mathbf a=\hat{\mathbf a}_{n,m}^{p,q}[\rho]$ for some $0<p< q\le n+m$ because $x^2$ is not a factor of $\mathbf a$ and the identity $\hat{\mathbf a}_{n,m}[\rho]\approx \mathbf a$ is non-trivial.
Let 
\[
X:=\{z_{1\rho},t_{1\rho},z_{2\rho},t_{2\rho},\dots,z_{p\rho},t_{p\rho},z_{(q+1)\rho},t_{(q+1)\rho},z_{(q+2)\rho},t_{(q+2)\rho},\dots,z_{(n+m)\rho},t_{(n+m)\rho}\}.
\]
Clearly, $(\hat{\mathbf a}_{n,m}[\rho])_X$ coincides (up to renaming of letters) with $\hat{\mathbf a}_{c,d}[\pi]$ for some $(c,d)\in\hat{\mathbb N}_0^2$ and $\pi \in S_{c,d}$ such that $c+d=q-p$.
By the choice of $n,m$ and $\rho$, the identity $\hat{\mathbf a}_{c,d}[\pi]\approx \hat{\mathbf a}_{c,d}^\prime[\pi]$ holds in the variety $\mathbf V$.
Hence this variety must satisfy $\mathbf a=\hat{\mathbf a}_{n,m}^{p,q}[\rho] \approx \hat{\mathbf a}_{n,m}^{q,q}[\rho]$.
Since $\mathbf V$ violates $\hat{\mathbf a}_{n,m}[\rho]\approx\hat{\mathbf a}_{n,m}^\prime[\rho]$, Lemma~\ref{L: x^2 is a factor or occ_x > 2}(i) implies that $\mathbf V$ satisfies the identity $\hat{\mathbf a}_{n,m}[\rho]\approx\hat{\mathbf a}_{n,m}^r[\rho]$ for some $n+1\le r\le n+m$.
Thus, we have proved that $\mathbf V$ satisfies this identity in any case.
Let $r$ be the least number such that the identity $\hat{\mathbf a}_{n,m}[\rho]\approx\hat{\mathbf a}_{n,m}^r[\rho]$ holds in $\mathbf V$.
In particular, the variety $\mathbf V$ satisfies the identity~\eqref{xxy=xxyx} but violates the identity $\hat{\mathbf a}_{n,m}[\rho]\approx\hat{\mathbf a}_{n,m}^j[\rho]$ for any $j=n+1,\dots,r$.

If $M_\lambda(xyx^+)\notin\mathbf V$, then the variety $\mathbf V$ satisfies the identity~\eqref{xyxx=xxyxx} by Lemma~\ref{L: nsub M(xyx^+)}. 
Then $\mathbf V$ also satisfies the identities
\[
\begin{aligned}
\hat{\mathbf a}_{n,m}[\rho]&{}\approx\hat{\mathbf a}_{n,m}^r[\rho]\stackrel{\eqref{xyxx=xxyxx}}\approx \mathbf h_1x\mathbf h_2x^2\biggl(\prod_{i=n+1}^{r-1} t_iz_i\biggr)(t_rx^2z_r)\biggl(\prod_{i=r+1}^{n+m} t_iz_i\biggr)\\
&{}\approx \mathbf h_1x\mathbf h_2x^2\mathbf h_3\stackrel{\{\eqref{yxxty=xyxxty},\,\eqref{ytyxx=ytxyxx}\}}\approx \hat{\mathbf a}_{n,m}^\prime[\rho],
\end{aligned}
\]
contradicting the choice of $n,m$ and $\rho$.
Therefore, $M_\lambda(xyx^+)\in\mathbf V$.

Suppose that $n+m\le2$.
Then $m=1$ and  $n\in\{0,1\}$ because $(n,m)\in\hat{\mathbb N}_0^2$. 
If $n=1$, then it is easy to see that the identity $\hat{\mathbf a}_{n,m}[\rho]\approx \hat{\mathbf a}_{n,m}^\prime[\rho]$ is a consequence of 
\[
\{\eqref{xxy=xxyx},\,\eqref{xyzxy=yxzxy},\,\eqref{ytyxx=ytxyx},\,\hat{\mathbf a}_{n,m}[\rho]\approx \hat{\mathbf a}_{n,m}^r[\rho]\}.
\]
By the choice of $n,m$, the identity~\eqref{ytyxx=ytxyx} holds in $\mathbf V$, contradicting the assumption that $\mathbf V$ violates $\hat{\mathbf a}_{n,m}[\rho]\approx \hat{\mathbf a}_{n,m}^\prime[\rho]$.
Therefore, the case when $n=1$ is impossible.
By a similar argument we can show that the case $n=0$ is impossible.
Hence $n+m>2$.

There are four possibilities:
\begin{itemize}
\item $n=m$ and $1\le 1\rho\le n$;
\item $n=m$ and $n+1\le 1\rho\le n+m$;
\item $n=m+1$ and so $1\le 1\rho\le n$;
\item $n=m-1$ and so $n+1\le 1\rho\le n+m$.
\end{itemize}
We will consider only the first possibility because the other ones are considered quite analogous.
In this case, $n=m\ge2$, $1\le i\rho\le n$ and $n+1\le (i+1)\rho\le n+m$ for any $i=1,3,\dots,n+m-1$.

Let 
\[
\begin{aligned}
&\mathbf v_0 := \mathbf p\,x\,\mathbf q[0;2]\,y\,\mathbf q[2;n+m+2]\,x\,\mathbf q[n+m+4;2]\,y\,\mathbf r,\\
&\mathbf v_1 := \mathbf p\,x\,\mathbf q[0;2]\,y\,\mathbf q[2;n+m+2]\,x^2\,\mathbf q[n+m+4;2]\,y\,\mathbf r,\\
&\mathbf v_2 := \mathbf p\,x\,\mathbf q[0;2]\,y\,\mathbf q[2;n+m+2]\,x\,\mathbf q[n+m+4;2]\,y^2\,\mathbf r,\\
&\mathbf v_3 := \mathbf p\,x\,\mathbf q[0;2]\,y\,\mathbf q[2;n+m+2]\,x^2\,\mathbf q[n+m+4;2]\,y^2\,\mathbf r,
\end{aligned}
\]
where
\[
\begin{aligned}
\mathbf p :={}&\psi_{n,m}^{(n+m-1)\rho}\biggl(\psi_{n,m}^{1\rho}\biggl(zt\biggl(\prod_{i=1}^n z_it_i\biggr)\biggr)\biggr),\\
\mathbf q :={}&z_{1\rho}^\prime z_{2\rho}^\prime z_{1\rho}z_{2\rho}zz^\prime \biggl(\prod_{i=3}^{n+m} z_{i\rho}\biggr)z_{(n+m-1)\rho}^\prime z_{(n+m)\rho}^\prime,\\
\mathbf r :={}&\psi_{n,m}^{(n+m)\rho}\biggl(\psi_{n,m}^{2\rho}\biggl(\biggl(\prod_{i=n+1}^{n+m} t_iz_i\biggr)t^\prime z^\prime\biggr)\biggr),
\end{aligned}
\]
and $\psi_{n,m}^i\colon \mathfrak X \to \mathfrak X^\ast$ is the substitution defined by
\[
\psi_{n,m}^i(v):=
\begin{cases} 
t_iz_i^\prime t_i^\prime & \text{if $v=t_i$ and $1\le i\le n$}, \\
t_i^\prime z_i^\prime t_i & \text{if $v=t_i$ and $n+1\le i\le n+m$}, \\
v & \text{otherwise}.
\end{cases} 
\]
Let
\[
\mathbf X := \mathbf V\wedge\var\{\mathbf v_0 \approx \mathbf v_1\},\ 
\mathbf Y := \mathbf V\wedge\var\{\mathbf v_0 \approx \mathbf v_2\},\
\mathbf Z := \mathbf V\wedge\var\{\mathbf v_1 \approx \mathbf v_2\approx \mathbf v_3\}.
\]

Let now $\mathtt v_1$ denote the set of all words $\mathbf w$ such that
\begin{itemize}
\item[\textup{(a)}] $\mathbf w_{\{x,y\}}=\mathbf p\mathbf q\mathbf r$;
\item[\textup{(b)}] the first two occurrences of both $x$ and $y$ in $\mathbf w$ lie in the same block of $\mathbf w$ as $_{2\mathbf w}z$;
\item[\textup{(c)}] $(_{2\mathbf w}z_{1\rho}^\prime)<(_{1\mathbf w}y)<(_{2\mathbf w}z_{1\rho})$ and $(_{1\mathbf w}z_{(n+m)\rho}^\prime)<(_{2\mathbf w}y)$;
\item[\textup{(d)}] there are no $y$ between $_{2\mathbf w}y$ and $_{1\mathbf w}t_r$.
\end{itemize}
We are going to show that the set $\mathtt v_1$ is stable with respect to $\mathbf X$.
Take an identity $\mathbf u \approx \mathbf u^\prime$ of $\mathbf X$ with $\mathbf u\in \mathtt v_1$.
We need to show $\mathbf u^\prime\in \mathtt v_1$.
In view of Lemma~\ref{L: M(W) in V}  and Corollary~\ref{C: M_alpha(W) in V}(ii), the sets $\{xzxyty\}$ and $xyx^+$ are stable with respect to $\mathbf V$.
It is easy to see that these two sets also are stable with respect to $\var\{\mathbf v_0\approx\mathbf v_1\}$.
Hence $\{xzxyty\}$ and $xyx^+$ are stable with respect to $\mathbf X$.
It follows that $\mathbf u^\prime_{\{x,y\}}=\mathbf p\mathbf q \mathbf r$ and the first two occurrences of both $x$ and $y$ in $\mathbf u^\prime$ lie in the same block of $\mathbf u^\prime$ as $_{2\mathbf u^\prime}z$.
In view of Proposition~\ref{P: deduction}, we may assume without loss of generality that  either $\mathbf u \approx \mathbf u^\prime$ holds in $\mathbf V$ or $\mathbf u \approx \mathbf u^\prime$ is directly deducible from $\mathbf v_0 \approx \mathbf v_1$. 

Suppose that $\mathbf u \approx \mathbf u^\prime$ holds in $\mathbf V$.
Let $\mathbf w_1:=\mathbf u(X_1)$ and $\mathbf w_1^\prime:=\mathbf u^\prime(X_1)$, where
\[
X_1:=\{y,z_{i\rho},t_{i\rho}\mid 1\le i\le n+m\}.
\]
Since $\mathbf u\in\hat{\mathtt v}_1$, the word $\ini_2(\mathbf w_1)$ coincides (up to renaming of letters) with $\hat{\mathbf a}_{n,m}[\rho]$ and there are no $y$ between $_{2\mathbf w_1}y$ and $_{1\mathbf w_1}t_r$.
Then the identity $\ini_2(\mathbf w_1)\approx \mathbf w_1$ follows from the identity $\hat{\mathbf a}_{n,m}[\rho]\approx \hat{\mathbf a}_{n,m}^r[\rho]$.
Hence $\mathbf V$ satisfies $\ini_2(\mathbf w_1)\approx \mathbf w_1^\prime$.
By the choice of $n$, $m$, $\rho$ and $r$, it follows that $\ini_2(\mathbf w_1^\prime)=\ini_2(\mathbf w_1)$ and  there are no $y$ between $_{2\mathbf w_1^\prime}y$ and $_{1\mathbf w_1^\prime}t_r$. 
Hence $(_{1\mathbf u^\prime}y)<(_{2\mathbf u^\prime}z_{1\rho})$ and there are no $y$ between $_{2\mathbf u^\prime}y$ and $_{1\mathbf u^\prime}t_r$.
By a similar argument we can show that $\ini_2(\mathbf u(X_2))=\ini_2(\mathbf u^\prime(X_2))$ and $\ini_2(\mathbf u(X_3))=\ini_2(\mathbf u^\prime(X_3))$, where
\[
\begin{aligned}
&X_2:=\{y,z_{1\rho}^\prime,t_{1\rho}^\prime,z_{i\rho},t_{i\rho}\mid 2\le i\le n+m\},\\
&X_3:=\{y,z_{(n+m)\rho}^\prime,t_{(n+m)\rho}^\prime,z_{i\rho},t_{i\rho}\mid 1\le i\le n+m-1\}.
\end{aligned}
\]
Hence $(_{2\mathbf u^\prime}z_{1\rho})<(_{1\mathbf u^\prime}y)$ and $(_{1\mathbf u^\prime}z_{(n+m)\rho}^\prime)<(_{2\mathbf u^\prime}y)$.
Therefore, $\mathbf u^\prime\in\mathtt v_1$.

Suppose now that $\mathbf u \approx \mathbf u^\prime$ is directly deducible from $\mathbf v_0 \approx \mathbf v_1$, i.e., there exist some words $\mathbf a,\mathbf b \in \mathfrak X^\ast$ and substitution $\phi\colon \mathfrak X \to \mathfrak X^\ast$ such that $(\mathbf u,\mathbf u^\prime ) = (\mathbf a\phi(\mathbf v_s)\mathbf b,\mathbf a\phi(\mathbf v_t)\mathbf b)$, where $\{s,t\}=\{0,1\}$.
First, notice that if $a$ and $b$ are distinct letters in 
\[
X_4:=\{z,z^\prime,z_{1\rho}^\prime,z_{2\rho}^\prime,z_{(n+m-1)\rho}^\prime,z_{(n+m)\rho}^\prime,z_{1\rho},\dots,z_{(n+m)\rho}\},
\] 
then $ab$ occurs in $\mathbf u$ as a factor at most once.
It follows that 
\begin{itemize}
\item[\textup{($\ast$)}] for any $v\in \mul(\mathbf v_s)=\mul(\mathbf v_t)$, if $\con(\phi(v))\cap X_4\ne\varnothing$, then $(\phi(v))_{\{x,y\}}$ is a letter.
\end{itemize}
If $y\notin\con(\phi(x))$, then $\mathbf u^\prime \in \hat{\mathtt v}_1$ because the application of $\mathbf v_s\approx \mathbf v_t$ do not change the positions of the occurrences of $y$ in $\mathbf u$.
It remains to consider the case when $y\in\con(\phi(x))$.
Clearly, the letters in 
\[
\{t,t^\prime,t_{1\rho}^\prime,t_{2\rho}^\prime,t_{(n+m-1)\rho}^\prime,t_{(n+m)\rho}^\prime,t_{1\rho},\dots,t_{(n+m)\rho}\}
\] 
do not occur in $\phi(x)$ because these letters are simple in $\mathbf u$, while $x\in\mul(\mathbf v_s)=\mul(\mathbf v_t)$.
Further, the letters in $X_4$ also do not occur in $\phi(x)$ because the first and the second occurrences of these letters lie in different blocks in $\mathbf u$, while all the occurrences of $x$ lie in the same block in $\mathbf v_s$.
Thus, $\{y\}\subseteq\con(\phi(x))\subseteq\{x,y\}$.
If the image under $\phi$ of $_{1\mathbf v_s}x$ does not contain ${_{1\mathbf u}}y$, then $\mathbf u^\prime \in \hat{\mathtt v}_1$ because the application of $\mathbf v_s\approx \mathbf v_t$ do not change the positions of the first three occurrences of $y$ in $\mathbf u$.
Assume now that the image under $\phi$ of $_{1\mathbf v_s}x$ contains ${_{1\mathbf u}}y$.
This is only possible when $s=0$ and the image under $\phi$ of $_{2\mathbf v_s}x$ contains ${_{2\mathbf u}}y$.
Now we apply~($\ast$), yielding that $X_4\setminus\{z_{1\rho}^\prime,z_{2\rho}^\prime\}$ is a subset of 
\[
\left\{(\phi(z)))_x\mid z\in X_4\setminus\{z_{(n+m-1)\rho}^\prime,z_{(n+m)\rho}^\prime\}\right\}.
\]
Since these two sets are of the same power, this is only possible when these two sets are equal.
Hence 
\[
(\phi(z_{1\rho}^\prime))_x=z_{1\rho},  (\phi(z_{2\rho}^\prime))_x=z_{2\rho}, (\phi(y))_x=1, (\phi(z_{1\rho}))_x=z, (\phi(z_{2\rho}))_x=z^\prime, (\phi(z))_x=z_{3\rho},
\]
contradicting $({_{1\mathbf v_s}}z)<({_{1\mathbf v_s}}z_{1\rho})$ and $({_{1\mathbf u}}z)<({_{1\mathbf u}}z_{3\rho})$.
Therefore, the case when $y\in\con(\phi(x))$ is impossible and $\mathbf u^\prime \in \mathtt v_1$.

We see that the set $\mathtt v_1$ is stable with respect to $\mathbf X$.
By similar arguments we can show that the set $\mathtt v_2$ of all words $\mathbf w$ such that~(a) and~(b) hold and
\begin{itemize}
\item[\textup{(c$^\prime$)}] $(_{1\mathbf w}x)<(_{2\mathbf w}z_{1\rho}^\prime)$ and $(_{2\mathbf w}x)<(_{1\mathbf w}z_{(n+m)\rho}^\prime)$;
\item[\textup{(d$^\prime$)}] there are no $x$ between $_{2\mathbf w}x$ and $_{1\mathbf w}t_r$;
\end{itemize}
is stable with respect to $\mathbf Y$.
Then the set $\mathtt v_1\cap \mathtt v_2$ is stable with respect to $\mathbf X\vee \mathbf Y$.
The same arguments as above imply that $\mathtt v_1\cap \mathtt v_2$ is also stable with respect to $\mathbf Z$.
Therefore, $\mathtt v_1\cap \mathtt v_2$ is stable with respect to $(\mathbf X\vee \mathbf Y)\wedge \mathbf Z$.
Clearly, the identity $\mathbf v_0\approx \mathbf v_3$ holds in the variety $(\mathbf X\wedge \mathbf Z)\vee(\mathbf Y\wedge \mathbf Z)$.
We see that $\mathtt v_0$ is not stable with respect to this variety, whence
\[
(\mathbf X\wedge \mathbf Z)\vee(\mathbf Y\wedge \mathbf Z)\subset \mathbf Z= (\mathbf X\vee \mathbf Y)\wedge \mathbf Z.
\]
Thus, we have proved that the lattice $\mathfrak L(\mathbf V)$ is not distributive.
\end{proof}

\subsection{Infinite series of varieties induced by $\mathbf c_{n,m,k}[\rho]$}

\begin{proposition}
\label{P: non-mod M([c_{0,0,n}[rho]]^lambda)}
The lattice $\mathfrak L\left(\mathbf M_\lambda([\mathbf c_{0,0,n}[\rho]]^\lambda)\right)$ is not modular for any $n\in \mathbb N$ and $\rho\in S_n$.
\end{proposition}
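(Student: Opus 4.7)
Write $\mathtt c_k[\tau] := [\mathbf c_{0,0,k}[\tau]]^\lambda$ for brevity. The strategy is to adapt the technique used in Propositions~\ref{P: non-dis L(M([hat{a}_{0,k}[pi]]^lambda))} and~\ref{P: non-dis L(M([a_{0,k}[pi]]^lambda))} so as to exhibit a pentagon $N_5$ inside $\mathfrak L(\mathbf M_\lambda(\mathtt c_n[\rho]))$, rather than the diamond-like configuration that witnesses non-distributivity in those proofs.

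The first step is a reduction to a convenient choice of permutation. As in the opening paragraphs of those propositions, I will construct $\rho' \in S_{n'}$ for some $n' > n$, obtained from $\rho$ by padding with an extra index at an appropriate position, and verify that $\mathtt c_{n'}[\rho']$ is stable with respect to $\mathbf M_\lambda(\mathtt c_n[\rho])$. The needed auxiliary stability of the $\lambda$-classes $xyx^+$, $xx^+y$, $yxx^+ty^+$, $xzyx^+ty^+$ and $[(\mathbf c_{0,0,n'}[\rho'])_{\{x,y\}}]^\lambda$ can be established by the same substitution bookkeeping as in Lemma~\ref{L: stable M_lambda(hat{a}_{0,k}[pi])}, after which Corollary~\ref{C: M_alpha(W) in V}(ii) gives $M_\lambda(\mathtt c_{n'}[\rho']) \in \mathbf M_\lambda(\mathtt c_n[\rho])$. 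It therefore suffices to prove non-modularity of $\mathfrak L(\mathbf M_\lambda(\mathtt c_{n'}[\rho']))$.

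Next, I will construct four test words $\mathbf v_0, \mathbf v_1, \mathbf v_2, \mathbf v_3$ built from an expanded copy of $\mathbf c_{0,0,n'}[\rho']$ augmented with duplicated padding letters $z_i^{(j)}, t_i^{(j)}$, in the style of the proofs above. The words $\mathbf v_1$ and $\mathbf v_2$ will each shift one of the first occurrences of $x$ or of $y$ across a single padding block, $\mathbf v_3$ will perform both shifts simultaneously, and $\mathbf v_0$ will be the original. The design is arranged so that $\mathbf v_1 \approx \mathbf v_3$ is a consequence of $\mathbf v_0 \approx \mathbf v_2$ already in $\mathbf M_\lambda(\mathtt c_{n'}[\rho'])$, while the full identity $\mathbf v_0 \approx \mathbf v_3$ forces the identity $\mathbf c_{0,0,n'}[\rho'] \approx \mathbf c_{0,0,n'}^\prime[\rho']$ that is forbidden in $\mathbf M_\lambda(\mathtt c_{n'}[\rho'])$. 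Setting
\begin{align*}
\mathbf A &:= \mathbf M_\lambda(\mathtt c_{n'}[\rho']) \wedge \var\{\mathbf v_0 \approx \mathbf v_2\},\\
\mathbf B &:= \mathbf M_\lambda(\mathtt c_{n'}[\rho']) \wedge \var\{\mathbf v_0 \approx \mathbf v_1\},
\end{align*}
one has $\mathbf A \vee \mathbf B$ satisfying $\mathbf v_0 \approx \mathbf v_3$. I will then produce an intermediate subvariety $\mathbf D$ with $\mathbf A \subsetneq \mathbf D \subsetneq \mathbf A \vee \mathbf B$ such that $\mathbf D \wedge \mathbf B = \mathbf A \wedge \mathbf B$ and $\mathbf D \vee \mathbf B = \mathbf A \vee \mathbf B$; the elements $\mathbf A \wedge \mathbf B,\ \mathbf B,\ \mathbf A,\ \mathbf D,\ \mathbf A \vee \mathbf B$ then form a pentagon sublattice.

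The main obstacle is verifying that the $\lambda$-classes $[\mathbf v_i]^\lambda$ remain unions of $\FIC$-classes with respect to the relevant subvarieties, so that $\mathbf D$ is strictly separated from both $\mathbf A$ and $\mathbf A \vee \mathbf B$. This reduces to a substitution analysis for direct deductions of $\mathbf v_0 \approx \mathbf v_i$ from identities of $\mathbf M_\lambda(\mathtt c_{n'}[\rho'])$: because most padding letters are simple in $\mathbf v_0$, the familiar ``$(\ast)$''-style restriction forces that $\phi$ applied to any multiple letter is either empty or a power of a single letter, after which a counting comparison between the content of the factor of $\mathbf u$ between its two occurrences of $x$ (or of $y$) and the analogous factor of $\mathbf v_s$ rules out the undesired cases. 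As in the proofs of Propositions~\ref{P: non-dis L(M([hat{a}_{0,k}[pi]]^lambda))} and~\ref{P: non-dis L(M([a_{0,k}[pi]]^lambda))}, the delicate point is tailoring the padding so that the counting strictly excludes the unwanted substitutions while preserving the asymmetry between $\mathbf v_1$ and $\mathbf v_2$ that distinguishes the pentagon from a diamond.
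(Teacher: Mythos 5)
Your proposal identifies the right high-level goal (a pentagon, not a diamond) but the adaptation you sketch does not hang together, and it misses the device the paper actually relies on.

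First, there is an internal inconsistency. You claim that $\mathbf A \vee \mathbf B$ satisfies $\mathbf v_0 \approx \mathbf v_3$, and also that $\mathbf v_0 \approx \mathbf v_3$ forces $\mathbf c_{0,0,n'}[\rho'] \approx \mathbf c_{0,0,n'}^\prime[\rho']$. But $\mathbf A$ and $\mathbf B$ are both subvarieties of $\mathbf M_\lambda(\mathtt c_{n'}[\rho'])$, hence so is $\mathbf A\vee\mathbf B$; if it satisfied an identity that forces the forbidden $\mathbf c_{0,0,n'}[\rho'] \approx \mathbf c_{0,0,n'}^\prime[\rho']$, then that identity could not hold in $\mathbf M_\lambda(\mathtt c_{n'}[\rho'])$ itself, and the join of two subvarieties cannot satisfy an identity that fails in the ambient variety unless both of them do. Moreover $\mathbf A$ satisfies only $\mathbf v_0\approx\mathbf v_2$ and (by your design) $\mathbf v_1\approx\mathbf v_3$; from these you cannot deduce $\mathbf v_0\approx\mathbf v_3$ in $\mathbf A$, so the claimed property of $\mathbf A\vee\mathbf B$ is not established even setting aside the contradiction. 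The unmotivated ``intermediate variety $\mathbf D$'' is the crux of the whole argument, yet it is not constructed, and your $\mathbf v_0,\dots,\mathbf v_3$ have exactly the symmetric, two-shift shape used for a diamond, which gives no mechanism for producing the required incomparable-with-intermediate configuration.

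The paper's proof takes a genuinely different route. It exploits the fact that the single $\lambda$-class $\mathtt c_n[\rho]$ has \emph{two incomparable} ``padded'' extensions $\mathtt c_k[\pi]$ and $\mathtt c_k[\tau]$ (constructed via the permutation-insertion operators $\theta_{q,r}$), each stable with respect to $\mathbf M_\lambda(\mathtt c_n[\rho])$, and works in $\mathbf M_\lambda(\mathtt c_k[\pi],\mathtt c_k[\tau])$. The test words $\mathbf v_{\xi,\eta}$ are indexed by pairs $(\xi,\eta)\in S_2\times S_2$ and are built from alternating $\pi$-padded, $\tau$-padded, $\pi$-padded blocks; Lemma~\ref{L: FIC(M(c_{0,0,k}[tau]))-class} shows the union of all four $\lambda$-classes is a single $\FIC$-class, and Lemma~\ref{L: v_{xi,eta}} pins down which direct deductions can move between them. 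The three varieties $\mathbf X,\mathbf Y,\mathbf Z$ are then defined so that $\mathbf Z\subseteq\mathbf Y$ and $(\mathbf X\wedge\mathbf Y)\vee\mathbf Z\subset(\mathbf X\vee\mathbf Z)\wedge\mathbf Y$. The asymmetry that makes the pentagon possible comes entirely from the distinction between the $\pi$-part and the $\tau$-part of the words, which correspond to the two incomparable monoids. Your sketch has no analogue of that device; the padding with duplicated simple letters in the style of the $\mathbf a$-word propositions only produces the symmetric diamond obstruction, not the asymmetric pentagon one.
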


\begin{proof}
First, it is easy to see that
\begin{itemize}
\item[\textup{($\star$)}] the sets $xzyx^+ty^+$ and $xx^+yty^+$ are stable with respect to $\mathbf M_\lambda([\mathbf c_{0,0,q}[\theta]]^\lambda)$ for any $q\in\mathbb N$ and $\theta\in S_q$.
\end{itemize} 

For any $p\in\mathbb N$, $\theta\in S_p$ and $q,r\in\{1,\dots, p+1\}$, we define the permutation $\theta_{q,r}\in S_{p+1}$ as follows: 
\begin{itemize}
\item for any $i=1,\dots,q-1$,
\[
i\theta_{q,r} := 
\begin{cases} 
i\theta & \text{if }i\theta< r, \\ 
i\theta+1 & \text{if }r\le i\theta, 
\end{cases} 
\]
\item $q\theta_{q,r} := r$;
\item for any $i=q+1,\dots,p+1$,
\[
i\theta_{q,r} := 
\begin{cases} 
(i-1)\theta & \text{if }(i-1)\theta< r, \\ 
(i-1)\theta+1 & \text{if }r\le (i-1)\theta. 
\end{cases} 
\]
\end{itemize}
For any $\theta\in S_q$, define $\mathbf c_q[\theta]:=(\mathbf c_{0,0,q}[\theta])_{\{y_1,y_{q-1}\}}$, $\mathbf c_q^\prime[\theta]:=(\mathbf c_{0,0,q}^\prime[\theta])_{\{y_1,y_{q-1}\}}$ and $\mathtt c_q[\theta]:=[\mathbf c_q[\theta]]^\lambda$.

For brevity, put $k:=n+2$, $\pi:=(\rho_{2,n+1})_{n+1,1}$ and $\tau:=(\rho_{2,1})_{n+1,n+2}$.
Let $\mathbf u\approx \mathbf v$ be an identity of $\mathbf M_\lambda([\mathbf c_{0,0,n}[\rho]]^\lambda)$ with $\mathbf u\in \mathtt c_k[\pi]$.
Then~($\star$) implies that 
\[
\mathbf v\in\mathbf ctx^+z_{1\pi}\biggl(\prod_{i=2}^{k-2} z_{i\pi}y_iy_i^+\biggr)z_{(k-1)\pi}z_{k\pi}y^+\biggl(\prod_{i=1}^k t_iz_i\biggr),
\]
where $\mathbf c\in\{xy,yx\}$.
By definition, $(\mathbf c_{0,0,k}[\pi])_{X_1}$ coincides (up to renaming of letters) with $\mathbf c_{0,0,n}[\rho]$ for $X_1:=\{y_2,y_{k-2},z_1,z_k,t_2,t_k\}$.
Then Lemma~\ref{L: M_alpha(W) in V} implies that $\mathbf v_{X_1}\in[\mathbf u_{X_1}]^\lambda$.
In particular, $\mathbf c=xy$.
Thus, $\mathbf v\in \mathtt c_k[\pi]$. 
We see that $\mathtt c_k[\pi]$ is stable with respect to $\mathbf M_\lambda([\mathbf c_{0,0,n}[\rho]]^\lambda)$.
By similar arguments we can show that $\mathtt c_k[\tau]$ is stable with respect to $\mathbf M_\lambda([\mathbf c_{0,0,n}[\rho]]^\lambda)$.
Then $M_\lambda(\mathtt c_k[\pi])$ and $M_\lambda(\mathtt c_k[\tau])$ lie in $\mathbf M_\lambda([\mathbf c_{0,0,n}[\rho]]^\lambda)$.
It is routine to check that the monoids $M_\lambda(\mathtt c_k[\pi])$ and $M_\lambda(\mathtt c_k[\tau])$ satisfy the identities $\mathbf c_k[\tau]\approx \mathbf c_k^\prime[\tau]$ and $\mathbf c_k[\pi]\approx \mathbf c_k^\prime[\pi]$, respectively.
So, we have proved that the variety $\mathbf M_\lambda([\mathbf c_{0,0,n}[\rho]]^\lambda)$ contains two incomparable subvarieties $\mathbf M_\lambda(\mathtt c_k[\pi])$ and $\mathbf M_\lambda(\mathtt c_k[\tau])$.
In view of this fact and Lemma~\ref{L: M_alpha(W_1) vee M_alpha(W_2)}, it suffices to show that the lattice $\mathfrak L\left(\mathbf M_\lambda(\mathtt c_k[\pi],\mathtt c_k[\tau])\right)$ is not modular.

For any $\xi,\eta \in S_2$, we define the word:
\[
\mathbf v_{\xi,\eta}:= a_1b_1\, x_{1\xi}x_{2\xi}\, x_{1\eta}^\prime x_{2\eta}^\prime\, b_2a_2\,t\mathbf r\mathbf s,
\]
where
\[
\begin{aligned}
\mathbf r :={}&x_1z_{1\pi}^\prime a_1\biggl(\prod_{i=2}^{k-2} z_{i\pi}^\prime (y_i^\prime)^2\biggr) z_{(k-1)\pi}^\prime b_2z_{k\pi}^\prime x_2\, z_{1\tau}\biggl(\prod_{i=2}^{k-2}z_{i\tau}y_i^2\biggr)z_{(k-1)\tau}z_{k\tau}\cdot\\
&\cdot x_1^\prime z_{1\pi}^{\prime\prime} b_1\biggl(\prod_{i=2}^{k-2} z_{i\pi}^{\prime\prime}(y_i^{\prime\prime})^2\biggr)z_{(k-1)\pi}^{\prime\prime}a_2z_{k\pi}^{\prime\prime} x_2^\prime,\\
\mathbf s :={}&\biggl(\prod_{i=1}^k t_iz_i\biggr)\biggl(\prod_{i=1}^k t_i^\prime z_i^\prime\biggr)\biggl(\prod_{i=1}^k t_i^{\prime\prime} z_i^{\prime\prime}\biggr).
\end{aligned}
\]
Put $\mathtt v_{\xi,\eta}:=[\mathbf v_{\xi,\eta}]^\lambda$ for any $\xi,\eta \in S_2$.
Let $\varepsilon$ [respectively, $\upsilon$] denote the identity [respectively, unique non-identity] element in $S_2$.
We need the following two auxiliary facts.

\begin{lemma}
\label{L: FIC(M(c_{0,0,k}[tau]))-class}
The set $\mathtt v_{\varepsilon,\varepsilon}\cup\mathtt v_{\upsilon,\varepsilon}\cup\mathtt v_{\varepsilon,\upsilon}\cup\mathtt v_{\upsilon,\upsilon}$ forms a $\FIC(\mathbf M_\lambda(\mathtt c_k[\tau]))$-class.
\end{lemma}

\begin{proof}
It is routine to check that $M_\lambda(\mathtt c_k[\tau])$ satisfies $\mathbf u\approx\mathbf u^\prime$ whenever $\mathbf u,\mathbf u^\prime\in\mathtt v_{\varepsilon,\varepsilon}\cup\mathtt v_{\upsilon,\varepsilon}\cup\mathtt v_{\varepsilon,\upsilon}\cup\mathtt v_{\upsilon,\upsilon}$.
Let now $\mathbf v_{\varepsilon,\varepsilon}\approx \mathbf v$ be an identity of $M(\mathbf c_{n,m,k+2}[\tau])$.
Then~($\star$) implies that $\mathbf v\in[\mathbf v^\prime \mathbf q\mathbf r]^\lambda$, where $\mathbf v^\prime$ is a linear word with $\con(\mathbf v^\prime)=\{a_1,a_2,b_1,b_2,x_1,x_2,x_1^\prime,x_2^\prime\}$.
Since $(\mathbf v_{\varepsilon,\varepsilon})(X_2)$ coincides (up to renaming of letters) with $\mathbf c_k[\tau]$ for 
\[
X_2:=\{a_1,b_1,t\}\cup\{y_j,z_i,t_i\mid 1\le i\le k,\, 2\le j\le k-2\},
\] 
Lemma~\ref{L: M_alpha(W) in V} implies that $(_{1\mathbf v}a_1)<(_{1\mathbf v}b_1)$.
By a similar argument we can show that 
\[
\begin{aligned}
&(_{1\mathbf v}b_1)<(_{1\mathbf v}x_1),\ (_{1\mathbf v}b_1)<(_{1\mathbf v}x_2),\ (_{1\mathbf v}x_1)<(_{1\mathbf v}x_1^\prime),\\
&(_{1\mathbf v}x_1)<(_{1\mathbf v}x_2^\prime),\ (_{1\mathbf v}x_2)<(_{1\mathbf v}x_1^\prime),\ (_{1\mathbf v}x_2)<(_{1\mathbf v}x_2^\prime),\\
&(_{1\mathbf v}x_1^\prime)<(_{1\mathbf v}b_2),\ (_{1\mathbf v}x_2^\prime)<(_{1\mathbf v}b_2),\
(_{1\mathbf v}b_2)<(_{1\mathbf v}a_2).
\end{aligned}
\]
It follows that $\mathbf v \in\mathtt v_{\xi,\eta}$ for some $\xi,\eta\in S_2$ as required.
\end{proof}

\begin{lemma}
\label{L: v_{xi,eta}}
Let $\xi,\eta,\xi_1,\eta_1,\xi_2,\eta_2\in S_2$, $\mathbf u\in \mathtt v_{\xi,\eta}$ and $\mathbf v\notin \mathtt v_{\xi,\eta}$. 
If the identity $\mathbf u \approx \mathbf v$ is directly deducible from the identity $\mathbf v_{\xi_1,\eta_1} \approx \mathbf v_{\xi_2,\eta_2}$, then $\{\mathtt v_{\xi,\eta},[\mathbf v]^\lambda\}=\{\mathtt v_{\xi_1,\eta_1},\mathtt v_{\xi_2,\eta_2}\}$.
\end{lemma}

\begin{proof}
Since $\mathbf u \approx \mathbf v$ is directly deducible from $\mathbf v_{\xi_1,\eta_1} \approx \mathbf v_{\xi_2,\eta_2}$, there are $\mathbf a,\mathbf b\in \mathfrak X^\ast$ and $\phi\colon \mathfrak X \to \mathfrak X^\ast$ such that $(\mathbf u,\mathbf v)=(\mathbf a\phi(\mathbf v_{\xi_1,\eta_1})\mathbf b,\mathbf a\phi(\mathbf v_{\xi_2,\eta_2})\mathbf b)$.
Now Lemma~\ref{L: FIC(M(c_{0,0,k}[tau]))-class} implies that $\mathbf v\in\mathtt v_{\xi^\prime,\eta^\prime}$ for some $\xi^\prime,\eta^\prime\in S_2$.
Since $\mathbf u\in \mathtt v_{\xi,\eta}$ and $\mathbf v\notin \mathtt v_{\xi,\eta}$, we have $(\xi,\eta)\ne(\xi^\prime,\eta^\prime)$.
By symmetry, we may assume that $\xi\ne\xi^\prime$.
Then the first occurrences of $x_1$ and $x_2$ appear in different order in the words $\mathbf u$ and $\mathbf v$.
This is only possible when one of the following holds:
\begin{itemize}
\item $\xi_1\ne\xi_2$ and the image of ${_{1\mathbf v_{\xi_1,\eta_1}}}x_{i\xi_1}$ under $\phi$ contains ${_{1\mathbf u}}x_{i\xi}$, $i=1,2$;
\item $\eta_1\ne\eta_2$ and the image of ${_{1\mathbf v_{\xi_1,\eta_1}}}x_{i\eta_1}^\prime$ under $\phi$ contains ${_{1\mathbf u}}x_{i\xi}$, $i=1,2$.
\end{itemize}
We notice that if $a$ and $b$ are distinct letters, then $ab$ occurs in $\mathbf u$ as a factor at most once.
It follows that
\begin{itemize}
\item[\textup{($\ast$)}] $\phi(c)$ is either the empty word or a power of letter for any $c\in\mul(\mathbf v_{\xi_1,\eta_1})$.
\end{itemize}
Since the first block of $\mathbf u$ is a linear word, this fact implies that one of the following holds:
\begin{itemize}
\item[\textup{(a)}] $\xi_1\ne\xi_2$, $\phi(x_{1\xi_1})=\phi(x_{2\xi_2})=x_{1\xi}$ and $\phi(x_{2\xi_1})=\phi(x_{1\xi_2})=x_{2\xi}$;
\item[\textup{(b)}] $\eta_1\ne\eta_2$, $\phi(x_{1\eta_1}^\prime)=\phi(x_{2\eta_2}^\prime)=x_{1\xi}$ and $\phi(x_{2\eta_1}^\prime)=\phi(x_{1\eta_2}^\prime)=x_{2\xi}$.
\end{itemize}

Suppose that~(a) holds.
In this case, the second occurrences of $x_{1\xi_1}$ and $x_{2\xi_1}$ must appear in $\mathbf v_{\xi_1,\eta_1}$ in the same order as the second occurrences of $x_{1\xi}$ and $x_{2\xi}$ in $\mathbf u$.
This implies that $(1\xi_1,2\xi_1)=(1\xi,2\xi)$, whence $\xi_1=\xi$.
Then
\[
\phi\biggl(z_{1\pi}^\prime a_1\biggl(\prod_{i=2}^{k-2} z_{i\pi}^\prime (y_i^\prime)^2\biggr) z_{(k-1)\pi}^\prime b_2z_{k\pi}^\prime\biggr)\in z_{1\pi}^\prime a_1a_1^+\biggl(\prod_{i=2}^{k-2} z_{i\pi}^\prime y_i^\prime(y_i^\prime)^+\biggr) z_{(k-1)\pi}^\prime b_2b_2^+z_{k\pi}^\prime.
\]
It follows from~($\ast$) and the fact that the first block of $\mathbf u$ is a linear word that $\phi(a_1)=a_1$, $\phi(b_2)=b_2$ and $\phi(z_{i\pi}^\prime)=z_{i\pi}^\prime$ for any $i=1,\dots,k$.
Then 
\[
\phi(b_1x_{1\xi_1}x_{2\xi_1}x_{1\eta_1}^\prime x_{2\eta_1}^\prime)=b_1x_{1\xi}x_{2\xi}x_{1\eta}^\prime x_{2\eta}^\prime.
\]
Now we apply~($\ast$) again, yielding that $\phi(b_1)=b_1$ and $\phi(x_{i\eta_1}^\prime)=x_{i\eta}^\prime$ for any $i=1,2$.
Clearly, the second occurrences of $x_{1\eta_1}^\prime$ and $x_{2\eta_1}^\prime$ must appear in $\mathbf v_{\xi_1,\eta_1}$ in the same order as the second occurrences of $x_{1\eta}^\prime$ and $x_{2\eta}^\prime$ in $\mathbf u$.
This implies that $(1\eta_1,2\eta_1)=(1\eta,2\eta)$, whence $\eta_1=\eta$.
Then $\mathbf u\in \mathtt v_{\xi_1,\eta_1}$, $\mathbf v\in \mathtt v_{\xi_2,\eta_2}$ and so $\{\mathtt v_{\xi,\eta},[\mathbf v]^\lambda\}=\{\mathtt v_{\xi_1,\eta_1},\mathtt v_{\xi_2,\eta_2}\}$.

Suppose that~(b) holds.
In this case, the second occurrences of $x_{1\eta_1}^\prime$ and $x_{2\eta_1}^\prime$ must appear in $\mathbf v_{\xi_1,\eta_1}$ in the same order as the second occurrences of $x_{1\xi}$ and $x_{2\xi}$ in $\mathbf u$.
This implies that $(1\eta_1,2\eta_1)=(1\xi,2\xi)$, whence $\eta_1=\xi$.
Then
\[
\phi\biggl( z_{1\pi}^{\prime\prime} b_1\biggl(\prod_{i=2}^{k-2} z_{i\pi}^{\prime\prime}(y_i^{\prime\prime})^2\biggr)z_{(k-1)\pi}^{\prime\prime}a_2z_{k\pi}^{\prime\prime}\biggr)\in z_{1\pi}^\prime a_1a_1^+\biggl(\prod_{i=2}^{k-2} z_{i\pi}^\prime y_i^\prime(y_i^\prime)^+\biggr) z_{(k-1)\pi}^\prime b_2b_2^+z_{k\pi}^\prime.
\]
It follows from~($\ast$) and the fact that the first block of $\mathbf u$ is a linear word that $\phi(b_1)=a_1$ and $\phi(a_2)=b_2$.
Then 
\[
\phi(x_{1\xi_1}x_{2\xi_1}x_{1\eta_1}^\prime x_{2\eta_1}^\prime b_2)=b_1x_{1\xi}x_{2\xi}x_{1\eta}^\prime x_{2\eta}^\prime.
\]
Now we apply~($\ast$) again and obtain that $\phi(x_{2\eta_1}^\prime)=x_{1\eta}^\prime$ contradicting~(b).
Therefore,~(b) is impossible.

Lemma~\ref{L: v_{xi,eta}} is proved.
\end{proof}

One can return to the proof of Proposition~\ref{P: non-mod M([c_{0,0,n}[rho]]^lambda)}.
Let 
\[
\begin{aligned}
&\mathbf X := \mathbf M_\lambda(\mathtt c_k[\pi],\mathtt c_k[\tau])\wedge\var\{\mathbf v_{\varepsilon,\varepsilon} \approx \mathbf v_{\upsilon,\varepsilon},\,\mathbf v_{\varepsilon,\upsilon} \approx \mathbf v_{\upsilon,\upsilon}\},\\
&\mathbf Y := \mathbf M_\lambda(\mathtt c_k[\pi],\mathtt c_k[\tau])\wedge\var\{\mathbf v_{\upsilon,\varepsilon} \approx \mathbf v_{\upsilon,\upsilon}\},\\
&\mathbf Z := \mathbf M_\lambda(\mathtt c_k[\pi],\mathtt c_k[\tau])\wedge\var\{\mathbf v_{\varepsilon,\varepsilon} \approx \mathbf v_{\varepsilon,\upsilon},\,\mathbf v_{\upsilon,\varepsilon} \approx \mathbf v_{\upsilon,\upsilon}\}.
\end{aligned}
\]
Consider an identity $\mathbf u \approx \mathbf u^\prime$ of $\mathbf X$ with $\mathbf u\in \mathtt v_{\varepsilon,\varepsilon}\cup\mathtt v_{\upsilon,\varepsilon}$.
We are going to show that $\mathbf u^\prime \in \mathtt v_{\varepsilon,\varepsilon}\cup\mathtt v_{\upsilon,\varepsilon}$.
In view of Proposition~\ref{P: deduction}, we may assume without loss of generality that  either $\mathbf u \approx \mathbf u^\prime$ holds in $\mathbf M_\lambda(\mathtt c_k[\pi],\mathtt c_k[\tau])$ or $\mathbf u \approx \mathbf u^\prime$ is directly deducible from $\mathbf v_{\varepsilon,\varepsilon} \approx \mathbf v_{\upsilon,\varepsilon}$ or $\mathbf v_{\varepsilon,\upsilon} \approx \mathbf v_{\upsilon,\upsilon}$. 
According to Lemma~\ref{L: v_{xi,eta}}, $\mathbf u \approx \mathbf u^\prime$ cannot be directly deducible from $\mathbf v_{\varepsilon,\upsilon} \approx \mathbf v_{\upsilon,\upsilon}$ and if $\mathbf u \approx \mathbf u^\prime$ is directly deducible from $\mathbf v_{\varepsilon,\varepsilon} \approx \mathbf v_{\upsilon,\varepsilon}$, then $\{[\mathbf u]^\lambda, [\mathbf u^\prime]^\lambda\}\subseteq \{\mathtt v_{\varepsilon,\varepsilon}, \mathtt v_{\upsilon,\varepsilon}\}$.
Therefore, it remains to consider the case when $\mathbf u \approx \mathbf u^\prime$ is satisfied by $\mathbf M_\lambda(\mathtt c_k[\pi],\mathtt c_k[\tau])$.
It follows from Lemma~\ref{L: FIC(M(c_{0,0,k}[tau]))-class} that $\mathbf u^\prime\in\mathtt v_{\varepsilon,\varepsilon}\cup\mathtt v_{\upsilon,\varepsilon}\cup\mathtt v_{\varepsilon,\upsilon}\cup\mathtt v_{\upsilon,\upsilon}$.
If $\mathbf u^\prime\in\mathtt v_{\varepsilon,\upsilon}\cup\mathtt v_{\upsilon,\upsilon}$, then the identity $\mathbf u(X)\approx \mathbf u^\prime(X)$ is equivalent modulo~\eqref{xyx=xyxx} to the identity $\mathbf c_k[\pi]\approx \mathbf c_k^\prime[\pi]$, where
\[
X=\{x_1^\prime,x_2^\prime,t,y_j^{\prime\prime},z_i^{\prime\prime},t_i^{\prime\prime}\mid1\le i\le k, 2\le j\le k-2\}.
\]
But this is impossible because $M_\lambda(\mathtt c_k[\pi],\mathtt c_k[\tau])$ violates this identity by Lemma~\ref{L: M_alpha(W) in V}.
We see that $\mathbf u^\prime\in \mathtt v_{\varepsilon,\varepsilon}\cup\mathtt v_{\upsilon,\varepsilon}$ in either case and so the set $\mathtt v_{\varepsilon,\varepsilon}\cup\mathtt v_{\upsilon,\varepsilon}$ forms a $\FIC(\mathbf X)$-class.
By similar arguments we can show that the sets $\mathtt v_{\varepsilon,\varepsilon}$ and $\mathtt v_{\varepsilon,\varepsilon}\cup\mathtt v_{\varepsilon,\upsilon}$ form $\FIC(\mathbf Y)$-class and $\FIC(\mathbf Z)$-class, respectively.
This implies that the $\lambda$-class $\mathtt v_{\varepsilon,\varepsilon}$ is stable with respect to $(\mathbf X\vee \mathbf Z)\wedge \mathbf Y$.
Clearly, both $\mathbf X\wedge \mathbf Y$ and $\mathbf Z$ satisfy the identity $\mathbf v_{\varepsilon,\varepsilon}\approx \mathbf v_{\varepsilon,\upsilon}$.
Therefore, the $\lambda$-class $\mathtt v_{\varepsilon,\varepsilon}$ is not stable with respect to $(\mathbf X\wedge \mathbf Y)\vee\mathbf Z$.
Since $\mathbf Z\subseteq\mathbf Y$, we have
\[
(\mathbf X\wedge \mathbf Y)\vee\mathbf Z\subset (\mathbf X\vee \mathbf Z)\wedge \mathbf Y.
\]
It follows that the lattice $\mathfrak L\left(\mathbf M_\lambda([\mathbf c_{0,0,n}[\rho]]^\lambda)\right)$ is not modular.
\end{proof}

\begin{proposition}[\mdseries{\!\cite[Propositions~3.5 and~3.8]{Gusev-Vernikov-21}}]
\label{P: non-mod M(hat{c}_{n,m,n+m+1}[rho]),M(hat{c}_{n,m,0}[rho])}
The lattices $\mathfrak L\left(\mathbf M(\hat{\mathbf c}_{n,m,n+m+1}[\rho])\right)$ and $\mathfrak L\left(\mathbf M(\hat{\mathbf c}_{n,m,0}[\tau])\right)$ are not modular for any $n,m\in \mathbb N_0$, $\rho\in S_{n+m,n+m+1}$ and $\tau\in S_{n+m}$.\qed
\end{proposition}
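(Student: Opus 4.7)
Since the statement is attributed to \cite[Propositions~3.5 and~3.8]{Gusev-Vernikov-21}, the cleanest route is to simply invoke those results. However, a self-contained argument would follow the same template as the proof of Proposition~\ref{P: non-mod M([c_{0,0,n}[rho]]^lambda)} above, and this is the plan I would pursue. The goal is to exhibit, for each of the two varieties $\mathbf V := \mathbf M(\hat{\mathbf c}_{n,m,n+m+1}[\rho])$ (respectively $\mathbf V := \mathbf M(\hat{\mathbf c}_{n,m,0}[\tau])$), three subvarieties $\mathbf X, \mathbf Y, \mathbf Z \subseteq \mathbf V$ with $\mathbf Z \subseteq \mathbf Y$ such that $(\mathbf X \wedge \mathbf Y) \vee \mathbf Z \subsetneq (\mathbf X \vee \mathbf Z) \wedge \mathbf Y$.

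The first step is to construct an auxiliary family of words $\mathbf v_{\xi,\eta}$ indexed by pairs $(\xi,\eta) \in S_2 \times S_2$, modeled exactly on the words $\mathbf v_{\xi,\eta}$ defined in the proof of Proposition~\ref{P: non-mod M([c_{0,0,n}[rho]]^lambda)}: a linear ``head'' block $a_1b_1\,x_{1\xi}x_{2\xi}\,x'_{1\eta}x'_{2\eta}\,b_2a_2\,t$, followed by three disjoint renamings of (the appropriate factor of) $\hat{\mathbf c}_{n,m,n+m+1}[\rho]$ (respectively $\hat{\mathbf c}_{n,m,0}[\tau]$) tying the letters $a_1,a_2,b_1,b_2,x_1,x_2,x'_1,x'_2$ into the middle blocks, and a final tail containing all the linear letters. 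The congruence class $\mathtt v_{\xi,\eta}$ is simply the isoterm class of $\mathbf v_{\xi,\eta}$ in the appropriate alphabet.

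The second step is to prove a two-lemma combination analogous to Lemmas~\ref{L: FIC(M(c_{0,0,k}[tau]))-class} and~\ref{L: v_{xi,eta}}: namely, (a) that $\bigcup_{\xi,\eta \in S_2}\mathtt v_{\xi,\eta}$ is a single $\FIC(\mathbf V)$-class, which follows because $\hat{\mathbf c}_{n,m,n+m+1}[\rho]$ (respectively $\hat{\mathbf c}_{n,m,0}[\tau]$) is an isoterm for $\mathbf V$ and thus the positions of all the ``structural'' letters are fixed up to commuting the pairs $\{x_1,x_2\}$ and $\{x'_1,x'_2\}$; and (b) that if $\mathbf u \approx \mathbf v$ is directly deducible from some $\mathbf v_{\xi_1,\eta_1} \approx \mathbf v_{\xi_2,\eta_2}$ with $\mathbf u \in \mathtt v_{\xi,\eta}$ and $\mathbf v \notin \mathtt v_{\xi,\eta}$, then $\{[\mathbf u]^\lambda,[\mathbf v]^\lambda\} = \{\mathtt v_{\xi_1,\eta_1},\mathtt v_{\xi_2,\eta_2}\}$. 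The third step, once (a) and (b) are in hand, is the diagram chase: set
\[
\begin{aligned}
&\mathbf X := \mathbf V \wedge \var\{\mathbf v_{\varepsilon,\varepsilon} \approx \mathbf v_{\upsilon,\varepsilon},\ \mathbf v_{\varepsilon,\upsilon} \approx \mathbf v_{\upsilon,\upsilon}\},\\
&\mathbf Y := \mathbf V \wedge \var\{\mathbf v_{\upsilon,\varepsilon} \approx \mathbf v_{\upsilon,\upsilon}\},\\
&\mathbf Z := \mathbf V \wedge \var\{\mathbf v_{\varepsilon,\varepsilon} \approx \mathbf v_{\varepsilon,\upsilon},\ \mathbf v_{\upsilon,\varepsilon} \approx \mathbf v_{\upsilon,\upsilon}\}.
\end{aligned}
\]
Using (b) to see that each directly deducible step can only identify specifically matched pairs, one shows that the classes $\mathtt v_{\varepsilon,\varepsilon}\cup \mathtt v_{\upsilon,\varepsilon}$, $\mathtt v_{\varepsilon,\varepsilon}$, and $\mathtt v_{\varepsilon,\varepsilon}\cup \mathtt v_{\varepsilon,\upsilon}$ are $\FIC$-classes of $\mathbf X$, $\mathbf Y$, $\mathbf Z$ respectively. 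Thus $\mathtt v_{\varepsilon,\varepsilon}$ is stable with respect to $(\mathbf X \vee \mathbf Z) \wedge \mathbf Y$, while $(\mathbf X \wedge \mathbf Y) \vee \mathbf Z$ satisfies $\mathbf v_{\varepsilon,\varepsilon} \approx \mathbf v_{\varepsilon,\upsilon}$ and hence violates this stability; together with $\mathbf Z \subseteq \mathbf Y$, this yields the strict inclusion and so non-modularity.

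The main obstacle is step~(b), the direct deducibility analysis: one must rule out, via a careful case analysis exploiting the linearity of the head block and the isoterm property of $\hat{\mathbf c}_{n,m,n+m+1}[\rho]$ (respectively $\hat{\mathbf c}_{n,m,0}[\tau]$), every substitution $\phi$ that could permute the first occurrences of $x_1,x_2$ or $x'_1,x'_2$ without being an ``aligned'' version of $\mathbf v_{\xi_1,\eta_1}\approx\mathbf v_{\xi_2,\eta_2}$. The combinatorial bookkeeping required to carry this through for arbitrary $n,m$ and arbitrary $\rho \in S_{n+m,n+m+1}$ (respectively arbitrary $\tau \in S_{n+m}$) is precisely the same type of argument as in Lemma~\ref{L: v_{xi,eta}}, with the counting~$(\ast)$ property (that only the $x,y,x',y'$ positions admit non-letter substitution images) being the crucial ingredient.
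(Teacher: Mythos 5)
The paper itself supplies no internal proof here: the \qed\ follows directly from the citation to Gusev--Vernikov \cite[Propositions~3.5 and~3.8]{Gusev-Vernikov-21}, which is exactly the route you flag as cleanest, so on that level the proposal matches the paper.

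Your self-contained sketch is reasonable in outline and correctly identifies the pentagon-exhibiting template of Proposition~\ref{P: non-mod M([c_{0,0,n}[rho]]^lambda)} and Proposition~\ref{P: non-mod M([c_n^{(i)}[rho]]^{gamma'})} (words $\mathbf v_{\xi,\eta}$, a $\FIC$-class lemma, a direct-deducibility lemma, and the lattice chase). However, two nontrivial points are glossed over. First, the internal proofs you cite as templates do not work inside the full variety: Proposition~\ref{P: non-mod M([c_{0,0,n}[rho]]^lambda)} first manufactures two \emph{incomparable} subvarieties $\mathbf M_\lambda(\mathtt c_k[\pi])$ and $\mathbf M_\lambda(\mathtt c_k[\tau])$ and carries out the pentagon argument inside their join via Lemma~\ref{L: M_alpha(W_1) vee M_alpha(W_2)}; an analogous reduction would have to be set up for $\mathbf M(\hat{\mathbf c}_{n,m,n+m+1}[\rho])$ and $\mathbf M(\hat{\mathbf c}_{n,m,0}[\tau])$, and here one must be careful that the resulting parameters stay within the class of $(p,q)$-permutations $S_{p,q}$ when required. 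Second, and more substantially, the template propositions work with $\mathbf M_\lambda$ or $\mathbf M_{\gamma^\prime}$, so stability is tracked via Corollary~\ref{C: M_alpha(W) in V}, and the words $\mathbf c$ contain the $y_i^2$ factors that drive the counting constraint~$(\ast)$ on substitution images; the words $\hat{\mathbf c}$ in the present statement are plain isoterms (trivial congruence, no $y_i$ letters at all), so the $(\ast)$-type argument you lean on would need to be reproved from scratch for this different word shape, and the bookkeeping you call the ``main obstacle'' really is an obstacle rather than a routine adaptation. Since the paper delegates exactly this bookkeeping to the cited reference, your instinct to do the same is the safe conclusion; the sketch should be presented as a plausibility argument rather than a proof.
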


\begin{proposition}
\label{P: non-mod M([c_n^{(i)}[rho]]^{gamma'})}
The lattices 
\[
\begin{aligned}
&\mathfrak L\left(\mathbf M_{\gamma^\prime}([\mathbf c_n^{(1)}[\pi_1,\tau]]^{\gamma^\prime})\right),\ 
\mathfrak L\left(\mathbf M_{\gamma^\prime}([\mathbf c_n^{(2)}[\pi_2,\tau]]^{\gamma^\prime})\right),\\
&\mathfrak L\left(\mathbf M_{\gamma^\prime}([\mathbf c_n^{(3)}[\pi_3,\tau]]^{\gamma^\prime})\right),\ 
\mathfrak L\left(\mathbf M_{\gamma^\prime}([\mathbf c_n^{(4)}[\pi_4,\tau]]^{\gamma^\prime})\right)
\end{aligned}
\]
are not modular for any $n\in \mathbb N$, $\pi_1\in S_{4n+1}$, $\pi_2\in S_{n+1}$, $\pi_3,\pi_4\in S_{2n+1}$ and $\tau\in S_{2n}$.
\end{proposition}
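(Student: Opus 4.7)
The plan is to adapt, almost verbatim, the strategy of Proposition~\ref{P: non-mod M([c_{0,0,n}[rho]]^lambda)}. The four cases $i=1,2,3,4$ are mutually analogous, so it suffices to treat $i=1$; the remaining three cases follow by the same argument after relabeling the $z$-letters in the appropriate copies.

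First I would establish a stability statement of the form
\begin{itemize}
\item[$(\star)$] the $\gamma^\prime$-classes $\{xzxyty\}$, $xx^+yty$ and $yxx^+ty$ are stable with respect to $\mathbf M_{\gamma^\prime}([\mathbf c_q^{(1)}[\sigma,\theta]]^{\gamma^\prime})$ for every $q\in\mathbb N$, $\sigma\in S_{4q+1}$ and $\theta\in S_{2q}$,
\end{itemize}
by the same substitution argument used throughout Section~\ref{Sec: separating identities}. Using $(\star)$ together with Corollary~\ref{C: M_alpha(W) in V}(iv), I would then exhibit two incomparable subvarieties inside $\mathbf M_{\gamma^\prime}([\mathbf c_n^{(1)}[\pi,\tau]]^{\gamma^\prime})$: namely $\mathbf M_{\gamma^\prime}([\mathbf c_k^{(1)}[\pi^\sharp,\tau^\sharp]]^{\gamma^\prime})$ and $\mathbf M_{\gamma^\prime}([\mathbf c_k^{(1)}[\pi^\flat,\tau^\flat]]^{\gamma^\prime})$, where $k=n+2$ and the pairs $(\pi^\sharp,\tau^\sharp)$, $(\pi^\flat,\tau^\flat)$ are obtained from $(\pi,\tau)$ by two different insertions analogous to the maps $\theta\mapsto\theta_{q,r}$ used in the proof of Proposition~\ref{P: non-mod M([c_{0,0,n}[rho]]^lambda)}. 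Stability of the corresponding $\gamma^\prime$-classes is verified by applying Lemma~\ref{L: M_alpha(W) in V} to the identity $\mathbf c_k^{(1)}[\pi^\sharp,\tau^\sharp]\approx\mathbf w$ restricted to the $n$-letter sub-alphabet that reproduces $\mathbf c_n^{(1)}[\pi,\tau]$ up to renaming.

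Next, mirroring the construction of $\mathbf v_{\xi,\eta}$ in the proof of Proposition~\ref{P: non-mod M([c_{0,0,n}[rho]]^lambda)}, I would build a four-element family $\{\mathbf w_{\xi,\eta}\mid \xi,\eta\in S_2\}$ whose common skeleton consists of three nested copies of the word $\mathbf c_k^{(1)}$-pattern, joined in the first block by a linear word $a_1b_1\,x_{1\xi}x_{2\xi}\,x_{1\eta}^\prime x_{2\eta}^\prime\,b_2a_2$ that encodes the parameters $(\xi,\eta)$. I would then prove two lemmas: (a) the union $\bigcup_{\xi,\eta}[\mathbf w_{\xi,\eta}]^{\gamma^\prime}$ is a single $\FIC\bigl(\mathbf M_{\gamma^\prime}(\mathtt c_k[\pi^\sharp,\tau^\sharp],\mathtt c_k[\pi^\flat,\tau^\flat])\bigr)$-class, and (b) if $\mathbf u\approx\mathbf v$ is directly deducible from $\mathbf w_{\xi_1,\eta_1}\approx\mathbf w_{\xi_2,\eta_2}$ with $\mathbf u\in[\mathbf w_{\xi,\eta}]^{\gamma^\prime}$ and $\mathbf v\notin[\mathbf w_{\xi,\eta}]^{\gamma^\prime}$, then $\{[\mathbf u]^{\gamma^\prime},[\mathbf v]^{\gamma^\prime}\}=\{[\mathbf w_{\xi_1,\eta_1}]^{\gamma^\prime},[\mathbf w_{\xi_2,\eta_2}]^{\gamma^\prime}\}$. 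Both lemmas are proved by essentially the same block-and-substitution analysis that yields Lemmas~\ref{L: FIC(M(c_{0,0,k}[tau]))-class} and~\ref{L: v_{xi,eta}}, the key ingredient being the observation $(\ast)$ that, because any two distinct letters appear adjacently at most once in the carrier word, the image under any admissible substitution of a multiple letter is either empty or a power of a single letter.

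With these lemmas in hand, the non-modularity follows by the standard pentagon construction: set
\begin{align*}
\mathbf X&:=\mathbf M_{\gamma^\prime}(\mathtt c_k[\pi^\sharp,\tau^\sharp],\mathtt c_k[\pi^\flat,\tau^\flat])\wedge\var\{\mathbf w_{\varepsilon,\varepsilon}\approx\mathbf w_{\upsilon,\varepsilon},\,\mathbf w_{\varepsilon,\upsilon}\approx\mathbf w_{\upsilon,\upsilon}\},\\
\mathbf Y&:=\mathbf M_{\gamma^\prime}(\mathtt c_k[\pi^\sharp,\tau^\sharp],\mathtt c_k[\pi^\flat,\tau^\flat])\wedge\var\{\mathbf w_{\upsilon,\varepsilon}\approx\mathbf w_{\upsilon,\upsilon}\},\\
\mathbf Z&:=\mathbf M_{\gamma^\prime}(\mathtt c_k[\pi^\sharp,\tau^\sharp],\mathtt c_k[\pi^\flat,\tau^\flat])\wedge\var\{\mathbf w_{\varepsilon,\varepsilon}\approx\mathbf w_{\varepsilon,\upsilon},\,\mathbf w_{\upsilon,\varepsilon}\approx\mathbf w_{\upsilon,\upsilon}\},
\end{align*}
and verify using~(a) and~(b) that $[\mathbf w_{\varepsilon,\varepsilon}]^{\gamma^\prime}$ is stable with respect to $(\mathbf X\vee\mathbf Z)\wedge\mathbf Y$ but not with respect to $(\mathbf X\wedge\mathbf Y)\vee\mathbf Z$, yielding the strict inclusion $(\mathbf X\wedge\mathbf Y)\vee\mathbf Z\subset(\mathbf X\vee\mathbf Z)\wedge\mathbf Y$ together with $\mathbf Z\subseteq\mathbf Y$. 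The main obstacle, as in the proof of Lemma~\ref{L: v_{xi,eta}}, is the combinatorial case analysis needed to exclude the "crossed" configuration in which the switchable pair $(x_1^\prime,x_2^\prime)$ would be mapped onto the first occurrences of $(x_1,x_2)$; this is where the richer structure of $\mathbf c_n^{(1)}[\pi,\tau]$ (compared to $\mathbf c_{0,0,n}[\rho]$) forces one to place the primed and unprimed $z$-letters with enough separation that any such substitution violates $(\ast)$.
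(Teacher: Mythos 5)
Your proposal is correct and takes essentially the same approach as the paper: reduce via Lemma~\ref{L: M_alpha(W_1) vee M_alpha(W_2)} to the join of two incomparable subvarieties $\mathbf M_{\gamma^\prime}([\mathbf c_k^{(1)}[\pi_1,\tau_1]]^{\gamma^\prime})$, $\mathbf M_{\gamma^\prime}([\mathbf c_k^{(1)}[\pi_2,\tau_2]]^{\gamma^\prime})$ with $k=n+2$, construct the four words $\mathbf v_{\xi,\eta}$ with the linear prefix encoding $(\xi,\eta)$, establish the analogues of Lemmas~\ref{L: FIC(M(c_{0,0,k}[tau]))-class} and~\ref{L: v_{xi,eta}}, and conclude with the $\mathbf X,\mathbf Y,\mathbf Z$ pentagon. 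The paper likewise treats only $i=1$ and explicitly writes out the $\mathbf p,\mathbf q,\mathbf r,\mathbf s$ components of $\mathbf v_{\xi,\eta}$ before deferring the remaining verification back to Proposition~\ref{P: non-mod M([c_{0,0,n}[rho]]^lambda)}, exactly as you anticipate.
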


\begin{proof}
We will prove only the fact that the variety $\mathbf M_{\gamma^\prime}([\mathbf c_n^{(1)}[\pi,\tau]]^{\gamma^\prime})$ has a non-modular lattice of subvarieties; the proof for the other three varieties is very similar and we omit it.

Arguments similar to ones from the proof of Proposition~\ref{P: non-mod M([c_{0,0,n}[rho]]^lambda)} can show that the variety $\mathbf M_{\gamma^\prime}([\mathbf c_n^{(1)}[\pi,\tau]]^{\gamma^\prime})$ contains two incomparable subvarieties $\mathbf M_{\gamma^\prime}([\mathbf c_k^{(1)}[\pi_1,\tau_1]]^{\gamma^\prime})$ and $\mathbf M_{\gamma^\prime}([\mathbf c_k^{(1)}[\pi_2,\tau_2]]^{\gamma^\prime})$, where $k:=n+2$, $\pi_1,\pi_2\in S_{4k+1}$ and $\tau_1,\tau_2\in S_{2k}$.
In view of this fact and Lemma~\ref{L: M_alpha(W_1) vee M_alpha(W_2)}, it suffices to show that the lattice $\mathfrak L\left(\mathbf M_{\gamma^\prime}([\mathbf c_k^{(1)}[\pi_1,\tau_1]]^{\gamma^\prime},[\mathbf c_k^{(1)}[\pi_2,\tau_2]]^{\gamma^\prime})\right)$ is not modular.

For any $\xi,\eta \in S_2$, we define the word:
\[
\mathbf v_{\xi,\eta}:= \mathbf p\,a_1b_1\, x_{1\xi}x_{2\xi}\, x_{1\eta}^\prime x_{2\eta}^\prime\, b_2a_2\,\mathbf q\mathbf r\mathbf s,
\]
where
\[
\begin{aligned}
\mathbf p :={}&\biggl(\prod_{i=1}^k t_i^{(2)}z_i^{(2)}\biggr)\biggl(\prod_{i=1}^k t_i^{(4)} z_i^{(4)}\biggr)\biggl(\prod_{i=1}^k t_i^{(6)} z_i^{(6)}\biggr),\\
\mathbf q :={}&t\biggl(\prod_{i=k+1}^{2k}z_i^{(2)} t_i^{(2)}\biggr)\biggl(\prod_{i=k+1}^{2k} z_i^{(4)}t_i^{(4)} \biggr)\biggl(\prod_{i=k+1}^{2k} z_i^{(6)} t_i^{(6)}\biggr),\\
\mathbf r :={}&x_1 z_{1\pi_1}^{(3)}a_1\hat{z}_{1\pi_1}^{(3)}(y_1^{(3)})^2z_{2\pi_1}^{(3)}z_{1\tau_1}^{(4)} \biggl(\prod_{i=2}^{2k} z_{(2i-1)\pi_1}^{(3)}(y_i^{(3)})^2z_{(2i)\pi_1}^{(3)}z_{i\tau_1}^{(4)}\biggr)\hat{z}_{(4k+1)\pi_1}^{(3)}b_2z_{(4k+1)\pi_1}^{(3)} x_2\cdot \\
&\cdot \biggl(\prod_{i=1}^{2k} z_{(2i-1)\pi_2}^{(1)}(y_i^{(1)})^2z_{(2i)\pi_2}^{(1)}z_{i\tau_2}^{(2)}\biggr)z_{(4k+1)\pi_2}^{(1)}\cdot\\
&\cdot x_1^\prime z_{1\pi_1}^{(5)}b_1\hat{z}_{1\pi_1}^{(5)}(y_1^{(5)})^2z_{2\pi_1}^{(5)}z_{1\tau_1}^{(6)} \biggl(\prod_{i=2}^{2k} z_{(2i-1)\pi_1}^{(5)}(y_i^{(5)})^2z_{(2i)\pi_1}^{(5)}z_{i\tau_1}^{(6)}\biggr)\hat{z}_{(4k+1)\pi_1}^{(5)}a_2z_{(4k+1)\pi_1}^{(5)} x_2^\prime,\\
\mathbf s :={}&\biggl(\prod_{i=1}^{4k+1} t_i^{(1)}z_i^{(1)}\biggr)\cdot\\
&\cdot\biggl(\prod_{i=1}^{k^\prime-1} t_i^{(3)}z_i^{(3)}\biggr)(t_{k^\prime}^{(3)}z_{k^\prime}^{(3)}\hat{t}_{k^\prime}^{(3)}\hat{z}_{k^\prime}^{(3)})\biggl(\prod_{i=k^\prime+1}^{k^{\prime\prime}-1} t_i^{(3)}z_i^{(3)}\biggr)(t_{k^{\prime\prime}}^{(3)}z_{k^{\prime\prime}}^{(3)}\hat{t}_{k^{\prime\prime}}^{(3)}\hat{z}_{k^{\prime\prime}}^{(3)})\biggl(\prod_{i=k^{\prime\prime}+1}^{4k+1} t_i^{(3)}z_i^{(3)}\biggr)\cdot\\
&\cdot\biggl(\prod_{i=1}^{k^\prime-1} t_i^{(5)}z_i^{(5)}\biggr)(t_{k^\prime}^{(5)}z_{k^\prime}^{(5)}\hat{t}_{k^\prime}^{(5)}\hat{z}_{k^\prime}^{(5)})\biggl(\prod_{i=k^\prime+1}^{k^{\prime\prime}-1} t_i^{(5)}z_i^{(5)}\biggr)(t_{k^{\prime\prime}}^{(5)}z_{k^{\prime\prime}}^{(5)}\hat{t}_{k^{\prime\prime}}^{(5)}\hat{z}_{k^{\prime\prime}}^{(5)})\biggl(\prod_{i=k^{\prime\prime}+1}^{4k+1} t_i^{(5)}z_i^{(5)}\biggr)
\end{aligned}
\]
and $k^\prime:=\min(1\pi_1,(4k+1)\pi_1)$, $k^{\prime\prime}:=\max(1\pi_1,(4k+1)\pi_1)$.
Define also three varieties:
\[
\begin{aligned}
&\mathbf X := \mathbf M_{\gamma^\prime}([\mathbf c_k^{(1)}[\pi_1,\tau_1]]^{\gamma^\prime},[\mathbf c_k^{(1)}[\pi_2,\tau_2]]^{\gamma^\prime})\wedge\var\{\mathbf v_{\varepsilon,\varepsilon} \approx \mathbf v_{\upsilon,\varepsilon},\,\mathbf v_{\varepsilon,\upsilon} \approx \mathbf v_{\upsilon,\upsilon}\},\\
&\mathbf Y := \mathbf M_{\gamma^\prime}([\mathbf c_k^{(1)}[\pi_1,\tau_1]]^{\gamma^\prime},[\mathbf c_k^{(1)}[\pi_2,\tau_2]]^{\gamma^\prime})\wedge\var\{\mathbf v_{\upsilon,\varepsilon} \approx \mathbf v_{\upsilon,\upsilon}\},\\
&\mathbf Z := \mathbf M_{\gamma^\prime}([\mathbf c_k^{(1)}[\pi_1,\tau_1]]^{\gamma^\prime},[\mathbf c_k^{(1)}[\pi_2,\tau_2]]^{\gamma^\prime})\wedge\var\{\mathbf v_{\varepsilon,\varepsilon} \approx \mathbf v_{\varepsilon,\upsilon},\,\mathbf v_{\upsilon,\varepsilon} \approx \mathbf v_{\upsilon,\upsilon}\},
\end{aligned}
\]
where $\varepsilon$ [respectively, $\upsilon$] is the identity [respectively, unique non-identity] element in $S_2$
Then using arguments as in the proof of Proposition~\ref{P: non-mod M([c_{0,0,n}[rho]]^lambda)}, we can show that 
\[
(\mathbf X\wedge \mathbf Y)\vee\mathbf Z\subset (\mathbf X\vee \mathbf Z)\wedge \mathbf Y
\]
and thus the lattice $\mathfrak L\left(\mathbf M_{\gamma^\prime}([\mathbf c_n^{(1)}[\pi,\tau]]^{\gamma^\prime})\right)$ is not modular.
\end{proof}

\section{Identities defining varieties}
\label{Sec: identities}

\subsection{Short lists of identities}

Following~\cite{Lee-23}, we call an identity of the form
\[
\mathbf u_0\biggl(\prod_{i=1}^r t_i\mathbf u_i\biggr)\approx\mathbf v_0\biggl(\prod_{i=1}^r t_i\mathbf v_i\biggr),
\]
where $\{t_1,\dots,t_r\}=\simple(\mathbf u_0t_1\mathbf u_1\cdots t_r\mathbf u_r)=\simple(\mathbf v_0t_1\mathbf v_1\cdots t_r\mathbf v_r)$, \textit{efficient} if $\mathbf u_i\mathbf v_i\ne1$ for any $i=0,\dots,r$. 
The following statement was first established in~\cite[Proposition~4.1]{Lee-12}.

\begin{proposition}[\mdseries{\!\cite[Lemma~12.1 and Remark~12.3]{Lee-23}}]
\label{P: var{sigma_2,sigma_3} subvarieties}
Each non-commutative variety satisfying the identities $\sigma_2$ and $\sigma_3$ can be defined by the identities $\sigma_2$ and $\sigma_3$ together with some of the following identities:
\begin{equation}
\label{one letter in a block}
x^{e_0}\biggl(\prod_{i=1}^r t_ix^{e_i}\biggr) \approx x^{f_0}\biggl(\prod_{i=1}^r t_ix^{f_i}\biggr),
\end{equation}
where $r,e_0,f_0,\dots,e_r,f_r\in\mathbb N_0$ and $\sum_{i=0}^re_i,\sum_{i=0}^rf_i\ge2$; and
\begin{equation}
\label{two letters in a block}
x^{e_0}y^{f_0}\biggl(\prod_{i=1}^r t_ix^{e_i}y^{f_i}\biggr)\approx y^{f_0}x^{e_0}\biggl(\prod_{i=1}^r t_ix^{e_i}y^{f_i}\biggr),
\end{equation}
where $r\in\mathbb N_0$, $e_0,f_0\in\mathbb N$, $e_1,f_1,\dots,e_r,f_r\in\mathbb N_0$, $\sum_{i=0}^r e_i\ge 2$ and $\sum_{i=0}^r f_i\ge 2$.
The identities~\eqref{one letter in a block} and~\eqref{two letters in a block} can be chosen to be efficient.\qed
\end{proposition}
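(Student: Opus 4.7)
The plan is to show that, modulo $\sigma_2$ and $\sigma_3$, every identity of the variety can be rewritten as a conjunction of identities in the two restricted shapes~\eqref{one letter in a block} and~\eqref{two letters in a block}. I would begin by observing that a variety satisfying $\sigma_2$ and $\sigma_3$ contains $M(xy)$ (because $xy$ is an isoterm for both defining identities), so for any identity $\mathbf u\approx\mathbf v$ of $\mathbf V$, Lemma~\ref{L: identities of M(xy)} gives $\con(\mathbf u)=\con(\mathbf v)$ and matching decompositions $\mathbf u=\mathbf u_0t_1\mathbf u_1\cdots t_m\mathbf u_m$, $\mathbf v=\mathbf v_0t_1\mathbf v_1\cdots t_m\mathbf v_m$, so the simple-letter skeleton is already fixed. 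Thus the question reduces to identifying what non-trivial identities can hold between corresponding blocks.

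Next I would establish the key technical lemma: that $\sigma_2$ and $\sigma_3$ together allow one to arbitrarily permute adjacent non-first occurrences of letters inside a single block, provided both letters have already occurred in an earlier portion of the word. This is the "commutation inside blocks" statement; it is proved by exhibiting a direct deduction that swaps two non-first occurrences of distinct letters $a,b$ that are adjacent inside some block $\mathbf u_i$ — $\sigma_3$ handles the case when $\mathbf u_i$ is followed by another simple letter, $\sigma_2$ handles the tail block, and iteration of adjacent swaps gives the full symmetric group on non-first occurrences within each block.

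With this, for any identity $\mathbf u\approx\mathbf v$ of $\mathbf V$ I would apply $\sigma_2,\sigma_3$ to bring $\mathbf u$ and $\mathbf v$ into a canonical form within each block — for instance, ordering the non-first occurrences of letters by their order of first appearance, so that each block becomes a sorted concatenation of the form $a_1^{e_{i,1}}a_2^{e_{i,2}}\cdots$ followed by the first occurrences of any "new" letters. After this normalization the only remaining discrepancy between $\mathbf u$ and $\mathbf v$ is a difference in the multiset of exponents attached to each letter in each block. Decomposing this global exponent change into elementary moves that alter exponents of a single letter (yielding~\eqref{one letter in a block}) or of exactly two letters that co-occur in the first non-trivially-affected block (yielding~\eqref{two letters in a block}), and restricting each such move to its support of genuinely involved variables, produces the required list of efficient identities; identities involving three or more letters with multiple occurrences in a block reduce further by a second application of the commutation lemma, so they are not needed.

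The main obstacle is the decomposition step: given an identity whose normalized sides differ in exponents across many blocks and many letters simultaneously, one must show that this global discrepancy factors through a sequence of one- and two-letter elementary identities, each of which genuinely holds in $\mathbf V$. This is not automatic and requires a careful inductive argument on, say, the total number of affected (letter, block) pairs, using the fact that if an identity modifies exponents in disjoint letter-tracks then it is the composition of its projections, and if two tracks interact then $\sigma_2$, $\sigma_3$ together with a two-letter identity in~\eqref{two letters in a block} suffice to realize the interaction. Verifying that each projection is itself a valid identity of $\mathbf V$ (and hence legitimately usable in the deduction) is the delicate point, and is handled by substituting $1$ for letters outside the track of interest, using that the variety is closed under such substitutions via the empty-word convention in $\mathfrak X^\ast$.
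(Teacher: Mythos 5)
This proposition is cited from Lee's work (Proposition~4.1 of \cite{Lee-12} and Remark~11 of \cite{Lee-13}); the paper itself gives no proof, so there is nothing internal to compare your sketch against. Your overall direction --- normalize inside blocks via $\sigma_2$ and $\sigma_3$, then reduce the residual discrepancy to one- and two-letter identities --- is plausible, but the sketch has concrete errors. The central one is your reading of identity~\eqref{two letters in a block} as an exponent-changing move. Both sides of~\eqref{two letters in a block} carry identical exponent vectors $(e_0,f_0,\dots,e_r,f_r)$; the only difference is the transposition $x^{e_0}y^{f_0}\leftrightarrow y^{f_0}x^{e_0}$ in the leading block. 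So your claim that after normalization the only remaining discrepancy is a difference in the multiset of exponents is false: $\sigma_2$ and $\sigma_3$ move non-first occurrences around but cannot swap the \emph{first} occurrences of two multiple letters that both debut in block~$0$ (only $\sigma_1$, which is not assumed here, does that). That residual first-block permutation is not an exponent change --- it is precisely the discrepancy that~\eqref{two letters in a block} exists to absorb --- and your proposed canonical form does not actually determine it.

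Two smaller points. Your ``commutation lemma'' misstates $\sigma_3$: in $xzxyty\approx xzyxty$ the occurrence of $y$ that the non-first $x$ crosses is the \emph{first} occurrence of $y$, so $\sigma_3$ commutes a non-first occurrence past a first occurrence, not two non-first occurrences; this distinction controls exactly which within-block rearrangements are reachable and hence which identities survive normalization. And the opening claim that $M(xy)\in\mathbf V$ follows because $xy$ is an isoterm for $\sigma_2,\sigma_3$ is incomplete: an isoterm for $\var\{\sigma_2,\sigma_3\}$ need not remain one for a proper subvariety $\mathbf V$ satisfying further identities. In this paper the analogous conclusion is drawn via Lemma~\ref{L: nsub M(xy)}, which needs the additional hypotheses $x^n\approx x^{n+1}$ and $x^ny^n\approx y^nx^n$; you would need to supply these or give a separate argument.
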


\begin{proposition}
\label{P: Phi,Phi_1,Phi_2 subvarieties}
Each non-commutative variety satisfying $\{\Phi,\,\Phi_1,\,\Phi_2\}$ can be defined by the identities in $\{\Phi,\,\Phi_1,\,\Phi_2\}$ together with some of the following identities:~\eqref{one letter in a block}, where
\begin{equation}
\label{one letter in a block conditions}
r\in\mathbb N,\ e_0,f_0,\dots,e_r,f_r\in\{0,1,2\},\ \sum_{i=0}^re_i,\sum_{i=0}^rf_i\ge2;
\end{equation}
and
\begin{align}
\label{two letters in a block middle 1}
\biggl(\prod_{i=1}^k a_i^{g_i}t_i\biggr) xy \biggl(\prod_{i=k+1}^{k+\ell} t_i a_i^{g_i}\biggr)&{}\approx\biggl(\prod_{i=1}^k a_i^{g_i}t_i\biggr) yx  \biggl(\prod_{i=k+1}^{k+\ell} t_ia_i^{g_i}\biggr),\\
\label{two letters in a block middle 2}
\biggl(\prod_{i=1}^k a_i^{g_i}t_i\biggr) x^2y \biggl(\prod_{i=k+1}^{k+\ell} t_i a_i^{g_i}\biggr)&{}\approx\biggl(\prod_{i=1}^k a_i^{g_i}t_i\biggr) xyx  \biggl(\prod_{i=k+1}^{k+\ell} t_ia_i^{g_i}\biggr),\\
\label{two letters in a block middle 3}
\biggl(\prod_{i=1}^k a_i^{g_i}t_i\biggr) yx^2 \biggl(\prod_{i=k+1}^{k+\ell} t_i a_i^{g_i}\biggr)&{}\approx\biggl(\prod_{i=1}^k a_i^{g_i}t_i\biggr) xyx  \biggl(\prod_{i=k+1}^{k+\ell} t_ia_i^{g_i}\biggr),
\end{align}
where
\begin{equation}
\label{two letters in a block middle conditions}
k,\ell\in\mathbb N_0,\ g_1,\dots,g_{k+\ell}\in\{1,2\}\ \text{ and }\ a_1,\dots,a_{k+\ell}\in\{x,y\}.
\end{equation}
The identities~\eqref{one letter in a block} can be chosen to be efficient.
\end{proposition}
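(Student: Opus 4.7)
The plan is to follow the pattern of Proposition~\ref{P: var{sigma_2,sigma_3} subvarieties}, adapting the normal-form argument of Lee~\cite{Lee-12,Lee-13} to the present hypothesis that $\{\Phi,\Phi_1,\Phi_2\}$ is available. Let $\mathbf V$ be a non-commutative variety satisfying $\Phi\cup\Phi_1\cup\Phi_2$ and let $\mathbf u\approx\mathbf v$ be an identity of $\mathbf V$. Using $x^2\approx x^3\in\Phi$ we first reduce to the case where every individual exponent in $\mathbf u$ and $\mathbf v$ lies in $\{1,2\}$; using $x^2y^2\approx y^2x^2\in\Phi$ we may additionally assume that within each block, the squared letters are pushed into a canonical position. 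By Lemma~\ref{L: identities of M(xy)} (which applies because $M(xy)\in\mathbf V$, as $\mathbf V$ violates some identity of the form $xy\approx yx^k\cdots$) the simple letters of $\mathbf u$ and $\mathbf v$ induce the same block decomposition, so it suffices to deduce the identity block-by-block.

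The second step is a ``single block'' reduction. I would argue that the general identity $\mathbf u\approx\mathbf v$ is deducible, modulo $\Phi\cup\Phi_1\cup\Phi_2$, from its restrictions to at most two multiple letters appearing in the block where the discrepancy occurs. For a discrepancy involving a single multiple letter $x$ in a single block, the resulting identity is already of the form~\eqref{one letter in a block}, with the parameter bounds~\eqref{one letter in a block conditions} forced by the normal form of Step~1. For a discrepancy involving two multiple letters $x,y$ appearing in a common block, the identities of $\Phi_1$ and $\Phi_2$ can be applied in sequence to transport ``extra'' occurrences of $x$ and $y$ into any adjacent blocks of our choosing, collapsing any such two-letter identity to one in which each of $x,y$ appears once in the central block and with squared occurrences concentrated in the flanking blocks; these are exactly the identities~\eqref{two letters in a block middle 1}--\eqref{two letters in a block middle 3} under the parameter bounds~\eqref{two letters in a block middle conditions}.

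The third step, which is the main technical obstacle, is to show that any identity of $\mathbf V$ involving more than two multiple letters in the central block can be derived from the two-letter cases above together with $\Phi\cup\Phi_1\cup\Phi_2$. Here one argues as follows: given $\mathbf u\approx\mathbf v$, compare the orders of occurrences of letters block-by-block. Any ``elementary swap'' of a pair $(x,y)$ in a single block can be realized, after inserting appropriate squares via $\Phi$ and then applying an identity in $\Phi_2$ to prepare the local context $\mathbf a_{k,\ell}[\rho]\approx\overline{\mathbf a}_{k,\ell}[\rho]$, by one of~\eqref{two letters in a block middle 1}--\eqref{two letters in a block middle 3}, followed by a $\Phi_2$-application in reverse to absorb the auxiliary squares. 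Iterating across inversions decomposes the full permutation into transpositions, and the identities in $\Phi_1$ are used to handle swaps of adjacent $xy$ factors that are separated from the rest of the block by a simple letter.

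The hard part will be verifying that the last step terminates with identities precisely of the prescribed forms, rather than producing identities with more letters or larger exponents. To control this I would use induction on a well-chosen complexity measure (such as the total number of inversions between $\mathbf u$ and $\mathbf v$, together with the total number of squared islands outside the central block), and at each reduction step check that the intermediate identities invoked are themselves in $\Phi_1\cup\Phi_2$ or in the list~\eqref{one letter in a block},~\eqref{two letters in a block middle 1}--\eqref{two letters in a block middle 3}. Efficiency, in the sense asserted in the last sentence of the proposition, follows automatically: any such chosen identity is obtained from $\mathbf u\approx\mathbf v$ by a substitution $\phi$ with $|\con(\phi(w))|\le 2$ for each letter $w$, and hence is a consequence of $\mathbf u\approx\mathbf v$ modulo $\Phi\cup\Phi_1\cup\Phi_2$.
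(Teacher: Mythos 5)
Your outline matches the paper's broad strategy (normalize exponents via $\Phi$, reduce per-block multiplicities via $\Phi_2$, equalize per-block occurrence counts using restrictions to one multiple letter, then decompose the residual block-preserving permutation into elementary transpositions and induct on their number — what the paper calls $r$-invertibility). But several load-bearing steps are either missing or misidentified, and they are not routine fill-ins.

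The two decisive technical tools in the paper's argument are the lemmas stating that, modulo $\Phi_2$ alone, one may insert an occurrence of $x$ anywhere between two existing occurrences of $x$ (so $\mathbf p x\mathbf q_1\mathbf q_2 x\mathbf r\approx\mathbf p x\mathbf q_1 x\mathbf q_2 x\mathbf r$), and that, modulo $\Phi_2$ together with the $\mathbf c$-identities, $\mathbf p xy\mathbf q x\mathbf r y\mathbf s\approx\mathbf p yx\mathbf q x\mathbf r y\mathbf s$. These are what let one (a) cap every block's per-letter multiplicity at $2$, (b) use the one-letter restrictions $\mathbf v(x,t_1,\dots,t_m)\approx\mathbf v^\prime(x,t_1,\dots,t_m)$ to equalize per-block counts, and (c) dispose of the elementary swap when $x$ and $y$ both occur in another block. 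Your plan gestures at (a) via ``push squares into canonical positions'' and at (c) via ``transport extra occurrences into adjacent blocks,'' but neither is what those lemmas do, and without stating and proving something equivalent, Step~3 of your plan does not close. In particular, your characterization of $\Phi_1$'s role — ``handle swaps of adjacent $xy$ factors that are separated from the rest of the block by a simple letter'' — is not correct; $\Phi_1$ enters through the second lemma above, which handles precisely the case where the two letters involved in the swap cohabit another block, so that no further identity of the explicit list is needed.

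Two smaller points. First, the elementary-swap case where one of the two letters is simple in the relevant block is where the identities~\eqref{two letters in a block middle 1}--\eqref{two letters in a block middle 3} actually get extracted: the witnessing identity is of the form $x^{g_0}y\,x^{g_0^\prime}\,\prod t_i a_i^{g_i}\approx\ldots$ with exponents in $\{1,2\}$, coming from a restriction to $x$, $y$, and the simple letters, followed by $\Phi_2$-normalization — not from ``concentrating squares in flanking blocks'' as you describe. Second, the claim that efficiency is automatic because the chosen identities arise via substitutions with $|\con(\phi(w))|\le 2$ does not track: efficiency is the condition $\mathbf u_i\mathbf v_i\ne 1$ for every block index $i$, a non-emptiness condition on blocks, which is obtained by a simple separate observation (delete the simple letters flanking an empty block), not by controlling letter content.
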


To prove Proposition~\ref{P: Phi,Phi_1,Phi_2 subvarieties}, we need three auxiliary results.

\begin{lemma}
\label{L: from pxx_1..x_kxr to px_1x..x_kxr}
Let $\mathbf V$ be a variety satisfying $\Phi_2$. 
If $\mathbf w:=\mathbf px\mathbf q_1\mathbf q_2x\mathbf r$ and $\con(\mathbf q_1\mathbf q_2)\subseteq\mul(\mathbf w)$, then $\mathbf V$ satisfies the identity $\mathbf w\approx\mathbf px\mathbf q_1x\mathbf q_2x\mathbf r$.
\end{lemma}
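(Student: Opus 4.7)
The plan is to derive $\mathbf w \approx \mathbf p x\mathbf q_1 x\mathbf q_2 x\mathbf r$ from $\Phi_2$ by a two-stage sequence of direct deductions, each using an identity of the form $\mathbf a_{k,\ell}[\rho] \approx \overline{\mathbf a}_{k,\ell}[\rho]$. The central observation is that the difference between $\mathbf a_{n,m}[\rho]$ and $\overline{\mathbf a}_{n,m}[\rho]$ is the insertion of one extra $x$ after each block $z_{i\rho}y_i^2$, so a single direct deduction cannot in general insert exactly one $x$ at the prescribed place; instead we overshoot and then correct.

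\textbf{Stage 1 (bulk insertion).} Write $\mathbf q_1\mathbf q_2 = v_1v_2\cdots v_s$ as a sequence of letters, and for each $v_j$ fix a ``partner'' occurrence of the same letter elsewhere in $\mathbf w$ (which exists because $v_j \in \mul(\mathbf w)$). Partition $\{1,\dots,s\}$ into three parts according to whether the partner of $v_j$ lies in $\mathbf p$, in $\mathbf r$, or inside $\mathbf q_1\mathbf q_2$ itself. Choose $n,m\ge1$ with $n+m=s$ and $\rho\in S_{n+m}$ so that $\rho$ lists the $v_j$'s with partner in $\mathbf p$ (in prefix-reading order) as $z_1,\dots,z_n$ and those with partner in $\mathbf r$ (in suffix-reading order) as $z_{n+1},\dots,z_{n+m}$. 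Set $\phi(x)=x$, $\phi(z_{i\rho})=v_i$, $\phi(y_i)=1$, and let $\phi(t_i)$ be the ``gap'' word from $\mathbf p$ or $\mathbf r$ between consecutive partner occurrences. With suitable context words $\mathbf a,\mathbf b$, the direct deduction via $\mathbf a_{n,m}[\rho]\approx\overline{\mathbf a}_{n,m}[\rho]$ yields
\[
\mathbf w \;\approx\; \mathbf p\,x\,v_1\,x\,v_2\,x\cdots x\,v_s\,x\,\mathbf r,
\]
with an $x$ inserted between every adjacent pair of letters of $\mathbf q_1\mathbf q_2$.

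\textbf{Stage 2 (selective deletion).} Let $j=|\mathbf q_1|$. Remove every inserted $x$ except the one between $v_j$ and $v_{j+1}$, by applying $\overline{\mathbf a}_{1,1}[\rho']\approx\mathbf a_{1,1}[\rho']$ (reading right-to-left) one at a time, with substitutions chosen to match the local $z_1t_1 x z_1 y_1^2 (x) z_2 x t_2 z_2$-pattern around each unwanted $x$. Since the $v_i$'s have partners already available in $\mathbf p,\mathbf r$ or among themselves, the required substitutions exist. After $s-2$ such deletions, only the single desired $x$ remains in place, producing $\mathbf p x\mathbf q_1 x \mathbf q_2 x\mathbf r$.

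\textbf{Main obstacle.} The delicate point is Stage 1 when a letter $v_j$ has no partner in $\mathbf p$ or $\mathbf r$ but only within $\mathbf q_1\mathbf q_2$. One then encodes such a pair using a $y_i$ with $\phi(y_i)\ne 1$ when the two occurrences are adjacent in $\mathbf q_1\mathbf q_2$, and otherwise rearranges the ``sharing'' of $z$-letters so that two distinct $z_{i\rho}$'s receive the same image $v_j$, with their prefix/suffix copies provided by yet another partner. A short case analysis on the pairing structure of $\mathbf q_1\mathbf q_2$, together with the hypothesis $\con(\mathbf q_1\mathbf q_2)\subseteq\mul(\mathbf w)$, is expected to cover every configuration. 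Verifying that these substitutions always exist, and that Stage 2 can remove the extra $x$'s in a permissible order, constitutes the bulk of the work.
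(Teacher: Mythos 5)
Your two-stage plan (bulk insertion, then selective deletion) takes a genuinely different route from the paper. The paper's proof is a case analysis: if every letter that is multiple in $\mathbf q_1\mathbf q_2$ forms only islands of length $\ge 2$, a $\Phi_2$-identity applies directly with the squares absorbed into the $y_i^2$'s; otherwise one uses $\Phi_2$ recursively to replace each length-$1$ island by a square, applies the first case, and then undoes the squaring. The squaring step is the crucial device that lets a letter whose only other occurrences lie inside $\mathbf q_1\mathbf q_2$ be encoded as some $\phi(y_i)$, rather than as a $z_i$ needing a partner in $\mathbf p$ or $\mathbf r$.

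Your proposal has concrete gaps exactly where this device is missing. In Stage 1 you set $\phi(z_{i\rho})=v_i$, $\phi(y_i)=1$, which forces every $v_i$ to have a partner in $\mathbf p$ or in $\mathbf r$; but the hypothesis only guarantees a partner somewhere in $\mathbf w$. Take $\mathbf w=\mathbf p\, x\, abab\, x\,\mathbf r$ with $a,b\notin\con(\mathbf p\mathbf r)\cup\{x\}$: all partners of $a$ and $b$ lie inside $\mathbf q_1\mathbf q_2$, so your construction gives $n=m=0$ while $s=4$, a contradiction. You flag this as the "main obstacle", but the sketch (sharing $z$-letters, or encoding adjacent pairs via a $y_i$) doesn't resolve it: in $abab$ the matching occurrences are \emph{not} adjacent, and making two distinct $z_{i\rho}$'s share the image $v_j$ would force $v_j$ to reappear in the prefix $\prod z_it_i$ or suffix $\prod t_iz_i$, which is again a factor of $\mathbf p$ or $\mathbf r$. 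Stage 2 has the same flavour of problem: to delete the $x$ between $v_{j'}$ and $v_{j'+1}$ using $\mathbf a_{1,1}[\rho']\approx\overline{\mathbf a}_{1,1}[\rho']$ read right-to-left, you need $v_{j'}$ to have another occurrence on one side of the deleted $x$ and $v_{j'+1}$ one on the other side; if both partners lie on the same side (as for the $x$ between the first $a$ and the first $b$ in $\mathbf p x\,a\,x\,b\,x\,a\,x\,b\,x\,\mathbf r$), neither $\rho'\in S_2$ works, and passing to $\mathbf a_{n,m}$ with $n+m>2$ would delete more than one $x$. Without something playing the role of the paper's squaring reduction, the "short case analysis" you defer is not short and, as written, does not close.
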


\begin{proof}
If, for any $a\in\mul(\mathbf q_1\mathbf q_2)$, each island formed by $a$ is of length $>1$, then one can find $k,m\in\mathbb N_0$ and $\rho\in S_{k+m}$ such that $\mathbf a_{m,k}[\rho]\approx \overline{\mathbf a}_{m,k}[\rho]$ implies $\mathbf w\approx\mathbf px\mathbf q_1x\mathbf q_2x\mathbf r$.

So, it remains to consider the case when some multiple letter forms an island of length~$1$ in $\mathbf q_1\mathbf q_2$.
Then there is a letter $y_1\in\con(\mathbf q_1\mathbf q_2)$ such that $\mathbf q_1\mathbf q_2=\mathbf v_1y_1\mathbf v_2y_1\mathbf v_3$ and, for any $a\in\mul(\mathbf v_2)$, each island formed by $a$ in $\mathbf v_2$ is of length $>1$.
By the arguments in the previous paragraph, $\mathbf w=\mathbf px\mathbf v_1y_1\mathbf v_2y_1\mathbf v_3x\mathbf r\stackrel{\Phi_2}\approx\mathbf px\mathbf v_1y_1^2\mathbf v_2y_1^2\mathbf v_3x\mathbf r$.
In other words, using the identities in $\Phi_2$, we may replace the discussed occurrences of $y_1$ to $y_1^2$ in $\mathbf q_1\mathbf q_2$. 
Repeating these considerations, we may replace all occurrences of multiple letters to their squares in $\mathbf q_1\mathbf q_2$, resulting some word $\mathbf q_1^\prime\mathbf q_2^\prime$. 
Thus, $\mathbf V$ satisfies the identity $\mathbf w\approx\mathbf px\mathbf q_1^\prime\mathbf q_2^\prime x\mathbf r$. 
By the arguments in the previous paragraph, $\Phi_2$ implies $\mathbf px\mathbf q_1^\prime\mathbf q_2^\prime x\mathbf r\approx \mathbf px\mathbf q_1^\prime x\mathbf q_2^\prime x\mathbf r$.
It remains to remove the added squares in $\mathbf q_1^\prime x\mathbf q_2^\prime$ using the identities in $\Phi_2$, resulting the word $\mathbf q_1x\mathbf q_2$. 
As a result, we obtain that $\mathbf V$ satisfies the identity $\mathbf w\approx\mathbf px\mathbf q_1x\mathbf q_2x\mathbf r$ as required.
\end{proof}

\begin{lemma}
\label{L: two letters in a block middle 2,3 <-> 4}
If the condition~\eqref{two letters in a block middle conditions} holds, then the identity
\begin{equation}
\label{two letters in a block middle 4}
\biggl(\prod_{i=1}^k a_i^{g_i}t_i\biggr) x^2y \biggl(\prod_{i=k+1}^{k+\ell} t_i a_i^{g_i}\biggr)\approx\biggl(\prod_{i=1}^k a_i^{g_i}t_i\biggr) yx^2  \biggl(\prod_{i=k+1}^{k+\ell} t_ia_i^{g_i}\biggr),
\end{equation}
is equivalent modulo $\Phi_2$ to $\{\eqref{two letters in a block middle 2},\,\eqref{two letters in a block middle 3}\}$.
\end{lemma}

\begin{proof}
Taking into account Lemma~\ref{L: from pxx_1..x_kxr to px_1x..x_kxr}, we have
\[
\begin{aligned}
&\mathbf h\, x^2y\, \mathbf t\stackrel{\eqref{two letters in a block middle 2}}\approx \mathbf h\, xyx\, \mathbf t\stackrel{\eqref{two letters in a block middle 3}}\approx \mathbf h\, yx^2\, \mathbf t, \\ 
&\mathbf h\, x^2y\, \mathbf t\stackrel{\Phi_2}\approx \mathbf h\, x^3y\, \mathbf t\stackrel{\eqref{two letters in a block middle 4}}\approx \mathbf h\, xyx^2\, \mathbf t\stackrel{\Phi_2}\approx\mathbf h\, xyx\, \mathbf t,\\
&\mathbf h\, yx^2\, \mathbf t\stackrel{\Phi_2}\approx \mathbf h\, yx^3\, \mathbf t\stackrel{\eqref{two letters in a block middle 4}}\approx \mathbf h\, x^2yx\, \mathbf t\stackrel{\Phi_2}\approx\mathbf h\, xyx\, \mathbf t,
\end{aligned}
\]
where
\[
\mathbf h:=\biggl(\prod_{i=1}^k a_i^{g_i}t_i\biggr)\ \text{ and }\ \mathbf t:=\biggl(\prod_{i=k+1}^{k+\ell} t_ia_i^{g_i}\biggr).
\]
This means that the identity~\eqref{two letters in a block middle 4} is equivalent modulo $\Phi_2$ to $\{\eqref{two letters in a block middle 2},\,\eqref{two letters in a block middle 3}\}$ as required.
\end{proof}

\begin{lemma}
\label{L: from pxyqxrys to pyxqxrys}
Let $\mathbf V$ be a variety satisfying 
\[
\{\Phi_2,\,\mathbf c_{n,m,k}[\rho] \approx\mathbf c_{n,m,k}^\prime[\rho]\mid n,m,k\in\mathbb N_0,\,\rho\in S_{n+m+k}\}.
\] 
If $\mathbf w:=\mathbf pxy\mathbf qx\mathbf ry\mathbf s$ and $\con(\mathbf r)\subseteq\mul(\mathbf w)$, then $\mathbf V$ satisfies the identity $\mathbf w\approx\mathbf pyx\mathbf qx\mathbf ry\mathbf s$.
\end{lemma}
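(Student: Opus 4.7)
The proof strategy closely mirrors that of Lemma~\ref{L: from pxx_1..x_kxr to px_1x..x_kxr}, with the identities $\mathbf c_{n,m,k}[\rho]\approx\mathbf c_{n,m,k}^\prime[\rho]$ now playing the role that $\mathbf a_{k,\ell}[\rho]\approx\overline{\mathbf a}_{k,\ell}[\rho]$ played there. These latter identities perform exactly the desired swap $xy\mapsto yx$ at the initial position of a word, provided that the segment between the second $x$ and the final $y$ has the rigid ``squared'' shape $\prod_{i=1}^{n+m+k-1}z_{i\rho}y_i^2\cdot z_{(n+m+k)\rho}$. The plan is therefore to reduce the arbitrary segment $\mathbf r$ to such a shape, apply a suitable instance, and then restore $\mathbf r$.

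I would first handle the case when every multiple letter of $\mathbf w$ that occurs in $\mathbf r$ forms only islands of length at least $2$ within $\mathbf r$. In this situation one reads off $n,m,k\in\mathbb N_0$, a permutation $\rho\in S_{n+m+k}$ and a substitution $\phi\colon\mathfrak X\to\mathfrak X^\ast$ such that $\mathbf w=\mathbf a\,\phi(\mathbf c_{n,m,k}[\rho])\,\mathbf b$ for some $\mathbf a,\mathbf b\in\mathfrak X^\ast$: the linear letters and block boundaries of $\mathbf p$, $\mathbf q$ and $\mathbf s$ supply the $z_i$'s and $t_i$'s (with the isolated simple letter of $\mathbf q$ taking the role of $t$), while each squared island in $\mathbf r$ is identified with some $y_i^2$. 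Under the same matching, $\mathbf p\,yx\,\mathbf q\,x\,\mathbf r\,y\,\mathbf s=\mathbf a\,\phi(\mathbf c_{n,m,k}^\prime[\rho])\,\mathbf b$, so a single direct application of $\mathbf c_{n,m,k}[\rho]\approx\mathbf c_{n,m,k}^\prime[\rho]$ delivers the identity.

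In the general case some multiple letter $y_1\in\con(\mathbf r)$ forms an island of length $1$ in $\mathbf r$. I choose an occurrence pair of $y_1$ so that $\mathbf r=\mathbf v_1y_1\mathbf v_2y_1\mathbf v_3$ with every multiple letter of $\mathbf v_2$ having all its islands of length at least $2$. By the argument used in the previous paragraph of Lemma~\ref{L: from pxx_1..x_kxr to px_1x..x_kxr}, applied with $y_1$ in place of the outer $x$, the identities in $\Phi_2$ alone allow us to double these two occurrences of $y_1$:
\[
\mathbf p\,xy\,\mathbf q\,x\,\mathbf v_1y_1\mathbf v_2y_1\mathbf v_3\,y\,\mathbf s\approx \mathbf p\,xy\,\mathbf q\,x\,\mathbf v_1y_1^2\mathbf v_2y_1^2\mathbf v_3\,y\,\mathbf s.
\]
Iterating the squaring step over all remaining length-$1$ island occurrences of multiple letters in $\mathbf r$ produces a word $\mathbf p\,xy\,\mathbf q\,x\,\mathbf r'\,y\,\mathbf s$ in which every multiple letter of $\mathbf r'$ appears only in islands of length at least $2$. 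The first case then yields $\mathbf p\,xy\,\mathbf q\,x\,\mathbf r'\,y\,\mathbf s\approx \mathbf p\,yx\,\mathbf q\,x\,\mathbf r'\,y\,\mathbf s$, and reversing the squaring steps (using the same $\Phi_2$-identities in the opposite direction) replaces $\mathbf r'$ by $\mathbf r$ on the right-hand side as well.

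The main obstacle is the first case: one must carefully specify the parameters $n,m,k,\rho$ and the substitution $\phi$ so that the irregular block structure of $\mathbf w$ fits the rigid shape of $\mathbf c_{n,m,k}[\rho]$, and verify that the same $\mathbf a,\phi,\mathbf b$ also witness $\mathbf p\,yx\,\mathbf q\,x\,\mathbf r\,y\,\mathbf s=\mathbf a\,\phi(\mathbf c_{n,m,k}^\prime[\rho])\,\mathbf b$. Once this matching is set up, the squaring procedure of the general case is a direct transcription of the analogous step in Lemma~\ref{L: from pxx_1..x_kxr to px_1x..x_kxr} and presents no new difficulty.
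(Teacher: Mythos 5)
Your proof follows the same three-step strategy as the paper: use the $\Phi_2$ identities (via the squaring mechanism of Lemma~\ref{L: from pxx_1..x_kxr to px_1x..x_kxr}) to double the length-one islands of letters that are multiple in $\mathbf r$, apply a suitable instance of $\mathbf c_{n,m,k}[\rho]\approx\mathbf c_{n,m,k}^\prime[\rho]$ to swap $xy$ to $yx$ once $\mathbf r$ has been put into the rigid form required by that template, then undo the squaring. The paper's proof is just a more compressed version of this.

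One imprecision worth fixing: your first-case hypothesis reads ``every multiple letter of $\mathbf w$ that occurs in $\mathbf r$ forms only islands of length at least $2$ within $\mathbf r$.'' Since $\con(\mathbf r)\subseteq\mul(\mathbf w)$, this is equivalent to requiring \emph{every} letter of $\con(\mathbf r)$ to lie only in islands of length at least $2$, which forbids the single occurrences that must map onto the $z_{i\tau}$ positions of $\mathbf c_{n,m,k}[\rho]$. Moreover your complementary ``general case'' then includes letters that are simple in $\mathbf r$ (one occurrence, hence one length-one island), but for such a letter there is no ``occurrence pair'' in $\mathbf r$, so the squaring step cannot be carried out. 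The correct dichotomy, as in the proof of Lemma~\ref{L: from pxx_1..x_kxr to px_1x..x_kxr}, is over letters in $\mul(\mathbf r)$: the squaring procedure applies exactly to a letter with at least two occurrences in $\mathbf r$, while letters simple in $\mathbf r$ are left untouched and are the ones that match the $z_{i\tau}$'s in the template. Once the condition is rephrased accordingly the case analysis is exhaustive and your argument agrees with the paper's.
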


\begin{proof}
In view of Lemma~\ref{L: from pxx_1..x_kxr to px_1x..x_kxr}, the identities in $\Phi_2$ imply the identity  $\mathbf pxy\mathbf qx\mathbf ry\mathbf s\approx\mathbf pxy\mathbf qx\mathbf r^\prime y\mathbf s$, where $\mathbf r^\prime$ is obtained from $\mathbf r$ by replacing  all multiple letters to their squares.
Further, one can find $n,k,m\in\mathbb N_0$ and $\rho\in S_{n+k+m}$ such that $\mathbf pxy\mathbf qx\mathbf r^\prime y\mathbf s\approx\mathbf pyx\mathbf qx\mathbf r^\prime y\mathbf s$ follows from $\mathbf c_{n,m,k}[\rho]\approx\mathbf c_{n,m,k}^\prime[\rho]$.
Now Lemma~\ref{L: from pxx_1..x_kxr to px_1x..x_kxr} applies again, yielding that $\mathbf pyx\mathbf qx\mathbf r^\prime y\mathbf s\stackrel{\Phi_2}\approx\mathbf pyx\mathbf qx\mathbf r y\mathbf s$.
Hence, $\mathbf V$ satisfies $\mathbf w\approx\mathbf pyx\mathbf qx\mathbf ry\mathbf s$ as required.
\end{proof}

\begin{proof}[Proof of Proposition~\ref{P: Phi,Phi_1,Phi_2 subvarieties}]
Denote by $\Sigma$ the set of all identities of form~\eqref{one letter in a block},~\eqref{two letters in a block middle 1} and~\eqref{two letters in a block middle 2} such that~\eqref{one letter in a block conditions} and~\eqref{two letters in a block middle conditions} hold.
Let $\mathbf V$ be a non-commutative subvariety of 
\[
\mathbf O:=\var\{\Phi,\,\Phi_1,\,\Phi_2\}.
\]
Take an arbitrary identity $\mathbf u \approx \mathbf u^\prime$ of $\mathbf V$.
We need to verify that $\mathbf u \approx \mathbf u^\prime$ is equivalent within $\mathbf O$ to some subset of $\Sigma$.

According to Lemma~\ref{L: nsub M(xy)}, $M(xy)\in\mathbf V$.
Let $t_0\mathbf u_0\cdots t_m\mathbf u_m$ be the decomposition of $\mathbf u$. 
Lemma~\ref{L: identities of M(xy)} implies that the decomposition of $\mathbf u^\prime$ has the form $t_0\mathbf u_0^\prime\cdots t_m\mathbf u_m^\prime$.
According to Lemma~\ref{L: from pxx_1..x_kxr to px_1x..x_kxr}, the identities in $\Phi_2$ can be used to convert the words $\mathbf u$ and $\mathbf u^\prime$ into some words $\mathbf v$ and $\mathbf v^\prime$, respectively, such that the following hold:
\begin{itemize}
\item the decompositions of $\mathbf v$ and $\mathbf v^\prime$ are of the form $t_0\mathbf v_0\cdots t_m\mathbf v_m$ and $t_0\mathbf v_0^\prime\cdots t_m\mathbf v_m^\prime$, respectively;
\item $\occ_x(\mathbf v_i),\occ_x(\mathbf v_i^\prime)\le 2$ for any $x\in\mathfrak X$ and $i=0,\dots,m$.
\end{itemize}
In view of this fact, it suffices to show that the identity $\mathbf v \approx \mathbf v^\prime$ is equivalent within $\mathbf V$ to some subset of $\Sigma$.

Take an arbitrary $x\in\mul(\mathbf v)=\mul(\mathbf v^\prime)$.
Evidently, 
\[
\mathbf v_i=\mathbf p_ix^{c_i}\mathbf q_ix^{d_i}\mathbf r_i\ \text{ and }\  \mathbf v_i^\prime=\mathbf p_i^\prime x^{c_i^\prime}\mathbf q_i^\prime x^{d_i^\prime}\mathbf r_i^\prime
\]
for some words $\mathbf p_i,\mathbf p_i^\prime,\mathbf q_i,\mathbf q_i^\prime,\mathbf r_i,\mathbf r_i^\prime\in \mathfrak X^\ast$ not containing the letter $x$ and
\[
\begin{aligned}
&c_i:=
\begin{cases} 
1 & \text{if $\occ_x(\mathbf v_i)>0$}, \\
0 & \text{if $\occ_x(\mathbf v_i)=0$},
\end{cases}\ \ \ 
d_i:=
\begin{cases} 
1 & \text{if $\occ_x(\mathbf v_i)=2$}, \\
0 & \text{if $\occ_x(\mathbf v_i)\le 1$},
\end{cases}
\\ 
&c_i^\prime:=
\begin{cases} 
1 & \text{if $\occ_x(\mathbf v_i^\prime)>0$}, \\
0 & \text{if $\occ_x(\mathbf v_i^\prime)=0$},
\end{cases}\ \ \ 
d_i^\prime:=
\begin{cases} 
1 & \text{if $\occ_x(\mathbf v_i^\prime)=2$}, \\
0 & \text{if $\occ_x(\mathbf v_i^\prime)\le 1$},
\end{cases}
\end{aligned}
\]
$i=0,\dots,m$. 
According to Lemma~\ref{L: from pxx_1..x_kxr to px_1x..x_kxr}, the identity
\begin{equation}
\label{v(x,t_1,..,t_m)=v'(x,t_1,..,t_m)}
\mathbf v(x,t_1,\dots,t_m)\approx \mathbf v^\prime(x,t_1,\dots,t_m)
\end{equation}
together with $\Phi_2$ imply the identities
\[
\begin{aligned}
\mathbf v&{}\stackrel{\Phi_2}\approx \biggl(\prod_{i=0}^mt_i\mathbf p_ix^{\occ_x(\mathbf v_i)+d_i}\mathbf q_ix^{d_i}\mathbf r_i\biggr)\stackrel{\eqref{v(x,t_1,..,t_m)=v'(x,t_1,..,t_m)}}\approx \biggl(\prod_{i=0}^mt_i\mathbf p_ix^{\occ_x(\mathbf v_i^\prime)+d_i}\mathbf q_ix^{d_i}\mathbf r_i\biggr)
\stackrel{\Phi_2}\approx \biggl(\prod_{i=0}^mt_i\mathbf p_ix^{h_i}\mathbf q_ix^{d_i}\mathbf r_i\biggr),\\
\mathbf v^\prime&{}\stackrel{\Phi_2}\approx \biggl(\prod_{i=0}^mt_i\mathbf p_i^\prime x^{\occ_x(\mathbf v_i^\prime)}\mathbf q_i^\prime x^{d_i^\prime}\mathbf r_i^\prime\biggr)
\stackrel{\eqref{v(x,t_1,..,t_m)=v'(x,t_1,..,t_m)}}\approx \biggl(\prod_{i=0}^mt_i\mathbf p_i^\prime x^{\occ_x(\mathbf v_i)}\mathbf q_i^\prime x^{d_i^\prime}\mathbf r_i\biggr)\\
&{}\stackrel{\Phi_2}\approx \biggl(\prod_{i=0}^mt_i\mathbf p_i^\prime x^{\occ_x(\mathbf v_i)+2d_i}\mathbf q_i^\prime x^{d_i^\prime}\mathbf r_i\biggr)
\stackrel{\eqref{v(x,t_1,..,t_m)=v'(x,t_1,..,t_m)}}\approx\biggl(\prod_{i=0}^mt_i\mathbf p_i^\prime x^{\occ_x(\mathbf v_i^\prime)+2d_i}\mathbf q_i^\prime x^{d_i^\prime}\mathbf r_i\biggr)
\stackrel{\Phi_2}\approx\biggl(\prod_{i=0}^mt_i\mathbf p_i^\prime x^{h_i^\prime}\mathbf q_i^\prime x^{d_i^\prime}\mathbf r_i\biggr),
\end{aligned}
\]
where
\[
h_i:=
\begin{cases} 
\occ_x(\mathbf v_i^\prime) & \text{if $d_i=0$}, \\
1 & \text{if $d_i=1$},
\end{cases}\ \ \ 
h_i^\prime:=
\begin{cases} 
\occ_x(\mathbf v_i^\prime) & \text{if $d_i=d_i^\prime=0$}, \\
1 & \text{if $d_i^\prime=1$},\\
2 & \text{if $d_i^\prime=0$, $d_i=1$},
\end{cases}
\]
$i=0,\dots,m$. 
Notice also that $h_i+d_i=h_i^\prime+d_i^\prime$ for all $i=0,\dots,m$.
Since the letter $x$ is arbitrary, the set 
\begin{equation}
\label{set of rigid identities}
\{\mathbf v(x,t_1,\dots,t_m)\approx \mathbf v^\prime(x,t_1,\dots,t_m)\mid x\in \mul(\mathbf v)=\mul(\mathbf v^\prime)\}
\end{equation}
of identities together with $\Phi_2$ can be used to convert the words $\mathbf v$ and $\mathbf v^\prime$ into some words $\mathbf w$ and $\mathbf w^\prime$, respectively, such that the following hold:
\begin{itemize}
\item the decompositions $\mathbf w$ and $\mathbf w^\prime$ have the form $t_0\mathbf w_0\cdots t_m\mathbf w_m$ and $t_0\mathbf w_0^\prime\cdots t_m\mathbf w_m^\prime$, respectively;
\item $\occ_x(\mathbf w_i)=\occ_x(\mathbf w_i^\prime)\le2$ for any $x\in\mathfrak X$ and $i=0,\dots,m$.
\end{itemize}
Evidently, every identity from~\eqref{set of rigid identities} is of the form~\eqref{one letter in a block}.
Hence $\mathbf V\{\mathbf u \approx \mathbf u^\prime\}=\mathbf V\{\Gamma,\,\mathbf w\approx \mathbf w^\prime\}$ for some $\Gamma\subseteq\Sigma$.
In view of this fact, it remains to show that the identity $\mathbf w \approx \mathbf w^\prime$ is equivalent within $\mathbf V$ to a subset of $\Sigma$.

We call an identity $\mathbf c\approx\mathbf d$ 1-\textit{invertible} if $\mathbf c=\mathbf e^\prime\, xy\,\mathbf e^{\prime\prime}$ and $\mathbf d=\mathbf e^\prime\, yx\,\mathbf e^{\prime\prime}$ for some words $\mathbf e^\prime,\mathbf e^{\prime\prime}\in\mathfrak X^\ast$ and letters $x,y\in\con(\mathbf e^\prime\mathbf e^{\prime\prime})$. 
Let $j>1$. 
An identity $\mathbf c\approx\mathbf d$ is called $j$-\textit{invertible} if there is a sequence of words $\mathbf c=\mathbf w_0,\dots,\mathbf w_j=\mathbf d$ such that the identity $\mathbf w_i\approx\mathbf w_{i+1}$ is 1-invertible for each $i=0,\dots,j-1$ and $j$ is the least number with such a property. 
For convenience, we will call the trivial identity 0-\textit{invertible}. 

Notice that the identity $\mathbf w \approx \mathbf w^\prime$ is $r$-invertible for some $r\in\mathbb N_0$ because $\occ_x(\mathbf w_i)=\occ_x(\mathbf w_i^\prime)$ for any $x\in\mathfrak X$ and $i=0,\dots,m$. 
We will use induction by $r$.

\smallskip

\textit{Induction base}. 
If $r=0$, then $\mathbf w=\mathbf w^\prime$, whence $\mathbf V\{\mathbf w\approx\mathbf w^\prime\}=\mathbf V\{\varnothing\}$.

\smallskip

\textit{Induction step}. 
Let $r>0$. 
Obviously, $\mathbf w_s\ne\mathbf w_s^\prime$ for some $s\in\{0,\dots,m\}$. 
Then there are letters $x$ and $y$ such that $\mathbf w_s=\mathbf a_s\, {_{q\mathbf w_s}}y\ {_{p\mathbf w_s}}x\,\mathbf b_s$ for some $p,q\in\{1,2\}$ and $\mathbf a_s,\mathbf b_s\in\mathfrak X^\ast$, while the letter ${_{p\mathbf w_s^\prime}}x$ precedes the letter ${_{q\mathbf w_s^\prime}}y$ in $\mathbf w_s^\prime$. 
We denote by $\hat{\mathbf w}$ the word obtained from $\mathbf w$ by swapping of the letters ${_{p\mathbf w_s}}x$ and ${_{q\mathbf w_s}}y$ in the block $\mathbf w_s$. 

To complete the proof, it suffices to show that $\mathbf w \approx \hat{\mathbf w}$ holds in $\mathbf O\{\mathbf w \approx \mathbf w^\prime\}$ and $\mathbf O\{\mathbf w \approx \hat{\mathbf w}\}=\mathbf O\Gamma^\prime$ for some $\Gamma^\prime\subseteq\Sigma$.
Indeed, in this case, the identity $\hat{\mathbf w}\approx\mathbf w^\prime$ is \mbox{$(r-1)$}-invertible. 
By the induction assumption, $\mathbf O\{\hat{\mathbf w}\approx\mathbf w\}=\mathbf O\Gamma^{\prime\prime}$ for some $\Gamma^{\prime\prime}\subseteq\Sigma$, whence  
\[
\mathbf O\{\mathbf w\approx\mathbf w^\prime\}=\mathbf O\{\mathbf w\approx \hat{\mathbf w},\hat{\mathbf w}\approx \mathbf w^\prime\}=\mathbf O\{\Gamma^\prime,\,\Gamma^{\prime\prime}\},
\] and we are done.

Suppose that $\occ_x(\mathbf w_s),\occ_y(\mathbf w_s)\ge2$.
Then, using Lemma~\ref{L: from pxx_1..x_kxr to px_1x..x_kxr}, we obtain that $\mathbf O$ satisfies the identities
\[
\mathbf w= \mathbf a\, yx\,\mathbf b\stackrel{\Phi_2}\approx \mathbf a\, y^2x^2\,\mathbf b \stackrel{\Phi}\approx \mathbf a\, x^2y^2\,\mathbf b\stackrel{\Phi_2}\approx \mathbf a\, xy\,\mathbf b=\hat{\mathbf w},
\]
where
\[
\mathbf a:=\biggl(\prod_{i=0}^{s-1}t_i\mathbf w_i\biggr)\,t_s\mathbf a_s \ \text{ and } \ \mathbf b:=\mathbf b_s\,\biggl(\prod_{i=s+1}^mt_i\mathbf w_i\biggr)
\]
So, it remains to consider the case when either $\occ_x(\mathbf w_s)=1$ or $\occ_y(\mathbf w_s)=1$.
By symmetry, we may assume that $\occ_y(\mathbf w_s)=1$.

If $x,y\in\con(\mathbf w_{s^\prime})=\con(\mathbf w_{s^\prime})$ for some $s^\prime\ne s$, then Lemma~\ref{L: from pxyqxrys to pyxqxrys} or the statement dual to it implies that $\mathbf O$ satisfies the identity $\mathbf w\approx\hat{\mathbf w}$. 
Thus, we may further assume that at most one of the letters $x$ and $y$ occurs in $\con(\mathbf w_i)$ for any $i\ne s$.
Denote this letter by $a_i$ (if $x,y\notin\con(\mathbf w_i)$, then $a_i$ denote the empty word).
Then $\mathbf w_i=\mathbf a_ia_i^{b_i}\mathbf b_ia_i^{b_i^\prime}\mathbf c_i$ for some $\mathbf a_i,\mathbf b_i,\mathbf c_i\in \mathfrak X^\ast$ and
\[
b_i:=
\begin{cases} 
1 & \text{if $\occ_{a_i}(\mathbf w_i)>0$}, \\
0 & \text{if $\occ_{a_i}(\mathbf w_i)=0$},
\end{cases}\ \ \ 
b_i^\prime:=
\begin{cases} 
1 & \text{if $\occ_{a_i}(\mathbf w_i)=2$}, \\
0 & \text{if $\occ_{a_i}(\mathbf w_i)\le 1$},
\end{cases}
\]
$i=0,\dots,s-1,s+1,\dots,m$.

Further, the identity $\mathbf w(x,y,t_1,\dots,t_m)\approx \mathbf w^\prime(x,y,t_1,\dots,t_m)$ coincides with either the identity
\begin{equation}
\label{two letters in a block middle mod}
\begin{aligned}
{}&\biggl(\prod_{i=0}^{s-1} t_ia_i^{\occ_{a_i}(\mathbf w_i)}\biggr)\, t_sx^{\occ_x(\mathbf a_s)}yx^{1+\occ_x(\mathbf b_s)}\, \biggl(\prod_{i=s+1}^m t_i a_i^{\occ_{a_i}(\mathbf w_i)}\biggr)\\
\approx{}&\biggl(\prod_{i=0}^{s-1} t_ia_i^{\occ_{a_i}(\mathbf w_i)}\biggr)\, t_sx^{1+\occ_x(\mathbf a_s)}yx^{\occ_x(\mathbf b_s)}  \biggl(\prod_{i=s+1}^m t_ia_i^{\occ_{a_i}(\mathbf w_i)}\biggr)
\end{aligned}
\end{equation}
or the identity
\[
\biggl(\prod_{i=0}^{s-1} t_ia_i^{\occ_{a_i}(\mathbf w_i)}\biggr)\, t_syx^2\, \biggl(\prod_{i=s+1}^m t_i a_i^{\occ_{a_i}(\mathbf w_i)}\biggr)
\approx\biggl(\prod_{i=0}^{s-1} t_ia_i^{\occ_{a_i}(\mathbf w_i)}\biggr)\, t_sx^2y \biggl(\prod_{i=s+1}^m t_ia_i^{\occ_{a_i}(\mathbf w_i)}\biggr).
\]
In view of Lemma~\ref{L: two letters in a block middle 2,3 <-> 4}, the latter identity together with $\Phi_2$ imply~\eqref{two letters in a block middle mod}.
Therefore, $\{\Phi_2,\,\mathbf w(x,y,t_1,\dots,t_m)\approx \mathbf w^\prime(x,y,t_1,\dots,t_m)\}$ implies~\eqref{two letters in a block middle mod} in either case.
Clearly, the identity $\mathbf w(x,y,t_1,\dots,t_m)\approx \hat{\mathbf w}(x,y,t_1,\dots,t_m)$ is nothing but the identity~\eqref{two letters in a block middle mod}. 
By Lemma~\ref{L: from pxx_1..x_kxr to px_1x..x_kxr},
\[
\begin{aligned}
\mathbf w=\mathbf a\, yx\,\mathbf b\stackrel{\Phi_2}\approx\mathbf a^\prime\, x^{\occ_x(\mathbf a_s)}yx^{1+\occ_x(\mathbf b_s)}\,\mathbf b^\prime
\stackrel{\eqref{two letters in a block middle mod}}\approx\mathbf a^\prime\, x^{1+\occ_x(\mathbf a_s)}yx^{\occ_x(\mathbf b_s)}\,\mathbf b^\prime
\stackrel{\Phi_2}\approx\mathbf a\, xy\,\mathbf b=\hat{\mathbf w},
\end{aligned}
\]
where
\[
\mathbf a^\prime:=\biggl(\prod_{i=0}^{s-1}t_i\mathbf a_ia_i^{\occ_{a_i}(\mathbf w_i)}\mathbf b_ia_i^{b_i^\prime}\mathbf c_i\biggr)\,t_s\mathbf a_s \ \text{ and } \ \mathbf b^\prime:=\mathbf b_s\,\biggl(\prod_{i=s+1}^mt_i\mathbf a_ia_i^{\occ_{a_i}(\mathbf w_i)}\mathbf b_ia_i^{b_i^\prime}\mathbf c_i\biggr).
\]
Therefore, the identity~\eqref{two letters in a block middle mod} is equivalent within $\mathbf O$ to the identity $\mathbf w\approx \hat{\mathbf w}$. 
It remains to notice that the identity~\eqref{two letters in a block middle mod} is of the form~\eqref{two letters in a block middle 1}--\eqref{two letters in a block middle 3} such that~\eqref{two letters in a block middle conditions} holds.
Finally, it is easy to see that every identity of the form~\eqref{one letter in a block} or~\eqref{two letters in a block middle 1}--\eqref{two letters in a block middle 3} is equivalent to an efficient identity of the same form.
\end{proof}

\subsection{Identities formed by words with one multiple letter}

Recall that $\mathbf A$ is the variety defined by the identities $x^2\approx x^3$ and~\eqref{xxyx=xxyxx}.
In this subsection, we find a minimal list of identities of the form~\eqref{one letter in a block} which can be used to define the subvarieties of $\mathbf A$.

The following statement readily follows from Lemma~\ref{L: nsub M(yxx^+)}, the claim dual to it and Lemma~\ref{L: nsub M(x^+yzx^+)}.

\begin{lemma}
\label{L: zero exponent}
Let $r,e_0,f_0,e_1,f_1,\dots,e_r,f_r\in\mathbb N_0$ with $\sum_{i=0}^re_i,\sum_{i=0}^rf_i\ge2$.
\begin{itemize}
\item[\textup{(i)}] If one of the numbers $e_0$ or $f_0$ is zero, while the other one is not, then $\mathbf A\{\eqref{one letter in a block}\}$ satisfies the identity~\eqref{yxx=xyxx}.
\item[\textup{(ii)}] If one of the numbers $e_r$ or $f_r$ is zero, while the other one is not, then $\mathbf A\{\eqref{one letter in a block}\}$ satisfies the identity~\eqref{xxy=xxyx}.
\item[\textup{(iii)}] If, for some $k\in\{1,\dots,r-1\}$, one of the numbers $e_k$ or $f_k$ is zero, while the other one is not, then $\mathbf A\{\eqref{one letter in a block}\}$ satisfies the identity~\eqref{xxyzx=xxyxzx}.\qed
\end{itemize}
\end{lemma}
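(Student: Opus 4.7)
The plan is to exploit~\eqref{one letter in a block} by applying suitable substitutions $\phi\colon \mathfrak X\to\mathfrak X^\ast$ that collapse all but one or two of the simple letters $t_1,\dots,t_r$ to the empty word, thereby reducing~\eqref{one letter in a block} to a short identity in $x$ and at most two additional letters. The three target identities~\eqref{yxx=xyxx},~\eqref{xxy=xxyx} and~\eqref{xxyzx=xxyxzx} will then be extracted by elementary manipulations using the defining identities $x^2\approx x^3$ and $x^2yx\approx x^2yx^2$ of $\mathbf A$, together with their immediate consequences: $x^m\approx x^n$ for all $m,n\ge 2$, and, via the substitution $y\mapsto yt$ in $x^2yx\approx x^2yx^2$, the identity $x^2ytx^n\approx x^2ytx$ for every $n\ge 1$.

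For part~(i), assume without any loss that $e_0=0$ and $f_0\ge 1$ (the opposite case is symmetric). Apply the substitution $\phi(t_1)=y$, $\phi(t_i)=1$ for $i>1$, and $\phi(x)=x$ to~\eqref{one letter in a block}. Since $\sum_{i=0}^re_i\ge 2$ and $e_0=0$, the resulting identity has the form $yx^p\approx x^{f_0}yx^q$ with $p=\sum_{i=1}^re_i\ge 2$ and $q=\sum_{i=1}^rf_i\ge 0$. Reducing powers of $x$ modulo $x^2\approx x^3$ gives $yx^2\approx x^ayx^b$ with $a\in\{1,2\}$ and $b\in\{0,1,2\}$. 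A short case analysis on $(a,b)$ produces~\eqref{yxx=xyxx}: when $b=2$ and $a=1$ this is already~\eqref{yxx=xyxx}; when $b=2$ and $a=2$, multiplying by $x$ on the left and applying $x^2\approx x^3$ gives $xyx^2\approx x^2yx^2\approx yx^2$; when $b\in\{0,1\}$, multiplying by $x$ on the right converts the identity to one of the previous forms (e.g., $yx^2\approx xy$ yields $yx^2\approx xyx$ after right multiplication by $x$, and a second right multiplication produces $yx^2\approx xyx^2$), with $x^2yx\approx x^2yx^2$ invoked to normalize trailing powers.

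Part~(ii) is handled by the dual choice of substitution: assume $e_r=0$ and $f_r\ge 1$, apply $\phi(t_r)=y$ and $\phi(t_i)=1$ for $i<r$, and argue exactly as in part~(i) to derive~\eqref{xxy=xxyx}. Note that duality does \textbf{not} carry part~(i) directly to part~(ii) because $\mathbf A$ is not self-dual, but the same case analysis succeeds (in particular, when $p'=0$ in the reduced identity $x^py\approx x^{p'}yx^{f_r}$, the condition $p'+f_r\ge 2$ forces $f_r\ge 2$, and the argument closes by right-multiplication by $x$).

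For part~(iii), assume $e_k=0$ and $f_k\ge 1$ and apply $\phi(t_k)=y$, $\phi(t_{k+1})=z$, $\phi(t_i)=1$ for $i\notin\{k,k+1\}$. The resulting identity has the form $x^\alpha yzx^\gamma\approx x^{\alpha'}yx^{\beta'}zx^{\gamma'}$ with $\beta'\ge 1$. Multiplying on the left by $x^2$ and on the right by $x$, and then using $x^2\approx x^3$ to normalize the leading power to $x^2$, yields
\[
x^2yzx^{\gamma+1}\approx x^2yx^{\beta'}zx^{\gamma'+1}.
\]
Now apply the consequence $x^2ytx^n\approx x^2ytx$ (with $t=z$) to collapse the left-hand side to $x^2yzx$; on the right, first reduce $x^{\beta'}$ to $x^{\min(\beta',2)}$, handle the case $\beta'\ge 2$ using $x^2yx^2\approx x^2yx$ to recover the middle factor $x$, and then apply the same consequence (with $t=xz$) to collapse the trailing exponent, producing $x^2yxzx$. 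This gives exactly~\eqref{xxyzx=xxyxzx}. The main bookkeeping hurdle, slight but unavoidable, is the case split on $\beta'\in\{1\}$ versus $\beta'\ge 2$ in part~(iii); everything else is a straightforward reduction using the two defining identities of $\mathbf A$.
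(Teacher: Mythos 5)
Your proof is correct and follows essentially the same strategy as the paper's: collapse all but one (or two) simple letters via a substitution, append powers of $x$ to rigidify the exponents, then reduce modulo $x^2\approx x^3$ and $x^2yx\approx x^2yx^2$ to reach the target identity. The paper streamlines part~(i) by taking $\mathbf u(x,t_1)x^2\approx\mathbf v(x,t_1)x^2$ from the outset (so the trailing exponent is already $2$ and only the case $m\in\{1,2\}$ in $yx^2\approx x^myx^2$ remains), whereas you defer the right-multiplication and therefore run a slightly larger but equivalent case analysis over $(a,b)$; your observation that $\mathbf A$ is not self-dual (so part~(ii) must be reproved rather than dualized) is correct and is implicit in the paper's ``quite similar'' remark. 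Part~(iii) matches the paper's derivation from $x^2\mathbf u(x,t_k,t_{k+1})x^2\approx x^2\mathbf v(x,t_k,t_{k+1})x^2$ up to the same bookkeeping.
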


For any $n\in\mathbb N$, we fix notation for the following identities:
\[
\begin{aligned}
\beta_n&:\enskip xt_1x\cdots t_nx\approx xt_1x\cdots xt_nx^2,\\
\gamma_n&:\enskip xt_1x\cdots  t_{n-1}x^2t_nx\approx xt_1x\cdots t_{n-1}xt_nx^2.
\end{aligned}
\]
For the rest of the article, let
\[
\Delta:=\{\eqref{yxx=xyxx},\,\eqref{xxy=xxyx},\,\eqref{xxyzx=xxyxzx},\,\beta_n,\gamma_n\mid n\in\mathbb N\}.
\]

\begin{lemma}
\label{L: one letter in block reduction}
Let $r,e_0,f_0,e_1,f_1,\dots,e_r,f_r\in\mathbb N_0$  with $\sum_{i=0}^re_i,\sum_{i=0}^rf_i\ge2$.
Then the identity~\eqref{one letter in a block} is equivalent within $\mathbf A$ to some identities in $\Delta$. 
\end{lemma}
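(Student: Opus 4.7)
The plan is to reduce \eqref{one letter in a block} to a canonical form using the defining identities of $\mathbf A$, peeling off members of $\Delta$ along the way. Throughout, denote the left- and right-hand sides of \eqref{one letter in a block} by $\mathbf u$ and $\mathbf v$.

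First, I would normalise exponents. Since $\mathbf A$ satisfies $x^2\approx x^3$, the identity \eqref{one letter in a block} is equivalent within $\mathbf A$ to the identity obtained by replacing each $e_i$ with $\min(e_i,2)$ and each $f_i$ with $\min(f_i,2)$. Hence I may assume without loss of generality that $e_i,f_i\in\{0,1,2\}$ for every $i$, and that the identity is efficient in the sense that $(e_i,f_i)\neq (0,0)$ for all $i$ (otherwise absorb the trivial block into $t_it_{i+1}$).

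Second, I would apply Lemma~\ref{L: zero exponent} at every index $i$ where exactly one of $e_i,f_i$ equals $0$: such an $i$ forces $\mathbf A\{\mathbf u\approx\mathbf v\}$ to satisfy whichever of \eqref{yxx=xyxx}, \eqref{xxy=xxyx}, \eqref{xxyzx=xxyxzx} corresponds to the position of $i$ (first, last, or middle). All three identities lie in $\Delta$, and modulo them one can insert an extra $x$ into any block while preserving equivalence over $\mathbf A$. So after splitting off these members of $\Delta$, we may further assume that $e_i=0 \iff f_i=0$ for every $i$ and then restrict attention to the indices with $e_i,f_i\in\{1,2\}$.

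Third, the residual identity records only the positions at which the exponent is flipped between $1$ and $2$. The defining identity $x^2yx\approx x^2yx^2$ of $\mathbf A$, together with $x^2\approx x^3$, equates $x$ and $x^2$ in any block that is strictly preceded by a block with an $x^2$; iterating this observation allows an $x^2$ to be freely propagated to the right. The remaining ``boundary'' discrepancies — where the leftmost (respectively rightmost) position of disagreement carries exponent $2$ on one side and $1$ on the other — are designed to be the content of the identities $\beta_n$ and $\gamma_n$ in $\Delta$. Each such discrepancy can therefore be eliminated by deducing the corresponding $\beta_n$ or $\gamma_n$, strictly decreasing the number of positions at which $\mathbf u$ and $\mathbf v$ disagree, and the induction closes.

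The main obstacle is the final combinatorial step: verifying that \emph{every} distribution of flipped exponents along the word can be resolved by iteratively peeling off a $\beta_n$ or a $\gamma_n$. Here it is essential that $\Delta$ contains two separate families, because $\mathbf A$ is not self-dual: the identity $x^2yx\approx x^2yx^2$ propagates an $x^2$ only to the right. Careful casing on the leftmost and rightmost positions where $(e_i,f_i)\in\{(1,2),(2,1)\}$, combined with the reduction steps above, reduces \eqref{one letter in a block} to a (finite) list of identities from $\Delta$; choosing each peeled-off identity to be efficient yields the claimed conclusion.
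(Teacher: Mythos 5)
The key difficulty is in your second step, and it does not go through. Deriving the appropriate member of $\{\eqref{yxx=xyxx},\eqref{xxy=xxyx},\eqref{xxyzx=xxyxzx}\}$ for each index where exactly one of $e_i,f_i$ vanishes is fine, but the follow-up claim — that modulo these identities ``one can insert an extra $x$ into any block,'' so that one may assume $e_i=0\iff f_i=0$ — is false. Each of these three identities requires a factor $x^2$ in a specific position before it can be applied, and that factor may be absent on one side. Concretely, take $r=2$, $(e_0,e_1,e_2)=(1,0,1)$, $(f_0,f_1,f_2)=(1,1,1)$, i.e.\ the identity $xt_1t_2x\approx xt_1xt_2x$. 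Here Lemma~\ref{L: zero exponent}(iii) gives $\eqref{xxyzx=xxyxzx}$, and that is the only $\Delta$-identity your step~2 produces. But the word $xt_1t_2x$ contains no factor $x^2$, so none of $x^2\approx x^3$, $x^2yx\approx x^2yx^2$, or $x^2yzx\approx x^2yxzx$ rewrites it at all: $\mathbf A\{\eqref{xxyzx=xxyxzx}\}$ does not prove $xt_1t_2x\approx xt_1xt_2x$, and you cannot normalize to $e_1>0$. (In fact the correct decomposition for this example is $\{\beta_1,\gamma_1,\eqref{xxyzx=xxyxzx}\}$: one needs $\beta_1$ to create the missing $x^2$ first.)

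This is precisely what the paper's inner induction (its Case~2) is for. Rather than trying to normalize exponents in place, the paper deletes the offending simple letter $t_k$, obtaining a smaller identity $\mathbf u_{t_k}\approx\mathbf v_{t_k}$ to which the induction hypothesis on $r$ applies, producing the $\beta$- and $\gamma$-identities needed to create the factor $x^2$ before $\sigma$ can be applied. Your step~3 (peeling off $\beta_n$'s and $\gamma_n$'s once all zero exponents match) is sound in spirit — it is essentially the paper's Case~1 — but without the inner induction your step~2 leaves a gap that the example above shows is unrepairable by the stated means. Any correct proof must bootstrap the missing $x^2$ from $\beta$- or $\gamma$-identities obtained by a reduction in $r$, not merely from the positional identities of Lemma~\ref{L: zero exponent}.
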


\begin{proof}
Evidently, any identity of the form~\eqref{one letter in a block} is equivalent to an efficient identity of the same form.
Further, it is easy to see that any identity of the form~\eqref{one letter in a block} is equivalent modulo $\{x^2\approx x^3,\,\eqref{xxyx=xxyxx}\}$ to an identity of the same form such that both hand sides of this identity have at most one island formed by $x$ of length $>1$.
Thus, we may further assume that the identity~\eqref{one letter in a block} is efficient and both hand sides of this identity have at most one island $x^2$, while the other islands formed by $x$ are of length $1$.
If the identity~\eqref{one letter in a block} is trivial, then there is nothing to prove. 
So, we assume below that the identity~\eqref{one letter in a block} is non-trivial. 
For convenience, denote the left-hand [right-hand] side of the identity~\eqref{one letter in a block} by $\mathbf u$ [respectively, $\mathbf v$]. 
Further considerations are divided into two cases.

\smallskip

\textbf{Case 1}: $e_i,f_i>0$ for all $i=0,\dots,r$.
Since the identity~\eqref{one letter in a block} is non-trivial and efficient, either $\mathbf u$ or $\mathbf v$ must contain an island of the form $x^2$.
By symmetry, we may assume without loss of generality that $f_k=2$ for some $k$.
Then $f_i=1$ for all $i=0,\dots,k-1,k+1,\dots,r$.

Assume that $e_i=1$ for all $i=0,\dots,r$.
If $k=r$, then~\eqref{one letter in a block} is nothing but $\beta_r$, and we are done.
Suppose now that $k<r$.
Then
\[
\begin{aligned}
&\mathbf u(x,t_1,\dots,t_{k+1})x=xt_1x\cdots  t_kxt_{k+1}x^{r-k+1},\\
&\mathbf v(x,t_1,\dots,t_{k+1})x=xt_1x\cdots  t_kx^2t_{k+1}x^{r-k+1}.
\end{aligned}
\]
Hence $\mathbf u(x,t_1,\dots,t_{k+1})x\approx \mathbf v(x,t_1,\dots,t_{k+1})x$ is equivalent modulo $\{x^2\approx x^3,\,\eqref{xxyx=xxyxx}\}$ to $\gamma_{k+1}$.
Then $\beta_r$ follows from~\eqref{one letter in a block} because
\[
xt_1x\cdots t_rx\stackrel{\eqref{one letter in a block}}\approx xt_1x\cdots t_kx^2t_{k+1}x\cdots t_rx\stackrel{\gamma_{k+1}}\approx xt_1x\cdots t_{r-1}xt_rx^2.
\]
Finally, since
\[
\mathbf u=xt_1x\cdots t_rx\stackrel{\beta_r}\approx xt_1x\cdots t_{r-1}xt_rx^2\stackrel{\gamma_{k+1}}\approx xt_1x\cdots t_kx^2t_{k+1}x\cdots t_rx=\mathbf v,
\]
we have $\mathbf A\{\eqref{one letter in a block}\}=\mathbf A\{\beta_r,\,\gamma_{k+1}\}$.

Assume now that $e_m=2$ for some $m$.
Then $e_i=1$ for all $i=0,\dots,m-1,m+1,\dots,r$.
Since the identity~\eqref{one letter in a block} is non-trivial and efficient, $k\ne m$.
We may assume without any loss that $m<k$.
Then
\[
\begin{aligned}
&\mathbf u(x,t_1,\dots,t_{m+1})=xt_1x\cdots  t_{m-1}xt_mx^2t_{m+1}x^{r-m},\\
&\mathbf v(x,t_1,\dots,t_{m+1})=xt_1x\cdots  t_mxt_{m+1}x^{r-m+1}.
\end{aligned}
\]
Hence $\mathbf u(x,t_1,\dots,t_{m+1})\approx \mathbf v(x,t_1,\dots,t_{m+1})$ is equivalent modulo $\{x^2\approx x^3,\,\eqref{xxyx=xxyxx}\}$ to $\gamma_{m+1}$.
Clearly, $\gamma_{m+1}$ implies~\eqref{one letter in a block}, whence $\mathbf A\{\eqref{one letter in a block}\}=\mathbf A\{\gamma_{m+1}\}$.

\smallskip

\textbf{Case 2}: at least one of the exponents  $e_0,f_0,\dots,e_r,f_r$ is zero. 
In this case, $r>0$. 
We will use induction on $r$.
There are two subcases.

\textbf{Case 2.1}: $e_0,f_0>0$.
Let $k$ be the least number such that either $e_k$ or $f_k$ is zero.
By symmetry, we may assume that $e_k=0$.

\textbf{Induction base}: $r=1$.
Then $\mathbf u=x^2t_1$ and $f_0,f_1>0$.
According to Lemma~\ref{L: zero exponent}(ii), the variety $\mathbf A\{\eqref{one letter in a block}\}$ satisfies the identity~\eqref{xxy=xxyx}.
By Case~1, $\mathbf A\{\mathbf ux\approx \mathbf v\}=\mathbf A\Delta^\prime$ for some $\Delta^\prime\subseteq\Delta$.
Then
\[
\mathbf A\{\eqref{one letter in a block}\}=\mathbf A\{\eqref{one letter in a block},\,\eqref{xxy=xxyx}\}=\mathbf A\{\mathbf ux\approx \mathbf v,\,\eqref{xxy=xxyx}\}=\mathbf A\{\Delta^\prime,\,\eqref{xxy=xxyx}\}
\]
as required.

\textbf{Induction step}: $r>1$.
We have
\[
\begin{aligned}
&\mathbf u_{t_k}=x^{e_0}t_1x^{e_1}\cdots  t_{k-1}x^{e_{k-1}}t_{k+1}x^{e_{k+1}}\cdots t_rx^{e_r},\\
&\mathbf v_{t_k}=x^{f_0}t_1x^{f_1}\cdots  t_{k-1}x^{f_{k-1}+f_k}t_{k+1}x^{f_{k+1}}\cdots t_rx^{f_r}.
\end{aligned}
\]
By the induction assumption or Case~1, $\mathbf A\{\mathbf u_{t_k}\approx \mathbf v_{t_k}\}=\mathbf A\Delta_1^\prime$ for some $\Delta_1^\prime\subseteq\Delta$.
Let $\sigma$ denote the identity~\eqref{xxyzx=xxyxzx} whenever $e_r,f_r>0$ or the identity~\eqref{xxy=xxyx} otherwise.
In view of Lemma~\ref{L: zero exponent}(ii),(iii), the identity $\sigma$ holds in the variety $\mathbf A\{\eqref{one letter in a block}\}$.
Put $\Delta_1:=\Delta_1^\prime\cup\{\sigma\}$.
Then $\mathbf A\Delta_1$ and so $\mathbf A\{\eqref{one letter in a block}\}$ satisfy the identities
\[
\begin{aligned}
\mathbf u&{}\approx x^{f_0}t_1x^{f_1}\cdots  t_{k-1}x^{f_{k-1}+f_k}t_kt_{k+1}x^{f_{k+1}}\cdots t_rx^{f_r}&&\textup{by }\mathbf u_{t_k}\approx \mathbf v_{t_k}\\
&\approx x^{f_0}t_1x^{f_1}\cdots  t_{k-1}x^2t_kx\cdots t_rx&&\textup{by }\sigma\\
&=:\mathbf u^\prime.
\end{aligned}
\]
Now consider the identity $\mathbf u^\prime\approx \mathbf v$.
Arguments similar to the above ones show that there is a subset $\Delta_2$ of $\Delta$ holding in $\mathbf A\{\mathbf u^\prime\approx \mathbf v\}$ such that $\mathbf A\Delta_2$ satisfies an identity 
\[
\mathbf v\approx x^{f_0^\prime}t_1x^{f_1^\prime}\cdots t_rx^{f_r^\prime} =:\mathbf v^\prime,
\]
where $f_0^\prime,\dots,f_r^\prime>0$.
In view of the above, $\mathbf u^\prime\approx \mathbf v$ is satisfied by $\mathbf A\{\eqref{one letter in a block}\}$, whence $\Delta_2$ and so $\mathbf u^\prime\approx\mathbf v^\prime$ hold in $\mathbf A\{\mathbf u\approx \mathbf v\}$.
Finally, by Case~1, $\mathbf A\{\mathbf u^\prime\approx\mathbf v^\prime\}=\mathbf A\Delta_3$ for some $\Delta_3\subseteq\Delta$.
We see that $\mathbf A\{\eqref{one letter in a block}\}\subseteq\mathbf A\{\Delta_1,\Delta_2,\Delta_3\}$.
In fact, the reverse inclusion is valid because $\mathbf u\stackrel{\mathbf A\Delta_1}\approx\mathbf u^\prime\stackrel{\mathbf A\Delta_3}\approx\mathbf v^\prime\stackrel{\mathbf A\Delta_2}\approx\mathbf v$.
Hence $\mathbf A\{\eqref{one letter in a block}\}=\mathbf A\{\Delta_1,\Delta_2,\Delta_3\}$, and we are done.

\textbf{Case 2.2}: at least one of the numbers $e_0$ or $f_0$ is zero.
In this case, $\mathbf A\{\eqref{one letter in a block}\}$ satisfies~\eqref{yxx=xyxx} by Lemma~\ref{L: zero exponent}(i).
By symmetry, we may assume without any loss that $e_0=0$.
Then $f_0>0$ since the identity~\eqref{one letter in a block} is efficient.

\textbf{Induction base}: $r=1$.
Then $\mathbf u=t_1x^2$.
By Case~1 or Case~2.1, $\mathbf A\{x\mathbf u\approx \mathbf v\}=\mathbf A\Delta^\prime$ for some $\Delta^\prime\subseteq\Delta$.
Then
\[
\mathbf A\{\eqref{one letter in a block}\}=\mathbf A\{\eqref{one letter in a block},\,\eqref{yxx=xyxx}\}=\mathbf A\{x\mathbf u\approx \mathbf v,\,\eqref{yxx=xyxx}\}=\mathbf A\{\Delta^\prime,\,\eqref{yxx=xyxx}\}
\]
as required.

\textbf{Induction step}: $r>1$.
Suppose that $f_1>0$.
Then $f_0+f_1\ge2$ and
\[
\mathbf u_{t_1}=x^{e_1}t_2x^{e_2}\cdots t_rx^{e_r},\ \ \ \mathbf v_{t_1}=x^{f_0+f_1}t_2x^{f_2}\cdots t_rx^{f_r}.
\]
By the induction assumption or Case~1 or Case~2.1, $\mathbf A\{\mathbf u_{t_1}\approx \mathbf v_{t_1}\}=\mathbf A\Delta_1$ for some $\Delta_1\subseteq\Delta$.
Clearly, $\mathbf u_{t_1}\approx \mathbf v_{t_1}$ together with~\eqref{yxx=xyxx} imply
\[
\mathbf u\approx xt_1x^2t_2x^{f_2}\cdots t_rx^{f_r}=:\mathbf u^\prime.
\]
According to Case~1 or Case~2.1, $\mathbf A\{\mathbf u^\prime\approx\mathbf v\}=\mathbf A\Delta_2$ for some $\Delta_2\subseteq\Delta$.
Then
\[
\mathbf A\{\eqref{one letter in a block}\}=\mathbf A\{\eqref{one letter in a block},\,\mathbf u_{t_1}\approx \mathbf v_{t_1},\,\eqref{yxx=xyxx}\}=\mathbf A\{\mathbf u^\prime\approx\mathbf v,\,\mathbf u_{t_1}\approx \mathbf v_{t_1},\,\eqref{yxx=xyxx} \}=\mathbf A\{\Delta_1,\Delta_2,\,\eqref{yxx=xyxx} \},
\]
and we are done.

Suppose now that $f_1=0$.
Then
\[
\mathbf u_{t_2}=t_1x^{e_1+e_2}t_3x^{e_3}\cdots t_rx^{e_r},\ \ \ 
\mathbf v_{t_2}=x^{f_0}t_1x^{f_2}t_3x^{f_3}\cdots t_rx^{f_r}.
\]
By the induction assumption, $\mathbf A\{\mathbf u_{t_2}\approx \mathbf v_{t_2}\}=\mathbf A\Delta_3$ for some $\Delta_3\subseteq\Delta$.
Clearly, $\mathbf u_{t_2}\approx \mathbf v_{t_2}$ implies
\[
\mathbf v\approx t_1t_2x^{e_1+e_2}t_3x^{e_3}\cdots t_rx^{e_r}=:\mathbf v^\prime.
\]
By the induction assumption, $\mathbf A\{\mathbf u_{t_1}\approx \mathbf v_{t_1}^\prime\}=\mathbf A\Delta_4$ for some $\Delta_4\subseteq\Delta$.
Since $\mathbf u\approx \mathbf v^\prime$ is equivalent to $\mathbf u_{t_1}\approx \mathbf v_{t_1}^\prime$, we have
\[
\begin{aligned}
\mathbf A\{\eqref{one letter in a block}\}&{}=\mathbf A\{\eqref{one letter in a block},\,\mathbf u_{t_2}\approx \mathbf v_{t_2}\}=\mathbf A\{\mathbf u\approx\mathbf v^\prime,\,\mathbf u_{t_2}\approx \mathbf v_{t_2}\}\\
&{}=\mathbf A\{\mathbf u_{t_1}\approx \mathbf v_{t_1}^\prime,\,\mathbf u_{t_2}\approx \mathbf v_{t_2}\}=\mathbf A\{\Delta_3,\Delta_4\},
\end{aligned}
\]
and we are done.
\end{proof}

Put $\mu_n:=\FIC(\var\{x^2\approx x^3,\,\gamma_n\})$ for any $n\in\mathbb N$.
Notice that $\mu_2$ is nothing but $\mu$.

\begin{corollary}
\label{C: nsub M(xt_1x...t_nx),M(xt_1x...t_nx^+)}
Let $\mathbf V$ be a subvariety of $\mathbf A$ such that $M(xy)\in \mathbf V$.
\begin{itemize}
\item[\textup{(i)}] If $M(xt_1x\cdots t_nx)\notin \mathbf V$, then $\mathbf V$ satisfies the identity $\beta_n$.
\item[\textup{(ii)}] If $M_{\mu_n}(xt_1x\cdots t_nx^+)\notin \mathbf V$, then $\mathbf V$ satisfies the identity $\gamma_n$.
\end{itemize}
\end{corollary}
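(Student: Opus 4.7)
Both parts follow the standard template of this section: extract a $\mathbf V$-identity from the non-inclusion hypothesis, restrict its form using $M(xy)\in\mathbf V$ together with Lemma~\ref{L: identities of M(xy)}, and then manipulate it inside $\mathbf A$ to read off the target identity.

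For Part~(i), Lemma~\ref{L: M(W) in V} yields a non-trivial identity $xt_1x\cdots t_nx\approx \mathbf w$ of $\mathbf V$, and Lemma~\ref{L: identities of M(xy)} forces $\mathbf w=x^{f_0}t_1x^{f_1}\cdots t_nx^{f_n}$ with $(f_0,\dots,f_n)\ne(1,\dots,1)$. Setting every $t_i=1$ shows that if $\sum f_i\le 1$ then $x^2\approx x^3$ forces $\mathbf V$ to be trivial (so $\beta_n$ is vacuous); otherwise the identity is of the form~\eqref{one letter in a block} with $e_i=1$ for all $i$, to which Lemma~\ref{L: one letter in block reduction} applies. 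The direct way to extract $\beta_n$ is to substitute $t_n\mapsto t_nx$ in $\mathbf u\approx\mathbf w$: this yields $xt_1x\cdots t_{n-1}xt_nx^2\approx x^{f_0}t_1x^{f_1}\cdots x^{f_{n-1}}t_nx^{f_n+1}$ in $\mathbf V$. When $f_n\ge 2$ the axiom $x^2\approx x^3$ collapses $x^{f_n+1}\approx x^{f_n}$, so the right-hand side coincides with $\mathbf w$ modulo $\mathbf A$, and combining with $\mathbf u\approx\mathbf w$ gives $\beta_n$. The remaining configurations are handled by a short case analysis: any $x^2$-island located at some position $k<n$ can be pushed to the last position via $x^2yx\approx x^2yx^2$ with $y$ chosen as a composite word that bridges even vanishing blocks, reducing to the $f_n\ge 2$ case; and if all $f_k\in\{0,1\}$ with some $f_k=0$, the substitution $t_k\mapsto t_kx$ at the missing position creates an $x^2$-island on the right while the left-hand side stays essentially $\mathbf u$, again reducing to the previous subcase. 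Any smaller $\beta_m$ ($m<n$) arising in the reduction lifts to $\beta_n$ via $t_m\mapsto t_mxt_{m+1}x\cdots xt_n$.

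Part~(ii) is parallel. An analog of Corollary~\ref{C: M_alpha(W) in V} for $\mu_n$ — valid because $\mu_n$ is a fully invariant congruence with $\{1\}$ a singleton class — supplies a $\mu_n$-class in $\{[xt_1x\cdots t_nx^+]^{\mu_n}\}^{\le_{\mu_n}}$ which is not stable with respect to $\mathbf V$, and one checks using $M(xy)\in\mathbf V$ and Lemma~\ref{L: identities of M(xy)} that every $\le_{\mu_n}$-smaller class is already stable, so the culprit must be $[xt_1x\cdots t_nx^+]^{\mu_n}$ itself. Thus $\mathbf V$ satisfies some $\mathbf u\approx\mathbf v$ with $\mathbf u\in xt_1x\cdots t_nx^+$ and $\mathbf v\notin[xt_1x\cdots t_nx^+]^{\mu_n}$; Lemma~\ref{L: identities of M(xy)} again pins down $\mathbf v=x^{f_0}t_1x^{f_1}\cdots t_nx^{f_n}$. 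Since $\mu_n$ already collapses $x^k$ to $x^2$ for $k\ge 2$ and identifies $xt_1x\cdots t_{n-1}x^2t_nx$ with $xt_1x\cdots t_{n-1}xt_nx^2$, the non-membership of $\mathbf v$ forces either some $f_k\ne 1$ with $k\le n-2$ or $(f_{n-1},f_n)$ to lie outside the short $\mu_n$-orbit of the $(1,\cdot)$-patterns, and a brief case analysis inside $\mathbf A$ rearranges $\mathbf u\approx\mathbf v$ into $\gamma_n$.

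The main obstacle in Part~(i) is the bookkeeping of the "missing-block" configurations (those $\mathbf w$ with some $f_k=0$), ensuring that each produces $\beta_n$ rather than a strictly weaker identity; in Part~(ii) the analogous difficulty is giving a sufficiently explicit description of $[xt_1x\cdots t_nx^+]^{\mu_n}$ together with its $\le_{\mu_n}$-downward closure to rigorously identify the unique unstable class that yields $\gamma_n$.
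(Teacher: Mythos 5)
Your proposal cites the right lemmas, but you then abandon the approach they enable and attempt a direct manipulation that you yourself acknowledge has unresolved gaps. The paper's actual argument is an indirect one, and it is precisely what Lemma~\ref{L: one letter in block reduction} is designed for: once you know that $\mathbf A\wedge\var\{xt_1x\cdots t_nx\approx\mathbf w\}=\mathbf A\Delta'$ for some $\Delta'\subseteq\Delta$, you do not need to chase the configuration of $\mathbf w$ at all. Instead one observes that the word $xt_1x\cdots t_nx$ is an isoterm for $\mathbf A\wedge\var(\Delta\setminus\{\beta_1,\dots,\beta_n\})$, so $\Delta'$ must contain $\beta_k$ for some $k\le n$, and every such $\beta_k$ implies $\beta_n$. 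This completely replaces the case analysis you sketch in Part~(i), including the problematic ``missing-block'' configurations you flag as a difficulty. (As a side remark, the subcase $\sum f_i\le1$ you dispose of at the outset cannot occur at all: Lemma~\ref{L: identities of M(xy)} forces $x$ to be a multiple letter of $\mathbf w$, so $\sum f_i\ge2$ is automatic. Your claim that $x^2\approx x^3$ would then force $\mathbf V$ to be trivial is also not quite right---what you would get is a contradiction with $M(xy)\in\mathbf V$, not triviality of $\mathbf V$.)

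In Part~(ii) the gap is more serious: you never invoke Lemma~\ref{L: one letter in block reduction} at all, replacing it with a sketched case analysis on the exponents $f_0,\dots,f_n$ that is not carried out. The paper's route is parallel to Part~(i). After using Lemma~\ref{L: M_alpha(W) in V} and the explicit downward $\le_{\mu_n}$-closure to show that the top $\mu_n$-class $xt_1x\cdots t_nx^+$ is the unstable one, it obtains an identity $\mathbf u\approx\mathbf v$ with $\mathbf u\in xt_1x\cdots t_nx^+$, $\mathbf v\notin xt_1x\cdots t_nx^+$, and $\mul(\mathbf v)=\{x\}$. Then Lemma~\ref{L: one letter in block reduction} gives $\mathbf A\{\mathbf u\approx\mathbf v\}=\mathbf A\Delta'$ with $\Delta'\subseteq\Delta$, and one checks that the set $xt_1x\cdots t_nx^+$ is stable with respect to $\mathbf A\wedge\var\,(\Delta\setminus\{\beta_1,\dots,\beta_{n-1},\gamma_1,\dots,\gamma_n\})$, forcing $\Delta'$ to intersect $\{\beta_1,\dots,\beta_{n-1},\gamma_1,\dots,\gamma_n\}$; every identity in that list implies $\gamma_n$. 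Without this detour through $\Delta$ you are left exactly with the bookkeeping problem you flag in your last paragraph, and nothing in your sketch resolves it. In short: the intended proof is an isoterm/stability argument on top of Lemma~\ref{L: one letter in block reduction}, not an explicit normalization of $\mathbf w$.
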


\begin{proof}
(i) In view of Lemma~\ref{L: M(W) in V}, the word $xt_1x\cdots t_nx$ is not an isoterm for $\mathbf V$.
This means that $\mathbf V$ satisfies a non-trivial identity $xt_1x\cdots t_nx\approx \mathbf w$ for some $\mathbf w\in \mathfrak X^\ast$.
The inclusion $\mathbf M(xy)\subseteq \mathbf V$ and Lemma~\ref{L: identities of M(xy)} imply that $\mathbf w(\simple(\mathbf w))=t_1\cdots t_n$ and $\mul(\mathbf w)=\{x\}$.
Then, by Lemma~\ref{L: one letter in block reduction},
$
\mathbf A\{xt_1x\cdots t_nx\approx \mathbf w\}=\mathbf A\Delta^\prime
$
for some $\Delta^\prime\subseteq\Delta$.
It is easy to see that $xt_1x\cdots t_nx$ is an isoterm for $\mathbf A\wedge\var\,(\Delta\setminus\{\beta_1,\dots,\beta_n\})$.
Hence $\beta_k\in\Delta^\prime$ for some $k\le n$.
Clearly, this identity implies $\beta_n$, and we are done.

\smallskip 

(ii) Using Lemma~\ref{L: le_alpha}, it is routine to check that
\[
\{xt_1x\cdots t_nx^+\}^{\le_{\mu_n}}=
\left\{\!\!\!\!
\begin{array}{l}
1,  x, x^et_kx\cdots t_mx^f, x^et_kx\cdots t_nxx^+,\\
t_1x\cdots t_nx, xt_1x\cdots t_{n-1}xt_n,\\
t_1x\cdots t_nxx^+, xt_1x\cdots t_nx^+
\end{array}
\middle\vert
\begin{array}{l}
e,f\in\{0,1\},\\ 
1\le k\le m\le n,\\ 
m-k<n
\end{array}
\!\!\!\!\right\}.
\]
One can deduce from this fact that the $\mu_n$-class $xt_1x\cdots t_nx^+$ is stable with respect to a variety if and only if every $\mu_n$-class in $\{xt_1x\cdots t_nx^+\}^{\le_{\mu_n}}$ is stable respect to this variety.
This fact and Lemma~\ref{L: M_alpha(W) in V} imply that the $\mu_n$-class $xt_1x\cdots t_nx^+$ is not stable with respect to $\mathbf V$.
This means that $\mathbf V$ satisfies a non-trivial identity $\mathbf u\approx \mathbf v$  such that $\mathbf u\in xt_1x\cdots t_nx^+$ but  $\mathbf v\notin xt_1x\cdots t_nx^+$.
The inclusion $\mathbf M(xy)\subseteq \mathbf V$ and Lemma~\ref{L: identities of M(xy)} imply that $\mathbf w(\simple(\mathbf v))=t_1\cdots t_n$ and $\mul(\mathbf w)=\{x\}$.
Then, by Lemma~\ref{L: one letter in block reduction}, $\mathbf A\{\mathbf u\approx \mathbf v\}=\mathbf A\Delta^\prime$ for some $\Delta^\prime\subseteq\Delta$.
It is routine to check that the set $xt_1x\cdots t_nx^+$ is stable with respect to $\mathbf A\wedge\var\,(\Delta\setminus\{\beta_1,\dots,\beta_{n-1},\gamma_1,\dots,\gamma_n\})$.
Hence the set $\Delta^\prime\cap \{\beta_1,\dots,\beta_{n-1},\gamma_1,\dots,\gamma_n\}$ is not empty.
This implies that $\gamma_n$ is a consequence of $\Delta^\prime$ and so holds in $\mathbf V$.
\end{proof}

\begin{corollary}
\label{C: Psi in X wedge Y}
Let $\mathbf X$ and $\mathbf Y$ be subvarieties of $\mathbf A\{\eqref{xxyy=yyxx}\}$.
If the variety $\mathbf X\wedge\mathbf Y$ satisfies an identity $\sigma\in\Delta$, then $\sigma$ holds in either $\mathbf X$ or $\mathbf Y$.
\end{corollary}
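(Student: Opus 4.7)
The plan is to argue by contrapositive: assume $\sigma\in\Delta$ fails in both $\mathbf X$ and $\mathbf Y$, and produce a monoid in $\mathbf X\wedge\mathbf Y$ that witnesses this failure. The unifying idea is that each $\sigma\in\Delta$ admits a canonical witness monoid $M_\sigma$ with two properties: (a)~$M_\sigma$ itself violates $\sigma$, and (b)~whenever $\sigma$ fails in a non-commutative subvariety $\mathbf V$ of $\mathbf A$ containing $M(xy)$, one has $M_\sigma\in\mathbf V$. Granting (a) and (b), the corollary is immediate: both $\mathbf X$ and $\mathbf Y$ contain $M_\sigma$, hence so does $\mathbf X\wedge\mathbf Y$, and therefore $\sigma$ fails in $\mathbf X\wedge\mathbf Y$.

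The first step is to dispose of the commutative case. A direct computation shows that every commutative subvariety of $\mathbf A$ satisfies every identity in $\Delta$, since $x^2\approx x^3$ forces $x^{n+1}\approx x^{n+2}$, collapsing the two sides of each identity. Hence the failure of $\sigma$ in $\mathbf X$ forces $\mathbf X$ to be non-commutative, and then Lemma~\ref{L: nsub M(xy)} (with $n=2$, using the hypothesis $x^2y^2\approx y^2x^2$) yields $M(xy)\in\mathbf X$; symmetrically $M(xy)\in\mathbf Y$. This brings $\mathbf X$ and $\mathbf Y$ into the setting in which the separating lemmas of Section~\ref{Sec: separating identities} apply.

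I would then read off the witness $M_\sigma$ case by case from the previously established results, taking $M_\gamma(yxx^+)$, $M_\gamma(xx^+y)$, $M_\gamma(x^+yzx^+)$, $M(xt_1x\cdots t_nx)$ and $M_{\mu_n}(xt_1x\cdots t_nx^+)$ for $\sigma$ equal to \eqref{yxx=xyxx}, \eqref{xxy=xxyx}, \eqref{xxyzx=xxyxzx}, $\beta_n$ and $\gamma_n$ respectively. Property~(b) in each case is exactly the contrapositive of Lemma~\ref{L: nsub M(yxx^+)} (and its dual), Lemma~\ref{L: nsub M(x^+yzx^+)}, and the two parts of Corollary~\ref{C: nsub M(xt_1x...t_nx),M(xt_1x...t_nx^+)}. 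For the first four witnesses, property~(a) is straightforward: under the identity substitution the two sides of $\sigma$ lie in different $\gamma$-classes (respectively, one is a factor of the defining isoterm and the other is not), so $\sigma$ visibly fails in $M_\sigma$.

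The delicate step, which I anticipate to be the main obstacle, is verifying property~(a) for $\sigma=\gamma_n$. Here one must exhibit a substitution under which the two sides of $\gamma_n$ fall into distinct surviving $\mu_n$-classes of the Rees quotient $M_{\mu_n}(xt_1x\cdots t_nx^+)$; this requires going back to the explicit description of $\{xt_1x\cdots t_nx^+\}^{\le_{\mu_n}}$ given in the proof of Corollary~\ref{C: nsub M(xt_1x...t_nx),M(xt_1x...t_nx^+)}(ii) and tracing which classes are collapsed to zero. Once that substitution is pinned down, property~(a) holds uniformly, and the case analysis closes.
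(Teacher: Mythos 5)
Your argument is the paper's own proof restated contrapositively: the paper shows directly that $\sigma$ holding in $\mathbf X\wedge\mathbf Y$ forces the corresponding witness monoid out of $\mathbf X\wedge\mathbf Y$, hence out of one of $\mathbf X$, $\mathbf Y$, and then invokes Lemma~\ref{L: nsub M(yxx^+)} (and its dual), Lemma~\ref{L: nsub M(x^+yzx^+)}, and the two parts of Corollary~\ref{C: nsub M(xt_1x...t_nx),M(xt_1x...t_nx^+)} exactly as you do in property~(b), after first disposing of the commutative case via Lemma~\ref{L: nsub M(xy)}. One small correction to your description of the delicate $\gamma_n$ step: what makes $M_{\mu_n}(xt_1x\cdots t_nx^+)$ violate $\gamma_n$ is not that the two sides of $\gamma_n$ land in distinct surviving $\mu_n$-classes, but rather that the left side $xt_1x\cdots t_{n-1}x^2t_nx$ lies outside $\{xt_1x\cdots t_nx^+\}^{\le_{\mu_n}}$ and so is collapsed to zero in the Rees quotient while the right side $xt_1x\cdots t_nx^2$ survives, which is precisely the observation your phrase ``tracing which classes are collapsed to zero'' already points at.
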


\begin{proof}
If $M(xy)\notin\mathbf X$, then $\mathbf X$ is commutative by Lemma~\ref{L: nsub M(xy)}. 
Then $\mathbf X$ satisfies all the identities in $\Delta$ (and, in particular, $\sigma$) because $\Delta$ is a consequence of $\{x^2\approx x^3,\,xy\approx yx\}$.
By a similar argument we can show that if $M(xy)\notin\mathbf Y$, then $\sigma$ holds in $\mathbf Y$.
So, we may further assume that $M(xy)\in\mathbf X\wedge\mathbf Y$.

Assume that the identity $\sigma$ coincides with the identity~\eqref{xxyzx=xxyxzx}.
Then $M_\gamma(x^+yzx^+)\notin \mathbf X\wedge\mathbf Y$ by Lemma~\ref{L: M_alpha(W) in V}.
This means that either $\mathbf X$ or $\mathbf Y$ does not contain $M_\gamma(x^+yzx^+)$.
Now Lemma~\ref{L: nsub M(x^+yzx^+)} applies, yielding that $\sigma$ holds in either $\mathbf X$ or $\mathbf Y$.
Using Lemma~\ref{L: nsub M(yxx^+)} or the dual to it instead of Lemma~\ref{L: nsub M(x^+yzx^+)}, by similar arguments we can show that $\sigma$ holds in either $\mathbf X$ or $\mathbf Y$ whenever $\sigma\in\{\eqref{yxx=xyxx},\,\eqref{xxy=xxyx}\}$.

Further, if $\sigma=\beta_n$ for some $n\in\mathbb N$, then $M(xt_1x\cdots t_nx)\notin\mathbf X\wedge\mathbf Y$ by Lemma~\ref{L: M(W) in V}.
This means that either $\mathbf X$ or $\mathbf Y$ does not contain $M(xt_1x\cdots t_nx)$.
Now Corollary~\ref{C: nsub M(xt_1x...t_nx),M(xt_1x...t_nx^+)}(i) applies, yielding that $\sigma$ holds in either $\mathbf X$ or $\mathbf Y$.
Finally, using Lemma~\ref{L: M_alpha(W) in V} instead of Lemma~\ref{L: M(W) in V} and Part~(ii) instead of Part~(i) in Corollary~\ref{C: nsub M(xt_1x...t_nx),M(xt_1x...t_nx^+)}, one can show that $\sigma$ holds in either $\mathbf X$ or $\mathbf Y$ whenever $\sigma=\gamma_n$.

Corollary~\ref{C: Psi in X wedge Y} is thus proved.
\end{proof}

\subsection{Identities formed by words with two multiple letters}

\begin{lemma}
\label{L: xy.. = yx.. in V}
Let $\mathbf V$ be a subvariety of $\mathbf A\{\sigma_3,\,\beta_2,\,\eqref{xyzxxtyy=yxzxxtyy},\,\eqref{xxyy=yyxx}\}$ and $r\in\mathbb N_0$, $e_0,f_0\in\mathbb N$, $e_1,f_1,\dots,e_r,f_r\in\mathbb N_0$ such that $\sum_{i=0}^r e_i,\sum_{i=0}^r f_i\ge 2$. 
Suppose that either $\sum_{i=0}^r e_i>2$ or $e_0>1$ and either $\sum_{i=0}^r f_i>2$ or $f_0>1$.
Then the identity~\eqref{two letters in a block} holds in $\mathbf V$.
\end{lemma}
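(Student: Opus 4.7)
The plan is to derive the identity \eqref{two letters in a block} inside $\mathbf A\{\sigma_3,\beta_2,\eqref{xxyy=yyxx},\eqref{xyzxxtyy=yxzxxtyy}\}$ by an explicit sequence of deductions. Using $x^2\approx x^3$ throughout, I may freely assume each $e_i,f_i\in\{0,1,2\}$. The argument then splits into four cases according to whether $e_0$ and $f_0$ equal $1$ or are at least $2$.

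In the symmetric case $e_0,f_0\ge 2$, I would compress $x^{e_0}\approx x^2$ and $y^{f_0}\approx y^2$, apply \eqref{xxyy=yyxx} directly to the prefix, and then decompress; the suffix $\prod_{i=1}^r t_ix^{e_i}y^{f_i}$ is unaffected. In the mixed case $e_0\ge 2$, $f_0=1$ (with $f_0=1$, $e_0\ge 2$ handled symmetrically), the hypothesis $\sum f_i>2$ guarantees a further occurrence of $y$ inside $\prod$. I would apply $\sigma_3$ iteratively, using the two $x$'s of $x^{e_0}$ as the initial pair $x\cdots x$ in the pattern $xzxyty$ and one of the later $y$'s as the trailing $y$, in order to walk one of the $y$'s from $\prod$ leftward, through each intervening $x$, until it becomes adjacent to the initial $y$. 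This produces $x^{e_0}y^2\mathbf{w}$; applying \eqref{xxyy=yyxx} converts it to $y^2x^{e_0}\mathbf{w}$, and then running the same $\sigma_3$-moves in reverse (now with $x^{e_0}$ sitting to the right of $yy$, still supplying the required two $x$'s) restores the $y$ to its original slot in $\prod$, yielding $y^{f_0}x^{e_0}\prod$.

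The hard case is $e_0=f_0=1$, where both $\sum_{i\ge 1}e_i\ge 2$ and $\sum_{i\ge 1}f_i\ge 2$, so $\mathbf u$ contains at least three $x$'s and three $y$'s. Here I would use $\beta_2$ applied to any three $x$'s of $\mathbf u$ to pump the third such $x$ to $x^2$, and, via the obvious substitution instance of $\beta_2$ with the roles of $x$ and $y$ exchanged, pump a third $y$ of $\mathbf u$ to $y^2$; this puts an $x^2$ and a $y^2$ as factors somewhere in $\prod$. When the produced $x^2$ lies to the left of the produced $y^2$, a single application of \eqref{xyzxxtyy=yxzxxtyy} swaps the initial $xy$ to $yx$, after which the $\beta_2$-pumpings are reversed to recover $\prod$ exactly.

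The main obstacle will be the sub-case where the natural position of the pumped $y^2$ is to the left of that of the pumped $x^2$. In this sub-case, before invoking \eqref{xyzxxtyy=yxzxxtyy}, I would first combine $\sigma_3$ (in both its original form $xzxyty\approx xzyxty$ and its letter-swap $yzyxtx\approx yzxytx$) with the already-available $x^2y^2\approx y^2x^2$ to transport an $x$ leftward past a $y$ inside $\prod$ until an $x^2$ can be pumped ahead of the chosen $y^2$; the existence of at least two $x$'s and two $y$'s in $\prod$ is precisely what makes these $\sigma_3$-swaps legal. The identity $x^2yx\approx x^2yx^2$ (and its iterates) is then used to restore the exponents $e_i,f_i$ to their original values at the end.
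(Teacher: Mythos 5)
There is a genuine gap. Your key intermediate claims --- that the $\sigma_3$-moves produce $x^{e_0}y^2\mathbf{w}$ in the mixed case, and that $\sigma_3$ together with \eqref{xxyy=yyxx} can ``transport an $x$ leftward past a $y$ inside $\prod$'' in the hard sub-case --- are not derivable. Each identity in the defining set of $\mathbf A\{\sigma_3,\beta_2,\eqref{xxyy=yyxx},\eqref{xyzxxtyy=yxzxxtyy}\}$ has the feature that its \emph{multiple} variables cannot be substituted by words containing a simple letter of the target word (that letter would then become multiple); so any application leaves the simple letters $t_1,\dots,t_r$ at fixed positions, and a swapped occurrence of $x$ or $y$ can never cross any $t_i$. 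In the mixed case $e_0\ge 2,\ f_0=1$, the first block $x^{e_0}y^{f_0}$ contains exactly one $y$, and none of the $y$'s in $\prod$ can be brought across $t_1$ to join it, so the word $x^{e_0}y^2\mathbf w$ is unreachable. (Note also that the concrete $\sigma_3$-instance you describe --- two $x$'s of $x^{e_0}$ at the two $x$-positions of $xzxyty$, a later $y$ at the trailing $y$ --- forces the swapped $y$ at position $4$ to be adjacent to the last $x$ of $x^{e_0}$, i.e.\ it must be $y^{f_0}$ itself, not a $y$ from $\prod$; iterating this pushes surplus $x$'s of $x^{e_0}$ to the right of $y^{f_0}$ rather than pulling a $y$ in.) The same obstruction defeats the hard sub-case: an $x$ sitting in a block of $\prod$ strictly to the right of the chosen $y^2$ cannot cross the intervening $t_i$'s.

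The paper sidesteps this entirely and never moves a letter across a block boundary. When $\prod$ contains no $x$, the surplus $x$'s of $x^{e_0}$ are moved past $y^{f_0}$ \emph{within block $0$} via $\sigma_3$, giving $xy^{f_0}x^2\mathbf p$, after which the last $y$-island is pumped by $\beta_2$ and \eqref{xyzxxtyy=yxzxxtyy} finishes. When $\prod$ does contain an $x$-island, the last $x$-island and last $y$-island are pumped to $x^2$ and $y^2$ in place, and one applies \eqref{xyzxxtyy=yxzxxtyy} --- in its stated form when the $x^2$ comes first, and in the form obtained by renaming $x\leftrightarrow y$ (namely $xyzy^2tx^2\approx yxzy^2tx^2$) when the $y^2$ comes first. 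Your use of $\Phi$, $\beta_2$ and \eqref{xyzxxtyy=yxzxxtyy} is otherwise the right strategy; the fixes you need are to move $x$'s within block $0$ rather than pull $y$'s in, and to rename $x$ and $y$ in \eqref{xyzxxtyy=yxzxxtyy} when the order of the squares is reversed.
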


\begin{proof}
If $e_0>1$ or $f_0>1$, then the identity~\eqref{two letters in a block} is equivalent modulo $x^2\approx x^3$ to an identity of the same form  such that $x$ and $y$ occur in its hand-sides at least trice.
So, we may further assume that $\sum_{i=0}^r e_i,\sum_{i=0}^r f_i>2$.
Obviously,~\eqref{two letters in a block} follows from $\Phi$ whenever $e_0,f_0>1$.
Assume now that at least one of the numbers $e_0$ or $f_0$, say $f_0$, is $1$.
Let $k$ and $m$ denote the greatest numbers such that $e_k,f_m>0$.
Since $f_0=1$, we have $m>0$.
If $k=0$, then $\mathbf V$ satisfies
\[
x^{e_0}y^{f_0}\mathbf p\stackrel{x^2\approx x^3}\approx x^3y^{f_0}\mathbf p\stackrel{\sigma_3}\approx xy^{f_0}x^2\mathbf p\stackrel{\beta_2}\approx xy^{f_0}x^2\mathbf p^\prime \stackrel{\eqref{xyzxxtyy=yxzxxtyy}}\approx y^{f_0}x^3\mathbf p^\prime\stackrel{\beta_2}\approx y^{f_0}x^3\mathbf p\stackrel{x^2\approx x^3}\approx y^{f_0}x^{e_0}\mathbf p,
\]
where
\[
\mathbf p:=\biggl(\prod_{i=1}^r t_ix^{e_i}y^{f_i}\biggr)\ \text{ and }\  \mathbf p^\prime:=\biggl(\prod_{i=1}^{m-1} t_ix^{e_i}y^{f_i}\biggr) (t_mx^{e_m}y^2) \biggl(\prod_{i=m+1}^r t_ix^{e_i}y^{f_i}\biggr).
\]
Let now $k>0$.
If $k<m$, then $\mathbf V$ satisfies
\[
x^{e_0}y^{f_0}\mathbf p\stackrel{\beta_2}\approx x^{e_0}y^{f_0}\mathbf q\stackrel{\eqref{xyzxxtyy=yxzxxtyy}}\approx y^{f_0}x^{e_0}\mathbf q\stackrel{\beta_2}\approx y^{f_0}x^{e_0}\mathbf p,
\]
where
\[
\mathbf q:=\biggl(\prod_{i=1}^{k-1} t_ix^{e_i}y^{f_i}\biggr) (t_kx^2y^{f_k}) \biggl(\prod_{i=k+1}^{m-1} t_ix^{e_i}y^{f_i}\biggr) (t_mx^{e_m}y^2) \biggl(\prod_{i=m+1}^r t_ix^{e_i}y^{f_i}\biggr).
\]
By a similar argument we can show that~\eqref{two letters in a block} holds in $\mathbf V$ whenever $m\le k$.
\end{proof}

\begin{lemma}
\label{L: xy.. = yx.. in X wedge Y}
Let $\mathbf X$ and $\mathbf Y$ be non-commutative subvarieties of $\mathbf A\{\sigma_2,\,\sigma_3,\,\eqref{xxyy=yyxx}\}$. 
Suppose that $\mathbf X\wedge\mathbf Y$ satisfies the identity~\eqref{two letters in a block} with $r\in\mathbb N_0$, $e_0,f_0\in\mathbb N$, $e_1,f_1,\dots,e_r,f_r\in\mathbb N_0$, $\sum_{i=0}^r e_i\ge 2$ and $\sum_{i=0}^r f_i\ge 2$.
Then this identity is true in either $\mathbf X$ or $\mathbf Y$ whenever one of the following holds:
\begin{itemize}
\item[\textup{(i)}] the words $x^{e_0}t_1x^{e_1}\cdots t_rx^{e_r}$ and $y^{f_0}t_1y^{f_1}\cdots t_ry^{f_r}$ are isoterms for $\mathbf X\wedge\mathbf Y$;
\item[\textup{(ii)}] $\mathbf X\vee\mathbf Y$ satisfies~\eqref{xyzxxtyy=yxzxxtyy} and $M(xyx)\notin\mathbf X\wedge\mathbf Y$;
\item[\textup{(iii)}] $\mathbf X\vee\mathbf Y$ satisfies~\eqref{xxyty=yxxty}, $M_\lambda(xyx^+)\notin\mathbf X\wedge\mathbf Y$ and, for some $j$, either $e_j>1$ or $f_j>1$;
\end{itemize} 
\end{lemma}

\begin{proof}
For convenience, denote by $\mathbf u$ [respectively, $\mathbf v$] the left-hand [right-hand] side of the identity~\eqref{two letters in a block}. 

\smallskip

(i) In view of Proposition~\ref{P: deduction}, there exists a finite sequence $\mathbf u = \mathbf w_0, \dots, \mathbf w_m = \mathbf v$ of words such that each identity $\mathbf w_i \approx \mathbf w_{i+1}$ holds in either $\mathbf X$ or $\mathbf Y$.
Since the words $x^{e_0}t_1x^{e_1}\cdots t_rx^{e_r}$ and $y^{f_0}t_1y^{f_1}\cdots t_ry^{f_r}$ are isoterms for $\mathbf X\wedge\mathbf Y$, we have $(\mathbf w_i)_y=x^{e_0}t_1x^{e_1}\cdots t_rx^{e_r}$ and $(\mathbf w_i)_x=y^{f_0}t_1y^{f_1}\cdots t_ry^{f_r}$ for all $i=0,\dots,m$.
Evidently, there is $j\in\{0,\dots,m-1\}$ such that $(_{1\mathbf w_j}x)<(_{1\mathbf w_j}y)$ but $(_{1\mathbf w_{j+1}}y)<(_{1\mathbf w_{j+1}}x)$.
Then the identity $\mathbf w_j\approx \mathbf w_{j+1}$ is equivalent modulo $\{x^2\approx x^3,\,\sigma_2,\,\sigma_3\}$ to~\eqref{two letters in a block}.
Hence~\eqref{two letters in a block} holds in either $\mathbf X$ or $\mathbf Y$ as required.

\smallskip

(ii) According to Corollary~\ref{C: nsub M(xt_1x...t_nx),M(xt_1x...t_nx^+)}(i), $\beta_1$ holds in either $\mathbf X$ or $\mathbf Y$. 
Now Lemma~\ref{L: xy.. = yx.. in V} applies.

\smallskip

(iii) By symmetry, we may assume that $e_j>1$.
According to Lemmas~\ref{L: nsub M(xy)} and~\ref{L: nsub M(xyx^+)}, the identity~\eqref{xyxx=xxyxx} holds in either $\mathbf X$ or $\mathbf Y$. 
If $\sum_{i=1}^rf_i>0$, then either $\mathbf X$ or $\mathbf Y$ satisfies
\[
x^{e_0}y^{f_0}\mathbf p\stackrel{\{x^2\approx x^3,\,\eqref{xyxx=xxyxx}\}}\approx x^2y^{f_0}\mathbf p\stackrel{\eqref{xxyty=yxxty}}\approx y^{f_0}x^2\mathbf p\stackrel{\{x^2\approx x^3,\,\eqref{xyxx=xxyxx}\}}\approx y^{f_0}x^{e_0}\mathbf p,
\]
where $\mathbf p:=t_1x^{e_1}y^{f_1}\cdots t_rx^{e_r}y^{f_r}$.
If $\sum_{i=1}^rf_i=0$, then $f_0>1$, whence either $\mathbf X$ or $\mathbf Y$ satisfies
\[
x^{e_0}y^{f_0}\mathbf p\stackrel{\{x^2\approx x^3,\,\eqref{xyxx=xxyxx}\}}\approx x^2y^2\mathbf p\stackrel{\eqref{xxyy=yyxx}}\approx y^2x^2\mathbf p\stackrel{\{x^2\approx x^3,\,\eqref{xyxx=xxyxx}\}}\approx y^{f_0}x^{e_0}\mathbf p.
\]
Hence~\eqref{two letters in a block} holds in either $\mathbf X$ or $\mathbf Y$ in any case.
\end{proof}

The proof of the following statement is quite similar to the proof of Lemma~\ref{L: xy.. = yx.. in X wedge Y} and so we omit it.

\begin{lemma}
\label{L: xy.. = yx.. in X wedge Y 2}
Let $\mathbf X$ and $\mathbf Y$ be subvarieties of $\mathbf A\{\sigma_1,\,\sigma_3,\,\eqref{xxyy=yyxx}\}$. 
Suppose that $\mathbf X\wedge\mathbf Y$ satisfies the identity
\begin{equation}
\label{two letters in a block dual}
\biggl(\prod_{i=r}^1 x^{e_i}y^{f_i}t_i\biggr)x^{e_0}y^{f_0}\approx \biggl(\prod_{i=r}^1 x^{e_i}y^{f_i}t_i\biggr)y^{f_0}x^{e_0}
\end{equation}
with $r\in\mathbb N_0$, $e_0,f_0\in\mathbb N$, $e_1,f_1,\dots,e_r,f_r\in\mathbb N_0$, $\sum_{i=0}^r e_i\ge 2$ and $\sum_{i=0}^r f_i\ge 2$.
Then this identity is true in either $\mathbf X$ or $\mathbf Y$ whenever one of the following holds:
\begin{itemize}
\item[\textup{(i)}] the words $x^{e_0}t_1x^{e_1}\cdots t_rx^{e_r}$ and $y^{f_0}t_1y^{f_1}\cdots t_ry^{e_r}$ are isoterms for $\mathbf X\wedge\mathbf Y$;
\item[\textup{(ii)}] $M(xyx)\notin\mathbf X\wedge\mathbf Y$.\qed
\end{itemize} 
\end{lemma}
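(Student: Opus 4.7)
The proof is dual to that of Lemma~\ref{L: xy.. = yx.. in X wedge Y}, via the reversal of words combined with the interchange $\sigma_1 \leftrightarrow \sigma_2$, while $\sigma_3$ and $\eqref{xxyy=yyxx}$ are preserved under this duality. The hypothesis $\mathbf{A}\{\sigma_1,\sigma_3,\eqref{xxyy=yyxx}\}$ is thus what the original hypothesis transforms into, and the identity~\eqref{two letters in a block dual} itself is the word-reversal of~\eqref{two letters in a block}.

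For part~(i), denote the two sides of~\eqref{two letters in a block dual} by $\mathbf{u}$ and $\mathbf{v}$. By Proposition~\ref{P: deduction} there is a finite sequence $\mathbf{u}=\mathbf{w}_0,\dots,\mathbf{w}_m=\mathbf{v}$ in which each step $\mathbf{w}_i\approx\mathbf{w}_{i+1}$ holds in $\mathbf{X}$ or in $\mathbf{Y}$. The one-letter projections $x^{e_0}t_1x^{e_1}\cdots t_rx^{e_r}$ and $y^{f_0}t_1y^{f_1}\cdots t_ry^{f_r}$ are isoterms for $\mathbf{X}\wedge\mathbf{Y}$, and therefore isoterms for both $\mathbf{X}$ and $\mathbf{Y}$ (since any identity of $\mathbf{X}$ or $\mathbf{Y}$ holds in $\mathbf{X}\wedge\mathbf{Y}$). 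Consequently $(\mathbf{w}_i)_y$ and $(\mathbf{w}_i)_x$ are constant along the sequence. In $\mathbf{u}$ the last occurrence of $x$ precedes the last occurrence of $y$, while in $\mathbf{v}$ the reverse holds, so there is an index $j$ at which this relative order flips. The identity $\mathbf{w}_j\approx\mathbf{w}_{j+1}$ — which holds in $\mathbf{X}$ or $\mathbf{Y}$ — is then equivalent modulo $\{x^2\approx x^3,\sigma_1,\sigma_3\}$ to~\eqref{two letters in a block dual}, because $\sigma_1$ absorbs all discrepancies among the first occurrences of the multiple letters while $\sigma_3$ normalises the interior blocks, leaving exactly the swap of the last block $x^{e_0}y^{f_0}\leftrightarrow y^{f_0}x^{e_0}$.

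For part~(ii), $M(xyx)\notin\mathbf{X}\wedge\mathbf{Y}$ means that $xyx$ is not an isoterm for at least one of $\mathbf{X},\mathbf{Y}$, say $M(xyx)\notin\mathbf{X}$. Corollary~\ref{C: nsub M(xt_1x...t_nx),M(xt_1x...t_nx^+)}(i) with $n=1$ then yields $\beta_1$ in $\mathbf{X}$. The mirror of Lemma~\ref{L: xy.. = yx.. in V}, stated for the setting in which the multiple-letter block sits at the right end, now applies to $\mathbf{X}$: using $\beta_1$ together with $x^2\approx x^3$ one inflates $x^{e_0}$ and $y^{f_0}$ into $x^2$ and $y^2$ while $\sigma_1,\sigma_3$ reshuffle the interior blocks, and then the commutation $x^2y^2\approx y^2x^2$ from $\eqref{xxyy=yyxx}$ at the right end finishes the derivation of~\eqref{two letters in a block dual}. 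Hence $\mathbf{X}$ satisfies~\eqref{two letters in a block dual}.

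The main obstacle is verifying this dual analog of Lemma~\ref{L: xy.. = yx.. in V}. One has to check that, in contrast to the original lemma, no counterpart of $\eqref{xyzxxtyy=yxzxxtyy}$ needs to be assumed separately: the point is that $\beta_1$ is a ``right-end'' inflation identity ($xyx\approx xyx^2$ adds an $x$ on the right), and it combines cleanly with $\mathbf{A}$'s axiom $x^2yx\approx x^2yx^2$ and with $\sigma_1,\sigma_3$ to provide all the rewriting needed when the multiple-letter block is at the end of the word. The case analysis proceeds exactly as in the proof of Lemma~\ref{L: xy.. = yx.. in V} (on whether $e_0,f_0>1$, and on the position of the last nonzero $e_i$ or $f_i$), with $\sigma_2$ replaced by $\sigma_1$ and each rewriting done symmetrically on the right.
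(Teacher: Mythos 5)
Your proof is correct and follows the paper's intended route: the paper states that the proof of this lemma is ``quite similar'' to that of Lemma~\ref{L: xy.. = yx.. in X wedge Y} and omits it, and you have supplied the missing dualization, correctly noting $\sigma_1^\delta=\sigma_2$ while $\sigma_3$ and $\eqref{xxyy=yyxx}$ are self-dual, and tracking last rather than first occurrences in part~(i). The one genuinely non-obvious point is that part~(ii) lacks the hypothesis corresponding to~$\eqref{xyzxxtyy=yxzxxtyy}$ that appears in Lemma~\ref{L: xy.. = yx.. in X wedge Y}(ii), and you have correctly identified why: since the swap in~\eqref{two letters in a block dual} sits at the right end and $\beta_1$ is a right-end inflation, one can inflate $x^{e_0}$ and $y^{f_0}$ (using the second-to-last occurrences which exist because $\sum e_i,\sum f_i\ge 2$), commute $x^2y^2\approx y^2x^2$ there, and deflate back -- in fact this already works with only $\beta_1$, $\eqref{xxyy=yyxx}$ and $x^2\approx x^3$, so invoking $\sigma_1,\sigma_3$ and the axiom $x^2yx\approx x^2yx^2$ in part~(ii) is harmless but unnecessary, and the extra case analysis inherited from Lemma~\ref{L: xy.. = yx.. in V} can be avoided entirely.
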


\section{Proof of Theorem~\ref{T: A_com}}
\label{Sec: proof}

\textbf{Necessity}. 
Let $\mathbf V$ be a distributive subvariety of $\mathbf A_\mathsf{com}$.
Then $\mathbf V$ satisfies the identities $x^n\approx x^{n+1}$ and $x^ny^n\approx y^nx^n$ for some $n\in\mathbb N$.
If $\mathbf V$ is a variety of monoids with central idempotents, then, by~\cite[ Theorem~1.1]{Gusev-23}, $\mathbf V$ is contained in one of the varieties $\mathbf P_n$, $\mathbf Q_n$, $\mathbf Q_n^\delta$, $\mathbf R_n$ or $\mathbf R_n^\delta$, and we are done.
Suppose that $\mathbf V$ is not a variety of monoids with central idempotents.
In this case, $\mathbf V$ violates $x^ny\approx yx^n$.
In particular, the variety $\mathbf V$ cannot be commutative.
Then $M(xy)\in \mathbf V$ by Lemma~\ref{L: nsub M(xy)}.
Now Lemma~\ref{L: nsub M(yxx^+)} and the dual to it apply, yielding that at least one of the monoids $M_\gamma(yxx^+)$ or $M_\gamma(xx^+y)$ lies in $\mathbf V$.
It follows from Lemma~2 in~\cite{Gusev-20b}, the dual statement and the results of~\cite{Sapir-21} that the lattices $\mathfrak L\left(\mathbf M(x^2)\vee\mathbf M_\gamma(yxx^+)\right)$ and $\mathfrak L\left(\mathbf M(x^2)\vee\mathbf M_\gamma(xx^+y)\right)$ are not modular.
Hence $M(x^2)\notin \mathbf V$.
Using Lemma~\ref{L: M(W) in V}, one can easily deduce from this that $\mathbf V$ satisfies $x^2\approx x^3$.
It follows that $\mathbf V$ satisfies~\eqref{xxyy=yyxx} as well. 
Further, according to Proposition~\ref{P: sporadic}(iv), $\mathbf V$ does not contain at least one of the varieties $\mathbf M_\lambda(xyx^+)$ or $\mathbf M_{\overline{\lambda}}(x^+yx)$.
By symmetry, we may assume that $\mathbf M_{\overline{\lambda}}(x^+yx)\nsubseteq\mathbf V$.
Then $\mathbf V$ satisfies~\eqref{xxyx=xxyxx} by the dual to Lemma~\ref{L: nsub M(xyx^+)}.

Suppose that $M(xyx)\notin\mathbf V$.
Then the identity~\eqref{xyx=xyxx} holds in $\mathbf V$ by Corollary~\ref{C: nsub M(xt_1x...t_nx),M(xt_1x...t_nx^+)}(i).
If $M_\lambda(xyx^+)\notin\mathbf V$, then, by Lemma~\ref{L: nsub M(xyx^+)}, $\mathbf V$ satisfies the identity~\eqref{xyxx=xxyxx} and so the identity~\eqref{xyx=xxyx} because $xyx\stackrel{\eqref{xyx=xyxx}}\approx xyx^2\stackrel{\eqref{xyxx=xxyxx}}\approx x^2yx^2\stackrel{\eqref{xxyx=xxyxx}}\approx x^2yx$.
In this case, $\Phi_2$ is satisfied by $\mathbf V$ since, for any $k,\ell\in\mathbb N$ and $\rho\in S_{k+\ell}$,
\[
\begin{aligned}
\mathbf a_{k,\ell}[\rho]&{}\stackrel{\{\eqref{xyx=xyxx},\,\eqref{xyx=xxyx}\}}\approx \mathbf h\,x^{2(k+\ell)}\biggl(\prod_{i=1}^{k+\ell-1} z_{i\rho}^2y_i^2\biggr)z_{(k+\ell)\rho}x\,\mathbf t\\
&{}\ \ \ \ \ \, \stackrel{\eqref{xxyy=yyxx}}\approx\ \ \ \ \ \mathbf h\,x^2\biggl(\prod_{i=1}^{k+\ell-1} z_{i\rho}^2y_i^2x^2\biggr)z_{(k+\ell)\rho}x\,\mathbf t\stackrel{\{\eqref{xyx=xyxx},\,\eqref{xyx=xxyx}\}}\approx \overline{\mathbf a}_{k,\ell}[\rho],
\end{aligned}
\]
where $\mathbf h:=z_1t_1\cdots z_kt_k$ and $\mathbf t:=t_{k+1}z_{k+1}\cdots t_{k+\ell}z_{k+\ell}$.
By a similar argument, we can show that $\Phi_1$ holds in $\mathbf V$.
Hence $\mathbf V\subseteq\mathbf D_2$.
Assume now that $M_\lambda(xyx^+)\in\mathbf V$.
If $M_\gamma(xx^+y)\notin\mathbf V$, then $\mathbf V$ satisfies~\eqref{xxy=xxyx} by the statement dual to Lemma~\ref{L: nsub M(yxx^+)}, whence $\mathbf V\subseteq\mathbf D_1$.
If $M_\gamma(xx^+y)\in\mathbf V$, then $\mathbf H\nsubseteq \mathbf V$ by Proposition~\ref{P: sporadic}(i).
In this case, Lemma~\ref{L: nsub H} implies that $\mathbf V$ satisfies~\eqref{xyzxxyy=yxzxxyy}.
Then the identity~\eqref{xyzxy=yxzxy} is satisfied by $\mathbf V$ because
\[
xyzxy \stackrel{\eqref{xyx=xyxx}}\approx xyzx^2y^2\stackrel{\eqref{xyzxxyy=yxzxxyy}}\approx yxzx^2y^2\stackrel{\eqref{xyx=xyxx}}\approx yxzxy.
\]
According to Propositions~\ref{P: non-dis L(M([hat{a}_{0,k}[pi]]^lambda))},~\ref{P: non-dis L(M([a_{0,k}[pi]]^lambda))} and~\ref{P: non-dis L(M([a_{0,k}[pi]]^beta))}, the monoids $M_\lambda([\hat{\mathbf a}_{0,k}[\pi]]^\lambda)$, $M_\lambda([\mathbf a_{0,k}[\pi]]^\lambda)$ and $M_\beta([\mathbf a_{0,k}[\pi]]^\beta)$ do not lie in $\mathbf V$ for any $k\ge2$ and $\pi\in S_k$.
Now Lemma~\ref{L: nsub M([hat{a}_{0,n}[rho]]^lambda),M([a_{0,n}[rho]]^lambda),M([a_{0,n}[rho]]^beta)} applies, yielding that $\mathbf V$ satisfies $\Phi_2$.
Further, Proposition~\ref{P: non-mod M([c_{0,0,n}[rho]]^lambda)} implies that $M_\lambda([\mathbf c_{0,0,k}[\pi]]^\lambda)\notin\mathbf V$ for any $k\in\mathbb N$ and $\pi\in S_k$ .
Then, by Lemma~\ref{L: nsub M([c_{0,0,n}[rho]]^lambda)}, the variety $\mathbf V$ satisfies $\mathbf c_{k,\ell,m}[\rho]\approx \mathbf c_{k,\ell,m}^\prime[\rho]$ for all $k,\ell,m\in\mathbb N$ and $\rho\in S_{k+\ell+m}$.
Finally, the identity $\mathbf d_{k,\ell,m}[\rho]\approx \mathbf d_{k,\ell,m}^\prime[\rho]$ is also satisfied by $\mathbf V$ because this identity follows from $\{\eqref{xyx=xyxx},\,\eqref{xxyy=yyxx}\}$.
We see that $\Phi_1$ holds in $\mathbf V$.
Hence $\mathbf V\subseteq\mathbf D_2$ again.
Thus, it remains to consider the case when $M(xyx)\in\mathbf V$.
Three cases are possible.

\medskip

\textbf{Case 1:} $\mathbf N\subseteq\mathbf V$.
Then Parts~(xxiii) and~(xxiv) of Proposition~\ref{P: sporadic} and Lemma~\ref{L: swapping in linear-balanced} imply that $\mathbf V$ satisfies $\sigma_2$ and $\sigma_3$.

If $M_\gamma(yxx^+)\notin\mathbf V$, then $\mathbf V$ satisfies~\eqref{yxx=xyxx} by Lemma~\ref{L: nsub M(yxx^+)}.
In this case, $\mathbf V\subseteq\mathbf D_6$ because $xyzx^2ty\stackrel{\eqref{yxx=xyxx}}\approx yxzx^2ty$, $xyzytx^2\stackrel{\eqref{yxx=xyxx}}\approx yxzytx^2$ and $x^2yty\stackrel{x^2\approx x^3}\approx x^3yty \stackrel{\sigma_3}\approx xyx^2ty \stackrel{\eqref{yxx=xyxx}}\approx yx^2ty$.
Thus, we may further assume that $M_\gamma(yxx^+)\in\mathbf V$.

Assume now that $M_\nu([yx^2ty]^\nu)\in \mathbf V$.
Then $\mathbf V$ does not contain the varieties $\mathbf M_\gamma(xx^+y)$, $\mathbf M_\lambda(xyx^+)$ and $\mathbf M(xyxzx)$ by Parts~(ii),~(v) and~(xv) of Proposition~\ref{P: sporadic}.
Now the dual to Lemma~\ref{L: nsub M(yxx^+)}, Lemma~\ref{L: nsub M(xyx^+)} and Corollary~\ref{C: nsub M(xt_1x...t_nx),M(xt_1x...t_nx^+)}(i) apply, yielding that the identities~\eqref{xxy=xxyx},~\eqref{xyxx=xxyxx} and $\beta_2$ are satisfied by $\mathbf V$.
Since the identity~\eqref{xyxx=xxyxx} is equivalent modulo~\eqref{xxy=xxyx} to the identity~\eqref{xyxx=xxyx}, we have $\mathbf V\subseteq\mathbf D_3$.
Therefore, we may further assume that $M_\nu([yx^2ty]^\nu)\notin \mathbf V$.
In view of Lemma~\ref{L: nsub M([yx^2zy]),M(xx^+yty)}(i), $\mathbf V$ satisfies the identity~\eqref{yxxtxxyxx=xxyxxtxxyxx}.
Clearly, this identity is equivalent modulo $\{x^2\approx x^3,\,\eqref{xxyx=xxyxx},\,\sigma_2,\,\sigma_3\}$ to~\eqref{xxytxy=yxxtxy}.
Two subcases are possible.

\smallskip

\textbf{Case 1.1:} $M_\lambda(xyx^+)\notin\mathbf V$. 
Then, by Lemma~\ref{L: nsub M(xyx^+)}, $\mathbf V$ satisfies~\eqref{xyxx=xxyxx} and, therefore,~\eqref{xyxx=xxyx}.
If $M(xyxzx)\notin\mathbf V$, then $\mathbf V$ satisfies $\beta_2$ by Corollary~\ref{C: nsub M(xt_1x...t_nx),M(xt_1x...t_nx^+)}(i).
In this case, $\mathbf V\subseteq\mathbf D_5$ because $xyzx^2y\stackrel{\eqref{xyxx=xxyxx}}\approx  x^2yzx^2y\stackrel{\eqref{xxytxy=yxxtxy}}\approx yx^2zx^2y\stackrel{\eqref{xyxx=xxyxx}}\approx yxzx^2y$.
So, we may further assume that $M(xyxzx)\in\mathbf V$. 
In this case, Proposition~\ref{P: sporadic}(xvi) implies that $M_{\gamma^\prime}(yxx^+ty)\notin \mathbf V$.
Then $\mathbf V$ satisfies~\eqref{yxxty=xyxxty} by Corollary~\ref{C: nsub M(yxx^+ty)}.
Hence the identity~\eqref{xxyty=yxxty} holds in $\mathbf V$ because 
\begin{equation}
\label{xxyty=xxxyty=xyxxty=yxxty}
x^2yty\stackrel{x^2\approx x^3}\approx x^3yty\stackrel{\sigma_3}\approx xyx^2ty\stackrel{\eqref{yxxty=xyxxty}}\approx yx^2ty.
\end{equation}
Finally, since
\begin{equation}
\label{xyzxxty=..=yxzxxty,xyzytxx=..=yxzytxx}
\begin{aligned}
xyzx^2ty\stackrel{\eqref{xyxx=xxyx}}\approx x^2yzxty\stackrel{\eqref{xxyty=yxxty}}\approx yx^2zxty\stackrel{\eqref{xyxx=xxyx}}\approx yxzx^2ty,\\
xyzytx^2\stackrel{\eqref{xyxx=xxyx}}\approx x^2yzytx\stackrel{\eqref{xxyty=yxxty}}\approx yx^2zytx\stackrel{\eqref{xyxx=xxyx}}\approx yxzytx^2,
\end{aligned}
\end{equation}
we have $\mathbf V\subseteq\mathbf D_6$.

\smallskip

\textbf{Case 1.2:} $M_\lambda(xyx^+)\in\mathbf V$. 
Then Parts~(vi) and~(xii) of Proposition~\ref{P: sporadic} imply that $\mathbf V$ does not contain the varieties $\mathbf M_{\gamma^\prime}(yxx^+ty)$ and $\mathbf H$.
In view of Corollary~\ref{C: nsub M(yxx^+ty)} and Lemma~\ref{L: nsub H}, $\mathbf V$ satisfies the identities~\eqref{yxxty=xyxxty} and~\eqref{xyzxxyy=yxzxxyy}.
Then the identity~\eqref{xxyty=yxxty} holds in $\mathbf V$ because~\eqref{xxyty=xxxyty=xyxxty=yxxty}.

Assume first that $M_\eta([xyzx^2ty]^\eta)\in \mathbf V$.
Clearly, $M_\eta([xyzx^2ty]^\eta)$ violates~\eqref{xyzytxx=yxzytxx}.
This fact and Lemma~\ref{L: nsub M([xyzx^2ty]),M(xyzytxx^+),M(yzyxtxx^+)}(ii) imply that $\mathbf M_{\lambda^\prime}(xyzytxx^+)\subset \mathbf M_\eta([xyzx^2ty]^\eta)$.
Then $\mathbf V$ does not contain the varieties $\mathbf M_\gamma(xx^+y)$, $\mathbf M(xyxzx)$ and $\mathbf M_\mu(xyxzx^+)$ by Parts~(iii),~(xix) and~(xxii) of Proposition~\ref{P: sporadic}.
Now the dual to Lemma~\ref{L: nsub M(yxx^+)} and Corollary~\ref{C: nsub M(xt_1x...t_nx),M(xt_1x...t_nx^+)} apply, yielding that the identities~\eqref{xxy=xxyx}, $\beta_2$ and $\gamma_2$ are satisfied by $\mathbf V$.
Finally, the identities
\[
xyzx^2ty^2\stackrel{\eqref{xxy=xxyx}}\approx xyzx^2tx^2y^2\stackrel{\eqref{xyzxxyy=yxzxxyy}}\approx yxzx^2tx^2y^2\stackrel{\eqref{xxy=xxyx}}\approx yxzx^2ty^2
\]
hold in $\mathbf V$.
Hence $\mathbf V\subseteq\mathbf D_4$.
Therefore, we may further assume that $M_\eta([xyzx^2ty]^\eta)\notin \mathbf V$.
In view of Lemma~\ref{L: nsub M([xyzx^2ty]),M(xyzytxx^+),M(yzyxtxx^+)}(i), $\mathbf V$ satisfies~\eqref{xyzxxtxyx=yxzxxtxyx}.
Clearly, this identity is equivalent modulo $\{x^2\approx x^3,\,\sigma_2\}$ to~\eqref{xyzxxy=yxzxxy}.
Two subcases are possible.

\smallskip

\textbf{Case 1.2.1:} $M(xyxzx)\in\mathbf V$.
In this case, Parts (xviii) and~(xix) of Proposition~\ref{P: sporadic} imply that the monoids $M_{\lambda^\prime}(xyzxx^+ty)$ and $M_{\lambda^\prime}(xyzytxx^+)$ do not lie in $\mathbf V$.
In view of Lemma~\ref{L: nsub M([xyzx^2ty]),M(xyzytxx^+),M(yzyxtxx^+)}(ii) and Corollary~\ref{C: nsub M(xyzxx^+ty)}, $\mathbf V$ satisfies the identities~\eqref{xyzytxx=yxzytxx} and~\eqref{xyzxxty=yxzxxty}.
Hence $\mathbf V\subseteq\mathbf D_6$.

\smallskip

\textbf{Case 1.2.2:} $M(xyxzx)\notin\mathbf V$.
Then $\mathbf V$ satisfies $\beta_2$ by Corollary~\ref{C: nsub M(xt_1x...t_nx),M(xt_1x...t_nx^+)}(i).
According to Proposition~\ref{P: sporadic}(xiii), $M_\lambda(xyzx^+ty^+)\notin\mathbf V$.
Then $\mathbf V$ satisfies~\eqref{xyzxxtyy=yxzxxtyy} by Corollary~\ref{C: nsub M(xyzx^+ty^+)}.
Further, if $M_\mu(xyxzx^+)\notin\mathbf V$, then the identity $\gamma_2$ holds in $\mathbf V$ by Corollary~\ref{C: nsub M(xt_1x...t_nx),M(xt_1x...t_nx^+)}(ii), whence $\mathbf V\subseteq\mathbf D_7$. 
If $M_\mu(xyxzx^+)\in\mathbf V$, then Proposition~\ref{P: sporadic}(xxi) and Lemma~\ref{L: nsub M(xyzx^+tysx^+)} imply that $\mathbf V$ satisfies~\eqref{xyzxxtysx=yxzxxtysx}, whence $\mathbf V\subseteq\mathbf D_8$.

\medskip

\textbf{Case 2:} $\mathbf N^\delta\subseteq\mathbf V$.
Then Part~(xxiii), the dual to Part~(xxiv) of Proposition~\ref{P: sporadic} and Lemma~\ref{L: swapping in linear-balanced} imply that $\mathbf V$ satisfies $\sigma_1$ and $\sigma_3$.

Assume first that $M_{\overline{\nu}}([ytx^2y]^{\overline{\nu}})\in \mathbf V$.
Then $\mathbf V$ does not contain the varieties $\mathbf M_\gamma(yxx^+)$ and $\mathbf M(xyxzx)$ by Parts~(ii) and~(xv) of the statement dual to Proposition~\ref{P: sporadic}.
Now Lemma~\ref{L: nsub M(yxx^+)} and Corollary~\ref{C: nsub M(xt_1x...t_nx),M(xt_1x...t_nx^+)}(i) apply, yielding that the identities~\eqref{yxx=xyxx} and $\beta_2$ are satisfied by $\mathbf V$.
Since the identity~\eqref{xxyx=xxyxx} together with~\eqref{yxx=xyxx} imply~\eqref{xyxx=xxyx}, we have $\mathbf V\subseteq\mathbf D_3^\delta$.
Therefore, we may further assume that $M_{\overline{\nu}}([ytx^2y]^{\overline{\nu}})\notin \mathbf V$.
In view of the dual to Lemma~\ref{L: nsub M([yx^2zy]),M(xx^+yty)}(i), $\mathbf V$ satisfies the identity $x^2yx^2tx^2y\approx x^2yx^2tx^2yx^2$.
Clearly, this identity is equivalent modulo $\{x^2\approx x^3,\,\eqref{xxyx=xxyxx},\,\sigma_3\}$ to~\eqref{xxytxy=xxytyx}.

Further, if $M_\gamma(xx^+y)\notin\mathbf V$, then $\mathbf V$ satisfies~\eqref{xxy=xxyx} by the dual to Lemma~\ref{L: nsub M(yxx^+)}.
Clearly, this identity together with $\sigma_3$ imply
\begin{equation}
\label{ytxxy=ytyxx}
ytx^2y\approx ytyx^2,
\end{equation}
whence $\mathbf V\subseteq\mathbf D_9$.
So, we may further assume that $M_\gamma(xx^+y)\in\mathbf V$.
If $M(xyxzx)\in\mathbf V$, then the dual to Proposition~\ref{P: sporadic}(xvi) implies $M_{\gamma^\prime}(ytxx^+y)\notin\mathbf V$.
If $M_\gamma(yxx^+)\notin\mathbf V$, then, by Lemma~\ref{L: nsub M(yxx^+)}, $\mathbf V$ satisfies~\eqref{yxx=xyxx} and so $ytx^2y\stackrel{\eqref{yxx=xyxx}}\approx x^2ytx^3y \stackrel{\eqref{xxytxy=xxytyx}}\approx x^2ytx^2yx \stackrel{\eqref{yxx=xyxx}}\approx ytx^2yx$.
If $M_\gamma(yxx^+)\in\mathbf V$, then the dual to Lemma~\ref{L: nsub M(yxx^+ty)} applies, yielding that
\begin{equation}
\label{ytxxy=ytxxyx}
ytx^2y\approx ytx^2yx
\end{equation} 
is satisfied by $\mathbf V$.
Since $ytx^2yx\stackrel{\sigma_3}\approx ytyx^3\stackrel{x^2\approx x^3}\approx ytyx^2$, we have $\mathbf V\subseteq\mathbf D_9$ again.
So, it remains to consider the case when $M(xyxzx)\notin\mathbf V$.
Then $\mathbf V$ satisfies $\beta_2$ by Corollary~\ref{C: nsub M(xt_1x...t_nx),M(xt_1x...t_nx^+)}(i).
If $M_\lambda(xyx^+)\notin\mathbf V$, then Lemma~\ref{L: nsub M(xyx^+)} implies that $\mathbf V$ satisfies~\eqref{xyxx=xxyxx} and, therefore,~\eqref{xyxx=xxyx}, whence $\mathbf V\subseteq\mathbf D_5^\delta$.
Assume now that $M_\lambda(xyx^+)\in\mathbf V$.
Then $\mathbf M_{\gamma^\prime}(ytyxx^+),M_{\gamma^\prime}([x^2zytx^2y]^{\gamma^\prime})\notin\mathbf V$ by Parts~(vii) and~(xi) of Proposition~\ref{P: sporadic}.
According to the dual to Lemma~\ref{L: nsub M([yx^2zy]),M(xx^+yty)}(ii), the identity~\eqref{ytyxx=ytxyxx} holds in $\mathbf V$.
Now Lemma~\ref{L: nsub M([x^2zytx^2y])} applies, yielding that $\mathbf V$ satisfies the identity~\eqref{xxzytxy=xxzytyx}.
Hence $\mathbf V\subseteq\mathbf D_{10}$.

\medskip

\textbf{Case 3:} $\mathbf N,\mathbf N^\delta\nsubseteq\mathbf V$.
Propositions~\ref{P: non-mod M(hat{c}_{n,m,n+m+1}[rho]),M(hat{c}_{n,m,0}[rho])},~\ref{P: non-mod M([c_n^{(i)}[rho]]^{gamma'})}, Lemma~\ref{L: nsub M(c_{n,m,k}[rho]) 2} and the dual statements imply that $\mathbf V$ satisfies $\Phi_1$.
In particular, the identities~\eqref{xyzxy=yxzxy} and~\eqref{xyzxy=xyzyx} are satisfied by $\mathbf V$.

Two subcases are possible.

\smallskip

\textbf{Case 3.1:} $M_\lambda(xyx^+)\notin\mathbf V$. 
Then, by Lemma~\ref{L: nsub M(xyx^+)}, $\mathbf V$ satisfies~\eqref{xyxx=xxyxx} and, therefore,~\eqref{xyxx=xxyx}.
Further, $\mathbf V$ satisfies $\Phi_2$ by Lemma~\ref{L: nsub M(a_{n,m}[rho]) 2}, Propositions~\ref{P: non-dis L(M_{gamma'}(a_{n,m}[pi]))},~\ref{P: non-dis L(M_{gamma''}(a_{n,m}[pi]))},~\ref{P: non-dis L(M_{alpha_1}(a_{n,n}[pi]))},~\ref{P: non-dis  L(M_{zeta}(hat{a}_{n,m}[pi]))} and the dual to Proposition~\ref{P: non-dis  L(M_{zeta}(hat{a}_{n,m}[pi]))}.
If $M(xyxzx)\notin\mathbf V$, then $\mathbf V$ satisfies $\beta_2$ by Corollary~\ref{C: nsub M(xt_1x...t_nx),M(xt_1x...t_nx^+)}(i), whence $\mathbf V\subseteq\mathbf D_{11}$.
So, we may further assume that $M(xyxzx)\in\mathbf V$. 
Then Parts~(xvi) and~(xvii) of Proposition~\ref{P: sporadic} imply that the monoids $M_{\gamma^\prime}(xx^+yty)$ and $M_{\gamma^\prime}(yxx^+ty)$ do not lie in $\mathbf V$.
Then $\mathbf V$ satisfies~\eqref{xxyty=xxyxty} and~\eqref{yxxty=xyxxty} by Lemma~\ref{L: nsub M([yx^2zy]),M(xx^+yty)}(ii) and Corollary~\ref{C: nsub M(yxx^+ty)}.
In this case, the identity~\eqref{xxyty=yxxty} holds in $\mathbf V$ because 
\begin{equation}
\label{xxyty=xxyxxty=yxxty}
x^2yty\stackrel{\eqref{xxyty=xxyxty}}\approx x^2yx^2ty\stackrel{\eqref{yxxty=xyxxty}}\approx yx^2ty.
\end{equation}
By the dual arguments we can show that $\mathbf V$ satisfies~\eqref{ytxxy=ytyxx}.
Finally, since~\eqref{xyzxxty=..=yxzxxty,xyzytxx=..=yxzytxx} and 
\[
yzyxtx^2\stackrel{\eqref{xyxx=xxyx}}\approx yzyx^2tx\stackrel{\eqref{ytxxy=ytyxx}}\approx yzx^2ytx\stackrel{\eqref{xyxx=xxyx}}\approx yzxytx^2,
\]
we have $\mathbf V\subseteq\mathbf D_{12}$.

\smallskip

\textbf{Case 3.2:} $M_\lambda(xyx^+)\in\mathbf V$.
Parts (vi) and~(vii) of Proposition~\ref{P: sporadic} imply that the monoids $M_{\gamma^\prime}(yxx^+ty)$ and $M_{\gamma^\prime}(ytyxx^+)$  do not lie in $\mathbf V$.
In view of the dual to Lemma~\ref{L: nsub M([yx^2zy]),M(xx^+yty)}(ii) and Corollary~\ref{C: nsub M(yxx^+ty)}, $\mathbf V$ satisfies the identities~\eqref{yxxty=xyxxty} and~\eqref{ytyxx=ytxyxx}.
Further, $\mathbf V$ satisfies $\Phi_2$ by Lemma~\ref{L: nsub M(a_{n,m}[rho]) 1} and Proposition~\ref{P: non-dis  L(M_{zeta}(hat{a}_{n,m}[pi]))}.
Two subcases are possible.

\smallskip

\textbf{Case 3.2.1:} $M(xyxzx)\in\mathbf V$.
Parts (xvii)--(xx) of Proposition~\ref{P: sporadic} and the statement dual to Proposition~\ref{P: sporadic}(xvi) imply that the following monoids do not lie in $\mathbf V$:
\[
M_{\gamma^\prime}(ytxx^+y),\, M_{\gamma^\prime}(xx^+yty),\,M_{\lambda^\prime}(xyzxx^+ty),\,M_{\lambda^\prime}(xyzytxx^+),\,M_{\lambda^\prime}(yzyxtxx^+).
\]
Then $\mathbf V$ satisfies the identities~\eqref{ytxxy=ytxxyx}, \eqref{xxyty=xxyxty}, \eqref{xyzytxx=yxzytxx}, \eqref{yzxytxx=yzyxtxx} and~\eqref{xyzxxty=yxzxxty} by the dual to Corollary~\ref{C: nsub M(yxx^+ty)}, Lemma~\ref{L: nsub M([yx^2zy]),M(xx^+yty)}(ii), Lemma~\ref{L: nsub M([xyzx^2ty]),M(xyzytxx^+),M(yzyxtxx^+)}(ii),(iii) and Corollary~\ref{C: nsub M(xyzxx^+ty)}.
Since~\eqref{xxyty=xxyxxty=yxxty} and $ytyx^2\stackrel{\eqref{ytyxx=ytxyxx}}\approx ytx^2yx^2\stackrel{\eqref{ytxxy=ytxxyx}}\approx ytx^2y$, we have $\mathbf V\subseteq\mathbf D_{12}$.

\smallskip

\textbf{Case 3.2.2:} $M(xyxzx)\notin\mathbf V$.
Then $\mathbf V$ satisfies $\beta_2$ by Corollary~\ref{C: nsub M(xt_1x...t_nx),M(xt_1x...t_nx^+)}(i).
Further, Parts~(viii)--(xiv) of Proposition~\ref{P: sporadic} imply that $\mathbf V$ does not contain the following varieties:
\[
\begin{aligned}
&\mathbf H,\,
\mathbf M_\lambda(xyzx^+ty^+),\,
\mathbf M_\lambda(xx^+yty^+),\\
&\mathbf M_{\gamma^\prime}([x^2yzytx^2]^{\gamma^\prime}),\,
\mathbf M_{\gamma^\prime}([x^2yzx^2ty]^{\gamma^\prime}),\,
\mathbf M_{\gamma^\prime}([yzx^2ytx^2]^{\gamma^\prime}),\,
\mathbf M_{\gamma^\prime}([x^2zytx^2y]^{\gamma^\prime}).
\end{aligned}
\]
Then Lemmas~\ref{L: nsub M([x^2yzytx^2]),M([x^2yzx^2ty])},~\ref{L: nsub M([yzx^2ytx^2])},~\ref{L: nsub M([x^2zytx^2y])} and Corollaries~\ref{C: nsub M(xx^+yty^+)},~\ref{C: nsub M(xyzx^+ty^+)} imply that $\mathbf V$ satisfies the following identities: \eqref{xxyzytx=yxxzytx}, \eqref{xxyzxty=yxxzxty}, \eqref{yzxxytx=yzyxxtx}, \eqref{xxzytxy=xxzytyx}, \eqref{xxytyy=yxxtyy} and~\eqref{xyzxxtyy=yxzxxtyy}.
Further, if $M_\mu(xyxzx^+)\notin\mathbf V$, then the identity $\gamma_2$ holds in $\mathbf V$ by Corollary~\ref{C: nsub M(xt_1x...t_nx),M(xt_1x...t_nx^+)}(ii), whence $\mathbf V\subseteq\mathbf D_{13}$. 
If $M_\mu(xyxzx^+)\in\mathbf V$, then Proposition~\ref{P: sporadic}(xxi) and Lemma~\ref{L: nsub M(xyzx^+tysx^+)} imply that $\mathbf V$ satisfies~\eqref{xyzxxtysx=yxzxxtysx}, whence $\mathbf V\subseteq\mathbf D_{14}$.

\medskip

\textbf{Sufficiency}.
By symmetry, it suffices to show that the varieties $\mathbf D_1$--$\mathbf D_{14}$, $\mathbf P_n$, $\mathbf Q_n$ and $\mathbf R_n$ are distributive.
In view of~\cite[Theorem~1.1]{Gusev-23}, varieties $\mathbf P_n$, $\mathbf Q_n$ and $\mathbf R_n$ have this property.
The variety $\mathbf D_1$ is also distributive by Lemma~\ref{L: L(D1)}.
So, it remains to prove distributivity of the varieties $\mathbf D_2$--$\mathbf D_{14}$.
The following lemma is the key tool of the remained part of the proof.

\begin{lemma}
\label{L: smth imply distributivity}
Let $\mathbf V$ be a monoid variety satisfying $\Phi$.
Suppose that there is a set $\Sigma$ of identities such that:
\begin{itemize}
\item[\textup{(i)}] if $\mathbf U$ is a non-commutative subvariety of $\mathbf V$, then $\mathbf U=\mathbf V\Sigma^\prime$ for some subset $\Sigma^\prime$ of $\Sigma$;
\item[\textup{(ii)}] if $\mathbf U,\mathbf U^\prime$ are non-commutative subvarieties of $\mathbf V$ and $\mathbf U\wedge\mathbf U^\prime$ satisfies an identity $\sigma\in\Sigma$, then $\sigma$ holds in either $\mathbf U$ or $\mathbf U^\prime$.
\end{itemize}
Then the the lattice $\mathfrak L(\mathbf V)$ is distributive.
\end{lemma}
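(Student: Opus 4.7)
The plan is to piggyback on the distributivity of the Boolean lattice $2^\Sigma$ via a Galois-type correspondence. For each subvariety $\mathbf U \subseteq \mathbf V$ I would set $\Sigma(\mathbf U) := \{\sigma \in \Sigma : \mathbf U \models \sigma\}$. Two basic facts will drive the argument: first, $\Sigma(\mathbf U \vee \mathbf U') = \Sigma(\mathbf U) \cap \Sigma(\mathbf U')$ is immediate since an identity holds in a join iff it holds in both joinands; second, for non-commutative $\mathbf U, \mathbf U'$, condition~(ii) together with the trivial inclusion $\Sigma(\mathbf U) \cup \Sigma(\mathbf U') \subseteq \Sigma(\mathbf U \wedge \mathbf U')$ yields equality $\Sigma(\mathbf U \wedge \mathbf U') = \Sigma(\mathbf U) \cup \Sigma(\mathbf U')$. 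Combined with~(i), which lets me recover any non-commutative $\mathbf U$ as $\mathbf V \Sigma(\mathbf U)$, these facts show that $\Sigma$ is an antitone lattice-embedding of the sublattice of non-commutative subvarieties of $\mathbf V$ into $2^\Sigma$.

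Given arbitrary $\mathbf X, \mathbf Y, \mathbf Z \subseteq \mathbf V$, I would prove $\mathbf X \wedge (\mathbf Y \vee \mathbf Z) \subseteq (\mathbf X \wedge \mathbf Y) \vee (\mathbf X \wedge \mathbf Z)$ (the only non-trivial direction) by case analysis on commutativity. In the principal case---when $\mathbf X, \mathbf Y, \mathbf Z$ and all meets and joins in the equation are non-commutative---applying $\Sigma$ reduces the distributive identity to the Boolean identity $\Sigma_X \cup (\Sigma_Y \cap \Sigma_Z) = (\Sigma_X \cup \Sigma_Y) \cap (\Sigma_X \cup \Sigma_Z)$, and then~(i) lifts the resulting equality of $\Sigma$-values back to equality of varieties. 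For cases where one of the varieties involved is commutative, everything reduces to computations inside $\mathfrak{L}(\mathbf V^{\mathsf{com}})$, which I would argue is a chain (a truncation of $\mathbf T \subset \mathbf{SL} \subset \mathbf M(x) \subset \cdots$) by the highly constrained structure of commutative aperiodic monoids satisfying $\Phi$; chains are trivially distributive.

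The main obstacle will be the mixed case: $\mathbf X, \mathbf Y, \mathbf Z$ are all non-commutative but one of the meets $\mathbf X \wedge \mathbf Y$ or $\mathbf X \wedge \mathbf Z$ happens to be commutative. Condition~(ii) still supplies the correct $\Sigma$-value for the meet (it hypothesizes that $\mathbf U, \mathbf U'$ are non-commutative but says nothing about the meet itself), so the $\Sigma$-sides of the distributive identity still match; however, one cannot directly apply~(i) to reconstruct the meets as $\mathbf V \Sigma(\cdot)$. I would bridge this by showing that the commutative parts $c(\mathbf X \wedge \mathbf Y) = (\mathbf X \wedge \mathbf Y) \wedge \mathbf V^{\mathsf{com}}$ combine coherently through the chain $\mathfrak{L}(\mathbf V^{\mathsf{com}})$, while the non-commutative contributions are governed by the Boolean calculation, so both sides of the distributive identity agree. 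Once all cases are established, $\mathfrak L(\mathbf V)$ embeds into the distributive lattice $\mathfrak{L}(\mathbf V^{\mathsf{com}}) \times 2^\Sigma$ and is therefore itself distributive.
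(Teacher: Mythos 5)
Your approach genuinely differs from the paper's. The paper observes that $\mathfrak L(\mathbf V)$ is an ordinal sum: by Lemma~\ref{L: nsub M(xy)}, the commutative subvarieties of $\mathbf V$ are exactly those not containing $M(xy)$, and they form the chain $\mathfrak L(\mathbf M(x))$; the non-commutative subvarieties form the filter $[\mathbf M(xy),\mathbf V]$. Each piece is distributive by a cited result (Lemma~\ref{L: L(M(xzyx^+ty^+))} for the chain, and \cite[Lemma~2.19]{Gusev-Vernikov-21} for the interval, whose content is precisely the Boolean calculation with $\Sigma$ that you propose to carry out). Your direct verification of the distributive law via the correspondence $\mathbf U\mapsto\Sigma(\mathbf U)$ thus re-derives the cited interval lemma inline. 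This is legitimate, just longer.

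The key observation you are missing collapses most of your case analysis: since $\mathbf V\models\Phi$, Lemma~\ref{L: nsub M(xy)} applies (with $n=2$), so every non-commutative subvariety of $\mathbf V$ contains $M(xy)$, and consequently so does the meet of any two non-commutative subvarieties. The ``mixed case'' you present as your main obstacle therefore never occurs. With this noted, the principal case handles all triples of non-commutative varieties outright (every meet and join stays in $[\mathbf M(xy),\mathbf V]$), and the remaining cases become trivial because any commutative subvariety of $\mathbf V$ lies below $\mathbf M(xy)$, hence below every non-commutative variety appearing in the distributive identity, forcing both sides to collapse to the same thing.

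The one real flaw is your closing sentence. You assert that $\mathfrak L(\mathbf V)$ embeds into $\mathfrak L(\mathbf V^{\mathsf{com}})\times 2^{\Sigma}$ without giving the map; the natural candidate $\mathbf U\mapsto(\mathbf U\wedge\mathbf V^{\mathsf{com}},\,\Sigma(\mathbf U))$ is a lattice homomorphism but need not be injective---indeed, in all of the paper's applications every $\sigma\in\Sigma$ holds in $M(xy)$, so $\mathbf M(xy)$ and $\mathbf M(x)=\mathbf V^{\mathsf{com}}$ are sent to the same pair $(\mathbf V^{\mathsf{com}},\Sigma)$. A non-injective homomorphism onto a distributive lattice does not force the source to be distributive, so this framing cannot close the argument. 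Fortunately it is also superfluous: once the mixed-case worry is removed, your case-by-case verification already establishes distributivity directly, and the product-embedding line should simply be dropped.
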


\begin{proof}
In view of Lemma~\ref{L: nsub M(xy)}, a subvariety of $\mathbf V$ is commutative if and only if it does not contain $M(xy)$.
This fact and~\cite[Corollary~6.1.5]{Almeida-94} imply that the lattice $\mathfrak L(\mathbf V)$ is a disjoint union of the lattice $\mathfrak L(\mathbf M(x))$ and the interval $[\mathbf M(xy),\mathbf V]$.
According to Lemma~\ref{L: L(M(xzyx^+ty^+))}, the lattice $\mathfrak L(\mathbf M(x))$ is distributive.
The interval $[\mathbf M(xy),\mathbf V]$ is also distributive by~\cite[Lemma~2.19]{Gusev-Vernikov-21}.
Lemma~\ref{L: smth imply distributivity} is thus proved.
\end{proof}

\smallskip

To prove distributivity of the varieties $\mathbf D_2$--$\mathbf D_{14}$, we use the following general scheme.
For each $2\le n\le 14$, we construct a set $\Delta_n$ of identities such that: 
\begin{itemize}
\item[\textup{(i)}] each non-commutative subvariety of $\mathbf D_n$ can be defined within $\mathbf D_n$ by some identities in $\{\Delta,\,\Delta_n\}$;
\item[\textup{(ii)}] if $\mathbf U,\mathbf U^\prime$ are non-commutative subvarieties of $\mathbf D_n$ and $\mathbf U\wedge\mathbf U^\prime$ satisfies an identity $\sigma\in\Delta_n$, then $\sigma$ holds in either $\mathbf U$ or $\mathbf U^\prime$.
\end{itemize}
The variety $\mathbf D_n$ is thus distributive by Lemma~\ref{L: smth imply distributivity} and Corollary~\ref{C: Psi in X wedge Y}.
If $n=2$ or $11\le n\le 14$, then it follows from Proposition~\ref{P: Phi,Phi_1,Phi_2 subvarieties} and Lemma~\ref{L: one letter in block reduction} that each non-commutative subvariety of $\mathbf D_n$ can be defined by identities in $\Delta$ together with some identities of the form~\eqref{two letters in a block middle 1}--\eqref{two letters in a block middle 3} such that the condition~\eqref{two letters in a block middle conditions} holds.
In this case, we build the set $\Delta_n$ by refining the form of identities~\eqref{two letters in a block middle 1}--\eqref{two letters in a block middle 3}.
If $3\le n\le 8$, then Proposition~\ref{P: var{sigma_2,sigma_3} subvarieties} and Lemma~\ref{L: one letter in block reduction} imply that each non-commutative subvariety of $\mathbf D_n$ can be defined by identities in $\Delta$ together with some efficient identities of the form~\eqref{two letters in a block}, where $r\in\mathbb N_0$, $e_0,f_0\in\mathbb N$, $e_1,f_1,\dots,e_r,f_r\in\mathbb N_0$, $\sum_{i=0}^r e_i\ge 2$ and $\sum_{i=0}^r f_i\ge 2$.
In this case, to construct the set $\Delta_n$, we refine the form of identity~\eqref{two letters in a block}.
Finally, if $n=9$ or $n=10$, then the dual to Proposition~\ref{P: var{sigma_2,sigma_3} subvarieties} and Lemma~\ref{L: one letter in block reduction} imply that each non-commutative subvariety of $\mathbf D_n$ can be given by identities in $\Delta$ together with some efficient identities of the form~\eqref{two letters in a block dual}, where $r\in\mathbb N_0$, $e_0,f_0\in\mathbb N$, $e_1,f_1,\dots,e_r,f_r\in\mathbb N_0$, $\sum_{i=0}^r e_i\ge 2$ and $\sum_{i=0}^r f_i\ge 2$.
In this case, to build the set $\Delta_n$, we build the set $\Delta_n$ by refining the form of identities~\eqref{two letters in a block dual}.

\smallskip

\textbf{Distributivity of $\mathfrak L(\mathbf D_2)$}.
Take arbitrary $k,\ell\in\mathbb N_0$, $g_1,\dots,g_{k+\ell}\in\{1,2\}$, $a_1,\dots,a_{k+\ell}\in\{x,y\}$.
Consider the identity~\eqref{two letters in a block middle 1}.
If $k>0$, then, by symmetry, we may assume that $x\in\con(a_1\cdots a_k)$.
Then~\eqref{two letters in a block middle 1} is equivalent modulo~\eqref{xyx=xyxx} to~\eqref{two letters in a block middle 4}. 
Thus, the identity~\eqref{two letters in a block middle 1} is equivalent within $\mathbf D_2$ to either
\begin{equation}
\label{two letters in a block middle 1 D_2}
xy\biggl(\prod_{i=k+1}^{k+\ell} t_i a_i\biggr)\approx yx\biggl(\prod_{i=k+1}^{k+\ell} t_i a_i\biggr)
\end{equation}
or $\{\eqref{two letters in a block middle 2},\eqref{two letters in a block middle 3}\}$ by Lemma~\ref{L: two letters in a block middle 2,3 <-> 4}.
Further, suppose that the identity~\eqref{two letters in a block middle 2} does not hold in $\mathbf D_2$.
It is easy to see that $y\notin\con(a_1\cdots a_k)$ because~\eqref{two letters in a block middle 2} follows from~$\{\eqref{xyx=xyxx},\,\eqref{xxyy=yyxx}\}$ otherwise.
Then it is routine to check that~\eqref{two letters in a block middle 2} is equivalent modulo $\{\Phi_2,\,\eqref{xyx=xyxx}\}$ to
\begin{equation}
\label{two letters in a block middle 2 D_2}
x^2y\biggl(\prod_{i=k+1}^{k+\ell} t_i a_i\biggr)\approx xyx\biggl(\prod_{i=k+1}^{k+\ell} t_i a_i\biggr).
\end{equation}
By a similar argument we can show that if the identity~\eqref{two letters in a block middle 3} does not hold in $\mathbf D_2$, then it is equivalent modulo $\{\Phi_2,\,\eqref{xyx=xyxx}\}$ to
\begin{equation}
\label{two letters in a block middle 3 D_2}
x^gt\,yx^2\biggl(\prod_{i=k+1}^{k+\ell} t_i a_i\biggr)\approx x^gt\,xyx\biggl(\prod_{i=k+1}^{k+\ell} t_i a_i\biggr),
\end{equation}
 where
\[
g:=
\begin{cases} 
0 & \text{if $k=0$},\\
1 & \text{if $k=1$ and $g_1=1$}, \\
2 & \text{if $k>1$ or $g_1>1$}.
\end{cases} 
\]
Denote by $\Delta_2$ the set of all identities of the form~\eqref{two letters in a block middle 1 D_2}--\eqref{two letters in a block middle 3 D_2}.
It remains to show that if $\mathbf X$ and $\mathbf Y$ are non-commutative subvarieties of $\mathbf D_2$ and $\mathbf X\wedge\mathbf Y$ satisfies some identity $\mathbf u\approx\mathbf v\in\Delta_2$, then $\mathbf u\approx\mathbf v$ holds in either $\mathbf X$ or $\mathbf Y$.
By Lemma~\ref{L: nsub M(xy)}, $M(xy)\in\mathbf X\wedge\mathbf Y$ .

If $M_\lambda(xyx^+)\notin\mathbf X\wedge\mathbf Y$, then the identity~\eqref{xyxx=xxyxx} holds in either $\mathbf X$ or $\mathbf Y$ by Lemma~\ref{L: nsub M(xyx^+)}.
In this case, the identity $\mathbf u\approx\mathbf v$ is satisfied by either $\mathbf X$ or $\mathbf Y$ because this identity follows from $\{\eqref{xyxx=xxyxx},\,\eqref{xyx=xyxx},\,\eqref{xxyy=yyxx}\}$.
If $M_\lambda(xx^+y)\notin\mathbf X\wedge\mathbf Y$, then the identity~\eqref{xxy=xxyx} holds in either $\mathbf X$ or $\mathbf Y$ by the dual to Lemma~\ref{L: nsub M(yxx^+)}.
In this case, the identity $\mathbf u\approx\mathbf v$ is satisfied by either $\mathbf X$ or $\mathbf Y$ because $\mathbf u\stackrel{\{\eqref{xxy=xxyx},\,\eqref{xyx=xyxx}\}}\approx \mathbf uxy\stackrel{\eqref{xyzxy=yxzxy}}\approx \mathbf vxy\stackrel{\{\eqref{xxy=xxyx},\,\eqref{xyx=xyxx}\}}\approx \mathbf v$.
So, we may further assume that $\mathbf M_\lambda(xyx^+)\vee \mathbf M_\gamma(xx^+y)\subseteq\mathbf X\wedge\mathbf Y$.

We consider only the case when $\mathbf u\approx\mathbf v$ has the form~\eqref{two letters in a block middle 3 D_2}.
The case when $\mathbf u\approx\mathbf v$ is of the form~\eqref{two letters in a block middle 1 D_2} or~\eqref{two letters in a block middle 2 D_2} is quite similar and we omit it.

Assume first that there is $k+1\le r\le k+\ell$ such that $a_{k+1}=\cdots=a_{r-1}=y$ and $a_r=\cdots=a_{k+\ell}=y$.
In view of Proposition~\ref{P: deduction}, there exists a finite sequence $\mathbf u = \mathbf w_0, \dots, \mathbf w_m = \mathbf v$ of words such that each identity $\mathbf w_j \approx \mathbf w_{j+1}$ holds in either $\mathbf X$ or $\mathbf Y$.
Since the sets $xx^+y$ and $xyx^+$ are stable with respect to $\mathbf X\wedge\mathbf Y$ by Lemma~\ref{L: M_alpha(W) in V}, we have
\[
\mathbf w_j\in x^{g_j} t\, \mathbf a_j \,\biggl(\prod_{i=k+1}^{k+\ell} t_i a_i^+\biggr),
\]
where $\con(\mathbf a_j)=\{x,y\}$,  $\mathbf w_j(x,t_{k+1})\in xx^+t_{k+1}x^\ast$, $\mathbf w_j(y,t_{k+1})\in yt_{k+1}y^+$, $\mathbf w_j(y,t_r)\in yt_ry^+$,  and $g_j=g$ whenever $g\le 1$, and $g_j\ge2$ whenever $g\ge2$ for any $j=0,\dots,m$. 
Then there is $s\in\{0,\dots,m-1\}$ such that $(_{1\mathbf a_s}y)<(_{1\mathbf a_s}x)$ but $(_{1\mathbf a_{s+1}}x)<(_{1\mathbf a_{s+1}}y)$.
Hence the identity $\mathbf w_s\approx \mathbf w_{s+1}$ is equivalent modulo $\{\Phi_2,\,\eqref{xyx=xyxx}\}$ to either~\eqref{two letters in a block middle 3 D_2} or
\[
x^gt\,x^2y\biggl(\prod_{i=k+1}^{k+\ell} t_i a_i\biggr)\approx x^gt\,yx^2\biggl(\prod_{i=k+1}^{k+\ell} t_i a_i\biggr).
\]
It is easy to see that the latter identity together with~\eqref{xyx=xyxx} imply~\eqref{two letters in a block middle 3 D_2}.
It follows that~\eqref{two letters in a block middle 3 D_2} holds in either $\mathbf X$ or $\mathbf Y$.

Assume now that there is $k+1\le r<k+\ell$ such that $a_r=y$ and $a_{r+1}=x$.
If $\mathbf X\wedge\mathbf Y$ does not contain the monoid $M_\gamma(x^+yzx^+)$, then, by Lemma~\ref{L: nsub M(x^+yzx^+)}, either $\mathbf X$ or $\mathbf Y$ satisfies~\eqref{xxyzx=xxyxzx} and so 
\[
\mathbf u\stackrel{\eqref{xxyzx=xxyxzx}}\approx x^gt\,yx^2\biggl(\prod_{i=k+1}^{r-1} t_i a_i\biggr)\, t_rxy\,\biggl(\prod_{i=r+1}^{k+\ell} t_i a_i\biggr)\stackrel{\eqref{xyzxy=yxzxy}}\approx x^gt\,xyx\biggl(\prod_{i=k+1}^{r-1} t_i a_i\biggr)\, t_rxy\,\biggl(\prod_{i=r+1}^{k+\ell} t_i a_i\biggr)\stackrel{\eqref{xxyzx=xxyxzx}}\approx \mathbf v.
\]
If $M_\gamma(x^+yzx^+)\in\mathbf X\wedge\mathbf Y$, then arguments similar to ones from the previous paragraph imply that this identity holds in either $\mathbf X$ or $\mathbf Y$.
Then either $\mathbf X$ or $\mathbf Y$ satisfies~\eqref{two letters in a block middle 3 D_2} as required.

\smallskip

\textbf{Distributivity of $\mathfrak L(\mathbf D_3)$}.
Take an arbitrary efficient identity of the form~\eqref{two letters in a block}, where $r\in\mathbb N_0$, $e_0,f_0\in\mathbb N$, $e_1,f_1,\dots,e_r,f_r\in\mathbb N_0$ and $e:=\sum_{i=0}^r e_i,f:=\sum_{i=0}^r f_i\ge 2$, which does not hold in $\mathbf D_3$.
Since 
\begin{equation}
\label{xyzxxtyy=xxyyzxxtyy=yyxxzxxtyy=yxzxxtyy}
xyzx^2ty^2\stackrel{\eqref{xyxx=xxyx}}\approx x^2y^2zxty\stackrel{\eqref{xxyy=yyxx}}\approx y^2x^2zxty\stackrel{\eqref{xyxx=xxyx}}\approx yxzx^2ty^2,
\end{equation} 
Lemma~\ref{L: xy.. = yx.. in V} implies that either $(e,e_0)=(2,1)$ or $(f,f_0)=(2,1)$. 
By symmetry, we may assume without loss of generality that $(f,f_0)=(2,1)$.
Then it is routine to check that
\begin{itemize}
\item if $(e,e_0)=(2,1)$, then $r\in\{1,2\}$ and so~\eqref{two letters in a block} coincides (up to renaming of letters) with either $\sigma_1$ or~\eqref{xyzxy=yxzxy};
\item if either $e_0>1$ or $e>2$, then~\eqref{two letters in a block} is equivalent modulo $\{\eqref{xxy=xxyx},\,\eqref{xyxx=xxyx},\,\beta_2\}$ to~\eqref{xxyty=yxxty}.
\end{itemize}
Thus,~\eqref{two letters in a block} is equivalent within $\mathbf D_3$ to an identity in $\Delta_3:=\{\sigma_1,\,\eqref{xyzxy=yxzxy},\,\eqref{xxyty=yxxty}\}$. 
It remains to show that if $\mathbf X$ and $\mathbf Y$ are non-commutative subvarieties of $\mathbf D_3$ and $\mathbf X\wedge\mathbf Y$ satisfies some identity $\sigma\in\Delta_3$, then $\sigma$ holds in either $\mathbf X$ or $\mathbf Y$.

According to Lemma~\ref{L: xy.. = yx.. in X wedge Y}(ii), the required claim is true whenever $M(xyx)\notin\mathbf X\wedge\mathbf Y$.
So, we may assume that $M(xyx)\in\mathbf X\wedge\mathbf Y$.
Further, if $\sigma\in\{\sigma_1,\,\eqref{xyzxy=yxzxy}\}$, then $\sigma$ holds in either $\mathbf X$ or $\mathbf Y$ by Lemma~\ref{L: xy.. = yx.. in X wedge Y}(i).
Let now $\sigma$ is~\eqref{xxyty=yxxty}.
In view of Proposition~\ref{P: deduction}, there exists a finite sequence $x^2yty = \mathbf w_0, \dots, \mathbf w_m = yx^2ty$ of words such that each identity $\mathbf w_i \approx \mathbf w_{i+1}$ holds in either $\mathbf X$ or $\mathbf Y$.
Since $xyx$ is an isoterm for $\mathbf X\wedge\mathbf Y$ by Lemma~\ref{L: M(W) in V}, we have $\mathbf w_i(y,t)=yty$ for all $i=0,\dots,m$.
Evidently, there is $j\in\{0,\dots,m-1\}$ such that $(_{1\mathbf w_j}x)<(_{1\mathbf w_j}y)$ but $(_{1\mathbf w_{j+1}}y)<(_{1\mathbf w_{j+1}}x)$.
Then the identity $\phi(\mathbf w_j)\approx \phi(\mathbf w_{j+1})$, where $\phi\colon \mathfrak X \to \mathfrak X^\ast$ is the substitution given by
\begin{equation}
\label{phi(v) D_3}
\phi(v) := 
\begin{cases} 
x^2t & \text{if }v=t, \\ 
v & \text{otherwise},
\end{cases} 
\end{equation}
is equivalent modulo~$\{\eqref{xxy=xxyx},\,\eqref{xyxx=xxyx}\}$ to~\eqref{xxyty=yxxty}.
Hence~\eqref{xxyty=yxxty} holds in either $\mathbf X$ or $\mathbf Y$ as required.

\smallskip

\textbf{Distributivity of $\mathfrak L(\mathbf D_4)$}.
Take an arbitrary efficient identity of the form~\eqref{two letters in a block}, where $r\in\mathbb N_0$, $e_0,f_0\in\mathbb N$, $e_1,f_1,\dots,e_r,f_r\in\mathbb N_0$ and $e:=\sum_{i=0}^r e_i,f:=\sum_{i=0}^r f_i\ge 2$, which does not hold in $\mathbf D_4$.
Then either $(e,e_0)=(2,1)$ or $(f,f_0)=(2,1)$ by Lemma~\ref{L: xy.. = yx.. in V}. 
By symmetry, we may assume without loss of generality that $(f,f_0)=(2,1)$.
Evidently, $e_0=1$ because~\eqref{two letters in a block} holds in $\mathbf D_4$ otherwise. 
Then it is routine to check that
\begin{itemize}
\item if $e=2$, then~\eqref{two letters in a block} coincides (up to renaming of letters) with either $\sigma_1$ or~\eqref{xyzxy=yxzxy};
\item if $e>2$ and $e_1=0$, then~\eqref{two letters in a block} is equivalent modulo $\{\eqref{xxy=xxyx},\,\beta_2,\,\gamma_2\}$ to~\eqref{xyzytxx=yxzytxx};
\item if $e>2$ and $e_1>0$, then~\eqref{two letters in a block} is equivalent modulo $\{\eqref{xxy=xxyx},\,\beta_2,\,\gamma_2\}$ to~\eqref{xyzxxty=yxzxxty}.
\end{itemize}
Thus,~\eqref{two letters in a block} is equivalent within $\mathbf D_4$ to an identity in $\Delta_4:=\{\sigma_1,\,\eqref{xyzxy=yxzxy},\,\eqref{xyzytxx=yxzytxx},\,\eqref{xyzxxty=yxzxxty}\}$. 
It remains to show that if $\mathbf X$ and $\mathbf Y$ are non-commutative subvarieties of $\mathbf D_4$ and $\mathbf X\wedge\mathbf Y$ satisfies some identity $\sigma\in\Delta_4$, then $\sigma$ holds in either $\mathbf X$ or $\mathbf Y$.

According to Lemma~\ref{L: xy.. = yx.. in X wedge Y}(ii), the required claim is true whenever $M(xyx)\notin\mathbf X\wedge\mathbf Y$.
So, we may assume that $M(xyx)\in\mathbf X\wedge\mathbf Y$.
Then it follows from Lemma~\ref{L: xy.. = yx.. in X wedge Y}(i) that the required claim is true whenever $\sigma\in\{\sigma_1,\,\eqref{xyzxy=yxzxy}\}$.
Thus, it remains to consider the case when $\sigma\in\{\eqref{xyzytxx=yxzytxx},\,\eqref{xyzxxty=yxzxxty}\}$.
If $M_\lambda(xyx^+)\notin\mathbf X\wedge\mathbf Y$, then $\sigma$ holds in either $\mathbf X$ or $\mathbf Y$ by Lemma~\ref{L: xy.. = yx.. in X wedge Y}(iii).
So, we may assume that $M_\lambda(xyx^+)\in\mathbf X\wedge\mathbf Y$.
Suppose that $\sigma$ is~\eqref{xyzxxty=yxzxxty}.
In view of Proposition~\ref{P: deduction}, there exists a finite sequence $xyzx^2ty = \mathbf w_0, \dots, \mathbf w_m = yxzx^2ty$ of words such that each identity $\mathbf w_i \approx \mathbf w_{i+1}$ holds in either $\mathbf X$ or $\mathbf Y$.
Since the sets $xyx^+$ and $\{xyx\}$ are stable with respect to $\mathbf X\wedge\mathbf Y$ by Lemmas~\ref{L: M(W) in V} and~\ref{L: M_alpha(W) in V}, we have $\mathbf w_i(x,z)\in xzxx^+$ and $\mathbf w_i(y,t,z)=yzty$ for all $i=0,\dots,m$.
Hence $\mathbf w_i\in xyzx^+tx^\ast y x^\ast\cup yxzx^+tx^\ast y x^\ast$.
Evidently, there is $j\in\{0,\dots,m-1\}$ such that $(_{1\mathbf w_j}x)<(_{1\mathbf w_j}y)$ but $(_{1\mathbf w_{j+1}}y)<(_{1\mathbf w_{j+1}}x)$.
Then the identity $\phi(\mathbf w_j)\approx \phi(\mathbf w_{j+1})$, where $\phi\colon \mathfrak X \to \mathfrak X^\ast$ is the substitution given by~\eqref{phi(v) D_3}, is equivalent modulo~\eqref{xxy=xxyx} to~\eqref{xyzxxty=yxzxxty}.
Hence~\eqref{xyzxxty=yxzxxty} holds in either $\mathbf X$ or $\mathbf Y$ as required.
By a similar argument we can establish the required claim when $\sigma$ is ~\eqref{xyzytxx=yxzytxx}.

\smallskip

\textbf{Distributivity of $\mathfrak L(\mathbf D_5)$}.
Take an arbitrary efficient identity of the form~\eqref{two letters in a block}, where $r\in\mathbb N_0$, $e_0,f_0\in\mathbb N$, $e_1,f_1,\dots,e_r,f_r\in\mathbb N_0$ and $e:=\sum_{i=0}^r e_i,f:=\sum_{i=0}^r f_i\ge 2$, which does not hold in $\mathbf D_5$.
In view of~\eqref{xyzxxtyy=xxyyzxxtyy=yyxxzxxtyy=yxzxxtyy}, we may apply Lemma~\ref{L: xy.. = yx.. in V}, yielding that $(e,e_0)=(2,1)$ or $(f,f_0)=(2,1)$. 
By symmetry, we may assume without loss of generality that $(f,f_0)=(2,1)$.
There is $p\in\{1,\dots,r\}$ such that $f_p=1$, while $f_1=\cdots=f_{p-1}=f_{p+1}=\cdots =f_r=0$.
Since the identity~\eqref{two letters in a block} is efficient, we have $e_1,\dots,e_{p-1},e_{p+1},\dots,e_r>0$.
If $(e,e_0)=(2,1)$, then~\eqref{two letters in a block} coincides (up to renaming of letters) with either $\sigma_1$ or~\eqref{xyzxy=yxzxy}.
Let now either $e>2$ or $e_0>1$.
Then $e_p=0$ because~\eqref{two letters in a block} follows from $\{\beta_2,\,\eqref{xyzxxy=yxzxxy},\,\eqref{xyxx=xxyx}\}$ otherwise.
Then it is routine to check that
\begin{itemize}
\item if $p<r$, then~\eqref{two letters in a block} is equivalent modulo $\{\sigma_3,\,\beta_2,\,\eqref{xyxx=xxyx}\}$ to~\eqref{xyzytxx=yxzytxx};
\item if $p=r$, then~\eqref{two letters in a block} is equivalent modulo $\{\sigma_3,\,\beta_2,\,\eqref{xyxx=xxyx}\}$ to~\eqref{xxyty=yxxty}.
\end{itemize}
Thus,~\eqref{two letters in a block} is equivalent within $\mathbf D_5$ to an identity in $\Delta_5:=\{\sigma_1,\,\eqref{xyzxy=yxzxy},\,\eqref{xyzytxx=yxzytxx},\,\eqref{xxyty=yxxty}\}$. 
It remains to show that if $\mathbf X$ and $\mathbf Y$ are non-commutative subvarieties of $\mathbf D_5$ and $\mathbf X\wedge\mathbf Y$ satisfies some identity $\sigma\in\Delta_5$, then $\sigma$ holds in either $\mathbf X$ or $\mathbf Y$.

According to Lemma~\ref{L: xy.. = yx.. in X wedge Y}(ii), the required claim is true whenever $M(xyx)\notin\mathbf X\wedge\mathbf Y$.
So, we may assume that $M(xyx)\in\mathbf X\wedge\mathbf Y$.
Then it follows from Lemma~\ref{L: xy.. = yx.. in X wedge Y}(i) that the required claim is true whenever $\sigma\in\{\sigma_1,\,\eqref{xyzxy=yxzxy}\}$.

Suppose that $\sigma$ is~\eqref{xyzytxx=yxzytxx}.
If $M_\gamma(x^+yzx^+)\notin\mathbf X\wedge\mathbf Y$, then, by Lemmas~\ref{L: nsub M(xy)} and~\ref{L: nsub M(x^+yzx^+)}, either $\mathbf X$ or $\mathbf Y$ satisfies~\eqref{xxyzx=xxyxzx} and so
\[
xyzytx^2\stackrel{\eqref{xyxx=xxyx}}\approx x^2yzytx\stackrel{\eqref{xxyzx=xxyxzx}}\approx x^2yzx^2ytx\stackrel{\eqref{xyzxxy=yxzxxy}}\approx yx^2zx^2ytx\stackrel{\eqref{xxyzx=xxyxzx}}\approx yx^2zytx\stackrel{\eqref{xyxx=xxyx}}\approx yxzytx^2.
\]
Thus, we may further assume that $M_\gamma(x^+yzx^+)\in\mathbf X\wedge\mathbf Y$.
In view of Proposition~\ref{P: deduction}, there exists a finite sequence $xyzytx^2 = \mathbf w_0, \dots, \mathbf w_m = yxzytx^2$ of words such that each identity $\mathbf w_i \approx \mathbf w_{i+1}$ holds in either $\mathbf X$ or $\mathbf Y$.
Since the sets $x^+yzx^+$ and $\{xyx\}$ are stable with respect to $\mathbf X\wedge\mathbf Y$ by Lemmas~\ref{L: M(W) in V} and~\ref{L: M_alpha(W) in V}, we have $\mathbf w_i(x,z,t)\in x^+ztx^+$, $\mathbf w_i(x,z,t)\ne xztx$ and $\mathbf w_i(y,z,t)=yzyt$ for all $i=0,\dots,m$.
Evidently, there is $j\in\{0,\dots,m-1\}$ such that $(_{1\mathbf w_j}x)<(_{1\mathbf w_j}y)$ but $(_{1\mathbf w_{j+1}}y)<(_{1\mathbf w_{j+1}}x)$.
Then the identity $\mathbf w_j\approx \mathbf w_{j+1}$ is equivalent modulo $\{x^2\approx x^3,\,\eqref{xyxx=xxyx},\,\sigma_3\}$ to~\eqref{xyzytxx=yxzytxx}.
Hence~\eqref{xyzytxx=yxzytxx} holds in either $\mathbf X$ or $\mathbf Y$ as required.

Suppose now that $\sigma$ is~\eqref{xxyty=yxxty}.
If $M_\gamma(xx^+y)\notin\mathbf X\wedge\mathbf Y$, then, by Lemma~\ref{L: nsub M(xy)} and the dual to Lemma~\ref{L: nsub M(yxx^+)}, either $\mathbf X$ or $\mathbf Y$ satisfies~\eqref{xxy=xxyx} and so
\[
x^2yty\stackrel{\eqref{xxy=xxyx}}\approx x^2ytx^2y\stackrel{\eqref{xyzxxy=yxzxxy}}\approx yx^2tx^2y\stackrel{\eqref{xxy=xxyx}}\approx yx^2ty.
\]
Thus, we may further assume that $M_\gamma(xx^+y)\in\mathbf X\wedge\mathbf Y$.
In view of Proposition~\ref{P: deduction}, there exists a finite sequence $x^2yty = \mathbf w_0, \dots, \mathbf w_m = yx^2ty$ of words such that each identity $\mathbf w_i \approx \mathbf w_{i+1}$ holds in either $\mathbf X$ or $\mathbf Y$.
Since the sets $xx^+y$ and $\{xyx\}$ are stable with respect to $\mathbf X\wedge\mathbf Y$ by Lemmas~\ref{L: M(W) in V} and~\ref{L: M_alpha(W) in V}, we have $\mathbf w_i(x,t)\in xx^+t$ and $\mathbf w_i(y,t)=yty$ for all $i=0,\dots,m$.
Hence $\mathbf w_i\in xx^+ y x^\ast ty\cup x^+yx^+ty\cup yxx^+ty$.
Evidently, there is $j\in\{0,\dots,m-1\}$ such that $(_{1\mathbf w_j}x)<(_{1\mathbf w_j}y)$ but $(_{1\mathbf w_{j+1}}y)<(_{1\mathbf w_{j+1}}x)$.
Then the identity $\mathbf w_j\approx \mathbf w_{j+1}$ is equivalent modulo $\{x^2\approx x^3,\,\sigma_3\}$ to~\eqref{xxyty=yxxty}.
Hence~\eqref{xxyty=yxxty} holds in either $\mathbf X$ or $\mathbf Y$ as required.

\smallskip

\textbf{Distributivity of $\mathfrak L(\mathbf D_6)$}.
Take an arbitrary efficient identity of the form~\eqref{two letters in a block}, where $r\in\mathbb N_0$, $e_0,f_0\in\mathbb N$, $e_1,f_1,\dots,e_r,f_r\in\mathbb N_0$ and $e:=\sum_{i=0}^r e_i,f:=\sum_{i=0}^r f_i\ge 2$, which does not hold in $\mathbf D_6$.
Then $e_i,f_i\le1$ for all $i=0,\dots,m$ because~\eqref{two letters in a block} follows from the identitties in $\{\Phi,\,\eqref{xyzytxx=yxzytxx},\,\eqref{xyzxxty=yxzxxty},\,\eqref{xxyty=yxxty}\}$ otherwise.
Hence~\eqref{two letters in a block} belongs to
\[
\Delta_6:=\{xyt_1\mathbf c_1\cdots t_i\mathbf c_i\approx yxt_1\mathbf c_1\cdots t_i\mathbf c_i\mid i\in\mathbb N,\ \mathbf c_1,\dots,\mathbf c_i\in\{x,y,xy\}\}.
\]
It remains to show that if $\mathbf X$ and $\mathbf Y$ are non-commutative subvarieties of $\mathbf D_6$ and $\mathbf X\wedge\mathbf Y$ satisfies some identity $\mathbf u\approx \mathbf v\in\Delta_6$, then $\mathbf u\approx \mathbf v$ holds in either $\mathbf X$ or $\mathbf Y$.
According to Lemma~\ref{L: xy.. = yx.. in X wedge Y}(i), the required claim is true whenever the words $xt_1x\cdots t_kx$ and $yt_1y\cdots t_m y$, where $k:=\occ_x(\mathbf u)-1=\occ_x(\mathbf v)-1$ and $m:=\occ_y(\mathbf u)-1=\occ_y(\mathbf v)-1$, are isoterms for $\mathbf X\wedge\mathbf Y$.
So, we may assume that at least one of these words, say $xt_1x\cdots t_kx$, is not an isoterm for at least one of the varieties $\mathbf X$ or $\mathbf Y$, say $\mathbf X$.
Now Lemma~\ref{L: M(W) in V} and Corollary~\ref{C: nsub M(xt_1x...t_nx),M(xt_1x...t_nx^+)}(i) apply, yielding that $\mathbf X$ satisfies $\beta_k$.
Notice that $\mathbf u\approx \mathbf v$ follows from $\{\beta_k,\,\eqref{xyzytxx=yxzytxx},\,\eqref{xyzxxty=yxzxxty}\}$.
Hence $\mathbf u\approx \mathbf v$ is satisfied by $\mathbf X$ as required.

\smallskip

\textbf{Distributivity of $\mathfrak L(\mathbf D_7)$}.
Take an arbitrary efficient identity of the form~\eqref{two letters in a block}, where $r\in\mathbb N_0$, $e_0,f_0\in\mathbb N$, $e_1,f_1,\dots,e_r,f_r\in\mathbb N_0$ and $e:=\sum_{i=0}^r e_i,f:=\sum_{i=0}^r f_i\ge 2$, which does not hold in $\mathbf D_7$.
Then either $(e,e_0)=(2,1)$ or $(f,f_0)=(2,1)$ by Lemma~\ref{L: xy.. = yx.. in V}. 
By symmetry, we may assume without loss of generality that $(f,f_0)=(2,1)$.
There is $p\in\{1,\dots,r\}$ such that $f_p=1$, while $f_1=\cdots=f_{p-1}=f_{p+1}=\cdots =f_r=0$.
Since the identity~\eqref{two letters in a block} is efficient, we have $e_1,\dots,e_{p-1},e_{p+1},\dots,e_r>0$.
Evidently, $e_0=1$ because~\eqref{two letters in a block} follows from $\{x^2\approx x^3,\,\eqref{xxyty=yxxty}\}$ otherwise.
If $e=2$, then~\eqref{two letters in a block} coincides (up to renaming of letters) with either $\sigma_1$ or~\eqref{xyzxy=yxzxy}.
Assume now that $e>2$.
In this case, $e_p=0$ becasue~\eqref{two letters in a block} follows from $\{x^2\approx x^3,\,\eqref{xyzxxy=yxzxxy},\,\beta_2,\,\gamma_2\}$ otherwise.
Then it is routine to check that
\begin{itemize}
\item if $p=1$, then~\eqref{two letters in a block} is equivalent modulo $\{x^2\approx x^3,\,\beta_2\}$ to~\eqref{xyzytxx=yxzytxx};
\item if $p=r$, then~\eqref{two letters in a block} is equivalent modulo $\{x^2\approx x^3,\,\beta_2\}$ to~\eqref{xyzxxty=yxzxxty};
\item if $1<p<r$, then~\eqref{two letters in a block} is equivalent modulo $\{x^2\approx x^3,\,\beta_2,\,\gamma_2\}$ to~\eqref{xyzxtysx=yxzxtysx}.
\end{itemize}
Thus,~\eqref{two letters in a block} is equivalent within $\mathbf D_7$ to an identity in 
\[
\Delta_7:=\{\sigma_1,\,\eqref{xyzxy=yxzxy},\,\eqref{xyzytxx=yxzytxx},\,\eqref{xyzxxty=yxzxxty},\,\eqref{xyzxtysx=yxzxtysx}\}.
\] 
It remains to show that if $\mathbf X$ and $\mathbf Y$ are non-commutative subvarieties of $\mathbf D_7$ and $\mathbf X\wedge\mathbf Y$ satisfies some identity $\sigma\in\Delta_7$, then $\sigma$ holds in either $\mathbf X$ or $\mathbf Y$.
This fact can be established by similar arguments as in the proof of distributivity of the variety $\mathbf D_4$ and we omit the corresponding considerations.

\smallskip

\textbf{Distributivity of $\mathfrak L(\mathbf D_8)$}. 
Take an arbitrary efficient identity of the form~\eqref{two letters in a block}, where $r\in\mathbb N_0$, $e_0,f_0\in\mathbb N$, $e_1,f_1,\dots,e_r,f_r\in\mathbb N_0$ and $e:=\sum_{i=0}^r e_i,f:=\sum_{i=0}^r f_i\ge 2$, which does not hold in $\mathbf D_8$.
Then either $(e,e_0)=(2,1)$ or $(f,f_0)=(2,1)$ by Lemma~\ref{L: xy.. = yx.. in V}. 
By symmetry, we may assume without loss of generality that $(f,f_0)=(2,1)$.
There is $p\in\{1,\dots,r\}$ such that $f_p=1$, while $f_1=\cdots=f_{p-1}=f_{p+1}=\cdots =f_r=0$.
Since the identity~\eqref{two letters in a block} is efficient, we have $e_1,\dots,e_{p-1},e_{p+1},\dots,e_r>0$.
Evidently, $e_0=1$ because~\eqref{two letters in a block} follows from $\{x^2\approx x^3,\,\eqref{xxyty=yxxty}\}$ otherwise.
If $e=2$, then it is easy to see that~\eqref{two letters in a block} coincides (up to renaming of letters) with either $\sigma_1$ or~\eqref{xyzxy=yxzxy}.
Assume now that $e>2$.
In this case, $e_p\le 1$ becasue~\eqref{two letters in a block} follows from~\eqref{xyzxxy=yxzxxy} otherwise.
Then it is routine to check that
\begin{itemize}
\item if $e_p=0$ and $p=1$, then~\eqref{two letters in a block} is equivalent modulo $\{x^2\approx x^3,\,\beta_2\}$ to~\eqref{xyzytxx=yxzytxx};
\item if $e_p=0$ and $p=r$, then~\eqref{two letters in a block} is equivalent modulo $\{x^2\approx x^3,\,\beta_2\}$ to~\eqref{xyzxxty=yxzxxty};
\item if $e_p=0$ and $1<p<r$, then $p=2$ and $e_1=1$ because~\eqref{two letters in a block} follows from $\{\eqref{xyzxxtysx=yxzxxtysx},\,\beta_2\}$ otherwise and so~\eqref{two letters in a block} is equivalent modulo $\beta_2$ to~\eqref{xyzxtysx=yxzxtysx};
\item if $e_p=1$, then $p=1$ because~\eqref{two letters in a block} follows from $\{\eqref{xyzxxy=yxzxxy},\,\beta_2\}$ otherwise and so~\eqref{two letters in a block} is equivalent modulo $\beta_2$ to
\begin{equation}
\label{xyzxytx=yxzxytx}
xyzxytx\approx yxzxytx.
\end{equation}
\end{itemize}
Thus,~\eqref{two letters in a block} is equivalent within $\mathbf D_8$ to an identity in 
\[
\Delta_8:=\{\sigma_1,\,\eqref{xyzxy=yxzxy},\,\eqref{xyzytxx=yxzytxx},\,\eqref{xyzxxty=yxzxxty},\,\eqref{xyzxtysx=yxzxtysx},\,\eqref{xyzxytx=yxzxytx}\}.
\] 
It remains to show that if $\mathbf X$ and $\mathbf Y$ are non-commutative subvarieties of $\mathbf D_8$ and $\mathbf X\wedge\mathbf Y$ satisfies some identity $\sigma\in\Delta_8$, then $\sigma$ holds in either $\mathbf X$ or $\mathbf Y$.
This fact can be established by similar arguments as in the proof of distributivity of $\mathbf D_4$ and we omit the corresponding considerations.

\smallskip

\textbf{Distributivity of $\mathfrak L(\mathbf D_9)$}. 
Take an arbitrary efficient identity of the form~\eqref{two letters in a block dual}, where $r\in\mathbb N_0$, $e_0,f_0\in\mathbb N$, $e_1,f_1,\dots,e_r,f_r\in\mathbb N_0$ and $e:=\sum_{i=0}^r e_i,f:=\sum_{i=0}^r f_i\ge 2$, which does not hold in $\mathbf D_9$. Then $e_i,f_i\le1$ for all $i=0,\dots,m$ because~\eqref{two letters in a block dual} follows from $\{\Phi,\,\eqref{xxyx=xxyxx},\,\eqref{ytxxy=ytyxx}\}$ otherwise.
Hence~\eqref{two letters in a block dual} belongs to
\[
\Delta_9:=\{\mathbf c_it_i\cdots \mathbf c_1t_1xy\approx \mathbf c_it_i\cdots \mathbf c_1t_1yx\mid i\in\mathbb N,\ \mathbf c_1,\dots,\mathbf c_i\in\{x,y,xy\}\}.
\]
It remains to show that if $\mathbf X$ and $\mathbf Y$ are non-commutative subvarieties of $\mathbf D_9$ and $\mathbf X\wedge\mathbf Y$ satisfies some identity $\mathbf u\approx \mathbf v\in\Delta_9$, then $\mathbf u\approx \mathbf v$ holds in either $\mathbf X$ or $\mathbf Y$.
According to Lemma~\ref{L: xy.. = yx.. in X wedge Y 2}(i), the required claim is true whenever the words $xt_1x\cdots t_kx$ and $yt_1y\cdots t_m y$, where $k:=\occ_x(\mathbf u)-1=\occ_x(\mathbf v)-1$ and $m:=\occ_y(\mathbf u)-1=\occ_y(\mathbf v)-1$, are isoterms for $\mathbf X\wedge\mathbf Y$.
So, we may assume that at least one of these words, say $xt_1x\cdots t_kx$, is not an isoterm for at least one of the varieties $\mathbf X$ or $\mathbf Y$, say $\mathbf X$.
Now Lemma~\ref{L: M(W) in V} and Corollary~\ref{C: nsub M(xt_1x...t_nx),M(xt_1x...t_nx^+)}(i) apply, yielding that $\mathbf X$ satisfies $\beta_k$.
Notice that $\mathbf u\approx \mathbf v$ follows from $\{\beta_k,\,\eqref{ytxxy=ytyxx}\}$.
Hence $\mathbf u\approx \mathbf v$ is satisfied by $\mathbf X$ as required.

\smallskip

\textbf{Distributivity of $\mathfrak L(\mathbf D_{10})$}. 
Take an arbitrary efficient identity of the form~\eqref{two letters in a block dual}, where $r\in\mathbb N_0$, $e_0,f_0\in\mathbb N$, $e_1,f_1,\dots,e_r,f_r\in\mathbb N_0$ and $e:=\sum_{i=0}^r e_i,f:=\sum_{i=0}^r f_i\ge 2$, which does not hold in $\mathbf D_{10}$.
Then either $(e,e_0)=(2,1)$ or $(f,f_0)=(2,1)$ because~\eqref{two letters in a block dual} follows from $\{\Phi,\,\beta_2\}$ otherwise. 
By symmetry, we may assume without loss of generality that $(f,f_0)=(2,1)$.
There is $p\in\{1,\dots,r\}$ such that $f_p>0$, while $f_1=\cdots=f_{p-1}=f_{p+1}=\cdots =f_r=0$.
Since the identity~\eqref{two letters in a block dual} is efficient, we have $e_1,\dots,e_{p-1},e_{p+1},\dots,e_r>0$.
Clearly, $e_p,\dots,e_r\le1$ and $\sum_{i=p}^re_i\le2$ because~\eqref{two letters in a block dual} follows from $\{\beta_2,\,\eqref{xxzytxy=xxzytyx}\}$ otherwise.
If $(e,e_0)=(2,1)$, then~\eqref{two letters in a block dual} coincides (up to renaming of letters) with either $\sigma_2$ or~\eqref{xyzxy=xyzyx}.
Let now $e>2$ or $e_0>1$.
In this case, it is routine to check that
\begin{itemize}
\item if $(e_p,\sum_{i=p}^re_i)=(0,0)$, then~\eqref{two letters in a block dual} is equivalent modulo $\{x^2\approx x^3,\,\sigma_3,\,\beta_2,\,\eqref{xyxx=xxyx}\}$ to~\eqref{ytxxy=ytyxx};
\item if $(e_p,\sum_{i=p}^re_i)=(0,1)$, then~\eqref{two letters in a block dual} is equivalent modulo $\{x^2\approx x^3,\,\sigma_3,\,\beta_2,\,\eqref{xyxx=xxyx}\}$ to~\eqref{xzytxxy=xzytyxx};
\item if $(e_p,\sum_{i=p}^re_i)=(0,2)$, then~\eqref{two letters in a block dual} is equivalent modulo $\{x^2\approx x^3,\,\sigma_3,\,\beta_2,\,\eqref{xyxx=xxyx}\}$ to
\begin{equation}
\label{xzxtysxy=xzxtysyx}
xzxtysxy\approx xzxtysyx;
\end{equation}
\item if $(e_p,\sum_{i=p}^re_i)=(1,1)$, then~\eqref{two letters in a block dual} is equivalent modulo $\{x^2\approx x^3,\,\sigma_3,\,\beta_2,\,\eqref{xyxx=xxyx}\}$ to
\begin{equation}
\label{xyzxxy=xyzyxx}
xyzx^2y\approx xyzyx^2;
\end{equation}
\item if $(e_p,\sum_{i=p}^re_i)=(1,2)$, then~\eqref{two letters in a block dual} is equivalent modulo $\{x^2\approx x^3,\,\sigma_3,\,\beta_2,\,\eqref{xyxx=xxyx}\}$ to
\begin{equation}
\label{xzxytxy=xzxytyx}
xzxytxy\approx xzxytyx.
\end{equation}
\end{itemize}
Thus,~\eqref{two letters in a block dual} is equivalent within $\mathbf D_{10}$ to an identity in 
\[
\Delta_{10}:=\{\sigma_2,\,\eqref{xyzxy=xyzyx},\,\eqref{ytxxy=ytyxx},\,\eqref{xzytxxy=xzytyxx},\,\eqref{xzxtysxy=xzxtysyx},\,\eqref{xyzxxy=xyzyxx},\,\eqref{xzxytxy=xzxytyx}\}.
\]
It remains to show that if $\mathbf X$ and $\mathbf Y$ are non-commutative subvarieties of $\mathbf D_{10}$ and $\mathbf X\wedge\mathbf Y$ satisfies some identity $\sigma\in\Delta_{10}$, then $\sigma$ holds in either $\mathbf X$ or $\mathbf Y$.

According to Lemma~\ref{L: xy.. = yx.. in X wedge Y 2}(ii), the required claim is true whenever $M(xyx)\notin\mathbf X\wedge\mathbf Y$.
So, we may assume that $M(xyx)\in\mathbf X\wedge\mathbf Y$.
Then it follows from Lemma~\ref{L: xy.. = yx.. in X wedge Y 2}(i) that the required claim is true whenever $\sigma\in\{\sigma_2,\,\eqref{xyzxy=xyzyx}\}$.

Thus, it remains to consider the case when $\sigma\in\{\eqref{ytxxy=ytyxx},\,\eqref{xzytxxy=xzytyxx},\,\eqref{xzxtysxy=xzxtysyx},\,\eqref{xyzxxy=xyzyxx},\,\eqref{xzxytxy=xzxytyx}\}$.
Suppose that $\sigma$ is~\eqref{xzytxxy=xzytyxx}.
If $M_\lambda(xyx^+)\notin \mathbf X\wedge\mathbf Y$, then, by Lemma~\ref{L: nsub M(xyx^+)}, either $\mathbf X$ or $\mathbf Y$ satisfies~\eqref{xyxx=xxyxx} and so $xzytx^2y \stackrel{\eqref{xyxx=xxyxx}}\approx x^2zytx^2y\stackrel{\eqref{xxzytxy=xxzytyx}}\approx x^2zytyx^2\stackrel{\eqref{xyxx=xxyxx}}\approx xzytyx^2$.
Let now $M_\lambda(xyx^+)\in \mathbf X\wedge\mathbf Y$. 
In view of Proposition~\ref{P: deduction}, there exists a finite sequence $xzytx^2y = \mathbf w_0, \dots, \mathbf w_m = xzytyx^2$ of words such that each identity $\mathbf w_i \approx \mathbf w_{i+1}$ holds in either $\mathbf X$ or $\mathbf Y$.
Since the sets $xyx^+$ and $\{xyx\}$ are stable with respect to $\mathbf X\wedge\mathbf Y$ by Lemmas~\ref{L: M(W) in V} and~\ref{L: M_alpha(W) in V}, we have $\mathbf w_i(x,z,t)\in xztxx^+$ and $\mathbf w(y,z,t)=zyty$ for all $i=0,\dots,m$.
Evidently, there is $j\in\{0,\dots,m-1\}$ such that $(_{\ell\mathbf w_j}x)<(_{\ell\mathbf w_j}y)$ but $(_{\ell\mathbf w_{j+1}}y)<(_{\ell\mathbf w_{j+1}}x)$.
Then the identity $\mathbf w_j\approx \mathbf w_{j+1}$ is equivalent modulo $\{x^2\approx x^3,\,\sigma_3\}$ to~\eqref{xzytxxy=xzytyxx}.
Hence~\eqref{xzytxxy=xzytyxx} holds in either $\mathbf X$ or $\mathbf Y$ as required.
By a similar argument we can establish the required claim when $\sigma$ is in $\Delta_{10}\setminus\{\sigma_2,\,\eqref{xyzxy=xyzyx},\,\eqref{xzytxxy=xzytyxx}\}$.

\smallskip

\textbf{Distributivity of $\mathfrak L(\mathbf D_{11})$}. 
Take arbitrary $k,\ell\in\mathbb N_0$, $g_1,\dots,g_{k+\ell}\in\{1,2\}$, $a_1,\dots,a_{k+\ell}\in\{x,y\}$.
If $\occ_x(a_1^{g_1}\cdots a_{k+\ell}^{g_{k+\ell}})>1$, then the identity~\eqref{two letters in a block middle 1} is equivalent modulo~$\{\eqref{xyxx=xxyx},\,\beta_2\}$ to~\eqref{two letters in a block middle 4}.
It follows that the identity~\eqref{two letters in a block middle 1} either coincides (up to renaming of letters) with an identity in $\{\sigma_1,\,\sigma_2,\,\sigma_3\}$ or is equivalent within $\mathbf D_{11}$ to $\{\eqref{two letters in a block middle 2},\eqref{two letters in a block middle 3}\}$ by Lemma~\ref{L: two letters in a block middle 2,3 <-> 4}.
If $\occ_y(a_1^{g_1}\cdots a_{k+\ell}^{g_{k+\ell}})>1$, then we can rename the letters $x$, $y$ and apply analogous arguments.

Further, if the identity~\eqref{two letters in a block middle 2} does not hold in $\mathbf D_{11}$, then $\occ_y(a_1^{g_1}\cdots a_{k+\ell}^{g_{k+\ell}})=1$ because the identity~\eqref{two letters in a block middle 2} follows from~$\{\Phi,\,\beta_2,\,\eqref{xyxx=xxyx}\}$ otherwise.
In this case, there is a unique $p\in\{1,\dots,k+\ell\}$ such that $a_p=y$ and $g_p=1$.
Then it is routine to check that
\begin{itemize}
\item if $p=1$ and $\ell=0$, then~\eqref{two letters in a block middle 2} is equivalent modulo $\Phi_2$ to
\begin{equation}
\label{ytxxy=ytxyx}
ytx^2y\approx ytxyx; 
\end{equation}
\item if $p=1$ and $\ell>1$, then~\eqref{two letters in a block middle 2} is equivalent modulo $\{\Phi_2,\,\eqref{xyxx=xxyx}\}$ to
\begin{equation}
\label{yzxxytx=yzxyxtx}
yzx^2ytx\approx yzxyxtx; 
\end{equation} 
\item if $1<p\le k$ and $\ell=0$, then~\eqref{two letters in a block middle 2} is equivalent modulo $\{\Phi_2,\,\eqref{xyxx=xxyx}\}$ to~\eqref{xzytxxy=xzytxyx};
\item if $1<p\le k$ and $\ell>0$, then~\eqref{two letters in a block middle 2} is equivalent modulo $\{\Phi_2,\,\eqref{xyxx=xxyx}\}$ to
\begin{equation}
\label{xzytxxysx=xzytxyxsx}
xzytx^2ysx\approx xzytxyxsx; 
\end{equation} 
\item if $p=k+1$ and $\ell=1$, then~\eqref{two letters in a block middle 2} is equivalent modulo $\Phi_2$ to~\eqref{xxyty=xyxty};
\item if $p=k+1$ and $\ell>1$, then~\eqref{two letters in a block middle 2} is equivalent modulo $\{\Phi_2,\,\eqref{xyxx=xxyx}\}$ to
\begin{equation}
\label{xxyzytx=xyxzytx}
x^2yzytx\approx xyxzytx; 
\end{equation}
\item if $k+1<p<k+\ell$, then~\eqref{two letters in a block middle 2} is equivalent modulo $\{\Phi_2,\,\eqref{xyxx=xxyx}\}$ to
\begin{equation}
\label{xxyzxtysx=xyxzxtysx}
x^2yzxtysx\approx xyxzxtysx;
\end{equation}
\item if $p=k+\ell$ and $\ell>1$, then~\eqref{two letters in a block middle 2} is equivalent modulo $\{\Phi_2,\,\eqref{xyxx=xxyx}\}$ to
\begin{equation}
\label{xxyzxty=xyxzxty}
x^2yzxty\approx xyxzxty. 
\end{equation} 
\end{itemize}

Denote by $\Delta_{11}$ the set consisting of all identities listed in the previous paragraph, namely, the identities
\[
\eqref{ytxxy=ytxyx},\,\eqref{yzxxytx=yzxyxtx},\,\eqref{xzytxxy=xzytxyx},\,\eqref{xzytxxysx=xzytxyxsx},\,\eqref{xxyty=xyxty},\,\eqref{xxyzytx=xyxzytx},\,\eqref{xxyzxtysx=xyxzxtysx},\,\eqref{xxyzxty=xyxzxty},
\] 
as well as the identities dual to them and the identities $\sigma_1$, $\sigma_2$, $\sigma_3$.
It remains to show that if $\mathbf X$ and $\mathbf Y$ are non-commutative subvarieties of $\mathbf D_{11}$ and $\mathbf X\wedge\mathbf Y$ satisfies some identity $\sigma\in\Delta_{11}$, then $\sigma$ holds in either $\mathbf X$ or $\mathbf Y$.

If $M(xyx)\notin\mathbf X\wedge\mathbf Y$, then either $\mathbf X$ or $\mathbf Y$, say $\mathbf X$, satisfies~\eqref{xyx=xyxx} by Corollary~\ref{C: nsub M(xt_1x...t_nx),M(xt_1x...t_nx^+)}(i).
In this case, the identity $\sigma$ holds in $\mathbf X$ because this identity follows from $\{\eqref{xxyy=yyxx},\,\eqref{xyx=xyxx},\,\eqref{xyxx=xxyx}\}$.
Thus, we may further assume that $M(xyx)\in\mathbf X\wedge\mathbf Y$.
Then it follows from Lemma~\ref{L: swapping in linear-balanced} that the required claim is true whenever $\sigma\in\{\sigma_1,\,\sigma_2,\,\sigma_3\}$.

Thus, it remains to consider the case when $\sigma\in\Delta_{11}\setminus\{\sigma_1,\,\sigma_2,\,\sigma_3\}$.
Suppose that $\sigma$ is~\eqref{ytxxy=ytxyx}. 
If $M_\lambda(yxx^+)\notin\mathbf X\wedge\mathbf Y$, then, by Lemmas~\ref{L: nsub M(xy)} and~\ref{L: nsub M(yxx^+)}, either $\mathbf X$ or $\mathbf Y$ satisfies~\eqref{yxx=xyxx} and so
\[
ytx^2y\stackrel{\eqref{yxx=xyxx}}\approx xytx^3y\stackrel{\Phi_1}\approx xytx^2yx\stackrel{\eqref{yxx=xyxx}}\approx ytx^2yx\stackrel{\Phi_2}\approx ytxyx.
\]
Let now $M_\lambda(yxx^+)\in\mathbf X\wedge\mathbf Y$.
In view of Proposition~\ref{P: deduction}, there exists a finite sequence $ytx^2y = \mathbf w_0, \dots, \mathbf w_m = ytxyx$ of words such that each identity $\mathbf w_i \approx \mathbf w_{i+1}$ holds in either $\mathbf X$ or $\mathbf Y$.
Since the sets $yxx^+$ and $\{xyx\}$ are stable with respect to $\mathbf X\wedge\mathbf Y$ by Lemmas~\ref{L: M(W) in V} and~\ref{L: M_alpha(W) in V}, we have $\mathbf w_i(x,t)\in txx^+$ and $\mathbf w_i(y,t)=yty$ for all $i=0,\dots,m$.
Hence $\mathbf w_i\in ytxx^+ yx^\ast\cup ytx^+ yx^+\cup ytx^\ast yxx^+$.
Evidently, there is $j\in\{0,\dots,m-1\}$ such that $(_{\ell\mathbf w_j}x)<(_{\ell\mathbf w_j}y)$ but $(_{\ell\mathbf w_{j+1}}y)<(_{\ell\mathbf w_{j+1}}x)$.
Then the identity $\mathbf w_j\approx \mathbf w_{j+1}$ is equivalent modulo $\Phi_2$ to either~\eqref{ytxxy=ytyxx} or~\eqref{ytxxy=ytxyx}.
Since~\eqref{ytxxy=ytyxx} together with $\{\Phi,\,\Phi_2\}$ imply~\eqref{ytxxy=ytxyx} by Lemma~\ref{L: two letters in a block middle 2,3 <-> 4}, the identity~\eqref{ytxxy=ytxyx} holds in either $\mathbf X$ or $\mathbf Y$ in any case.
By a similar argument we can establish the required claim when $\sigma\in\Delta_{11}\setminus\{\sigma_1,\,\sigma_2,\,\sigma_3,\,\eqref{ytxxy=ytxyx}\}$.

\smallskip

\textbf{Distributivity of $\mathfrak L(\mathbf D_{12})$}. 
Take arbitrary $k,\ell\in\mathbb N_0$, $g_1,\dots,g_{k+\ell}\in\{1,2\}$, $a_1,\dots,a_{k+\ell}\in\{x,y\}$.
First, notice that the identities~\eqref{two letters in a block middle 2} and~\eqref{two letters in a block middle 3} follow from $\{\Phi_2,\,\eqref{xxyty=yxxty},\,\eqref{ytxxy=ytyxx}\}$ and so hold in $\mathbf D_{12}$. 

Suppose that the identity~\eqref{two letters in a block middle 1} does not hold in $\mathbf D_{12}$. 
Then $g_1=\cdots=g_{k+\ell}=1$ because the identity~\eqref{two letters in a block middle 1} follows from $\{x^2\approx x^3,\,\eqref{xxyx=xxyxx},\,\eqref{xyzytxx=yxzytxx},\,\eqref{yzxytxx=yzyxtxx},\,\eqref{xyzxxty=yxzxxty},\,\eqref{xxyty=yxxty},\,\eqref{ytxxy=ytyxx}\}$ otherwise. 
Hence~\eqref{two letters in a block middle 1} has the form
\[
\biggl(\prod_{i=1}^k a_it_i\biggr)xy\biggl(\prod_{i=k+1}^{k+\ell} t_i a_i\biggr)\approx \biggl(\prod_{i=1}^k a_it_i\biggr)yx\biggl(\prod_{i=k+1}^{k+\ell} t_i a_i\biggr).
\]
Denote by $\Delta_{12}$ the set of all identities of such a form.
It remains to show that if $\mathbf X$ and $\mathbf Y$ are non-commutative subvarieties of $\mathbf D_{12}$ and $\mathbf X\wedge\mathbf Y$ satisfies some identity $\mathbf u\approx \mathbf v\in\Delta_{12}$, then $\mathbf u\approx \mathbf v$ holds in either $\mathbf X$ or $\mathbf Y$.

According to Lemma~\ref{L: swapping in linear-balanced}, the required claim is true whenever $\mathbf X\wedge\mathbf Y$ contains both the monoids $M(xt_1x\cdots t_px)$ and $M(yt_1y\cdots t_q y)$, where $p:=\occ_x(\mathbf u)-1=\occ_x(\mathbf v)-1$ and $q:=\occ_y(\mathbf u)-1=\occ_y(\mathbf v)-1$.
So, we may assume that at least one of these monoids, say $M(xt_1x\cdots t_px)$, does not lie in at least one of the varieties $\mathbf X$ or $\mathbf Y$, say $\mathbf X$.
Now Corollary~\ref{C: nsub M(xt_1x...t_nx),M(xt_1x...t_nx^+)}(i) applies, yielding that $\mathbf X$ satisfies $\beta_p$.
Notice that $\mathbf u\approx \mathbf v$ follows from $\{x^2\approx x^3,\,\beta_p,\,\eqref{xyzytxx=yxzytxx},\,\eqref{yzxytxx=yzyxtxx},\,\eqref{xyzxxty=yxzxxty},\,\eqref{xxyty=yxxty},\,\eqref{ytxxy=ytyxx}\}$.
Hence $\mathbf u\approx \mathbf v$ is satisfied by $\mathbf X$ as required.

\smallskip

\textbf{Distributivity of $\mathfrak L(\mathbf D_{13})$}. 
Take arbitrary $k,\ell\in\mathbb N_0$, $g_1,\dots,g_{k+\ell}\in\{1,2\}$, $a_1,\dots,a_{k+\ell}\in\{x,y\}$ and let $e:=\occ_x(a_1^{g_1}\cdots a_{k+\ell}^{g_{k+\ell}})$ and $f:=\occ_y(a_1^{g_1}\cdots a_{k+\ell}^{g_{k+\ell}})$.
First, notice that the identity~\eqref{two letters in a block middle 3} follows from $\{\Phi_2,\,\eqref{yxxty=xyxxty},\,\eqref{ytyxx=ytxyxx}\}$ and so holds in $\mathbf D_{13}$. 

Suppose that the identity~\eqref{two letters in a block middle 1} does not hold in $\mathbf D_{13}$. 
Then $e=1$ or $f=1$ because~\eqref{two letters in a block middle 1} follows from~$\{\Phi,\,\eqref{xxytyy=yxxtyy},\,\eqref{xyzxxtyy=yxzxxtyy},\,\beta_2\}$ otherwise. 
By symmetry, we may assume that $f=1$.
Then there is a unique $p\in\{1,\dots,k+\ell\}$ such that $a_p=y$ and $g_p=1$.
If $e=1$, then~\eqref{two letters in a block middle 1} coincides (up to renaming of letters) with an identity in $\{\sigma_1,\,\sigma_2,\,\sigma_3\}$.
Let now $e>1$.
If $\occ_x(a_1^{g_1}\cdots a_k^{g_k})>0$, then~\eqref{two letters in a block middle 1} is equivalent modulo~$\{\beta_2,\,\gamma_2\}$ to~\eqref{two letters in a block middle 4}. 
The latter identity is equivalent within $\mathbf D_{13}$ to~\eqref{two letters in a block middle 2} by the above and Lemma~\ref{L: two letters in a block middle 2,3 <-> 4}.
If $\occ_x(a_1^{g_1}\cdots a_k^{g_k})=0$, then it is routine to check that  
\begin{itemize}
\item if $k=p=1$, then~\eqref{two letters in a block middle 1} is equivalent modulo $x^2\approx x^3$ to~\eqref{yzxytxx=yzyxtxx};
\item if $k=0$ and $p=1$, then~\eqref{two letters in a block middle 1} is equivalent modulo $x^2\approx x^3$ to~\eqref{xyzytxx=yxzytxx};
\item if $k=0$ and $1<p<\ell$, then~\eqref{two letters in a block middle 1} is equivalent modulo $\{\beta_2,\,\gamma_2\}$ to~\eqref{xyzxtysx=yxzxtysx};
\item if $k=0$ and $p=\ell$, then~\eqref{two letters in a block middle 1} is equivalent modulo $x^2\approx x^3$ to~\eqref{xyzxxty=yxzxxty}.
\end{itemize}

Suppose that the identity~\eqref{two letters in a block middle 2} does not hold in $\mathbf D_{13}$. 
Then $f=1$ because~\eqref{two letters in a block middle 2} follows from~$\{\Phi,\,\Phi_2,\,\eqref{xxytyy=yxxtyy},\,\beta_2\}$ otherwise. 
Then there is a unique $q\in\{1,\dots,k+\ell\}$ such that $a_q=y$ and $g_q=1$.
Clearly, $\occ_x(a_{k+1}^{g_{k+1}}\cdots a_{k+\ell}^{g_{k+\ell}})=0$ because~\eqref{two letters in a block middle 2} follows from the identities in $\{\Phi_2,\,\eqref{xxyzytx=yxxzytx},\,\eqref{xxyzxty=yxxzxty},\,\eqref{yzxxytx=yzyxxtx}\}$ otherwise.  
It is routine to check that  
\begin{itemize}
\item if $q=1$, then~\eqref{two letters in a block middle 2} is equivalent modulo $\Phi_2$ to~\eqref{ytxxy=ytxyx};
\item if $1<q\le k$, then $q=2$ and $g_1=1$ because~\eqref{two letters in a block middle 2} follows from $\{\beta_2,\,\gamma_2,\,\eqref{xxzytxy=xxzytyx}\}$ otherwise and so~\eqref{two letters in a block middle 2} is equivalent modulo $\Phi_2$ to~\eqref{xzytxxy=xzytxyx};
\item if $q=k+1$, then~\eqref{two letters in a block middle 2} is equivalent modulo $\Phi_2$ to~\eqref{xxyty=xyxty}.
\end{itemize}
Let
\[
\Delta_{13}:=\{\sigma_1,\,\sigma_2,\,\sigma_3,\,\eqref{yzxytxx=yzyxtxx},\,\eqref{xyzytxx=yxzytxx},\,\eqref{xyzxtysx=yxzxtysx},\,\eqref{xyzxxty=yxzxxty},\,\eqref{ytxxy=ytxyx},\,\eqref{xzytxxy=xzytxyx},\,\eqref{xxyty=xyxty}\}.
\]
It remains to show that if $\mathbf X$ and $\mathbf Y$ are non-commutative subvarieties of $\mathbf D_{13}$ and $\mathbf X\wedge\mathbf Y$ satisfies some identity $\sigma\in\Delta_{13}$, then $\sigma$ holds in either $\mathbf X$ or $\mathbf Y$.

Suppose that $\sigma$ is~\eqref{yzxytxx=yzyxtxx}. 
If $M(xyx)\notin\mathbf X\wedge\mathbf Y$, then, by Corollary~\ref{C: nsub M(xt_1x...t_nx),M(xt_1x...t_nx^+)}(i), either $\mathbf X$ or $\mathbf Y$ satisfies~\eqref{xyx=xyxx} and so
\[
yzxytx^2\stackrel{\eqref{xyx=xyxx}}\approx yzxy^2tx^2 \stackrel{\eqref{xxytyy=yxxtyy}}\approx yzy^2xtx^2 \stackrel{\eqref{xyx=xyxx}}\approx yzyxtx^2.
\]
If $M_\lambda(xyx^+)\notin\mathbf X\wedge\mathbf Y$, then, by Lemmas~\ref{L: nsub M(xy)} and~\ref{L: nsub M(xyx^+)}, either $\mathbf X$ or $\mathbf Y$ satisfies~\eqref{xyxx=xxyxx} and so
\[
yzxytx^2\stackrel{\eqref{xyxx=xxyxx}}\approx yzx^2ytx^2\stackrel{\eqref{yzxxytx=yzyxxtx}}\approx yzyx^2tx^2\stackrel{\eqref{xyxx=xxyxx}}\approx yzyxtx^2.
\]
Thus, we may further assume that $\mathbf M_\lambda(xyx^+)\vee\mathbf M(xyx)\subseteq\mathbf X\wedge\mathbf Y$.
In view of Proposition~\ref{P: deduction}, there exists a finite sequence $yzxytx^2 = \mathbf w_0, \dots, \mathbf w_m = yzyxtx^2$ of words such that each identity $\mathbf w_i \approx \mathbf w_{i+1}$ holds in either $\mathbf X$ or $\mathbf Y$.
Since the sets $xyx^+$ and $\{xyx\}$ are stable with respect to $\mathbf X\wedge\mathbf Y$ by Lemmas~\ref{L: M(W) in V} and~\ref{L: M_alpha(W) in V}, we have $\mathbf w_i(x,z,t)\in zxtxx^+$ and $\mathbf w_i(y,z,t)=yzyt$ for all $i=0,\dots,m$.
Hence $\mathbf w_i\in yzxytxx^+\cup yzyxtxx^+$.
Evidently, there is $j\in\{0,\dots,m-1\}$ such that $\mathbf w_j\in yzxytxx^+$ but $\mathbf w_{j+1}\in yzyxtxx^+$.
Then the identity $\mathbf w_j\approx \mathbf w_{j+1}$ is equivalent modulo $x^2\approx x^3$ to~\eqref{yzxytxx=yzyxtxx}.
Hence~\eqref{yzxytxx=yzyxtxx} holds in either $\mathbf X$ or $\mathbf Y$ as required.
By a similar argument we can establish the required claim when $\sigma\in\Delta_{13}\setminus\{\eqref{yzxytxx=yzyxtxx}\}$.

\smallskip

\textbf{Distributivity of $\mathfrak L(\mathbf D_{14})$}. 
Take arbitrary $k,\ell\in\mathbb N_0$, $g_1,\dots,g_{k+\ell}\in\{1,2\}$, $a_1,\dots,a_{k+\ell}\in\{x,y\}$ and let $e:=\occ_x(a_1^{g_1}\cdots a_{k+\ell}^{g_{k+\ell}})$ and $f:=\occ_y(a_1^{g_1}\cdots a_{k+\ell}^{g_{k+\ell}})$.
First, notice that the identity~\eqref{two letters in a block middle 3} follows from $\{\Phi_2,\,\eqref{yxxty=xyxxty},\,\eqref{ytyxx=ytxyxx}\}$ and so holds in $\mathbf D_{14}$. 

Suppose that the identity~\eqref{two letters in a block middle 1} does not hold in $\mathbf D_{14}$. 
Then $e=1$ or $f=1$ because~\eqref{two letters in a block middle 1} follows from~$\{\Phi,\,\eqref{xxytyy=yxxtyy},\,\eqref{xyzxxtyy=yxzxxtyy},\,\beta_2\}$ otherwise. 
By symmetry, we may assume that $f=1$.
Then there is a unique $p\in\{1,\dots,k+\ell\}$ such that $a_p=y$ and $g_p=1$.
If $e=1$, then~\eqref{two letters in a block middle 1} coincides (up to renaming of letters) with an identity in $\{\sigma_1,\,\sigma_2,\,\sigma_3\}$.
Let now $e>1$.
If $\occ_x(a_1^{g_1}\cdots a_k^{g_k})>1$, then~\eqref{two letters in a block middle 1} is equivalent modulo~$\beta_2$ to~\eqref{two letters in a block middle 4}. 
The latter identity is equivalent within $\mathbf D_{14}$ to~\eqref{two letters in a block middle 2} by the above and Lemma~\ref{L: two letters in a block middle 2,3 <-> 4}.
If $\occ_x(a_1^{g_1}\cdots a_k^{g_k})=0$, then it is routine to check that  
\begin{itemize}
\item if $k=p=1$, then~\eqref{two letters in a block middle 1} is equivalent modulo $\{x^2\approx x^3,\,\beta_2\}$ to~\eqref{yzxytxx=yzyxtxx};
\item if $k=0$ and $p=1$, then~\eqref{two letters in a block middle 1} is equivalent modulo $\{x^2\approx x^3,\,\beta_2\}$ to~\eqref{xyzytxx=yxzytxx};
\item if $k=0$ and $1<p<\ell$, then $p=2$ and $g_1=1$ because the identity~\eqref{two letters in a block middle 1} follows from $\{\eqref{xyzxxtysx=yxzxxtysx},\,\beta_2\}$ otherwise and so~\eqref{two letters in a block middle 1} is equivalent modulo $\beta_2$ to~\eqref{xyzxtysx=yxzxtysx};
\item if $k=0$ and $p=\ell$, then~\eqref{two letters in a block middle 1} is equivalent modulo $x^2\approx x^3$ to~\eqref{xyzxxty=yxzxxty}.
\end{itemize}
If $\occ_x(a_1^{g_1}\cdots a_k^{g_k})=1$, then it is routine to check that 
\begin{itemize}
\item if $p=1$, then~\eqref{two letters in a block middle 1} is equivalent modulo $\beta_2$ to
\begin{equation}
\label{yzxtxysx=yzxtyxsx}
yzxtxysx\approx yzxtyxsx;
\end{equation}
\item if $p=k=2$, then~\eqref{two letters in a block middle 1} is equivalent modulo $\beta_2$ to
\begin{equation}
\label{xzytxysx=xzytyxsx}
xzytxysx\approx xzytyxsx;
\end{equation}
\item if $k=1$ and $1<p<\ell+1$, then $p=2$ and $g_1=1$ because~\eqref{two letters in a block middle 1} follows from $\{\eqref{xyzxxtysx=yxzxxtysx},\,\beta_2\}$ otherwise and so~\eqref{two letters in a block middle 1} is equivalent modulo $\beta_2$ to
\begin{equation}
\label{xzxytysx=xzyxtysx}
xzxytysx\approx xzyxtysx;
\end{equation}
\item if $k=1$ and $2<p=\ell+1$, then~\eqref{two letters in a block middle 1} is equivalent modulo $\beta_2$ to
\begin{equation}
\label{xzxytxsy=xzyxtxsy}
xzxytxsy\approx xzyxtxsy.
\end{equation}
\end{itemize} 

Suppose that the identity~\eqref{two letters in a block middle 2} does not hold in $\mathbf D_{14}$. 
Then $f=1$ because~\eqref{two letters in a block middle 2} follows from~$\{\Phi,\,\Phi_2,\,\eqref{xxytyy=yxxtyy},\,\beta_2\}$ otherwise. 
Then there is a unique $q\in\{1,\dots,k+\ell\}$ such that $a_q=y$ and $g_q=1$.
Clearly, $\occ_x(a_{k+1}^{g_{k+1}}\cdots a_{k+\ell}^{g_{k+\ell}})=0$ because~\eqref{two letters in a block middle 2} follows from the identities in $\{\Phi_2,\,\eqref{xxyzytx=yxxzytx},\,\eqref{xxyzxty=yxxzxty},\,\eqref{yzxxytx=yzyxxtx}\}$ otherwise.  
It is routine to check that  
\begin{itemize}
\item if $q=1$, then~\eqref{two letters in a block middle 2} is equivalent modulo $\Phi_2$ to~\eqref{ytxxy=ytxyx};
\item if $1<q\le k$, then $2\le q\le3$ and $g_1=g_{q-1}=1$ because~\eqref{two letters in a block middle 2} follows from $\{\beta_2,\,\eqref{xxzytxy=xxzytyx}\}$ otherwise and so~\eqref{two letters in a block middle 2} is equivalent modulo $\Phi_2$ to either~\eqref{xzytxxy=xzytxyx} or
\begin{equation}
\label{xzxtysxxy=xzxtysxyx}
xzxtysx^2y\approx xzxtysxyx;
\end{equation}
\item if $q=k+1$, then~\eqref{two letters in a block middle 2} is equivalent modulo $\Phi_2$ to~\eqref{xxyty=xyxty}.
\end{itemize}
Let
\[
\Delta_{14}:=
\left\{
\begin{array}{l}
\sigma_1,\,\sigma_2,\,\sigma_3,\,\eqref{yzxytxx=yzyxtxx},\,\eqref{xyzytxx=yxzytxx},\,\eqref{xyzxtysx=yxzxtysx},\,\eqref{xyzxxty=yxzxxty},\,\eqref{yzxtxysx=yzxtyxsx},\,\eqref{xzytxysx=xzytyxsx},\\ 
\eqref{xzxytysx=xzyxtysx},\,\eqref{xzxytxsy=xzyxtxsy},\,\eqref{ytxxy=ytxyx},\,\eqref{xzytxxy=xzytxyx},\,\eqref{xzxtysxxy=xzxtysxyx},\,\eqref{xxyty=xyxty}
\end{array}
\right\}.
\]
It remains to show that if $\mathbf X$ and $\mathbf Y$ are non-commutative subvarieties of $\mathbf D_{14}$ and $\mathbf X\wedge\mathbf Y$ satisfies some identity $\sigma\in\Delta_{14}$, then $\sigma$ holds in either $\mathbf X$ or $\mathbf Y$.
This fact can be established by the same arguments as in the proof of distributivity of $\mathbf D_{13}$ and we omit it.

\subsection*{Acknowledgments.} 
The author is grateful to Olga Sapir for her remarks and constructive suggestions.

{\small

}

\newpage

\section*{APPENDIX. Identities labelled by numbers}

\begin{table}[tbh]
\begin{center}
\begin{tabular}{|c|c||c|c|}
\hline
Number & Identity & Number & Identity \\
\hline
\eqref{xyxx=xxyxx} &  $xyx^2\approx x^2yx^2$ & \eqref{xxyxty=xyxty} & $x^2yxty\approx xyxty$\\
\eqref{yxxtxxyxx=xxyxxtxxyxx} & $yx^2tx^2yx^2\approx x^2yx^2tx^2yx^2$ & \eqref{xxyty=xyxty} & $x^2yty\approx xyxty$\\
\eqref{xxyty=xxyxty} & $x^2yty\approx x^2yxty$ & \eqref{xyzxy=xyzyx} & $xyzxy\approx xyzyx$\\
\eqref{yxx=xyxx} & $yx^2\approx xyx^2$ & \eqref{xyxx=xxyx} & $xyx^2\approx x^2yx$\\
\eqref{xxy=xxyx} & $x^2y\approx x^2yx$ & \eqref{ytyxx=ytxyx} & $ytyx^2\approx ytxyx$\\
\eqref{yxxty=xyxxty} & $yx^2ty\approx xyx^2ty$ & \eqref{xxyty=yxxty} & $x^2yty\approx yx^2ty$\\
\eqref{xxytxy=yxxtxy} & $x^2ytxy\approx yx^2txy$ & \eqref{xzytxxy=xzytyxx} & $xzytx^2y\approx xzytyx^2$\\
\eqref{xxyx=xxyxx} & $x^2yx\approx x^2yx^2$ & \eqref{xzytxxy=xzytxyx} & $xzytx^2y\approx xzytxyx$\\
\eqref{xxyzx=xxyxzx} & $x^2yzx\approx x^2yxzx$ & \eqref{xyzxtysx=yxzxtysx} & $xyzxtysx\approx yxzxtysx$\\
\eqref{xxyzytx=yxxzytx} & $x^2yzytx\approx yx^2zytx$ &\eqref{xyx=xxyx} & $xyx\approx x^2yx$\\
\eqref{xxyzxty=yxxzxty} & $x^2yzxty\approx yx^2zxty$ &\eqref{yxxty=xyxty} & $yx^2ty\approx xyxty$\\
\eqref{ytyxx=ytxyxx} & $ytyx^2\approx ytxyx^2$ &\eqref{xyxtxxy=xyxty} & $xyxtx^2y\approx xyxty$\\
\eqref{yzxxytx=yzyxxtx} & $yzx^2ytx\approx yzyx^2tx$ &\eqref{ytxxy=ytyxx} & $ytx^2y\approx ytyx^2$\\
\eqref{xxytxy=xxytyx} & $x^2ytxy\approx x^2ytyx$ &\eqref{ytxxy=ytxxyx} & $ytx^2y\approx ytx^2yx$\\
\eqref{xxzytxy=xxzytyx} & $x^2zytxy\approx x^2zytyx$ & \eqref{xyzxytx=yxzxytx} & $xyzxytx\approx yxzxytx$\\
\eqref{xyzxxtxyx=yxzxxtxyx} & $xyzx^2txyx\approx yxzx^2txyx$ &\eqref{xzxtysxy=xzxtysyx} & $xzxtysxy\approx xzxtysyx$\\
\eqref{xyzytxx=yxzytxx} & $xyzytx^2\approx yxzytx^2$ &\eqref{xyzxxy=xyzyxx} & $xyzx^2y\approx xyzyx^2$\\
\eqref{yzxytxx=yzyxtxx} & $yzxytx^2\approx yzyxtx^2$ &\eqref{xzxytxy=xzxytyx} & $xzxytxy\approx xzxytyx$\\ 
\eqref{xyzxxyy=yxzxxyy} & $xyzx^2y^2\approx yxzx^2y^2$ & \eqref{ytxxy=ytxyx} & $ytx^2y\approx ytxyx$\\
\eqref{xxytyy=xxyxtyy} & $x^2yty^2\approx x^2yxty^2$ & \eqref{yzxxytx=yzxyxtx} & $yzx^2ytx\approx yzxyxtx$\\
\eqref{xxytyy=yxxtyy} & $x^2yty^2\approx yx^2ty^2$ & \eqref{xzytxxysx=xzytxyxsx} & $xzytx^2ysx\approx xzytxyxsx$\\ 
\eqref{xyzxxtyy=yxzxxtyy} & $xyzx^2ty^2\approx yxzx^2ty^2$ & \eqref{xxyzytx=xyxzytx} & $x^2yzytx\approx xyxzytx$\\ 
\eqref{xyzxxy=yxzxxy} & $xyzx^2y\approx yxzx^2y$ &\eqref{xxyzxtysx=xyxzxtysx} & $x^2yzxtysx\approx xyxzxtysx$\\
\eqref{xyzxxty=yxzxxty} & $xyzx^2ty\approx yxzx^2ty$ &\eqref{xxyzxty=xyxzxty} & $x^2yzxty\approx xyxzxty$\\
\eqref{xyzxxtysx=yxzxxtysx} & $xyzx^2tysx\approx yxzx^2tysx$ &\eqref{yzxtxysx=yzxtyxsx} & $yzxtxysx\approx yzxtyxsx$\\
\eqref{xyx=xyxx} & $xyx\approx xyx^2$ &\eqref{xzytxysx=xzytyxsx} & $xzytxysx\approx xzytyxsx$\\
\eqref{xxyy=yyxx} & $x^2y^2\approx y^2x^2$ & \eqref{xzxytysx=xzyxtysx} & $xzxytysx\approx xzyxtysx$\\
\eqref{xyzxy=yxzxy} & $xyzxy\approx yxzxy$ & \eqref{xzxytxsy=xzyxtxsy} & $xzxytxsy\approx xzyxtxsy$\\
\eqref{xzyxty=xzxyxty} & $xzyxty\approx xzxyxty$ & & \\
\hline
\end{tabular}
\end{center}
\label{numbers}
\end{table}
\end{document}